\newtheorem{theorem}{Theorem}[chapter]
\newtheorem*{theorem*}{Theorem}
\newtheorem{lemma}{Lemma}[chapter]
\newtheorem{auxlemma}{Auxiliary Lemma}
\newtheorem{proposition}{Proposition}[chapter]
\newtheorem{corollary}{Corollary}[chapter]
\theoremstyle{definition}
\newtheorem{definition}{Definition}
\theoremstyle{remark}
\newtheorem{conclusion}{Conclusion}[chapter]
\newtheorem{remark}{Remark}[chapter]
\newtheorem*{example}{Example}
\newcommand{\Red}{black}
 \newcommand{\Black}{red}
 \newcommand{\mP}{\mathbb{P}}
 \newcommand{\A}{\mathbb{A}}
 \newcommand{\C}{\mathbb{C}}
 \newcommand{\G}{\mathbb{G}}
 \newcommand{\N}{\mathbb{N}}
 \renewcommand{\P}{\mathbb{P}}
 \newcommand{\Q}{\mathbb{Q}}
 \newcommand{\R}{\mathbb{R}}
 \newcommand{\Z}{\mathbb{Z}}
 \newcommand{\HH}{\mathbf{H}}
 \newcommand{\GG}{\mathbf{G}}
 \newcommand{\CC}{\mathbf{C}}
 \newcommand{\cA}{\mathcal{A}}
 \newcommand{\cB}{\mathcal{B}}
 \newcommand{\cC}{\mathcal{C}}
 \newcommand{\cF}{\mathcal{F}}
 \newcommand{\cH}{\mathcal{H}}
 \newcommand{\cI}{\mathcal{I}}
 \newcommand{\cJ}{\mathcal{J}}
 \newcommand{\cK}{\mathcal{K}}
 \newcommand{\cL}{\mathcal{L}}
 \newcommand{\cM}{\mathcal{M}}
 \newcommand{\cN}{\mathcal{N}}
 \newcommand{\cO}{\mathcal{O}}
 \newcommand{\cP}{\mathcal{P}}
 \newcommand{\cR}{\mathcal{R}}
 \newcommand{\cX}{\mathcal{X}}
 \renewcommand{\Im}{\mathrm{Im}}
 \newcommand{\Grass}{\operatorname{Grass}}
 \newcommand{\Hilb}{\operatorname{Hilb}}
 \newcommand{\vep}{\varepsilon}
 \newcommand{\reg}{\mathrm{reg}}
 \newcommand{\Pic}{\operatorname{Pic}}
 \newcommand{\Rat}{\operatorname{Rat}}
 \newcommand{\Spec}{\operatorname{Spec}}
 \newcommand{\colength}{\mathrm{colength}}
 \newcommand{\down}{\mathrm{down}}
 \newcommand{\up}{\mathrm{up}}
 \newcommand{\grade}{\mathrm{grade}}
 \newcommand{\mwedge}{{\mathop{\bigwedge}\limits^m}}
 \newcommand{\pal}{\psi_{\alpha}}
 \newcommand{\inn}{\mathit{in}}
 \newcommand{\itin}{\mathit{in}}
 \newcommand{\itbis}{{\mbox{\it bis\/}}}  %
 \newcommand{\ol}{\overline}
 \newcommand{\bigsum}{\sum}
 \newcommand{\bigudot}{{\mathop{\bigcup}\limits^{\cdot}}}
 \newcommand{\modulo}{\text{ modulo }}
\begin{document}
 \numberwithin{equation}{chapter}
 \baselineskip=16pt

 \title[Hilbert scheme of space curves]{Computation of the First Chow group
   of\\  a Hilbert scheme of space curves}


 \author[G. Gotzmann]{Gerd Gotzmann}


 \begin{titlepage}
   \begin{center}
     \vspace{4.5cm}
     {
      \fontsize{20}{20}
      \textbf{
        Computation of the first Chow group of\\[3mm] 
        a Hilbert scheme of space curves}
      }\\
    \vspace{2cm}
     {\Large
       Gerd Gotzmann\\[5mm]
     }
  \vspace{1.5cm}
    {\large \textsc{Abstract}}\\[3mm]
    {\it An earlier wrong formula for the dimension of $ A_1(
      H_{d,g})\otimes\Q$ is corrected.}
   \end{center}

\vspace{1.5cm}

\centerline{\textbf{\Large Introduction}}
\vspace{5mm}
 The results stated in ([T4], pp. 1) have to be corrected as follows: Let
   $\HH=H_{d,g}=\Hilb^P(\mP_{\C}^3)$ be the Hilbert scheme , which parametrizes
   the curves in $\mP_{\C}^3$ of degree $d$ and genus $g$ (i.e., the closed
   subschemes of $\mP_{\C}^3$ with Hilbert polynomial $P(T)=d T-g+1$). It is
   always assumed that $d\ge 3$ and $g$ is not maximal, i.e. that
   $g<(d-1)(d-2)/2$.

 \begin{theorem}\label{I} 
   Let be $g(d):=(d-2)^2/4$. Then $\dim_{\Q}A_1(H_{d,g})\otimes\Q=3$ (resp.
   $=4$), if $g\le g(d)$ (resp. if $g>g(d)$).
 \end{theorem}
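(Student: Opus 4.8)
The plan is to exploit the stratification of $\HH=H_{d,g}$ set up in the earlier parts of this series: $\HH$ is a finite disjoint union of irreducible, smooth, locally closed strata $\HH_\sigma$, and each $\HH_\sigma$ carries a morphism $\pi_\sigma\colon\HH_\sigma\to B_\sigma$ onto a base $B_\sigma$ that is (a locally closed subscheme of) a product of a linear system of surfaces of the least degree $s=s(\sigma)$ containing the generic member of $\HH_\sigma$ with an auxiliary parameter space for the ``residual'' data (a Hilbert scheme of subschemes of strictly smaller degree), in such a way that $\pi_\sigma$ is fibred by open subsets of affine or projective spaces. Consequently $A_1(\HH_\sigma)$ is a quotient of $A_1(B_\sigma)\oplus A_0(B_\sigma)$, so it is finitely generated by explicit classes — a line inside the linear system of surfaces of degree $s$, a fibre line, and the pullbacks of generators of $A_1(B_\sigma)$ — and each $A_1(B_\sigma)\otimes\Q$ is itself small, since the bases are rational and assembled from projective spaces.

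Next I would reconstruct $A_1(\HH)$ from the pieces $A_1(\HH_\sigma)$ by iterating the localisation sequence $A_1(Z)\to A_1(X)\to A_1(X\setminus Z)\to 0$: ordering the strata by dimension and peeling off the closed ones from smallest to largest shows that $A_1(\HH)\otimes\Q$ is spanned by the images of the finitely many $A_1(\HH_\sigma)\otimes\Q$. Only a short list of classes can survive, and I expect the generators to be (i) the class of a pencil of curves cut on a fixed surface of minimal degree by a pencil of hypersurfaces, (ii) the class obtained by moving that surface in a pencil and dragging the curve along, and (iii) a ``planar'' class supported on the stratum of curves having a planar component (move the plane, equivalently move the residual configuration). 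The substance of the case $g\le g(d)$ is then to prove that these three classes are linearly independent in $A_1(\HH)\otimes\Q$ and that together they span it.

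The case distinction should be governed by the fact that $g\le g(d)$ holds precisely when $g$ does not exceed the maximal genus $\max_{a+b=d}(a-1)(b-1)$ of a degree-$d$ curve on a smooth quadric — this maximum, attained at the balanced bidegree, being essentially $g(d)=(d-2)^2/4$ — so that past this threshold the generic curve of the relevant component is no longer obtained from a curve lying on a quadric and is forced onto a cubic surface. For $g>g(d)$ the stratum dominating that component therefore fibres over a linear system of cubics in place of one of quadrics, and this produces a fourth class (iv); one must then verify that (iv) is independent of (i)--(iii) when $g>g(d)$ but becomes dependent on them when $g\le g(d)$. I expect this last verification — rather than any single bundle computation — to be the main obstacle, and to be precisely the point at which the formula of [T4] went astray: one has to check both that no stratum contributes a fifth independent class and that the extra relation among (i)--(iv) materialises exactly at $g=g(d)$. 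Independence of the surviving classes I would establish by exhibiting enough divisor classes on $\HH$ (sections of the universal curve, discriminant loci of the minimal surface) and pairing them against the candidate $1$-cycles; the collapse of class (iv) in the low-genus range I would reduce to the numerical coincidence at $g=g(d)$ together with the dimension comparison between the quadric stratum and the cubic stratum.
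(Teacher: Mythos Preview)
Your proposal is not a proof but a research program, and it diverges completely from the paper's route. You correctly observe that $g(d)$ coincides with the maximal genus of a degree-$d$ curve on a smooth quadric, and you build your whole strategy on a stratification of $\HH$ by the minimal degree of a surface containing the curve. This is geometrically appealing, but you yourself flag the crux --- showing that the fourth class becomes dependent on the first three precisely when $g\le g(d)$ --- as ``the main obstacle'' and leave it entirely open. Nothing in your outline explains why the localisation sequence would terminate with exactly three or four independent classes, why no deeper stratum could contribute, or how the ``numerical coincidence at $g=g(d)$'' forces the relation; these are the entire content of the theorem.

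The paper does something completely different. It never stratifies by containing surfaces. Instead it uses Hirschowitz's theorem that $A_1^B(X)\simeq A_1(X)$ for the Borel group $B=B(4;k)$ acting on $X=Z(\HH)$, so that $A_1(\HH)$ is generated by finitely many explicit $B$-invariant $1$-cycles $E,D,C_1,C_2,C_3$ (introduced in Section~1.1), and all relations among them live on $B$-invariant surfaces $V\subset Z(\HH)$. The entire question reduces to whether $[C_3]$ lies in the ``algebraic part'' $\cA(\HH)=\langle E,D\rangle$. To decide this the paper passes, via the restriction morphism $h:U_t\to H^d=\Hilb^d(\P^2)$, to a parallel question about the cycle $c_3=h(C_3)$ in a subscheme $\cH\subset H^d$ parametrising ideals whose Hilbert function $\varphi$ satisfies $g^*(\varphi)>g(d)$. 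The bound $g(d)$ enters not through quadrics at all, but through a purely combinatorial analysis of Hilbert functions (Chapter~2): if $g^*(\varphi)>g(d)$ then the ideal is forced into a rigid ``standard form'' $\cI=\ell\cK(-1)+f\cO(-m)$ with $m\ge c+2$. The bulk of the paper (Chapters~3--7) establishes delicate inequalities on an invariant called the $\alpha$-grade which, via a degree argument on limit curves in the standard construction (Chapters~8--9), show that no $B$-invariant relation can cancel a single $y$-standard cycle against non-$y$-standard cycles. This yields $[c_3]\notin\cA(\cH)$ (Theorem~1.1), which is then lifted back to $[C_3]\notin\cA(\HH)$ for $g>g(d)$ (Chapters~11--14). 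The converse direction $[C_3]\in\cA(\HH)$ for $g\le g(d)$ is handled by an explicit deformation (Section~14.2). None of this machinery --- equivariant Chow groups, standard forms, $\alpha$-grade estimates --- appears in your outline, and your surface-stratification approach, while not obviously wrong, would require an entirely independent and substantial argument that you have not supplied.
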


 \begin{corollary}\label{1}  
   $NS(\HH)\simeq\Z^{\rho}$ and $\Pic(\HH)\simeq\Z^{\rho}\oplus\C^r$, where
   $r:=\dim_{\C}H^1(\HH,\cO_{\HH})$ and $\rho=3$, if $g\le g(d)$. If $g>g(d)$,
   then $\rho=3$ or $\rho=4$.
 \end{corollary}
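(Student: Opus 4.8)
\noindent
The plan is to derive the corollary from Theorem~\ref{I} by essentially formal arguments. The only genuinely geometric inputs are that the reduced Hilbert scheme $\HH_{\red}$ satisfies $H^{1}(\HH_{\red},\cO_{\HH_{\red}})=0$ and has $H^{2}(\HH_{\red},\Z)$ torsion-free --- facts that should be read off from the description of $\HH$ and of its irreducible components used to prove Theorem~\ref{I} --- together with the classical fact that $\HH$ is connected (and projective).

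Put $\rho:=\operatorname{rk}NS(\HH)$; this is finite by the theorem of the base, and by the way $A_{1}(\HH)$ is set up in Theorem~\ref{I} one has $\rho=\dim_{\Q}A_{1}(\HH)\otimes\Q$, so $\rho=3$ if $g\le g(d)$ and $\rho\in\{3,4\}$ if $g>g(d)$. It remains to prove $NS(\HH)\simeq\Z^{\rho}$ and to compute $\Pic(\HH)$, and for this I would compare $\HH$ with $\HH_{\red}$.

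Let $\cN\subset\cO_{\HH}$ be the nilradical and use $0\to 1+\cN\to\cO_{\HH}^{*}\to\cO_{\HH_{\red}}^{*}\to 0$ on the underlying space. Since $\HH$ is connected and proper, $H^{0}(\cO_{\HH}^{*})\to H^{0}(\cO_{\HH_{\red}}^{*})=\C^{*}$ is surjective, so the long exact sequence identifies $K:=\ker\!\big(\Pic(\HH)\to\Pic(\HH_{\red})\big)$ with $H^{1}(\HH,1+\cN)$. In characteristic $0$ the exponential is an isomorphism of sheaves of abelian groups $(\cN,+)\xrightarrow{\sim}(1+\cN,\times)$, so $H^{1}(\HH,1+\cN)\simeq H^{1}(\HH,\cN)$; and $0\to\cN\to\cO_{\HH}\to\cO_{\HH_{\red}}\to 0$ together with $H^{0}(\cO_{\HH})\twoheadrightarrow H^{0}(\cO_{\HH_{\red}})=\C$ and $H^{1}(\cO_{\HH_{\red}})=0$ gives $H^{1}(\HH,\cN)\simeq H^{1}(\HH,\cO_{\HH})$. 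Hence $K\simeq\C^{r}$ with $r=\dim_{\C}H^{1}(\HH,\cO_{\HH})$. Being divisible, $K$ has trivial image in the finitely generated group $NS(\HH)=\Pic(\HH)/\Pic^{0}(\HH)$, so $K\subseteq\Pic^{0}(\HH)$; conversely $\Pic^{0}(\HH)$ maps to $\Pic(\HH_{\red})$, which is discrete because $H^{1}(\cO_{\HH_{\red}})=0$, with connected hence trivial image, so $\Pic^{0}(\HH)\subseteq K$. Thus $\Pic^{0}(\HH)=K\simeq\C^{r}$ and $NS(\HH)$ embeds into $\Pic(\HH_{\red})$, which is torsion-free since $H^{2}(\HH_{\red},\Z)$ is; a torsion-free finitely generated group is $\simeq\Z^{\rho}$, and since $\Z^{\rho}$ is free the sequence $0\to\Pic^{0}(\HH)\to\Pic(\HH)\to NS(\HH)\to 0$ splits, yielding $\Pic(\HH)\simeq\Z^{\rho}\oplus\C^{r}$. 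Feeding in the values of $\rho$ from Theorem~\ref{I} finishes the proof.

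The homological book-keeping above is routine; the real obstacle is the geometric input. One must control $\HH_{\red}$ --- in practice its irreducible components and their mutual intersections --- well enough to guarantee both $H^{1}(\cO_{\HH_{\red}})=0$ and the torsion-freeness of $H^{2}(\HH_{\red},\Z)$, which is typically where (uni)rationality or rational connectivity of the relevant loci enters, and which is precisely the information the proof of Theorem~\ref{I} must already carry. One should also check that $\rho=\dim_{\Q}A_{1}(\HH)\otimes\Q$ holds for a possibly singular, possibly non-reduced $\HH$, but this too belongs to the setup of Theorem~\ref{I}.
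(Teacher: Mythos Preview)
Your central identification $\rho=\dim_{\Q}A_{1}(\HH)\otimes\Q$ is the gap. You assert it ``by the way $A_{1}(\HH)$ is set up in Theorem~\ref{I}'', but no such identification is available, and in fact the corollary itself contradicts it: when $g>g(d)$ Theorem~\ref{I} gives $\dim_{\Q}A_{1}(\HH)\otimes\Q=4$, yet the corollary only claims $\rho\in\{3,4\}$. What one actually has is an inequality: the intersection pairing gives a surjection $A_{1}(\HH)_{\Q}\twoheadrightarrow N_{1}(\HH)_{\Q}$, and duality $N^{1}(\HH)_{\Q}\simeq N_{1}(\HH)_{\Q}^{*}$ then yields $\rho=\dim N^{1}(\HH)_{\Q}\le\dim_{\Q}A_{1}(\HH)_{\Q}$. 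This supplies the upper bounds $\rho\le 3$ resp.\ $\rho\le 4$, but you still owe the lower bound $\rho\ge 3$, which requires exhibiting three line bundles with numerically independent degree functions on curves---in practice, the tautological $\cL_{n}$ together with the explicit degree computations for the generators $E,D,C_{1},C_{2},C_{3}$ carried out in [T2], Section~4.1, and [T3], Anhang~2.

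The paper does not give a self-contained proof of the corollary here; it records (Chapter~17) that the argument of [T4], Satz~4 and Satz~5, goes through verbatim once the bound is corrected to $g(d)=(d-2)^{2}/4$. Your cohomological bookkeeping (nilradical, exponential, splitting of $0\to\Pic^{0}\to\Pic\to NS\to 0$) is in the right spirit and is essentially what [T4] does, but the two inputs you flag---$H^{1}(\cO_{\HH_{\red}})=0$ and $H^{2}(\HH_{\red},\Z)$ torsion-free---are not consequences of anything proved in the present paper; they are established in [T3], [T4] via the explicit decomposition of $\HH$ into rationally connected pieces. So as written your argument has one genuine error (the equality $\rho=\dim A_{1}$) and two black boxes whose justification lives in the earlier papers of the series.
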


 \begin{theorem}\label{II}  
   Let $\CC\hookrightarrow\HH\times\mP^3$ be the universal curve over $\HH$.
   Then
   $\dim_{\Q}A_1(\CC)\mathop{\otimes}\limits_{\Z}\Q=\dim_{\Q}A_1(\HH)\mathop{\otimes}\limits_{\Z}\Q+1$.
 \end{theorem}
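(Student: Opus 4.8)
The plan is to analyse the first projection $\pi\colon\CC\to\HH$ and to show that the proper push--forward $\pi_{*}\colon A_{1}(\CC)\otimes\Q\to A_{1}(\HH)\otimes\Q$ is a split surjection whose kernel is one--dimensional, spanned by the class of a general fibre; the asserted equality of dimensions is then immediate. I will also use the second projection $q\colon\CC\to\mP^{3}$ and the divisor class $\xi:=c_{1}\!\bigl(q^{*}\cO_{\mP^{3}}(1)\bigr)\in\Pic(\CC)$.

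First I would produce surjectivity together with an explicit splitting. Since $\CC\to\HH$ is the universal family it is flat of relative dimension $1$, so flat pull--back $\pi^{*}\colon A_{k}(\HH)\to A_{k+1}(\CC)$ is defined, and I set
\[
\phi\colon A_{1}(\HH)\otimes\Q\longrightarrow A_{1}(\CC)\otimes\Q,\qquad
\phi(\beta)=\tfrac{1}{d}\,\xi\cap\pi^{*}\beta .
\]
To check $\pi_{*}\circ\phi=\mathrm{id}$ it suffices to take $\beta=[W]$ for an integral curve $W\subset\HH$: then $\pi^{*}[W]=[\pi^{-1}W]$, and for a general hyperplane $H\subset\mP^{3}$ the class $\xi\cap[\pi^{-1}W]$ is represented by $q^{*}H\cap\pi^{-1}W$, a curve dominating $W$ with degree $\#(C_{w}\cap H)=d$; hence $\pi_{*}\bigl(\xi\cap\pi^{*}[W]\bigr)=d\,[W]$. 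Thus $\pi_{*}\otimes\Q$ is onto and $A_{1}(\CC)\otimes\Q=\phi\bigl(A_{1}(\HH)\otimes\Q\bigr)\oplus\ker(\pi_{*}\otimes\Q)$. Next, for $h_{0}\in\HH$ general put $F_{0}:=\pi^{-1}(h_{0})=\{h_{0}\}\times C_{h_{0}}$; then $\pi_{*}[F_{0}]=0$, while $q$ restricts to an isomorphism $F_{0}\xrightarrow{\ \sim\ }C_{h_{0}}$, so $\xi\cdot[F_{0}]=\deg_{\mP^{3}}C_{h_{0}}=d>0$ and $[F_{0}]\neq0$ in $A_{1}(\CC)\otimes\Q$. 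This already yields $\dim_{\Q}A_{1}(\CC)\otimes\Q\ge\dim_{\Q}A_{1}(\HH)\otimes\Q+1$.

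The substantive part — and the step I expect to be the main obstacle — is the reverse inequality $\ker(\pi_{*}\otimes\Q)\subseteq\Q[F_{0}]$, i.e.\ that every integral curve $V\subset\CC$ satisfies $[V]\in\phi\bigl(A_{1}(\HH)\otimes\Q\bigr)+\Q[F_{0}]$. When $\pi(V)$ is a point, $V$ is a component of some space curve $C_{h}$, of degree $e$ say, and I would try to show $[V]\equiv\tfrac{e}{d}[F_{0}]$ by moving $(h,V)$ with $\mathrm{PGL}_{4}$ (which preserves $\CC$) and degenerating inside $\HH$ along rational families of space curves of the kind occurring in the proof of Theorem~\ref{I}. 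When $\pi(V)=W$ is a curve and $V\to W$ has degree $m$, the class $[V]-\tfrac{m}{d}\phi([W])$ lies in $\ker(\pi_{*}\otimes\Q)$ and is carried by the surface $\pi^{-1}(W)$ with fibre--degree $0$; such a class need not be a multiple of a fibre \emph{on $\pi^{-1}(W)$} because of the Jacobian--type ambiguity among multisections, but it should become one on $\CC$ once it is transported along the finitely many rational test curves that generate $A_{1}(\HH)\otimes\Q$ in Theorem~\ref{I}.

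In short, everything up to this point is formal, and I expect the proof to hinge on lifting the geometry behind Theorem~\ref{I} (a torus action on, or a liaison stratification of, $\HH$) to $\CC$, so that $A_{1}(\CC)\otimes\Q$ is again generated by finitely many rational curves and a direct count gives exactly one generator more than for $\HH$ — the fibre class $[F_{0}]$.
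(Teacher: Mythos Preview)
Your splitting argument via $\phi(\beta)=\tfrac{1}{d}\,\xi\cap\pi^{*}\beta$ is correct and cleanly gives the lower bound $\dim_{\Q}A_{1}(\CC)\otimes\Q\ge\dim_{\Q}A_{1}(\HH)\otimes\Q+1$. The problem is the reverse inequality, which you yourself flag as the ``substantive part'' and then do not prove. Your proposed argument for vertical curves via $\mathrm{PGL}_{4}$ does not go through as stated: $\HH$ is far from homogeneous under $\mathrm{PGL}_{4}$, and a component $V$ of a fibre $C_{h}$ (say a line inside a degenerate curve) cannot in general be moved to a multiple of a smooth fibre by the group action alone --- one needs relations coming from surfaces in $\CC$, not just group translates. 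The multisection case is likewise only a heuristic: the sentence ``should become one on $\CC$ once it is transported along the finitely many rational test curves'' is precisely the content of the theorem and is not justified.

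The paper does not argue through $\pi_{*}$ at all. It applies to $\CC$ the same Borel--equivariant machinery (Hirschowitz's theorem) used for $\HH$: the diagonal $B(4;k)$-action on $\HH\times\mP^{3}$ restricts to $\CC$, so $A_{1}(\CC)$ is generated by $B$-invariant $1$-prime cycles, and relations among them live on $B$-invariant surfaces in (a suitable closed subscheme of) $\CC$. The classification of such cycles and surfaces is carried out in \cite{T4}, \S\S8--9; the present paper simply records (Chapter~17) that those arguments remain valid once the erroneous bound in \cite{T4} is replaced by $g(d)=(d-2)^{2}/4$, and states the outcome explicitly: $A_{1}(\CC)$ is freely generated by the lifts $E^{*},C_{i}^{*},D^{*}$ of the generators of $A_{1}(\HH)$ together with one additional class $L^{*}$. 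This is exactly what you anticipate in your last paragraph, but the work is in actually performing that equivariant classification on $\CC$, not in the projection formula.
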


 \begin{corollary}\label{2}  
   $NS(\CC)=\Z^{\rho+1}$ and $\Pic(\CC)\simeq\Z^{\rho+1}\oplus\C^s$, where
   $s:=\dim_{\C} H^1(\CC,\cO_{\CC})$ and $\rho$ is defined as in Corollary 1.
 \end{corollary}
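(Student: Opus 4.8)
My plan is to pass to the associated complex-analytic space and read off the structure of $\Pic(\CC)$ from the exponential sequence $0\to\Z\to\cO_\CC\to\cO_\CC^{*}\to 0$, whose cohomology sequence $H^1(\CC,\Z)\to H^1(\CC,\cO_\CC)\to\Pic(\CC)\xrightarrow{c_1}H^2(\CC,\Z)$ identifies $\Pic^0(\CC):=\ker c_1$ with $H^1(\CC,\cO_\CC)/\mathrm{im}\,H^1(\CC,\Z)$ and realizes $NS(\CC)=\mathrm{im}\,c_1$ as a subgroup of $H^2(\CC,\Z)$. Theorem~\ref{II} already supplies the rank, $\mathrm{rk}\,NS(\CC)=\rho+1$, so I would not reprove it; it will suffice to establish (a) that $NS(\CC)$ is torsion-free and (b) that $\Pic^0(\CC)\cong\C^s$ with $s=\dim_\C H^1(\CC,\cO_\CC)$. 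Granting these, the asserted decomposition is automatic: the extension $0\to\Pic^0(\CC)\to\Pic(\CC)\to NS(\CC)\to 0$ splits because $NS(\CC)\cong\Z^{\rho+1}$ is a free, hence projective, $\Z$-module. I transport everything from $\HH$ through the universal curve $\pi\colon\CC\to\HH$ via the Leray sequence $E_2^{p,q}=H^p(\HH,R^q\pi_*\Z)\Rightarrow H^{p+q}(\CC,\Z)$; since the fibres are connected, $\pi_*\Z=\Z$. From Corollary~\ref{1}, $\Pic^0(\HH)\cong\C^r$ forces $H^1(\HH,\Z)\to H^1(\HH,\cO_\HH)$ to vanish and hence $b_1(\HH)=0$, and I shall use, as part of the analysis behind Corollary~\ref{1}, that in fact $H^1(\HH,\Z)=0$ and that $H^2(\HH,\Z)$ is torsion-free.

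For (b) I will prove the stronger statement $H^1(\CC,\Z)=0$, which makes $\Pic^0(\CC)=H^1(\CC,\cO_\CC)\cong\C^s$. The low-degree Leray sequence gives $0\to H^1(\HH,\Z)\to H^1(\CC,\Z)\to H^0(\HH,R^1\pi_*\Z)$, so with $H^1(\HH,\Z)=0$ the claim reduces to the vanishing of the monodromy invariants $H^0(\HH,R^1\pi_*\Z)=0$. Over the open locus $U\subset\HH$ of smooth curves, $R^1\pi_*\Z$ is the local system $H^1(C_t,\Z)\cong\Z^{2g}$, whose global sections are the monodromy-invariant classes; for the universal family of space curves the monodromy contains that of the smooth members, which acts on $H^1$ of a general fibre with no nonzero invariants. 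Equivalently, a class in $\Pic^0(\CC)$ defines a section of the relative Jacobian $\mathbf{J}(\CC/\HH)\to\HH$, and since $\HH_{\mathrm{red}}$ carries no nonconstant map to an abelian variety (its Albanese being trivial, as $b_1(\HH)=0$) that section must be zero; either way $H^0(U,R^1\pi_*\Z)=0$.

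For (a) I would restrict divisor classes to a general smooth fibre. The map $r_t\colon NS(\CC)\to NS(C_t)=\Z$ sends the algebraic class $h:=p_2^{*}\cO_{\mP^3}(1)|_\CC$ (with $p_2$ the projection to $\mP^3$) to $d\neq 0$, so its image is a nonzero subgroup of $\Z$ and is therefore free of rank one. Theorem~\ref{II} forces $\mathrm{rk}\,\ker r_t=\rho$; and a line bundle numerically trivial on every fibre is the pullback of one on $\HH$, because the general fibre is irreducible and hence every vertical divisor is $\pi^{-1}$ of a divisor on $\HH$. This identifies $\ker r_t=\pi^{*}NS(\HH)\cong\Z^\rho$, which is torsion-free ($\pi^{*}$ being injective since $\pi_*\cO_\CC=\cO_\HH$). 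The sequence $0\to\pi^{*}NS(\HH)\to NS(\CC)\to\Z\to 0$ then has free cokernel, so it splits and $NS(\CC)\cong\Z^{\rho+1}$; in particular $NS(\CC)$ is torsion-free. As an alternative route to torsion-freeness I would verify directly that $H^2(\CC,\Z)$ is torsion-free by tracking torsion through Leray from the torsion-free $H^2(\HH,\Z)$, the piece $H^1(\HH,R^1\pi_*\Z)$, and $H^0(\HH,R^2\pi_*\Z)=\Z$ (the fibre fundamental class), the surviving differentials introducing none because $H^0(\HH,R^1\pi_*\Z)=0$ from (b).

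Combining (a) and (b) with Theorem~\ref{II} yields $NS(\CC)\cong\Z^{\rho+1}$ and, by the automatic splitting of the exponential extension, $\Pic(\CC)\cong\Z^{\rho+1}\oplus\C^s$. The main obstacle is the single geometric input feeding both parts: the behaviour of $R^q\pi_*\Z$ across the discriminant, where the fibres degenerate. Because the singular fibres sit over a divisor of $\HH$, they cannot be excised on codimension grounds for $H^1$ and $H^2$, so the delicate point is to show the vanishing-cycle and specialization contributions of the reducible or singular fibres create neither new monodromy invariants in degree one (needed for (b)) nor extra vertical classes or torsion in degree two (needed to pin $\ker r_t=\pi^{*}NS(\HH)$ exactly in (a)). This is precisely where the explicit description of $H_{d,g}$ underlying Theorems~\ref{I} and \ref{II} — which already certifies that the total rank is exactly $\rho+1$, i.e. that the degenerate fibres contribute no further independent classes — must be invoked to control the integral, not merely rational, structure.
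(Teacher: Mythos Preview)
The paper does not give a self-contained proof of this corollary. In Chapter~17 it simply records that the arguments of [T4], Satz~4 and Satz~5, remain valid verbatim once the numerical bound is replaced by $g(d)=(d-2)^2/4$, since those arguments are formally independent of the bound. So there is no detailed argument here to compare against; your proposal is a standalone attempt.

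Your outline via the exponential sequence and Leray is the natural one, but it has genuine gaps. The most serious is the claim that ``the general fibre is irreducible and hence every vertical divisor is $\pi^{-1}$ of a divisor on $\HH$''. The Hilbert scheme $H_{d,g}$ parametrises \emph{all} closed subschemes of $\mP^3$ with Hilbert polynomial $dT-g+1$, and for most $(d,g)$ it has many irreducible components whose general member is reducible or non-reduced (a plane curve union isolated points, a curve with embedded points, etc.). Your identification $\ker r_t=\pi^*NS(\HH)$ therefore fails as stated, and with it the splitting that pins down $NS(\CC)$ integrally. Similarly, the monodromy step ``acts on $H^1$ of a general fibre with no nonzero invariants'' is asserted, not proved, and on components with no smooth members there is no such $H^1$ to speak of; the alternative Albanese argument presumes $\HH_{\mathrm{red}}$ has trivial Albanese, which is again not established here. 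Finally, you take $H^1(\HH,\Z)=0$ and $H^2(\HH,\Z)$ torsion-free as given ``as part of the analysis behind Corollary~1'', but the paper (and Corollary~1) addresses $NS$ and $\Pic$, not integral singular cohomology.

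You correctly identify at the end that the behaviour over the discriminant is ``the main obstacle'' and that the explicit description of $H_{d,g}$ must be invoked to finish. That is exactly right, and it is precisely the content supplied in [T4] that the present paper is citing rather than reproving; without it your argument does not close.
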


 That means, the formula $ (d-2)(d-3)/2 $ for the bound $g(d)$ in ([T4], p.1) is wrong and has
 to be replaced by the above formula.
 
\vfill
\centerline{{March 2, 2011} }
\newpage{}
\end{titlepage}


\frontmatter
\pagestyle{empty}
\tableofcontents


\mainmatter
\pagestyle{plain}
 \chapter{Summary of earlier results}\label{1}

 \section{Description of the starting situation}\label{1.1}

 The result are the same as in [T1]-[T4] and are summed up in Appendix A.
 The ground field is $\C$, and $\HH=H_{d,g}$ is the Hilbert scheme which
 parametrizes the curves $C\subset\mP^3_{\C}$ of degree $d$ and genus $g$ (i.e.
 the closed subschemes of $\mP^3_{\C}$ with Hilbert polynomial $P(T)=dT-g+1$).
 According to F.S.Macaulay, $H_{d,g}$ is not empty if and only if the
 ``complementary'' Hilbert polynomial $Q(T)={T+r\choose r}-P(T)$ either has the
 form $Q(T)={T-1+3\choose 3}+{T-a+2\choose 2}$ or the form $Q(T)={T-1+3\choose
   3}+{T-a+2\choose 2}+{T-b+1\choose 1}$, where $a$ is an integer $\ge 1$,
 respectively
 $a$ and $b$ are integers (Macaulay coefficients), such that $2\le a\le b$.
 Between the degree and genus on the one hand and the Macaulay
 coefficients on the other hand, one has the following relations
 $d=a,g=(d-1)(d-2)/2$, if $Q(T)={T-1+3\choose 3}+{T-a+2\choose 2}$, and
 $d=a-1,g=(a^2-3a+4)/2-b$, if $Q(T)={T-1+3\choose 3}+{T-a+2\choose
   2}+{T-b+1\choose 1}$, respectively. One sees that the first case occurs if
 and only if one is dealing with plane curves, in which case the groups
 $A_1(\HH)$ and $NS(\HH)$ both have the rank 2 (cf. [T1], Satz 2a, p. 91).
 Therefore in the following we always suppose that $d\ge 3$ and
 $g<(d-1)(d-2)/2$, that means, the complementary Hilbert polynomial has the
 form $Q(T)={T-1+3\choose 3}+{T-a+2\choose 2}+{T-b+1\choose 1}$, where $4\le
 a\le b$.

 We also write $\HH_Q$ instead of $H_{d,g}$ in order to express that this
 Hilbert scheme likewise parametrizes the ideals $\cI\subset\cO_{\mP^3}$ with
 Hilbert polynomial $Q(T)$, or equivalently, the saturated graded ideals in
 $\C[x,y,z,t]$ with Hilbert polynomial $Q(T)$.

 In [T1]-[T4] it was tried to describe the first Chow group $A_1(\HH)$, where
 we always take rational coefficients, and we write $A_1(\HH)$ instead of
 $A_1(\HH)\mathop{\otimes}\limits_{\Z}\Q$. The starting point is the following
 consideration: If the Borel group $B=B(4;k)$ operates on $\HH=\HH_Q$ in the
 obvious way, then one can deform each 1-cycle on $\HH$ in a 1-cycle, whose
 prime components are $B$-invariant, irreducible, reduced and closed curves on
 $\HH$. It follows that $A_1(\HH)$ is generated by such $B$-invariant 1-prime
 cycles on $\HH$. This is a partial statement of a theorem of Hirschowitz.
  (Later on we will have to use the general statement, whereas the partial
 statement can be proved in a simple way, see [T1], Lemma 1, p.~6.)  Now
 such a $B$-invariant 1-prime cycle (i.e. closed, irreducible and reduced
 curve) $C$ on $\HH$ can be formally described as follows: Either each point of
 $C$ is invariant under $\Delta:=U(4;k)$, or one has
 $C=\overline{\G_a^i\cdot\eta}$, where $\eta$ is a closed point of $\HH$, which
 is invariant under $T=T(4;k)$ and the group $G_i, i\in\{ 1,2,3\}$. Here
 $\G_a^i$ is the group $\G_a$, acting by
 \[
 \begin{array}{l}
   \psi_{\alpha}^1:x\longmapsto x,\; y\longmapsto y,\; z\longmapsto z,\;
   t\longmapsto\alpha z+t \\
   \psi_{\alpha}^2:x\longmapsto x,\; y\longmapsto y,\; z\longmapsto\alpha
   y+z,\; t\longmapsto t \\
   \psi_{\alpha}^3:x\longmapsto x,\; y\longmapsto\alpha x+ y,\; z\longmapsto z,\; t\longmapsto t,
   \end{array} 
 \]
 respectively, on $P=k[x,y,z,t]$, and $G_i$ is the subgroup of $\Delta$, which is complementary to $\G_a^i$, that means, one defines
 \[
 G_1:=\left\{ \left( \begin{array}{cccc}
       1&*&*&*\\0&1&*&*\\0&0&1&0\\0&0&0&1\end{array}\right)\right\},\quad
 G_2:=\left\{ \left( \begin{array}{cccc}
       1&*&*&*\\0&1&0&*\\0&0&1&*\\0&0&0&1\end{array}\right)\right\},\quad
 G_3:=\left\{ \left( \begin{array}{cccc}
       1&0&*&*\\0&1&*&*\\0&0&1&*\\0&0&0&1\end{array}\right)\right\}.
 \]

 If $C$ has this form, then $C$ is called a curve or a 1-cycle of type $i$,
 where $i\in \{ 1,2,3\}$. $\cA(\HH):=\Im(A_1(\HH^{\Delta})\to
 A_1(\HH))$ is called the ``algebraic part'' and
 $\overline{A}_1(\HH):=A_1(\HH)/\cA(\HH)$ is called the ``combinatorial part''
 of the first Chow group of $\HH$. Here $\HH^{\Delta}$ denotes the fixed point
 scheme which, just as all other fixed point schemes that will occur later on,
 is supposed to have the induced reduced scheme structure. (This convention is
 valid also for the Hilbert scheme $H^d:=\Hilb^d(\mP^2_{\C})$, see below.)

 In order to formulate the results obtained so far, one has to introduce the
 following "tautological" 1-cycles on $\HH$:
 \begin{align*}
   C_1&=\Set{(x,y^a,y^{a-1}z^{b-a}(\alpha z+t))|\alpha\in k }^-\\
   C_2&=\{(x,y^{a-1}(\alpha y+z),y^{a-2}z^{b-a+1}(\alpha y+z))|\alpha\in k\}^-\\
   C_3&=\{(x^a,\alpha x+y,x^{a-1}z^{b-a+1})|\alpha\in k\}^-\\
   D&=\{(x^2,xy,y^{a-1},z^{b-2a+4}(y^{a-2}+\alpha xz^{a-3}))|\alpha\in k\}^-\\
   E&=\{(x^2,xy,xz,y^a,y^{a-1}z^{b-a+1},xt^{b-2}+\alpha
   y^{a-1}z^{b-a})|\alpha\in k \}^-
 \end{align*}
 For the sake of simplicity, we now suppose $d\ge 5$ (i.e. $a\ge 6$). (The
 cases $d=3$ and $d=4$ will be treated separately in Chapter 16.) Then one has
 the following results: \\[2mm]
 1. If $b<2a-4$, i.e. if $g>\gamma(d):=(d-2)(d-3)/2$, then $\cA(\HH)$ is
 generated by $E$, and $A_1(\HH)$ is generated by $E,C_1,C_2,C_3$. \\[2mm]
 2. If $b\ge 2a-4$, i.e if $g\le\gamma (d)$, then $\cA(\HH)$ is generated by
 $E$ and $D$ and $A_1(\HH)$ is generated by $E,D,C_2$ and $C_3$ ( see [T1], Satz
 2, p. 91; [T3], Proposition 4, p.\,22; [T4], Satz 1 and Proposition 2, p.\,26).

 From reasons of degree it follows that $[C_2]$ can not lie in the vector space
 spanned by $[E],[D],[C_3]$, so the problem is to decide, if
 $[C_3]\in\cA(\HH)$.

 In ([T4], Proposition 3, p.\,32) it was \emph{erroneously} claimed that
 $[C_3]\in\cA(\HH)$, if $b\ge 2a-4$. (The error is the wrong computation of the
 degree in ([T4], p.\,28, line 21 to line 30.) Therefore the bound for the genus
 in ([T4], p.\,1) is wrong.

 Actually, in ([T2], 3.3.2) it had been proved, that $[C_3]\in\cA(\HH)$, if
 $a\ge 6$ is even and $b\ge a^2/4$, i.e. if $d\ge 5$ is odd and $g\le
 (d-1)(d-3)/4$. In the case $d\ge 6$ even , in Conclusion 14.3 it will
 follow that $[C_3]\in\cA(\HH)$, if $g\le (d-2)^2/4$. (This means the bound of 
  [T2], 3.3.3 is valid if $ d \geq 6 $, already . ). One sees that the
  condition for $g$ in both  cases can be summed up to $g\le (d-2)^2/4$.

 The major part of the following text serves for the proof that this 
 sufficient condition is a necessary condition, too (cf. Conclusion 14.1).

 \section{Technical tools}\label{1.2}

 The formulas in ([T2], p. 134) and of ([T3], Anhang 2, p. 50) show that it is
 not possible to decide by means of the computation of degrees, whether $
 [C_3]$ lies in $\cA(\HH)$. Therefore we try to get a grasp of the relations
 among the $B$-invariant 1-cycles on $\HH$ with the help of the theorem of
 Hirschowitz ([Hi], Thm. 1, p. 87). We sketch the procedure.

 \subsection{The Theorem of Hirschowitz}\label{1.2.1}

 There is a closed and reduced subscheme $Z=Z(\HH)$ of $\HH$, such that
 $Z(k)=\{x\in\HH(k)|\dim\Delta\cdot x\le 1\}$ (cf. [Ho], p. 412 and [T3], Lemma
 1, p. 35). Then one can show, with the help of the theorem of Hirschowitz,
 that $A_1(Z)\mathop{\to}\limits^{\sim} A_1(\HH)$ (cf. [T2], Lemma 24, p. 121).
 As was explained in (1.1), $A_1(\HH)$ has a generating system consisting of
 $B$-invariant 1-cycles which lie in $Z$, automatically. As $\Delta$ is
 normalized by $B$, $B$ operates on $Z$ and therefore one can form the so
 called equivariant Chow group $A_1^B(Z)$, which is isomorphic to $A_1(Z)$
 ([Hi], loc. cit.). And the relations among $B$-invariant 1-cycles on $Z$ are
 generated by relations among such cycles, which lie on $B$-invariant surfaces
 $V\subset Z$ ( see [Hi], Mode d'emploi, p. 89).

 \subsection{The Restriction morphism}\label{1.2.2}

 Let $U_t\subset\HH$ be the open subset consisting of the ideals
 $\cI\subset\cO_{\mP^3}$ with Hilbert polynomial $Q$, such that $t$ is not a
 zero divisor of $\cO_{\mP^3}/\cI$. Then there is a so called
 restriction-morphism $h:U_t\to H^d:=\Hilb^d(\mP^2_{\C})$, defined by
 $\cI\mapsto\cI':=\cI+t\cO_{\mP^3}(-1)/t\cO_{\mP^3}(-1)$.  E.g., if 
  $G:=
 \left\{\left(
  \begin{smallmatrix}
   1&0&0&* \\ 0&1&0&* \\ 0&0&1&* \\0&0&0&1   
  \end{smallmatrix}\right)\right\}
 <\Delta$,
 then the fixed point scheme $F:=\HH^G$ is contained in $U_t$, and the
 restriction of $h$ to $F$ is denoted by $h$, again. In ([G4], Abschnitt 6, p.
 672f) the following description of $\Im(h)$ is given:

 (i) There is a finite set $\cF$ of Hilbert functions of ideals of colength $d$
 on $\mP^2$ such that \\ \quad $\Im(h)= \bigcup\{H_{\ge\varphi}|\varphi\in\cF\}$.

 (ii) If $k=\overline{k}$ and if $\cI\subset\cO_{\mP^2_k}$ is an ideal of
 colength $d$ and Hilbert function $\varphi$, then\\ \quad $\cI\in \Im(h) \iff
 g^\ast(\varphi):=\sum\limits^d_0 \varphi (n)-{d+3\choose 3}+d^2+1\ge g$.

 (iii) If $\varphi\in\cF$ and if $\psi$ is the Hilbert function of an ideal on
 $\mP^2$ of colength $d$ such that $\varphi(n)\le\psi (n)$ for all $n\in\N$,
 then $\psi\in\cF$.

 (iv) Let be $\varphi\in\cF$ and $\cI\subset\cO_{\mP^2_k}$ an ideal with
 Hilbert function $\varphi$. Let $\cI^*$ be the ideal in $\cO_{\mP^3_k}$
 defined by $H^0(\cI^*(n))=\mathop{\oplus}\limits^n_{i=0}t^{n-i}H^0(\cI(i))$,
 then $V_+(\cI^*)\subset\mP^3_k$ is a curve of degree $d$ and genus
 $g^*(\varphi)$.

 Here $H_{\varphi}\subset H^d$ is the locally closed subscheme (with the
 reduced induced scheme structure), which parametrizes the ideals
 $\cI\subset\cO_{\mP^2}$ of colength $d$ with Hilbert function $\varphi$, and
 $H_{\ge\varphi}:=\bigcup \{ H_{\psi}|\psi\ge\varphi\}$ is a closed subscheme
 ( with the induced reduced scheme structure). The image of $C_3$ under $h$ is
 the 1-cycle $c_3:=\{(x^d,\alpha x+y)|\alpha\in k\}^-$. One has

 \begin{theorem} 
   Let be $d \geq 5 , \cH : = \bigcup \{H_{\varphi}\subset H^d | g^*(\varphi)>g(d)\}$
   and \\$\cA(\cH): =\mathrm{Im} (A_1(\cH^{U(3;k)})\to A_1(\cH))$.  Then
   $ [c_3]\notin\cA(\cH)$.
 \end{theorem}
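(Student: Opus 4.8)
The plan is to compute $A_1(\cH)$ together with its subspace $\cA(\cH)$ by the same equivariant method used for $\HH$ --- Hirschowitz's theorem together with the classification of $B$-invariant $1$-cycles recalled in (1.1) --- and then to check that $[c_3]$ does not vanish in the quotient $A_1(\cH)/\cA(\cH)$. The first task is to make the geometry of $\cH$ explicit. Since $g^{\ast}(\varphi)=\sum_{n=0}^{d}\varphi(n)-\binom{d+3}{3}+d^{2}+1$ and $g(d)=(d-2)^{2}/4$, the inequality $g^{\ast}(\varphi)>g(d)$ forces the Hilbert function $\varphi$ of the ideal to lie very close to that of $d$ collinear points; indeed $d$ general points of a smooth conic already attain $g^{\ast}=\lfloor(d-2)^{2}/4\rfloor$, so $g^{\ast}(\varphi)>g(d)$ says that the length-$d$ subscheme is ``more special than $d$ general points on a conic''. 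Using Macaulay's growth estimates one then shows that $\cH$ is a finite union of closures of loci of the form ``$k$ collinear points plus $d-k$ further points'' (plus a few special-conic strata for large $d$), each a bundle over a small base --- for instance the line locus $L\subset\cH$ is a $\P^{d}$-bundle over the dual plane --- so that $A_1(\cH)$ is computable.

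Next I would determine the fixed scheme $\cH^{U(3;k)}$, hence $\cA(\cH)$. Just as on $\P^{3}$, this fixed locus consists of the finitely many strongly stable monomial ideals lying in $\cH$ together with certain $\P^{1}$-families of ``monomial-plus-one-binomial'' ideals; the relevant one is the $\P^{2}$-analogue $E'=\{(x^{2},xy,y^{d},\,xz^{d-1}-\alpha\,y^{d-1}z)\mid\alpha\in k\}^{-}$ of the cycle $E$, which belongs to $\cH$ once $d\ge 6$ (for $d=5$ the fixed locus is finite and $\cA(\cH)=0$). So $\cA(\cH)$ admits a short, explicit generating set. By the structure theorem of (1.1) applied to $U(3;k)$ in place of $\Delta$, every $U(3;k)$-invariant irreducible curve on $\cH$ is either pointwise $U$-fixed --- hence one of these generators of $\cA(\cH)$ --- or of the form $\overline{\G_{a}\cdot\eta}$ for a simple-root subgroup $\G_{a}\subset U(3;k)$ with $\eta$ fixed by the torus and the complementary group; $c_3=\overline{\G_{a}\cdot(x^{d},y)}$ for the root subgroup $y\mapsto\alpha x+y$ is one of these. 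Listing all of them yields a finite generating set of $A_1(\cH)$ containing $[c_3]$.

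The heart of the argument is Hirschowitz's theorem: $A_1(\cH)\cong A_1^{U(3;k)}(\cH)$, and the relations among the generators above are generated by the relations carried by the $U(3;k)$-invariant surfaces $V\subset\cH$. I would enumerate those surfaces --- there are only a few, lying inside the line and special-conic loci --- compute on each the $B$-semi-invariant rational function whose divisor gives the relation, and assemble the relation matrix. The decisive point is that every invariant surface through $c_3$ gives a relation expressing $[c_3]$ only modulo curves of the same combinatorial type and modulo $\cA(\cH)$, never a relation of the shape $[c_3]\equiv(\text{fixed curves})$; such a relation is carried by one further $B$-invariant surface $W$, but $W$ lies in $\cH$ only when the general-position conic stratum is included, and that stratum, having $g^{\ast}(\varphi)=\lfloor(d-2)^{2}/4\rfloor\le g(d)$, is excluded from $\cH$ by definition. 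This is the precise place where the value $g(d)=(d-2)^{2}/4$ is used, and where the sharpness of the bound resides.

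To conclude I would finish the linear algebra: modulo $\cA(\cH)$ and the relations just found, $[c_3]$ remains a nonzero generator of $A_1(\cH)/\cA(\cH)$; equivalently, one exhibits a $\Q$-linear functional on $A_1(\cH)$ that annihilates $\cA(\cH)$ and every invariant-surface relation but not $[c_3]$, the natural candidate being built from the $\P^{d}$-bundle projection $L\to(\text{dual plane})$, suitably corrected on the overlaps with the other components so as to kill $[E']$. I expect the main obstacle to be the middle step --- correctly listing the $U(3;k)$-invariant surfaces of the singular, reducible scheme $\cH$, computing the relations they impose, and verifying that none of them absorbs $[c_3]$ into $\cA(\cH)$; this is the sharp point of the theorem. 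A secondary difficulty is the clean description of $\cH$ and of $A_1(\cH)$ across its overlapping components, together with the separate, hands-on treatment of the small case $d=5$.
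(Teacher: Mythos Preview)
Your overall framework is right --- Hirschowitz's theorem, $B$-invariant surfaces, and the structure of the relations they carry --- and this is exactly the scaffolding the paper uses. But the execution you sketch has a genuine gap at the ``decisive point'', and the paper's actual argument there is quite different from, and much more delicate than, what you propose.

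You assert that every invariant surface through $c_3$ yields a relation of the form $[c_3]\equiv(\text{curves of the same combinatorial type})$ modulo $\cA(\cH)$, and you locate the obstruction in a single hypothetical surface $W$ that would lie in $\cH$ only if one allowed the conic stratum with $g^{\ast}=g(d)$. This is not how the argument works. There is no single bad surface; \emph{every} $B$-invariant surface $V\subset Z(\cH)$ containing a $y$-standard cycle could a priori produce a relation in which $y$-standard cycles appear with unequal total multiplicities on the two sides, and then $[c_3]$ could easily fall into $\cA(\cH)$. What must be shown is that in the standard construction on any such $V$, the limit $1$-cycles $\CC_0$ and $\CC_\infty$ each contain \emph{exactly one} $y$-standard cycle, with multiplicity~$1$ (the paper's Lemma~9.2 and Proposition~9.1). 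This is the technical heart of the proof and is not obtainable by enumeration: for general $d$ there are infinitely many possible $T(\rho)$-fixed ideals $\cI\leftrightarrow\zeta$ generating such surfaces, and one needs a uniform argument.

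The paper's method is the following. First (Chapter~2) the hypothesis $g^{\ast}(\varphi)>g(d)$ forces every $\Gamma\cdot T(\rho)$-invariant ideal $\cI$ in $\cH$ into an iterated ``standard form'' $\cI=\ell_0\cI_1(-1)+f_0\cO(-m_0)$, $\cI_1=\ell_1\cI_2(-1)+f_1\cO(-m_1)$, \dots, with sharp numerical inequalities $m_0\ge 2^r(c+2)$ etc.; this is where the value $(d-2)^2/4$ is actually used, not via a conic stratum but via the comparison with the Hilbert function $\chi$ of $(x^2,xy^{e-2},y^e)$. Then (Chapters~3--7) one introduces the $\alpha$-\emph{grade} of $H^0(\cI(n))$ --- the degree of the curve $\{\psi_\alpha(\cI)\}^-$ under the Pl\"ucker embedding --- and proves by a long case analysis the inequality
\[
Q(m_0-1)+\min\text{-}\alpha\text{-grade}(\cI)>\max\text{-}\alpha\text{-grade}(\cI)
\]
for every $\cI$ of $y$-standard form. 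Since any further $y$-standard cycle in $\CC_{0/\infty}$ would have degree $\ge Q(m_0-1)$, this inequality forces $C_{0/\infty}$ to be the unique $y$-standard component of $\CC_{0/\infty}$, with multiplicity~$1$. Once that is known, the quotient of $Z_1^B(X)$ by the non-$y$-standard cycles is free on the $y$-standard cycles, and the image of $\Rat_1^B(X)$ is generated by differences $e_i-e_j$; a single generator $e_n\leftrightarrow c_3$ cannot lie in that span.

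So your proposal correctly anticipates the shape of the conclusion (relations pair off $y$-standard cycles) but supplies no mechanism for it. The explicit-geometry route you outline --- describing $\cH$ as collinear-plus-residual loci, listing ``only a few'' invariant surfaces, exhibiting a functional from a $\P^d$-bundle projection --- would have to be replaced by the structural machinery above: the iterated standard form, the $\alpha$-grade, and the degree inequality~$(!)$. That inequality, not the absence of a particular surface, is what encodes the sharpness of the bound $g(d)=(d-2)^2/4$.
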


 The proof extends over the chapters 2 to 10 and essentially rests on the apparently strong condition for an ideal $\cI$ to have a Hilbert function  $\varphi$ such that $g^*(\varphi)>g(d)$.

 \subsection{Standard cycles on $H^d$}\label{1.2.3}

 It has been shown, respectively it will be shown that $[C_3]\in\cA(H_{d,g})$,
 if $g\le g(d)$ (cf. 1.1). Therefore, we can suppose that $g>g(d)$. If $\cJ\in
 U_t$ and the restriction ideal $\cI:=\cJ'$ has the Hilbert function $\varphi$,
 then from (ii) in (1.2.2) it follows that $g^\ast(\varphi)>g(d)$. It will be shown in
 Chapter 2 that this implies there is a linear form $\ell\in S_1-(0)$, an ideal
 $\cK\subset\cO_{\mP^2}$ of colength $c$ and a form $f\in H^0( \cK(m))$ such
 that $\cI=\ell\cK(-1)+f\cO_{\mP^2}(-m), c+m=d$ and $m\ge c+2$.

 Let be $C=\overline{\G_a\cdot\eta}\subset\HH$ a 1-cycle of type 3 and let be
 $\cJ\leftrightarrow\eta$ the corresponding ideal in $\cO_{\mP^3}$ with Hilbert
 polynomial $Q$. Then the ideal $\cI:=\cJ'\leftrightarrow\eta':=h(\eta)$ is invariant
 under $T(3;k)$ and $\Gamma:= \left\{\left(
 \begin{array}{ccc}
    1 & 0 & * \\
    0 & 1 & * \\
    0 & 0 & 1
 \end{array}
 \right)\right\} < U(3,k)$.  It follows that either
 $\cI=x\cK(-1)+y^m\cO_{\mP^2}(-m)$ or $\cI=y\cK(-1)+x^m\cO_{\mP^2}(-m)$, where
 $\cK$ is a monomial ideal. We say, $\cI$ has $x$-standard form or $\cI$ has
 $y$-standard form, respectively, and we call $C':=\overline{\G_a\cdot\eta'}$ a
 $x$-standard cycle or $y$-standard cycle on $H^d$, respectively. With the help
 of the theorem of Hirschowitz one can again try to describe the relations
 between $B(3;k)$-invariant $y$-standard cycles on $\cH$, and one obtains that
 such relations cannot make the $y$-standard cycle $c_3$ disappear modulo
 $\cA(\cH)$ (cf. Proposition 9.1) from which Theorem 0.1 will follow.

 \subsection{1-cycles of proper type 3}\label{1.2.4}

 Let be $C=\overline{\G_a\cdot\eta}, \eta\leftrightarrow\cJ$, be a 1-cycle of type 3 on
 $\HH=H_{d,g}$, such that $d\ge 5$ and $g>g(d)$. $C$ is called a 1-cycle of
 proper type 3, if $C'=\overline{\G_a\cdot\eta'}$ is a $y$-standard cycle on
 $\cH$. Corresponding to Hirschowitz's theorem one has to consider
 $B(4;k)$-invariant surfaces $V\subset Z(\HH)$, which contain a 1-cycle of
 proper type 3. It turns out that then $V$ is pointwise invariant under $G_3$
 and therefore $V$ is contained in $U_t$. Then one can map relations between
 $B$-invariant 1-cycles on $V$ by $h_*$ into relations between
 $B(3;k)$-invariant 1-cycles on $h(V)$, and one obtains with the aid of 
 Proposition 9.1 the main result of the second part of the paper ( Theorem
 14.1), which corresponds to Theorem 0.1. In Chapter 15 there is complete
 description of $A_1(H_{d,g})$ if $d\ge 5$, and in Chapter 16 this is done in
 the cases $d=3$ and $d=4$ (Theorem 15.1 and Theorem 16.1, respectively).


 \chapter{Subschemes of points in $\mP^2$ and their Hilbert functions}\label{2}

 \section{General properties}\label{2.1}

 The ground field is $\C$ and $k$ denotes an extension field. A closed
 subscheme $Z\subset\mP^2_k$ of length $d>0$ is defined by an ideal
 $\cI\subset\cO_{\mP^2_k}$ with Hilbert polynomial $Q(n)={n+2\choose 2}-d$. If
 the Hilbert function $h^0(\cI(n))=\dim_kH^0(\mP^2_k; \cI(n)), n\in\N$, of
 $\cI$ is denoted by $\varphi (n)$, then $\varphi'(n):=\varphi(n)-\varphi
 (n-1),n\in\N$, denotes the difference function. If
 $\varphi:\N\longrightarrow\N$ is any function, such that
 $\varphi(n)={n+2\choose 2}-d$ for $n\gg 0$, then the ideals
 $\cI\subset\cO_{\mP^2_k}$ with Hilbert function $\varphi$ form a locally
 closed subset $H_{\varphi}$ of the Hilbert scheme $H^d=\Hilb^d(\mP^2_{\C})$,
 and we take $H_\varphi$ as a subscheme of $H^d$ with the induced reduced
 scheme structure.

 Iarrobino has shown ([I], Lemma 1.3, p.8) that $H_{\varphi}\neq\emptyset$ if and only if the difference function fulfils the following two conditions:
 \begin{enumerate}[\quad (a)]
 \item 
  $\varphi'(n)\le n+1$, for all $n\in\N$ and
 \item 
  $\varphi'(n)\le\max (\varphi'(n+1)-1,0)$, for all $n\in\N$.
 \end{enumerate}

 \noindent
 If $\alpha=\alpha(\varphi):=\min \{ n\in\N\;|\; \varphi(n)>0\}$, then (b) is
 equivalent to:

 \begin{enumerate}[\quad (b')]
 \item $\varphi'(n)+1\le \varphi'(n+1)$, for all $n\ge\alpha$.
 \end{enumerate}

 The (Mumford-)regularity $e$ of an ideal $\cI$ with Hilbert function $\varphi$
 as before is characterized by $e=\reg(\varphi)=\min
 \{n\in\N\;|\;\varphi'(n+1)=n+1\}$ (cf. Appendix B, Lemma 2). In principle, the
 graph of $\varphi'$ has the shape of Fig. 2.1. If $\emptyset\neq
 H_{\varphi}\subset H^d$, then $d$ is determined by the condition $\varphi(n)=
 {n+2\choose 2}-d, n\gg 0$, and we call $d$ the colength of $\varphi$. It is
 known  that $\reg (\varphi)\le d$ ([G1], Lemma 2.9, p. 65), and
 $\reg(\varphi)=d$ is equivalent with $\cI$ being generated by a linear form
 $\ell\in S_1$ and a form $f\in S_d$, not divisible by $\ell$. Here
 $S=k[x,y,z]$ is the graded polynomial ring. Another characterization of $\reg
 (\varphi)=d$ is that the graph of $\varphi'$ has the shape of Fig.2.2. One
 notes that the colength of $\varphi$ is equal to the number of ''monomials"
 between the graph of $\varphi'$ and the line $y=x+1$. (For this and other
 properties , see [T1]-[T4].) In the following we write $\mP^2$ instead of
 $\mP^2_k$ and denote by $\cI$ an ideal in $\cO_{\mP^2}$, whose finite colength
 (resp. whose Hilbert function) usually is denoted by $d$ (resp. by $\varphi$).

 \section{Numerical and algebraic properties}\label{2.2}

 \begin{lemma} 
   Let be $k=\ol{k}, \cI\subset\cO_{\mP^2}$ an ideal with colength $d$, Hilbert
   function $\varphi$ and regularity $m$. We assume that there is a number
   $\vep\in\N, 0\le\vep <m-2$, such that $\varphi'(n)=n$ for all $n\in\N$, such
   that $\vep+1\le n\le m-1$. Then there is a linear form $\ell=S_1$, an ideal
   $\cK\subset\cO_{\mP^2}$ of colength $c$ and a form $f\in H^0(\cK(m))$, such
   that $\cI=\ell \cK(-1)+f\cO_{\mP^2}(-m)$. If $\ell_1,\ell_2$ are any linear
   forms in $S_1$ such that $\ell,\ell_1,\ell_2$ is a basis of the $k$-vector
   space $S_1$ and if $R:=k[\ell_1,\ell_2]$ is the subring of $S$, isomorphic
   to $k[x,y]$, then $d=c+m$ and 
 \[
 H^0(\cI(n))=
 \begin{cases} 
   \ell H^0(\cK(n-1)) &  \text{if $n<m$,}\\ 
   \ell H^0(\cK(n-1))\oplus fR_{n-m} & \text{if $n\ge m$}.
 \end{cases}
  \]
 \end{lemma}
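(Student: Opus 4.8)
The plan is to show that the whole lemma rests on a single \emph{collinearity} fact and that everything else is bookkeeping. The fact is: \emph{there is a linear form $\ell\in S_1$ such that every form of degree $<m$ in $I$ is divisible by $\ell$}. Geometrically this says the scheme $Z=V(\cI)$ carries a subscheme of length $\ge m$ on the line $L=V(\ell)$; numerically it is what the hypothesis is really about, since $\vep<m-2$ forces $[\vep+1,m-1]$ to contain $m-2$ and $m-1$, so the $h$-vector $h_Z(n)=(n+1)-\varphi'(n)$ of $S/I$ is $1$ on a stretch of length $\ge 2$ that is \emph{not} at the bottom. I expect the rest of the lemma to be a routine consequence, and this fact to be the one hard step.

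Granting the collinearity fact, I would argue as follows. For $n\le m-1$ we get $I_n\subseteq\ell S_{n-1}$, hence, cancelling the prime $\ell$, $I_n=\ell K_{n-1}$ with $K:=(I:\ell)$, which is saturated (quotients of a saturated ideal are saturated); let $\cK$ be its sheafification and $c$ its colength. In particular $\varphi_K(j)=\varphi_I(j+1)$ for $j\le m-2$, so the hypothesis yields $\varphi'_K(j)=\varphi'_I(j+1)=j+1$ for $\vep\le j\le m-2$; thus the $h$-vector of $K$ vanishes from degree $\vep$ on (a nonnegative $h$-vector of a finite scheme in $\mP^2$ that vanishes once vanishes thereafter), so $\reg(\cK)\le\vep$, and comparing $\varphi_K(m-2)=\varphi_I(m-1)$ with the stable value $\binom{m}{2}-c$ gives $d=c+m$. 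Next, the image of $I_m$ in $R_m\cong S_m/\ell S_{m-1}$ has dimension $\varphi_I(m)-\dim(I_m\cap\ell S_{m-1})=\varphi_I(m)-\varphi_K(m-1)=\bigl(\binom{m+2}{2}-d\bigr)-\bigl(\binom{m+1}{2}-c\bigr)=(m+1)-(d-c)=1$, so I may pick $f\in I_m$ with nonzero image there, equivalently $f\notin\ell S$; then $I_m=\ell K_{m-1}\oplus kf$ (and incidentally $\cI\not\subseteq(\ell)$, as it must be since $Z$ is $0$-dimensional).

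To spread this across all degrees I would use regularity. As $\cI$ is $m$-regular, $I_n=S_{n-m}I_m$ for $n\ge m$; as $m-1\ge\reg(\cK)$, also $K_{n-1}=S_{n-m}K_{m-1}$ for $n\ge m$. Combining with the base case, $I_n=\ell K_{n-1}+fS_{n-m}$; since $f\in K_m$ one has $fS_{n-m-1}\subseteq K_{n-1}$, so the $\ell S_{n-m-1}$-part of $fS_{n-m}$ is absorbed and $I_n=\ell K_{n-1}+fR_{n-m}$, the sum being direct because $\ell$ divides neither $f$ nor any nonzero element of $R$. For $n<m$ this just reads $I_n=\ell K_{n-1}$ (and $R_{n-m}=0$). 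Passing to global sections of twists is harmless, $I$ and $K$ being saturated, so $H^0(\cI(n))=I_n$, $H^0(\cK(n-1))=K_{n-1}$, and the displayed case distinction together with $\cI=\ell\cK(-1)+f\cO(-m)$ and $d=c+m$ follow.

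The delicate point is the collinearity fact, and that is where I expect to spend real effort. Cheap remarks — $I$ has a generator of degree $\le m-2$; $Z$ cannot lie on an irreducible curve of degree $\ge 2$, since that would force $h_Z\le n-1$ on the relevant range — do not by themselves produce $L$. What does is the structure theory of Hilbert functions of points in $\mP^2$: after killing a general linear form, the Artinian reduction $A/\ell_0A$ of $A=S/I$ is one-dimensional in each degree of $[\vep+1,m-1]$, and by Macaulay's inverse systems in two variables such a one-dimensional stretch of a cyclic module is only possible when the corresponding part of the inverse system is a pencil of powers of a single linear form — which translates back to: $Z$ has a subscheme of length $\ge m$ on a line $L$. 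Taking $\ell$ the equation of $L$, each degree-$(m-1)$ form in $I$ vanishes on this length-$\ge m$ collinear subscheme, so its restriction to $L\cong\mP^1$ (a form of degree $m-1<m$) must vanish, i.e. $\ell$ divides it; hence $I_{m-1}\subseteq\ell S_{m-2}$, and a one-step downward induction (again cancel the prime $\ell$) gives $I_{<m}\subseteq\ell S$. Making this airtight — in particular producing the collinear length $\ge m$, for which the $\vep=0$ instance is the cited [G1], Lemma 2.9 and the general mechanism belongs to the circle of results of Iarrobino — is the crux; everything preceding is Hilbert-function arithmetic and $m$-regularity.
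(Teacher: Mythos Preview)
Your overall architecture is correct and in fact matches the paper's proof closely: isolate the single hard fact (every form of degree $<m$ in $I$ is divisible by a fixed linear form $\ell$), then do the bookkeeping with Hilbert functions and $m$-regularity. Your bookkeeping is arguably cleaner than the paper's in one place: by taking $\cK$ to be the colon ideal $(I:\ell)$ from the outset, you get $f\in H^0(\cK(m))$ for free, whereas the paper first produces $\cK$ from an external lemma and then recovers $f\in H^0(\cK(m))$ by a dimension count in degree $m+1$ (showing $\dim(S_1f\cap \ell H^0(\cK(m)))=1$ and identifying the relevant linear form with $\ell$).

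The substantive difference is how the collinearity fact is obtained. The paper does \emph{not} go through inverse systems or a geometric ``length $\ge m$ on a line'' statement. It invokes its Appendix~B, Lemma~1 (proved via Galligo's generic initial ideal / Borel-normed ideals): from $\varphi'(n+1)=\varphi'(n)+1$ on the stretch $[\vep+1,m-1]$ with $\delta:=n+1-\varphi'(n)=1$, the ideal generated by $H^0(\cI(n))$ is $\ell\cdot\cL(-1)$ for some $\ell\in S_1$ and some finite-colength $\cL$, which immediately gives $H^0(\cI(n))=\ell H^0(\cK(n-1))$ for all $n\le m-1$. This is a direct, algebra-only route that avoids the delicate passage you flag as ``the crux''. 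Your inverse-systems sketch could presumably be completed, but as written it is the weakest link (and the citation of [G1], Lemma~2.9 is not quite apt --- that lemma is the regularity bound $\reg\le d$, not a collinearity statement). If you want the argument to stand on its own, either import the Davis-type lemma you are implicitly using, or switch to the generic-initial-ideal argument the paper uses.
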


 \begin{proof}
   By assumption, the graph of $\varphi'$ has the shape as in Fig.2.3. Then
   there is a $\ell\in S_1-(0)$ and an ideal $\cK\subset\cO_{\mP^2}$ of
   regularity $\le\vep$ such that $H^0(\cI(n))=\ell H^0(\cK(n-1))$ for all
   $n\le m-1$ (cf. [G4] and Appendix B, Lemma 1). If $\psi$ is the Hilbert
   function of $\cK$, then $\varphi'(n)=\psi'(n-1)$ for $1\le n\le m-1$, and
   because of the shape of the graphs of $\varphi'$ and $\psi'$ it follows that
   $\varphi(n)=\psi(n-1)+(n-m+1)$ for all $n\ge m$. Therefore $H^0(\cI(m))=\ell
   H^0(\cK(m-1))\oplus f\cdot k$, where $f\in H^0(\cI(m))$ is a suitable
   section. Because of the $m$-regularity of $\cI$ it follows that
   $H^0(\cI(n))=\ell H^0(\cK(n-1))+f S_{n-m}, n\ge m$. If $n=m+1$, then from
   $\varphi (m+1)=\psi(m)+2$ it follows that $S_1f\cap\ell H^0(\cK(m))$ has the
   dimension 1. Thus there is a $h\in S_1-(0)$, such that $hf\in\ell
   H^0(\cK(m))$. If $\ell$ would be a divisor of $f$, then it would follow that
   $\cI\subset\ell\cO_{\mP^2}(-m)$ and thus $\cI$ would not have a finite
   colength in $\cO_{\mP^2}$. Therefore we may suppose that $h=\ell$, and it
   follows that $f\in H^0(\cK(m))$. We choose $\ell_1,\ell_2\in S_1$ such that
   $\ell,\ell_1,\ell_2$ are linear independent and we put
   $R:=k[\ell_1,\ell_2]$. If there would be a $r\in R_{\nu}-(0)$ such that
   $rf\in \ell H^0(\cK(m+\nu-1))$, then it would follow that $\ell$ is a
   divisor of $f$, contradiction. Between the graph of $\psi'(n-1)$ and the
   line $y=x$ there are exactly $c:=\colength(\psi)$ monomials, and
   therefore $d=c+m$ (cf. Fig.2.3).
 \end{proof}

 \begin{corollary}\label{1}    %
 The assumptions and notations are as in Lemma 2.1. Then one has:
 \begin{enumerate}[(i)]
 \item $\kappa:=\reg (\cK)\le\vep$, especially $\kappa\le m-3.$
 \item $\cK$ (respectively the linear form $\ell$) is uniquely determined
   (respectively uniquely up to a factor out of $k$ different from zero).
 \item $f$ is uniquely determined up to a factor out of $k$ different from
   zero, modulo $\ell H^0(\cK(m-1))$.
 \item $\kappa$ and $\vep$ are uniquely determined by $\varphi$.
 \end{enumerate}
 \end{corollary}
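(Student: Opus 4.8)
The plan is to read off every ingredient from the explicit description already obtained in Lemma 2.1 and its proof, by reconstructing the triple $(\ell,\cK,f)$ intrinsically from $\cI$ (equivalently, from $\varphi$). Part (i) is essentially contained in the proof of Lemma 2.1: the ideal $\cK$ produced there is the one with $H^0(\cI(n))=\ell H^0(\cK(n-1))$ for all $n\le m-1$, and the shape of the graph of its difference function (Fig.\ 2.3) forces $\reg(\cK)\le\vep$; since $\vep<m-2$ this gives $\kappa\le\vep\le m-3$.

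For (ii) I would first recover $\ell$ up to a nonzero scalar. By Lemma 2.1 every section in $H^0(\cI(n))$ with $n\le m-1$ is divisible by $\ell$, so it is enough to show that $\ell$ is \emph{exactly} the greatest common divisor of the forms in $H^0(\cI(m-1))=\ell H^0(\cK(m-2))$, i.e.\ that $H^0(\cK(m-2))$ has no common factor of positive degree. Here one uses $m-2\ge\vep+1\ge\kappa+1>\kappa=\reg(\cK)$: in degrees $\ge\reg(\cK)$ the ideal sheaf $\cK$ is globally generated, while $\cK$ has finite colength, so its zero locus is $0$-dimensional; a positive-degree common divisor of $H^0(\cK(m-2))$ would therefore have to vanish along a curve contained in a finite set, which is impossible. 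Once $\ell$ is known, the equation $H^0(\cK(n-1))=\{g\mid \ell g\in H^0(\cI(n))\}$ for $n\le m-1$ determines all graded pieces $\cK_j$ with $j\le m-2$; since $\kappa\le m-3$ the ideal $\cK$ is generated in degrees $\le\kappa\le m-2$, hence $\cK$ is uniquely determined, and then so are $c=\colength(\cK)$ and $m=d-c$.

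For (iii), Lemma 2.1 gives $H^0(\cI(m))=\ell H^0(\cK(m-1))\oplus f\cdot k$, so $f$ spans the one-dimensional quotient $H^0(\cI(m))/\ell H^0(\cK(m-1))$; since $\ell$ and $\cK$ are already pinned down by (ii), this quotient — hence $f$, up to a nonzero scalar and modulo $\ell H^0(\cK(m-1))$ — depends only on $\cI$. For (iv), $m=\reg(\cI)=\reg(\varphi)$ is determined by $\varphi$; from the proof of Lemma 2.1 the Hilbert function $\psi$ of $\cK$ satisfies $\psi'(j)=\varphi'(j+1)$ for $0\le j\le m-2$ and has stabilized beyond $\kappa\le m-3$, so $\psi$ — and with it $\kappa=\reg(\cK)=\reg(\psi)$ — is determined by $\varphi$. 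Finally, since the graph of $\psi'$ lies strictly below its bounding line until degree $\kappa$ and coincides with it afterwards, $\varphi'(n)=n$ holds precisely for $\kappa+1\le n\le m-1$ and fails at $n=\kappa$ when $\kappa\ge1$; thus $\kappa$ is the smallest value of $\vep$ for which the hypothesis of Lemma 2.1 holds, so that value — which is what is meant by $\vep$ — is a function of $\varphi$ as well.

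The main obstacle is the gcd step in (ii): showing that $\ell$ is the \emph{full} common divisor of the low-degree sections, i.e.\ that $\cK$ contributes no further common factor. This is exactly the point where the two properties of $\cK$ — finite colength together with the regularity bound $\kappa\le m-3$ — are both used, the latter guaranteeing that already in degree $m-2$ the sections of $\cK$ define only its (zero-dimensional) zero locus.
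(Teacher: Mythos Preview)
Your proof is correct and follows the same outline as the paper for (i), (iii), (iv). For (ii) you take a slightly different route: you characterize $\ell$ \emph{intrinsically} as the gcd of $H^0(\cI(m-1))$, using that $\cK$ is $(m-2)$-regular and of finite colength so that $H^0(\cK(m-2))$ has no common factor, and then reconstruct $\cK$ from $\ell$. The paper instead compares two hypothetical decompositions $(\ell_1,\cK_1)$ and $(\ell_2,\cK_2)$: from $\ell_1 H^0(\cK_1(m-2))=\ell_2 H^0(\cK_2(m-2))$ and $\kappa<m-1$ one gets $\ell_1\cK_1(-1)=\ell_2\cK_2(-1)$, and then $\ell_1\nmid\ell_2$ would force $\cK_2\subset\ell_1\cO_{\mP^2}(-1)$, contradicting finite colength. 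Both arguments hinge on the same fact---a finite-colength ideal cannot sit inside a principal ideal of positive degree---but yours has the advantage of producing an explicit formula for $\ell$, while the paper's comparison argument is a line shorter. Your treatment of (iv), resolving the ambiguity in $\vep$ by taking the minimal admissible value (which equals $\kappa$), is more careful than the paper's ``clear.''
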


 \begin{proof}
   (i) $\vep+1=\varphi'(\vep+1)=\psi'(\vep)$. From (Appendix B, Lemma 2) it
   follows that $\kappa\le\vep$. \\
   (ii) The regularity only depends on the Hilbert function, and therefore
   $\kappa=\reg(\cK_1)=\reg(\cK_2)<m-1$. Thus from $\ell_1
   H^0(\cK_1(m-2))=\ell_2H^0(\cK_2(m-2))$ it follows that
   $\ell_1\cK_1(-1)=\ell_2\cK_2(-1)$. \\
   If $\ell_1$ would not be a divisor of $\ell_2$ then one would have
   $\cK_2\subset\ell_1\cO_{\mP^2}(-1)$ contradiction. From this assertion (ii)
  does follow, and (iii) and (iv) are clear.
 \end{proof}

 \begin{remark}   
   If $\varphi$ and $\psi$ are two Hilbert functions of colength $d$, then from
   $\varphi<\psi$ (that means $\varphi(n)\le\psi(n)$ for all $n\in\N$ and
   $\varphi(n)<\psi(n)$ for at least one $n\in\N$) it follows that
   $g^*(\varphi)<g^*(\psi)$. This follows immediately from the definition of
   $g^*(\varphi)$ in (1.2.2).
 \end{remark}

 \begin{remark}  
   If $e:=\reg(\varphi), d:=\colength(\varphi)$ and
   $s(\varphi):=\sum\limits^{e-2}_{i=0}\varphi(i)$, then
   $g^*(\varphi)=s(\varphi)-{e+1\choose 3}+d(e-2)+1$. Because of
   $\varphi(n)={n+2\choose 2}-d$ for all $n\ge e-1$ this follows from a simple
   computation with binomial coefficients.
 \end{remark}

 \subsection{Hilbert functions of colength $\le 4$}\label{2.2.2}

 We use the formula of Remark 2.2 and orientate ourselves by the figures 2.4--2.7.\\[2mm]
 \fbox{$d=1$} There is only one Hilbert function (cf. Fig. 2.4). 
 \[ 
  e=1,\ s(\varphi)=0,\ g^*(\varphi)=0-{2\choose 3}+1\cdot (1-2)+1=0.
  \]
 $\fbox{$d=2$}$ There is again only one Hilbert function (cf. Fig. 2.5). 
 \[ 
  e=2,\ s(\varphi)=0,\ g^*(\varphi)=0-{3\choose 3}+2\cdot 0+1=0.
 \]
 $\fbox{$d=3$}$ There are two Hilbert functions (Fig. 2.6 a and Fig. 2.6
 b).
 \begin{gather*}
   e_1=2,\ s(\varphi_1)=0,\ g^*(\varphi_1)=0-\binom{3}{3}+3\cdot 0+1=0,\\
   e_2=3,\ s(\varphi_2)=1,\ g^*(\varphi_2)=1-{4\choose 3}+3\cdot 1+1=1.
 \end{gather*}
 $\fbox{$d=4$}$ There are two Hilbert functions (Fig. 2.7 a and Fig. 2.7 b).
  \begin{gather*}
 e_1=3,\ s(\varphi_1)=0,\ g^*(\varphi_1)=0-{4\choose 3}+4\cdot 1+1=1,\\
 e_2=4,\ s(\varphi_2)=4,\ g^*(\varphi_2)=4-{5\choose 3}+4\cdot 2+1=3.
 \end{gather*}

 \subsection{Two special ideals}\label{2.2.3}

 First case: If $d\ge 6$ is even, then let be $e:=d/2+1$ and
 $\cI:=(x^2,xy^{e-2},y^e)$. The Hilbert function $\chi$ can be read from
 Fig. 2.8. One notes that $\colength(\cI)$ and $\reg(\cI)$ really are equal to
 $d$ and $e$, respectively, and $\chi(n)=\sum\limits^{n-1}_1i={n\choose 2}$, if
 $1\le n\le e-2$. Therefore $s(\chi)=\sum\limits^{e-2}_1\tbinom{i}{2}=\tbinom{e-1}{3}$
  and it follows that 
 \begin{align*}
   g^*(\chi) & =\tbinom{e-1}{3}-\tbinom{e+1}{3}+2(e-1)(e-2)+1
     =\tbinom{e-1}{3}-\tbinom{e}{3}+\tbinom{e}{3}-\tbinom{e+1}{3}+2e^2-6e+5 \\
    & =-\frac{1}{2}(e-1)(e-2)-\frac{1}{2}e(e-1)+2e^2-6e+5  =e^2-4e+4 \\
    & =(e-2)^2=\frac{1}{4}(d-2)^2.
 \end{align*}
 Second case: If $d\ge 5$ is odd, then let be $e:=(d+1)/2$ and
 $\cI:=(x^2,xy^{e-1},y^e)$. \\[2mm]
 The Hilbert function $\chi$ can be read from Fig. 2.9. One notes that colength
 $(\cI)$ and $\reg(\cI)$ are equal to $d$ and $e$, respectively, and
 $\chi(n)={n\choose 2}$, if $1\le n<e$.\\
 Therefore
 $s(\chi)=\sum\limits^{e-2}_2{i\choose 2}={e-1\choose 3}$ and it follows that
 \begin{align*}
   g^*(\chi) & =\tbinom{e-1}{3}-\tbinom{e+1}{3}+(2e-1)(e-2)+1  =-\tbinom{e-1}{2}-\tbinom{e}{2}+(2e-1)(e-2)+1 \\
   & =-(e-1)^2+2e^2-5e+3=e^2-3e+2  =\frac{1}{4}(d+1)^2-\frac{3}{2}(d+1)+2\\
   & =\frac{1}{4}(d^2-4d+3).
 \end{align*}

 \begin{definition}   
 If $d\ge 5$, then we set 
 \[
 g(d):=
 \begin{cases}
  \frac{1}{4}(d-2)^2 & \text{if $d\ge 6$ is even},\\
  \frac{1}{4}(d-1)(d-3) & \text{if $d\ge 5$ is odd}.
 \end{cases}
 \]
 $g(d)$ is called the \emph{deformation bound} for ideals in $\cO_{\mP^2}$ of
 colength $d$. 
 \end{definition}

 The rest of the article is to justify this notation.

 \subsection{}\label{2.2.3}  

 \begin{lemma} 
   Let be $k=\overline{k}, \cI\subset\cO_{\mP^2}$ an ideal of colength $d\ge 5$
   and regularity $m$. Let be $\varphi$ the Hilbert function of $\cI$. If
   $g^*(\varphi)>g(d)$, then the assumptions of Lemma 2.1 are fulfilled by $\cI$.
 \end{lemma}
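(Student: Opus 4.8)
Here is my plan.

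The plan is to prove the contrapositive: if the numerical hypothesis of Lemma 2.1 fails for $\varphi$, then $g^*(\varphi)\le g(d)$. Write $m=\reg(\varphi)$ and recall from (2.1) the shape of the difference function: $\varphi'(0)=0$, $\varphi'$ is strictly increasing once it is positive, $\varphi'(n)=n+1$ exactly for $n\ge m$, hence $\varphi'(n)\le n$ for $n\le m-1$, and $d=\sum_{n=0}^{m-1}\bigl((n+1)-\varphi'(n)\bigr)$ (the number of monomials between the graph of $\varphi'$ and the line $y=x+1$). Since $m\le 2$ would force $d\le 3$, the hypothesis $d\ge 5$ gives $m\ge 3$. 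As $\varepsilon$ in Lemma 2.1 must lie in $\{0,\dots,m-3\}$, the existence of such an $\varepsilon$ is equivalent to the single condition $\varphi'(m-2)=m-2$ and $\varphi'(m-1)=m-1$; so I assume this fails. Two cases remain: (A) $\varphi'(m-1)\le m-2$, and (B) $\varphi'(m-1)=m-1$ but $\varphi'(m-2)\le m-3$. Strict monotonicity propagates the inequality downward, so in both cases $\varphi'(n)\le n-1$ for $1\le n\le m-2$ (and also for $n=m-1$ in case (A)). Counting monomials then gives $d\ge 2m-1$ in case (A) and $d\ge 2m-2$ in case (B); in either case $m\le e(d)$, where $e(d)$ is the regularity of the special ideal of (2.2.3) of the same parity as $d$.

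Let $\chi$ be the Hilbert function of that special ideal, so by (2.2.3) and the definition of $g(d)$ one has $g^*(\chi)=g(d)$. Its difference function is, from (2.2.3): $\chi'(0)=\chi'(1)=0$, $\chi'(n)=n-1$ for $2\le n\le e(d)-2$, $\chi'(e(d)-1)=e(d)-1$, $\chi'(e(d))=e(d)+1$ (and $\chi'(n)=n+1$ afterwards) when $d$ is even; and $\chi'(0)=\chi'(1)=0$, $\chi'(n)=n-1$ for $2\le n\le e(d)-1$, $\chi'(e(d))=e(d)+1$ when $d$ is odd. The crux is the claim that $\varphi(n)\le\chi(n)$ for every $n$. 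For $n\ge m$ this is immediate, since $\varphi(n)=\binom{n+2}{2}-d\le\chi(n)$. For $0\le n\le m-2$ one compares difference functions term by term: $\varphi'(n)\le n-1$ there, while $m\le e(d)$ guarantees $\chi'(n)=n-1$ (and $\chi'(n)=0$ for $n\le 1$), so $\varphi'(n)\le\chi'(n)$, whence $\varphi(n)\le\chi(n)$. For $n=m-1$ in case (A), $\varphi'(m-1)\le m-2=\chi'(m-1)$, so again $\varphi(m-1)\le\chi(m-1)$. The only delicate point is case (B) at $n=m-1$, where $\varphi'(m-1)=m-1$ may exceed $\chi'(m-1)$.

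To settle case (B) at $n=m-1$: if $d$ is even and $m=e(d)$, there is nothing to do, as then $\chi'(m-1)=\chi'(e(d)-1)=e(d)-1=m-1$. In every other situation the colength bound of the first paragraph is strict (the case-(B) minimum $2m-2$ is $<d$), so the identity $d=\sum_{n=0}^{m-1}\bigl((n+1)-\varphi'(n)\bigr)$ rules out $\varphi'(n)=n-1$ for all $2\le n\le m-2$; hence $\varphi(m-2)=\sum_{n=2}^{m-2}\varphi'(n)\le\binom{m-2}{2}-1$, and therefore $\varphi(m-1)=\varphi(m-2)+(m-1)\le\binom{m-2}{2}+(m-2)=\binom{m-1}{2}=\chi(m-1)$. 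This establishes $\varphi\le\chi$, and Remark 2.1 gives $g^*(\varphi)\le g^*(\chi)=g(d)$. Contrapositively, $g^*(\varphi)>g(d)$ forces $\varphi'(m-2)=m-2$ and $\varphi'(m-1)=m-1$; taking $\varepsilon$ to be the largest $n\le m-2$ with $\varphi'(n)\ne n$, or $\varepsilon=0$ if there is none, yields an $\varepsilon\in\{0,\dots,m-3\}$ with $\varphi'(n)=n$ for $\varepsilon+1\le n\le m-1$, which is exactly the datum required by Lemma 2.1.

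I expect the bookkeeping of the third paragraph to be the main obstacle: one must read off the exact shape of $\chi'$ for each parity from (2.2.3), check $m\le e(d)$ separately in cases (A) and (B) and for even and odd $d$, and above all run the case-(B), $n=m-1$ argument, trading the colength surplus $d-(2m-2)$ against the single-unit excess $\varphi'(m-1)-\chi'(m-1)=1$. The remaining ingredients (the basic properties of $\varphi'$, the monomial count for the colength, and the invocations of Remark 2.1 and of the computations in (2.2.3)) are routine.
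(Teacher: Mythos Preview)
Your proof is correct and follows the same overall strategy as the paper---bounding $\varphi$ above by the special Hilbert function $\chi$ of (2.2.3) and then invoking Remark~2.1---but your execution is cleaner in the case you call (B). The paper first argues (as you do) that $\varphi'(m-1)\le m-2$ forces $\varphi'(n)\le\chi'(n)$ term by term, and then, when $\varphi'(m-1)=m-1$ but $\varphi'(m-2)\le m-3$, splits further into $\varphi'(m-2)<m-3$ and $\varphi'(m-2)=m-3$ and in each subcase exhibits a pictorial ``monomial move'' $u\mapsto v$ (Fig.~2.10, 2.11) producing a $B$-invariant ideal with strictly larger Hilbert function $\psi>\varphi$, iterating until one reaches $\chi$. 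You bypass the deformation pictures entirely: the colength identity $d=\sum_{n=0}^{m-1}((n+1)-\varphi'(n))$ together with $d>2m-2$ forces $\varphi(m-2)\le\binom{m-2}{2}-1$, from which $\varphi(m-1)\le\binom{m-1}{2}=\chi(m-1)$ follows directly. This is a more elementary and self-contained argument that does not rely on the figures; what the paper's deformation viewpoint buys is a uniform recipe for such comparisons that is reused elsewhere (e.g.\ in 2.3.4 and in the proof of Corollary~2.4).
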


 \begin{proof}
   Let be $\chi$ the Hilbert function defined by Fig. 2.8 and Fig. 2.9,
   respectively. Let be $m=\reg(\varphi)$. If $\varphi'(m)-\varphi'(m-1)>2$,
   then $\varphi'(i)\le\chi'(i)$ and therefore $\varphi(i)<\chi(i)$ for all
   $i$, and it would follow that $g^*(\varphi)\le g(\chi)$ (Remark 2.1). If
   $\varphi'(m)-\varphi'(m-1)=1$, then
   $\varphi'(m-1)=\varphi'(m)-1=(m+1)-1=(m-1)+1$, therefore $\reg(\cI)\le m-1$
   (cf. Appendix B, Lemma 2). It follows that $\varphi'(m)-\varphi'(m-1)=2$,
   therefore $\varphi'(m-1)=m-1$. If $\varphi'(m-2)=m-2$, as well, then the
   assumptions of Lemma 2.1 are fulfilled with $\vep:=m-3$, for instance. Thus
   without restriction of generality one can assume $\varphi'(m-2)\le m-3$. \\[2mm]
   Case 1: $\varphi'(m-2)<m-3$. Figure 2.10 represents the Hilbert function
   $\varphi$ as well as the $B(3;k)$-invariant ideal $\cM$ with Hilbert
   function $\varphi$. Then one makes the deformation 
 \[ 
     E(H^0(\cM(m)))\mapsto E(H^0(\cM(m)))-u\cup v=:E(H^0(\cN(m))),
 \] 
 where $\cN$ is a $B(3;k)$-invariant ideal with Hilbert function
   $\psi>\varphi$. But then it follows $g^*(\varphi)<g^*(\psi)\le
   g^*(\chi)=g(d)$, contradiction.

 \noindent Case 2: $\varphi'(m-2)=m-3$. If the graph of $\varphi'$ would have a
 shape different from that in Fig. 2.11 a, i.e., if the graph of $\varphi'$
 would have a ``jumping place'' $n<m-2$ (cf. the terminology in [T1], p. 72),
 then as in the first case one could make a deformation 
 \[
    E(H^0(\cM(m)))\mapsto E(H^0(\cM(m)))-u\cup v=:E(\cN(m)))
 \] 
 (cf. Fig. 2.11b) and would get a
 contradiction, again. It only remains the possibility represented in
 Fig. 2.11a.
  But then $\varphi=\chi$, which contradicts the assumption
 $g^*(\varphi)>g(d)$.
 \end{proof} 

 \section{Numerical conclusions from $g^*(\varphi)>g(d)$}\label{2.3}

 \subsection{}\label{2.3.1}

 At first we describe the starting situation: In this section we suppose that
 $g(d)$ is defined, i.e. $d\ge 5$. Moreover let be $\varphi$ a Hilbert function
 such that $H_{\varphi}\neq\emptyset$, colength$(\varphi)=d, \reg
 (\varphi)=m$ and $g^*(\varphi)>g(d)$. Then the assumptions of Lemma 2.2 are
 fulfilled for an ideal $\cI$, which can be supposed to be monomial. Therefore
 the assumption $k=\overline{k}$ is superfluous. As $m$ and $d$ are uniquely
 determined by $\varphi, c:=d-m$ is uniquely determined, too.

 The aim in this section is to prove the inequality $m\ge c+2$. By Lemma 2.1 and
 Lemma 2.2, respectively, one can write $\cI=\ell\cK(-1)+f\cO_{\mP^2}(-m)$, and
 $c$ is equal to the colength of the Hilbert function $\psi$ of $\cK$. (As
 $\cI$ is monomial, $\cK$ is monomial, too, and without restriction one has
 $\ell\in\{ x,y,z \}$.)

 \begin{lemma}   
   Let be $\psi$ the Hilbert function of an ideal $\cK$ of colength $c\ge 5,
   \kappa:=\reg (\psi)$, and $m\ge\kappa+2$ an integer. If one defines $\varphi$ by
   $\varphi'(n):=\psi'(n-1),0\le n\le m-1,\varphi'(n):=n+1,n\ge m$, then
   $H_{\varphi}\neq\emptyset$, $\colength(\varphi)=c+m,\reg(\varphi)=m$ and
   $g^*(\varphi)=g^*(\psi)+\frac{1}{2}m(m-3)+c$.
 \end{lemma}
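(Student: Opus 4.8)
The plan is to verify all four assertions by unwinding the definition of $\varphi$ in terms of $\psi$ and then applying the closed formula of Remark 2.2, $g^*(\varphi)=s(\varphi)-\binom{e+1}{3}+d(e-2)+1$ with $e=\reg(\varphi)$, $d=\colength(\varphi)$, $s(\varphi)=\sum_{i=0}^{e-2}\varphi(i)$. First I would check that $\varphi$ is the difference-function of an honest Hilbert function, i.e. that conditions (a) and (b) (equivalently (b$'$)) of Iarrobino hold, so that $H_\varphi\neq\emptyset$: for $n\le m-1$ this is inherited from $\psi$ after the shift $\varphi'(n)=\psi'(n-1)$; at the junction $n=m-1\to m$ one has $\varphi'(m-1)=\psi'(m-2)$ and $\varphi'(m)=m+1$, and here the hypothesis $m\ge\kappa+2=\reg(\psi)+2$ is exactly what makes $\psi'(m-2)\le m-1\le m=\varphi'(m)-1$ (recall $\reg(\psi)=\kappa$ means $\psi'(\kappa+1)=\kappa+1$ and $\psi'$ increases by at most $1$ thereafter in the relevant range, so $\psi'(m-2)$ is at most $m-1$); for $n\ge m$ the values $\varphi'(n)=n+1$ are the generic ones and (a),(b$'$) are immediate.

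Next, $\colength(\varphi)=c+m$: the colength is the number of lattice points between the graph of $\varphi'$ and the line $y=x+1$; the shifted portion (indices $1$ through $m-1$) contributes exactly the $c=\colength(\psi)$ points lying between the graph of $\psi'(n-1)$ and the line $y=x$ (this is precisely the count used at the end of the proof of Lemma 2.1), and the tail $n\ge m$ contributes $m$ more points — one in each column $n=0,1,\dots,m-1$ corresponding to the gap between $\varphi'(n)$ and $n+1$, as is visible from Fig. 2.3 / Fig. 2.10. Then $\reg(\varphi)=m$ follows from the characterization $\reg(\varphi)=\min\{n:\varphi'(n+1)=n+1\}$: by construction $\varphi'(m)=m+1$ while for $n<m$ one needs $\varphi'(n)<n$; this again uses $m\ge\kappa+2$, which forces the shifted graph to have reached its generic value strictly below the line before index $m-1$.

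Finally, the genus formula. Writing $e:=\reg(\varphi)=m$ and $d:=\colength(\varphi)=c+m$, I substitute into Remark 2.2 and split $s(\varphi)=\sum_{i=0}^{m-2}\varphi(i)$ according to the two regimes. Using $\varphi(i)=\sum_{n\le i}\varphi'(n)=\sum_{n\le i}\psi'(n-1)=\psi(i-1)$ for $i\le m-1$, the shifted part of $s(\varphi)$ becomes $\sum_{i=0}^{m-2}\psi(i-1)=\sum_{j=0}^{m-3}\psi(j)$, which is $s(\psi)$ once one notes $\psi(j)$ has already stabilized to its generic value $\binom{j+2}{2}-c$ for $j\ge\kappa-1$ and $m-3\ge\kappa-1$; the resulting correction terms are a sum of binomial coefficients plus a linear term in $m$ and $c$. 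Then $g^*(\varphi)-g^*(\psi)$ reduces to an elementary identity among binomial coefficients, which I expect to collapse to $\tfrac12 m(m-3)+c$. The only real point requiring care — the ``main obstacle'' — is bookkeeping the boundary contributions correctly (the column $n=m-1$ and the precise count of monomials in the tail), i.e. making sure no off-by-one error creeps into either $\colength(\varphi)=c+m$ or the sum $s(\varphi)$; once those are pinned down the genus identity is a routine computation with binomial coefficients of the same flavour as in Section 2.2.3.
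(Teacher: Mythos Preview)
Your approach is essentially the paper's: both establish $\varphi(i)=\psi(i-1)$ for $i\le m-1$, write $s(\varphi)=\sum_{j=0}^{m-3}\psi(j)=s(\psi)+\sum_{j=\kappa-1}^{m-3}\bigl[\binom{j+2}{2}-c\bigr]$, and then plug into the formula of Remark~2.2 to obtain $g^*(\varphi)=g^*(\psi)+\tfrac12 m(m-3)+c$. The only real difference is that the paper does not verify Iarrobino's conditions (a),(b$'$) directly; instead it takes $\cK$ to be $B(3;k)$-invariant without loss of generality and exhibits the concrete ideal $\cI=x\cK(-1)+y^m\cO_{\mP^2}(-m)$, citing an earlier lemma for the fact that this $\cI$ has Hilbert function $\varphi$, regularity $m$, and colength $c+m$. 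Your numerical verification is a perfectly good alternative.

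Two small slips to clean up. First, in the regularity step you write ``for $n<m$ one needs $\varphi'(n)<n$''; in fact $\varphi'(n)=\psi'(n-1)=n$ for $\kappa+1\le n\le m-1$, so the correct (and sufficient) statement is $\varphi'(n)\le n<n+1$. Second, your colength sentence ``the tail $n\ge m$ contributes $m$ more points --- one in each column $n=0,1,\dots,m-1$'' is self-contradictory as written; what you mean is that the tail $n\ge m$ contributes nothing, and the extra $m$ comes from the shift: in each column $0\le n\le m-1$ the gap between $\varphi'(n)=\psi'(n-1)$ and the line $y=x+1$ exceeds the corresponding gap for $\psi$ (between $\psi'(n-1)$ and $y=x$) by exactly one.
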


 \begin{proof}
   We orientate ourselves by Figure 2.3, but the weaker assumption
   $m\ge\kappa+2$ takes the place of the assumption $m\ge\kappa+3$.

   Without restriction one can assume that $\cK$ is $B(3;k)$-invariant. Then
   $y^m\in H^0(\cK(m))$ and $\cI:=x\cK(-1)+y^m\cO_{\mP^2}(-m)$ has the Hilbert
   function $\varphi$, the regularity $m$ and the colength $c+m$. This follows
   by considering the figure mentioned above (and has been shown in a more
   general situation in [G4], Lemma 4, p. 660). We compute $g^*(\varphi)$ (cf.
   Remark 2.2):
 \begin{align*}  
   s(\varphi) & = \sum^{m-2}_{i=0}\varphi(i)
   =\sum^{\kappa-1}_{i=0}\psi(i-1)+\sum^{m-2}_{i=\kappa}\psi(i-1)\\
   & =\sum^{\kappa-2}_{i=0}\psi(i)+\sum^{m-3}_{i=\kappa-1}\psi(i)
   =s(\psi)+\sum^{m-3}_{i=\kappa-1}\left[ \tbinom{i+2}{2}-c\right]\\
   &
   =s(\psi)+\sum^{m-3}_{i=0}\tbinom{i+2}{2}-\sum^{\kappa-2}_{i=0}\tbinom{i+2}{2}-(m-\kappa-1)c.
 \end{align*}
 By Remark 2.2 it follows that:
 \begin{align*}
   g^*(\varphi)& =s(\psi)+\tbinom{m}{3}-\tbinom{\kappa+1}{3}
       -(m-\kappa-1)c-\tbinom{m+1}{3}+(c+m)(m-2)+1 \\
   & =s(\psi)-\tbinom{\kappa+1}{3}+c(\kappa-2)+1+\tbinom{m}{3}-\tbinom{m+1}{3}
   -mc+c+(c+m)m-2m\\
   & =g^*(\psi)-\tbinom{m}{2}+ m^2-2m+c=g^*(\psi)+\frac{1}{2}m(m-3)+c. \qedhere
 \end{align*}
 \end{proof}

 \subsection{The cases $c\le 4$}\label{2.3.2}

 By Lemma 2.2 (resp. by Corollary 2.1 of Lemma 2.1) one has
 $\varphi'(n)=\psi'(n-1),0\le n\le m-1,\varphi'(n)=n+1,n\ge m$, and
 $\kappa=\reg (\cK)\le m-3$. Then the assumptions of Lemma 2.3 are fulfilled.

 We use the formula given there and orientate ourselves by the Figures 2.4 -
 2.7. The regularity of the Hilbert function considered each time will now be
 denoted by $\kappa$.
 \begin{enumerate}[\  ]
 \item If $c\in\{0,1\}$, then because of $d=m+c$ it follows that $m\ge 4$.
 \item If $c=2$, then $\kappa=2$ and $m\ge\kappa+3=5$.
 \item If $c=3$ and $\kappa=2$ or $\kappa=3$, then $m\ge\kappa+3=5$.
 \item If $c=4$, then $\kappa=3$ or $\kappa=4$ and $m\ge\kappa+3\ge 6$.
 \end{enumerate}
  Thus in the cases $0\le c\le 4$ one has $m\ge c+2$.

 \subsection{The case $g^*(\psi)\le g(c)$}\label{2.3.3}

 This notation implies that $c\ge 5$. If $\kappa$ is the regularity and $c$ is
 the colength of any Hilbert function, then because of $1+2+\cdots+\kappa\ge
 c$, one always has ${\kappa+1\choose 2}\ge c$, and therefore
 $\kappa\ge\sqrt{2c}-1$. By Lemma 2.2 the assumptions of Lemma 2.1 are fulfilled,
 therefore by Corollary 2.1 it follows that $m\ge\kappa+3>5.16$. \\[2mm]
 1st case: $c$ and $m$ are even. \\
 By the formulas for $g(d)$ and $g^*(\varphi)$ it follows that:
 \begin{multline*}
   \tfrac{1}{4}(c^2-4c+4)+\tfrac{1}{2}m(m-3)+c  >\tfrac{1}{4}[(c+m)^2-4(c+m)+4]\\
   \iff \quad \tfrac{1}{2}m(m-3)+c  >\tfrac{1}{4}[2cm+m^2-4m]\\
  \quad  \iff 
   m^2-2(c+1)m+4c  >0.
 \end{multline*}
  The solutions of the corresponding quadratic equation are 0 and
  $2c$.\\
  Therefore $m\ge 2c+1>c+2$. 

 \noindent
 2nd case: $c$ is even, $m$ is odd.\\
 One obtains the inequality:
 \begin{multline*}
  \qquad \tfrac{1}{4}(c^2-4c+4)+\tfrac{1}{2}m(m-3)+c  >\tfrac{1}{4}[(c+m)^2-4(c+m)+3]\\
   \iff\quad m^2-2(c+1)m+4c+1>0.\qquad
 \end{multline*}
 The solutions of the corresponding quadratic equation are
 $m=c+1\pm\sqrt{c^2-2c}\ge 0$. Because of $c+1-\sqrt{c^2-2c}<3$, if $c\ge 5$,
 it follows that $m\ge c+1+\sqrt{c^2-2c}$. Because of $c+1+\sqrt{c^2-2c}>2c-1$,
 if $c\ge 5$, it follows that $m\ge 2c$, therefore $m\ge 2c+1>c+2$. \\[2mm]
 3rd case: $c$ is odd, $m$ is even. \\
 One obtains the inequality :
 \begin{align*}
   \tfrac{1}{4}(c^2-4c+3)+\tfrac{1}{2}m(m-3)+c & >\tfrac{1}{4}[(c+m)^2-4(c+m)+4] \\
  \iff \quad  m^2-2(c+1)m+4c & >0.
 \end{align*}
 It follows that $m\ge 2c+1>c+2$.\\
 \noindent
 4th case: $c$ and $m$ are odd.\\
 One obtains the inequality: 
 \begin{align*}
   \tfrac{1}{4}(c^2-4c+3)+\tfrac{1}{2}m(m-3)+c & >\tfrac{1}{4}[(c+m)^2-4(c+m)+4]\\
   \iff\quad m^2-2(c+1)m+4c-1 & >0.
 \end{align*}
 The solutions of the corresponding quadratic
 equation are $m=c+1\pm\sqrt{(c-1)^2+1}$. Because of $c+1-\sqrt{(c-1)^2+1}<2$
 it follows that $m\ge c+1+\sqrt{(c-1)^2+1}>2c$, therefore $m\ge 2c+1\ge c+2$.

 \subsection{The case $g^*(\psi)\ge g(c)$}\label{2.3.4}

 As in the proof of Lemma 2.3 one can write $\cI=x\cK(-1)+y^m\cO_{\mP^2}(-m),\cK$
 a $B(3;k)$-invariant ideal of colength $c,
 \kappa=\reg(\cK), d=\colength(\cI)=c+m$, and again $m\ge\kappa+3$ (Corollary
 2.1). We represent the Hilbert function $\psi$ by the ideal
 $\cK=x\cJ(-1)+y^{\kappa}\cO_{\mP^2}(-\kappa), \cJ$ a $B(3;k)$-invariant ideal
 of colength $b$ and of regularity $\vep$, where $\kappa\ge\vep+3$ (cf.
 Corollary 2.1). If the Hilbert function of $\cJ$ is denoted by $\vartheta$, then
 in principle one has the situation represented by Fig. 2.12. If one assumes
 that $(m-1)-\kappa\le \colength(\cJ)=b$, then one could bring the monomials
 denoted by $\mathbf{1,2,3,\ldots}$ in the positions denoted by $1,2,3,\ldots$
 (cf. Fig. 2.12). In the course of this the Hilbert function increases and
 therefore $g^*(\varphi)<g^*(\varphi_1)<\cdots<g(d)$, contradiction. Thus one
 has $(m-1)-\kappa>b$, i.e., $m\ge\kappa+b+2=c+2$.

 \subsection{Summary}\label{2.3.5}

 \begin{lemma} 
   (Notations as in Lemma 2.1 and Lemma 2.2) If $g(d)<g^*(\varphi)$, then $m\ge
   c+2$.\hfill $\Box$ 
 \end{lemma}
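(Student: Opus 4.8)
The plan is to establish $m\ge c+2$ by running through all possibilities for the colength $c$ of the ideal $\cK$ in the decomposition $\cI=\ell\cK(-1)+f\cO_{\mP^2}(-m)$. By Lemma 2.2 the hypotheses of Lemma 2.1 are met, so such a decomposition exists with $c=d-m$, and by Corollary 2.1 its regularity satisfies $\kappa:=\reg(\cK)\le m-3$. Writing $\psi$ for the Hilbert function of $\cK$, I would split into three cases: $c\le 4$; $c\ge 5$ with $g^*(\psi)\le g(c)$; and $c\ge 5$ with $g^*(\psi)\ge g(c)$. For $0\le c\le 4$ the function $g(c)$ is not defined, but all Hilbert functions of colength at most $4$ are tabulated (Figures 2.4--2.7), so one reads off the possible regularities $\kappa$ in each case; since Lemma 2.1 gives $m\ge\kappa+3$, direct inspection yields $m\ge c+2$ throughout.

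For $c\ge 5$ with $g^*(\psi)\le g(c)$, I would combine the identity of Lemma 2.3, namely $g^*(\varphi)=g^*(\psi)+\tfrac12 m(m-3)+c$, with the hypothesis $g^*(\varphi)>g(d)=g(c+m)$ and with $g^*(\psi)\le g(c)$, and substitute the explicit quadratics for $g(\cdot)$ from Definition 2.1. According to the parities of $c$ and $m$ this reduces to a quadratic inequality in $m$ whose roots are, up to a bounded perturbation, $0$ and $2c$; since $\kappa\ge\sqrt{2c}-1$ forces $m\ge\kappa+3>5$, the small root is excluded and one concludes $m\ge 2c+1>c+2$.

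For $c\ge 5$ with $g^*(\psi)\ge g(c)$, I would apply Lemma 2.1 to $\cK$ itself, writing $\cK=x\cJ(-1)+y^{\kappa}\cO_{\mP^2}(-\kappa)$ with $\cJ$ a $B(3;k)$-invariant ideal of colength $b$ and regularity $\vep$ (so $\kappa\ge\vep+3$ and $c=b+\kappa$). If one had $(m-1)-\kappa\le b$, the monomials under the staircase of the Hilbert function $\vartheta$ of $\cJ$ could be pushed upward one at a time (cf. Fig. 2.12), producing a chain of $B(3;k)$-invariant ideals of colength $d$ along which $g^*$ strictly increases, hence eventually attaining the value $g(d)$ --- contradicting $g^*(\varphi)>g(d)$. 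So $(m-1)-\kappa>b$, that is, $m\ge\kappa+b+2=c+2$.

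The main obstacle is this last case: one must be confident that the ``monomial-sliding'' moves genuinely are deformations inside $H^d$ (this is exactly where the Iarrobino criterion and the geometry encoded in the figures do the work), that each move keeps the colength fixed while strictly raising $g^*$, and hence that the chain must terminate at the deformation bound $g(d)$. The parity bookkeeping in the middle case is routine but has to be carried out in all four combinations of $c,m \bmod 2$; the small-colength case is mere inspection of the tables.
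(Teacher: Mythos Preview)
Your proposal is correct and follows essentially the same three-case split as the paper (Sections 2.3.2--2.3.4): small colength $c\le 4$ by direct inspection using $m\ge\kappa+3$; the case $g^*(\psi)\le g(c)$ via Lemma 2.3 and a parity-by-parity quadratic analysis yielding $m\ge 2c+1$; and the case $g^*(\psi)>g(c)$ via the monomial-sliding argument on the second-level decomposition of $\cK$ (Fig.~2.12). Two tiny remarks: for $c\in\{0,1\}$ the paper actually uses $m=d-c\ge 5-c$ rather than $m\ge\kappa+3$, though your version works equally well; and in the third case the deformations do not ``attain'' $g(d)$ but rather terminate at some Hilbert function with $g^*\le g(d)$, which still contradicts $g^*(\varphi)>g(d)$---a phrasing issue only.
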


 \noindent
 From the proof of Lemma 2.4 we can conclude one more statement:

 \begin{corollary} 
   Let be $g^*(\psi)\le g(c)$ (which notation implies $c\ge 5$). Then $m\ge
   2c+1$. \hfill $\Box$
 \end{corollary}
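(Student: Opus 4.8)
The statement is a refinement of Lemma 2.4: in the case $g^*(\psi)\le g(c)$ the argument carried out above in fact delivers the bound $m\ge 2c+1$, not merely $m\ge c+2$, so the plan is to isolate that stronger conclusion from the same computation. Concretely, I would combine the regularity estimate of Corollary 2.1, the genus formula of Lemma 2.3, and the two inequalities $g^*(\varphi)>g(d)$ (the standing hypothesis of Lemma 2.4) and $g^*(\psi)\le g(c)$, and then read off $m$ from a quadratic inequality.

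First I would collect the numerical inputs. By Lemma 2.2 the hypotheses of Lemma 2.1 hold for $\cI$, hence by Corollary 2.1(i) one has $m\ge\kappa+3$, where $\kappa:=\reg(\cK)$. Since the colength $c$ of $\psi$ equals the number of monomials below the graph of $\psi'$, which is at most $1+2+\cdots+\kappa=\binom{\kappa+1}{2}$, we get $\binom{\kappa+1}{2}\ge c$, whence $\kappa\ge\sqrt{2c}-1$; as $c\ge 5$ this already gives $m\ge\kappa+3\ge\sqrt{2c}+2>5.16$, in particular $m>3$. Next, Lemma 2.3 (applicable since $m\ge\kappa+2$) yields $d=c+m$ together with $g^*(\varphi)=g^*(\psi)+\tfrac12 m(m-3)+c$. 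Substituting the two genus inequalities gives
\[
  g(c)+\tfrac12 m(m-3)+c\;>\;g(c+m).
\]

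Now I would expand $g(\cdot)$ by Definition 2.1 and split into the four cases according to the parities of $c$ and $m$. In each case the displayed inequality simplifies to a quadratic inequality of the form $m^2-2(c+1)m+\gamma>0$ with $\gamma\in\{4c-1,\,4c,\,4c+1\}$; one checks that its larger root lies in the interval $(2c-1,\,2c+1)$ while its smaller root is at most $3$. Because $m>5.16$ already exceeds the smaller root, the inequality forces $m$ to exceed the larger root; using the integrality of $m$ — together with the observation that in the case $c$ even, $m$ odd one has $m\neq 2c$ — one obtains $m\ge 2c+1$ in every case.

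I do not anticipate a real obstacle: the content is the parity bookkeeping already performed in the treatment of the case $g^*(\psi)\le g(c)$. The only point that needs a little care is that one is entitled to the branch $m>$ (larger root), rather than $m<$ (smaller root), of the quadratic inequality, and this is precisely what the a priori estimate $m\ge\kappa+3>5.16$ secures.
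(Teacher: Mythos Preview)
Your proposal is correct and essentially reproduces the paper's own argument: the Corollary is stated (with a $\Box$) as an immediate by-product of the case analysis in Section~2.3.3, where the four parity cases each yield a quadratic $m^2-2(c+1)m+\gamma>0$ with $\gamma\in\{4c-1,4c,4c+1\}$, and together with the a~priori bound $m\ge\kappa+3>5.16$ this forces $m\ge 2c+1$. Your handling of the borderline case ($c$ even, $m$ odd, where the larger root only exceeds $2c-1$) via the parity observation $m\neq 2c$ matches the paper exactly.
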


 \section{Additional group operations}\label{2.4}

 \subsection{General auxiliary lemmas}\label{2.4.1}
 The group $Gl(3;k)$ operates on $S=k[x,y,z]$, and therefore on
 $H^d=\Hilb^d(\mP^2_k)$. If $\rho=(\rho_0,\rho_1,\rho_2)\in\Z^3$ is a vector
 such that $\rho_0+\rho_1+\rho_2=0$, then
 $T(\rho):=\Set{(\lambda_0,\lambda_1,\lambda_2)\in
 (k^*)^3 | \lambda_0^{\rho_0}\lambda_1^{\rho_1}\lambda_2^{\rho_2}=1}$ is a
 subgroup of $T=T(3;k)$, and 
 $\Gamma:=\left\{\left(
   \begin{smallmatrix}
 1&0&*\\0&1&*\\0&0&1    
 \end{smallmatrix}
 \right) \right\}$ is a
 subgroup of $U(3;k)$.
  We let the additive group $\G_a$ operate on $S$ by
 \[
 \psi_{\alpha}:x\mapsto x,\ y\mapsto \alpha x+y,\ z\mapsto z,\ \alpha\in k,
 \]
  and
 $\sigma:\G_m\to T$ nearly always denotes the operation
 $\sigma(\lambda):x\mapsto x,y\mapsto y,z\mapsto\lambda z,\lambda\in k^*$.

 \noindent
 \begin{auxlemma}
   If $V\subset S_d$ is a vector space, invariant under $G:=\Gamma\cdot
   T(\rho)$ where $\rho=(0,-1,1)$ or $\rho=(-1,0,1)$, then $V$ is monomial,
   i.e. invariant under $T$.
 \end{auxlemma}
 \begin{proof}
   We first consider the case $\rho=(0,-1,1)$. We take a standard basis 
    of
   $V$ consisting of $T(\rho)$-semi-invariants (see Appendix E). Assuming that the assertion above is wrong , we conclude that there is
   a $T(\rho)$-semi-invariant $f\in V$, such that the monomials occurring in
   $f$ are not in $V$. Then there is such a form with smallest $z$-degree. From
   the invariance of $V$ under $T(\rho)$ it follows that $V$ is invariant under
   the $\G_m$-action $\tau(\lambda):x\mapsto\lambda x,y\to y,z\mapsto
   z,\lambda\in k^*$. We write $f=Mp$, where
   $M=x^{\ell}y^m,p=\sum^n_{i=0}a_iy^{n-i}z^i,\ell+m+n=d,n\ge 1$ and $a_n\neq
   0$. It follows that $y\partial f/\partial z=yM\sum^n_1ia_iy^{n-i}z^{i-1}\in
   V$. Now $y\partial f/\partial z$ is also a $T(\rho)$-semi-invariant with
   smaller $z$-degree than $f$. According to the choice of $f$ it follows that
   $g:=Myz^{n-1}\in V$, therefore $y\partial g/\partial z=(n-1)My^2z^{n-2}\in
   V$, etc. One gets $My^iz^{n-i}\in V, 1\le i\le n$, therefore $My^n\in V$,
   contradiction.\\
   In the case $\rho=(-1,0,1)$ we write
   $f=x^{\ell}y^m\sum^n_{i=0}a_ix^{n-i}z^i$. Because of $x\partial f/\partial
   z=xM\sum^n_{i=1}ia_ix^{n-i}z^{i-1}$ we can argue as before.
 \end{proof}

 \begin{auxlemma}
 Let be $\cI\subset\cO_{\mP^2}$ an ideal of
 colength $d$, which is invariant under $G:=\Gamma\cdot T(\rho)$. If
 $\rho_0+\rho_1+\rho_2=0,\rho_0<0$ and $\rho_1<0$, then $\cI$ is invariant
 under $T$.
 \end{auxlemma}
 \begin{proof}
   Let be $n$ the smallest natural number, such that $H^0(\cI(n))$ is not
   $T$-invariant. Then we have without restriction that $n\ge 1$. As
   $H^0(\cI(n))$ has a standard basis 
   , there is a
   proper semi-invariant in $H^0(\cI(n))$, i.e. a form $f\in H^0(\cI(n))$ of
   the shape $f=M(1+a_1X^{\rho}+a_2X^{2\rho}+\cdots+a_rX^{r\rho})$, $M$ a
   monomial, $a_r\neq 0,r\ge 1$, and no monomial $MX^{i\rho}$ is in
   $H^0(\cI(n))$, if $a_i\neq 0$. If $M$ would be divisible by $z$, then
   $g:=z^{-1}f\in H^0(\cI(n-1))$ would be a proper semi-invariant, too, because
   from $z^{-1}MX^{i\rho}\in H^0(\cI(n-1))$ it follows that $MX^{i\rho}\in
   H^0(\cI(n))$. Therefore, $M$ is not divisible by $z$. From the proper
   semi-invariants of $H^0(\cI(n))$ we chose one, say $f$, such that the
   $z$-degree is minimal. Now from $f\in H^0(\cI(n))$, because of the
   $\Gamma$-invariance, it follows that $x\partial f/\partial
   z=xM(a_1\rho_2z^{-1}X^{\rho}+\cdots+ra_r\rho_2z^{-1}X^{r\rho})$ and
   $y\partial f/\partial
   z=yM(a_1\rho_2z^{-1}X^{\rho}+\cdots+ra_r\rho_2z^{-1}X^{r\rho})$ is in
   $H^0(\cI(n))$, i.e., $g:=xMX^{\rho}z^{-1}p$ and $h:=yMX^{\rho}z^{-1}p$ are
   in $H^0(\cI(n))$, where
   $p(X^{\rho}):=a_1\rho_2+2a_2\rho_2X^{\rho}+\cdots+ra_r\rho_2X^{(r-1)\rho}$.
   As the $z$-degree of $g$ and of $h$ is smaller than the $z$-degree of $f,g$
   and $h$ are no longer proper semi-invariants, i.e. the monomials which occur
   in $g$ or in $h$, all are in $H^0(\cI(n))$. It follows that
   $u:=z^{-1}xMX^{r\rho}$ and $v:=z^{-1}yMX^{r\rho}$ are in $H^0(\cI(n))$. From
   the $\Gamma$-invariance it follows by applying the operators
   $x\partial/\partial z$ and $y\partial/\partial z$ repeatedly, that
   $\frac{x^{|\rho_0|}}{z^{|\rho_0|}}\cdot\frac{y^{|\rho_1|}}{z^{|\rho_1|}}\cdot MX^{r\rho}\in H^0(\cI(n))$. \\
   Now $X^{\rho}=x^{-|\rho_0|}y^{-|\rho_1|}z^{\rho_2}$ and
   $\rho_0+\rho_1+\rho_2=0$, therefore $MX^{(r-1)\rho}\in H^0(\cI(n))$.
   Applying the operators mentioned before again gives $MX^{(r-2)\rho}\in
   H^0(\cI(n))$, etc. It follows that $MX^{i\rho}\in H^0(\cI(n)),0\le i\le
   r-1$, and therefore $MX^{r\rho}\in H^0(\cI(n))$, contradiction.
 \end{proof}

 \subsection{Successive construction of $\Gamma$-invariant ideals}\label{2.4.2}

 At first we consider a general situation: Let be $\cK\subset\cO_{\mP^2}$ an
 ideal of colength $c$ and of regularity $e$; $z$ is supposed to be a non-zero
 divisor of $\cO_{\mP^2}/\cK$; let be $R:=k[x,y]$ and $\ell\in R_1$ a linear
 form. Let be $m>e$ an integer and $f\in H^0(\cK(m))$ a section, whose leading
 term is not divisible by $\ell$, i.e., if one writes
 $f=f^0+zf^1+\cdots+z^mf^m$, where $f^i\in R_{m-i}$, then $f^0$ is not
 divisible by $\ell$.

 \begin{lemma}
 The ideal $\cI:=\ell\cK(-1)+f\cO_{\mP^2}(-m)$ has the following properties:
 \begin{enumerate}[(i)]
 \item $z$ is not a zero-divisor of $\cO_{\mP^2}/\cI$.

 \item  $H^0(\cI(n))=\ell H^0(\cK(n-1))$, if $n<m$, and \\
   $H^0(\cI(n))=\ell H^0(\cK(n-1))\oplus fk[x,z]_{n-m}$, if $n\ge m$ and
   $\ell=\alpha x+y$ \\
   (respectively $H^0(\cI(n))=\ell H^0(\cK(n-1))\oplus fk[y,z]_{n-m}$, if $n\ge
   m$ and $\ell=x$).

 \item $\colength(\cI)=c+m,\reg(\cI)=m$.
 \end{enumerate}
 \end{lemma}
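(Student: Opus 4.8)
The plan is to read off all three properties from an explicit description of the graded pieces $H^0(\cI(n))$, the decomposition (ii) carrying essentially all the work and (i), (iii) following by formal bookkeeping. Write $S=k[x,y,z]$ and let $K=\bigoplus_n H^0(\cK(n))$, $I=\bigoplus_n H^0(\cI(n))$ be the associated saturated ideals. From the definition $\cI=\ell\cK(-1)+f\cO_{\mP^2}(-m)$ one has $H^0(\cI(n))=\ell H^0(\cK(n-1))+f\,S_{n-m}$, with $S_{<0}=0$. The hypothesis that $f^0$ is not divisible by $\ell$ forces $f^0\ne 0$ (everything divides $0$), hence $f\ne 0$; moreover $\ell$ is irreducible in the UFD $S$ and coprime to $z$. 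These facts are used repeatedly.

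First I would prove (ii). For $n<m$ one has $S_{n-m}=0$, so $H^0(\cI(n))=\ell H^0(\cK(n-1))$. For $n\ge m$ with $\ell=\alpha x+y$, the linear change of coordinates replacing $y$ by $\ell$ shows $S_{n-m}=k[x,z]_{n-m}\oplus\ell S_{n-m-1}$; since $f\in H^0(\cK(m))$ gives $f\,S_{n-m-1}\subseteq H^0(\cK(n-1))$, this yields $H^0(\cI(n))=\ell H^0(\cK(n-1))+f\,k[x,z]_{n-m}$. The substantive point — and the main obstacle — is that this sum is \emph{direct}. If $\ell g=fh$ with $g\in H^0(\cK(n-1))$ and $0\ne h\in k[x,z]_{n-m}$, I would factor off the largest power of $z$, $h=z^j\tilde h$ with $z\nmid\tilde h$ in $k[x,z]$; coprimality of $z$ and $\ell$ forces $z^j\mid g$, say $g=z^j\tilde g$, and since $z$ is a nonzerodivisor of $\cO_{\mP^2}/\cK$ one gets $\tilde g\in H^0(\cK(n-1-j))$; cancelling $z^j$ gives $\ell\tilde g=f\tilde h$, and reducing modulo $z$ inside $k[x,y]$ gives $\ell\cdot\overline{\tilde g}=f^0\cdot\tilde h(x,0)$ with $\tilde h(x,0)$ a nonzero polynomial in $x$ alone; irreducibility of $\ell$ together with $\ell\nmid f^0$ forces $\ell\mid\tilde h(x,0)$, which is absurd. (Reducing modulo $z$ before peeling off $z^j$ would only give $\ell\mid h^0$, which is vacuous when $h^0=0$; hence the detour through the $z$-adic valuation.) The case $\ell=x$ is identical, with $k[y,z]$ in place of $k[x,z]$ and ``polynomial in $y$ alone'' in place of ``in $x$''.

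Next, (iii) is pure bookkeeping on Hilbert functions. Writing $\psi$ for the Hilbert function of $\cK$, (ii) gives $\varphi(n)=\dim H^0(\cI(n))=\psi(n-1)$ for $n<m$ and $\varphi(n)=\psi(n-1)+(n-m+1)$ for $n\ge m$ (note $\dim k[x,z]_{n-m}=\dim k[y,z]_{n-m}=n-m+1$). For $n\gg 0$ this equals $\binom{n+2}{2}-(c+m)$, so $\colength(\cI)=c+m$. For the regularity, the difference function satisfies $\varphi'(n)=\psi'(n-1)$ for $n\le m-1$ and $\varphi'(n)=\psi'(n-1)+1$ for $n\ge m$; since $\reg(\cK)=e<m$ one has $\psi'(k)=k+1$ for all $k\ge e$, so $\varphi'(n)=n+1$ exactly for $n\ge m$, while $\varphi'(n)=\psi'(n-1)\le n<n+1$ for $n\le m-1$ by Iarrobino's condition~(a). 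Thus the graph of $\varphi'$ has precisely the shape of Fig.~2.3 with its jump at degree $m$, whence $\reg(\cI)=m$ (cf.~\S2.1).

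Finally, for (i), suppose $z\xi\in H^0(\cI(n))$ with $\xi\in S_{n-1}$; I would show $\xi\in H^0(\cI(n-1))$. Using (ii) (case $\ell=\alpha x+y$; the case $\ell=x$ is analogous) write $z\xi=\ell g+fh$ with $g\in H^0(\cK(n-1))$ and $h\in k[x,z]_{n-m}$. Reducing modulo $z$ in $k[x,y]$ gives $0=\ell\bar g+f^0\,h(x,0)$ with $h(x,0)\in k[x]$; as before $\ell\nmid f^0$ forces $\ell\mid h(x,0)$, hence $h(x,0)=0$, i.e.\ $h=zh_1$ with $h_1\in k[x,z]_{n-m-1}$. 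Then $z(\xi-fh_1)=\ell g$, so $z\mid g$ (coprimality with $\ell$), say $g=zg_1$, and $g_1\in H^0(\cK(n-2))$ because $z$ is a nonzerodivisor of $\cO_{\mP^2}/\cK$. Cancelling $z$ yields $\xi=\ell g_1+fh_1\in\ell H^0(\cK(n-2))+f\,S_{(n-1)-m}=H^0(\cI(n-1))$. Hence multiplication by $z$ on $S/I$ is injective in every degree, i.e.\ $z$ is a nonzerodivisor of $\cO_{\mP^2}/\cI$. The only genuinely delicate ingredient in the whole argument is the directness in (ii); once that is in hand, (i) and (iii) are routine, much as in the proof of Lemma~2.3.
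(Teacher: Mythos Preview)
Your approach differs from the paper's. The paper does not argue directly: for $\ell=x$ it simply invokes [G4], Lemma~4, p.~660, and then reduces $\ell=\alpha x+y$ (with $\alpha\neq 0$) and $\ell=y$ to that case via the linear automorphisms $x\mapsto\ell,\ y\mapsto y,\ z\mapsto z$ and $x\leftrightarrow y$, with one extra line to convert the resulting complement $k[y,z]$ into the desired $k[x,z]$. You instead give a self-contained elementary proof, establishing the directness and the nonzerodivisor property by peeling off powers of $z$ and reducing modulo $z$, using only that $\ell$ is coprime to $z$ and to $f^0$ in $k[x,y]$. Your route buys independence from the external reference; the paper's route is shorter once that reference is granted.

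One point of logical order: your opening assertion $H^0(\cI(n))=\ell H^0(\cK(n-1))+f\,S_{n-m}$ is only obvious in the direction $\supseteq$; equality amounts to the graded ideal $J:=\ell K+fS$ being saturated. Your proof of (i) actually delivers this (it shows $z$ is a nonzerodivisor on $S/J$, so the irrelevant maximal ideal is not associated to $S/J$), but since your proof of (i) in turn quotes (ii), the argument as written is circular. The fix is painless: first prove the direct-sum decomposition for $J_n$, then prove (i) for $S/J$, conclude that $J$ is saturated and hence $J_n=H^0(\cI(n))$, and only then read off (ii) and (iii). All the ingredients are present and correct; only the order needs adjusting.
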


 \begin{proof}
   If $\ell=x$, these are the statements of ([G4], Lemma 4, p. 660). If
   $\ell=\alpha x+y,\alpha\in k^*$, then let be $u$ the automorphism
   $x\mapsto\ell,y\mapsto y,z\mapsto z$ of $S$. By applying (loc.cit) to
 \[
   \overline{\cK}:=u^{-1}(\cK),\ \overline{\cI}:=u^{-1}(\cI),\
   \overline{f}:=u^{-1}(f)
 \] one gets
   \[
 H^0(\overline{\cI}(n))=xH^0(\overline{\cK}(n-1)))\oplus\overline{f}k[y,z]_{n-m}.
 \]
   Now applying $u$ gives 
 \[
  H^0(\cI(n))=\ell H^0(\cK(n-1))\oplus fk[y,z]_{n-m}.
 \]
   As $k[y,z]=k[\ell-\alpha x,z]$ and $\ell f\in\ell H^0(\cK(m))$, the
   statement (ii) follows, if $\alpha\neq 0$.\\ If $\alpha=0$, we take the
   automorphism $x\mapsto y,y\mapsto x,z\mapsto z$ and argue as before.
 \end{proof}

 We would like to put the section $f$ in a certain normal form. We first
 consider the case $\ell=\alpha x+y$. Then we can write $f^0=x^m+\ell u,u\in
 R_{m-1}$, without restriction. As $m-1\ge e$, there is
 $v=v^0+zv^1+z^2v^2+\cdots\; \in H^0(\cK(m-1))$ such that $v^0=u$. As $f$ is
 determined only modulo $\ell H^0(\cK(m-1))$, we can replace $f$ by
 $\tilde{f}:=f-\ell v$, therefore we can assume without restriction, that
 $f=f^0+zf^1+\cdots+z^mf^m$, where $f^0=x^m$.

 We now suppose that $\cK$ is invariant under $\Gamma$, and will formulate
 conditions that $\cI$ is $\Gamma$-invariant, too. This is equivalent to the
 condition that $f$ is $\Gamma$-invariant modulo $\ell H^0(\cK(m-1))$. By
 ([T2], Hilfssatz 1, p.~142) this is equivalent to the condition that $\langle
 x,y\rangle\partial f/\partial z\subset\ell H^0(\cK(m-1))$. It follows that
 $\ell$ is a divisor of $f^i,1\le i\le n$, i.e., one has $f=x^m+\ell zg,g\in
 S_{m-2}$.

 Write $g=g^0+zg^1+z^2g^2+\cdots$, where $g^i\in R_{m-2-i}$ and choose
 $u=u^0+zu^1+\cdots\in H^0(\cK(m-2))$ such that $u^0=g^0$. This is possible, if
 $m-2\ge e$. As $f$ is determined only modulo $\ell H^0(\cK(m-1))$, one can
 replace $f$ by $\tilde{f}=f-\ell zu$. It follows that one can assume without
 restriction $f=x^m+\ell z^2g$, where $g\in S_{m-3}$.

 Choose $u\in H^0(\cK(m-3))$, where $u^0=g^0$; this is possible, if $m-3\ge
 e$. If this is the case, replace $f$ by $\tilde{f}=f-\ell z^2u$. It follows
 that one can assume without restriction $f=x^m+\ell z^3g$, where $g\in
 S_{m-4}$, etc. Finally one obtains $f=x^m+z^{m-e}\ell g,\ell=\alpha x+y,g\in
 S_{e-1}$, and the $\Gamma$-invariance of $f$ modulo $\ell H^0(\cK(m-1))$ is
 equivalent to $\langle x,y\rangle [(m-e)z^{m-e-1}\ell g+z^{m-e}\ell\partial g/\partial z]\subset \ell H^0(\cK(m-1))$, i.e. equivalent to:
 \begin{equation}\label{1}
   \langle x,y\rangle [(m-e)g+z\partial g/\partial z]\subset H^0(\cK(e))
 \end{equation}

 In the case $\ell=x$, because of $R_m=xR_{m-1}\oplus y^m\cdot k$, one can
 write $f^0=y^m+xu$, and the same argumentation shows that one can write
 $f=y^m+z^{m-e}xg,g\in S_{e-1}$, and the $\Gamma$-invariance can again be
 expressed by the inclusion (2.1).

 \subsection{Standard forms}\label{2.4.3}

 Let $\cI\subset\cO_{\mP^2}$ have the colength $d$ and Hilbert function
 $\varphi$, and let $\cI$ be invariant under $G:=\Gamma\cdot T(\rho)$, where
 $\rho_2>0$. Moreover, we assume that $g^*(\varphi)>g(d)$. By Lemma 2.2 it
 follows that $\cI=\ell\cK(-1)+f\cO_{\mP^2}(-m)$, if $k=\overline{k}$ is
 supposed. As $H^0(\cI(m-1))=\ell H^0(\cK(m-2))$ is then invariant under $G$
 and $m-1>e=\reg (\cK)$, it follows that $\langle\ell\rangle$ and $\cK$ are
 $G$-invariant. Assume that $\langle ax+by+z\rangle$ is $\Gamma$-invariant.
 But then $\langle ax+by+z\rangle=\langle
 (a+\alpha)x+(b+\beta)y+z\rangle,\;\forall\;\alpha,\beta\in k$, which is not
 possible. Thus we have $\ell=ax+by$. From $\langle
 \lambda_0ax+\lambda_1by\rangle=\langle
 ax+by\rangle,\;\forall\;(\lambda_0,\lambda_1,\lambda_2)\in T(\rho)$ it
 follows that $\lambda_0/\lambda_1=1\;\forall\;
 (\lambda_0,\lambda_1,\lambda_2)\in T(\rho)$, if $a$ and $b$ both were
 different from 0. But then it would follow $T(\rho)\subset T(1,-1,0)$, and
 therefore $\rho_2=0$, contradiction. Therefore we have $\ell=x$ or $\ell=y$,
 without restriction.

 We consider the case $\ell=x$, for example. As it was shown in (2.4.2) we can
 write $f=y^m+z^{m-e}xg, e=\reg(\cK), g\in S_{e-1}$.

 From Appendix E it follows that $xH^0(\cK(m-1))$
 has a standard basis of $T(\rho)$-semi-invariants $f_i=m_ip_i(X^{\rho})$,
 i.e., $m_i$ is a monomial, $p_i$ is a polynomial in one variable with constant
 term 1, and such that $m_i$ does not occur in $f_j$ any longer, if $i\neq
 j$. Now each $f_i$ is divisible by $x$, therefore $y^m$ does not occur in
 $f_i$. If the initial monomial $m_i$ of $f_i$ appears in $f$, then $m_i$ has
 to appear in $z^{m-e}xg$. By choosing $\alpha\in k$ in a suitable way, one
 can achieve that $m_i$ does not occur in $\tilde{f}:=f-\alpha f_i$. As
 $\rho_2>0$ and $f_i$ is divisible by $x$, $\tilde{f}$ still has the shape
 $y^m+z^{m-e}x\tilde{g},\tilde{g}\in S_{e-1}$. By repeating this procedure
 one can achieve that none of the $m_i$ does occur in $f=y^m+z^{m-e}xg$
 (and $f$ is still invariant under $\Gamma$ modulo $xH^0(\cK(m-1))$. The same
 argumentation as in the proof of the lemma in Appendix E then shows that
 $f$ is automatically a $T(\rho)$-semi-invariant with initial monomial $y^m$,
 and $f$ together with the $f_i$ forms a standard basis of $H^0(\cI(m))$. We
 summarize:

 \begin{lemma}
   Let be $\cI\subset\cO_{\mP^2_k}$ an ideal of colength $d$, with Hilbert
   function $\varphi$, which is invariant under $G=\Gamma\cdot T(\rho)$,
   where $\rho_2>0$. Assume that $g(d)<g^*(\varphi)$. (It is not assumed that
   $k=\overline{k}$.) Then $\cI=x\cK(-1)+f\cO_{\mP^2}(-m)$ or
   $\cI=y\cK(-1)+f\cO_{\mP^2}(-m)$, where $\cK$ is a $G$-invariant ideal with
   $\colength(\cK)=c , \reg (\cK)=e$, and $c+m=d$. Moreover $f\in
   H^0(\cK(m))$ can be written in the form $f=y^m+z^{m-e}xg$ respectively in
   the form $f=x^m + z^{m-e}yg $, where $g\in S_{e-1}$. We have $\langle
   x,y\rangle\partial f/\partial z\subset x H^0(\cK(m-1))$ or $\langle
   x,y\rangle\partial f/\partial z\subset y H^0(\cK(m-1))$, respectively, and
   each of these inclusions are equivalent to the inclusion (2.1) in section
   (2.4.2). One has
   \begin{align*}
 H^0(\cI(n))  & =
 \begin{cases}
   xH^0(\cK(n-1)) & \text{if $n<m$},\\
   xH^0(\cK(n-1))\oplus fk[y,z]_{n-m} & \text{if $n\ge m$},
 \end{cases}
 \intertext{respectively}
 H^0(\cI(n)) & = 
 \begin{cases}
   yH^0(\cK(n-1))  & \text{if $n\le m$},\\
       yH^0(\cK(n-1))\oplus fk[x,z]_{n-m} & \text{if $n\ge m$}.
     \end{cases}
 \end{align*}
 \end{lemma}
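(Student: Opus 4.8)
The plan is to assemble the ingredients of Sections 2.4.2--2.4.3, the only genuine addition being a descent step that removes the hypothesis $k=\overline{k}$ of Lemmas 2.1 and 2.2. First I would pass to $\overline{\cI}:=\cI\otimes_k\overline{k}$, which is still $G$-invariant and has the same colength $d$, regularity $m$ and Hilbert function $\varphi$. Since $g(d)<g^*(\varphi)$, Lemma 2.2 shows that the hypotheses of Lemma 2.1 hold for $\overline{\cI}$, so Lemma 2.1 gives $\overline{\cI}=\overline{\ell}\,\overline{\cK}(-1)+\overline{f}\,\cO_{\mP^2}(-m)$ with $\colength(\overline{\cK})=c$, $\reg(\overline{\cK})=e\le m-3$ (Corollary 2.1(i)), $c+m=d$, and $\overline{\ell}\nmid\overline{f}$. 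Because $m-1>e$, the graded piece $H^0(\overline{\cI}(m-1))=\overline{\ell}\,H^0(\overline{\cK}(m-2))$ is $G$-invariant, and the uniqueness in Corollary 2.1(ii) (available since $\reg(\overline{\cK})<m-1$) forces both $\langle\overline{\ell}\rangle$ and $\overline{\cK}$ to be $G$-invariant. The argument of the next paragraph identifies $\overline{\ell}$ with a coordinate, say $x$; then, using $\overline{\ell}\nmid\overline{f}$, one reads off $\overline{\cK}=(\overline{\cI}:x)$, and since the graded colon ideal by $x$ commutes with the faithfully flat base change, the ideal $\cK:=(\cI:x)$ over $k$ satisfies $\cK\otimes_k\overline{k}=\overline{\cK}$; hence $\cK$ is $G$-invariant over $k$ with $\colength(\cK)=c$, $\reg(\cK)=e\le m-3$, $c+m=d$, and choosing $f\in H^0(\cI(m))$ that spans the one-dimensional $k$-space $H^0(\cI(m))/xH^0(\cK(m-1))$ gives $\cI=x\cK(-1)+f\cO_{\mP^2}(-m)$ with $x\nmid f$.

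Next I would pin down $\ell$. As $\Gamma$ fixes $x,y$ and sends $z\mapsto\alpha x+\beta y+z$, invariance of the line $\langle\ell\rangle$ under $\Gamma$ forces the $z$-coefficient of $\ell$ to vanish --- otherwise the $\Gamma$-orbit of $\langle\ell\rangle$ would be a positive-dimensional family of lines --- so $\ell=ax+by$; invariance under $T(\rho)$ then forces $ab=0$, for if both $a$ and $b$ were nonzero one would get $T(\rho)\subseteq T(1,-1,0)$ and hence $\rho_2=0$, against the hypothesis $\rho_2>0$. After rescaling, $\ell=x$ or $\ell=y$; I treat the case $\ell=x$, the case $\ell=y$ following by exchanging $x$ and $y$. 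To apply the machinery of Section 2.4.2 I would also check that $z$ is a nonzerodivisor modulo $\cK$: we may assume $c\ge1$ (if $c=0$ then $\cK=\cO_{\mP^2}$ and this is vacuous), and then the saturated ideal $\cK$ is $\Gamma$-invariant of finite positive colength, so, $\Gamma$ being connected, it fixes each of the finitely many points of $V_+(\cK)$; the unique $\Gamma$-fixed point of $\mP^2$ being $[0:0:1]$, one gets $V_+(\cK)=\{[0:0:1]\}$, whence $\sqrt{\cK}=(x,y)$ and $z$ lies in no associated prime of $\cK$. Now Lemma 2.5 applies with $\ell=x$ and yields at once $\colength(\cI)=c+m=d$, $\reg(\cI)=m$, and the formulas $H^0(\cI(n))=xH^0(\cK(n-1))$ for $n<m$ and $H^0(\cI(n))=xH^0(\cK(n-1))\oplus fk[y,z]_{n-m}$ for $n\ge m$.

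It remains to normalize $f$ and to read off the $\Gamma$-invariance condition, exactly as in Section 2.4.2. Writing $R=k[x,y]$ and using $R_m=xR_{m-1}\oplus ky^m$, rescale $f$ so that $f^0=y^m+xu$ with $u\in R_{m-1}$; because $\reg(\cK)=e\le m-3$, the map $H^0(\cK(j))\to k[x,y]_j$ of reduction modulo $z$ is surjective for every $j\ge e$, so one may successively subtract suitable elements of $xH^0(\cK(m-1))$ --- which only changes $f$ within its coset --- until $f=y^m+z^{m-e}xg$ with $g\in S_{e-1}$. The $\Gamma$-invariance of $\cI$ is equivalent to that of $f$ modulo $xH^0(\cK(m-1))$, that is, by ([T2], Hilfssatz 1), to $\langle x,y\rangle\,\partial f/\partial z\subseteq xH^0(\cK(m-1))$, and substituting the normal form turns this into the inclusion (2.1). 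Finally, take a standard basis of $xH^0(\cK(m-1))$ consisting of $T(\rho)$-semi-invariants $f_i=m_ip_i(X^\rho)$ (Appendix E) and repeatedly subtract from $f$ those $f_i$ whose initial monomial occurs in $f$; this is legitimate because $\rho_2>0$ and $x\mid f_i$, so the shape $y^m+z^{m-e}x\widetilde{g}$ is preserved, and at the end $f$ is itself a $T(\rho)$-semi-invariant with initial monomial $y^m$, so that $f$ together with the $f_i$ forms a standard basis of $H^0(\cI(m))$.

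The step I expect to be the real obstacle is the descent to an arbitrary $k$: one has to be sure that the distinguished line $\langle\overline{\ell}\rangle$ and the ideal $\overline{\cK}$ produced over $\overline{k}$ are actually defined over $k$, and here Corollary 2.1(ii) is exactly what does the work, since it makes $\langle\ell\rangle$ and $\cK$ depend on $\cI$ alone (its hypothesis $\reg(\cK)<m-1$ being guaranteed by $e\le m-3$). Granted that, compatibility of the colon ideal $(\cI:x)$ with base change transports colength, regularity and the $H^0$-formulas back from $\overline{k}$ to $k$; the remaining work is the mechanical normalization of Section 2.4.2, where one must keep each step within the range $j\ge e$ and avoid destroying the normal form $y^m+z^{m-e}xg$ or, at the last stage, the $T(\rho)$-semi-invariance of $f$.
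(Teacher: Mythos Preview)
Your proposal is correct and follows essentially the same route as the paper: reduce to $\overline{k}$ via Lemmas 2.1--2.2 and the discussion of 2.4.2--2.4.3, identify $\ell\in\{x,y\}$ by the $\Gamma\cdot T(\rho)$-invariance argument with $\rho_2>0$, descend $\cK$ through the colon ideal $\cK=(\cI:\ell)$ and its compatibility with the base change $k\hookrightarrow\overline{k}$, and then run the normalization of 2.4.2 together with the standard-basis argument of Appendix~E. The only organizational difference is that the paper descends the already-normalized $\overline{f}$ by invoking uniqueness of standard-basis elements, whereas you first descend an arbitrary $f$ and then normalize over $k$; both are fine, and your explicit verification that $z$ is a non-zero-divisor modulo $\cK$ (via the $\Gamma$-fixed-point argument) is a detail the paper leaves implicit.
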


 If one chooses a standard basis 
  $\{f_i\}$ of $xH^0(\cK(m-1))$ or of
 $yH^0(\cK(m-1))$, respectively, then one can choose $f$ in such a way that $f$
 has the form and the properties mentioned above and together with the $f_i$'s
 forms a standard basis of $H^0(\cI(m))$. 

 \begin{proof}
   If $k=\overline{k}$ this follows from the foregoing discussion. One has,
   e.g. $\cI\mathop{\otimes}\limits_k\overline{k}=y\overline{\cK}(-1)+\overline{f}\cO_{\mP^2}(-m)$, where $\overline{\cK}\subset\cO_{\mP^2\otimes\overline{k}}$ and $\overline{f}\in H^0(\overline{\cK}(m))$ have the properties mentioned above.
 One sees that $\overline{\cK}=\cI\otimes\overline{k}:y\cO_{\mP^2\otimes\overline{k}}(-1)$ and therefore one has the exact sequence: 
   \[
   0\to (\cO_{\mP^2\otimes\overline{k}}/\overline{\cK})(-1)\to
   \cO_{\mP^2\otimes\overline{k}}/\cI\otimes\overline{k}\to\cO_{\mP^2\otimes\overline{k}}/\cI\otimes\overline{k}+y\cO_{\mP^2\otimes\overline{k}}(-1)\to
   0.
 \]
   If $\cK:=\cI:y\cO_{\mP^2}(-1)$, then the sequence 
 \[
 0\to
 (\cO_{\mP^2}/\cK)(-1)\to\cO_{\mP^2}/\cI\to\cO_{\mP^2}/\cI+y\cO_{\mP^2}(-1)\to
 0
 \]
 is exact, too. Tensoring this sequence with $\overline{k}$ one obtains a
 commutative diagram
 \[
 \xymatrix{
   0 \ar[r] & (\cO_{\mP^2\otimes\ol{k}} / \cK\otimes\ol{k})(-1)\ar[r]^{\cdot y}\ar[d]  
            & \cO_{\mP^2\otimes\ol{k}}/ \cI\otimes\ol{k} \ar[r]\ar@{=}[d]  
            & \cO_{\mP^2\otimes\ol{k}} /
               \cI\otimes\ol{k}+y\cO_{\mP^2\otimes\ol{k}}(-1) \ar[r]\ar@{=}[d] 
            & 0  \\
   0 \ar[r] & \cO_{\mP^2\otimes\ol{k}}/\ol{\cK} (-1) \ar[r]  
            & \cO_{\mP^2\otimes\ol{k}} /\cI\otimes\ol{k}\ar[r]  
            & \cO_{\mP^2\otimes\ol{k}} /
                  \cI\otimes\ol{k}+y\cO_{\mP^2\otimes\ol{k}}(-1) \ar[r] 
            & 0 
 }
 \]
 with exact rows, where the first vertical arrow is obtained from the canonical
 injection $\cK\otimes \overline{k}\hookrightarrow\overline{\cK}$ by tensoring
 with $\cO_{\mP^2\otimes\overline{k}}(-1)$. It follows from this, that
 $\cK\otimes\overline{k}\stackrel{\sim}{\to}\overline{\cK}$. Because of
 $H^0(\cI(m))\otimes\overline{k}\stackrel{\sim}{\to}
 H^0(\cI(m)\otimes\overline{k})$ one obtains a standard basis of
 $T(\rho)$-semi-invariants of $H^0(\cI(m)\otimes\overline{k})$ by tensoring a
 standard basis of $H^0(\cI(m))$ with
 $\mathop{\otimes}\limits_k 1_{\overline{k}}$.
  As the elements of a standard basis
 are uniquely determined up to constant factors, it follows that
 $\overline{f}=f\otimes_k 1_{\overline{k}}$, where $f\in H^0(\cK(m))$.
 Therefore $f$ has the form $x^m+z^{m-e}yg,g\in S_{e-1}$, if $\overline{f}$ has
 the form $x^m+z^{m-e}y\overline{g},\overline{g}\in
 S_{e-1}\otimes\overline{k}$. For reasons of dimension it follows
 $H^0(\cI(n))=yH^0(\cK(n-1)),n<m$, and
 $H^0(\cI(n))=yH^0(\cK(n-1))+fk[x,z]_{n-m},n\ge m$. As the $G$-invariance of
 $\cK$ follows from the $G$-invariance of $\cI$, the remaining statements of
  Lemma 2.6 follow by the same argumentation as in the case $k=\overline{k}$.
 \end{proof}

 \begin{definition}\label{2}
   The (uniquely determined) decomposition $\cI=x\cK(-1)+f\cO_{\mP^2}(-m)$ or
   $\cI=y\cK(-1)+f\cO_{\mP^2}(-m)$ of Lemma 2.6 is called $x$-standard form or
   $y$-standard form of $\cI$, respectively.( N.B. For the Hilbert function $\varphi$ of $\cI$
   this definition implies that $g(d) < g^*(\varphi)$ is fulfilled.)
 \end{definition}

 \begin{corollary}
   Let be $R = k[x,y]$. If $ \cI$ has x-standard form (resp. y-standard form),
   then $xR_{m-2}$ (resp. $yR_{m-2} $) is contained in $H^0(\cI(m-1))$ and thus
   $ xR_{m-1}$ (resp. $yR_{m-1}$) is contained in $H^0(\cI(m))$.
 \end{corollary}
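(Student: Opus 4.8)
I treat the $x$-standard case; the $y$-standard case is identical after interchanging $x$ and $y$. By Lemma 2.6 (the case $n=m-1<m$) we have $H^0(\cI(m-1))=xH^0(\cK(m-2))$, so the first inclusion $xR_{m-2}\subseteq H^0(\cI(m-1))$ is equivalent to $R_{m-2}\subseteq H^0(\cK(m-2))$. Granting this, the second inclusion is automatic: since $R=k[x,y]$ satisfies $R_{m-1}=R_1R_{m-2}$ and $\bigoplus_n H^0(\cI(n))$ is an ideal of $S=k[x,y,z]$, one gets $xR_{m-1}=R_1\cdot(xR_{m-2})\subseteq S_1\cdot H^0(\cI(m-1))\subseteq H^0(\cI(m))$. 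So the whole corollary comes down to proving $R_{m-2}\subseteq H^0(\cK(m-2))$.

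\textbf{Geometry of $\cK$.} In the standard-form situation $\cK$ is invariant under $\Gamma$ (this is part of the data in Lemma 2.6, cf.\ the discussion in $(2.4.3)$). Since $\Gamma\cong\G_a^2$ is connected and $\cK$ cuts out a $0$-dimensional subscheme $W\subset\mP^2$, every point of $W$ must be fixed by $\Gamma$ (a non-fixed point would carry a $1$-dimensional orbit lying inside $W$); but $[0:0:1]$ is the only $\Gamma$-fixed point of $\mP^2$, so $W$ is concentrated there and $\bigoplus_nH^0(\cK(n))$ is $(x,y)$-primary. In particular $z$ is a non-zerodivisor modulo $\cK$. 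Moreover, $\cI$ being in standard form means $g(d)<g^*(\varphi)$, so the hypotheses of Lemma 2.1 hold and Corollary 2.1 gives $\reg(\cK)=e\le m-3$; hence $m-2\ge e+1$.

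\textbf{Proving $R_{m-2}\subseteq H^0(\cK(m-2))$.} Both sides are stable under $T(\rho)$, so it suffices to show that every monomial $x^ay^b$ with $a+b=m-2$ lies in $H^0(\cK(m-2))$. Fix one. Because $z$ is a non-zerodivisor modulo $\cK$ and $\reg(\cK)=e$, a dimension count (using $\dim H^0(\cK(n))=\binom{n+2}{2}-\colength(\cK)$ for $n\ge e-1$) shows that for every $n\ge e$ the image of $H^0(\cK(n))$ in $S_n/zS_{n-1}=k[x,y]_n$ is all of $k[x,y]_n$. Applying this to $n=m-2>e$ and lifting $T(\rho)$-equivariantly, we obtain a $T(\rho)$-semi-invariant $q\in H^0(\cK(m-2))$ with $q\equiv x^ay^b$ modulo $(z)$. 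By the weight constraint, apart from $x^ay^b$ itself $q$ involves only monomials $x^{a+t\rho_0}y^{b+t\rho_1}z^{t\rho_2}$ with $t\ge 1$, so $q=x^ay^b+(z\text{-divisible terms})$; if $q=x^ay^b$ we are done, so assume not. Now substitute $z\mapsto z+\alpha x+\beta y$ in $q$: the result lies in $H^0(\cK(m-2))$ for all $\alpha,\beta\in k$, hence (Zariski density, i.e.\ interpolation in $\alpha,\beta$) every $(\alpha,\beta)$-coefficient of it lies in $H^0(\cK(m-2))$. The part of top $(\alpha,\beta)$-degree comes from the largest surviving $t$, namely $T\rho_2$ with $T=\max\{t:c_t\neq0\}$, and it equals $c_T\,x^{a+T\rho_0}y^{b+T\rho_1}(\alpha x+\beta y)^{T\rho_2}$; reading off its coefficients (nonzero multiples of $x^{a+T\rho_0+j}y^{b+T\rho_1+T\rho_2-j}$ for $0\le j\le T\rho_2$) and taking $j=-T\rho_0$ produces a nonzero scalar times $x^ay^b$. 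Hence $x^ay^b\in H^0(\cK(m-2))$.

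\textbf{Where the difficulty sits.} The delicate point is exactly the passage from ``$x^ay^b\equiv(\text{element of }\cK)$ modulo $(z)$'' to ``$x^ay^b\in\cK$''; this is false for a general $(x,y)$-primary ideal of regularity $e$, and the argument above uses essentially both the $\Gamma$-invariance and the slack $m-2\ge e+1$ (note $T\rho_0\ge -\,$something forces $\rho_0,\rho_1\le 0$, which holds in the relevant cases $\rho=(0,-1,1),(-1,0,1)$; for other $\rho$ one must combine the two substitutions $z\mapsto z+\alpha x$ and $z\mapsto z+\beta y$ more carefully). When $\cK$ is \emph{monomial} --- which is the situation in every application of the corollary --- the whole step is transparent: $\reg(\cK)=e$ then says precisely that no monomial outside $\cK$ has degree $\ge e$, i.e.\ $(x,y)^e\subseteq\bigoplus_nH^0(\cK(n))$, whence $R_{m-2}\subseteq(x,y)^{m-2}\subseteq(x,y)^e\subseteq\bigoplus_nH^0(\cK(n))$, and the corollary follows at once.
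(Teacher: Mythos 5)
Your reduction to the statement $R_{m-2}\subseteq H^0(\cK(m-2))$ via Lemma 2.6, and the deduction of the second inclusion from the first, are fine and agree with the paper. The gap is in how you prove that inclusion. The paper's own proof uses the \emph{colength} bound: by Lemma 2.4 the standard form forces $m\ge c+2$, i.e.\ $m-2\ge c=\colength(\cK)$, and then Appendix~C, Remark~2 (which rests only on the $\Gamma$-invariance of $\cK$ and the limit argument of the Appendix~C Lemma) gives $R_n\subset H^0(\cK(n))$ for all $n\ge c$. You never invoke Lemma 2.4 or Appendix~C; instead you try to get by with the weaker information $m-2\ge\reg(\cK)+1$ plus equivariance, and that is exactly where your argument breaks. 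In your coefficient-extraction step the monomials you obtain from the top $(\alpha,\beta)$-degree part are $x^{a+T\rho_0+i}y^{b+T\rho_1+j}$ with $i,j\ge 0$, $i+j=T\rho_2$; recovering $x^ay^b$ itself requires $i=-T\rho_0\ge 0$ and $j=-T\rho_1\ge 0$, i.e.\ $\rho_0\le 0$ \emph{and} $\rho_1\le 0$. But the hypothesis of Lemma 2.6/Definition 2 is only $\rho_2>0$, and in the situations where the Corollary is actually used (Main Cases I and II of Chapter 3) exactly one of $\rho_0,\rho_1$ is strictly positive; your remark that the ``relevant cases'' are $\rho=(0,-1,1),(-1,0,1)$ confuses the special vectors of the Auxiliary Lemmas in (2.4.1) with the general setting. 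In the mixed-sign cases your extraction only yields monomials whose $x$- (resp.\ $y$-) exponent strictly exceeds $a$ (resp.\ $b$), so $x^ay^b$ is never produced, and the promised ``more careful combination of the two substitutions'' is not supplied — nor is it clear it exists with only $m-2\ge e+1$ at hand (note that when both $\rho_0,\rho_1<0$ the paper's Auxiliary Lemma 2 shows $\cK$ is monomial, which is why that case is easy; the mixed-sign cases are precisely where $\cK$ can be non-monomial).

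Your fallback ``in every application $\cK$ is monomial'' does not close the gap either: the Corollary as stated (and as used, e.g.\ in Remark 2.5 and in the analogous inclusions exploited in Chapters 3 and 8) concerns general standard-form ideals, where $\cK$ is only $\Gamma\cdot T(\rho)$-invariant. For monomial $\cK$ your observation that $\reg(\cK)=e$ means no monomial of degree $\ge e$ lies outside $\cK$ is correct, but it does not cover the statement you were asked to prove. The fix is simply the paper's route: use $m-2\ge c$ from Lemma 2.4 together with Appendix~C, Remark~2 applied to the $\Gamma$-invariant ideal $\cK$.
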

 \begin{proof}
   One has $ m-2\ge c$ by Lemma 2.4 and thus $xR_{m-2}\subset xH^0(\cK(m-2))$
   (resp. $yR_{m-2}\subset yH^0(\cK(m-2))$) by Appendix C, Remark 2.
 \end{proof}

 \begin{remark}
   If $\cI$ has $x$-standard from or $y$-standard form, respectively, and if
   $\G_m$ acts on $S$ by $\sigma(\lambda):x\mapsto x,y\mapsto y,z\mapsto\lambda
   z,\lambda\in k^*$, then $\cI_0:=\lim\limits_{\lambda\to
     0}\sigma(\lambda)\cI$ again has $x$-standard form or $y$-standard form,
   respectively. This follows from $h^0(\cI_0(n))=h^0(\cI(n)),n\in\N$ (cf.
   [G2], Lemma 4, p. 542), because then $H^0(\cI_0(n))$ is generated by the
   initial monomials of a standard basis of $H^0(\cI(n)),n\in\N$. The Figures
   2.13a and 2.13b show the typical shape of the pyramid formed by the initial
   monomials.
 \end{remark}

 \begin{remark}\label{4}
   An ideal cannot have $x$-standard form and $y$-standard form at the same
   time. For $m=\reg(\cI)$ and $c=\colength(\cI)$ are determined by the
   Hilbert function $\varphi$ of $\cI$. If $\cI$ would have $x$-standard form
   as well as $y$-standard form, then $E(H^0(\cI_0(d)))$ has the form shown in
   Figure 2.14a. As $\cI$ and $\cI_0$ have the same Hilbert function, $\varphi$
  has the form shown in Figure 2.14b, and therefore $g^*(\varphi)\le
  g^*(\chi)=g(d)$, where $\chi$ is the Hilbert function of (2.2.2).
\end{remark}

\begin{remark}\label{5}
  (Notations and assumption as in Remark 2.3)
  $\cI_{\infty}:=\lim\limits_{\lambda\to\infty}\sigma(\lambda)\cI$ has again
  $x$-standard form or $y$-standard form, respectively. The reasoning is as
  follows: The Hilbert function $\vartheta$ of $\cI_{\infty}$ (respectively
  the regularity $\mu$ of $\cI_{\infty}$) is $\ge\varphi$ (respectively $\ge
  m$). This follows from the semicontinuity theorem. By Lemma 2.4 it
  follows that $m\ge c+2$, therefore $R_{m-2}\subset H^0(\cK(m-2))$ (cf.
  Appendix C, Remark 2). If $\cI$ has $x$-standard form (respectively
  $y$-standard form), then $xR_{m-2}\subset H^0(\cI(m-1))$ (respectively
  $yR_{m-2}\subset H^0(\cI(m-1)))$ follows. Therefore $xR_{m-2}\subset
  H^0(\cI_{\infty}(m-1))$ (respectively $yR_{m-2}\subset
  H^0(\cI_{\infty}(m-1)))$. It follows that $xR_{\mu-2}\subset
  H^0(\cI_{\infty}(\mu-1))$ (respectively $yR_{\mu-2}\subset
  H^0(\cI_{\infty}(\mu-1)))$. Now $\cI_{\infty}$ has $x$-standard form or
  $y$-standard form, in any case, and the above inclusions show that $\cI$
  and $\cI_{\infty}$ have the same kind of standard form.
\end{remark}

\section{The type of a $G$-invariant ideal}\label{2.5}

Let $\cI\subset\cO_{\mP^2_k}$ be an ideal of colength $d$, with Hilbert
function $\varphi$ and invariant under $G=\Gamma\cdot T(\rho)$, where
$\rho_2>0$ and $k$ is an extension field of $\C$.

\subsection{}\label{2.5.1}
\begin{definition}
$1^{\circ}$ $\cI$ has the type $(-1)$, if one of the following cases occurs:\\
\quad 1st case: $g^*(\varphi)\le g(d)$, where $d\ge 5$ by convention.\\
\quad 2nd case: $0\le d\le 4$

$2^{\circ}$ We now assume $g^*(\varphi)>g(d)$, which notation implies $d\ge
5$. Then one has $\cI=\ell_0\cI_1(-1)+f_0\cO_{\mP^2}(-m_0)$, where
$(\ell_0,\cI_1,f_0,m_0)$ is as in Lemma 2.6. If $\cI_1$ has type $(-1)$, then we
say $\cI$ has type 0. If $\cI_1$ is not of type $(-1)$, then
$\cI_1=\ell_1\cI_2(-1)+f_1\cO_{\mP^2}(-m_1)$, where $(\ell_1,\cI_2,f_1,m_1)$
is as in Lemma 2.6. If $\cI_2$ has the type $(-1)$, then we say $\cI$ has the
type 1, etc. As $d=\colength(\cI)=\colength(\cI_1)+m_0$, etc., the
colengths of the ideals in question decrease and the procedure will
terminate. We have
\end{definition}

\begin{lemma}\label{7}
If $\cI$ has not the type $(-1)$, then one has a sequence of decompositions
%
%
\begin{equation}\label{2}
  \begin{aligned}
  \cI=:\cI_0 & =\ell_0\cI_1(-1)+f_0\cO_{\mP^2}(-m_0),\\
  \cI_1& =\ell_1\cI_2(-1)+f_1\cO_{\mP^2}(-m_1),\\
  \cdots\cdots & \cdots\cdots\cdots\cdots\cdots\cdots\cdots\cdots\cdots\\
  \cI_{r-1}& =\ell_{r-1}\cI_r(-1)+f_{r-1}\cO_{\mP^2}(-m_{r-1}),\\
  \cI_r& =\ell_r\cK(-1)+f_r\cO_{\mP^2}(-m_r).
\end{aligned}
\end{equation}
For a given ideal $\cI_i , (\ell_i,\cI_{i+1},f_i,m_i)$ is defined as in Lemma 2.6,
 where $0\le i\le r$ and $\cI_{r+1}:=\cK$. If $d_i$ and $\varphi_i$ is the
colength and the Hilbert function, respectively, of $\cI_i$, then the
inequality $g(d_i)<g^*(\varphi_i)$ is fulfilled. The ideal $\cK$ has the type
$(-1)$. The colength (the Hilbert function, the regularity) of $\cK$ is
denoted by $c$ (by $\psi$ and $\kappa$, respectively).
\end{lemma}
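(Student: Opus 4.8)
The plan is to read off the chain of decompositions directly from the recursive Definition of the type, using Lemma~2.6 at each step to produce a standard-form decomposition and Lemma~2.4 to guarantee that the process terminates. Throughout I keep the standing hypothesis of Section~2.5: $\cI$ is invariant under $G=\Gamma\cdot T(\rho)$ with $\rho_2>0$. First I would unwind the hypothesis: saying that $\cI_0:=\cI$ is \emph{not} of type $(-1)$ means, by part $1^{\circ}$ of the Definition, that neither $d_0:=\colength(\cI_0)\le 4$ nor ($d_0\ge 5$ and $g^*(\varphi_0)\le g(d_0)$) holds, i.e.\ that $d_0\ge 5$ and $g(d_0)<g^*(\varphi_0)$. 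This is exactly the hypothesis of Lemma~2.6, so that lemma produces $\cI_0=\ell_0\cI_1(-1)+f_0\cO_{\mP^2}(-m_0)$ with $\ell_0\in\{x,y\}$, with $\cI_1$ again $G$-invariant, with $f_0$ of the shape prescribed there, and with $\colength(\cI_1)+m_0=d_0$.

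Next I would iterate. Given a $G$-invariant ideal $\cI_i$ already constructed, I test whether it is of type $(-1)$. If it is, I stop: set $r:=i-1$ and $\cK:=\cI_i$, so that $\cI_{r+1}=\cK$ and the last line of the chain reads $\cI_r=\ell_r\cK(-1)+f_r\cO_{\mP^2}(-m_r)$. If it is not, then as above $d_i:=\colength(\cI_i)\ge 5$ and $g(d_i)<g^*(\varphi_i)$, so Lemma~2.6 again supplies $\cI_i=\ell_i\cI_{i+1}(-1)+f_i\cO_{\mP^2}(-m_i)$ with $\cI_{i+1}$ $G$-invariant and $\colength(\cI_{i+1})+m_i=d_i$, and I continue with $i$ replaced by $i+1$. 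By construction each quadruple $(\ell_i,\cI_{i+1},f_i,m_i)$ is the data furnished by Lemma~2.6, and $\cI_0,\dots,\cI_r$ are precisely the ideals that were decomposed, hence all \emph{not} of type $(-1)$; so $g(d_i)<g^*(\varphi_i)$ for $0\le i\le r$, which is the claimed numerical inequality, while $\cK=\cI_{r+1}$ has type $(-1)$ by the stopping rule.

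The one step that needs genuine care is showing the iteration halts after finitely many stages. For this I would invoke Lemma~2.4: whenever $\cI_i$ is not of type $(-1)$ --- so that $g(d_i)<g^*(\varphi_i)$ --- one gets $m_i\ge\colength(\cI_{i+1})+2$, whence $d_{i+1}=d_i-m_i\le d_i-2<d_i$ (the crude bound $m_i=\reg(\cI_i)\ge 1$ would already suffice). Thus $d_0>d_1>d_2>\cdots$ is a strictly decreasing sequence of non-negative integers, so at some stage $r+1$ one reaches an ideal with $\colength(\cI_{r+1})\le 4$, which is of type $(-1)$ by the second case of part $1^{\circ}$ (it may have become type $(-1)$ earlier via the first case); then I set $\cK:=\cI_{r+1}$ and let $c,\psi,\kappa$ denote its colength, Hilbert function and regularity. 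Apart from termination, the only bookkeeping point is that $G$-invariance with $\rho_2>0$ is inherited by each $\cI_{i+1}$ (this is part of the conclusion of Lemma~2.6), so that Lemma~2.6 may legitimately be reapplied; there is no deeper obstacle, the lemma being in essence just a transcription of the recursion defining the type.
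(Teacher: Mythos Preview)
Your proposal is correct and follows exactly the approach implicit in the paper: the lemma is essentially a transcription of the recursive Definition in $2^{\circ}$, and the paper's only proof content is the remark (already in the Definition) that the colengths strictly decrease so the recursion terminates, together with Corollary~2.1 for uniqueness. Your write-up is more careful in making explicit that ``not of type $(-1)$'' is equivalent to $d_i\ge 5$ and $g(d_i)<g^*(\varphi_i)$, so that Lemma~2.6 applies and $G$-invariance is inherited, but the route is the same.
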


$3^{\circ}$ We have already noted in Corollary 2.1 that the
decompositions of $\cI_0,\cI_1,\cdots$, are uniquely determined, in essence.
Therefore the number $r$ is determined uniquely. It is called the $\emph{type}$ of
$\cI$. The types of the ideals occurring in (2.2), their Hilbert functions and
the numbers $m_0,\cdots,m_r$ are uniquely determined by the Hilbert function
$\varphi$. \hfill $\qed$

\subsection{Properties of an ideal of type $r\ge 0$}\label{2.5.2}

The assumptions and notations are as before, and we assume that $\cI$ has the
type $r\ge 0$.

\begin{lemma} In the decompositions (2.2) of $\cI$ one has:
  \begin{enumerate}[(a)]
  \item $\colength(\cI_i)=\colength(\cI_{i+1})+m_i$, i.e.,
$\colength(\cI_i)=c+m_r+\cdots+m_i, 0\le i\le r$.
\item If $r=0$, then $m_0\geq c+2$, where $c=\colength(\cK)$.
\item If $r\ge 1$, then $m_0\ge c+2+m_r+\cdots+m_1=\colength(\cI_1)+2$.
\item If $r\ge 0$, then $m_0\ge 2^r(c+2)$. 
\end{enumerate}
\end{lemma}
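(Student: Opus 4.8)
The plan is to dispose of the four assertions in sequence, each one reducing to Lemma 2.4 together with elementary bookkeeping of the colengths $d_i=\colength(\cI_i)$. Assertion (a) is purely formal: for every link $\cI_i=\ell_i\cI_{i+1}(-1)+f_i\cO_{\mP^2}(-m_i)$ in the chain (2.2), Lemma 2.6 already records $\colength(\cI_i)=\colength(\cI_{i+1})+m_i$, and telescoping upward from $\cI_{r+1}=\cK$, $\colength(\cK)=c$, gives $\colength(\cI_i)=c+m_r+\cdots+m_i$ for $0\le i\le r$.

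For (b), when $r=0$ the chain reduces to $\cI=\ell_0\cK(-1)+f_0\cO_{\mP^2}(-m_0)$, a decomposition of exactly the type treated in Lemmas 2.1 and 2.2. By Lemma 2.7 the Hilbert function $\varphi=\varphi_0$ of $\cI$ satisfies $g(d)<g^*(\varphi)$, so Lemma 2.4 applies and yields $m_0\ge\colength(\cK)+2=c+2$. For (c), when $r\ge 1$ I would instead apply Lemma 2.4 to the topmost link $\cI=\ell_0\cI_1(-1)+f_0\cO_{\mP^2}(-m_0)$, now reading $\cI_1$ as the inner ideal; again $g(d_0)<g^*(\varphi_0)$ by Lemma 2.7, so Lemma 2.4 gives $m_0\ge\colength(\cI_1)+2$, which by the formula of (a) with $i=1$ becomes $m_0\ge c+2+m_r+\cdots+m_1=\colength(\cI_1)+2$.

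For (d) the key observation is that Lemma 2.4 applies not only to $\cI$ but to every intermediate ideal $\cI_i$, $0\le i\le r$: each such $\cI_i$ is of type $r-i\ge 0$, so $g(d_i)<g^*(\varphi_i)$ by Lemma 2.7, and the $i$-th link exhibits $\cI_{i+1}$ (with $\cI_{r+1}:=\cK$) as its inner ideal, whence $m_i\ge\colength(\cI_{i+1})+2$. Combined with (a) this gives the recursion $d_i=\colength(\cI_{i+1})+m_i\ge 2d_{i+1}+2$ for $0\le i\le r$, with $d_{r+1}=c$; a downward induction yields $d_i\ge 2^{\,r+1-i}(c+2)-2$, in particular $d_1\ge 2^{r}(c+2)-2$. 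Then for $r\ge 1$ part (c) finishes the job, $m_0\ge d_1+2\ge 2^{r}(c+2)$, while for $r=0$ part (b) is already $m_0\ge c+2=2^{0}(c+2)$. The only point calling for care --- and the nearest thing to an obstacle --- is verifying that the hypotheses of Lemma 2.4 (equivalently those of Lemma 2.1, through Lemma 2.2) genuinely hold for every intermediate $\cI_i$ and not merely for $\cI$; this is exactly what Lemma 2.7 provides, after which the whole argument is the one-line recursion $d_i\ge 2d_{i+1}+2$ and its solution.
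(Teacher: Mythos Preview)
Your proof is correct and for (a), (b), and (d) matches the paper's argument; the paper organizes (d) as an induction showing $m_i\ge 2^{r-i}(c+2)$ and then sums via (c), which is equivalent to your recursion $d_i\ge 2d_{i+1}+2$.

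The one genuine difference is (c). You invoke Lemma~2.4 directly with $\cI_1$ in the role of the inner ideal, obtaining $m_0\ge\colength(\cI_1)+2$ in one line. The paper instead repeats the deformation argument of Section~2.3.4 (the case $g^*(\psi)>g(c)$ in the proof of Lemma~2.4) explicitly for this situation, orienting by Figure~2.15 to show $m_0-1-m_1>\colength(\cI_2)$. Since Lemma~2.4 is stated without restriction on the inner ideal and its case~2.3.4 is precisely the present situation (when $r\ge 1$ the inner ideal $\cI_1$ has type $\ge 0$, hence $g^*(\varphi_1)>g(d_1)$), your direct citation is legitimate and more economical; the paper's re-derivation adds no content beyond making the mechanism visible again.
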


\begin{proof}
  (a) follows from the decompositions (2.2) and from Lemma 2.6.\\
  (b) follows from Lemma 2.4.\\
  (c) As the statement only depends on the Hilbert functions of
  $\cI_0,\cI_1,\cdots,\cI_{r+1}=\cK$ , one can assume without restriction
  $\ell_i=x,0\le i\le r$, and $\cI_i$ is $B(3;k)$-invariant, $0\le i\le
  r+1$. Then one has  
\[
  \cI=x\cI_1(-1)+y^{m_0}\cO_{\mP^2}(-m_0)\quad \text{and} \quad
  \cI_1=x\cI_2(-1)+y^{m_1}\cO_{\mP^2}(-m_1),
\]
  where $\cI_2=\cK$, if $r=1$.

  We argue as in (2.3.4) and we orientate ourselves by Figure 2.15. If one
  would have $m_0-1-m_1\le \colength(\cI_2)$, then one could make the
  deformations 
\[
  \fbox{$\mathbf{1}$} \mapsto 1,\ \ldots\ ,\  
   \fbox{$\mathbf{m_0-1-m_1}$}\mapsto m_0-1-m_1.
\]
  Then we would get
  $g^*(\varphi)<\cdots\le g(d)$, contradiction, because from type $r\ge 0$
  it follows that $g^*(\varphi)>g(d)$. Therefore one has $m_0-1-m_1>
  \colength(\cI_2)$. Now $\colength(\cI_2)=c$, if $r=1$ (respectively
  $\colength(\cI_2)=c+m_r+\cdots+m_2$, if $r\ge 2$) as was shown in (a).\\

  (d) If $r=0$, this is statement (b). If $r=1$, then by (b) and (c) it
  follows that $m_0\ge (c+2)+m_1\ge 2(c+2)$. Now we assume $r\ge 2$. We argue
  by induction and assume that $m_r\ge c+2,m_{r-1}\ge 2(c+1),\ldots,m_1\ge
  2^{r-1}(c+2)$. By (c) it follows that
\[
    m_0\ge (c+2)+(c+2)+2(c+2)+\cdots+2^{r-1}(c+2)=2^r(c+2).
\]

  In the case $r=1$, the statement (c) is valid even if $\cK=\cO_{\mP^2}$,
  i.e., if $c=0$. Because then one can assume without restriction again
\[
  \cI=x\cI_1(-1)+y^{m_0}\cO_{\mP^2}(-m_0) \quad \text{and} \quad
  \cI_1=x\cO_{\mP^2}(-1)+y^{m_1}\cO_{\mP^2}(-m_1),
\]
 and then, because of
  $\colength(\cI_1)=m_1$, by Lemma 2.4 it follows that $m_0\ge m_1+2$,
  i.e., one gets the statement (c).
\end{proof} 

\begin{corollary}[of the proof of Lemma 2.8] \label{3}
If $\cI$ has type $r\ge 1$, then $m_j+j<m_i+i-1$ for all $0\le i<j\le r$.
\end{corollary}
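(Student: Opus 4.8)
The statement $m_j+j<m_i+i-1$ for $0\le i<j\le r$ is equivalent, by telescoping, to the single inequality $m_{i+1}<m_i-2$ for each $0\le i\le r-1$ (since if $m_{k+1}\le m_k-3$ for all relevant $k$, then $m_j-m_i\le -3(j-i)\le -3<-(j-i)-1=-(j-i)-1$, and conversely). So the plan is to prove $m_i\ge m_{i+1}+3$, or more precisely $m_i\ge m_{i+1}+2+(\text{something positive})$, for every $i$ with $0\le i\le r-1$.

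The key observation is that the proof of Lemma 2.8(c) already establishes exactly this kind of estimate at the top of the chain: $m_0\ge m_r+\cdots+m_1+c+2=\colength(\cI_1)+2$, and in particular $m_0\ge m_1+(m_2+\cdots+m_r+c+2)\ge m_1+2+(c+m_2+\cdots+m_r)$. First I would observe that the argument there is not special to the index $0$: for each $i$ with $0\le i\le r-1$, the ideal $\cI_i$ itself has type $r-i\ge 1$ (its decomposition chain is the tail of the chain for $\cI$, ending in the same $\cK$ of type $(-1)$), and $\cI_{i+1}$ is its "first subideal". Applying Lemma 2.8(c) to $\cI_i$ in place of $\cI$ gives $m_i\ge \colength(\cI_{i+1})+2$. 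By Lemma 2.8(a), $\colength(\cI_{i+1})=c+m_r+\cdots+m_{i+1}\ge m_{i+1}$, and since $c\ge 5$ and there is at least the summand $m_{i+1}$, in fact $\colength(\cI_{i+1})=m_{i+1}+(c+m_r+\cdots+m_{i+2})\ge m_{i+1}+c\ge m_{i+1}+5$ when $i\le r-1$. Hence $m_i\ge m_{i+1}+c+2\ge m_{i+1}+7>m_{i+1}+2$, which certainly yields $m_i\ge m_{i+1}+3$.

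With $m_i\ge m_{i+1}+3$ in hand for all $0\le i\le r-1$, the corollary follows by summation: for $i<j$,
\[
m_i-m_j=\sum_{k=i}^{j-1}(m_k-m_{k+1})\ge 3(j-i),
\]
so $m_i+i-(m_j+j)=(m_i-m_j)-(j-i)\ge 3(j-i)-(j-i)=2(j-i)\ge 2>1$, i.e.\ $m_j+j<m_i+i-1$, as claimed.

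The main thing to be careful about — the only real "obstacle" — is justifying that Lemma 2.8(c) genuinely applies to the truncated chain $\cI_i,\cI_{i+1},\ldots,\cI_r,\cK$. This requires noting that each $\cI_i$ with $i\le r-1$ has type $r-i\ge 1$ (so Lemma 2.8 is applicable to it with $\cI_{i+1}$ playing the role of "$\cI_1$" and the same terminal ideal $\cK$), that $g(d_i)<g^*(\varphi_i)$ holds by Lemma 2.7, and that the colength bookkeeping of Lemma 2.8(a) transfers verbatim. Once that is observed, no new computation is needed; everything reduces to Lemma 2.8 applied finitely many times plus the telescoping sum above. Indeed the intended proof may well invoke only the weaker consequence $m_i\ge m_{i+1}+2$ from Lemma 2.8(c) applied with the extra strict inequality $m_i-1-m_{i+1}>\colength(\cI_{i+2})\ge 0$ appearing inside that proof, which already forces $m_i\ge m_{i+1}+2$ with room to spare.
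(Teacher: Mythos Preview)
Your telescoping reduction to the single-step inequality $m_i\ge m_{i+1}+3$ is correct, and your idea of applying Lemma~2.8(c) to the truncated chain starting at $\cI_i$ is exactly how the paper begins. However, there is a genuine gap: you assume $c\ge 5$, but the terminal ideal $\cK$ has type $(-1)$, which by Definition~3 allows $0\le c\le 4$ as well. In particular $c=0$ is possible (and is explicitly treated at the end of the proof of Lemma~2.8). When $c=0$ and $i=r-1$, Lemma~2.8(c) applied to $\cI_{r-1}$ gives only
\[
m_{r-1}\ \ge\ c+2+m_r\ =\ m_r+2,
\]
not $m_{r-1}\ge m_r+3$. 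Equivalently, the strict inequality $m_{r-1}-1-m_r>\colength(\cK)=0$ from the proof of Lemma~2.8(c) yields nothing better than $m_{r-1}\ge m_r+2$. So your argument does not exclude $m_{r-1}=m_r+2$, which is precisely the borderline case $m_j+j=m_i+i-1$ (with $i=r-1$, $j=r$) that the corollary forbids.

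The paper deals with this separately. After obtaining the weak chain $m_r+r<m_{r-1}+(r-1)<\cdots<m_0$ from Lemma~2.8(c), it argues that $m_{i+1}=m_i-2$ would force the Hilbert function $\varphi_i$ of $\cI_i$ to satisfy $\varphi_i'(m_{i+1}+1)=m_{i+1}$ and $\varphi_i'(m_i)=m_i+1$ with only one intermediate step; this shape (Figure~2.16) is dominated by the extremal function $\chi'$ of (2.2.3), giving $g^*(\varphi_i)\le g(d_i)$ and contradicting that $\cI_i$ has type $r-i\ge 1$. You need an argument of this kind to close the case $c=0$; the purely numerical consequences of Lemma~2.8 are not enough.
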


\begin{proof}
  We refer to Lemma 2.8 and use the same notations. If the sequence of
  decompositions (2.2) in (2.5.1) begins with $\cI_i$ instead of $\cI_0$, then
  from Lemma 2.8c it follows that $m_{i-1}\ge c+2+m_r+\cdots+m_i$. It follows
  that $m_{i-1}-m_i\ge 2$, therefore $m_{i-1}+(i-1)>m_i+i$. One gets
\[
  m_r+r<m_{r-1}+(r-1)<\cdots <m_1+1<m_0.
\]
If $m_j+j=m_i+(i-1)$, then it would follow $j=i+1$ and therefore
$m_{i+1}=m_i-2$. We will show, that this is not possible.

  The ideal $\cI_i$ has the type $r-i\ge 1$, the colength
  $c+m_r+\cdots+m_i=d_i$, and the Hilbert function $\varphi_i$ of $\cI_i$
  fulfils the inequality $g(d_i)<g^*(\varphi_i)$ (cf. Lemma 2.7). As to the
  Hilbert function $\varphi_i$, one obtains the situation of Figure 2.16,
  which corresponds to the Hilbert function $\varphi$. But then it follows
  that $g^*(\varphi_i)\le g^*(\chi')=g(d_i)$, where the graph of $\chi'$ is
  denoted by a dotted line (cf. the first case in (2.2.2), Figure 2.8, and the
  argumentation in the proof of Lemma2.2).
\end{proof}
\newpage
\begin{minipage}{10cm*\real{0.7}}
  \label{fig:2.1}
 \centering Fig. 2.1
 \begin{tikzpicture}[scale=0.7]
  \draw[style=help lines] grid (10,11);
  \draw[thick] (0,0) -- (0,11); 
  \draw[thick] (0,0) -- (10,0); 
  { \pgftransformxshift{0.5cm} 
    \pgftransformyshift{-0.55cm}
  \foreach \x in {0,1,2,3} \draw (\x,0) node[anchor=base] {$\x$};
    \draw (4,0) node[anchor=base] {$\alpha$}; 
     \foreach \x in {5,6,7,8} \draw (\x,0) node[anchor=base] {$\dots$};
     \draw (9,0)  node[anchor=base] {$e$}; 
  }
  \draw[\Red,ultra thick] (4,0) -- (4,1) -- (5,1) -- (5,3) -- (6,3) -- (6,4)
  -- (7,4) -- (7,5) -- (8,5) -- (8,5) -- (8,7) -- (9,7) --(9,10) -- (10,10);
  \draw[\Black, ultra thick] (0,1) -- (10,11);
 \end{tikzpicture}
\end{minipage}
\hfill
\begin{minipage}{10cm*\real{0.7}}
 \label{fig:2.2}
 \centering Fig. 2.2  
\begin{tikzpicture}[scale=0.7]
 \draw[style=help lines]  grid (10,11);
 \draw[thick] (0,0) -- (0,11); 
 \draw[thick] (0,0) -- (10,0);
 {
 \pgftransformxshift{0.5cm}
 \pgftransformyshift{-0.55cm}
  \foreach \x in {0,1,2,3,4} \draw (\x,0) node[anchor=base] {$\x$}; 
  \foreach \x in {5,6,7,8} \draw (\x,0) node[anchor=base] {$\dots$}; 
  \draw[anchor=base] (9,0) node {$d$};
 }
 \draw[\Red,ultra thick] (1,0) -- (1,1) -- (2,1) -- (2,2) -- (3,2) -- (3,3) --
 (4,3) -- (4,4) -- (5,4) -- (5,5) -- (6,5) -- (6,6) -- (7,6) -- (7,7) -- (8,7)
 -- (8,8) -- (9,8) -- (9,10) -- (10,10);
  \draw[\Black, ultra thick] (0,1) -- (10,11);  
\end{tikzpicture}
\end{minipage}
\par
%
%
\begin{minipage}{11cm*\real{0.7}} \label{fig:2.3}
  \centering Fig. 2.3 
 \begin{tikzpicture}[scale=0.7]
 \draw[style=help lines]  grid (11,12);
 \draw[thick] (0,0) -- (0,12); 
 \draw[thick] (0,0) -- (11,0); 
 {
 \pgftransformxshift{0.5cm}
 \pgftransformyshift{-0.55cm}
  \foreach \x in {0,1,2,3,4} \draw (\x,0) node[anchor=base] {$\x$}; 
 \draw[anchor=base] (5,0) node {$\varepsilon$};
 \draw[anchor=base] (6,0) node {$\varepsilon{+}1$};
 \foreach \x in {7,8,9} \draw (\x,0) node[anchor=base] {$\dots$}; 
 \draw[anchor=base] (10,0) node {$m$};
 }
 \draw[\Red,ultra thick,dotted] (4,0) -- (4,1) -- (5,1) -- (5,3)
   -- (6,3) -- (6,4);
 \draw[\Red,ultra thick] (6,4) -- (6,6) -- (7,6);
 \draw[\Red,ultra thick,dotted] (7,6) -- (7,7) -- (8,7) -- (8,8) --
                  (9,8);
 \draw[\Red,ultra thick] 
            (9,8) -- (9,9) -- (10,9) -- (10,11) -- (11,11);  
 \draw[\Black, ultra thick] (0,1) -- (11,12);  
 \draw[\Black, ultra thick,dashed] (0,0) -- (9,9);  
\end{tikzpicture}
\end{minipage}
\hfill
\begin{minipage}{4cm}
  \begin{minipage}{3cm*\real{0.7}}\label{fig:2.4}
\centering Fig. 2.4
\begin{tikzpicture}[scale=0.7] 
 \draw[style=help lines]  grid (3,4);
 \draw[thick] (0,0) -- (0,4); 
 \draw[thick] (0,0) -- (3,0);
 {
 \pgftransformxshift{0.5cm}
 \pgftransformyshift{-0.55cm}
  \foreach \x in {0,1,2} \draw (\x,0) node[anchor=base] {$\x$}; 
 }
 \draw[\Red,ultra thick] (1,0) -- (1,2) -- (2,2) -- (2,3) -- (3,3);
 \draw[\Black, ultra thick] (0,1) -- (3,4);  
\end{tikzpicture}
\end{minipage}\\
\vspace{1cm}

\begin{minipage}{3cm*\real{0.7}}\label{fig:2.5}
 \centering Fig. 2.5
\begin{tikzpicture}[scale=0.7]
 \draw[style=help lines]  grid (3,4);
 \draw[thick] (0,0) -- (0,4); 
 \draw[thick] (0,0) -- (3,0);
 {
 \pgftransformxshift{0.5cm}
 \pgftransformyshift{-0.55cm}
  \foreach \x in {0,1,2} \draw (\x,0) node[anchor=base] {$\x$}; 
 }
  \draw[\Red,ultra thick] (1,0) -- (1,1) -- (2,1) -- (2,3) -- (3,3);
  \draw[\Black, ultra thick] (0,1) -- (3,4);  
\end{tikzpicture}
\end{minipage}
\end{minipage}

\begin{minipage}{3cm*\real{0.7}} \label{fig:2.6a}
  \centering Fig. 2.6a
  \begin{tikzpicture}[scale=0.7]
    \draw[style=help lines] grid (4,5);
    \draw[thick] (0,0) -- (0,5); 
    \draw[thick] (0,0) -- (4,0);
    { 
    \pgftransformxshift{0.5cm} 
    \pgftransformyshift{-0.55cm}
    \foreach \x in {0,1,2} \draw (\x,0)  node[anchor=base] {$\x$}; 
    \draw (3,0)  node[anchor=base] {$\dots$};
    }
    \draw[\Red,ultra thick] (2,0) -- (2,3) -- (3,3) -- (3,4) -- (4,4);
    \draw[\Black, ultra thick] (0,1) -- (4,5);
  \end{tikzpicture}
\end{minipage}
\hspace{2cm}
%
  \begin{minipage}{4cm*\real{0.7}} \label{fig:2.6b}
    \centering  Fig. 2.6b
    \begin{tikzpicture}[scale=0.7]
      \draw[style=help lines] grid (4,5);
      \draw[thick] (0,0) -- (0,5); \draw[thick] (0,0) -- (4,0);
      { 
       \pgftransformxshift{0.5cm} 
       \pgftransformyshift{-0.55cm}
        \foreach \x in {0,1,2,3} \draw (\x,0) node[anchor=base] {$\x$};
      }
      \draw[\Red,ultra thick] (1,0) -- (1,1) -- (2,1) -- (2,2) -- (3,2) --
      (3,4) -- (4,4);
      \draw[\Black, ultra thick] (0,1) -- (4,5);
    \end{tikzpicture}
  \end{minipage}

\newpage

\begin{minipage}{11cm}
\begin{minipage}[b]{5cm*\real{0.7}} \label{fig:2.7a}
\centering Fig. 2.7a 
\tikzstyle{help lines}=[gray,very thin]
\begin{tikzpicture}[scale=0.7]
 \draw[style=help lines]  grid (5,6);
 \draw[thick] (0,0) -- (0,6); 
 \draw[thick] (0,0) -- (5,0);
 {
 \pgftransformxshift{0.5cm}
 \pgftransformyshift{-0.55cm}
  \foreach \x in {0,1,2,3} \draw (\x,0) node[anchor=base] {$\x$}; 
  \draw (4,0) node[anchor=base] {$\dots$};
 }
 \draw[\Red,ultra thick] (2,0) -- (2,2) -- (3,2) -- (3,4) -- (4,4) -- (4,5) -- (5,5);
  \draw[\Black, ultra thick] (0,1) -- (5,6);  
\end{tikzpicture}
\end{minipage}
\hfill
\begin{minipage}[b]{6cm*\real{0.7}} \label{fig:2.7b}
\centering Fig. 2.7b 
\tikzstyle{help lines}=[gray,very thin]
\begin{tikzpicture}[scale=0.7]
 \draw[style=help lines]  grid (6,7);
 \draw[thick] (0,0) -- (0,7); 
 \draw[thick] (0,0) -- (6,0);
 {
 \pgftransformxshift{0.5cm}
 \pgftransformyshift{-0.55cm}
  \foreach \x in {0,1,2,3,4} \draw (\x,0) node[anchor=base] {$\x$}; 
  \draw (5,0) node[anchor=base] {$\dots$};
 }
 \draw[\Red,ultra thick] (1,0) -- (1,1) -- (2,1) -- (2,2) -- (3,2) -- (3,3) --
 (4,3) -- (4,5) -- (5,5) -- (5,6) -- (6,6);
  \draw[\Black, ultra thick] (0,1) -- (6,7);  
\end{tikzpicture}
\end{minipage}
\end{minipage}

\begin{minipage}{16.5cm}
  \centering
\begin{minipage}[b]{11cm*\real{0.7}} \label{fig:2.8}
\centering Fig. 2.8 
\tikzstyle{help lines}=[gray,very thin]
\begin{tikzpicture}[scale=0.7]
 \draw[style=help lines]  grid (11,12);
 \draw[thick] (0,0) -- (0,12); 
 \draw[thick] (0,0) -- (11,0);
 {
 \pgftransformxshift{0.5cm}
 \pgftransformyshift{-0.55cm}
  \foreach \x in {0,1,2,3} \draw (\x,0) node[anchor=base] {$\x$}; 
  \foreach \x in {4,5,6,7,8,9} \draw (\x,0) node[anchor=base] {$\dots$}; 
  \draw (10,0) node[anchor=base] {$e$};
 }
{
 \pgftransformxshift{0.5cm}
  \draw (2,0) node[above] {$x^2$};
  \draw (10,10) node[above] {$y^e$};
  \draw[above] (9.5,8.1) node[fill=white,inner sep=1pt] {$xy^{e-2}$};
 
}
\draw[\Red,ultra thick] (2,0) -- (2,1) -- (3,1) -- (3,2) -- (4,2) -- (4,3) --
(5,3) -- (5,4) -- (6,4) -- (6,5) -- (7,5) -- (7,6) -- (8,6) -- (8,7) -- (9,7)
-- (9,9) -- (10,9) -- (10,11) -- (11,11);
  \draw[\Black, ultra thick] (0,1) -- (11,12);  
 \end{tikzpicture}
\end{minipage}
\hfill
\begin{minipage}[b]{11cm*\real{0.7}} \label{fig:2.9}
\centering Fig. 2.9 
\tikzstyle{help lines}=[gray,very thin]
\begin{tikzpicture}[scale=0.7]
 \draw[style=help lines]  grid (11,12);
 \draw[thick] (0,0) -- (0,12); 
 \draw[thick] (0,0) -- (11,0);
 {
 \pgftransformxshift{0.5cm}
 \pgftransformyshift{-0.55cm}
  \foreach \x in {0,1,2,3,4} \draw (\x,0) node[anchor=base] {$\x$}; 
  \foreach \x in {5,6,7,8,9} \draw (\x,0) node[anchor=base] {$\dots$}; 
  \draw (10,0) node[anchor=base] {$e$};
}
{
 \pgftransformxshift{0.5cm}
  \draw (2,0) node[above] {$x^2$};
  \draw (10,10) node[above] {$y^e$};
 }
 \draw[\Red,ultra thick] (2,0) -- (2,1) -- (3,1) -- (3,2) -- (4,2) -- (4,3) --
 (5,3) -- (5,4) -- (6,4) -- (6,5) -- (7,5) -- (7,6) -- (8,6) -- (8,7) -- (9,7)
 -- (9,8) -- (10,8) -- (10,11) -- (11,11);
  \draw[\Black, ultra thick] (0,1) -- (11,12);  
\end{tikzpicture}
\end{minipage}
\end{minipage}

\begin{minipage}{13cm*\real{0.7}} \label{fig:2.10}
\centering Fig. 2.10 \\
\tikzstyle{help lines}=[gray,very thin]
\begin{tikzpicture}[scale=0.7]
 \draw[style=help lines]  grid (13,14);
 \draw[thick] (0,0) -- (0,14); 
 \draw[thick] (0,0) -- (13,0);
 {
 \pgftransformxshift{0.5cm}
 \pgftransformyshift{-0.55cm}
  \foreach \x in {0,1} \draw (\x,0) node[anchor=base] {$\x$}; 
 \foreach \x in {2,3,4,5,6,7,8,9,10} \draw (\x,0) node[anchor=base] {$\dots$}; 
 \draw (11,0) node[anchor=base] {$m{-}1$};
 \draw (12,0) node[anchor=base] {$m$};
}
{
 \pgftransformxshift{0.5cm}
 \draw (10,7) node[above=2pt] {$v$};
 \draw (11,10) node[above=2pt] {$u$};
 }

 \draw[\Red,ultra thick,dotted] (5,0) -- (5,1) -- (6,1) -- (6,2) -- (7,2) --
 (7,3) -- (8,3) -- (8,5) -- (9,5) -- (9,6) -- (10,6);
 \draw[\Red,ultra thick] (10,6) -- (10,7) -- (11,7) -- (11,9);
 \draw[\Red,ultra thick,dotted] (11,9) -- (11,10);
 \draw[\Red,ultra thick] (11,10) -- (11,11) -- (12,11) -- (12,13) -- (13,13);
  \draw[\Black, ultra thick] (0,1) -- (13,14);  
\end{tikzpicture}
\end{minipage}

\newpage

\begin{minipage}{15cm}
\begin{minipage}{9cm*\real{0.7}} \label{fig:2.11a}
\centering Fig. 2.11a \\
\tikzstyle{help lines}=[gray,very thin]
\begin{tikzpicture}[scale=0.7]
 \draw[style=help lines]  grid (9,10);
 \draw[thick] (0,0) -- (0,10); 
 \draw[thick] (0,0) -- (9,0);
 {
 \pgftransformxshift{0.5cm}
 \pgftransformyshift{-0.55cm}
  \foreach \x in {0,1,2} \draw[anchor=base] (\x,0) node {$\x$}; 
  \foreach \x in {3,4,5,6} \draw[anchor=base] (\x,0) node {$\cdots$}; 
  \draw[anchor=base] (7,0) node {$m{-}1$};
  \draw[anchor=base] (8,0) node {$m$};
 }
 \draw[\Red,ultra thick] (2,0) -- (2,1) -- (3,1) -- (3,2) -- (4,2) -- (4,3) -- (5,3) -- (5,4) -- (6,4) -- (6,5) -- (7,5) -- (7,7) -- (8,7) -- (8,9) -- (9,9);
  \draw[\Black, ultra thick] (0,1) -- (9,10);  
\end{tikzpicture}
\end{minipage}
\hfill
%
\begin{minipage}{9cm*\real{0.7}} \label{fig:2.11b}
\centering Fig. 2.11b \\
\tikzstyle{help lines}=[gray,very thin]
\begin{tikzpicture}[scale=0.7]
 \draw[style=help lines]  grid (9,10);
 \draw[thick] (0,0) -- (0,10); 
 \draw[thick] (0,0) -- (9,0);
 {
 \pgftransformxshift{0.5cm}
 \pgftransformyshift{-0.55cm}
  \foreach \x in {0,1} \draw[anchor=base] (\x,0) node {$\x$}; 
  \foreach \x in {2,3} \draw[anchor=base] (\x,0) node {$\dots$}; 
 \draw[anchor=base] (4,0) node {$n$};
  \foreach \x in {5,6,7} \draw[anchor=base] (\x,0) node {$\dots$}; 
  \draw[anchor=base] (8,0) node {$m$};
 }
 {
 \pgftransformxshift{0.5cm}
  \draw (4,2) node[above=2pt] {$v$};
  \draw (7,6) node[above=2pt] {$u$};
 }
 \draw[\Red,ultra thick] (3,0) -- (3,1) -- (4,1);
 \draw[\Red,ultra thick,dotted] (4,1) -- (4,2);
 \draw[\Red,ultra thick] (4,2) -- (5,2) -- (5,4);
 \draw[\Red,ultra thick,dotted] (5,4)-- (6,4) -- (6,5);
 \draw[\Red,ultra thick] (6,5) -- (7,5) -- (7,7) -- (8,7) -- (8,9) -- (9,9);
  \draw[\Black, ultra thick] (0,1) -- (9,10);  
\end{tikzpicture}
\end{minipage}
\end{minipage}

%
\begin{minipage}{21cm*\real{0.7}} \label{fig:2.12}
\centering Fig. 2.12 \\
\tikzstyle{help lines}=[gray,very thin]
\begin{tikzpicture}[scale=0.7]
 \draw[style=help lines]  grid (20,21);
 \draw[thick] (0,0) -- (0,21); 
 \draw[thick] (0,0) -- (20,0);
 {
 \pgftransformxshift{0.5cm}
  \foreach \x in {0,1,2,3,4,5,6,7,8} \draw (\x,0) node[anchor=north] {$\x$}; 
  \draw (9,0) node[below] {$\varepsilon{+}2$};
  \foreach \x in {10,11,12,13,14,15,16,17,18} \draw (\x,0) node[anchor=north]
  {$\x$}; 
\draw (19,0) node[below=2pt] {$m$};
 }
{
 \pgftransformxshift{0.5cm}
\draw (4,1) node[above] {$6$};
\draw (5,3) node[above] {$5$};
\draw (6,4) node[above] {$4$};
\draw (7,5) node[above] {$3$};
\draw (8,6) node[above] {$2$};
\draw (9,7) node[above] {$1$};

\draw (13,12) node[above] {$\mathbf{1}$};
\draw (14,13) node[above] {$\mathbf{2}$};
\draw (15,14) node[above] {$\mathbf{3}$};
\draw (16,15) node[above] {$\mathbf{4}$};
\draw (17,16) node[above] {$\mathbf{5}$};
\draw (18,17) node[above] {$\mathbf{6}$};
}
\draw (11,5) node[above=2pt,fill=white,inner sep=2pt] {$\vartheta'(n-2)$};
\draw (18,14) node[above=2pt,inner sep=2pt,fill=white] {$\psi'(n-1)$};
 \draw[\Red,ultra thick] (4,0) -- (4,1) -- (5,1) -- (5,3) -- (6,3) -- (6,4) --
 (7,4) -- (7,5) -- (8,5) -- (8,6) -- (9,6) -- (9,7) -- (10,7) -- (10,9) --
 (11,9) -- (11,10) -- (12,10);
 \draw[\Red,ultra thick,dotted] (12,10) -- (12,11) -- (13,11);
 \draw[\Red,ultra thick] (13,11) -- (13,13) -- (14,13) -- (14,14) -- (15,14) -- (15,15) -- (16,15) -- (16,16) -- (17,16) -- (17,17) -- (18,17) -- (18,18) -- (19,18) -- (19,20) -- (20,20);
  \draw[\Black, ultra thick] (0,1) -- (20,21);  
  \draw[\Black, ultra thick,dashed] (0,0) -- (16,16);  
  \draw[\Black, ultra thick,dotted] (1,0) -- (13,12);  
\end{tikzpicture}
\end{minipage}

\newpage

\begin{minipage}{1.0\linewidth}
\begin{minipage}{9cm*\real{0.7}} \label{fig:2.13a}
\centering Fig. 2.13a\\ 
\tikzstyle{help lines}=[gray,very thin]
\begin{tikzpicture}[scale=0.7]
 \draw[style=help lines]  grid (9,10);
 \draw[thick] (0,0) -- (0,10); 
 \draw[thick] (0,0) -- (9,0);
 {
 \pgftransformxshift{0.5cm}
 \pgftransformyshift{-0.55cm}
  \foreach \x in {0,1,2,3,4,5,7,8} \draw[anchor=base] (\x,0) node {$\x$}; 
  \draw[anchor=base] (6,0) node {$\kappa{+}1$};
 
 }
 {
 \pgftransformxshift{0.5cm}
 \draw (8,8) node[above] {$y^m$};
 }
 \draw[\Red,ultra thick] (5,0) -- (5,1) -- (4,1) -- (4,3) -- (5,3) -- (5,4) --
 (6,4) -- (6,6) -- (7,6) -- (7,7) -- (8,7) -- (8,9) -- (9,9);
  \draw[\Black, ultra thick] (0,1) -- (9,10);  
\end{tikzpicture}
\end{minipage}
\hfill
\begin{minipage}{10cm*\real{0.7}} \label{fig:2.13b}
\centering Fig. 2.13b\\ 
\tikzstyle{help lines}=[gray,very thin]
\begin{tikzpicture}[scale=0.7]
 \draw[style=help lines]  grid (10,10);
 \draw[thick] (0,0) -- (0,10); 
 \draw[thick] (0,0) -- (10,0);
 {
 \pgftransformxshift{0.5cm}
 \pgftransformyshift{-0.55cm}
  \foreach \x in {0,1,2,3,4,5,6,8} \draw[anchor=base] (\x,0) node {$\x$}; 
 \draw[anchor=base] (7,0) node {$\kappa{+}1$};
 }
 {
 \pgftransformxshift{0.5cm}
 \draw (9,0) node[above] {$x^m$};
 }
 \draw[\Red,ultra thick] (9,0) -- (9,1)--(6,1)--(6,2)--
(5,2) -- (5,4) -- (6,4) -- (6,6) -- (7,6) -- (7,8) -- (8,8) -- (8,9) -- (9,9);
  \draw[\Black, ultra thick] (0,1) -- (9,10);  
\end{tikzpicture}
\end{minipage}  
\end{minipage}

\vspace{3cm}
\begin{minipage}{1.0\linewidth}
\begin{minipage}{9cm*\real{0.7}} \label{fig:2.14a}
\centering Fig. 2.14a\\ 
\tikzstyle{help lines}=[gray,very thin]
\begin{tikzpicture}[scale=0.7]
 \draw[style=help lines]  grid (9,10);
 \draw[thick] (0,0) -- (0,10); 
 \draw[thick] (0,0) -- (9,0);
 {
 \pgftransformxshift{0.5cm}
 \pgftransformyshift{-0.55cm}
  \foreach \x in {0,1,2,3,4,5,6,7} \draw[anchor=base] (\x,0) node {$\x$}; 
  \draw[anchor=base] (8,0) node {$m$};
 }
 \draw[\Red,ultra thick] (8,0)--(8,1)--(4,1); 
 \draw[\Red,ultra thick,dotted] (4,1) -- (4,2) -- (5,2) -- (5,4) -- (6,4) --
 (6,6) -- (7,6) -- (7,7);
 \draw[\Red,ultra thick] (7,7)--(8,7) -- (8,9) -- (9,9);
  \draw[\Black, ultra thick] (0,1) -- (9,10);  
\end{tikzpicture}
\end{minipage}
\hfill
\begin{minipage}{9cm*\real{0.7}} \label{fig:2.14b}
\centering Fig. 2.14b\\ 
\tikzstyle{help lines}=[gray,very thin]
\begin{tikzpicture}[scale=0.7]
 \draw[style=help lines]  grid (9,10);
 \draw[thick] (0,0) -- (0,10); 
 \draw[thick] (0,0) -- (9,0);
 {
 \pgftransformxshift{0.5cm}
 \pgftransformyshift{-0.55cm}
  \foreach \x in {0,1,2,3,4,5,6,7} \draw[anchor=base] (\x,0) node {$\x$}; 
 \draw[anchor=base] (8,0) node {$m$};
 }
 \draw[\Red,ultra thick,dotted] (4,0) -- (4,1) -- (5,1) -- (5,3) -- (6,3) --
 (6,5) -- (7,5) -- (7,6);
 \draw[\Red,ultra thick] (7,6)--(8,6) -- (8,9) -- (9,9);
  \draw[\Black, ultra thick] (0,1) -- (9,10);  
\end{tikzpicture}
\end{minipage}
\end{minipage}

\newpage

\begin{minipage}{21cm*\real{0.7}} \label{fig:2.15}
\centering Fig. 2.15\\ 
\tikzstyle{help lines}=[gray,very thin]
\begin{tikzpicture}[scale=0.7]
 \draw[style=help lines]  grid (21,21);
 \draw[thick] (0,0) -- (0,21); 
 \draw[thick] (0,0) -- (21,0);
 {
 \pgftransformxshift{0.5cm}
 \pgftransformyshift{-0.55cm}
  \foreach \x in {0,1,2,3,4,5,6,7,8,10,11,13,14,15,16,17,19} 
     \draw[anchor=base] (\x,0) node {$\x$}; 
 \draw[anchor=base] (9,0) node[fill=white,rotate=90] 
        {$m_2{+}2$ or $\kappa{+}2$};
 \draw[anchor=base] (12,0) node  {$\scriptstyle{m_1{+}1}$};
 \draw[anchor=base] (18,0) node {$m_0$};
 }
 {
 \pgftransformxshift{0.5cm}
 \draw (7,5) node[above] {$2$};
 \draw (8,6) node[above] {$1$};
 \draw (12,11) node[above] {$1$};
 \draw (13,12) node[above] {$2$};
 \draw (12,5) node[above,fill=white,inner sep=0pt] {graph of $\varphi'_2(n-2)$};
 \draw (18,13) node[above,fill=white,inner sep=0pt] {graph of $\varphi'_1(n-1)$};
 \draw (18,16) node[above,fill=white,inner sep=0pt] {$m_0{-}1{-}m_1$};
}
\draw[\Red,ultra thick] (4,0) -- (4,1) -- (5,1) -- (5,3) -- (6,3) -- (6,4) --
(7,4) -- (7,5) -- (8,5) -- (8,6) -- (9,6) -- (9,8) -- (10,8) -- (10,9) --
(11,9) -- (11,10) -- (12,10) -- (12,12) -- (13,12) -- (13,13) -- (14,13) --
(14,14) -- (15,14) -- (15,15) -- (16,15) -- (16,16) -- (17,16) -- (17,17) --
(18,17) -- (18,19) -- (19,19);
  \draw[\Black, ultra thick] (0,1) -- (20,21); 
  \draw[\Black, ultra thick,dashed] (0,0) -- (17,17);
  \draw[gray, ultra thick] (1,0) -- (11,10); 
  \draw[gray, ultra thick,dashed] (2,0) -- (7,5);
 
\end{tikzpicture}\\
$\#\{\text{monomials between graph of } \varphi'_2(n-1) \text{ and line } 
 y=x-1\}
= \colength(\cI_2)$
\end{minipage}

\vspace{1cm}
\begin{minipage}{8cm*\real{0.7}} \label{fig:2.16}
\centering Fig. 2.16\\ 
\tikzstyle{help lines}=[gray,very thin]
\begin{tikzpicture}[scale=0.7]
 \draw[style=help lines]  grid (8,9);
 \draw[thick] (0,0) -- (0,9); 
 \draw[thick] (0,0) -- (8,0);
 {
 \pgftransformxshift{0.5cm}
 \draw[anchor=west] (6,0) node[fill=white, rotate=90,inner sep=0pt] 
  {$m_{i+1}{+}1$};
 \draw[anchor=west] (7,0) node[fill=white, rotate=90,inner sep=0pt] {$m_i$};
 }
 \draw[\Red,ultra thick,dotted] (3,1) -- (3,2) -- (4,2) -- (4,3) -- (5,3) -- (5,4) -- (6,4);
 \draw[\Red,ultra thick] (6,4) -- (6,6) -- (7,6) -- (7,8) -- (8,8);
  \draw[\Black, ultra thick] (0,1) -- (8,9);  
\end{tikzpicture}
\end{minipage}


\chapter{A rough description of ideals invariant under $\Gamma\cdot T(\rho)$}\label{3}

It seems impossible to characterize ideals of colength $d$ in $\cO_{\mP^2}$,
which are invariant under $G:=\Gamma\cdot T(\rho)$. If $\cI$ is an ideal of
type $r$ in the sense of the definition in (2.5.1), however, then one can give
a rough description of the forms $f_0,\cdots, f_r$, which occur in the
decomposition (2.2) in (2.5.1). This description will be used later on for
estimating the so called $\alpha$-grade of $\cI$ which will be defined in the
next chapter.

\section{Assumptions}\label{3.1}
Let $\cI\subset\cO_{\mP^2}$ be an ideal of colength $d$, invariant under
$\Gamma\cdot T(\rho)$, where $\rho_2>0$, as usual. We assume that $\cI$ has the
type $r\ge 0$. Then according to Lemma 2.7 one has a decomposition:
%
\begin{equation}\label{Z}
  \begin{aligned}
    \cI=:\cI_0 & =\ell_0\cI_1(-1)+f_0\cO_{\mP^2}(-m_0),\\
    \cI_1 & =\ell_1\cI_2(-1)+f_1\cO_{\mP^2}(-m_1),\\
    \cdots&\cdots\cdots\cdots\cdots\cdots\cdots\cdots\cdots\cdots\\
    \cI_i&=\ell_i\cI_{i+1}(-1)+f_i\cO_{\mP^2}(-m_i),\\
    \cdots&\cdots\cdots\cdots\cdots\cdots\cdots\cdots\cdots\cdots\\
    \cI_r&=\ell_r\cI_{r+1}(-1)+f_r\cO_{\mP^2}(-m_r).
  \end{aligned}
\tag{Z}
\end{equation}
Using the notations of (loc.cit.) the ideal $\cI_{r+1}:=\cK$ has the colength
$c$ and regularity $\kappa$. If $\ell_i=x$, then
\[
  H^0(\cI_i(n))=xH^0(\cI_{i+1}(n-1))+f_ik[y,z]_{n-m_i}
\]
and if $\ell_i=y$, then
\[
  H^0(\cI_i(n))=yH^0(\cI_{i+1}(n-1))+f_ik[x,z]_{n-m_i}
\]
(cf. Lemma 2.6). From $(Z)$ follows
%
\begin{align*}
  H^0(\cI_1(m_i+i-1))&=\ell_1H^0(\cI_2(m_i+i-2)),\\
  H^0(\cI_2(m_i+i-2))& =\ell_2H^0(\cI_3(m_i+i-3)),\\
  \cdots\cdots\cdots\cdots&\cdots\cdots\cdots\cdots\cdots\cdots\cdots\\
  H^0(\cI_{i-1}(m_i+1))& =\ell_{i-1}H^0(\cI_i(m_i)),\\
  H^0(\cI_i(m_i)) & =\ell_iH^0(\cI_{i+1}(m_i-1))+\langle f_i\rangle,
\end{align*}

for by Corollary 2.4  $m_i+2<m_{i-1},i=1,\ldots,r$. If one starts with
$H^0(\cI_1(m_i+i-2))$, then one obtains a similar system of equations, whose
last line is
\[
  H^0(\cI_i(m_i-1))=\ell_iH^0(\cI_{i+1}(m_i-2)).
\]
\begin{conclusion}\label{1}
  If $2\le i\le r$ (if $1\le i\le r$, respectively), then
  $H^0(\cI_1(m_i+i-1))=\ell_1\cdots\ell_{i-1}H^0(\cI_i(m_i))\;
  (H^0(\cI_1(m_i+i-2))=\ell_1\cdots\ell_iH^0(\cI_{i+1}(m_i-2))$,
  respectively).\hfill $\Box$
\end{conclusion}

\section{Notations}\label{3.2}

We orientate ourselves by Figure 3.1, which shows the initial monomials of
$H^0(\cI(m_0))$. The set of all monomials in $S_{m_0}$ with $z$-degree $\ge
m_0-(c+r)$ (with $z$-degree $\le m_0-(c+r+1)$, respectively) is called the
left domain and is denoted by $\cL\cB$ (is called right domain and is denoted
by $\cR\cB$, respectively). The monomials in $S_{c-1}-$ {\it in}
$(H^0(\cK(c-1)))$ form a basis of $S_{c-1}/H^0(\cK(c-1))$. If we put
$\ell=\ell_0\cdots\ell_r$, then $\ell[S_{c-1}/H^0(\cK(c-1))]$ has a basis
consisting of the $c$ monomials in $\ell S_{c-1}-\ell{\cdot}$ {\it in}
$(H^0(\cK(c-1)))$.

The initial monomial of $\ell_0\cdots\ell_{i-1}f_i\cdot z^{m_0-m_i-i}$ is
$M_i^{\rm up}:=x^{i-\iota(i)}y^{m_i+\iota(i)}z^{m_0-m_i-i}$ or $M_i^{\rm
  down}:=x^{m_i+i-\iota(i)}y^{\iota(i)}z^{m_0-m_i-i}$, if $\ell_i=x$ or if
$\ell_i=y$, respectively. Here we have put, if $1\le i\le r+1, \iota(i):=$
number of indices $0\le j<i$ such that $\ell_j=y$. We also put $\iota(0)=0$,
so the formulas give the right result for $i=0$, too.

For instance, in Figure 3.1 we have
\[
r=5,\ \ell_0=x,\ \ell_1=y,\ \ell_2=\ell_3=x,\ \ell_4=\ell_5=y,
\]
 and therefore
\[
\iota(1)=0,\ \iota(2)=\iota(3)=\iota(4)=1,\ \iota(5)=2,\ \iota(6)=3.
\] 
(N.B. $\ell_0\cdots\ell_{i-1}=x^{i-\iota(i)}y^{\iota(i)}$, if $0<i\le r+1$.)

As always we assume $\rho_2>0$. If $M_i^{\down}$ occurs, then
$\cI_i=y\cI_{i+1}(-1)+f_i\cO_{\mP^2}(-m_i)$. If $M_i^{\up}$ occurs, then
$\cI_i=x\cI_{i+1}(-1)+f_i\cO_{\mP^2}(-m_i)$. By Lemma 2.8  $m_i\ge
(c+2)+m_r+\cdots+m_{i+1}$, if $r\ge 1$ $(m_0\ge c+2$, if $r=0$).The colength of
$\cI_{i+1}$ equals $c+m_r+\cdots+m_{i+1}$, and therefore $R_n\subset
H^0(\cI_{i+1}(n))$ if $n\ge m_i-2\ge c+m_r+\cdots+m_{i+1}$ in the case $r\ge 1
(R_n\subset H^0(\cK(n))$ if $n\ge c$ in the case $r=0$). This follows from
Remark 2 in Appendix C. The next generating element $f_i$ has the initial
monomial $M_i^{\up}$ or $M_i^{\down}$. 

\begin{conclusion}\label{2}
  Suppose $\rho_2>0$ and $\rho_1$ is arbitrary.Then the columns of total 
 degree $m_i+i-2$ and $m_i+i-1$ (in the variables $x$ and $y$ ),which 
 occur in {\it in}$(H^0(\cI(m_0)))$ , also occur in $H^0(\cI(m_0))$.
Thus $xM_i^{\up}$ or $yM_i^{\down}$, when it exists, is contained in $\cI$.
\end{conclusion}
\begin{proof}
  This follows from 
   , Conclusion 3.1, the decompositions
  $(Z)$ and the foregoing discussion.
\end{proof}
  As always we assume $\rho_2>0$. We have to distinguish between two cases 
(cf. 2.4.1 Auxiliary Lemma 1 and Auxiliary Lemma 2):

%
\noindent
\textbf{Main Case \phantom{I}I}: ${\rho_0} > 0 $ and ${\rho_1} < 0$ \\[4mm]
\textbf{Main Case II}: ${\rho_0} < 0$ and ${\rho_1} >0$

\section{Description of $f_0$ in Case I}\label{3.3}

If the initial monomial of $f_0$ equals $x^{m_0}$, then $f_0=x^{m_0}$ and
$\ell_0=y$. Therefore we may assume that $f_0$ is a proper semi-invariant with
initial monomial $y^{m_0}$. Then $\cI=\cI_0=x\cI_1(-1)+f_0\cO_{\mP^2}(-m_0)$,
and we write $f_0=y^{m_0}+z^{\mu} x {\cdot} G$, $\mu\in\N$ maximal, $G$ a
$T(\rho)$-semi-invariant (cf. 2.4.3). If $G^0$ is the initial monomial of $G$,
then $N : \;=z^{\mu}\cdot x \cdot G^0$ is called the \emph{vice-monomial} of $f_0$, which by
assumption is not equal to zero .

The inclusion (2.1) of (2.4.2) reads
\begin{equation}\label{1}
  \langle x,y\rangle [\mu G+z\;\partial G/\partial z]\subset H^0(\cI_1(m_0-\mu))
\end{equation}

As for the initial monomial $G^0$ of $G$, one has
\begin{equation}\label{2}
  \langle x,y\rangle G^0\subset {\it in} (H^0(\cI_1(m_0-\mu)))
=H^0({\it in} (\cI_1(m_0-\mu)))
\end{equation}

where ${\it in}$ denotes the subspace or the initial ideal, respectively,
generated by the initial monomials. (Equality follows, for example, from ${\it
  in}(\cI)=\lim\limits_{\lambda\to 0}\sigma(\lambda)\cI$ and [G2], Lemma 3 and
4.) Without restriction we may assume that $G^0\notin \inn(
H^0(\cI_1(m_0-\mu-1)))$, because we can reduce $f_0$ modulo $x
H^0(\cI_1(m_0-1))$.

\noindent
(\text{N.B.}: The same statements are analogously valid in the Case II,
that means, if $\rho_1>0$, $\cI=y\cI_1(-1)+f_0\cO_{\mP^2}(-m_0),
f_0=x^{m_0}+z^{\mu}y G$, etc.)

It follows that one of the following cases occurs, where $1\le i\le r$:\\[3mm]
$1^{\circ}\; N=N_i^{\down}:=(z/x)M_i^{\down}=x^{m_i+i-\iota(i)-1}y^{\iota(i)}z^{m_0-m_i-i+1}$\\[3mm]
$2^{\circ}\; N=N_i^{\up}:=(z/y)M_i^{\up}=x^{i-\iota(i)}y^{m_i+\iota(i)-1}z^{m_0-m_i+i+1}$\\[3mm]
$3^{\circ}\; \langle x,y\rangle N\subset\ell \inn(H^0(\cK(m_0-r)))$, where $\ell:=\ell_0\cdots\ell_r$ and $N$ is a monomial in
\[
  \cL:=\ell S_{m_0-(r+1)}-\ell \inn(H^0(\cK(m_0-r-1))).
\]
 \textbf{Notabene}: One has $\cL= [\ell S_{c-1}-\ell \cdot
 \inn(H^0(\cK(c-1)))]{\cdot}z^{m_0-r-c}$, because of
\[ 
   \bigoplus^{m_0-r-1}_{\nu=c}z^{m_0-r-1-\nu}R_{\nu}\subset H^0(\cK(m_0-r-1)).
\]
As $\mu$ is maximal and $\rho_2>0, G^0$ is not divisible by $z$. In the cases
$1^{\circ}$ and $2^{\circ}$ we therefore have $\mu=m_0-m_i-i+1$. The case
$3^{\circ}$ will be treated in (3.3.6), and we assume the case $1^{\circ}$ or
$2^{\circ}$. Then (3.1) can be written as
\[
\langle x,y\rangle [\mu G+z\;\partial G/\partial z]\subset
H^0(\cI_1(m_i+i-1)).
\]
If we put $h:=\ell_1\cdots\ell_{i-1}$, then $h=x^{i-1-\iota(i)}y^{\iota(i)}$,
because of $\ell_0=x$ one has $\iota(i)$ = number of indices j such that $
0<j<i$ and $\ell_j=y  $. If $i=1$, then $h:=1$. By Conclusion 1 it follows from
(3.1), that $G$ is divisible by $h$, that means, one has $G=hg,g\in S_{m_i-1}$.
Therefore we can write
\[
  f_0=y^{m_0}+z^{\mu}xhg
\]
and (3.1) is equivalent to:
\begin{equation}\label{3}
  \langle x,y\rangle [\mu g+z\;\partial g/\partial z]\subset H^0(\cI_i(m_i)).
\end{equation}

\subsection{}\label{3.3.1}
We assume the case $1^{\circ}$. As $N_i^{\down}$ occurs ,
$\cI_i=y\cI_{i+1}(-1)+x^{m_i}\cO_{\mP^2}(-m_i)$. If $g^0$ is the initial
monomial of the form $g$ (cf. the inclusion (3.3)), then
$z^{\mu}xG^0=hg^0xz^{\mu}=\\x^{i-\iota(i)}y^{\iota(i)}z^{\mu}g^0=N_i^{\down}$.
Because of $\mu=m_0-m_i-i+1$ it follows that $g^0=x^{m_i-1}$. Representing the
initial monomials of $H^0(\cI_i(m_i))$ in the same way as the initial
monomials of $H^0(\cI_0(m_0))$, one sees that southwest of $N_i^{\down}$ there
is no further monomial which occurs in $f_0$.

\begin{conclusion}\label{3}
  Let be $\rho_1<0$. If the vice-monomial of $f_0$ equals $N_i^{\down}$, then
  $f_0=y^{m_0} + \alpha N_i^{\down}$, where $\alpha\in k$. As
  $f_i=M_i^{\down}$ is a monomial, by Conclusion 3.2 it follows that all
  monomials which have the same $z$-degree as $M_i$ and are elements of 
$\inn(H^0(\cI(m_0)))$ also are elements of $H^0(\cI(m_0))$. Therefore we
  have $(x,y)y^{m_0}\subset\cI$.\hfill $\Box$
\end{conclusion}

\subsection{}\label{3.3.2}
We assume the case $2^{\circ}$. Then $y^{m_0}X^{\nu\rho}=N_i^{\up}$, where
$\nu>0$ and $1\le i\le r$. This statement is equivalent to the equations:
\begin{equation}\label{4}
  \nu\rho_0=i-\iota(i),\ \nu\rho_1=m_i+\iota(i)-1-m_0,\ \nu\rho_2=m_0-m_i-i+1.
\end{equation}
\begin{lemma}\label{9}
  If one has $f_0=y^{m_0}+\alpha N_i^{\up}+\cdots$, where $1\le i\le r$ and
  $\alpha\in k-(0)$, then $\rho_2>m_{i+1}=\reg(\cI_{i+1})$, where we put
  $\cI_{r+1}:=\cK$ and $m_{r+1}:=\kappa=\reg (\cK)$. Especially $\cI_j$ is a
  monomial ideal for all $j>i$.
\end{lemma}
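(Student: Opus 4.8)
The plan is to prove the inequality $\rho_2 > m_{i+1}$ by a direct numerical estimate and then to read off the monomiality statement as a formal consequence. Throughout we are in Main Case~I and in the case $2^{\circ}$ of (3.3.2), so that (after the reduction already carried out there) $N_i^{\up}$ is the vice-monomial of $f_0$ and the equations (3.4) hold: $\nu\rho_0 = i-\iota(i)$, $\nu\rho_1 = m_i+\iota(i)-1-m_0$, $\nu\rho_2 = m_0-m_i-i+1$, with $\nu \ge 1$. Since $\rho_0 \ge 1$ in Main Case~I and $\iota(i) \ge 0$ (indeed $\iota(i) \le i-1$, because $\ell_0 = x$), the first equation gives $\nu \le \nu\rho_0 = i-\iota(i) \le i$.

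First I would reduce the inequality to pure arithmetic. From $\nu\rho_2 = m_0-m_i-i+1$ and $\nu \le i$ it is enough to show that $m_0-m_i-i+1 > i\,m_{i+1}$, for then $\nu\rho_2 > i\,m_{i+1} \ge \nu\,m_{i+1}$ and hence $\rho_2 > m_{i+1}$. To prove this estimate, note that $1 \le i \le r$ forces $r \ge 1$, so $\cI$ has type $r \ge 1$ and Corollary~2.4 applies, giving $m_j \ge m_{j+1}+2$ for $1 \le j \le r-1$; applying Lemma~2.8(b) to $\cI_r$ (which has type $0$) together with the bound $\reg \le \colength$ gives $m_r \ge \colength(\cK)+2 \ge \kappa+2 = m_{r+1}+2$. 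Hence $m_j \ge m_{j+1}+2$ for all $1 \le j \le r$, and by downward induction $m_j \ge m_{i+1}+2(i+1-j)$ for $1 \le j \le i+1$. Now substitute into $m_0 \ge c+2+m_r+\cdots+m_1$ (Lemma~2.8(c)). When $i<r$ one keeps $m_{i+1}$ from the tail $m_r+\cdots+m_1$ and uses the lower bounds for $m_1,\dots,m_{i-1}$ to get $m_0-m_i \ge c + i\,m_{i+1} + i^2 + i$, whence $m_0-m_i-i+1 > i\,m_{i+1}$; when $i=r$ the number $\kappa = m_{r+1}$ is \emph{not} among the summands $m_r,\dots,m_1$, so one instead bounds $m_0-m_r \ge c+2+\sum_{j=1}^{r-1}m_j$ using $m_j \ge \kappa+2(r+1-j)$, again obtaining $m_0-m_r-r+1 > r\,\kappa = i\,m_{i+1}$ (the constants work out because $c = \colength(\cK) \ge \kappa$). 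In both cases $\rho_2 > m_{i+1}$.

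For the ``Especially'' clause I would argue as follows. By Corollary~2.4 and the bound $m_r \ge \kappa+2$ used above, the regularities form a strictly decreasing chain $m_{i+1} > m_{i+2} > \cdots > m_r > \kappa$, so $\reg(\cI_j) \le m_{i+1} < \rho_2$ for every $j$ with $i < j \le r+1$ (where $\cI_{r+1} := \cK$). Fix such a $j$. Any $T(\rho)$-semi-invariant $g \in S_n$ with $n < \rho_2$ is a monomial: if $g = M(1+a_1X^{\rho}+\cdots+a_sX^{s\rho})$ with $a_s \ne 0$ and $s \ge 1$, then $MX^{\rho}$ is a monomial of $z$-degree $\ge \rho_2 > n = \deg g$, which is impossible. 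Since $H^0(\cI_j(n))$ has a standard basis of $T(\rho)$-semi-invariants (Appendix~E), it is spanned by monomials for every $n < \rho_2$; and for $n \ge \rho_2 > \reg(\cI_j)$ the $m_j$-regularity gives $H^0(\cI_j(n)) = S_{n-m_j}H^0(\cI_j(m_j))$, again spanned by monomials. Hence $\cI_j$ is a monomial ideal for every $j > i$.

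The main obstacle is the bookkeeping in the second paragraph: one has to split off the case $i=r$ (because $\kappa = m_{r+1}$ is then missing from the tail in Lemma~2.8(c)) and check that the numerical constants still give a strict inequality. There is no conceptual difficulty beyond that — the vice-monomial relations (3.4) together with the growth estimates of Lemma~2.8 already force $\rho_2$ to be large, so no deformation / Hilbert-function argument (as in Lemma~2.2 or Corollary~2.4) is needed here.
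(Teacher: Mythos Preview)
Your proof is correct and follows essentially the same route as the paper: both reduce via the equations (3.4) and the bound $\nu\le i$ to the purely numerical inequality $m_0>m_i+i\,m_{i+1}+(i-1)$, then verify it from Lemma~2.8(c) by a case split $i=r$ versus $i<r$, using that $c\ge\kappa$ and that the $m_j$ decrease. The only cosmetic differences are that you invoke the sharper gap $m_j\ge m_{j+1}+2$ from Corollary~2.4 (the paper makes do with the bare monotonicity $m_1>\cdots>m_r>\kappa$) and that you spell out the monomiality of the $\cI_j$ for $j>i$ explicitly, whereas the paper just records it as a consequence.
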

\begin{proof}
  As we have assumed $\rho_1<0$, by (2.4.1 Auxiliary Lemma 2) one has
  $\rho_0>0$. Therefore $\nu\le i$. On the other hand
  $\rho_2=(m_0-m_i-i+1)/\nu\ge (m_0-m_i-i+1)/i$; thus it suffices to show
  $(m_0-m_i-i+1)/i>m_{i+1}$, i.e.
\begin{equation}\label{5}
  m_0>m_i+i\cdot m_{i+1}+(i-1).
\end{equation}
We start with $i=r$, in which case one has to show $m_0>m_r+r\kappa+(r-1)$.
This is valid if $r=0$, so we may assume $r\ge 1$. By Lemma 2.8 one has
$m_0\ge c+2+m_r+\cdots+m_1$, thus it suffices to show
$c+2+m_r+\cdots+m_1 > m_r +r\kappa+(r-1)$. If $r=1$, 
then this inequality is
equivalent to $c+2>\kappa$, and this is true. Therefore one can assume without
restriction that $r>1$, and has to show
\[
  c+2+m_{r-1}+\cdots+m_1>r\kappa+(r-1).
\]
Because of $\kappa\le c$ it suffices
to show $2+m_{r-1}+\cdots+m_1>(r-1)(\kappa+1)$ which is true because of
$\kappa\le c<m_{r-1}< \cdots <m_1$ (cf. Lemma 2.4 and Corollary 2.4).\\
Now we assume $1\le i<r$, in which case it suffices to show: 
\begin{multline*}
  c+2+m_r+\cdots+m_1>m_i+i\cdot m_{i+1}+(i-1)\\ 
\iff \qquad
  (c+2)+(m_r+\cdots+m_{i+2})+(m_{i-1}+\cdots+m_1)>(i-1)m_{i+1}+(i-1).
\end{multline*}
On the left side of this inequality the second summand (the third summand,
respectively) does not occur, if $i+1=r$ (if $i=1$, respectively). If $i=1$
and $r=2$, the inequality reads $c+2>0$. If $i=1$ and $r\ge 3$, the inequality
reduces to $(c+2)+m_r+\cdots+m_3>0$. Thus the case $i\ge 2$ remains, and it
suffices to show $m_{i-1}+\cdots +m_1>(i-1)(m_{i+1}+1)$, which is true because
of $m_{i+1}<m_i<\cdots <m_1$ (loc.cit.).
\end{proof}

\subsection{Description of the ideal $\cI_i$} \label{3.3.3}

The assumptions are the same as in Lemma 3.1, but we slightly change the
notations and write $\cI=\cI_i,\cK=\cI_{i+1}, m=m_i,e=\reg (\cK)$. (Thus
$e=m_{i+1}$ or $e=\kappa$.) We have the following situation:
$\cI=x\cK(-1)+f\cO_{\mP^2}(-m)$ is $\Gamma\cdot T(\rho)$-invariant, $\cK$ is
monomial, $\rho_1<0,\rho_2>e=\reg (\cK)$. From the results of (2.4.2) and
 Lemma 2.6 it follows that $f=y^m+z^{m-e}xg$, where $g\in S_{e-1}$ and
\begin{equation}\label{6}
 \langle x,y\rangle [(m-e)g+z\, \partial g/\partial z]\subset H^0(\cK(e))
\end{equation}
where $f\in H^0(\cK(m))$ is determined only modulo $x H^0(\cK(m-1) $ and we can assume that $f$ is a
$T(\rho)$-semi-invariant. Then $g$ is a $T(\rho)$-semi-invariant, too. Thus
we can write $g=N(1+a_1X^{\rho}+\cdots)$, where $N\in S_{e-1}$ is a
monomial. If we assume $a_1\neq 0$ for example, then $NX^{\rho}\in S_{e-1}$
would have a $z$-degree $\ge\rho_2$. As $\rho_2>e$ (cf. Lemma 3.1 ), this is
impossible.

\begin{conclusion}\label{4}
  Under the assumptions mentioned above, the form $g$ in (3.6) equals a monomial
  $N \in S_{e-1}-H^0(\cK(e-1))$ such that $\langle x,y\rangle N\subset
  H^0(\cK(e))$. It follows that $(x,y)y^m\subset\cI$.\hfill $\Box$
\end{conclusion}

\subsection{}\label{3.3.4}

We go back to the notations of Lemma 3.1, i.e. one has $N=N_i^{\up}$. As in
the\\  
case $1^{\circ}$ one has $\mu=m_0-m_i-i + 1$. As $y^{m_0}$ is the
initial monomial of $f_0 , \ell_0$ equals $x$ and $h:=\ell_1\cdots\ell_{i-1}$
equals $x^{i-1-\iota(i)}y^{\iota(i)}$ as before. If one puts $\overline{g}=\mu
g+z\partial g/\partial z$, then (3.3) can be written as
\begin{equation}\label{7}
  \langle x,y\rangle \overline{g}\subset xH^0(\cI_{i+1}(m_i-1))+\langle f_i\rangle\end{equation}
where $f_i$ has the initial monomial $y^{m_i} $; thus
\begin{equation}\label{8}
  \overline{g}\in H^0(\cI_{i+1}(m_i-1)).
\end{equation}
The initial monomial of $\overline{g}$ equals the initial monomial of $g$; by
construction this is equal to $y^{m_i-1}$. (Proof: The initial monomial of
$xhgz^{\mu}=xGz^{\mu}$ is equal to $N_i^{\up}$ by assumption. Thus the initial
monomial $g^0$ of $g$ fulfils the equation
\[
  xhg^0z^{\mu}=x^{i-\iota(i)}y^{m_i+\iota(i)-1}z^{m_0-m_i-i+1}.
\]
As $xh=x^{i-\iota(i)}y^{\iota(i)}$ and $\mu=m_0-m_i-i+1$, it follows that
$g^0=y^{m_i-1}$.) From (3.7) it follows that
\begin{equation}\label{9}
  y\overline{g}=xF+\alpha f_i,\; F\in H^0(\cI_{i+1}(m_i-1)),\; \alpha \in k^*.
\end{equation}
Now we can write $f_i=y^{m_i}+z^{m_i-m_{i+1}}xu$ (cf.~the last sentence in
(2.4.2)), where $u$ is either a monomial in
$S_{m_{i+1}-1}-H^0(\cI_{i+1}(m_{i+1}-1))$ (cf. Conclusion 2.4), or $u=0$.

We consider the first case. Then it follows that $z^{m_i - m_{i+1}}xu=\beta
y^{m_i}X^{\nu\rho}$, where $\beta\in k^*$ and $\nu\ge 1$. As $f_i$ can be
reduced modulo $xH^0(\cI_{i+1}(m_i-1))$, we have $z^{m_i-m_{i+1}}xu\notin
xH^0(\cI_{i+1}(m_i-1))$ without restriction. From (3.9) it follows that in
$\overline{g}$, except for $y^{m_i-1}$, the monomial
$\overline{u}:=z^{m_i-m_{i+1}}xu/y$ occurs.

\indent Suppose, there is another monomial $v$ in $\overline{g}$. Then one
would have $yv\in\\ xH^0(\cI_{i+1}(m_i-1))$, and from (3.8) it would follow that
$v$ is an element of the monomial subspace $H^0(\cI_{i+1}(m_i-1))$. Figure 3.2
shows $H^0(\cI_i(m_i))=xH^0(\cI_{i+1}(m_i-1))+\langle f_i\rangle$ marked 
with \textbf{---} in black 
and $H^0(\cI_{i+1}(m_i))$ marked with \textbf{---} in blue. Suppose that $v\in
H^0(\cI_i(m_i-1))=xH^0(\cI_{i+1}(m_i-2))$. By construction $v$ occurs in $g$,
therefore $xhv=\ell_0\cdots\ell_i\; v$ occurs in $xG$ and $z^{\mu}xhv$ occurs
in $z^{\mu}xG$. On the other hand, $z^{\mu}xhv\in z^{\mu} x\ell_1\cdots\
\ell_{i-1}xH^0(\cI_{i+1}(m_i-2))=z^{\mu}xH^0(\cI_1(m_i+i-2))\subset
xH^0(\cI_1(m_0-1))$, because $\ell_i=x$ and so Conclusion 3.1 can be applied. As
$f_0$ can be reduced modulo $xH^0(\cI_1(m_0-1))$, one can assume without
restriction, that $v$ does not occur in $g$ and therefore does not occur in
the inclusions (3.7) or (3.8), which have to be fulfilled. Thus we can assume
without restriction that $v\in H^0(\cI_{i+1}(m_i-1))-H^0(\cI_i(m_i-1))$. From
$yv\in xH^0(\cI_{i+1}(m_i-1))$ it follows that $v$ is equal to one of the
monomials in Figure 3.2, which are denoted by ?. Therefore the $z$-degree of
$v$ is $\ge m_i-m_{i+1}$.

By construction, the $z$-degree of $\overline{u}$ is $\ge m_i-m_{i+1}$, too.
As $\overline{u}$ and $v$ occur in the semi-invariant $\overline{g}$, both
monomials differ by a factor of the form $X^{\nu\rho}$. As $\rho_2>m_{i+1}$
(Lemma 3.1), it follows that $\nu=0$, i.e., $\overline{u}$ and $ v $ differ by a
constant factor. Therefore one has 
\[
  \overline{g}=\beta y^{m_i-1}+\gamma z^{m_i-m_{i+1}}xu/y\;\; ; \beta,\gamma\in k^*.
\]
We have to describe the position of $xu$ more exactly. From $xu\notin
xH^0(\cI_{i+1}(m_{i+1}-1))$ but $\langle x,y\rangle xu\in
xH^0(\cI_{i+1}(m_{i+1}))$ and from the $\Gamma$-invariance of $\cI_{i+1}$ it
follows that $z^{m_0-m_{i+1}}xu$ equals $N_j^{\down}$ or $N_j^{\up}$, where
$j$ is an index $r\ge j>i$, or equals a monomial $L\in\cL$ (cf. 3.3).

\indent Suppose $  z^{m_0-m_{i+1}}\cdot xu = N_j^{\down}$. Then
$\overline{u}=z^{m_i-m_{i+1}}xu/y$ is southwest of $N_j^{\down}/z^{m_0-m_i}$
and does not occur in the monomial subspace $H^0(\cI_{i+1}(m_i-1))$, which
contradicts (3.8). Finally we note that the monomials of $\overline{g}$ agree
with the monomials of $g$.

\begin{conclusion}\label{5}
  Assume that $f_0$ has the vice-monomial $N_i^{\up}$ and that $f_i$ is not a
  monomial. Then (up to a constant factor) $f_i=M_i^{\up}+\alpha N_j^{\up}$,
  where $1\le i<j\le r$ and $\alpha\in k^*$, or $f_i=M_i^{\up}+\alpha L$,
  where $L\in\cL$ is a monomial such $(x,y)L\subset \ell\cK(-r-1)$, and
  $\alpha\in k^*$. 

 Then it follows that $f_0=y^{m_0}+\beta N_j^{\up}+\gamma
  N_j^{\up}\cdot (z/y)$ or $f_0=y^{m_0}+\beta N_i^{\up}+\gamma L\cdot (z/y)$,
  respectively. Here $\beta$ and $\gamma$ are elements of $k^*$. Finally, all
  monomials, which occur in $x\, f_0$ or in $y^2f_0$ also occur in $\cI$.
\end{conclusion}

\begin{proof}
  The shape of $f_0$ and of $f_i$ results from the forgoing argumentation.
  Conclusion 3.4 gives $(x,y)L\subset\ell\cK(-r-1)$. The statements concerning
  the monomials in $xf_0$ follow from Conclusion 3.2. By Lemma 3.1, $\cI_j$ is
  monomial, thus $yN_j^{\up}=z\, M_j^{\up}\in\cI$ and therefore $y\,
  M_i^{\up}\in\cI$. Furthermore $y^2N_j^{\up}\cdot (z/y)=y\, N_j^{\up}z\in\cI$
  and as well $y^2L\cdot (z/y)=yLz\in\cI$. From this the assertion concerning
  the monomials of $y^2f_0$ follows.
\end{proof}

Now we come to the case that $f_i=y^{m_i}$. The above reasoning had shown that
$y^{m_i-1}$ occurs in $\overline{g}$. Suppose that another monomial $v$
occurs in $\overline{g}$. The same argumentation as before shows that $v$
equals one of the monomials in Figure 3.2 denoted by ?. Thus $v$ is equal to
$N_j^{\up}$ or $N_j^{\down}$ for an index $i<j\le r$, or is equal to a
monomial $L\in\cL$, such that $(x,y)L\subset\ell\cK(-r-1)$. Furthermore,
there can be only one such monomial $v$.

\begin{conclusion}\label{6}
  If $f_0$ has the vice-monomial $N_i^{\up}$ and if $f_i=y^{m_i}$, then
  $f_0=y^{m_0}+\alpha N_i^{\up}+\beta N_j^{\up}$ or $f_0=y^{m_0}+\alpha
  N_i^{\up}+\beta N_j^{\down}$, where $1\le i<j\le r$, or $f_0=y^{m_0}+\alpha
  N_i^{\up}+\beta L$, where $L\in\cL$ is a monomial such that
  $(x,y)L\subset\ell\cK(-r-1)$. All monomials occurring in $xf_0$ or $yf_0$
  also occur in $\cI$.
\end{conclusion}

\begin{proof}
  The statements concerning the shape of $f_0$ follow from the foregoing
  discussion. As $\cI_{i+1}$ is monomial by Lemma 3.1 and as $f_i$ is a
  monomial, $\cI_i$ is monomial and $(x,y)L, (x,y)N_i^{\up}$ and
  $(x,y)N_j^{\down}$ are contained in $\cI$.
\end{proof}

\subsection{}\label{3.3.5}
Suppose in the forms $f_0$ and $f_j$, where $1\le j\le r$ , as in Conclusion 3.5
or Conclusion 3.6 there are three monomials with coefficients different from
zero. We call $f_0$ (and $f_j$, respectively) a $ \emph{trinomial}$ and we then have
$f_0=y^{m_0}+\alpha N_i^{\up}+\beta E_0$ and $f_j=M_j^{\up}+\gamma
N_k^{\up}+\delta E_j$, where the ``final monomials'' $E_0$ and $E_j$ have the
shape described in Conclusion 3.5 and Conclusion 3.6, and where $\alpha,
\beta,\gamma,\delta\in k^*$. If $i>k$, then $m_i\le m_{k+1}<\rho_2$ (Lemma 3.1).
As we are in the case $\rho_0>0,\rho_2>0$, it follows that $|\rho_1|>m_i$, and
as $N_i^{\up}$ and $E_0$ both occur in the semi-invariant $f_0$, one has
$E_0=N_i^{\up}\cdot X^{\nu\rho}$, where $\nu\ge 1$. Looking at Figure 3.2, one
sees that then $E_0$ cannot be an element of $S_{m_0}$, contradiction. In the
same way it follows that $i<k$ is not possible, and thus we have $i=k$. As
$f_0$ and $f_j$ are $T(\rho)$-semi-invariants, it follows that $y^{m_0}\cdot
X^{\nu\rho}=M_j^{\up}$, where $\nu\ge 1$. We conclude from this that
\[
 \nu\rho_0=j-\iota(j),\ \nu\rho_1=m_j+\iota(j)-m_0,\ \nu\rho_2=m_0-m_j-j,
\]
 where $1\le j\le r$. We want to show that this implies $\rho_2>m_{j+1}$.
 Similarly as in the proof of Lemma 3.1 it suffices to show
 $m_0>m_j+jm_{j+1}+j$. We start with the case $j=r$. Then again
 $m_{r+1}:=\kappa$ and one has to show $m_0>m_r+r\kappa +r$.  The case $r=0$
 cannot occur. If $r=1$, the inequality reads $m_0>m_1+\kappa+1$, and because
 of $m_0\geq c+2+m_1$ (Lemma 2.8) and $\kappa\le c$ this is right.  Therefore
 we can assume $r\ge 2$.

 \noindent Because of (loc.cit.) it suffices to show
 $c+2+m_r+\cdots+m_1>m_r+r\kappa +r$. As $\kappa\le c$ it suffices to show
 $2+m_{r-1}+\cdots+m_1>(r-1)\kappa+r\; \iff\; 1+m_{r-1}+\cdots +
 m_1>(r-1)(\kappa+1)$. Because of $\kappa\le c<m_r<\cdots<m_1$ this is true
 (cf.Lemma 2.4 and Corollary 2.4).

\noindent We now assume $1\le j<r$, in which case it suffices to show:
\[
  c+2+m_r+\cdots +m_1>m_j+jm_{j+1}+j
\]
If $j=1$ this inequality reads $c+2+m_r+\cdots+m_1>m_1+m_2+1$, and this is
true, because $r\ge 2$. Thus we can assume $2\le j<r$ and the inequality which
has to be shown is equivalent to
\[
  (c+1)+(m_r+\cdots+m_{j+2})+(m_{j-1}+\cdots+m_1)>(j-1)(m_{j+1}+1)
\]
Because of $m_{j+1}<m_j<\cdots<m_1$ (loc.cit.), this is true.

\indent We thus have proved that from $y^{m_0}X^{\nu\rho}=M_j^{\up}$
\begin{equation}\label{9}
  \rho_2\; > \; m_{j+1}
\end{equation}
follows. As $k>j$ by definition, we have $m_k\le m_{j+1}<\rho_2$ and the same
argumentation as before carried out with $N_i^{\up}$ and $E_0$ shows that
$N_k^{\up}$ and $E_j$ cannot simultaneously occur in $f_j$.

\begin{conclusion}\label{7}
  Among the forms $f_i,\; 0\le i\le r$, which have the shape described in
  Conclusion 3.5 and Conclusion 3.6, there can only occur at most one trinomial.
\end{conclusion}

\subsection{}\label{3.3.6}

We now consider the case $3^{\circ}$. The following argumentation is first of
all independent of the sign of $\rho_1$. We write $f_0=f^0+z^ug$, where
$f^0=y^{m_0}$ if $\ell_0=x$ and $f^0=x^{m_0}$, if $\ell_0=y$. If we choose
$\mu$ maximal then we have $\mu\ge m_0-(\kappa+r)$. For as
$R_{\kappa}\subset\inn(H^0(\cK(\kappa)))$, the initial monomial of $g$ has a
$z$-degree $>m_0-(\kappa+r+1)$, i.e., the initial monomial occurs in a column
of total degree in $x$ and $y$ smaller or equal $ \kappa+r$.

\indent From the $\Gamma$-invariance of $f_0$ modulo $\ell_0H^0(\cI_1(m_0-1))$
it follows that $\langle x,y\rangle \partial f/\partial z=\langle x,y\rangle\;
[\mu z^{\mu-1}g+z^{\mu}\partial g/\partial z]\subset\ell_0H^0(\cI_1(m_0-1))$.
From the decompositions in $(Z)$ (cf. 3.1) we conclude that
$H^0(\cI_i(n))=\ell_iH^0(\cI_{i+1}(n-1))$ if $n<m_i$. Now 
\[
   m_0-\mu<\kappa +r+1<m_r+r<\cdots<m_1+1<m_0
\] (cf. Corollary 2.4) and thus
\begin{align*}
  H^0(\cI_1(m_0-\mu)) & =\ell_1H^0(\cI_2(m_0-\mu-1)),\\
  H^0(\cI_2(m_0-\mu-1))& =\ell_2H^0(\cI_3(m_0-\mu-2)),\\
 \cdots\cdots\cdots\cdots\cdots\cdots\cdots&\cdots\cdots\cdots\cdots\cdots\cdots\cdots\cdots\cdots\\
  H^0(\cI_{r-1}(m_0-\mu-r+2))& =\ell_{r-1}H^0(\cI_r(m_0-\mu-r+1)),\\
  H^0(\cI_r(m_0-\mu-r+1)) & =\ell_rH^0(\cK(m_0-\mu-r)).
\end{align*}
\noindent It follows that
$H^0(\cI_1(m_0-\mu))=\ell_1\cdots\ell_rH^0(\cK(m_0-\mu-r))$ and therefore
\[
 \langle x,y\rangle\, [\mu g+z\, \partial g/\partial z]\subset\ell
H^0(\cK(m_0-\mu-r))\quad 
 \text{where}\  \ell:=\ell_0\cdots\ell_r=x^ay^b
\]
and $a$ (respectively $b$) is the number of $\ell_i=x$ (respectively the
number of $\ell_i=y$). This implies that $g$ is divisible by $\ell$, and
changing the notation we can write $f_0=f^0+\ell z^{\mu}g$, where
$\ell=\ell_0\cdots\ell_r, \mu\ge m_0-(\kappa +r)$ is maximal, $g\in
S_{m_0-\mu-r-1}$ and
\[
 \langle x,y\rangle\; [\mu g+z\,\partial g/\partial z]\subset
  H^0(\cK(m_0-\mu-r)).
\]

\noindent Now $f_0\in H^0(\cK(m_0))$ or $f_0\in H^0(\cI_1(m_0))$ and $m_0\ge
\colength(\cK)+2$, if $r=0$ or $m_0\ge \colength(\cI_1)+2$, if $r\ge 1$,
respectively (cf. Lemma 2.6 and Lemma 2.8). Therefore $R_{m_0}\subset
H^0(\cK(m_0))$ or $R_{m_0}\subset H^0(\cI_1(m_0))$, respectively (cf. Appendix~C, Remark 2). It follows $\ell z^{\mu}g\in H^0(\cK(m_0))$ (or $\ell
z^{\mu}g\in H^0(\cI_1(m_0))$ and thus $\ell g\in H^0(\cK(m_0-\mu))$ (or $\ell
g\in H^0(\cI_1(m_0-\mu))=\ell_1\cdots\ell_rH^0(\cK(m_0-\mu-r))$,
respectively).

\noindent From this we conclude that $\ell_0 g\in H^0(\cK(m_0-\mu - r))$, in any
case. We also note that $g\notin H^0(\cK(m_0-\mu-r-1))$ without restriction,
because otherwise 
\[
\ell g\in\ell H^0(\cK(m_0-\mu-r-1))=\ell_0 H^0(\cI_1(m_0-\mu-1)),
\] and thus $z^{\mu}\ell g\in\ell_0 H^0(\cI_1(m_0-1))$
would follow. But then $\ell g$ could be deleted.


\begin{conclusion}\label{8}
  If the vice-monomial of $f_0$ is different from all monomials $N_i^{\up}$
  and $N_j^{\down}$, then we can write
  \begin{gather*}
f_0=f^0+z^{\mu}\ell g,\ \text{where}\ \mu\ge m_0-(\kappa+r) \text{is maximal},\
\ell:=\ell_0\cdots\ell_r,\\
 f^0=y^{m_0}, \text{if}\ 
  \ell_0=x \text{ (or}\ f=x^{m_0}, \text{if}\ \ell_0=y, \text{respectively)},\\
g\in
  S_{m_0-\mu-r-1}-H^0(\cK(m_0-\mu-r-1)),\ \ell_0g\in H^0(\cK(m_0-\mu-r))
  \end{gather*}
and $\langle x,y\rangle\, [\mu g+z\,\partial g/\partial z] \subset
H^0(\cK(m_0-\mu-r)) .\qed $
\end{conclusion}

\section{Order of $f_0$ in the case $3^\circ$}\label{3.4}

We keep the notations of Conclusion 3.8, but now we write
$h:=\ell_0\cdots\ell_r$. In order to simplify the notations , the cases
$\rho_1>0$ and $\rho_1<0$ will be treated separately. As always we assume
$\rho_2>0$.

\subsection{}\label{3.4.1}

Let be $\rho_1>0$ (and thus $\rho_0<0$). Conclusion 3.8 is still valid, if $x$
and $y$ are interchanged. Thus one can write $f_0=x^{m_0}+z^{\mu}hg,\ell_0=y$
and
 \[
 H^0(\cI(d))=yH^0(\cI_1(d-1))\oplus\langle \{ f_0x^nz^{d-m_0-n}|0\le n\le
 d-m_0\}\rangle,\] where $\cI_i:=\cK$ if $r=0$ (cf. Lemma 2.6).

\noindent Suppose there is a \;$0\le\nu\le d-m_0$ such that $x^{\nu+1}f_0\equiv
x^{m_0+\nu+1}$ modulo $hH^0(\cK(m_0+\nu-r))$. Because of
\[
hH^0(\cK(m_0+\nu-r))=y\ell_1\cdots\ell_rH^0(\cK(m_0+\nu-r))\subset
yH^0(\cI_1(m_0+\nu))
\]
 then follows that 
\begin{multline*}
  H^0(\cI(d))=yH^0(\cI_1(d-1)) \\ \oplus \langle \{ f_0x^nz^{d-m_0-n}|0\le
  n\le\nu\}\rangle\oplus \langle \{ x^{m_0+n}z^{d-m_0-n}| \nu+1\le n\le
  d-m_0\}\rangle.
\end{multline*}

\noindent If one wants to determine the so called $\alpha$-grade of
$\dot{\wedge}\psi_{\alpha}(H^0(\cI(d))$ (cf. Chapter 4), then it is easier to
estimate the contribution coming from the monomials in $H^0(\cI(d))$. In the
following definition we assume $\rho_2>0$ and the sign of $\rho_1$ is
arbitrary.

\begin{definition}\label{4}
  The \emph{order} of $f_0$ is the smallest natural number $ \nu$ such that
  $x^{\nu+1}f_0\equiv x^{m_0+\nu+1}$ modulo $hH^0(\cK(m_0+\nu-r))$ (such that
  $y^{\nu+1}f_0\equiv y^{m_0+\nu+1}$ modulo $hH^0(\cK(m_0+\nu-r))$,
  respectively) if $\rho_1>0$ (if $\rho_1<0$, respectively). \\
  Here we have put $h:=\ell_0\cdots\ell_r$ (cf. $(Z)$ in 3.1).
\end{definition}

\begin{remark}\label{1}
  If $\rho_1>0$, then $\rho_0<0$, and from Lemma 2.6 it follows that
  $f_0$ has the initial monomial $M_0^{\down}=x^{m_0}$. Then Conclusion
 3.2 gives $yx^{m_0}\in\cI$. On the other hand, if $\rho_1<0$ the same
  argumentation shows that $f_0$ has the initial monomial $y^{m_0}$ and
    $xy^{m_0}\in\cI$.\hfill $\Box$
\end{remark}

\noindent We now take up again the assumption $\rho_1>0$ from the beginning of
(3.4.1).\\
\textit{Subcase 1:} $\ell_0=x$. As $\rho_0<0$ one has $f_0=y^{m_0}$.\\
\textit{Subcase 2:} $\ell_0=y$. Putting $h:=x^ay^b$ one can write
\[
f_0=x^{m_0}+z^{\mu}hg=x^{m_0}(1+x^{a-m_0}y^bz^{\mu}g).
\]
 As $f_0$ is a $T(\rho)$-semi-invariant, one has
\[
x^{a-m_0}y^bz^{\mu}g=X^{\gamma\rho}p(X^\rho), \text{ where } \gamma\in\N -(0),\
  p(X^{\rho})=a_0+a_1X^{\rho}+\ldots+a_sX^{s\rho} \text{ and } a_i\in k.
\]
\noindent We now assume $f_0$ is not a monomial. If both $\mu$ and $\gamma$
  are chosen maximal, then $a_0\neq 0$, and we can write
$g=Mp(X^{\rho}),\
  \text{where}\  M:=x^{m_0-a+\gamma\rho_0}y^{\gamma\rho_1-b} \text{ and } \mu=\gamma\rho_2$.
  From the $\Gamma$-invariance of $f_0$ modulo $\ell_0H^0(\cI_1(m_0-1))$ it
  follows that
\[ 
\langle x,y\rangle hz^{\mu-1}[\mu g+z\partial g/\partial
  z]\subset\ell_0H^0(\cI_1(m_0-1)),
\]
 thus 
\[
\langle x,y\rangle
  \ell_1\cdots\ell_r[\mu g+z\partial g/\partial z]\subset
  H^0(\cI_1(m_0-\mu)).
\]

We had already obtained
$H^0(\cI_1(m_0-\mu))=\ell_1\cdots\ell_rH^0(\cK(m_0-\mu-r))$ in (3.3.6).\\
We conclude that
\[
\langle x,y\rangle [\mu g+z\partial g/\partial z]\subset
  H^0(\cK(m_0-\mu-r))
\]
 and therefore:
\[
  xg\equiv -\tfrac{1}{\mu}xM[\rho_2a_1X^{\rho}+2\rho_2a_2X^{2\rho}+\cdots +s\rho_2a_sX^{s\rho}]\;\mbox{modulo}\; H^0(\cK(m_0-\mu-r))
\]

\noindent Assume that $s=$ degree $(p)>0$. Then $a_s\neq 0$ and $j:=$ min $\{
i>0|a_i\neq 0\}$ is an integer between 1 and $s$ inclusive. Then one can write
:
\[
  xg\equiv -\tfrac{1}{\mu}xMX^{j\rho}[j\rho_2a_j+(j+1)\rho_2a_{j+1}X^{\rho}+\cdots+s\rho_2a_sX^{(s-j)\rho}]\; \text{modulo}\; H^0(\cK(m_0-\mu-r))
\]
From this it follows that  
\[
xf_0=x^{m_0+1}+h\, z^{\mu}xg\equiv\tilde{f} \modulo
hH^0(\cK(m_0-r)),
\]
 where $\tilde{f}:=x^{m_0+1}+h\, z^{\tilde{\mu}}\tilde{M}\tilde{p} (X^{\rho})$ and
$\tilde{\mu}:=\mu+j\rho_2>\mu,\ \tilde{M}:=xMx^{j\rho_0}y^{j\rho_1}$,
\[ 
\tilde{p}(X^{\rho}):=
  \tilde{a}_0+\tilde{a}_1X^{\rho}+\cdots+\tilde{a}_{s-j}X^{(s-j)\rho}
\]
and finally $\tilde{a}_0:=-\frac{1}{\mu}j\rho_2a_j\neq
0,\cdots,\tilde{a}_{s-j}:=-\frac{1}{\mu}s\rho_2a_s\neq 0$.

\begin{remark}\label{2}
  $\tilde{f}$ is again a $T(\rho)$-semi-invariant, because 
  \begin{align*}
    x^{-(m_0+1)}hz^{\tilde{\mu}}\tilde{M} & =
    x^{-m_0-1}x^ay^bz^{\mu+j\rho_2}x\cdot x^{m_0-a+\gamma\rho_0}\cdot
    y^{\gamma\rho_1-b}\cdot x^{j\rho_0}\cdot y^{j\rho_1}\\
    & = x^{(\gamma+j)\rho_0}y^{(\gamma+j)\rho_1}z^{(\gamma+j)\rho_2}. \qed
  \end{align*}
\end{remark}

\begin{remark}\label{3}
  $\tilde{f}$ is only determined modulo $\ell_0H^0(\cI_1(m_0))$, as
  $hH^0(\cK(m_0-r))\subset\ell_0H^0(\cI_1(m_0))$.\hfill $\qed$
\end{remark}

\indent We continue the above discussion and get at once $xf_0\equiv
x^{m_0+1}$ modulo $hH^0(\cK(m_0-r))$, if $s=0$. In any case we have degree
$(\tilde{p})<$ degree $(p)$, and continuing in this way, we get finally
$\tilde{p}^{\cdot (s+1)} = 0 $. This means, there is an integer
$0\le\nu\le s$ such that
\begin{equation}\label{10}
  x^{\nu+1}f_0\equiv x^{m_0+\nu+1} \modulo  hH^0(\cK(m_0-r+\nu)).
\end{equation} 

\subsection{}\label{3.4.2}

We now assume $\rho_1<0$. If $f_0$ is a monomial, then the order of $f_0$ is
zero by definition. Thus we may assume without restriction that $\rho_0>0$ 
(c.f. 2.4.1 Auxiliary Lemma 2). \\
\noindent \textit{Subcase 1:} $\ell_0=y$. Then $f_0=x^{m_0}$ (Lemma 2.6). \\
\noindent \textit{Subcase 2:} $\ell_0=x$. With notations analogous to that of
(3.4.1) one has: $f_0=y^{m_0}(1+x^ay^{b-m_0}z^{\mu}g)$ is a semi-invariant,
therefore $x^ay^{b-m_0}z^{\mu}g=X^{\gamma\rho}p(X^{\rho}),\gamma\in\N-(0)$. If
$f_0$ is not a monomial and $\mu$ and $\gamma$ are chosen maximal, then one
can write $g=Mp(X^{\rho})$, where $\mu=\gamma\rho_2,
M=x^{\gamma\rho_0-a}y^{m_0-b+\gamma\rho_1}$ and \;
$p(X^{\rho})=a_0+a_1X^{\rho}+\cdots +a_sX^{s\rho}$. Furthermore one gets
$yf_0\equiv\tilde{f}$ modulo $hH^0(\cK(m_0-r))$, where
$\tilde{f}:=y^{m_0+1}+hz^{\tilde{\mu}}\tilde{M}\tilde{p}(X^{\rho}),\tilde{\mu}:=\mu+j\rho_2,
\tilde{M}:=yMx^{j\rho_0}y^{j\rho_1}$ and $\tilde{p}$ is defined as in (3.4.1).
It is clear that one gets the corresponding statements as in (3.4.1), if $x$
and $y$ as well as $a$ and $b$ are interchanged. This means, there is an
integer $0\le\nu\le s$ such that
\begin{equation}\label{11}
  y^{\nu+1}f_0\equiv y^{m_0+\nu+1} \ \text{modulo} \ hH^0(\cK(m_0-r+\nu)).
\end{equation} 

\subsection{Estimate of the order of $f_0$}\label{3.4.3}

The order has been defined as the smallest natural number $\nu$ such that
$x^{\nu+1}f_0\equiv x^{m_0+\nu+1}$ (respectively $y^{\nu+1}f_0\equiv
y^{m_0+\nu+1}$) \text{modulo} $hH^0(\cK(m_0-r+\nu))$. We keep the notations of
(3.4.1) and (3.4.2). From $\gamma\rho_2=\mu \geq \, m_0-(\kappa+r)$ (cf. 3.3.6) it
follows that $\gamma\ge [m_0-(\kappa+r)]/\rho_2$. On the other hand,
$x^{m_0-a+\gamma\rho_0}\cdot x^{s\rho_0}$ has to be a polynomial in case that
$\rho_1>0$ ( $y^{m_0-b+\gamma\rho_1}\cdot y^{s\rho_1}$ has to be a monomial in
case that $\rho_1<0$, respectively). That means $m_0-a+(\gamma+s)\rho_0\ge 0$
($m_0-b+(\gamma+s)\rho_1\ge 0$, respectively). Now one has $\rho_0<0$, if
$\rho_1>0$, and $\rho_0>0$, if $\rho_1<0$ (cf. 2.4.1 Auxiliary Lemma 2). It
follows that
\begin{align*}
  \gamma+s \le (m_0-a)/|\rho_0| \quad & \text{if}\ \rho_1>0,  \\
\gamma+s\le  (m_0-b)/|\rho_1|  \quad &  \text{if}\ \rho_1<0.
\end{align*}
Here $a$ (respectively $b$) is the number of $\ell_i, 0\le i\le r$, such that $\ell_i=x$ (respectively $\ell_i=y$).

In case that $\rho_1>0$ we obtain , because of $|\rho_0|=\rho_1+\rho_2$:
\[
  s\le\frac{m_0-a}{\rho_1+\rho_2} - \frac{m_0-(\kappa+r)}{\rho_2}
\]
And in case that $\rho_1<0$, because of $|\rho_1|=\rho_0+\rho_2$, we obtain
\[
  s\le\frac{m_0-b}{\rho_0+\rho_2} - \frac{m_0-(\kappa+r)}{\rho_2}.
\]
 From the congruences (3.11) and (3.12) we get 

 \begin{conclusion}\label{9}
The order $s$ of $f_0$ fulfils the inequality:
\[
s\le 
\begin{cases}
  \frac{\kappa}{\rho_2}
  -m_0\left(\frac{1}{\rho_2}-\frac{1}{\rho_1+\rho_2}\right)
  +\frac{r}{\rho_2}-\frac{a}{\rho_1+\rho_2} & \text{if } \rho_1>0,\\
  \frac{\kappa}{\rho_2}
  -m_0\left(\frac{1}{\rho_2}-\frac{1}{\rho_0+\rho_2}\right)
  +\frac{r}{\rho_2}-\frac{b}{\rho_0+\rho_2} & \text{if } \rho_1<0.
\end{cases}
\]
\end{conclusion}

\section{Summary in the Case I}\label{3.5}
 We recall that this means $\rho_0>0, \rho_2>0$ and $\rho_1<0$.

\begin{proposition}\label{1}
  Let be $\rho_2>0$ and $\rho_1<0$.\\
  (a) The following cases can a priori occur:\\
  1st case: One of the $f_i$, say $f_{i_0}$ has as vice-monomial an ``upper
  empty corner'' $N_{j_0}^{\up}$. Then $\rho_2>\kappa$, thus $\cK$ is
  monomial, and the $f_i$ have one of the following forms:
\begin{enumerate}
\item $f_i=M_i^{\down}$
\item $f_i=M_i^{\up}+\alpha N_j^{\down}$
\item $f_i=M_i^{\up}+\alpha N_j^{\up}+\beta N_k^{\up}$
\item $f_i=M_i^{\up}+\alpha N_j^{\up}+\beta N_k^{\down}$
\item $f_i=M_i^{\up}+\alpha N_j^{\up}+\beta L, L\in\cL$ a monomial \\
  such that 
\[
(x,y)\cdot L  \subset\ell\cK(-r-1),\ \ell:=\ell_0\cdots\ell_r.
\]
\item $f_i=M_i^{\up}+\alpha N_j^{\up}+\beta N_k^{\up}\cdot (z/y),\alpha,\beta\in k^*$
\item $f_i=M_i^{\up}+\alpha N_j^{\up}+\beta L\cdot (z/y),L\in\cL$ a monomial \\
  such that 
\[ 
  (x,y)\cdot L \subset \ell\cK(-r-1),\ \alpha,\beta\in k^*.
\]
\end{enumerate}

Here $\alpha$ and $\beta$ are (a priori) arbitrary elements of $k$ and $0\le
i<j<k\le r$. 

\noindent
2nd case: Not any of the $f_i$ has as vice-monomial an ``upper empty corner'' $N_j^{\up}$. Then each of the $f_i$ has one of the following forms:
\begin{enumerate}
\item $f_i=M_i^{\up}+\alpha N_j^{\down}$
\item $f_i=M_i^{\down}$
\item $f_i=M_i^{\up}+F, F=\sum \alpha_j L_j,\ L_j\in\cL$ monomial,
  $\alpha_j\in k^*$, such that $xF\in\ell\cK(-r-1)$, and for suitable
  $\beta_j\in k^*$ and $G:=\sum\beta_jL_j$ one has $(x,y)\cdot
  G\subset\ell\cK(-r-1)$.
\end{enumerate} 
(b) In the 1st case there is only one trinomial at most, i.e. there is only
one $f_i$ at most, which has the shape as in one of the cases 1.3 - 1.7, with
$\alpha$ and $\beta$ different from zero. \\
(c) Let be $\cM:=\bigcup\limits_{n\ge 0} \{\text{monomials in } H^0(\cI(n))\}$
and let be $\langle\cM\rangle$ the subspace of $S$ generated by these
monomials. In the 1st case one has:
\begin{itemize} 
\item $xf_i\in\langle\cM\rangle$, for all $0\le i\le r$;
\item $yf_i\in\langle\cM\rangle$, if $f_i$ has the shape as in one of the
  cases 1.1--1.5
\item $y^2f_i\in\langle\cM\rangle$, if $f_i$ has the shape as in one of the
  cases 1.6 and 1.7.
\end{itemize} 
(d) In the 2nd case one has $\langle x,y\rangle f_i\subset \langle\cM\rangle$,
if $f_i$ has the shape as in case 2.1. \\
If $f_i$ has the shape as in case 2.3, then $xM_i^{\up}\in\cM$ , and 
the order $s$ of $f_i$ fulfils the inequality 
\[
s\le \frac{\kappa}{\rho_2}-m_i \left(
  \frac{1}{\rho_2}-\frac{1}{\rho_0+\rho_2}\right) + \frac{r}{\rho_2},
\]
where $0\le i\le r$.
\end{proposition}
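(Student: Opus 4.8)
The statement is a compilation of everything proved in Sections~3.3--3.4 together with Lemma~3.1, so the plan is to organise those results by the position of the vice-monomial of each generating form and check that nothing is left over. The crucial preliminary observation is that each ideal $\cI_i$ occurring in the decomposition $(Z)$ of~3.1 is itself a $\Gamma\cdot T(\rho)$-invariant ideal of colength $d_i$ and type $r-i$, with the \emph{same} $\rho$ (so still $\rho_2>0$, $\rho_1<0$), with auxiliary ideal $\cI_{i+1}$ and bottom ideal $\cI_{r+1}=\cK$; hence the entire discussion of~3.3, carried out there for $f_0$, applies verbatim to each $f_i$ in place of $f_0$. In particular the vice-monomial of each $f_i$ exists (after the usual reduction modulo $\ell_0\cdots\ell_{i-1}\,H^0(\cI_i(m_i-1))$) and falls into exactly one of the cases $1^\circ$, $2^\circ$, $3^\circ$ of~(3.3), with the index $j$ there satisfying $i<j\le r$ in cases $1^\circ,2^\circ$, and case $3^\circ$ referring to $\cK$. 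One also records that in Case~I the initial monomial of $f_i$ is $M_i^{\up}$ or $M_i^{\down}$ (Lemma~2.6, together with~(3.2)).

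First I would treat the \textbf{1st case}: some $f_{i_0}$ has vice-monomial an upper empty corner $N_{j_0}^{\up}$. Applying Lemma~3.1 to $\cI_{i_0}$ gives $\rho_2>\reg(\cI_{j_0+1})=m_{j_0+1}$; since the $m_i$ are strictly decreasing and $m_{r+1}=\kappa$ (Corollary~2.4), we get $m_{j_0+1}\ge\kappa$, hence $\rho_2>\kappa$, and then $\cK$ is monomial (the ``Especially'' clause of Lemma~3.1). Now run through the $f_i$ one by one: if the vice-monomial of $f_i$ is of type $1^\circ$, Conclusion~3.3 (applied to $\cI_i$) yields forms~1.1 and~1.2; if it is of type $2^\circ$, Conclusions~3.5 and~3.6 yield forms~1.2--1.7 according to whether the associated lower form $f_j$ is a monomial, the possible shapes of the final monomials being exactly those listed; and if it is of type $3^\circ$, then since $\cK$ is monomial the vice-monomial is a monomial $L\in\cL$ with $(x,y)L\subset\ell\cK(-r-1)$ by Conclusion~3.8 and Conclusion~3.4, again one of forms~1.1--1.5. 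Part~(b) is then precisely Conclusion~3.7. For part~(c), each of Conclusions~3.2, 3.3, 3.5, 3.6 records which multiples of the generator have all their monomials in $\cI$, hence in $\langle\cM\rangle$: $xf_i$ always (via $xM_i^{\up}\in\cI$ from~(3.2) and the monomial statements in~3.5, 3.6), $yf_i$ in forms~1.1--1.5, and $y^2f_i$ in forms~1.6--1.7.

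Next I would treat the \textbf{2nd case}: no $f_i$ has an upper-corner vice-monomial, so for each $i$ the vice-monomial is of type $1^\circ$ or $3^\circ$. Type $1^\circ$ gives, by Conclusion~3.3, either $f_i=M_i^{\up}+\alpha N_j^{\down}$ (form~2.1) or, after reduction, a monomial (form~2.2); in the first of these $\langle x,y\rangle f_i\subset\langle\cM\rangle$ since $(x,y)M_i^{\up}\subset\cI$ by~(3.2) and $(x,y)N_j^{\down}\subset\cI$ (because $\cI_j$ is monomial, resp.\ by Conclusion~3.3). Type $3^\circ$ gives form~2.3 by Conclusion~3.8, the conditions $xF\in\ell\cK(-r-1)$ and $(x,y)G\subset\ell\cK(-r-1)$ being the translation of $\langle x,y\rangle[\mu g+z\,\partial g/\partial z]\subset H^0(\cK(m_0-\mu-r))$; here $xM_i^{\up}\in\cM$ is again~(3.2), and the order estimate is the $\rho_1<0$ branch of Conclusion~3.9 applied to $\cI_i$, which gives $s\le\frac{\kappa}{\rho_2}-m_i\bigl(\frac1{\rho_2}-\frac1{\rho_0+\rho_2}\bigr)+\frac{r-i}{\rho_2}-\frac{b_i}{\rho_0+\rho_2}$ with $b_i\ge0$ the number of $\ell_j=y$ for $i\le j\le r$; dropping the term $-b_i/(\rho_0+\rho_2)\le0$ and using $r-i\le r$ yields the bound stated in~(d).

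I expect the main obstacle to be purely organisational rather than conceptual: one must verify that the trichotomy $1^\circ/2^\circ/3^\circ$ for the vice-monomial of \emph{each} $f_i$ is genuinely exhaustive, and that the normalisations made in the individual Conclusions (reduction of $f_i$ modulo $\ell_0\cdots\ell_{i-1}H^0(\cI_i(m_i-1))$, choosing $\mu$ and $\gamma$ maximal, etc.) remain mutually compatible when carried out simultaneously for all $i$, so that the indices $0\le i<j<k\le r$ appearing in forms~1.3--1.7 and~2.1 are consistent across the whole decomposition $(Z)$. There is no new argument beyond what is already established in~3.3--3.4; the work is entirely in keeping the bookkeeping straight.
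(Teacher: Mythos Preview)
Your proposal is correct and matches the paper's own proof essentially line for line: both treat the proposition as a pure compilation of Conclusions~3.2--3.9 together with Lemma~3.1, with part~(a) sorted by the three possibilities $1^\circ/2^\circ/3^\circ$ for the vice-monomial, part~(b) from Conclusion~3.7, part~(c) from Conclusions~3.3, 3.5, 3.6, and part~(d) from Conclusions~3.2 and~3.9. Your derivation of the order bound via applying Conclusion~3.9 to $\cI_i$ and then weakening $r-i$ to $r$ and dropping the nonpositive $-b_i/(\rho_0+\rho_2)$ term is exactly what is needed; the paper simply cites Conclusion~3.9 without spelling this out.
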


\begin{proof}
  (1.1) and (1.2) result from Conclusion 3.2 and Conclusion 3.3; (1.3)--(1.5)
  result from Conclusion 3.6; and (1.6), and (1.7) results from Conclusion
  3.5. (2.1) results again from Conclusion 3.2 and Conclusion 3.3; (2.2) is clear,
  and (2.3) results from Conclusion 3.8, for if $M_i^{\up}$ occurs, then
  $\ell_i=x$, and this corresponds to the linear form $\ell_0$ in Conclusion
  3.8. \\
  (b) results from Conclusion 3.7.\\
  (c) results from the Conclusions 3.3, 3.5 and 3.6.\\
  (d) results in the case (2.1) from the fact that $f_j=M_j^{\down}$ is a
  monomial and therefore all monomials with the same $z$-degree as
  $M_j^{\down}$, which occur in $\inn(H^0(\cI(m_0)))$, also occur in 
  $H^0(\cI(m_0))$ (Conclusion 3.2). In the case (2.3) the first part of the
  statement results from Conclusion 3.2, and the statement concerning the order
  results from Conclusion 3.9.
\end{proof}

\begin{remark}\label{4}
  If $r=0$, then only the cases (2.2) and (2.3) can occur, i.e., one has
  either $f_0=x^{m_0}$ or $f_0=y^{m_0}+F$, where $F$ has the properties
  mentioned above.
\end{remark}

\section{Summary in the Case II}\label{3.6}
We recall that this means $\rho_0<0, \rho_1>0, \rho_2>0$. 

\noindent  We translate formally the results of (3.5): As
always $\rho_2>0$ is assumed, one has $\rho_0<0$. By definition $\iota(i)=\#
\{\ell_j=y|0\le j<i\}$ and therefore $i-\iota (i)=\# \{\ell_j=x|0\le j<i\}$. \\
We conclude that if one interchanges $x$ and $y$ in the monomial $M_i^{\up}$
(respectively $M_i^{\down}$) and if one simultaneously replaces $\iota(i)$ by
$i-\iota(i)$ and vice versa $i-\iota(i)$ by $\iota(i)$, then one obtains
$M_i^{\down}$ (respectively $M_i^{\up}$). Therefore the statements of
Proposition 3.1 can be translated to the Case II by interchanging $x$ and $y$
and ``up'' and ``down'':

\begin{proposition}\label{2} 
  Let be $\rho_2>0$ and $\rho_1>0$.\\
  (a) The following cases can a priori occur: \\
  1st case: One of the $f_i$, say $f_{i_0}$ has as vice-monomial a ``lower
  empty corner'' $N_j^{\down}$. Then $\rho_2>\kappa$, thus $\cK$ is monomial
  and the $f_i$ have one of the following forms:
\begin{enumerate} 
\item $f_i=M_i^{\up}$
\item $f_i=M_i^{\down}+\alpha N_j^{\up}$
\item $f_i=M_i^{\down}+\alpha N_j^{\down}+\beta N_k^{\down}$
\item $f_i=M_i^{\down}+\alpha N_j^{\down}+\beta N_k^{\up}$
\item $f_i=M_i^{\down}+\alpha N_j^{\down}+\beta L, L\in\cL$ a monomial\\
  such that $(x,y)\cdot L\subset\ell\cK(-r-1),\ell:=\ell_0\cdots\ell_r$
\item $f_i=M_i^{\down}+\alpha N_j^{\down}+\beta N_k^{\down} \cdot (z/x),\;\alpha,\beta\in k^*$
\item $f_i=M_i^{\down}+\alpha N_j^{\down}+\beta L\cdot (z/x), L\in\cL$ a
  monomial\\ such that $(x,y)\cdot L\subset\ell\cK(-r-1),\alpha , \beta\in k^*$.
\end{enumerate} 
2nd case: Not any of the $f_i$ has as vice-monomial a `` lower empty corner'' $N_j^{\down}$. Then each of the $f_i$ has one of the following forms:
\begin{enumerate}
\item $f_i=M_i^{\down}+\alpha N_j^{\up}$
\item $f_i=M_i^{\up}$
\item $f_i=M_i^{\up}+F,F=\sum \alpha_j L_j,L_j\in\cL$ monomial,$\alpha_j\in
 k^*$, such that $yF\in\ell\cK(-r-1)$, and for suitable $\beta_j\in k^*$ and
  $G:=\sum \beta_j L_j$ one has $(x,y)\cdot G\subset\ell\cK(-r-1)$.\\
  (b) The same statement as in Proposition 3.1.\\
  (c) The same statement as in Proposition 3.1, if $x$ and $y$ are interchanged.\\
  (d) The same statement as in Proposition 3.1, if one replaces $xM_i^{\up}$ by
  $yM_i^{\down}$.
\end{enumerate}
 And the order $s$ of $f_i$ fulfils the inequality
\[
  s\le \frac{\kappa}{\rho_2}-m_i
  \left(\frac{1}{\rho_2}-\frac{1}{\rho_1+\rho_2}\right) + \frac{r}{\rho_2},
\]
where $0\le i\le r$.
\hfill $\Box$
\end{proposition}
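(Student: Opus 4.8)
The plan is to obtain Proposition 3.2 from Proposition 3.1 by the symmetry that interchanges $x$ and $y$, rather than by rerunning the arguments of (3.3)--(3.4). Let $\tau$ be the automorphism $x\mapsto y,\ y\mapsto x,\ z\mapsto z$ of $S=k[x,y,z]$, and set $\cJ:=\tau(\cI)$. Since $\Gamma$ is symmetric in $x$ and $y$ it is stable under conjugation by $\tau$, and $\tau$ conjugates $T(\rho)$ to $T(\rho')$ with $\rho'=(\rho_1,\rho_0,\rho_2)$; hence $\cJ$ is invariant under $\Gamma\cdot T(\rho')$ with $\rho'_2=\rho_2>0$ and $\rho'_1=\rho_0<0$, i.e.\ $\cJ$ lies in the situation of Case I. Being the image of $\cI$ under an automorphism of $\mP^2$, $\cJ$ has the same colength $d$, the same Hilbert function $\varphi$, the same regularity, and (by the characterization of the type in (2.5.1)) the same type $r$; in particular $g(d)<g^*(\varphi)$ still holds, so Proposition 3.1 applies to $\cJ$.

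First I would record how the decomposition datum transports. An $x$-standard form is carried by $\tau$ onto a $y$-standard form and conversely, so by the uniqueness statements of Lemma 2.6 and Corollary 2.1 the sequence $(Z)$ for $\cJ$ is $\cJ_i=\tau(\cI_i)$, $\ell'_i=\tau(\ell_i)$, $f'_i=\tau(f_i)$, with the same $m_0,\dots,m_r$, and $\cK'=\tau(\cK)$, $c'=c$, $\kappa'=\kappa$. Since $\ell'_j=y$ precisely when $\ell_j=x$, the index function of the $\cJ$-decomposition is $\iota'(i)=i-\iota(i)$, and substituting this into the formulas of (3.2) gives $\tau(M_i^{\down})=M_i^{\up}$ and $\tau(M_i^{\up})=M_i^{\down}$ for the monomials attached to $\cJ$, and likewise $\tau(N_i^{\down})=N_i^{\up}$, $\tau(N_i^{\up})=N_i^{\down}$, $\tau(\cL)=\cL$, and $\tau(\ell_0\cdots\ell_r)=\ell'_0\cdots\ell'_r$.

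Then I would apply Proposition 3.1 to $\cJ$ and pull the conclusion back through $\tau$, using $\tau^2=\mathrm{id}$ and $\tau(\cM)=\cM$ (so $\tau(\langle\cM\rangle)=\langle\cM\rangle$, with $\cM$ the monomial part of $\cI$): the list of possible shapes of the $f'_i$, the ``at most one trinomial'' clause, and the memberships $xf'_i\in\langle\cM\rangle$ and so on, become --- after interchanging ``up'' with ``down'' and $x$ with $y$ --- exactly the assertions of Proposition 3.2. For the order, Definition 3.4 in the branch $\rho'_1<0$ that applies to $\cJ$ reads: the smallest $\nu$ with $y^{\nu+1}f'_0\equiv y^{m_0+\nu+1}$ modulo $h'H^0(\cK'(m_0+\nu-r))$; applying $\tau$ turns this into the smallest $\nu$ with $x^{\nu+1}f_0\equiv x^{m_0+\nu+1}$ modulo $hH^0(\cK(m_0+\nu-r))$, which is the order in the branch $\rho_1>0$. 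Substituting $\rho'_2=\rho_2$ and $\rho'_0=\rho_1$ into the bound of Proposition 3.1(d) yields
\[
  s\le\frac{\kappa}{\rho_2}-m_i\Bigl(\frac{1}{\rho_2}-\frac{1}{\rho_1+\rho_2}\Bigr)+\frac{r}{\rho_2},
\]
as claimed. The main obstacle is not any one of these steps but the bookkeeping in the middle paragraph: one must verify carefully that the conventions which are asymmetric in $x$ and $y$ --- the counting function $\iota$, the labels ``up'' and ``down'', the set $\cL$, the monomial $h=\ell_0\cdots\ell_r$, and the $x$-versus-$y$ asymmetry built into the definition of the order --- are all transported consistently by $\tau$; once that dictionary is in place the proposition follows formally.
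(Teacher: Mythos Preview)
Your proposal is correct and is essentially the same approach as the paper's: the paper obtains Proposition 3.2 from Proposition 3.1 by ``formally translating'' the statements under the interchange of $x$ with $y$ and of ``up'' with ``down'', noting that this swap sends $\iota(i)$ to $i-\iota(i)$ and hence $M_i^{\up}\leftrightarrow M_i^{\down}$, $N_i^{\up}\leftrightarrow N_i^{\down}$. Your version simply makes this translation rigorous by realizing it as the automorphism $\tau$ and checking that the decomposition data, the set $\cL$, the definition of the order, and the inequality all transport correctly.
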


\begin{remark}\label{5}
  If $r=0$, then only the cases (2.2) and (2.3) can occur.
\end{remark}

\newpage
\hspace{-2cm}
\begin{minipage}{34cm*\real{0.5}} \label{fig:3.1}
\centering Fig. 3.1\\ 
\tikzstyle{help lines}=[gray,very thin]
\begin{tikzpicture}[scale=0.5]
 \draw[style=help lines]  grid (34,35);
 \draw[thick] (0,0) -- (0,35); 
 \draw[thick] (0,0) -- (34,0);
 {
 \pgftransformxshift{0.5cm}
 \pgftransformyshift{-0.55cm}
  \foreach \x in {0,1,2,3,4,5,6,7,8,9,10,11,12,13,14,15,16,17,18,19,20,21,22,23,24,25,26,27,28,29,30,31,32,33,0,1,2,3,4,5,6,7,8,9,10,11,12,13,14,15,16,17,18,19,20,21,22,23,24,25,26,27,28,29,30,31,32,33}
 \draw[anchor=base] (\x,0) node {$\scriptstyle{\x}$}; 
 }
 {
 \pgftransformxshift{0.5cm}
 \draw (33,33) node[above] {$\scriptstyle{M_0}$};
 \draw (26,25) node[above] {$\scriptstyle{N_2}$};
 \draw (27,26) node[above] {$\scriptstyle{M_2}$};
 \draw (23,21) node[above] {$\scriptstyle{N_3}$};
 \draw (24,22) node[above] {$\scriptstyle{M_3}$};
 \draw (16,2) node[above] {$\scriptstyle{N_5}$};
 \draw (17,2) node[above] {$\scriptstyle{M_5}$};
 \draw (20,1) node[above] {$\scriptstyle{N_4}$};
 \draw (21,1) node[above] {$\scriptstyle{M_4}$};
 \draw (29,0) node[above] {$\scriptstyle{N_1}$};
 \draw (30,0) node[above] {$\scriptstyle{M_1}$};
\draw[anchor=east] (13,-1) node[rotate=90] {$\scriptstyle{\kappa{+}6}$};
\draw[anchor=east] (17,-1) node[rotate=90] {$\scriptstyle{m_5{+}5}$};
\draw[anchor=east] (21,-1) node[rotate=90] {$\scriptstyle{m_4{+}4}$};
\draw[anchor=east] (24,-1) node[rotate=90] {$\scriptstyle{m_3{+}3}$};
\draw[anchor=east] (27,-1) node[rotate=90] {$\scriptstyle{m_2{+}2}$};
\draw[anchor=east] (30,-1) node[rotate=90] {$\scriptstyle{m_1{+}1}$};
\draw[anchor=east] (33,-1) node[rotate=90] {$\scriptstyle{m_0}$};
 }

 \draw[\Red,ultra thick] (30,0) -- (30,1) -- (21,1) -- (21,2) -- (17,2) -- 
 (17,3) -- (13,3)
 (12,9) -- (13,9) -- (13,11) -- (14,11) -- (14,12) -- (15,12) --
 (15,13) -- (16,13) -- (16,14) -- (17,14) -- (17,15) -- (18,15) -- (18,16) --
 (19,16) -- (19,17) -- (20,17) -- (20,18) -- (21,18) -- (21,19) -- (22,19) --
 (22,20) -- (23,20) -- (23,21) -- (24,21) -- (24,23) -- (25,23) -- (25,24) --
 (26,24) -- (26,25) -- (27,25) -- (27,27) -- (28,27) -- (28,28) -- (29,28) --
 (29,29) -- (30,29) -- (30,30) -- (31,30) -- (31,31) -- (32,31) -- (32,32) --
 (33,32) -- (33,34) -- (34,34);

 \draw[\Red,ultra thick,dotted] (13,3) -- (12,3) -- (12,4) -- (11,4) --
 (11,5) -- (12,5) -- (12,6) -- (11,6) -- (11,7) -- (12,7) -- (12,9);
  \draw[\Black, ultra thick] (0,1) -- (34,35);  
\end{tikzpicture}
\end{minipage}

\vspace{0.5cm}
\parbox{14cm}
{
Explanation:  $\reg(\cK)\leq c \Rightarrow R_n \subset H^0(\cK(n)), n\geq c$\\
$S_{c-1}/H^0(\cK(c-1))$ has a basis of $c$ monomials, namely the monomials\\
 of $S_{c-1}-\itin H^0(\cK(c-1)) \Rightarrow \ell[S_{c-1}/H^0(\cK(c-1))]$ has a
basis \\
of $c$ monomials, namely the monomials of
$\ell S_{c-1}-\ell\cdot\itin H^0(\cK(c-1))$ \\
They generate a subspace $\cL$.
$\ell:=\ell_0\cdots \ell_r=x^{r+1-\iota(r+1)}y^{\iota(r+1)}$
}

\newpage

\begin{minipage}{21cm*\real{0.7}} \label{fig:3.2}
\centering Fig. 3.2\\ 
\tikzstyle{help lines}=[gray,very thin]
\begin{tikzpicture}[scale=0.7]
 \draw[style=help lines]  grid (21,25);
 \draw[thick] (0,0) -- (0,25); 
 \draw[thick] (0,0) -- (21,0);
 {
 \pgftransformxshift{0.5cm}
 \draw (9,14) node[above] {?};
 \draw (8,12) node[above] {?};
 \draw (6,8) node[above] {?};
 \draw (6,9) node[above] {?};
 \draw (3,3) node[above] {?};
 \draw (6,2) node[above] {?};
 \draw (9,1) node[above] {?};
 \draw (17,23) node[above] {$y^{m_i}$};
 \draw (12,20) node[above] {$\cI_{i+1}$};
 \draw (16,22) node[above] {$N_i^\up$};
 \draw[anchor=east] (10,0) node[rotate=90] {$m_{i{+}1}$};
 \draw[anchor=east] (11,0) node[rotate=90] {$m_{i{+}1}{+}1$};
 \draw[anchor=east] (17,0) node[rotate=90] {$m_i$};
 \draw[anchor=east] (15,11) node[fill=white,inner sep=1pt,above=1pt] 
   {$\cI_i=x\cI_{i+1}(-1)+f_i\cO_{\P^2}(-m_i)$};
 }
 \draw[blue,ultra thick] (10,1) -- (9,1) -- (9,2) -- (6,2) -- (6,3) -- (3,3)
 -- (3,4) -- (2,4) -- (2,5) -- (3,5) -- (3,6) -- (4,6) -- (4,7) -- (5,7) --
 (5,8) -- (6,8) -- (6,11) -- (7,11) -- (7,12) -- (8,12) -- (8,14) -- (9,14) --
 (9,16) -- (10,16) -- (10,17) -- (11,17) -- (11,18) -- (12,18) -- (12,19) --
 (13,19) -- (13,20) -- (14,20) -- (14,21) -- (15,21) -- (15,22) -- (16,22) --
 (16,23) -- (17,23);
 \draw[\Red,ultra thick] (11,0) -- (11,1) -- (10,1) -- (10,2) -- (7,2) --
 (7,3) -- (4,3) -- (4,4) -- (3,4) -- (3,5) -- (4,5) -- (4,6) -- (5,6) -- (5,7)
 -- (6,7) -- (6,8) -- (7,8) -- (7,11) -- (8,11) -- (8,12) -- (9,12) -- (9,14)
 -- (10,14) -- (10,16) -- (11,16) -- (11,17) -- (12,17) -- (12,18) -- (13,18)
 -- (13,19) -- (14,19) -- (14,20) -- (15,20) -- (15,21) -- (16,21) -- (16,22)
 -- (17,22) -- (17,24) -- (18,24);
\end{tikzpicture}
\end{minipage}


\chapter{The $\alpha$-grade.}\label{4}

\section{Notations.}\label{4.1}
We let $\G_m$ (respectively $\G_a$) operate on $S=k[x,y,z]$ by
$\sigma(\lambda):x\to x,\, y\mapsto y,\, z\mapsto \lambda z$ (respectively by
$\pal:x\mapsto x,\, y\mapsto\alpha x+y,\, z\mapsto z$).

Let be $\GG=\Grass_m(S_n)$. If $V\in \GG(k)$, then $\mwedge V$ has the
dimension 1 and $V\mapsto\mwedge V$ defines a closed immersion
$\GG\stackrel{p}{\to}\mP(\mwedge S_n)=\mP^N, N:=\dim\mwedge S_n-1$, the so
called Pl\"ucker embedding.

If one numbers the monomials in $S_n$, then one gets a basis
$\{e_1,e_2,\cdots\}$ of $S_n$, and therefore
$e_{(i)}=e_{i_1}\wedge\cdots\wedge e_{i_m}$ is a basis of $\mwedge S_n$, where
$(i)=(i_1,\cdots,i_m)$ runs through all sequences of natural numbers, such
that $1\le i_1<\cdots <i_m\le {n+2\choose 2}$. If one puts

$\pal(e_{(i)}):=\pal (e_{i_1})
\wedge\cdots\wedge\pal (e_{i_m})$, then $\G_a$ operates in an
equivariant manner on $\GG$ and $\mP^N$, and the same is valid for the
operation $\sigma$ of $\G_m$. If $\xi\in \GG(k)$ corresponds to the vector
space $V\subset S_n$, then $C_{\xi}:=\{\mwedge \pal(V)|\alpha\in k\}^-$ is
a point or a curve in $\mP^N$, and there are polynomials $f_0,\cdots,f_N$
in one variable with coefficients in $k$, such that
$C_{\xi}=\{(f_0(\alpha):\cdots :f_N(\alpha))|\alpha\in k\}^-$. At least one
of the $f_i$ is equal to 1, and if $\xi$ is not invariant under $\G_a$,
then $C_{\xi}$ is a $\G_a$-invariant closed curve in $\mP^N$ of degree
equal to $\max \{\deg (f_i)|0\le i\le N\}$ (cf. [T1], Bemerkungen 2 und 3,
p. 11). This number is denoted by $\alpha$-grade $(V)$.

Let now be $e_i$, $1\le i\le\ell:={n+2\choose 2}$, the monomials in $S_n$,
ordered in the inverse lexicographic manner. If
$f_i=\bigsum\limits^{\ell}_{j=1}a_{ji}e_j, 1\le i\le m$, is a basis of $V$,
then $f_1\wedge\cdots\wedge f_m=\bigsum\limits_{(i)}P_{(i)}e_{(i)}$, where
$P_{(i)}=\det \left(\begin{array}{l} a_{i_1 1}\cdots a_{i_1 m}\\
    \cdots\cdots\cdots\cdots\\ a_{i_m1}\cdots a_{i_mm}\end{array} \right)$ is
the Pl\"ucker coordinate for the index $(i)=(i_1,\cdots,i_m)$. It follows that
$\pal(\mwedge V)=\langle \bigsum\limits_{(i)}P_{(i)}\pal(e_{(i)})\rangle$ and
we conclude from this:
\begin{equation}\label{1}
  \alpha-\grade\; (V)\le\mathop{\max}\limits_{(i)}\{\alpha-\grade\;
  \pal(e_{(i)}) \; | \; P_{(i)}\neq 0\}
\end{equation}
where we define the $\alpha$-grade of $\pal(e_{(i)})$ to be the $\alpha$-grade
of the monomial subspace $\langle e_{i_1},\cdots,e_{i_m}\rangle$ of $S_n$.
This can be computed as follows: Write
$\pal(e_{i_{\nu}})=\bigsum\limits^{\ell}_{j=1}p_{j\nu}(\alpha)e_j,
1\le\nu\le m$, where the $p_{\nu j}$ are polynomials in one variable with
coefficients in $\Z$. Then
$\pal(e_{(i)})=\bigsum\limits_{(j)}P_{(j)}(\alpha)e_{(j)}$, where the
$P_{(j)}(\alpha)$ are the Pl\"ucker coordinates of the vector space $\langle\pal
(e_{i_1}),\cdots,\pal(e_{i_m})\rangle$. The $P_{(j)}$ are polynomials in one
variable with coefficients in $\Z$. As $P_{(i)}(\alpha)=1$, the $\alpha$-grade
of $\langle e_{i_1},\cdots,e_{i_m}\rangle$ is equal to
$\max\limits_{(j)}\{\deg (P_{(j)})\}$.

Whereas it seems practically impossible to find a formula for the
$\alpha$-grade of an arbitrary vector space $V\subset S_n$, the $\alpha$-grade
of a monomial subspace $V\subset S_n$ can be computed as follows: Write
$V=\bigoplus\limits^n_{i=0}z^{n-i}V_i$ where $V_i\subset R_i$ is a monomial
subspace and $R=k[x,y]$. If we put $m(i):=\dim V_i$ then $V_i$ has a basis of
the form $\{x^{i-a_{ij}}y^{a_{ij}}|1\le j\le m(i)\}$, where $0\le
a_{i1}<\cdots <a_{im(i)}\le i$ is a sequence of integers. As $\alpha$-grade
$(V)=\sum\limits^n_{i=0}\alpha$-grade $(V_i)$, we can consider $V$ as a graded
vector space in $R$, which has a basis of monomials of different degrees. In
([T1], 1.3, p. 12f.) it was shown:
\begin{equation}\label{2}
  \begin{array}{l}
  \mbox{If}\; 0\le c_1<\cdots<c_r\le i\; \mbox{are integers, and}\\
  W:=\langle x^{i-c_1}y^{c_1}\cdots,x^{i-c_r}y^{c_r}\rangle \subset R_i,\\
  \mbox{then}\;\alpha\mbox{-grade}\; (W)=(c_1+\cdots+c_r)-(1+\cdots+r-1).\end{array}
\end{equation}

Later on we will need an estimate of the $\alpha$-grade of an ideal
$\cI\subset\cO_{\mP^2}$ of colength $d$, which is invariant under
$G=\Gamma\cdot T(\rho)$. This will be done by estimating the $\alpha$-grade of
the vector space $V=H^0(\cI(n))$, if $n$ is sufficiently large. By means of
(4.1) the estimate will be reduced to the computation of the $\alpha$-grade of
monomial subspaces, and because of (4.2) this can be regarded as a combinatorial
problem, the formulation of which needs some more notations.

\section{The weight of a pyramid.}\label{4.2}

\begin{definition}\label{5}
  A pyramid with frame $c$ and colength $d$, $1\le d\le c$, is a set $P$ of
  monomials in $R=k[x,y]$ with the following properties: The $i$-th ``column''
  $S_i$ of $P$ consists of monomials $x^{i-a_{ij}}y^{a_{ij}},0\le
  a_{i1}<\ldots<a_{im(i)}\le i$, for all $0\le i\le c-1$, such that the
  following conditions are fulfilled:
\begin{enumerate}
\item $\bigcup\limits^{c-1}_{i=0}S_i=P$
\item $\# (\{ x^{i-j}y^j|0\le j\le i\le c-1\} \setminus P)=d$
\end{enumerate}

Then $w(S_i):=(a_{i1}+\cdots+a_{im(i)})-(1+\cdots+m(i)-1)$ is called the weight of the $i$-th column $S_i$, and $w(P):=\bigsum\limits^{c-1}_{i=0}w(S_i)$ is called the weight of the pyramid $P$.
\end{definition}

\begin{example} Let be $\cI\subset\cO_{\mP^2}$ an ideal of colength $d$ which
  is invariant under $T(3;k)$. Then $\reg (\cI)\le d$, therefore
  $h^0(\cI(d-1))={d+1\choose 2}-d$, and we can write
  $H^0(\cI(d-1))=\bigoplus\limits^{d-1}_{i=0}z^{d-1-i}V_i$, where $V_i\subset
  R_i$ are monomial subspaces. If $S_i$ is the set of monomials in $V_i$, then
  $P:=\bigcup\limits^{d-1}_{i=0}S_i$ is a pyramid with frame and colength
  equal to $d$. From (4.2) one concludes that $w(P)=\alpha$-grade
  $(H^0(\cI(d-1))$. (N.B. One has $R_n\subset H^0(\cI(n))$ if $n\ge d$.)
\end{example}

\begin{remark}\label{1}
  Let $0\le c_1<\cdots<c_r\le i$ be a sequence of integers.
  $w(c_1,\cdots,c_r):=(c_1+\cdots+c_r)-(1+\cdots+r-1)$ is maximal if and only
  if $c_{\nu}=i-r+\nu,1\le\nu\le r$, i.e. if
  $(c_1,\cdots,c_r)=(i-r+1,\cdots,i)$.\hfill $\Box$
\end{remark}

The aim is to determine those pyramid $P$ of type $(c,d)$, i.e., with frame
$c$ and colength $d$, for which $w(P)$ is maximal. Because of Remark 4.1 we will
consider without restriction only pyramids with
$S_i=\{x^{i-a(i)}y^{a(i)},\cdots,xy^{i-1},y^i\}$, where $a(i):=i+1-m(i)$ is a
number between 0 and $i$ inclusive. We call $x^{i-a(i)}y^{a(i)}$ the
\emph{initial monomial} and $a(i)$ \emph{initial degree} of $S_i$.  For
simplicity we write $S_i=(a(i),a(i)+1,\cdots,i)$ and $P=\langle
x^{i-a(i)}y^{a(i)}|0\le i\le c-1\rangle$.

\begin{remark}\label{2}
$w(S_i)=ia(i)+a(i)-a(i)^2$.
\end{remark}

\begin{proof}
  $w(S_i)=(a(i)+\cdots +i)-(1+\cdots+i-a(i))=(1+\cdots
  +i)-(1+\cdots+a(i)-1)-(1+\cdots+i-a(i))={i+1\choose 2}-{a(i)\choose
    2}-{i-a(i)+1\choose 2}$, and a direct computation gives the assertion.
\end{proof}

Taking away $x^{i-a(i)}y^{a(i)}$ from $S_i$ and adding
$x^{j-a(j)+1}y^{a(j)-1}$ to $S_j$, if $S_i\neq\emptyset, a(j)>0$ and $j\neq
i$, then gives a pyramid
\[
  P'=P-\{x^{i-a(i)}y^{a(i)}\}\cup\{ x^{j-a(j)+1}y^{a(j)-1}\}.
\]
We express this as 
 $ S_i(P)=(a(i),\cdots,i), S_j(P)=(a(j),\cdots,j),
S_i(P')=(a(i)+1,\cdots,i),\\S_j(P')=(a(j)-1,\cdots,j)$ and we get
$w(S_i(P))=ia(i)+a(i)-a(i)^2$; $w(S_i(P'))=i(a(i)+1)+(a(i)+1)-(a(i)+1)^2;
w(S_j(P))=ja(j)+a(j)-a(j)^2, w(S_j(P'))=j(a(j)-1)+a(j)-1-(a(j)-1)^2$. It
follows that
$w(P')-w(P)=[w(S_i(P'))-w(S_i(P))]+[w(S_j(P'))-w(S_j(P))]=[i+1-2a(i)-1]+[-j-1+2a(j)-1]$.
We get the following formula:
\begin{equation}\label{3}
  w(P')-w(P)=2(a(j)-a(i))-(j-i)-2,
\end{equation}
where we have made the assumption that $i\neq j, S_i(P)\neq\emptyset$, and
$a(j)>0$.

Now let $P$ be a pyramid of type $(c,d)$, such that $w(P)$ is maximal. Then
for all deformations $P\mapsto P'$ as above, the right side of (4.3) is $\le 0$,
i.e. $a(j)-a(i)\le\frac{1}{2}(j-i)+1$. If $i<j$ (if $i>j$, respectively) this
is equivalent to
\[
 \frac{a(j)-a(i)}{j-i}\le\frac{1}{2}+\frac{1}{j-i} \quad \text{and} \quad
\frac{a(j)-a(i)}{j-i}\ge\frac{1}{2}+\frac{1}{j-i}, \quad \text{respectively}.
\]
Putting in $j=i+1$ (respectively $j=i-1$) gives $a(i+1)-a(i)\le 1.5 \quad
( a(i)-a(i-1)\ge -0.5$, respectively). As the left side of these inequalities
are integers, it follows that $a(i+1)-a(i)\le 1\quad ( a(i)-a(i-1)\ge 0$,
respectively) for all $0\le i<c-1$ (for all $0<i\le c-1$, respectively). Note
that we can apply (4.3) only under the assumption $a(i+1)>0$ (respectively
$a(i-1)>0$). But if $a(i+1)=0$ (respectively $a(i-1)=0$), then the two last
inequalities are true, too. So we get

\begin{remark}\label{3}
  If the pyramid $P$ of type $(c,d)$ has maximal weight, then one has $a(i)\le
  a(i+1)\le a(i)+1$, for all $0\le i\le c-2$.\hfill $\Box$
\end{remark}

\begin{remark}\label{4}
  If $P=\{ x^{i-a_{ij}}y^{a_{ij}}|i,j\}$ is a pyramid of type $(c,d)$, then
  $P'=yP:=\{x^{i-a_{ij}}y^{a_{ij}+1}|i,j\}$ is called shifted pyramid, and
  $w(P')=w(P)+\# P$.
\end{remark}

\begin{proof}
  The column $S_i=(a_{i1},\cdots,a_{im(i)})$ goes over to
  $S'_i=(a_{i1}+1,\cdots,a_{im(i)}+1)$, and
  $w(S'_i)=(a_{i1}+1)+\cdots+(a_{im(i)}+1)-(1+\cdots+m(i)-1)=w(S_i)+m(i)$.
  Thus $w(P')=\bigsum\limits^{c-1}_{i=0}(w(S_i)+m(i))=w(P)+\#P$.
\end{proof}

\begin{remark}\label{5}
  A pyramid with maximal weight does not contain a step of breadth $\ge 4$,
  i.e.~it is not possible that $a(j)=\cdots=a(i)>0$, if $i-j\ge 3$.
\end{remark}

\begin{proof}
  If one would have the situation shown in Figure 4.1, then one could make the
  deformation $x^{i-a(i)}y^{a(i)}\mapsto x^{j-a(j)+1}y^{a(j)-1}$ and then from
  (4.3) one would get: $w(P')-w(P)=2\cdot 0-(j-i)-2=i-j-2\ge 1$, contradiction.
\end{proof}

\begin{remark}\label{6}
  In any pyramid two consecutive ``normal'' steps can be replaced by a step of
  breadth 3, without a change of weight. In the course of this, the sum over
  the $x$-degrees of all monomials in the pyramid increases by 2, however.
\end{remark}

\begin{proof}
  (cf. Fig. 4.2). By assumption one has $a(i)+1=a(i+1)$ and $a(i+1)+1=a(i+2)$.
  If one makes the deformation $x^{i-a(i)}y^{a(i)}\mapsto
  x^{j-a(j)+1}y^{a(j)-1}$, where $j=i+2$, then one gets $w(P')-w(P)=2\cdot
  2-2-2=0$.
\end{proof}

\noindent N.B. The possibility $a(i)=0$ is not excluded.

\begin{remark}\label{7}
There is a pyramid of maximal weight, which does not contain two consecutive normal steps, that means, there is no index $i$, such that $a(i+1)=a(i)+1, a(i+2)=a(i+1)+1$. This is called a ``prepared'' pyramid. (N.B. $a(i)$ may equal zero.)
\end{remark}

\begin{proof}
Apply Remark 4.6 for several times.
\end{proof}

\begin{remark}\label{8}
A prepared pyramid of maximal weight does not contain two steps of breadth 3.
\end{remark}

\begin{proof}
  We consider two cases:\\
  {\em 1st case:} The steps of breadth $3$ are situated side by side. Then one
  makes the deformation described in Fig. 4.3 and one gets:
  $w(P')-w(P)=2[a(i-5)-a(i)]+5-2=1$, contradiction. \\
  {\em 2nd case:} Between the steps of breadth $3$ there are $\nu\ge 1$ double
  steps. One then makes the deformation described in Fig. 4.4. Putting
  $j=i-2(\nu+1)$ one gets:
  $w(P')-w(P)=2(a(j)-a(i))-(j-i)-2=2(-\nu)+2(\nu+1)-2=0$. Then $P'$ would have
  maximal weight, too. But $P'$ contains a step of breadth $4$, contradicting
  Remark 4.5.
\end{proof}

\begin{remark}\label{9}
  Each positive natural number $d$ can uniquely represented either in the form
  $d=n(n+1)-r, 0\le r<n$ (1st case) or in the form $d=n^2-r,0\le r<n$ (2nd
  case). Both cases exclude each other.
\end{remark}

\begin{proof}
  Choosing $n$ sufficiently large, then the sequence
  $n(n+1),n(n+1)-1,\cdots,n(n+1)-(n-1),
  n(n+1)-n=n^2,n^2-1,\cdots,n^2-(n-1),n^2-n=(n-1)n,\cdots$ contains any given
  set $\{ 1,\cdots,m\}\subset\N$.
\end{proof}

\noindent We now assume that $P$ is a prepared pyramid of type $(c,d)$,
with $d\ge 3$ and maximal weight. If $P$ does not contain a step of breadth 3,
then $P$ has the form described either in Fig. 4.5a or in Fig. 4.5b, according
to if either $d=n(n+1)$ or $d=n^2$. (One has
$2\cdot\bigsum\limits^n_1\nu=n(n+1)$ and $\bigsum\limits^n_1(2\nu-1)=n^2$.) If
there is 1 step of breadth 3, then from the Remarks 3,5,6,7 and 8 it follows
that $P$ has the form described either in Fig. 4.6a or in Fig. 4.6b. Here the
step of breadth 3 may lie quite left or right in Fig. 4.6a or Fig 4.6b,
respectively. One sees that Fig. 4.6a and Fig. 4.6b result from Fig. 4.5a and
Fig. 4.5b, respectively, by removing the monomials marked by $-$.

We first compute the weights of the pyramids shown in Fig. 4.5a and Fig. 4.5b,
and then the weights of the ``reduced'' pyramids $\overline{P}$ in Fig. 4.6a
and Fig. 4.6b. \\
1st case: (Fig. 4.5a)

\begin{center}
\begin{tabular}{c|c|l}
$i$ & $a(i)$ & $w(S_i)=ia(i)-a(i)(a(i)-1)$\\ \hline
$c-1$ & $n$ & $(c-1)n-n(n-1)$\\
$c-2$ & $n$ & $(c-2)n-n(n-1)$\\
$\dotfill$ & $\dotfill$ & $\dotfill$\\
$c-2\nu-1$ & $n-\nu$ & $(c-2\nu-1)(n-\nu)-(n-\nu)(n-\nu-1)$\\
$c-2\nu-2$ & $n-\nu$ & $(c-2\nu-2)(n-\nu)-(n-\nu)(n-\nu-1)$\\
$\dotfill$ & $\dotfill$ & $\dotfill$\\
$c-2n+1$ & 1 & $(c-2n+1)\cdot 1-1\cdot 0$\\
$c-2n$ & 1 & $(c-2n)\cdot 1-1\cdot 0$
\end{tabular}
\end{center}

2nd case: (Fig 4.5b)

\begin{center}
\begin{tabular}{c|c|l}
$i$ & $a(i)$ & $w(S_i)=ia(i)-a(i)(a(i)-1)$\\ \hline
$c-1$ & $n$ & $(c-1)n-n(n-1)$\\
$c-2$ & $n-1$ & $(c-2)(n-1)-(n-1)(n-2)$\\
$c-3$ & $n-1$ & $(c-3)(n-1)-(n-1)(n-2)$\\
$\dotfill$ & $\dotfill$ & $\dotfill$\\
$c-2\nu$ & $n-\nu$ & $(c-2\nu)(n-\nu)-(n-\nu)(n-\nu-1)$\\
$c-2\nu-1$ & $n-\nu$ & $(c-2\nu-1)(n-\nu)-(n-\nu)(n-\nu-1)$\\
$\dotfill$ & $\dotfill$ & $\dotfill$\\
$c-2n+2$ & 1 & $(c-2n+2)\cdot 1-1\cdot 0$\\
$c-2n+1$ & 1 & $(c-2n+1)\cdot 1-1\cdot 0$
\end{tabular}
\end{center}

\noindent 1st case: We sum up $w(S_i)$ and $w(S_{i-1})$ if $i=c-2\nu-1$ and we
get:
\begin{align*}
\sum^{n-1}_{\nu=0}(n-\nu)(2c-2n-2\nu-1)& =\sum^n_{\nu=1}\nu(2c-4n+2\nu-1)\\
 &=(2c-4n-1)\cdot\tfrac{1}{2}n(n+1)+\tfrac{1}{3}n(n+1)(2n+1)\\
 &=n(n+1)(c-2n- {0.5}+\tfrac{2}{3}n+\tfrac{1}{3})\\
 &=n(n+1)(c-\tfrac{4}{3}n-\tfrac{1}{6}).
\end{align*}
\indent In the reduced pyramid the initial terms $\overline{a}(i)$ of the
column $\overline{S}_i$, if $i=c-2,c-4,\cdots,c-2r$, then are equal to
$n-1,n-2,\cdots,n-r$. This means, $\overline{a}(i)=n-\nu$, if $i=c-2\nu$, and
$w(\overline{S}_i)=(c-2\nu)(n-\nu)-(n-\nu)(n-\nu-1), 1\le\nu\le r$. If
$i=c-2\nu$ we get:
$w(\overline{S}_i)-w(S_i)=(c-2\nu)(n-\nu)-(n-\nu)(n-\nu-1)-[(c-2\nu)(n-\nu+1)-(n-\nu+1)(n-\nu)
]=-(c-2\nu)+(n-\nu) \cdot 2=2n-c$.

It follows that $w(\overline{P})-w(P)=r(2n-c)$.\\[2mm]
2nd case: We sum up $w(S_i)$ and $w(S_{i-1})$ if $i=c-2\nu$ and we get:\\
\begin{align*}
\sum\limits^{n-1}_{\nu=1}(n-\nu)(2c-2n-2\nu+1)&=\sum\limits^{n-1}_{\nu=1}\nu(2c-4n+2\nu+1)\\
&=(2c-4n+1)\cdot\frac{1}{2}(n-1)n+\frac{1}{3}(n-1)n(2n-1)\\
&=(n-1)n(c-2n+ {0.5} +\frac{2}{3}n-\frac{1}{3})\\
& =n(n-1)(c-\frac{4}{3}n+\frac{1}{6}).
\end{align*}

Beside this, we have to add the weight $(c-1)n-n(n-1)=n(c-n)$, if $i=c-1$, and we get $w(P)=n[(c+ {0.5})n-\frac{4}{3}n^2-\frac{1}{6}]$.\\[2mm]
\indent In the reduced pyramid the initial terms $\overline{a}(i)$ of the
column $\overline{S}_i$, if $i=c-1,c-3,\cdots,c-2r+1$, then are equal to $n-1,
n-2,\cdots,n-r$. This means $\overline{a}(i)=n-\nu$, if $i=c-2\nu+1$, and
$w(\overline{S}_i)=(c-2\nu+1)(n-\nu)-(n-\nu)(n-\nu-1), 1\le\nu\le r$. If
$i=c-2\nu+1$ we get: \\
$w(\overline{S}_i)-w(S_i)=(c-2\nu+1)(n-\nu)-(n-\nu)(n-\nu-1)-[(c-2\nu+1)(n-\nu+1)-(n-\nu+1)(n-\nu)]=2n-c-1,
1\le\nu\le r$. We get $w(\overline{P})-w(P)=r(2n-c-1)$.

\begin{remark}\label{10}
The maximal weight of a pyramid $P$ of type $(c,d)$ is equal to
\begin{equation}\label{4}
n[(c-1,5)n+(c+2r-1/6)-4/3\cdot n^2]-rc
\end{equation}
if $d=n(n+1)-r$ and $0\le r<n$ and it is equal to
\begin{equation}\label{5}
  n[(c+0.5)n+(2r-1/6)-4/3\cdot n^2]-r(c+1)
\end{equation}
if $d=n^2-r$ and $0\le r<n$.
\end{remark}

\begin{proof}
  If $d\ge 3$, this follows from the foregoing computation. If $d=2$ or if
  $d=1$, then $n=1$ and $r=0$. The formula (4.4) and the formula (4.5) give the
  weights $2c-3$ and $c-1$, which is confirmed by Fig. 4.7a and Fig. 4.7b,
  respectively.
\end{proof}

\begin{remark}\label{11}
  The formulas (4.4) and (4.5) agree in the ends of the intervals. This is
  shown by putting in $r=n$ in (4.4) and $r=0$ in (4.5) and by putting in
  $n-1$ instead of $n$ and $r=0$ in (4.4) and $r=n$ in (4.5), respectively,
  and then checking equality.\hfill $\Box$
\end{remark}

We denote the maximal weight of a pyramid of type $(c,d)$ by $w(P_{c,d})$.

\begin{remark}\label{12}
 \[
w(P_{c,d})=
\begin{cases}
  -\frac{4}{3}n^3-1,5n^2+(2r-\frac{1}{6})n+dc,\;& \mbox{if}\; d=n(n+1)-r,\\

  -\frac{4}{3}n^3+0.5n^2+(2r-\frac{1}{6})n-r+dc,\;&\mbox{if}\; d=n^2-r
\end{cases}
\]
Thus $w(P_{c,d})$ is a strictly increasing function of $c\ge d$, if $d$ is
fixed.\hfill $\Box$
\end{remark}

\begin{remark}\label{13}
 Fixing the integer $c\ge 5$, then $w(P_{c,d})$ is a strictly increasing function of $1\le d\le c$.
\end{remark}

\begin{proof}
  $w(P_{c,d})=-\frac{4}{3}n^3-1,5n^2-\frac{1}{6}n+cn(n+1)+r(2n-c)$, if
  $d=n(n+1)-r,0\le r\le n$, and
  $w(P_{c,d})=-\frac{4}{3}n^3+0.5n^2-\frac{1}{6}n+cn^2+r(2n-c-1)$, if
  $d=n^2-r, 0\le r\le n$. From $n(n+1)-r=d\le c$ and from $n^2-r=d\le c$ it
  follows that $n\le\sqrt{c}$. From $c\ge 5$ we conclude $2n-c<0$. From $d\ge
  1$ it follows that $n\ge 1$. As a function of $r$ both terms for
  $w(P_{c,d})$ are strictly decreasing in the intervals $0\le r\le n$, and the
  assertion follows from Remark 4.11 and Remark 4.12.
\end{proof}

\begin{remark}\label{14}
  Fixing the integer $1\le c\le 4, w(P_{c,d})$ is an increasing function of
  $1\le d\le c$.
\end{remark}

\begin{proof}
 By drawing the possible patterns one finds:
\begin{center} 
\begin{tabular}{c|c|c} $d$ & 1 & 2\\ \hline $w(P_{2,d})$ & 1 & 1\end{tabular}\hfill
\begin{tabular}{c|c|c|c} $d$ & 1 & 2 & 3\\ \hline $w(P_{3,d})$ & 2 & 3 & 3\end{tabular}\hfill
\begin{tabular}{c|c|c|c|c} $d$ & 1 & 2 & 3 & 4\\ \hline $w(P_{4,d})$ & 3 & 5 & 6 & 7\end{tabular}
\end{center}
\end{proof}

We define $P_c:=P_{c,c}$ and $w(\emptyset)=0$.

\begin{proposition}\label{3}
$w(P_c)\le (c-1)^2$ for all $c\in\N$.
\end{proposition}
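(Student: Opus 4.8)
The plan is to derive the bound from the closed formula for $w(P_{c,d})$ recorded in Remark 4.12, specialized to $d = c$ (so that its $dc$-term becomes $c^{2}$). By Remark 4.9 one writes $c$ uniquely either as $c = n(n+1) - r$ with $0 \le r < n$ (``case A'') or as $c = n^{2} - r$ with $0 \le r < n$ (``case B''); this is precisely the representation governing the two branches of the formula of Remark 4.12, which by Remark 4.10 is valid for every $c \ge 1$.

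First I substitute $d = c$, together with $c = n(n+1) - r$ (resp.\ $c = n^{2} - r$), into the target inequality $w(P_{c,c}) \le (c-1)^{2}$. The $c^{2}$ coming from $(c-1)^{2} = c^{2} - 2c + 1$ cancels the $dc$-term, and after expressing the leftover $-2c + 1$ in terms of $n$ and $r$ and clearing denominators the inequality takes the form
\[
  8n^{3} - 3n^{2} - (12r + 11)n + 12r + 6 \ \ge\ 0 \qquad\text{(case A)},
\]
\[
  8n^{3} - 15n^{2} - (12r - 1)n + 18r + 6 \ \ge\ 0 \qquad\text{(case B)}.
\]
Both left-hand sides are affine in $r$, with $r$-coefficients $12(1-n)$ and $6(3-2n)$ respectively. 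In the ranges that actually occur --- $n \ge 1$ in case A, and $n \ge 2$ in case B, the remaining value $n = 1$ of case B being $c = 1$, for which $P_{1} = \emptyset$ and $w(P_{1}) = 0 = 0^{2}$ --- these coefficients are $\le 0$, so it suffices to check each inequality at the largest admissible value $r = n - 1$.

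Putting $r = n-1$ turns case A into $8n^{3} - 15n^{2} + 13n - 6 \ge 0$ and case B into $8n^{3} - 27n^{2} + 31n - 12 \ge 0$. For each of these cubics the derivative ($24n^{2} - 30n + 13$ and $24n^{2} - 54n + 31$) has negative discriminant and positive leading coefficient, hence is positive everywhere, so the cubic is strictly increasing; since the first one vanishes at $n = 1$ and the second equals $6$ at $n = 2$, both cubics are nonnegative on the relevant range. This yields $w(P_{c}) \le (c-1)^{2}$, with equality exactly at $c = 1, 2$. The only point that requires care is organizational: one has to keep track of the constraint $0 \le r < n$ (to know that $r = n-1$ is the worst case and that the two cubics need only be evaluated at the indicated base points $n = 1$ and $n = 2$) and to confront the degenerate small values of $c$ with the table of Remark 4.14.
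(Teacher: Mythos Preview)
Your proof is correct and follows essentially the same route as the paper's: specialize the formula of Remark~4.12 to $d=c$, reduce to an inequality that is affine (and nonincreasing) in $r$, and verify the resulting one-variable inequality in $n$. Your version is slightly tidier --- you clear denominators, use the sharp endpoint $r=n-1$ rather than the paper's lazier $r=n$, and dispose of the cubics via the sign of the discriminant of their derivative --- but you should also record the trivial case $c=0$ (where $P_0=\emptyset$ and $w(P_0)=0\le 1$), which the proposition's ``for all $c\in\N$'' includes.
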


\begin{proof}
  1st case: $c=d=n(n+1)-r, 0\le r<n$. Because of Remark 4.12 one has to show:\\
  $-\frac{4}{3}n^3-1,5n^2+(2r-\frac{1}{6})n+c^2\le (c-1)^2\Leftrightarrow$\\
  $\frac{4}{3}n^3+1,5n^2-(2r-\frac{1}{6})n-2[n(n+1)-r]+1\ge 0\Leftrightarrow$\\
  $\frac{4}{3}n^3-0.5n^2-(2r+\frac{11}{6})n+2r+1\ge 0 \Leftarrow$\\
  $\frac{4}{3}n^3-0.5n^2-(2n+\frac{11}{6})n+2n\ge 0\Leftarrow$\\
  $\frac{4}{3}n^3-2,5n^2+\frac{1}{6}n\ge 0$. This is true if $n\ge 2$. If $n=1$, then $r=0$, and by substituting one can convince oneself that the inequality is true in this case, too.\\

  2nd case: $c=d=n^2-r, 0\le r<n$. One has to show:
  \begin{align}
    -\tfrac{4}{3}n^3+0.5n^2+(2r-\tfrac{1}{6})n-r+c^2 & \le (c-1)^2\notag\\
    \Leftrightarrow \tfrac{4}{3}n^3-0.5n^2-(2r-\tfrac{1}{6})n+r-2[n^2-r]+1 &\ge
    0
    \notag \\
    \Leftrightarrow  \tfrac{4}{3}n^3-2,5n^2-(2r-\tfrac{1}{6})n+3r+1& \ge 0.\label{eq:6}
  \end{align}
Assuming $n\ge 2$, this inequality follows from
\begin{align*}
  \tfrac{4}{3}n^3-2,5n^2-(2n-\tfrac{1}{6})n+3n+1& \ge 0\\
  \Leftarrow \quad \tfrac{4}{3}n^3-4,5n^2+3\tfrac{1}{6}n & \ge 0 \\
  \Leftrightarrow \quad \tfrac{4}{3}n^2-4,5n+3\tfrac{1}{6}& \ge 0.
\end{align*}
This is true if $n\ge 3$. Putting
  $n=1$ and $n=2$ in (4.6) gives the inequalities $r\ge 0$ and $2-r>0$,
  respectively, which are true by assumption. As $P_{1,1}=\emptyset$, the
  assertion is true if $c=0$ or $c=1$, too.
\end{proof}

\section{Preview}\label{4.3}

Let be $V\subset S_n$ a $m$-dimensional subspace and
$V\leftrightarrow\xi\in\GG(k)$ the corresponding point in $\GG=\Grass_m(S_n)$.
We let $\G_m$ and $\G_a$ operate on $S$ as described in (4.1). We assume $V$
not to be invariant under $\G_m$ or $\G_a$. Then $C:=\{\pal(\xi)|\alpha\in
k\}^-$ is a closed irreducible curve, which is to have the induced reduced
scheme structure and which we imagine as a curve in $\mP^N$ by means of the
Pl\"ucker embedding $p$. Let be $h$ the Hilbert polynomial of
$p(C)\subset\mP^N, \cX=\Hilb^h(\GG)\hookrightarrow\Hilb^h(\mP^N)$ and
$\sigma:\G_m\to\cX$ the morphism $\lambda\mapsto \sigma(\lambda)C$. It has an
extension $\overline{\sigma}:\mP^1\longrightarrow\cX$, which induces a family
of curves
\[
  \begin{array}{rcl}
   \cC & \hookrightarrow & \GG\times\mP^1\\
   & \stackrel{\searrow}{f} & \;\;\downarrow\; p_2\\
  & &\mP^1  \end{array}
\]
such that $f$ is flat and $\cC_{\lambda}:=f^{-1}(\lambda)=\sigma(\lambda)C$
for all $\lambda\in\mP^1-\{0,\infty\}$. As $f$ is dominant, we get (cf. [Fu],
p.15): If $\mathbf{C}_{0/\infty}:=p_1(\cC_{0/\infty}),$ then $[\mathbf{C}_0]
=[\mathbf{C}_{\infty}]$ in
$A_1(\mathbf{G})$.

Let be $\xi_{0/\infty}=\lim\limits_{\lambda\to 0/\infty}\sigma (\lambda)\xi$
and $C_{0/\infty}:=\{\pal (\xi_{0/\infty})|\alpha \in k\}^-$. The central
theme of [T1]--[T4] is the question, what is the connection of $[C_0]$ and
$[C_{\infty}]$. The essential tool, which was already used in [T1] is the
$\alpha$-grade of $V$, which is nothing else than the degree of $C$, imbedded
in $\mP^N$ by means of $p$ (cf. [T1], 1.3).

We paraphrase the recipe for estimating the $\alpha$-grade of $V$: Let
$M_1<\cdots<M_{\ell},\ell={n+2\choose 2}$, be the inverse-lexicographically
ordered monomials of $S_n$, and let be
$f_i=\sum\limits^{\ell}_{j=1}a_{ij}M_j,\\1\le i\le m$, a basis of $V$. If
$M_{j_1}<\cdots<M_{j_m}$ is a sequence of monomials in $S_n$, then the
Pl\"ucker coordinate $P_V(M_{j_1},\cdots,M_{j_m})$ is defined to be the
determinant of $(a_{ij_{\nu}})_{1\le i,\nu\le m}$.
\bigskip
In the following , $V$ is a $T(\rho)$-invariant subspace of $S_n$, and
$f_i=M_i(1+a_i^1X^{\rho}+a_i^2X^{2\rho}+\cdots+a^i_{\nu(i)}X^{\nu(i)\rho}),
1\le i\le m$, is a basis of $T(\rho)$-semi-invariants. From formula (4.1) in (4.1)
it follows that \quad $\alpha-grade (V)\le\mathop{\max}\limits_{(j)} 
\{\alpha-grade  \;\langle M_1X^{j(1)\rho},\cdots,M_mX^{j(m)\rho}\rangle \}$ \\where
$(j)=(j(1),\cdots,j(m))\in [1,\ell]^m\;\cap\;\N^m$ runs through all sequence
such that $P_V(M_1X^{j(1)\rho}, \cdots, M_mX^{j(m) \rho})\neq 0$.

It is possible to choose the semi-invariants $f_i$ so that the initial
monomials $M_i$ (the final monomials $M_iX^{\nu(i)\rho}=:N_i$, respectively)
are linearly independent, i.e. different from each other (cf. Appendix E or
the proof of Hilfssatz 6 in [T2], Anhang 1, p. 140). As the Pl\"ucker
coordinates of a subvector space, up to a factor different from zero, do not
depend on the basis which one has chosen, one has
\begin{equation}\label{7}
  P_V(M_1,\cdots,M_m)\neq 0\quad\mbox{and}\quad P_V(N_1,\cdots,N_M)\neq 0.
\end{equation}

Define $V_0=\langle M_1,\cdots,M_m\rangle \leftrightarrow\xi_0$ and $V_{\infty}:=\langle
N_1,\cdots,N_m\rangle \leftrightarrow\xi_{\infty}$. As the function $\deg$ is constant
on flat families of curves, we get
\[
  \alpha\mbox{-grade} (V)=\deg(C)=\deg(\CC_0)=\deg(\CC_{\infty})
\]
As $C_{0/\infty}\subset\CC_{0/\infty}$ we conclude that
\begin{equation}\label{8}
  \alpha\mbox{-grade} (V)\ge \max (\alpha\mbox{-grade} (V_0),\alpha\mbox{-grade}(V_{\infty})).
\end{equation}

Now it is scarcely possible to see if
$P_V(M_1X^{j(1)\rho},\cdots,M_mX^{j(m)\rho})$ is different from zero.
Therefore we introduce the number
\[
\max-\alpha\mbox{-grade}(V):= \max\limits_{(j)}\{\alpha-\grade \;\langle
a_1^{j(1)}M_1X^{j(1)\rho},\ldots, a_m^{j(m)}M_mX^{j(m)\rho}\rangle\}
\]
 where $(j)$ runs through all sequences
$(j(1),\cdots,j(m))\in\N^m$, such that $0\le j(i)\le\nu(i)$ for all $1\le i\le
m$ and $a_i^0:=1$.

\begin{remark} (a) Clearly $\alpha$-grade $(V)\le\max-\alpha$-grade $(V)$.\\
(b) In the definition, the monomials need not be ordered.\\
(c) If one coefficient $a_i^{j(i)}$ is equal to zero or if two of the monomials $M_iX^{j(i)\rho}$ are equal for different indices $i$, then the $m$-times exterior product of the monomial space and its $\alpha$-grade are zero (cf. 4.1).
\end{remark}

To say it differently, take from each semi-invariant $f_i$ a monomial
$M_iX^{j(i)\rho}$, whose coefficient $a_i^{j(i)}$ is different from zero, form
$\mathop{\wedge}\limits_{i=1}^m\pal(M_iX^{j(i)\rho})$, and determine the highest power of
$\alpha$ occurring in such an exterior product. Finally, define
$\max-\alpha$-grade $(V)$ to be the maximum of these degrees, if $(j)$ runs
through all such sequences.

Accordingly, one defines 
\[
\min-\alpha-\grade
(V):=\min\limits_{(j)}\{\alpha\mbox{-grade}\langle
a_1^{j(1)}M_1X^{j(1)\rho},\cdots,a_m^{j(m)}M_mX^{j(m)\rho}\rangle \}
\]
 where
$(j)$ runs through all sequences of the kind described above and which give
an $\alpha$-grade different from zero.

As $\alpha$-grade $(V_{0/\infty})=\deg (C_{0/\infty})$, from (4.7) we conclude that
\begin{equation}\label{9}
  \min-\alpha\mbox{-grade} (V)\le \min(\deg C_0,\deg C_{\infty}).
\end{equation}

Later on, the vector space $V$ will always be equal to $H^0(\cI(n)))$, where
$\cI\subset\cO_{\mP^2}$ is a $G=\Gamma\cdot T(\rho)$-invariant ideal of
$y$-standard form (cf. 2.4.3 Definition 2). We will see that
$\max-\alpha$-grade $(\cI):=\max-\alpha$-grade $(H^0(\cI(n)))$
and $\min-\alpha$-grade $(\cI):=\min-\alpha$-grade $(H^0(\cI(n)))$
 not only are
independent of $n\ge$ colength $(\cI)$, but also can be computed with the help
of smaller numbers $n$. The real aim of the following estimates is to prove
the following inequality: If $\cI\subset\cO_{\mP^2}$ is an ideal of
$y$-standard form and if $\reg(\cI)=m$, then
\begin{equation*}
  \label{eq:ausruf}
  Q(m-1)+\min-\alpha\mbox{-grade} (\cI)>\max-\alpha\mbox{-grade}(\cI).
 \tag{\textbf{!}}
\end{equation*}

From this it will follow that $\CC_0$ and $\CC_{\infty}$ do not contain any
$y$-standard cycle besides $C_0$ and $C_{\infty}$, respectively (cf. Lemma 9.2).

\newpage
\vspace{-2.5cm}
\begin{minipage}{24cm*\real{0.7}} \label{fig:4.1-4.4}
\tikzstyle{help lines}=[gray,very thin]
\begin{tikzpicture}[scale=0.7]
 \draw[style=help lines] (0,-4) grid  (25,5);
 {
 \pgftransformxshift{0.5cm}
 \draw[thick,<-] (1,1.5) .. controls (2.5,0.8) and (4.1,1) .. (5,2.5);
 \draw[anchor=base] (1,2) node[above=2pt,fill=white, inner sep=0pt] 
  {$a(j)$};
 \draw[anchor=base] (5,2) node[above=2pt,fill=white, inner sep=0pt] 
  {$a(i)$};
 \draw[thick,->] (7,1.2) .. controls (8.5,1.7) and (8.5,1.5) .. (9,2.5);
 \draw[anchor=base] (7,1) node[above=2pt,fill=white, inner sep=0pt] 
  {$\scriptstyle{a(i)}$};
 \draw[anchor=base] (9,3) node[above=2pt,fill=white, inner sep=0pt] 
  {$\scriptstyle{a(i{+}2)}$};

 \draw[thick,<-] (11,1.5) .. controls (14,1.8) .. (16,3.2);
 \draw[anchor=base] (11,2) node[above=2pt,fill=white, inner sep=0pt] 
  {$\scriptstyle{a(i{-}5)}$};
 \draw[anchor=base] (16,3) node[above=2pt,fill=white, inner sep=0pt] 
  {$\scriptstyle{a(i)}$};

 \draw[thick,<-] (13,-1.5) .. controls (16.5,-1.1) and (21.2,1,6) .. (23,3.2);
 \draw[anchor=base] (13,-1) node[above=2pt,fill=white, inner sep=0pt] 
  {$\scriptstyle{a(j)}$};
 \draw[anchor=base] (23,3) node[above=2pt,fill=white, inner sep=0pt] 
  {$\scriptstyle{a(i)}$};
 }
 \draw [anchor=base] (3,5) node[above=2pt] {Fig. 4.1}; 
 \draw [anchor=base] (9,5) node[above=2pt] {Fig. 4.2}; 
 \draw [anchor=base] (14,5) node[above=2pt] {Fig. 4.3}; 
 \draw [anchor=base] (21,5) node[above=2pt] {Fig. 4.4}; 
 \draw[\Red,ultra thick] (1,1) -- (1,2) -- (3,2) (4,2) -- (6,2) -- (6,3)
 (7,1) -- (8,1) -- (8,2) -- (9,2) -- (9,3) -- (10,3)
 (11,1) -- (11,2) -- (14,2) -- (14,3) -- (17,3) -- (17,4)
 (9,-3) -- (10,-3) -- (10,-2) -- (13,-2) -- (13,-1) 
 -- (15,-1) -- (15,0) -- (17,0) -- (17,1) -- (19,1) -- (19,2) -- (21,2) --
 (21,3) -- (24,3) -- (24,4);
 \draw[\Red,ultra thick,dotted] (3,2) -- (4,2);
\end{tikzpicture}
\end{minipage}
\vspace{0.5cm}

\begin{minipage}{20cm*\real{0.7}} \label{fig:4.5a}
\centering Fig. 4.5a\\ 
\tikzstyle{help lines}=[gray,very thin]
\begin{tikzpicture}[scale=0.7]
 \draw[style=help lines]  grid (20,10);
 \draw[thick] (0,0) -- (20,0); 
 \draw[white] (0,10) -- (0,10.5);

 {
 \pgftransformxshift{0.5cm}
  \draw[<->,thick] (20,0) -- (20,10);
  \draw[anchor=east] (0,0) node[rotate=90] {$c{-}2n$};
  \draw[anchor=east] (1,0) node[rotate=90] {$c{-}2n{+}1$};
  \draw[anchor=east] (10,0) node[rotate=90] {$c{-}2\nu{-}2$};
  \draw[anchor=east] (11,0) node[rotate=90] {$c{-}2\nu{-}1$};
  \draw[anchor=east] (18,0) node[rotate=90] {$c{-}2$};
  \draw[anchor=east] (19,0) node[rotate=90] {$c{-}1$};
  \draw (8,4) node[above] {$-$};
  \draw (10,5) node[above] {$-$};
  \draw (12,6) node[above] {$-$};
  \draw (14,7) node[above] {$-$};
  \draw (16,8) node[above] {$-$};
  \draw (18,9) node[above] {$-$};
 \draw[anchor=base] (10,6) node[above=2pt,fill=white, inner sep=0pt] 
  {$\scriptstyle{n{-}\nu}$};
 \draw[anchor=base] (11,6) node[above=2pt,fill=white, inner sep=0pt] {$\scriptstyle{n{-}\nu}$};
 \draw[anchor=base] (16,9) node[above=2pt,fill=white, inner sep=0pt] {$\scriptstyle{n{-}1}$};
 \draw[anchor=base] (17,9) node[above=2pt,fill=white, inner sep=0pt]
 {$\scriptstyle{n{-}1}$};
 \draw[anchor=base] (18,10) node[above=2pt,fill=white, inner sep=0pt] {$\scriptstyle{n}$};
 \draw[anchor=base] (19,10) node[above=2pt,fill=white, inner sep=0pt] {$\scriptstyle{n}$};
 \draw[anchor=mid] (20,5) node[fill=white, inner sep=2pt] {$n$};
 }
 \draw[\Red,ultra thick] (0,0) -- (0,1) -- (2,1) -- (2,2) -- (4,2) -- (4,3) --
 (6,3) -- (6,4) -- (8,4) -- (8,5) -- (10,5) -- (10,6) -- (12,6) -- (12,7) --
 (14,7) -- (14,8) -- (16,8) -- (16,9) -- (18,9) -- (18,10) -- (20,10) --
 (20,0) -- cycle;
 \end{tikzpicture}
\end{minipage}
\vspace{0.5cm}

\begin{minipage}{20cm*\real{0.7}} \label{fig:4.5b}
\centering Fig. 4.5b\\ 
\tikzstyle{help lines}=[gray,very thin]
\begin{tikzpicture}[scale=0.7]
 \draw[style=help lines]  grid (19,10);
 \draw[white] (-1,0) -- (0,0);
 \draw[thick] (0,0) -- (19,0); 
 {
 \pgftransformxshift{0.5cm}
  \draw[<->,thick] (19,0) -- (19,10);
  \draw[anchor=east] (0,0) node[rotate=90] {$c{-}2n{+}1$};
  \draw[anchor=east] (1,0) node[rotate=90] {$c{-}2n{+}2$};
  \draw[anchor=east] (10,0) node[rotate=90] {$c{-}2\nu{-}1$};
  \draw[anchor=east] (11,0) node[rotate=90] {$c{-}2\nu$};
  \draw[anchor=east] (17,0) node[rotate=90] {$c{-}2$};
  \draw[anchor=east] (18,0) node[rotate=90] {$c{-}1$};
  \draw (8,4) node[above] {$-$};
  \draw (10,5) node[above] {$-$};
  \draw (12,6) node[above] {$-$};
  \draw (14,7) node[above] {$-$};
  \draw (16,8) node[above] {$-$};
  \draw (18,9) node[above] {$-$};
 \draw[anchor=mid] (19,4.5) node[fill=white, inner sep=2pt] {$n$};
 }

 \draw[\Red,ultra thick] (0,0) -- (0,1) -- (2,1) -- (2,2) -- (4,2) -- (4,3) --
 (6,3) -- (6,4) -- (8,4) -- (8,5) -- (10,5) -- (10,6) -- (12,6) -- (12,7) --
 (14,7) -- (14,8) -- (16,8) -- (16,9) -- (18,9) -- (18,10) -- (19,10) --
 (19,0) -- cycle;
 \end{tikzpicture}
\end{minipage}

\begin{minipage}{20cm*\real{0.7}} \label{fig:4.6a}
\centering Fig. 4.6a\\ 
\tikzstyle{help lines}=[gray,very thin]
\begin{tikzpicture}[scale=0.7]
 \draw[style=help lines]  grid (20,10);
 \draw[thick] (0,0) -- (20,0); 
 \draw[white] (0,10) -- (0,10.5);

 {
 \pgftransformxshift{0.5cm}
 \draw[<->,thick] (20,0) -- (20,10);
  \draw[anchor=east] (0,0) node[rotate=90] {$e{-}2n$};
  \draw[anchor=east] (1,0) node[rotate=90] {$e{-}2n{+}1$};
  \draw[anchor=east] (18,0) node[rotate=90] {$e{-}2$};
  \draw[anchor=east] (19,0) node[rotate=90] {$e{-}1$};
 \draw[anchor=base] (11,6) node[above=2pt,fill=white, inner sep=0pt] 
  {$\scriptstyle{n{-}\nu}$};
 \draw[anchor=base] (12,6) node[above=2pt,fill=white, inner sep=0pt] {$\scriptstyle{n{-}\nu}$};
 \draw[anchor=base] (17,9) node[above=2pt,fill=white, inner sep=0pt] {$\scriptstyle{n{-}1}$};
 \draw[anchor=base] (18,9) node[above=2pt,fill=white, inner sep=0pt] {$\scriptstyle{n{-}1}$};
 \draw[anchor=base] (19,10) node[above=2pt,fill=white, inner sep=0pt] {$\scriptstyle{n}$};
 \draw[anchor=mid] (20,5) node[fill=white, inner sep=2pt] {$n$};
 }
 \draw[\Red,ultra thick] (0,0) -- (0,1) -- (2,1) -- (2,2) -- (4,2) -- (4,3) --
 (6,3) -- (6,4) -- (9,4) -- (9,5) -- (11,5) -- (11,6) -- (13,6) -- (13,7) --
 (15,7) -- (15,8) -- (17,8) -- (17,9) -- (19,9) -- (19,10) -- (20,10) --
 (20,0) -- cycle;
 \end{tikzpicture}
\end{minipage}

\begin{minipage}{20cm*\real{0.7}} \label{fig:4.6b}
\centering Fig. 4.6b\\ 
\tikzstyle{help lines}=[gray,very thin]
\begin{tikzpicture}[scale=0.7]
 \draw[style=help lines]  grid (19,9);
 \draw[white] (-1,0) -- (0,0);
 \draw[thick] (1,0) -- (19,0); 

 {
 \pgftransformxshift{0.5cm}
 \draw[<->,thick] (19,0) -- (19,9);
  \draw[anchor=east] (0,0) node[rotate=90] {$e{-}2n{+}1$};
  \draw[anchor=east] (1,0) node[rotate=90] {$e{-}2n{+}2$};
  \draw[anchor=east] (17,0) node[rotate=90] {$e{-}2$};
  \draw[anchor=east] (18,0) node[rotate=90] {$e{-}1$};
  \draw[anchor=mid] (19,4.5) node[fill=white, inner sep=2pt] {$n$};
 }
 \draw[\Red,ultra thick] (0,0) -- (0,1) -- (2,1) -- (2,2) -- (4,2) -- (4,3) --
 (6,3) -- (6,4) -- (9,4) -- (9,5) -- (11,5) -- (11,6) -- (13,6) -- (13,7) --
 (15,7) -- (15,8) -- (17,8) -- (17,9) -- (19,9) -- (19,0) -- cycle;
 \end{tikzpicture}
\end{minipage}

\begin{minipage}{1.0\linewidth}
\centering
\begin{minipage}[b]{7cm*\real{0.7}} \label{fig:4.7a}
\centering Fig. 4.7a\\ 
\tikzstyle{help lines}=[gray,very thin]
\begin{tikzpicture}[scale=0.7]
 \draw[style=help lines]  grid (7,8);
 \draw[thick] (0,0) -- (7,0);
 \draw[thick] (0,0) -- (7,0) -- (7,8);
 {
 \pgftransformxshift{0.5cm}
 \pgftransformyshift{-0.55cm}
 \draw[anchor=base] (6,0) node {$c{-}1$};
 }
 \draw[\Red,ultra thick] (5,0) -- (5,1) -- (7,1) 
(0,0) -- (0,1) -- (1,1) -- (1,2) -- (2,2) -- (2,3) -- (3,3) -- (3,4) -- (4,4)
-- (4,5)  -- (5,5) -- (5,6) -- (6,6) -- (6,7) -- (7,7); 
  \draw[\Black, ultra thick] (0,1) -- (7,8);  
\end{tikzpicture}
\end{minipage}
\hspace{1.5cm}  
\begin{minipage}[b]{7cm*\real{0.7}} \label{fig:4.7b}
\centering Fig. 4.7b\\ 
\tikzstyle{help lines}=[gray,very thin]
\begin{tikzpicture}[scale=0.7]
 \draw[style=help lines]  grid (7,8);
 \draw[thick] (0,0) -- (7,0);
 \draw[thick] (0,0) -- (7,0) -- (7,8);
 {
 \pgftransformxshift{0.5cm}
 \pgftransformyshift{-0.55cm}
 \draw[anchor=base] (6,0) node {$c{-}1$};
 }
 \draw[\Red,ultra thick] (6,0) -- (6,1) -- (7,1) 
 (0,0) -- (0,1) -- (1,1) -- (1,2) -- (2,2) -- (2,3) -- (3,3) -- (3,4) -- (4,4)
 -- (4,5) -- (5,5) -- (5,6) -- (6,6) -- (6,7) -- (7,7);
  \draw[\Black, ultra thick] (0,1) -- (7,8);  
\end{tikzpicture}
\end{minipage}
\end{minipage}


\chapter{Estimates of the $\alpha$-grade in the case $\rho_1<0, \rho_2>0$.}\label{5}

\section{Preliminary remarks.}\label{5.1}
We refer to Proposition 3.1 in section 3.5 and we treat case (I.1) at first. If
in $f_i$ the vice-monomial $N_j^{\up}$ occurs, then $\cI_k$ is monomial for
all $k\ge j+1$. Especially, $f_{j+1},\cdots f_r$ are monomials, which do not
cause a deformation of the pyramid.

We show that the $z$-degree of such a monomial $N_j^{\up}$ cannot be equal to
the $z$-degree of an initial monomial $M_k$. For then it would follow
$m_j+j-1=m_k+k$ for another index $k$, which is not possible by 
Corollary 2.4. For the same reason it is not possible that the $z$-degree of
$N_j^{\up}\cdot (z/y)$ is equal to the $z$-degree of $N_k^{\down}$ or of
$M_k$. The corresponding statements are true, if ``up'' and ``down'' are
exchanged, as it follows from the corresponding definition in (3.2) and (3.3).

Finally, if there occurs a deformation of the form (1.6) of Proposition 3.1,
then it can be that the final monomial $N_k^{\up}(z/y)$ of $f_i$ has the same
$z$-degree as the initial monomial $M_{\ell}$ of $f_{\ell}$. But then
$\ell>k>j>i$, and therefore $\cI_{\ell}$ is a monomial ideal by Lemma 3.1 .
 But then $f_{\ell}$ does not define a deformation, at all. It follows from
this remarks that one can separately consider the deformations defined by the
different $f_i$, if one wants to determine the changes of $\alpha$-grade
caused by these deformations. (N.B. This statement is analogously valid in the
situation described by Proposition 3.2, too).

At first we determine the change of the $\alpha$-grade, if in one $f_i$ the initial monomial $M_i^{\up}$ is replaces by another monomial occurring in $f_i$:\\
$1^{\circ}$ $M_i^{\up}\longmapsto N_j^{\down}$, if $0\le i<j\le r$;\\
$2^{\circ}$ $M_i^{\up}\longmapsto N_j^{\up}$, if $0\le i<j\le r$;\\
$3^{\circ}$ $M_i^{\up}\longmapsto L, L\in\cL$ monomial such that $(x,y)L\subset \ell\cK(-r-1), 0\le i\le r$;\\
$4^{\circ}$ $M_i^{\up}\longmapsto N_k^{\up}\cdot (z/y)=M_k^{\up}\cdot (z/y)^2, 0\le i<k\le r$;\\
$5^{\circ}$ $M_i^{\up}\longmapsto L\cdot (z/y), L\in\cL$ monomial such that $(x,y)L\subset\ell\cK(-r-1), 0\le i\le r$.

The deformation $4^{\circ}$ (resp. $5^{\circ}$) comes from the case 1.6 (resp.
1.7) of Proposition 3.1, and therefore there is at most one such a deformation,
whereas in the deformations $1^{\circ}$ and $2^{\circ}$ (resp. $3^{\circ}$)
the index $i$ may a priori run through all integers $0\le i<r$ (resp. $0\le
i\le r$). Then for the index $j$ in the cases $1^{\circ}$ and $2^{\circ}$
(resp. for the monomial $L$ in the case $3^{\circ}$) there are several
possibilities. But if one has chosen $i$, then one has to decide for an index
$j$ (resp. for a monomial $L$), and we will give an uniform estimate of the
corresponding changes of $\alpha$-grades.

We identify the set $\cL$, which was introduced in Section (3.3) with the
vector space  generated by the monomials in this set.

We denote by $\cL\cB$ (left domain) the vector space generated by
all monomials in $S_{m_0}$ with $z$-degree $\ge m_0-(c+r)$, i.e., generated by
all monomials $x^ay^bz^{m_0-(a+b)}$, where $a+b\le c+r$.

As to the deformation $4^{\circ}$ (resp. $5^{\circ}$), there is still the
possibility $yM_i^{\up}\mapsto yM_k^{\up}(z/y)^2$ (resp. $yM_i^{\up}\mapsto
yL\cdot (z/y)$). This is because in the case 1.6 (resp. 1.7) of Proposition 3.1,
$f_i$ has the order 1, whereas in the remaining cases $f_i$ has the order 0.

Remember that (cf. Figure 3.1 and 5.1):\\
$M_i^{\up}=x^{i-\iota(i)}y^{m_i+\iota(i)}z^{m_0-m_i-i}$\\
$N_i^{\up}=M_i^{\up}\cdot (z/y)=x^{i-\iota(i)}y^{m_i+\iota(i)-1}z^{m_0-m_i-i+1}$\\
$M_i^{\down}=x^{m_i+i-\iota(i)}y^{\iota(i)}z^{m_0-m_i-i}$\\
$N_i^{\down}=M_i^{\down}\cdot (z/x)=x^{m_i+i-\iota(i)-1}y^{\iota(i)}z^{m_0-m_i-i+1}$\\
$E_k^{\up}:=M_k^{\up}\cdot (z/y)^2=x^{k-\iota(k)}y^{m_k+\iota(k)-2}z^{m_0-m_k-k+2}$\\
$E_k^{\down}:=M_k^{\down}\cdot (z/x)^2=x^{m_k+k-\iota(k)-2}y^{\iota(k)}z^{m_0-m_k-k+2}$.

\section{Estimates in the case I.}\label{5.2}
We determine one after the other the changes of the $\alpha$-grade in the
deformations: \\
$1^{\circ}$ $M_i^{\up}\to N_j^{\down}$.\\
First we note $\varphi'(m_i+i)= m_i + 1$ and $\varphi'(m_j +j - 1 ) = m_j - 1 $ (see Fig.5.2 ).
The $\alpha$-grade of the column, in which $M_i^{\up}$ occurs changes by
$-(m_i+\iota(i))+\varphi'(m_i+i)-1 =- \iota (i)$ (cf. the formula (4.2) in 4.1). The
$\alpha$-grade of the column, to which $N_j^{\down}$ is added, changes by
$\iota (j)-\varphi'(m_j+j-1)= \iota (j) - m_j +1$ ( loc . cit.). Therefore the $\alpha$-grade changes by
$-m_i-\iota(i)+\varphi'(m_i+i)-1+\iota(j)-\varphi'(m_j+j-1)= \iota(j)-\iota(i)-m_j+1$. As $0\leq \iota (i)\leq \iota (j) \leq j$ \ , the absolute value of this difference is $\le\max (j,m_j-1)$, where $0\le i<j\le r$. \\
$2^{\circ}$ $M_i^{\up}\to N_j^{\up}$. \\
The $\alpha$-grade of the column, to which $M_i^{\up}$ belongs changes by
$-(m_i+\iota(i))+\varphi'(m_i+i)-1 =  - \iota (i) $; the $\alpha$-grade of the column, to
which $N_j^{\up}$ is added, changes by $m_j+\iota(j)-1-\varphi'(m_j+j-1) = \iota (j) $.
Therefore the change of $\alpha$-grade is equal to $0\le\iota(j)-\iota(i)\le j$, where $0\le i<j\le
r$. \\
$3^{\circ}$ $M_i^{\up}\longmapsto L\in \cL$.  \\
The $\alpha$-grade of the column, to which $M_i^{\up}$ belongs, changes by $-\iota (i)$. From the
domain $\cL$ the monomial $L$ is removed, such that by Proposition 4.1 one
gets the following estimate of the $\alpha$-grade: The $\alpha$-grade after
the deformation $3^{\circ}$ of that part of the pyramid, which belongs to the
left domain is $\le (c-1)^2+\iota(r+1)[ {c+1\choose 2}-(c-1)]$. For one has
$\# \cL=c$, and there are ${c+1\choose 2}-c$ initial monomials of $\ell
H^0(\cK(c-1))$ in the left domain $\cL\cB$. Therefore the expression in the
bracket gives the number of monomials in the corresponding part of the pyramid
after the deformation. Besides this one has to take into account that the
pyramid is pushed upwards by $\iota(r+1)$ units (cf. Remark 4.4).
 We recall
that $\cL\cB$ (resp. $\cR\cB$) is the vector space generated by the monomials
of total degree $\le c+r$ (resp. of total degree $\ge c+r+1$) in $x$ and $y$
(cf. Fig. 5.3). The change of $\alpha$-grade caused by the deformation
$3^{\circ}$ can be expressed as follows: The change in the domain $\cR\cB$ is
$-\iota(i)$; the $\alpha$-grade of the left domain of the pyramid after the
deformation is estimated as given above. \\
$4^{\circ}$ $M_i^{\up}\longmapsto E_k^{\up}$.\\
At first we consider the case that $E_k^{\up}$ belongs to the right domain,
i.e. $m_k+k-2\ge c+r+1$, and we orientate ourselves by Figure 5.1. The
deformation $4^{\circ}$ changes the $\alpha$-grade of the column of
$M_i^{\up}$ by $-\iota(i)$ (cf. $3^{\circ}$), and the $\alpha$-grade of the
column to which $E_k^{\up}$ is added changes by
$m_k+\iota(k)-2-\varphi'(m_k+k-2)$. As we have remarked above,
$\varphi'(m_k+k)=m_k+1$ and therefore $\varphi'(m_k+k-2)=m_k-2$. Therefore the
$\alpha$-grade of the column of $E_k^{\up} $ changes by $\iota(k)$. Altogether the
deformation $4^{\circ}$ causes a change of $\alpha$-grade by
$0\le\iota(k)-\iota(i)\le k$, where $0\le i<k\le r$. \\
This deformation occurs only once, yet one has to take into account the
deformation $4^{\circ} \itbis \;\, (y/z)M_i^{\up}\mapsto N_k^{\up}$ ( Proposition 
3.1c). In the column of $yM_i^{\up}$ this gives a change of the $\alpha$-grade
by $ -(m_i+\iota (i) +1 ) +\varphi'(m_i+i+1)-1 = - m_i -\iota(i) -1 +m_i +2 -1 = - \iota (i)$.
In the column of $N_k^{\up}$ the $\alpha$-grade changes by
$m_k+\iota(k)-1-\varphi'(m_k+k-1)=m_k+\iota(k)-1-(m_k-1)=\iota(k)$. Altogether
the deformation $4^{\circ}\itbis$ gives a change of $\alpha$-grade by $0\le
\iota(k)-\iota(i)\le k$.

Now to the case $m_k+k-2\le c+r$. Due to the deformation $4^{\circ}$ (resp.
$4^{\circ}\itbis$)
the $\alpha-\grade$ in the right domain of the pyramid changes by
$-\iota(i)$. In any case the deformation $4^\circ$  (resp. $4^\circ\itbis$)
 gives a change of $\alpha$-grade in the right domain
of absolute value $\le r$. \\
$5^{\circ}$ $M_i^{\up}\longmapsto L\cdot (z/y)$.\\
Removing $M_i^{\up}$ (resp. $(y/z)M_i^{\up}$) gives a change of $\alpha$-grade by
$-\iota(i)$ in the corresponding column (cf. case $4^{\circ}$). The changes in
the left domain will be estimated later on.

The deformations $1^{\circ}-5^{\circ}$ exclude each other, i.e., there are at
most $r+1$ such deformations plus two deformations $4^{\circ}\itbis$ and
$5^{\circ}\itbis$. The changes in the right domain can be estimated in the
cases $1^{\circ}$ and $2^{\circ}$ by $\max (j,m_j-1)\le r+m_{i+1}$, where $i$
runs through the numbers $0,\cdots,r-1$. The absolute value of the change in
the case $3^{\circ}$ can be estimated by $r$, and the same is true for the
deformations $4^{\circ}, 4^{\circ}\itbis, 5^{\circ}$ and $5^{\circ}\itbis$.

We now consider possible trinomials.\\
$6^{\circ}$ We assume there is a trinomial of the form 1.3. We want to
determine the change of $\alpha$-grade,if $N_j^{\up}$ is replaced by
$N_k^{\up}$, where we start from a pyramid containing $N_j^{\up}$ instead of
$M_i^{\up}$. The changes of $\alpha$-grade in the following diagram follow
from the computation in $2^{\circ}$.
\[
  \begin{array}{c}
  M_i^{\up}\\
  \iota(j)-\iota(i)\swarrow\qquad\searrow \iota(k)-\iota(i)\\
  N_j^{\up}\quad\stackrel{\delta}{\longrightarrow}\quad N_k^{\up}
  \end{array}
\]
The change of $\alpha$-degree is therefore $0\le\delta:=\iota(k)-\iota(j)\le r$.\\
$7^{\circ}$ The trinomial has the form 1.4.
\[
  \begin{array}{c}
  M_i^{\up}\\
  \qquad\qquad\iota(j)-\iota(i)\swarrow\qquad\searrow \iota(k)-\iota(i)-m_k+1\\
  N_j^{\up}\quad\stackrel{\delta}{\longrightarrow}\quad N_k^{\down}
  \end{array}
\]
(cf. $1^{\circ}$ and $2^{\circ}$) Therefore $\delta=\iota(k)-\iota(j)-m_k+1$, and as in $1^{\circ}$ we obtain the estimate $0\le |\delta|\le\max (k,m_k-1)\le m_k+k<m_{i+1}+r$.\\
$8^{\circ}$ The trinomial has the form (1.5).
\[
  \begin{array}{c}
  \qquad M_i^{\up}\\
  \iota(j)-\iota(i)\swarrow\qquad\searrow -\iota(i)\\
  \; N_j^{\up}\quad\stackrel{\delta}{\longrightarrow}\quad L
  \end{array}
\]
(cf. $3^{\circ}$) Therefore $\delta=-\iota(j)$ and $0\le |\delta|\le r$.\\
$9^{\circ}$ The trinomial has the form (1.6).\\
\[
  \begin{array}{c}
  M_i^{\up}\qquad\\
  \qquad\qquad\iota(j)-\iota(i)\swarrow\qquad\searrow \iota(k)-\iota(i) (\mbox{resp.}-\iota(i))\; \\
  \; N_j^{\up}\quad\stackrel{\delta}{\longrightarrow}\quad N_k^{\up}\cdot (z/y)
  \end{array}
\]
(cf. $4^{\circ}$) It follows $\delta=\iota(k)-\iota(j)$ (resp. $\delta=-\iota(j)$) and therefore $|\delta|\le r$.\\
$10^{\circ}$ The trinomial has the form (1.7).
\[
  \begin{array}{c}
  M_i^{\up}\; \\
  \iota(j)-\iota(i)\swarrow\qquad\searrow -\iota(i)\qquad\quad\\
  N_j^{\up}\quad\stackrel{\delta}{\longrightarrow}\quad L\cdot (z/y)
  \end{array}
\]
(cf. $5^{\circ}$) It follows that $\delta=-\iota(j)$ and $|\delta|\le r$.\\

\textbf{N.B.} Because of $N_j^{\up}\cdot (y/z)=M_j^{\up}$ the cases
$9^{\circ}\itbis$ and $10^{\circ}\itbis$ do not occur.

Summarizing the cases $1^{\circ}-10^{\circ}$ one sees that the total change of
$\alpha$-grade in the right domain has an absolute value $\le
(r+1)r+2r+\sum\limits^r_{i=1}m_i$. In order to formulate this result in a
suitable manner, we have to introduce some notations.\\

We take up the decomposition (Z) of Section (3.1) and choose a standard basis
of $ H^0(\cK(c))$.  Then we multiply the elements in this basis as well as the
forms $f_i$ by monomials in the variables $ x , y ,z $ to obtain a basis of $
T(\rho)$- semi-invariants of $H^0(\cI(n))$ with different initial monomials.
By linearily combining one can achive that the initial monomial of each
semi-invariant does not appear in any other of these semi-invariant, i.e., one
gets a standard basis. (Fig. 3.1 is to show these initial monomials.) The set
of all monomials which occur in this basis
form a pyramid, which is denoted by $\cP$. Here $n\gg 0$, e.g. $n\ge d$.

From each element of the basis we take a monomial the coefficient of
which is different from zero and such that the monomials are different from
each other .Then we compute the $\alpha$-grade of the vector space generated
by these monomials. The maximum and the minimum of the $\alpha$-grades which
one obtains in this way had been denoted by
\[
\max-\alpha-\grade (V)\quad \text{and by} \quad \min-\alpha-\grade (V),
\] 
respectively. One chooses from such a sequence of monomials, which gives the
maximal $\alpha$-grade (which gives the minimal $\alpha$- grade ,
respectively) those monomials the total degree in $x$ and $y$ of which is $\ge
c+(r+1)$. Then one forms the $\alpha$-grade of the subspaces generated by
these monomials and denotes it by $\max-\alpha$-grade $(\cP\cap\cR\cB)$ ( by
$\min-\alpha$-grade $(\cP\cap\cR\cB)$, respectively ). If one chooses from
such sequences of monomials those monomials the total degree in $x$ and $y$ of
which is $\le c+r$, then $\max-\alpha$-grade $(\cP\cap\cL\cB)$ and
$\min-\alpha$-grade $(\cP\cap\cL\cB)$ are defined analogously.

Of course this is valid in the case $\rho_1>0$, too, but the assumption $\rho_2 > 0 $ is essential .\\We make the
\begin{definition}\label{6}
$A:=\max-\alpha$-grade $(\cP\cap\cR\cB)-\min-\alpha$-grade $(\cP\cap\cR\cB)$.
\end{definition}

Then we can formulate the result obtained above as
\begin{conclusion}\label{1}
In the case I.1 one has
\[
A\le r(r+3)+\bigsum\limits^r_{i=1}m_i. \quad \quad \qed
\]
\end{conclusion}
\textbf{N.B.}. If $r=0$ one has actually $A=0$.

Now to the case I.2 (cf. Proposition 3.1, 2nd case).\\
$1^{\circ}$ $M_i^{\up}\longmapsto N_j^{\down}$ gives a change of
$\alpha$-grade of absolute value $\le\max (r,m_{i+1})$, where $0\le i\le r-1$
(cf. the case I.1). \\
$2^{\circ}$ $M_i^{\up}\longmapsto L\in\cL$ gives a change of $\alpha$-grade in
the right domain by $-\iota(i)$ (see above). Further possible deformations are
$yM_i^{\up}\mapsto \in\cL,
y^2M_i^{\up}\mapsto\in\cL,\cdots,y^{\nu }M_i^{\up}\mapsto\in\cL$, so long as
$m_i+i+\nu < m_{i-1}+(i-1)-1$ (cf. Conclusion 3.2 ). This gives
in the column of $yM_i^{\up}$ (of $y^2M_i^{\up},\cdots,y^{\nu}M_i^{\up}$,
respectively) a change of $\alpha$-grade by
\[
-(m_i+\iota(i)+1)+[\varphi'(m_i+i+1)-1]=-(m_i+\iota(i)+1)+[m_i+1]=-\iota(i)
\]
\[(\text{by}
-(m_i+\iota(i)+2)+[\varphi'(m_i+i+2)-1]=-(m_i+\iota(i)+2)+[m_i+2]=-\iota(i),\]
$\cdots,$
\[
-(m_i+\iota(i)+\nu)+[\varphi'(m_i+i+\nu)-1]=-(m_i+\iota(i)+\nu)-[m_i+\nu]=-\iota(i),
\text{respectively})
\]
 as long as $m_i+i+\nu < m_{i-1}+(i-2)$, see above.) This
procedure can be repeated at most $c$ times, until $\cL$ is full. As
$\iota(r)\le r$, the total change of $\alpha$-degree in the right domain
caused by deformations $2^{\circ}$ has an absolute value $\le cr$. If $A$ is
defined as before one gets

\begin{conclusion}\label{2}
In the case I.2 one has
\[
  A\le r(r+c)+\sum^r_{i=1}m_i.\qquad\qquad \Box
\]
\end {conclusion}
\textbf {N.B.} If $r=0$, then one really has $A=0$. For removing $M_0^{\up}=y^{m_0}$ does not change the $\alpha$-grade of the column of $z$-degree 0 (this $\alpha$-grade is equal to 0), or one has $f_0=x^{m_0}$.

\section{The case $r\ge 1$.}\label{5.3}
We start with an ideal $\cI$ of type $r\ge 0$ such that $\ell_0=y$, and we
refer to Proposition 3.1 again. The aim is to prove the inequality
(!) in (4.3), where now $V=H^0(\cI(n))$. In the course of the following
computations it will turn out that $\alpha$-grade $(V)$ is independent of $n$,
if $n$ is sufficient large, e.g. if $n\ge d$.

If one simply writes $\alpha$-grade $(\cI)$ instead of $\alpha$-grade
$(H^0(\cI(n)))$, where $n$ is sufficient large, then one has to show:
\[
Q(m_0-1)+\min-\alpha-\mbox{grade}\; (\cI)>\max-\alpha-\mbox{grade}\;
(\cI)\qquad\qquad \textbf{(!)}  
\]

We orientate ourselves by Fig. 5.4. From the Remarks 4.4 , 4.13 , 4.14 and
Proposition 4.1 it follows that
 \[
  \min-\alpha\mbox{-grade} (\cL\cB\cap\cP)\ge \iota(r+1)[{c+1\choose 2}-c]
\]
\[
\max-\alpha\mbox{-grade} (\cL\cB\cap\cP)\le (c-1)^2+\iota(r+1)[{c+1\choose
  2}-(c-s-1)]
\]
 
where $s+1=$ total number of all deformations
$M_i^{\up}\mapsto\in\cL,\cdots,y^{\nu }M_i^{\up}\mapsto\in\cL$, even with
different indices $i$ ($s=-1$ means, there are no such deformations). For
proving {\bf (!)} it is sufficient to show
\begin{equation}
  \label{eq:stern}
  Q(m_0-1)>(c-1)^2+(s+1)\cdot\iota(r+1)+A \tag{*}
\end{equation}
\noindent where $A=\max-\alpha$-grad $(\cP\cap\cR\cB)-\min-\alpha$-grade $(\cP\cap\cR\cB)$ by definition (cf. Section 5.2 for the notations).\\
\textbf{N.B.} If $c=0$ there are no deformations into the left domain of the pyramid.
Therefore the inequality (*) reduces to
\begin{equation} 
  \label{eq:starbis}
  Q(m_0-1)>A.  \tag{*\itbis}  
\end{equation}

These statements are independent of $\rho_1<0$ (Case I) or $\rho_1>0$ (Case II).\\
Unfortunately one has to distinguish these two cases in the following estimates
and we start with Case I.1 (cf. Proposition 3.1).

Because of $0\le s+1\le c, 1\le\iota(r+1)\le r+1$ and the estimates of 

 as well as Lemma 2.8) it is sufficient to show:
  \begin{align}
\tbinom{m_0+1}{2}
  -(c+\sum\limits^r_{i=0}m_i) & >(c-1)^2+c(r+1)+r(r+3)+\sum\limits^r_{i=1}m_i 
\notag \\
  \iff
  \frac{1}{2} m_0(m_0-1)& >c^2+cr+r(r+3)+1 \label{5.1} 
\end{align}
The case $r=0$ has to be treated separately (see below Section 5.4), such that
we can assume $r\ge 1$. If $c=0$, then $m_0\ge 2^{r+1}$ ( Lemma 2.8), and
(5.1) follows from $2^r(2^{r+1}-1)>r(r+3)+1$, which inequality is true
if $r\ge 1$. Therefore we can assume $c>0$. Because of $m_0\ge 2^r(c+2)$ the
inequality \label{5.1} follows from $2^{r-1}(c+2)\cdot
2^r(c+1)>c^2+cr+r(r+3)+1\iff 2^{2r-1}(c^2+3c+2)>c^2+cr+r(r+3)+1$, which is
true if $r\ge 1$ and $c>0$.

\begin{conclusion}\label{3}
In the case I.1 the inequality (*) is valid, if $r\ge 1$.\hfill $\Box$
\end{conclusion}

Now to the case I.2. Because of Conclusion 5.2 one has to replace in the inequality above $r(r+3)$ by $r(r+c)$, i.e. one has to show that $2^{2r-1}(c^2+3c+2)>c^2+2rc+r^2+1$, if $r\ge 1$. This is true, if $c\ge 0$.

\begin{conclusion}\label{4}
In the case I.2 the inequality (*) is valid, if $r\ge 1$.\hfill $\Box$
\end{conclusion}

\section{The case $r=0$.}\label{5.4}
As always we suppose $g^*(\varphi)>g(d)$. As $\ell_0=y$ and $\rho_1<0$ by
assumption, one has $f_0=x^{m_0}$ (cf. Fig. 5.5). Therefore there are no
deformations into $\cL$, i.e. $A=0$ and (*) reads $Q(m_0-1)>(c-1)^2$. If for
simplicity one writes $m$ instead of $m_0$, then one has to show
\begin{equation}\label{2}
  m(m-1)>2c^2-2c+2
\end{equation}
Now $c=$ colength $(\cK)$, $\cK$ an ideal of type $(-1)$ and the following cases can occur:\\
1. If $\psi$ is the Hilbert function of $\cK$, then $g^*(\psi)\le g(c)$. Especially one has $c\ge 5$ and by Corollary 2.2  $m\ge 2c+1$, from which (5.2) follows.\\
2. One has $c\le 4$. If $c=0$, then $\cI$ is monomial and there are no
deformations at all. It follows that $(*\itbis)$ is fulfilled.

A little consideration shows that because of $m\ge c+2$ the inequality (5.2) is fulfilled in the cases $1\le c\le 4$, too. Thus {\bf(*)} and ${\bf(*\itbis)}$ are proved in the case $r=0$, also. Using Conclusion 5.4 gives

\begin{proposition}\label{4}
  Assume that $\cI$ has the type $r\ge 0$ and has $y$-standard form; assume
  $\rho_1<0$ and $\rho_2>0$. Then (*) and $(*\itbis)$ are fulfilled,
  respectively.\hfill $\Box$
\end{proposition}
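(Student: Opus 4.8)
The plan is to assemble Proposition 5.4 as the combination of the two cases $r \geq 1$ and $r = 0$, both of which have essentially been carried out in Sections 5.2--5.4; what remains is to check that the bookkeeping of the reduction to the inequality $(*)$ (respectively $(*\itbis)$) is correct and that $(*)$ genuinely implies $(\textbf{!})$.

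First I would recall the setup: $\cI$ has $y$-standard form and type $r \geq 0$, so by Lemma 2.6 one has $\cI = y\cK(-1) + f_0\cO_{\mP^2}(-m_0)$ with $\cK$ itself of type $(-1)$ after $r$ further decompositions, $c = \colength(\cK)$, $\kappa = \reg(\cK)$, and by Lemma 2.8(d) $m_0 \geq 2^r(c+2)$. The standard basis of $H^0(\cI(n))$ for $n \geq d$ gives the pyramid $\cP$, which splits as $\cP = (\cP \cap \cL\cB) \sqcup (\cP \cap \cR\cB)$. The key structural input, already established in Section 5.1, is that the deformations $1^\circ$--$10^\circ$ attached to the various $f_i$ act on disjoint columns (the $z$-degrees of the vice-monomials never coincide with the $z$-degrees of initial monomials $M_k$, by Corollary 2.4), so the contributions to the change of $\alpha$-grade add up, and one can estimate $A = \max\text{-}\alpha\text{-grade}(\cP \cap \cR\cB) - \min\text{-}\alpha\text{-grade}(\cP \cap \cR\cB)$ separately from the left-domain contribution. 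Then I would invoke the displayed estimates at the start of Section 5.3: $\min\text{-}\alpha\text{-grade}(\cL\cB \cap \cP) \geq \iota(r+1)[\binom{c+1}{2}-c]$ and $\max\text{-}\alpha\text{-grade}(\cL\cB \cap \cP) \leq (c-1)^2 + \iota(r+1)[\binom{c+1}{2}-(c-s-1)]$, together with Proposition 4.1 ($w(P_c) \leq (c-1)^2$), to reduce $(\textbf{!})$, i.e. $Q(m_0-1) + \min\text{-}\alpha\text{-grade}(\cI) > \max\text{-}\alpha\text{-grade}(\cI)$, to the purely arithmetic inequality $(*)$: $Q(m_0-1) > (c-1)^2 + (s+1)\iota(r+1) + A$.

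For $r \geq 1$ I would simply quote Conclusions 5.3 and 5.4, which establish $(*)$ in Cases I.1 and I.2 respectively; the proof there bounds $A$ by $r(r+3) + \sum_1^r m_i$ (Conclusion 5.1) or $r(r+c) + \sum_1^r m_i$ (Conclusion 5.2), uses $0 \leq s+1 \leq c$ and $1 \leq \iota(r+1) \leq r+1$, cancels $\sum m_i$ against the $\sum m_i$ inside $Q(m_0-1) = \binom{m_0+1}{2} - (c + \sum_0^r m_i)$, and then checks $\tfrac12 m_0(m_0-1) > c^2 + cr + r(r+3) + 1$ using $m_0 \geq 2^r(c+2)$ (or $m_0 \geq 2^{r+1}$ when $c=0$). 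For $r = 0$ I would reproduce Section 5.4: since $\ell_0 = y$ and $\rho_1 < 0$, by Lemma 2.6 one has $f_0 = x^{m_0}$ exactly (no final monomials), hence no deformations into $\cL$, so $A = 0$ and $(*)$ becomes $Q(m_0-1) > (c-1)^2$, i.e. $m_0(m_0-1) > 2c^2 - 2c + 2$; this follows from $m_0 \geq 2c+1$ (Corollary 2.2) when $g^*(\psi) \leq g(c)$, and a direct check using $m_0 \geq c+2$ (Lemma 2.4) settles the finitely many cases $1 \leq c \leq 4$, while $c = 0$ gives a monomial ideal where $(*\itbis)$ is trivial.

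The main obstacle is not any single hard estimate but making sure the case analysis is genuinely exhaustive and that the disjointness-of-deformations argument from Section 5.1 really licenses adding the left-domain and right-domain contributions independently; in particular one must be careful that the "at most one trinomial" statement (Proposition 3.1(b), Conclusion 3.7) and the bound on the number $s+1$ of deformations into $\cL$ are used consistently, and that the $\iota(r+1)$-shift of the pyramid (Remark 4.4) is accounted for on both the $\min$ and $\max$ side so that it partially cancels. Once those are in place, Proposition 5.4 follows by assembling Conclusions 5.3, 5.4 and the $r=0$ analysis: in every case either $(*)$ holds (when $c > 0$) or $(*\itbis)$ holds (when $c = 0$), which is exactly the assertion. \hfill $\Box$
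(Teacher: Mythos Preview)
Your proposal is correct and follows essentially the same route as the paper: it assembles Conclusions 5.3 and 5.4 for the case $r\ge 1$ and then reproduces the $r=0$ analysis of Section 5.4 (where $\ell_0=y$ and $\rho_1<0$ force $f_0=x^{m_0}$, hence $A=0$), exactly as the paper does. Your added remarks about the disjointness of the deformations and the $\iota(r+1)$-shift are faithful to the setup in Sections 5.1--5.3 and do not deviate from the paper's argument.
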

\textbf{N.B.} Hence the inequality {\bf (!)} follows (see the corresponding argumentation in 5.3).

\newpage

\begin{minipage}{6cm*\real{0.7}} \label{fig:5.1'}
\centering Fig. 5.1\\ 
\tikzstyle{help lines}=[gray,very thin]
\begin{tikzpicture}[scale=0.7]
 \draw[style=help lines]  grid (6,7);
  \draw[thick] (0,0) -- (6,0);
 {
 \pgftransformxshift{0.5cm}
  \draw[anchor=east] (1,0) node[rotate=90] {$m_k{+}k{-}2$};
  \draw[anchor=east] (3,0) node[rotate=90] {$m_k{+}k$};
 }
 {
 \pgftransformxshift{0.5cm}
 \draw (3,5) node[above=1pt,inner sep=1pt] {$M_k^\up$};
 \draw (2,4) node[above=1pt,inner sep=1pt] {$N_k^\up$};
 \draw (1,3) node[above,inner sep=1pt] {$E_k^\up$};
}
 \draw[\Red,ultra thick] (1,2) -- (1,3) -- (2,3) -- (2,4) -- (3,4) -- (3,6) -- (4,6);
\end{tikzpicture}
\end{minipage}

\begin{minipage}{12cm*\real{0.7}} \label{fig:5.2}
\centering Fig. 5.2\\ 
\tikzstyle{help lines}=[gray,very thin]
\begin{tikzpicture}[scale=0.7]
 \draw[style=help lines]  grid (12,13);
 \draw[thick] (0,0) -- (0,13); 
 \draw[thick] (0,0) -- (12,0);
 {
 \pgftransformxshift{0.5cm}
 \pgftransformyshift{-0.55cm}
  \foreach \x in {0,1,2,3,4,5,6,8,10} \draw[anchor=base] (\x,0) node {$\x$}; 
 \draw[anchor=base] (7,0) node {$\scriptstyle{m_2{+}2}$};
 \draw[anchor=base] (9,0) node {$\scriptstyle{m_1{+}1}$};
 \draw[anchor=base] (11,0) node {$m_0$};
 }
 {
 \pgftransformxshift{0.5cm}
  \draw (3,10) node[above,fill=white,inner sep=2pt] {$\varphi'(m_i+i)=m_i+1$};
  \draw (10,8.5) node[above,fill=white] {$\varphi'$};
 }
 \draw[\Red,ultra thick,dashed] (4,0) -- (4,1) -- (5,1) -- (5,3) -- (6,3);
 \draw[\Red,ultra thick] (6,3) -- (6,4) -- (7,4) -- (7,6) -- (8,6) -- 
 (8,7) -- (9,7) -- (9,9) -- (10,9) -- (10,10) -- (11,10) -- (11,12) -- (12,12);
  \draw[\Black, ultra thick] (0,1) -- (12,13);  
\end{tikzpicture}
\end{minipage}

\newpage
\begin{minipage}{22cm*\real{0.7}} \label{fig:5.3}
\centering Fig. 5.3\\ 
\tikzstyle{help lines}=[gray,very thin]
\begin{tikzpicture}[scale=0.7]
 \filldraw[fill=gray,opacity=0.5] 
   (10,4) -- (10,16) -- (11,16) -- (11,4) -- cycle;
 \draw[style=help lines]  grid (22,20);
 \draw[thick] (0,0) -- (22,0);
 \draw[->] (0,-2.2) -- (9.8,-2.2);
 \draw[<-] (10.2,-2.2) -- (22,-2.2);
 \draw[thick,->] (12,11.2) -- (10.4,11.2);  
 {
 \pgftransformxshift{0.5cm}
 \pgftransformyshift{-0.55cm}
 \draw[anchor=base] (12,0) node {$m_r{+}r$};
 \draw[anchor=base] (19,0) node {$m_0$};
 \draw[anchor=base] (7,-2) node[fill=white] {left domain};
 \draw[anchor=base] (14,-2) node[fill=white] {right domain};
 }
 {
 \pgftransformxshift{0.5cm}
 \draw[anchor=east] (9,0) node[rotate=90] {$c{+}r$};
 \draw[anchor=east] (10,0) node[rotate=90] {$c{+}r{+}1$};
 \draw (15,10) node[above=1pt,fill=white,text width=5.5cm]
      {column with $c+1$ monomials is complete};
 }
  \draw[\Black, ultra thick] (0,12) -- (8,20);  
  \draw[gray, ultra thick] (10,-2.5) -- (10,20);  
 \draw[\Red,ultra thick] (19,0) -- (19,1);
 \draw[\Red,ultra thick,dashed] (19,1) -- (17,1) -- (12,4);
 \draw[\Red,ultra thick]   
 (12,4) -- (10,4) -- (10,5) -- (9,5) -- (9,6) -- (8,6) --(8,7) -- (6,7) --
 (6,8) -- (7,8) -- (7,9) -- (8,9) -- (8,10) -- (9,10) -- (9,12) -- (8,12) --
 (8,13) -- (9,13) -- (9,15) -- (10,15) -- (10,16) -- (11,16) -- (11,17) --
 (12,17) -- (12,19) -- (13,19);
\end{tikzpicture}\\[1cm]
\end{minipage}

\noindent
$\#\{ \text{ monomials in the left domain }\}
  = h^0(\cK(c-1))=\tbinom{c+1}{2}-c$  \\
 as $\cK$ has the Hilbert polynomial $\tbinom{n+2}{2}-c$.\\[2mm]
$ \#\{ \text{ monomials in the right domain }\} = Q(m_0-1)- 
\#\{ \text{ monomials in the left domain }\}$\\
$ = \tbinom{m_0+1}{2}-d \left[\tbinom{c+1}{2}-c\right]$\\[2mm]

\newpage
\begin{minipage}{1.0\linewidth}
\begin{minipage}[b]{10cm*\real{0.7}} \label{fig:5.4a}
\centering Fig. 5.4a\\ 
\tikzstyle{help lines}=[gray,very thin]
\begin{tikzpicture}[scale=0.7]
 \draw[style=help lines]  grid (10,10);
 \draw[thick] (0,0) -- (0,10); 
 \draw[thick] (0,0) -- (10,0);
 {
 \pgftransformxshift{0.5cm}
 \pgftransformyshift{-0.55cm}
  \foreach \x in {0,1,2,3,4,5,7,8} \draw[anchor=base] (\x,0) node {$\cdots$}; 
 \draw[anchor=base] (6,0) node {$\scriptstyle{m_1{+}1}$};
 \draw[anchor=base] (9,0) node {$m_0$};
 }
 \draw[\Red,ultra thick] (9,0) -- (9,1) -- (2,1) -- (2,2) -- (3,2) -- (3,3) --
 (4,3) -- (4,4) -- (5,4) -- (5,5) -- (6,5) -- (6,7) -- (7,7) -- (7,8) -- (8,8)
  -- (8,9) -- (9,9);
  \draw[\Black, ultra thick] (0,1) -- (9,10);  
\end{tikzpicture}
\end{minipage}
\hfill
\begin{minipage}[b]{10cm*\real{0.7}} \label{fig:5.4b}
\centering Fig. 5.4b\\ 
\tikzstyle{help lines}=[gray,very thin]
\begin{tikzpicture}[scale=0.7]
 \draw[style=help lines]  grid (10,10);
 \draw[thick] (0,0) -- (0,10); 
 \draw[thick] (0,0) -- (10,0);
 {
 \pgftransformxshift{0.5cm}
 \pgftransformyshift{-0.55cm}
  \foreach \x in {0,1,2,3,4,5,7,8} \draw[anchor=base] (\x,0) node {$\cdots$}; 
 \draw[anchor=base] (6,0) node {$\scriptstyle{m_1{+}1}$};
 \draw[anchor=base] (9,0) node {$m_0$};
 }
 \draw[\Red,ultra thick] (9,0) -- (9,1) -- (6,1) -- (6,2) -- 
 (2,2) -- (2,3) -- (3,3) -- (3,4) -- (4,4) -- (4,5) -- (5,5) -- (5,6) -- (6,6)
 -- (6,7) -- (7,7) -- (7,8) -- (8,8) -- (8,9); 
  \draw[\Black, ultra thick] (0,1) -- (9,10);  
\end{tikzpicture}
\end{minipage}
\end{minipage}

\vspace{3cm}
%
\begin{minipage}{17cm*\real{0.7}} \label{fig:5.5 ($r=0$)}
\centering Fig. 5.5 ($r=0$)\\ 
\tikzstyle{help lines}=[gray,very thin]
\begin{tikzpicture}[scale=0.7]
\filldraw[fill=gray,opacity=0.5] (9,1) -- (10,1) -- (10,10) -- (9,10) -- cycle;
 \draw[style=help lines]  grid (17,12);
 \draw[thick] (0,0) -- (0,12); 
 \draw[thick] (0,0) -- (17,0);
 \draw[<->] (0,-1.5) -- (8.8,-1.5); 
 \draw[<-]  (9.2,-1.5) -- (17,-1.5); 
 \draw[->] (11.5,5.7) -- (9.6,4.4);  
 {
 \pgftransformxshift{0.5cm}
  \pgftransformyshift{-0.55cm}
 \draw[anchor=base] (8,0) node {$c$};
 \draw[anchor=base] (9,0) node {$c{+}1$};
 }
 {
 \draw[anchor=south west] (11.5,5.5) 
      node[fill=white,,text width = 3.2cm,inner sep=0pt]
      {column filled with monomials};
 \draw[anchor=west] (2,1) node[above,fill=white] {$\#\cL{=}c$};
 \draw (3,-2) node[above,fill=white] {$\cL\cB$};
 \draw (13,-2) node[above,fill=white] {$\cR\cB$};
 }
 \draw[gray, very thick] (9,-2) -- (9,12);  
 \draw[gray,ultra thick] (1,2) -- (1,1) -- (8,1);
 \draw[\Red,ultra thick] (15,0) -- (15,1) -- (7,1) -- (7,2) -- (8,2) -- (8,3)
 -- (5,3) -- (5,4) -- (6,4) -- (6,5) -- (7,5) -- (7,6) -- (8,6) -- (8,9) --
 (9,9) -- (9,10) -- (10,10) -- (10,11); 
  \draw[\Black, ultra thick] (0,1) -- (11,12);  
\end{tikzpicture}
\end{minipage}


\chapter{Estimates of the $\alpha$-grade in the case $\rho_1>0, \rho_2>0$ and $r\ge 1$.}\label{6}

We refer to Proposition 3.2 in (3.6) and take over the notations from there. As has been remarked in (5.1) one can compute the changes of $\alpha$-grade by the single deformations $f_i$ separately.

\section{Estimates in the case II}\label{6.1}

At first we compute the changes of the $\alpha$-grade , if we replace the initial
monomial $M_i$ of $f_i$ by another monomial occurring in $f_i$ (cf. 5.1): \\
$1^{\circ}$ $M_i^{\down}\longmapsto N_j^{\up}$, if $0\le i<j\le r$. \\
In the column of $M_i^{\down}$ there is a change of the $\alpha$-grade by
$-\iota(i)+\varphi'(m_i+i)-1=m_i-\iota(i)$. In the column of $N_j^{\up}$  there is a
change of the $\alpha$-grade by
$m_j+\iota(j)-1-\varphi'(m_j+j-1)=m_j+\iota(j)-1-(m_j-1)=\iota(j)$. Therefore
the deformation $1^{\circ}$ gives a change of the $\alpha$-grade by
$m_i+\iota(j)-\iota(i), 0\le i<j\le r$. \\
$2^{\circ}$ $M_i^{\down}\longmapsto N_j^{\down}, 0\le i<j\le r$.\\
In the column of $M_i^{\down}$ there is a change of the $\alpha$-grade by $m_i-\iota(i)$; in
the column of $N_j^{\down}$ is a change of $\alpha$-grade by
$\iota(j)-\varphi'(m_j+j-1)=\iota(j)-m_j+1$. Therefore the deformation
$2^{\circ}$ gives a change of $\alpha$-grade, whose absolute value is
$|\iota(j)-\iota(i)+m_i-m_j+1|\le\max (|m_i+\iota(j)|,\, |m_j+\iota(i)-1|)\le
m_i+j$, where $0\le i<j\le r$. \\
$3^{\circ}$ $M_i^{\down}\longmapsto L\in\cL$.\\
In the column of $M_i^{\down}$ is a change of $\alpha$-grade by $0<m_i-\iota(i)\le
m_i+i,0\le i\le r$. The change of $\alpha$-grade in the left domain can be
estimated as in Case I. \\
$4^{\circ}$ $M_i^{\down}\longmapsto E_k^{\down}=M_k^{\down}\cdot (z/x)^2,0\le
i<k\le r$. \\
At first we consider the case that $E_k^{\down}$ belongs to the right domain,
i.e. $m_k+k-2\ge c+r+1$. The change of $\alpha$-grade in the column of
$M_i^{\down}$ is $m_i-\iota(i)$; the change of $\alpha$-grade in the column of
$E_k^{\down}$ is $\iota(k)-\varphi'(m_k+k-2)=\iota(k)-(m_k-2)$. Therefore the
absolute value of the change of $\alpha$-grade by the deformation $4^{\circ}$
is $|m_i-\iota(i)+\iota(k)-m_k+2|\le\max (|m_i+\iota(k)|,\,
|m_k+\iota(i)-2|)=m_i+\iota(k)\le m_i+k$, if $0\le i<k\le r$. This deformation
can occur only once, yet one has to take into account the deformation\\
$4^{\circ}\itbis\quad  (x/z) M_i^{\down}\mapsto N_k^{\down}$ (cf. Proposition 3.2c). \\
In the column of $xM_i^{\down}$ this gives a change of the $\alpha$-grade by
$-\iota(i)+\varphi'(m_i+i+1)-1=m_i-\iota(i)+1$. In the column of $N_k^{\down}$
the $\alpha$-grade changes by $\iota(k)-\varphi'(m_k+k-1)=\iota(k)-m_k+1$.
Thus the absolute value of the change of $\alpha$-grade in the right domain
due to $4^{\circ}\itbis$ is $|m_i-\iota(i)+1+\iota(k)-m_k+1)\le\max
(|m_i+\iota(k)|,\, |m_k+\iota(i)-2|)=m_i+\iota(k)\le m_i+k$, where $0\le
i<k<r$. This deformation occurs only once.\\
Now to the case that $m_k+k-2\le c+r$. Removing $M_i^{\down}$ ( resp. $ (x/z)M_i^{\down}$) gives a change of $\alpha$-grade by $m_i-\iota(i)$ ( by $m_i-\iota(i)+1$, respectively), whose absolute value is bounded by $m_i+r$.\\
$5^{\circ}$ $M_i^{\down}\longmapsto L\cdot (z/x)$.\\
Removing $M_i^{\down}$ (resp. $(x/z)M_i^{\down}$) causes a change of
$\alpha$-grade of the column of $M_i^{\down}$ (resp. $(x/z)M_i^{\down}$) by
$m_i-\iota(i)$ (resp. by $m_i-\iota(i)+1$), which are estimated by $m_i+i$
(resp. $m_i+i+1$), where $0\le i\le r$. The deformation $5^{\circ}$ (resp.
$5^{\circ}\itbis$) can occur only once. The changes in the left domain will be
estimated later on.

The deformation $1^{\circ}-5^{\circ}$ exclude each other, i.e. there are at
most $r+1$ such deformation plus two deformations $4^{\circ}\itbis$ and
$5^{\circ}\itbis$. The changes of $\alpha$-grade in the right domain in the
cases $1^{\circ}-3^{\circ}$ have an absolute value $\le m_i+r,0\le i\le r$.
The same estimate is valid for the deformations $4^{\circ}$ and
$4^{\circ}\itbis$, even if $E_k$ belongs to the left domain, as we have
assumed $r\ge 1$. As for the deformations $5^{\circ}$ (resp.
$5^{\circ}\itbis$) we estimate the change of the $\alpha$-grade by $m_i+r$
(resp. $m_i+r+1$).

We now consider possible trinomials.\\
$6^{\circ}$ We assume there is a trinomial of the form 1.3. Similarly as in
the Case I in Chapter 5, we have a diagram
\[
  \begin{array}{c}
    M_i^{\down}\\
    \gamma(j)\swarrow\qquad\searrow \gamma(k)\\   
    N_j^{\down}\quad\stackrel{\delta}{\longrightarrow}\quad N_k^{\down}
  \end{array}
\]
where $\gamma(j):=m_i-m_j-\iota(i)+\iota(j)+1$ and $\gamma(k):=m_i-m_k-\iota(i)+\iota(k)+1$ (cf. $2^{\circ}$). Therefore $\delta=m_j-m_k-\iota(j)$. It follows that $|\delta|\le\max (|m_j+\iota(k)|,\,|m_k+\iota(j)|)\le m_i+r$.\\

$7^{\circ}$ The trinomial has the form 1.4.
\[
  \begin{array}{c}
    M_i^{\down}\\
    \gamma(j)\swarrow\qquad\searrow \beta\\
    N_j^{\down}\quad\stackrel{\delta}{\longrightarrow}\quad N_k^{\up}
  \end{array}
\]
where $\beta:=m_i-\iota(i)+\iota(k)$ (cf. $1^{\circ}$ and $2^{\circ}$). It follows that $\delta=m_j-\iota(j)+\iota(k)-1$ and $|\delta|\le m_i+r$.\\

$8^{\circ}$ The trinomial has the form 1.5.
\[
  \begin{array}{c}
  M_i^{\down}\\
  \gamma(j)\swarrow\qquad\searrow \beta \\
  N_j^{\down}\quad\stackrel{\delta}{\longrightarrow}\quad L
  \end{array}
\]
where $\beta:=m_i-\iota(i)$ (cf. $3^{\circ}$). It follows that $\delta=m_j-\iota(j)-1$ and $|\delta|\le m_i+r$.\\

$9^{\circ}$ The trinomial has the form 1.6.
\[
   \begin{array}{c}
     M_i^{\down}\\
     \gamma(j)\swarrow\qquad\searrow \beta \\
     N_j^{\down}\quad\stackrel{\delta}{\longrightarrow}\quad N_k\cdot (z/x)
   \end{array}
 \]
where $\beta:=m_i-m_k-\iota(i)+\iota(k)+2$, respectively $\beta:=m_i-\iota(i)$
 (cf. $4^{\circ}$). It follows that $\delta=m_j-m_k-\iota(j)+\iota(k)+1$
 (resp. $\delta=m_j-\iota(j)-1$) and $|\delta|\le \max
 (|m_j-\iota(k)|,|m_k+\iota(j)-1|)\le m_j+r$. \\
 
 $10^{\circ}$ The trinomial has the form 1.7.
\[
  \begin{array}{c}
  M_i^{\down}\\
  \gamma(j)\swarrow\qquad\searrow m_i-\iota(i)\\       
  N_j^{\down}\quad\stackrel{\delta}{\longrightarrow}\quad L\cdot (z/x)
  \end{array}
\]
(cf. $5^{\circ}$). It follows that $\delta=m_j-\iota(j)-1$ and $|\delta|<m_i+r$.\\

\textbf{Notabene}.  Because of $N_j^{\down}\cdot (x/z)=M_j^{\down}$ the case
$9^{\circ}\itbis$ or $10^{\circ}\itbis$ does not occur.

Summarizing the cases $1^{\circ}-10^{\circ}$ one sees that the total change of
$\alpha$-grade in the right domain has an absolute value $\le
(r+1)r+\sum\limits^r_{i=0}m_i$. If one estimates the changes in the cases
$4^{\circ}\itbis$ and $5^{\circ}\itbis$ by $m_i+r$ and $m_i+r+1$,
respectively, one obtains $A\le r(r+3)+1+3m_0+\sum\limits^r_{i=1}m_i$. As we
have assumed that $r\ge 1$, we have $m_0\geq c+2+m_r+\cdots+m_1$ ( Lemma 2.8)
 and we obtain

\begin{conclusion}\label{1}
 If $r\ge 1$ is assumed,in the case II.1 one has $A\le
4m_0+r(r+3)-c-1$.\qed
\end{conclusion}
Now we come to the case II.2 (cf. Proposition 3.2, 2nd case).\\

$1^{\circ}$ $M_i^{\down}\longmapsto N_j^{\up}$ again gives a change of the
$\alpha$-grade in the right domain, whose absolute value is $\le m_i+r, 0\le
i\le r-1$ (see above). \\
$2^{\circ}$ $M_i^{\down}\longmapsto L\in\cL$ gives a change of $\alpha$-grade
in the right domain by $m_i-\iota(i), 0\le i\le r$ (see above). Further
possible deformations are $xM_i^{\down}\mapsto\in\cL,
x^2M_i^{\down}\mapsto\in\cL,\cdots,x^{\nu}M_i^{\down}\mapsto\in\cL$, as long as
$m_i+i+\nu < m_{i-1}+(i-2)$ (cf. Conclusion 3.2). This gives in the column of 
$xM_i^{\down}$ (of $x^2M_i^{\down},\cdots,x^{\nu}M_i^{\down}$,
respectively) a change of $\alpha$-grade by
$-\iota(i)+\varphi'(m_i+i+1)-1=-\iota(i)+(m_i+2)-1=m_i-\iota(i)+1$ (by
$-\iota(i)+\varphi'(m_i+i+2)-1=m_i-\iota(i)+2,\cdots,-\iota(i)+\varphi'(m_i+i+\nu)-1=m_i-\iota(i)+\nu$,
respectively), as long as $m_i+i+\nu < m_{i-1}+ (i-2)$ and $\nu \leq c-1$.

\begin{remark}\label{2}
 One has $|m_i-\iota(i)|\le m_0$ for all $0\le i\le r$.
\end{remark}

\begin{proof}
  The inequality $-\iota(i)+m_i\le m_i\le m_0$ is true, and $\iota(i)-m_i\le
  i-m_i\le r-m_i\le r\le m_0$ is true if $r=0$. If $r\ge 1$ one has $m_0\ge
  c+2+m_r+\cdots +m_1>r$ (Lemma 2.8).
\end{proof}

From this we conclude: Replacing $M_i^{\down},
xM_i^{\down},\cdots,x^{\nu(i)}M_i^{\down}$ by monomials in $\cL$, even with
different indices $i$ as long as $\nu(i)\le c-1$, gives a change of
$\alpha$-grade in the right domain whose absolute value is $\leq\sum\limits^{\nu(i)}_{j=0}(m_0+j)$,
 because $|m_i-\iota(i)+j|\le m_0+j$ by the remark above . One gets
 $A\le\sum\limits^{r-1}_{i=0}(m_i+r)+\sum\limits^r_{i=0}\sum\limits^{\nu(i)}_{j=0}(m_0+j)$.
As $\nu(0)+1+\cdots +\nu(r)+1\le c$, it follows that
$A\le\sum\limits^{r-1}_{i=0}(m_i+r)+\sum\limits^{c-1}_{j=0} (m_0+j)$. As
$m_0\ge c+2+m_r+\cdots+m_1$ and $m_r\ge c+2$, we obtain
$\sum\limits^{r-1}_{i=1}m_i\le m_0-2(c+2)$. This estimate is valid if $r\ge
1$. In the case $r=0$ one only has the deformations
$M_0^{\down}\mapsto\in\cL,\cdots,x^sM_0^{\down}\mapsto\in\cL$, and $s$ can be
estimated as in ( Proposition 3.2).

If $M_0^{\down}$ occurs, the $\alpha$-grade in the column of
$M_0^{\down},\cdots,x^sM_0^{\down}$ increases by\\ $m_0,\cdots,m_0+s$,
respectively.
\begin{conclusion}\label{2}
 In the case II.2 one has $A\le (c+1)m_0+r^2-2(c+2)+{c\choose 2}$, if $r\ge
 1$. If $r=0$, if $\cI$ has $y$-standard form and $\kappa:=\reg(\cK)$, then
 $A\le (s+1)m_0+{s+1\choose 2}$, where
 $s\le\kappa/\rho_2-m_0(1/\rho_2-1/(\rho_1+\rho_2))$.
 \end{conclusion}\qed

\section{The case $r\ge 2$.}\label{6.2}
We recall that we started from an ideal $\cI$ of type $r\ge 0$ with
$y$-standard form, and the aim was to show the inequalities
\[
  Q(m_0-1)>(c-1)^2+(s+1)\iota (r+1)+A\quad\textbf{ (*)}
\]
and
\[
  Q(m_0-1)>A\quad(\textbf{*}\it{bis})
\]
respectively (cf. Section 5.3).

At first we treat the case II.1, where $A\le 4m_0+r(r+3)-c-1$, if $r\ge 1$.

\noindent{\bf Auxiliary Lemma 1.} If $\cI$ has the type $r=2$ (the type $r=1$,
respectively), then $m_0\ge 14$ ( $m_0\ge 7$, respectively).

\begin{proof}
  We use the results of Lemma 2.8 and write in the case
  $r=2:\cI=\cI_0=y\cI_1(-1)+f_0\cO_{\P^2}(-m_0),\cI_1=\ell_1\cI_2(-1)+f_1\cO_{\P^2}(-m_1),\cI_2=\ell_2\cK(-1)+f_2\cO_{\P^2}(-m_2)$.
  $\cI_2$ has the type 0, therefore colength $(\cI_2)=c+m_2\ge 5$ and it
  follows that $m_2\ge 5-c$. As $\cI_1$ has the type 1, we get $m_1\ge
  m_2+c+2\ge 7$. Because of $m_0\ge c+2+m_2+m_1$ it follows that $m_0\ge
  c+2+5-c+7$.
\end{proof}

To begin with, let $c=0$. Then $(*\itbis)$ reads
$\frac{1}{2}m_0(m_0+1)-(m_r+\cdots +m_0)>4m_0+r(r+3)-1$. Because of
$m_0-(c+2)\ge \sum\limits^r_1m_i$ it is sufficient to prove
\begin{equation}\label{1}
  m_0(m_0-11)>2r(r+3)-6
\end{equation}
If $r=2$ (respectively $r=3$) the right side of (6.1) is equal to 14 (equal to
30, respectively). As $m_0\ge 14$ by the Auxiliary Lemma 1 ($m_0\ge
2^3(c+2)=16$ by Lemma 2.8, respectively), (6.1) is true in these cases.

Let be $r\ge 4$. Then $m_0\ge 2^{r+1}$ and (6.1) follows from
$2^r(2^{r+1}-11)>r(r+3)$. Now $2^r>r$ if $r\ge 2$, and $2^{r+1}-11>r+3$ is
equivalent to $2^{r+1}>r+14$, which is true if $r\ge 4$.

Thus we can assume $c\ge 1$ in the following. Because of $1\le \iota(r+1)\le
r+1,0\le s+1\le c,\sum\limits^r_{i=1}m_i\le m_0-(c+2)$ the inequality $(*)$
will follow from: 
\begin{align}
     & \frac{1}{2}m_0(m_0+1)-(2m_0-2)>(c-1)^2+c(r+1)+4m_0+r(r+3)-c-1\notag  \\
\iff &  m_0(m_0-11)>2c^2+2(r-2)c+2r(r+3)-4 \label{(2)}
\end{align}
Because of $m_0\ge 2^r(c+2)$ it suffices to show:
\begin{equation}\label{3}
  2^r(c+2)[2^rc+2^{r+1}-11]>2c^2+2(r-2)c+2r(r+3)-4
\end{equation}

If $r=2$ this inequality reads $4(c+2)(4c-3)>2c^2+16\Leftrightarrow
7c^2+10c-20>0$ and this is true if $c\ge 2$.

In the case $r=2, c=1$, the inequality (6.2) reads $m_0(m_0-11)>18$, which is
true because $m_0\ge 14$ (cf. Auxiliary Lemma 1). Therefore we can now suppose
without restriction $r\ge 3 , c\ge 1$. But $2^{r+1}>11$ and thus (6.3) follows
from $2^r(c+2)\cdot 2^rc > 2c^2 + 2cr + 2r(r+3)$ which is equivalent to :
\begin{equation}\label{4} 
(2^{2r}-2)c^2+(2^{2r+1}-2r)c>2r(r+3)
\end{equation}

The left side of (6.4) is a monotone function of $c$, and if $c=1$, then (6.4) reads $2^{2r}-2+2^{2r+1}-2r>2r(r+3)\Leftrightarrow 2^{2r}+2^{2r+1}>2r^2+8r+2\Leftrightarrow 2^{2r-1}+2^r>r^2+4r+1$.\\
This is true if $r\ge 3$. Summarizing all subcases we obtain
\begin{conclusion}\label{3}
 In the case II.1 the inequality $(*)$ is fulfilled for all $r\ge2$.
\end{conclusion}

We now consider the case II.2 and assume $r\ge 2$. With the help of Conclusion
 6.2 and the estimates $\sum\limits^r_{i=1}m_i\le m_0-(c+2), s+1\le c,
\iota(r+1)\le r+1$ one sees that $(*)$ follows from
\[
  \frac{1}{2} m_0(m_0+1)-(2m_0-2)\ge (c-1)^2+c(r+1)+(c+1)m_0+r^2-2(c+2)+{c\choose 2}.
\]
A simple computation shows that this is equivalent to
\begin{equation}\label{5}
  m_0(m_0-2c-5)>3c^2+2cr-7c-10+2r^2.
\end{equation}
Now we have $m_0\ge 2^r(c+2)\ge 4(c+2)$. If $c=0$, then (6.5) follows from
$2^{r+1}> -10 +r^2 $,which is true for all $r \geq 2 $. Therefore we can assume $ c > 0 $. Then (6.5) follows from $2^r(c+2)(2c+3)>3c^2+2cr+2r^2\Leftrightarrow 2^r(2c^2+7c+6)>3c^2+2cr+2r^2$
which is true for all $c\ge 1$ and $r\ge 2$. We get
\begin{conclusion}\label{4}
 In the case II.2 the inequality (*) is fulfilled for all $r\ge 2$.
\end{conclusion}

\section{The case $r=1$.}\label{6.3}
Then
$\cI=y\cI_1(-1)+f_0\cO_{\P^2}(-m_0),\cI_1=\ell_1\cK(-1)+f_1\cO_{\P^2}(-m_1)$,
where $\cI_1$ has the type 0 and $\cK$ has the type $-1$.

\subsection{}\label{(6.3.1)} We start with the case II.1 of Proposition 3.2 .

{\it Subcase 1:\/} $\ell_1=y$: Then one has the situation shown in Figure 6.1 and there are the following possibilities (case II 1.5 and II 1.7, respectively):\\
$1^{\circ}$ $f_0=x^{m_0}+\alpha N_1^{\down}+\beta L, L\in\cL$ monomial such that $(x,y)L\subset \ell \cK(-2)$.\\
$2^{\circ}$ $f_0=x^{m_0}+\alpha N_1^{\down}+\beta L\cdot (z/x), L\in\cL$
monomial such that $(x,y)L\subset\ell\cK(-2)$.

We treat the case $1^{\circ}$. At first, one has the possibility
$x^{m_0}\mapsto N_1^{\down}$. The $\alpha$-grade of the column of $x^{m_0}$
changes by $m_0$; the $\alpha$-grade of the column of $N_1^{\down}$ changes by
$\iota(1)-\varphi'(m_1)=1-(m_1-1)=2-m_1$. Therefore the change of
$\alpha$-grade in the right domain is $m_0-m_1+2$. The deformation
$x^{m_0}\mapsto L$ gives a change of $\alpha$-grade by $m_0$ in the right
domain. As the order of $f_0$ is equal to 0 in the case II 1.5 (cf.
Proposition 3.2c), there are no other changes of $\alpha$-grade caused by $f_0$.

By Proposition 3.2 again, it follows that $f_1$ has the form of case
II.1.5, where $\alpha=0$, and the order of $f_1$ is equal to 0. The
deformation $M_1^{\down}\mapsto\in\cL$ gives a change of $\alpha$-grade by
$-\iota(1)+\varphi'(m_1+1)-1=-\iota(1)+m_1=m_1-1$. Thus in the case
$1^{\circ}$ one has $A\le\max (m_0-m_1+2,m_0)+m_1-1=m_0+m_1-1$, because
$m_1\ge c+2$ (Lemma 2.4). \\
$2^{\circ}$ At first, $f_0$ defines a deformation as in the case $1^{\circ}$
and gives a change of $\alpha$-grade $\le\max (m_0,m_0-m_1+2)=m_0$ in the
right domain. But as $f_0$ has the order $\le 1$ by Proposition 3.2c, there
is still the possibility $x^{m_0+1}\mapsto\in\cL$, which gives a change of
$\alpha$-grade by $m_0+1$ in the right domain. As $f_1$ again has the same
form as in the case $1^{\circ}$, it follows that $A\le 2m_0+m_1$.

Because of $s+1\le c$ the inequality \textbf{(*)} follows from
\[
  \frac{1}{2} m_0(m_0+1)-(c+m_0+m_1)>(c-1)^2+2c+2m_0+m_1.
\]
As $m_1\le m_0-(c+2)$, this inequality follows from
$m_0(m_0-9)>2c^2-2c-6$. Because of $m_0\ge 2c+4$ it suffices to show
$(2c+4)(2c-5)>2c^2-2c-6\Leftrightarrow 2c^2>14$. This inequality is fulfilled, if
$c\ge 3$. The cases $c=0,1,2$ will be treated later on (see below). \\
{\it Subcase 2:\/} $\ell_1=x$: Then from Figure 6.2 we conclude that only the
second case of Proposition 3.2 can occur, and we have

\begin{conclusion}\label{5}
 In the case II.1 the inequality (*) is fulfilled except if
  $\ell_0=y,\ell_1=y$ and $c=\{0,1,2\}$.
\end{conclusion}
  
\subsection{}\label{(6.3.2)}We now treat the case II.2 of Proposition 3.2).\\

{\it Subcase 1:} $\ell_1=y$: Figure 6.1 shows that only the case II.2.3 is
possible. Then there are $s+1$ deformations
$M_0\mapsto\in\cL,\cdots,x^sM_0\mapsto\in\cL$ ( and $t+1$ deformations
$M_1\mapsto\in\cL,\cdots,x^tM_1\mapsto\in\cL$, respectively). The changes of
$\alpha$-grade in the columns of $M_0,\cdots,x^sM_0$ (of $M_1,\cdots,x^tM_1$,
respectively) is $m_0,\cdots,m_0+s $ ( and $ m_1-1,m_1,\cdots,m_1+t-1$, respectively).
Here $s$ and $t$ fulfil the inequalities of ( Proposition 3.2d). Thus the
total change of $\alpha$-grade in the right domain fulfils the inequality:
$A\le (s+1)m_0+{s+1\choose 2}+(t+1)m_1+{t\choose 2}-1$, where
\[
  s\le\frac{\kappa}{\rho_2}-m_0(\frac{1}{\rho_2}-\frac{1}{\rho_1+\rho_2})+\frac{1}{\rho_2} \qquad\mbox{and}
\]
\[
  t\le\frac{\kappa}{\rho_2}-m_1(\frac{1}{\rho_2}-\frac{1}{\rho_1+\rho_2}).
\]
{\it Estimate of $s$:\/} Because of $\kappa\le c$ and $m_0\ge 2(c+2)$ one obtains :
\[
s\le\frac{c}{\rho_2}-2(c+2)(\frac{1}{\rho_2}-\frac{1}{\rho_1+\rho_2})+\frac{1}{\rho_2}
\]
\[
  =\frac{c+1}{\rho_2}-2(c+1)(\frac{1}{\rho_2}-\frac{1}{\rho_1+\rho_2})-2(\frac{1}{\rho_2}-\frac{1}{\rho_1+\rho_2})
\]
\[
  =(c+1)  (\frac{2}{\rho_1+\rho_2}-\frac{1}{\rho_2})-2(\frac{1}{\rho_2}-\frac{1}{\rho_1+\rho_2})
\]

We first consider the possibility $s\ge 0$. This implies $\frac{2}{\rho_1+\rho_2}-\frac{1}{\rho_2}>0$, i.e. $\rho_2>\rho_1$.

Let be $f_a(x)=\frac{2}{x+a}-\frac{1}{x}$; i.e. $x$ corresponds to $\rho_2$
and $a$ corresponds to $\rho_1$, therefore $1\le
a<x$. $f'_a(x)=-2/(x+a)^2+1/x^2<0\Leftrightarrow 2x^2>(x+a)^2\Leftrightarrow
\sqrt{2}x>x+a\Leftrightarrow x>a(1+\sqrt{2})$. It follows that $f_a(x)$ has
the maximum for $x=a(1+\sqrt{2})$, and
$f_a(a(1+\sqrt{2}))=\frac{0.171\cdots}{a}$. Therefore $s\le 0.172(c+1)$. \\

{\it Estimate of $t$:} Because of $m_1\ge c+2$ (cf. Lemma 2.4) one has
\begin{align*}
 t&\le\frac{c}{\rho_2}-(c+2)\left(\frac{1}{\rho_2}-\frac{1}{\rho_1+\rho_2}\right)\\
  &
  =\frac{c}{\rho_2}-c\left(\frac{c}{\rho_2}-\frac{1}{\rho_1+\rho_2}\right)
  -2\left(\frac{1}{\rho_2}-\frac{1}{\rho_1+\rho_2}\right) \\
 & =\frac{c}{\rho_1+\rho_2}-2\left(\frac{1}{\rho_2}-\frac{1}{\rho_1+\rho_2}\right)
\end{align*}
Therefore $t\le c/2$, if $\rho_2\le\rho_1$ and $t\le c/3$, if $\rho_2>\rho_1$.

{\it First possibility:\/} $\rho_2\le\rho_1$: Then $s<0$, i.e. there are no deformations defined by $f_0$, and $A\le (c/2+1)m_1+{c/2\choose 2}-1$.

{\it Second possibility:\/} $\rho_1<\rho_2$. Then $s\le 0.172 (c+1), t\le c/3$ and $A\le (0.172c+1,172)m_0+{0.172c+1,172\choose 2}+(c/3+1)m_1+{c/3\choose 2}-1$.

As $m_1\le m_0-(c+2)$ (Lemma 2.8), one obtains the following estimates:

{\it First possibility:\/} $A\le (c/2+1)[m_0-(c+2)]+c^2/8-1\Rightarrow A\le
(0.5c+1)m_0-3/8c^2-2c-3$

{\it Second possibility:\/} $A\le (0.172c+1,172)m_0+0.5(0.172c+1,172)^2+(c/3+1)[m_0-(c+2)]+c^2/18-1\Rightarrow A\le (0.51c+2,172)m_0-0.25c^2-1,46c-2,3$

As we have assumed that $\ell_0=y,\ell_1=y$ it follows that
$\iota(r+1)=\iota(2)=2$. As mentioned above, only the case II.2.3 of 
Proposition 3.2 can occur. If $c=0$, there are no deformations at all, so that
one can assume $c\ge 1$. Replacing $s+1$ by $c$, one sees that it suffices to
show:
\begin{equation}\label{6}
  Q(m_0-1)>c^2+1+A
\end{equation}

{\it First possibility:\/} $A\le (0.5c+1)m_1+{0.5c\choose 2}-1$ (see above). One sees that it suffices to show: \\
$\frac{1}{2}m_0(m_0+1)-(c+m_0+m_1)>c^2+1+(0.5c+1)m_1+\frac{1}{8}c^2-1\;\iff\,\frac{1}{2}m_0(m_0+1)-m_0-(0.5c+2)m_1>\frac{9}{8}c^2+c$.\\
Because of $m_1\le m_0-(c+2)$ this follows from $\frac{1}{2}m_0(m_0+1)-(0.5c+3)m_0+(0.5c+2)(c+2)>\frac{9}{8}c^2+c\;\iff\;m_0(m_0-c-5)>1,25c^2-4c-8$.\\
As $m_0\ge 2c+4$ this follows from
$(2c+4)(c-1)>1,25c^2-4c-8\;\iff\;0.75c^2+6c+4>0$. \\
This inequality is fulfilled for all $c$.

{\it Second possibility:\/} $A\le (0.51c+2,172)m_0-0.25c^2-1,46c-2,3$. Putting this into (6.6), one has to show $m_0(m_0+1)-2(c+m_0+m_1)>1,5c^2-2,92c-2,6+(1.02c+4,344)m_0$.\\
Because of $m_1\le m_0-(c+2)$ this follows from $m_0(m_0+1)-2(2m_0-2)-(1,02c+4,344)m_0>1,5c^2-2,92c-2,6\;\iff\; m_0(m_0-1,02c-7,344)>1,5c^2-2,92c-6,6$.\\
As $m_0\ge 2c+4$ this follows from $(2c+4)(0.98c-3,344)>1,5c^2-2,92c-6,6\;\iff\, 0.46c^2+0.152c-6,776>0$.\\
This inequality is fulfilled if $c>3,676$ and the cases $c=0,1,2,3$ will be
treated later on in (6.3.3).

{\it Subcase 2:\/} $\ell_1=x$. Figure 6.2 shows that in Proposition 3.2 
$f_1=M_1^{\up}$ has to be a monomial and that for $f_0$ one of the two cases
2.1 or 2.3 can occur. We first treat the case 2.1, i.e., one has the
deformation $x^{m_0}\mapsto N_1^{\up}$. The $\alpha$-grade of the column of
$x^{m_0}$ changes by $m_0$ and the $\alpha$-grade of the column of $N_1^{\up}$
changes by 1. Therefore $A\le m_0+1$. There are no further deformations in the
right domain. Now in the inequality (*) one has $s=0$ and $\iota(2)=1$ and
therefore has to show: \\
$\frac{1}{2}m_0(m_0+1)-(c+m_0+m_1)>(c-1)^2+1+m_0+1$. Because of $m_1\le
m_0-(c+2)$ this follows from $m_0(m_0-5)>2(c-1)^2$. As $m_0\ge 2c+4$ this
follows from $(2c+4)(2c-1)>2(c-1)^2\iff 2c^2+10c-6>0$. This inequality is
fulfilled, if $c\ge 1$. In the case $c=0$ one has $(*\itbis)$, i.e., one has
to prove $Q(m_0-1)>A$, i.e. to prove
$\frac{1}{2}m_0(m_0+1)-(c+m_1+m_0)>m_0+1$. \\
One sees that this follows from $m_0(m_0-5)>-2$. As one has $m_0\ge 7$ by (6.2 Auxiliary Lemma 1), this inequality is fulfilled.

\indent Now we treat the case 2.3, that means $f_0=M_0+F$ as in 
Proposition 3.2. The only possible deformations are $M_0\mapsto\in\cL,
xM_0\mapsto\in\cL,\cdots,x^sM_0\mapsto\in\cL$. As $c=0$ implies that $\cI$ is
monomial, we can assume $c>0$.

We again distinguish two cases:

{\it First possibility:\/} $\rho_2\le\rho_1$. Then $s+1=0$, i.e. there is no
deformation at all, and therefore one has $A=0$. Then \textbf{(*)} reads
$\frac{1}{2}m_0(m_0+1)-(c+m_0+m_1)>(c-1)^2$. Because of $m_1\le m_0-(c+2)$
this follows from $m_0(m_0-3)>2c^2-4c-2$. Because of $m_0\geq 2c+4$ it suffices
to show $(2c+4)(2c+1)>2c^2-4c-2\iff 2c^2+14c+6>0$ which is true for all $c$.

{\it Second possibility:\/} $\rho_1<\rho_2$. Then $s\le 0,172(c+1)$, as was
shown above. We have already remarked at the beginning of (6.3.2) that the
$s+1$ deformations noted above give a change of $\alpha$-grade $A\le
(s+1)m_0+{s+1\choose 2}$. It follows that $A\le
(0.172c+1,172)m_0+0.5(0.172c+1,172)^2$. \\
In order to prove \textbf{(*)} it suffices to show
$Q(m_0-1)>(c-1)^2+(0.172c+1,172)[1+m_0+\\0.5\cdot (0.172c+1,172)]$. Because of
$m_1\le m_0-(c+2)$ this follows from
$m_0(m_0+1)-2(2m_0-2)>2(c-1)^2+(0.344c+2,344)(m_0+0.086c+1,586)\iff
m_0(m_0-0.344c-5,344)>2,029584c^2-3,252832c+1,717584$. \\
Now $c\ge 1$ and $m_0\ge 2c+4$ so that it suffices to show $(2c+4)(1,656c-1,344)> 2,029584c^2-3,252832c+1,717584\iff 1,282416c^2+7,188832c-7,093584>0$.\\
This is true if $c\ge 1$. Therefore the inequality \textbf{(*)}is fulfilled in Subcase 2 .

\begin{conclusion}\label{6}
In the case II, if $r=1$, then (*) is fulfilled except in the case II.1 if
  $\ell_0=y,\ell_1=y$ and $c\in \{0,1,2\}$ or in the case II.2 if $\ell_0=y,
  \ell_1=y$ and $c\in \{ 0,1,2,3\}$.
\end{conclusion}

\subsection{\textbf{The cases} $0\le c\le 4$}\label{(6.3.3)}

 \fbox{$c=0$} From Figure 6.3 it follows that $M_0\mapsto N_1$ is the only possible deformation, which is the case II.1. The change of $\alpha$-grade is $A=m_0+m_1-2(m_1-1)=m_0-m_1+2$ and the inequality $(*\itbis)$ reads $Q(m_0-1)>m_0-m_1+2\iff m_0(m_0-3)>4$, and this is true, as $m_0\ge 7$ by (6.2 Auxiliary Lemma 1).\\

\fbox{$c=1$} At first, we note that $\cK$ is monomial. From Figure 6.4 it follows that there are three deformations, which can occur, with the ``total'' change of $\alpha$-grade $B$:\\
1. $M_0\mapsto L, B=m_0+2$\\
2. $M_0\mapsto N_1$ and $M_1\mapsto L,B=(m_0-m_1+2)+(2m_1-m_1-1)+2=m_0+3$\\
3. $M_1\mapsto L,B=m_1+1$.\\
Then $(*\itbis)$ follows from $Q(m_0-1)>m_0+3\iff\frac{1}{2}m_0(m_0+1)-(1+m_0+m_1)>m_0+3$. As $m_1\le m_0-3$ (Lemma 2.8), one sees that it suffices to show $m_0(m_0-5)>2$. This is true, because of $m_0\geq 2(c+2)=6$ (loc.cit.).\\

\fbox{$c=2$} Then $\cK$ is monomial, and the possible deformations are shown
in Figure 6.5 $a,b$. If the case II.1 occurs, then $f_0=m_0+\alpha N_1+\beta
F$ and $f_1=M_1+\gamma L$. One sees that the change of $\alpha$-grade in the
right domain becomes maximal, if the deformations $M_0\mapsto F$ and
$M_1\mapsto L$ occur. Therefore $A\le m_0+2m_1-m_1-1=m_0+m_1-1$. The
inequality (*) follows from $Q(m_0-1)>1^2+2\cdot
2+(m_0+m_1-1)\iff\frac{1}{2}m_0(m_0+1)-(2+m_0+m_1)>4+m_0+m_1\iff
\frac{1}{2}m_0(m_0+1)-2m_0-2m_1>6$. Because of $m_1\le m_0-(c+2)=m_0-4$ it
suffices to show $\frac{1}{2}m_0(m_0+1)-4m_0>-2\iff m_0(m_0-7)>-4$. This is
true as $m_0\geq 2c+4=8$.

\indent If the Figure 6.5a occurs, then only the case $1^{\circ}$ of (6.3.1) is
possible and the change of $\alpha$-grade in the right domain is $A\le
m_0+m_1-1$. One gets the same estimate of $A$ if Figure 6.5a occurs in the
case II.2, because $xF$ and $xL$ are elements of $\cK$ and thus the order of
$f_0$ and of $f_1$ is equal to 0. Then (*) reads $Q(m_0-1)>1^2+2\cdot
2+(m_0+m_1-1)$, and this is true as was shown just before.

\indent If Figure 6.5b occurs, then the order of $f_0$ can be equal to 1, but
it is not possible that $f_1=M_1+\alpha F$, where $\alpha \neq 0$, because
$\cK$ is monomial and $F\notin\cK$, whereas $f_1\in\cK$ by Lemma 2.6.

We want to sharpen the estimate of (6.3). $M_0\mapsto F$ and $xM_0\mapsto L$
yield $A=2m_0+1; M_0\mapsto F,M_1\mapsto L$ yield $A=m_0+m_1-1$ and
$M_0\mapsto N_1,M_1\mapsto L$ yield $A=(m_0-m_1+2)+m_1-1$. Therefore $A\le
2m_0+1$, and (*) reads $\frac{1}{2}m_0(m_0+1)-(2+m_0+m_1)>1^2+2\cdot
2+2m_0+1$. Because of $m_1\le m_0-4$ it suffices to show $m_0(m_0-7)>8$. As
$m_0\ge 2c+4=8$, the case $c=2, m_1=4, m_0=8$ remains (cf. Fig. 6.5c). One
sees that the deformations $M_0\mapsto F,xM_0\mapsto L$ give the maximal
change of ``total'' $\alpha$-grade $B=(8+9)+1+2=20$. As $Q(7)={9\choose
  2}-14=22$ the inequality (!) in (5.3) is fulfilled. \\
  
\fbox{$c=3$} Because of Conclusion 6.6 one has only to consider
deformations of the form $f_0=M_0+F,f_1=M_1+G$, where $F,G\in\cL$ (cf. 
Proposition 3.2). From the computations in (6.3.2) it follows that the order of
$f_0$ is $\le 0.172(3+1)<1$ and the order of $f_1$ is $\le 1$. Therefore the
change of $\alpha$-grade in the right domain is $A\le m_0+(m_1-1)+m_1$. If one
replaces $m_1$ by $m_0-5\ge m_1$ in the inequality (*) of (5.3), then one has
to show: \\
$\frac{1}{2}m_0(m_0+1)-(2m_0-2)>2^2+3\cdot 2+m_0+2(m_0-5)-1\iff
m_0(m_0-9)>-6$. This is true because of $m_0\ge 2\cdot (3+2)=10$ (cf. 
Lemma 2.8).

\begin{conclusion}\label{7}
  Even in the cases $c\in\{0,1,2,3\}$ the inequalities \textbf{(*)} and
  \textbf{(!)} respectively, are fulfilled.\
\end{conclusion}

Summarizing the Conclusions 6.1--6.7, we obtain:
\begin{proposition}\label{5}

If $\rho_1>0,\rho_2>0, \cI$ has the type $r\ge 1$ and has $y$-standard form, then the inequality (!) of Section (4.3) is fulfilled.
\end{proposition}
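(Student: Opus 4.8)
The plan is to deduce Proposition~6.5 by assembling the case analysis that has already been carried out in Chapters~5 and~6. Recall that the target inequality $(\textbf{!})$, namely $Q(m_0-1)+\min\text{-}\alpha\text{-grade}(\cI)>\max\text{-}\alpha\text{-grade}(\cI)$, has been reduced in Section~5.3 to the purely numerical inequalities $(*)$, i.e. $Q(m_0-1)>(c-1)^2+(s+1)\iota(r+1)+A$, and $(*\itbis)$, i.e. $Q(m_0-1)>A$, where $A=\max\text{-}\alpha\text{-grade}(\cP\cap\cR\cB)-\min\text{-}\alpha\text{-grade}(\cP\cap\cR\cB)$. That reduction is \emph{independent} of the sign of $\rho_1$, so it applies verbatim in Main Case~II ($\rho_0<0,\rho_1>0,\rho_2>0$). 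Hence it suffices to verify $(*)$ or $(*\itbis)$ for every ideal $\cI$ of $y$-standard form and type $r\ge 1$ in Case~II.

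First I would split according to $r$. For $r\ge 2$, the estimates for $A$ are furnished by Conclusion~6.1 (subcase II.1: $A\le 4m_0+r(r+3)-c-1$) and Conclusion~6.2 (subcase II.2), and $(*)$ has already been checked in Conclusion~6.3 (Case~II.1, all $r\ge 2$) and Conclusion~6.4 (Case~II.2, all $r\ge 2$), using the structural bounds $m_0\ge 2^r(c+2)$, $\sum_{i=1}^r m_i\le m_0-(c+2)$ from Lemma~2.8 together with Auxiliary Lemma~1 of Section~6.2. For $r=1$, the analysis in Section~6.3 (Conclusions~6.5 and~6.6) reduces the remaining work to the small cases $\ell_0=y,\ell_1=y$ with $c\in\{0,1,2\}$ (Case~II.1) or $c\in\{0,1,2,3\}$ (Case~II.2); these are then disposed of one by one in Section~6.3.3, with Conclusion~6.7 recording that $(*)$ (or $(\textbf{!})$ directly, as in the exceptional configuration $c=2,m_1=4,m_0=8$) holds in each. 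So the proof of the Proposition is simply: collect Conclusions~6.3, 6.4, 6.5, 6.6, 6.7, observe that together they establish $(*)$ or $(*\itbis)$ in every case $r\ge 1$ of Case~II, and invoke the reduction of Section~5.3 (the ``N.B.'' after Proposition~5.1) to pass from $(*)$/$(*\itbis)$ to $(\textbf{!})$.

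One subtlety I would make explicit: in a handful of configurations $(*)$ as stated fails for the very smallest values of $(c,m_0,m_1)$ compatible with the hypotheses, and there one must appeal instead to $(\textbf{!})$ with the genuine values of $Q(m_0-1)$ and of $\max$- and $\min$-$\alpha$-grade rather than to the slack numerical surrogate $(*)$ — this is exactly what was done for $c=2,m_1=4,m_0=8$ in Section~6.3.3, where $Q(7)=\binom{9}{2}-14=22$ beats the total change $B=20$. I would therefore phrase the conclusion as: $(\textbf{!})$ holds in all cases, either because $(*)$ or $(*\itbis)$ holds (which implies $(\textbf{!})$), or by the direct verification recorded in Conclusion~6.7.

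The proof itself is short — it is a bookkeeping argument — but the main obstacle is organisational: one must be confident that Conclusions~6.1 through~6.7 genuinely exhaust the possibilities in Case~II, i.e. that every ideal of $y$-standard form with $r\ge1$ falls under one of the forms catalogued in Proposition~3.2 (which dictates the shapes of the $f_i$ and hence the admissible deformations and the resulting bounds on $A$), and that the reduction $(*)\Rightarrow(\textbf{!})$ from Section~5.3 is legitimately sign-of-$\rho_1$-independent. Both points are asserted in the text (Proposition~3.2 covers Case~II after interchanging $x\leftrightarrow y$ and up$\leftrightarrow$down, and Section~5.3 explicitly says ``These statements are independent of $\rho_1<0$ or $\rho_1>0$''), so the write-up consists in citing them and listing the Conclusions; no new estimate is needed.

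\begin{proof}
  By the reduction carried out in Section~5.3 (cf.\ the \textbf{N.B.} following the statement of inequality $(*)$), it is independent of the sign of $\rho_1$ that inequality $(\textbf{!})$ of Section~4.3 follows once one knows $(*)$, namely
  \[
    Q(m_0-1)>(c-1)^2+(s+1)\iota(r+1)+A,
  \]
  or, when $c=0$, the weaker $(*\itbis)$, namely $Q(m_0-1)>A$. Since $\cI$ is assumed of $y$-standard form and of type $r\ge 1$, and $\rho_2>0$, $\rho_1>0$, we are in Main Case~II, and the possible shapes of the generating forms $f_0,\dots,f_r$ are those listed in Proposition~3.2. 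It therefore suffices to verify $(*)$ or $(*\itbis)$ in all such cases.

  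For $r\ge 2$: in the case II.1 the bound $A\le 4m_0+r(r+3)-c-1$ of Conclusion~6.1 together with $m_0\ge 2^r(c+2)$ and $\sum_{i=1}^r m_i\le m_0-(c+2)$ (Lemma~2.8) and Auxiliary Lemma~1 of Section~6.2 gives $(*)$, as shown in Conclusion~6.3; in the case II.2 the bound of Conclusion~6.2 gives $(*)$ as shown in Conclusion~6.4. Hence $(*)$ holds for every $r\ge 2$ in Case~II.

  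For $r=1$: by Conclusion~6.5 the inequality $(*)$ holds in the case II.1 unless $\ell_0=y$, $\ell_1=y$ and $c\in\{0,1,2\}$, and by Conclusion~6.6 it holds in the case II.2 unless $\ell_0=y$, $\ell_1=y$ and $c\in\{0,1,2,3\}$. In each of these finitely many exceptional configurations the verification is carried out in Section~6.3.3: either $(*)$ (equivalently $(*\itbis)$ when $c=0$) is established directly, or, in the configuration $c=2$, $m_1=4$, $m_0=8$, one checks $(\textbf{!})$ itself by computing $Q(7)=\binom{9}{2}-14=22$ and comparing with the maximal total change of $\alpha$-grade $B=20$. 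This is summarised in Conclusion~6.7.

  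Collecting these statements, $(*)$ or $(*\itbis)$ — and in the one remaining configuration $(\textbf{!})$ directly — holds for every ideal $\cI$ of $y$-standard form and type $r\ge 1$ in Case~II. By the reduction of Section~5.3, $(\textbf{!})$ is fulfilled in all these cases.
\end{proof}
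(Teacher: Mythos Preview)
Your proposal is correct and follows essentially the same approach as the paper: the paper's own ``proof'' is simply the one-line summary ``Summarizing the Conclusions 6.1--6.7, we obtain'' preceding the proposition, and your write-up is a more explicit unpacking of exactly that collection (splitting $r\ge 2$ versus $r=1$, citing the relevant Conclusions, and noting the direct verification of $(\textbf{!})$ in the lone configuration $c=2,m_1=4,m_0=8$). Nothing is missing.
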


\newpage
\begin{minipage}{15cm*\real{0.7}} \label{fig:6.1}
\centering Fig. 6.1\\ 
\tikzstyle{help lines}=[gray,very thin]
\begin{tikzpicture}[scale=0.7]
\filldraw[fill=gray,opacity=0.5] (6,2) -- (7,2) -- (7,7) -- (6,7) -- cycle;
 \draw[style=help lines]  grid (15,10);
 \draw[thick] (0,0) -- (0,10); 
 \draw[thick] (0,0) -- (15,0);
 \draw[<->] (0,-2) -- (5.8,-2); 
 \draw[<-]  (6.2,-2) -- (15,-2); 
 \draw (2,-2) node[fill=white] {$\cL\cB$};
 \draw (8,-2) node[fill=white] {$\cR\cB$};
 \draw[->] (8.5,5.2) -- (6.5,4.2);  
 {
 \pgftransformxshift{0.5cm}
 \pgftransformyshift{-0.55cm}
  \foreach \x in {0,1,2,3,4,5,6,7,10,11,12,13} 
   \draw[anchor=base] (\x,0) node {$\scriptstyle{\x}$}; 
 \draw[anchor=base] (14,0) node {$m_0$};
 \draw[anchor=base] (5,-1) node {$c$};
 \draw[anchor=base] (6,-1) node {$c{+}1$};
 \draw[anchor=base] (8,0) node {$\scriptstyle{m_1}$};
 \draw[anchor=base] (9,0) node {$\scriptstyle{m_1{+}1}$};
 }
 {
 \pgftransformxshift{0.5cm}
 \draw (8,1) node[above] {$N_1$};
 \draw (9,1) node[above] {$M_1$};
 \draw (14,0) node[above] {$M_0$};
 \draw[anchor=south west] (8,5) node[fill=white,inner sep=2pt] {monomials};
 }
  \draw[gray, ultra thick] (6,-2.5) -- (6,10);  
  \draw[\Red,ultra thick] (14,0) -- (14,1) -- (9,1) -- (9,2) -- (6,2) -- (6,3)
  -- (4,3) -- (4,4) -- (5,4) -- (5,6) -- (6,6) -- (6,7) -- (7,7) -- (7,8) --
  (8,8) -- (8,9) -- (9,9);
  \draw[\Black, ultra thick] (0,1) -- (9,10);  
\end{tikzpicture}
\end{minipage}

\vspace{1.5cm}
\begin{minipage}{15cm*\real{0.7}} \label{fig:6.2}
\centering Fig. 6.2\\ 
\tikzstyle{help lines}=[gray,very thin]
\begin{tikzpicture}[scale=0.7]
\filldraw[fill=gray,opacity=0.5] (6,1) -- (7,1) -- (7,6) -- (6,6) -- cycle;
 \draw[style=help lines]  grid (15,11);
 \draw[thick] (0,0) -- (0,11); 
 \draw[thick] (0,0) -- (15,0);
 \draw[<->] (0,-1.5) -- (5.8,-1.5); 
 \draw[<-]  (6.2,-1.5) -- (15,-1.5); 
 \draw (2,-1.5) node[fill=white] {$\cL\cB$};
 \draw (8,-1.5) node[fill=white] {$\cR\cB$};
 \draw[->] (9.5,4.2) -- (6.5,3.2);  
 {
 \pgftransformxshift{0.5cm}
 \pgftransformyshift{-0.55cm}
  \foreach \x in {0,1}
   \draw[anchor=base] (\x,0) node {$\x$}; 
  \foreach \x in {2,3,4,7,10,11,12,13}
   \draw[anchor=base] (\x,0) node {$\dots$}; 
 \draw[anchor=base] (14,0) node {$m_0$};
 \draw[anchor=base] (5,0) node {$c{+}1$};
 \draw[anchor=base] (6,0) node {$c{+}2$};
 \draw[anchor=base] (8,0) node {$\scriptstyle{m_1}$};
 \draw[anchor=base] (9,0) node {$\scriptstyle{m_1{+}1}$};
 }
{
 \pgftransformxshift{0.5cm}
 \draw (8,8) node[above] {$N_1$};
 \draw (9,9) node[above] {$M_1$};
 \draw[anchor=south west] (9,4) node[fill=white,inner sep=0pt] {monomials};
}
  \draw[gray, ultra thick] (6,-2) -- (6,11);  
  \draw[\Red,ultra thick] (14,0) -- (14,1) -- (6,1) -- (6,2) -- (4,2) -- (4,2)
  -- (4,3) -- (5,3) -- (5,5) -- (6,5) -- (6,6) -- (7,6) -- (7,7) -- (8,7) --
  (8,8) -- (9,8) -- (9,10) -- (10,10);
  \draw[\Black, ultra thick] (0,1) -- (10,11);  
\end{tikzpicture}
\end{minipage}
\newpage
\begin{minipage}{9cm*\real{0.7}} \label{fig:6.3}
\centering Fig. 6.3\\ 
\tikzstyle{help lines}=[gray,very thin]
\begin{tikzpicture}[scale=0.7]
 \draw[style=help lines]  grid (9,7);
 \draw[thick] (0,0) -- (0,7); 
 \draw[thick] (0,0) -- (9,0);
 {
 \pgftransformxshift{0.5cm}
 \pgftransformyshift{-0.55cm}
  \foreach \x in {0,1,2,3,4} \draw[anchor=base] (\x,0) node {$\x$}; 
 \draw[anchor=base] (5,0) node {$\scriptstyle{m_1}$};
 \draw[anchor=base] (6,0) node {$\scriptstyle{m_1{+}1}$};
 \draw[anchor=base] (7,0) node {$\dots$};
 \draw[anchor=base] (8,0) node {$m_0$};
 }
 {
 \pgftransformxshift{0.5cm}
\draw (8,0) node[above] {$M_0$};
\draw (5,1) node[above] {$N_1$};
\draw (6,1) node[above] {$M_1$};
 }
 \draw[\Red,ultra thick] (8,0) -- (8,1) -- (6,1) -- (6,2) -- 
(2,2) -- (2,3) -- (3,3) -- (3,4) -- (4,4) -- (4,5) -- (5,5) -- (5,6) -- (6,6);
  \draw[\Black, ultra thick] (0,1) -- (6,7);  
\end{tikzpicture}
\end{minipage}

\begin{minipage}{9cm*\real{0.7}} \label{fig:6.4}
\centering Fig. 6.4\\ 
\tikzstyle{help lines}=[gray,very thin]
\begin{tikzpicture}[scale=0.7]
 \draw[style=help lines]  grid (9,7);
 \draw[thick] (0,0) -- (0,7); 
 \draw[thick] (0,0) -- (9,0);
 {
 \pgftransformxshift{0.5cm}
 \pgftransformyshift{-0.55cm}
  \foreach \x in {0,1,2} \draw[anchor=base] (\x,0) node {$\x$}; 
 \foreach \x in {3,4,5,7} \draw[anchor=base] (\x,0) node {$\dots$};
 \draw[anchor=base] (6,0) node {$m_1{+}1$};
 \draw[anchor=base] (8,0) node {$m_0$};
 }
 {
 \pgftransformxshift{0.5cm}
\draw (2,2) node[above] {$L$};
\draw (5,1) node[above] {$N_1$};
\draw (6,1) node[above] {$M_1$};
 }
 \draw[\Red,ultra thick] (8,0) -- (8,1) -- (6,1) -- (6,2) -- 
(3,2) -- (3,4) -- (4,4) -- (4,5) -- (5,5) -- (5,6) -- (6,6);
  \draw[\Black, ultra thick] (0,1) -- (6,7);  
\end{tikzpicture}
\end{minipage}

\begin{minipage}{9cm*\real{0.7}} \label{fig:6.5a}
\centering Fig. 6.5a\\ 
\tikzstyle{help lines}=[gray,very thin]
\begin{tikzpicture}[scale=0.7]
 \draw[style=help lines]  grid (9,7);
 \draw[thick] (0,0) -- (0,7); 
 \draw[thick] (0,0) -- (9,0);
 {
 \pgftransformxshift{0.5cm}
 \pgftransformyshift{-0.55cm}
  \foreach \x in {0,1,2,3,4,5} \draw[anchor=base] (\x,0) node {$\x$}; 
 \foreach \x in {7} \draw[anchor=base] (\x,0) node {$\dots$};
 \draw[anchor=base] (6,0) node {$m_1{+}1$};
 \draw[anchor=base] (7,0) node {$\dots$};
 \draw[anchor=base] (8,0) node {$m_0$};
 }
 {
 \pgftransformxshift{0.5cm}
\draw (2,2) node[above] {$F$};
\draw (3,3) node[above] {$L$};
\draw (5,1) node[above] {$N_1$};
\draw (6,1) node[above] {$M_1$};
 }
 \draw[\Red,ultra thick] (8,0) -- (8,1) -- (6,1) -- (6,2) -- (3,2) -- 
 (3,3) -- (4,3) -- (4,5) -- (5,5) -- (5,6) -- (6,6);
  \draw[\Black, ultra thick] (0,1) -- (6,7);  
\end{tikzpicture}
\end{minipage}

\begin{minipage}{15cm}
\begin{minipage}[b]{9cm*\real{0.7}} \label{fig:6.5b}
\centering Fig. 6.5b\\ 
\tikzstyle{help lines}=[gray,very thin]
\begin{tikzpicture}[scale=0.7]
 \draw[style=help lines]  grid (9,7);
 \draw[thick] (0,0) -- (0,7); 
 \draw[thick] (0,0) -- (9,0);
 {
 \pgftransformxshift{0.5cm}
 \pgftransformyshift{-0.55cm}
  \foreach \x in {0,1,2,3,4,5,6,7,8} \draw[anchor=base] (\x,0) node {$\x$}; 
 }
 {
 \pgftransformxshift{0.5cm}
\draw (2,2) node[above] {$F$};
\draw (3,2) node[above] {$L$};
\draw (8,0) node[above] {$M_0$};
\draw (5,1) node[above] {$N_1$};
\draw (6,1) node[above] {$M_1$};

 }
 \draw[\Red,ultra thick] (8,0) -- (8,1) -- (6,1) -- (6,2) -- 
(4,2) -- (4,3) -- (3,3) -- (3,4) -- (4,4) -- (4,5) -- (5,5) -- (5,6) -- (6,6);
  \draw[\Black, ultra thick] (0,1) -- (6,7);  
\end{tikzpicture}
\end{minipage}
\hfill
\begin{minipage}[b]{9cm*\real{0.7}} \label{fig:6.5c}
\centering Fig. 6.5c\\ 
\tikzstyle{help lines}=[gray,very thin]
\begin{tikzpicture}[scale=0.7]
 \draw[style=help lines]  grid (9,7);
 \draw[thick] (0,0) -- (0,7); 
 \draw[thick] (0,0) -- (9,0);
 {
 \pgftransformxshift{0.5cm}
 \pgftransformyshift{-0.55cm}
  \foreach \x in {0,1,2,3,4,5,6,7,8} \draw[anchor=base] (\x,0) node {$\x$}; 
 }
 {
 \pgftransformxshift{0.5cm}
\draw (2,2) node[above] {$F$};
\draw (3,2) node[above] {$L$};
\draw (8,0) node[above] {$M_0$};
\draw (4,1) node[above] {$N_1$};
\draw (5,1) node[above] {$M_1$};

 }
 \draw[\Red,ultra thick] (8,0) -- (8,1) -- (5,1) -- (5,2) -- 
(4,2) -- (4,3) -- (3,3) -- (3,4) -- (4,4) -- (4,5) -- (5,5) -- (5,6) -- (6,6);
  \draw[\Black, ultra thick] (0,1) -- (6,7);  
\end{tikzpicture}
\end{minipage}  
\end{minipage}


\chapter{Estimates of the $\alpha$-grade in the case $\rho_1>0, \rho_2>0$ and  $r=0$.}\label{7}

Let $\cI$ be an ideal of type $r=0$ with $y$-standard form. Then one has by
definition: $\cI=y\cK(-1)+f\cO_{\mP^2}(-m)$, colength
$(\cK)=c,\reg(\cK)=\kappa$, colength $(\cI)=d=c+m$, $\cI$ and $\cK$ invariant
under $G=\Gamma\cdot T(\rho)$, and if the Hilbert functions of $\cK$ and $\cI$
are $\psi$ and $\varphi$, respectively, then $g^*(\varphi)>g(d)$. By
definition, $\cK$ has the type $(-1)$, that means, the following cases can
occur: \\
{\it 1st case:\/} $g^*(\psi)\le g(c)$, where $c\ge 5$ by convention.\\
{\it 2nd case:\/} $0\le c\le 4$.

As usual the aim is to prove \textbf{(*)} in (5.3), and we write $m$ and $f$ instead of $m_0$ and $f_0$.

\section{The case $g^*(\psi)\le g(c)$.}\label{7.1}

\subsection{}\label{7.1.1}
We first assume that there is no deformation into $\cL$, at all. That means
$f=x^m$ (cf. Fig. 7.1), $A=0$ and one has to prove the inequality \textbf{(*)}, i.e.
$Q(m-1)>(c-1)^2$. Because of $Q(m-1)=\frac{1}{2}m(m+1)-(c+m)$, this is
equivalent to $m(m-1)>2c^2-2c+2$. As $m\ge 2c+1$ by Corollary 2.2, it
suffices to show $(2c+1)\cdot 2c>2c^2-2c+2\iff c^2+2c-1>0$, and this is true
for $c\ge 5$.

\subsection{}\label{7.1.2}
We now assume that there are deformations
$M_0=x^m\mapsto\in\cL,\cdots,x^sM_0\mapsto\in\cL$. Then by Proposition 3.2d 
 $s\le\frac{\kappa}{\rho_2}-m\left(
  \frac{1}{\rho_2}-\frac{1}{\rho_1+\rho_2}\right)$.

\noindent {\bf Auxiliary lemma 1.} If $g^*(\psi)\le g(c)$ and if there is the deformation $M_0\mapsto\in\cL$, then $\rho_1<\rho_2$.

\begin{proof}
  Because $M_0X^{\nu\rho}=L$ is a monomial in $\cL$, the slope of the line
  connecting $M_0$ and $L$ is $\le$ the slope of the line connecting $M_0$ and
  $L_0$ (see Figure 7.1, take into account the inclusion (3.1) in 3.3 and the
  following remark concerning the vice-monomial of $f$.) It follows that
  $\rho_1/\rho_2\le\kappa / (m-\kappa)$. Now $\kappa / (m-\kappa)<1$ is equivalent
  to $2\kappa<m$, and because of $\kappa\le c$ and Corollary 2.2 this is
  true.
\end{proof}

\noindent {\bf Auxiliary lemma 2.} If $g^*(\psi)\le g(c)$, then
$\kappa=\reg(\cK)\le c-2$.

\begin{proof}
  One has $\kappa\le c$ and from $\kappa=c$ it follows that
  $g^*(\psi)=(c-1)(c-2)/2>g(c)$ (cf. [T1], p. 92 and Anhang 2e, p. 96). By
  considering the figures 7.2a and 7.2b one can convince oneself that
  $\kappa=c-1$ implies $g^*(\psi)>g(c)$.
\end{proof}

It is clear that the change of $\alpha$-grade in the right domain caused by
the deformations mentioned above is equal to
\begin{equation}\label{1}
  A=m+(m+1)+\cdots+(m+s)=(s+1)m+{s+1\choose 2}.
\end{equation}
Because of $\kappa\le c-2$ and $m\geq 2c+1$ (Corollary 2.2) one has
\[
s\le \frac{c-2}{\rho_2} - (2c+1) \left(
  \frac{1}{\rho_2}-\frac{1}{\rho_1+\rho_2}\right).
\]
It follows that 
\[
s\le
\frac{c-2}{\rho_2}-2(c-2)\left(\frac{1}{\rho_2}-\frac{1}{\rho_2+\rho_2}\right)
-5\left(\frac{1}{\rho_2}-\frac{1}{\rho_2+\rho_2}\right)
\]
and therefore
\begin{equation}\label{2}
  s<(c-2) \left[ \frac{2}{\rho_1+\rho_2}-\frac{1}{\rho_2}\right]
\end{equation}

\noindent {\bf Auxiliary lemma 3.} If $g^*(\psi)\le g(c)$ and if there is a
deformation $M_0\mapsto\in\cL$, then $s<\frac{1}{6}(c-2)$.

\begin{proof}
  As $1\le\rho_1<\rho_2$ (cf. Auxiliary lemma 1), one has to find an upper
  bound for the function $f:[ \N -\{0\}]\times [\N -
  \{0\}]\rightarrow\R, f(x,y):=\frac{2}{x+y}-\frac{1}{y}$, on the domain
  $1\le x<y$. One can convince oneself that $f(1,2)$ is the maximal value.
\end{proof}

In the case $r=0$ the inequality  \textbf{(*)} reads $\frac{1}{2}m(m+1)-(c+m)>(c-1)^2+(s+1)+A$.\\
Putting in the expression (7.1), one gets
$\frac{1}{2}m(m+1)-m>c^2-c+s+2+(s+1)m+{s+1\choose 2}\iff
m(m+1)-2m>2c^2-2c+2(s+2)+2(s+1)m+s(s+1)\iff
m(m-1)>2c^2-2c+2(s+2)+2(s+1)m+s(s+1)$. Because of the Auxiliary lemma 3 it
suffices to show:
$m\left(m-\frac{1}{3}c-\frac{7}{3}\right)>\frac{73}{36}c^2-\frac{29}{18}c+\frac{28}{9}$.
As $m\ge 2c+1$ (Corollary 2.2) this follows from:
$(2c+1)\left(\frac{5}{3}c-\frac{4}{3}\right)>\frac{73}{36}c^2-\frac{29}{18}c+\frac{28}{9}\iff\frac{47}{36}c^2+\frac{11}{18}c-\frac{40}{9}>0$.
As this is true if $c\ge 2$, we have proven \textbf{(*)} in the 1st case.

\section{The cases $0\le c\le 4$.}\label{7.2}
If $c=0$, then $\cI=(y,x^m)$ is monomial and (*) is true, obviously.
Unfortunately, one has to consider the cases $c=1,\cdots,4$ separately. The
ideal $\cK$ can have the Hilbert functions noted in (2.2.2).

\subsection{\fbox{$c=1$}}\label{7.2.1}
From Figure 7.3 we see that $\deg C_0=2+\cdots+m-1={m\choose 2}-1$ and $\deg
C_{\infty}=1+\cdots+m={m+1\choose 2}$. The deformation of $C_0$ into
$C_{\infty}$ is defined by $f_0=x^m+yz^{m-1}$, and this is a simple
deformation in the sense of ([T1], 1.3).

One has $\alpha$-grade $(\cI)=\deg(C)=\max (\deg C_0, \deg C_{\infty}$)
(loc.cit. Hilfssatz 2, p. 12). In order to prove \textbf{(*)} one has to show:
$Q(m-1)>\deg C_{\infty}-\deg C_0=m+1\iff\frac{1}{2}m(m+1)-(1+m)>m+1\iff
m^2-3m-4>0$. This is true if $m\ge 5$.

\begin{conclusion}\label{1}
If $c=1$, then \textbf{(*)} is fulfilled except in the case $c=1, m=4$, which will be treated in (9.3).\hfill \qed
\end{conclusion} 

\subsection{\fbox{$c=2$}}\label{7.2.2}
There are two subcases, which are shown in Fig. 7.4a and Fig. 7.4b. Here only
simple deformations occur, again.

{\it 1st subcase\/} (Fig. 7.4a). One gets $\deg C_0=1+3+4+\cdots+m-1={m\choose 2}-2; \deg C_{\infty}=2+3+\cdots+m={m+1\choose 2}-1$. The same argumentation as in (7.2.1) shows that one has to show $Q(m-1)>m+1$, i.e. $m^2-3m-6>0$. This is fulfilled if $m\ge 5$.\\
In the case $m=4$, by the formula of Remark 2.2, it follows that
$g^*(\varphi)=4$. As $g(6)=4$ and $g^*(\varphi)>g(d)$ by assumption, the case
$m=4$ cannot occur.

{\it 2nd subcase\/} (Fig. 7.4b). $\deg C_0=2+\cdots+m-1, \deg
C_{\infty}=2+\cdots+m; Q(m-1)>\deg C_{\infty}-\deg C_0=m\iff m^2-3m-4>0$. This
is true if $m\ge 5$, and the case $m=4$ cannot occur.

\begin{conclusion}\label{2}
If $c=2$, then \textbf{(*)} is fulfilled.\hfill $\Box$
\end{conclusion}

\subsection{\fbox{$c=3$}}\label{7.2.3}
Here 5 deformations are possible, which are all simple (Fig. 7.5a-7.5e).

{\it 1st subcase\/} (Fig. 7.5a). $\deg C_0=3+\cdots+m-1={m\choose 2}-3; \deg
C_{\infty}=2+4+4+\cdots+m-1={m\choose 2}$. $Q(m-1)>3\iff m^2-m>12$. This is
true because of $m\ge c+2=5$.

{\it 2nd subcase\/} (Fig. 7.5b). $\deg C_0=3+\cdots +m-1; \deg
C_{\infty}=1+3+4+\cdots+m$. $Q(m-1)>\deg C_{\infty}-\deg C_0=m+1\iff
m^2-3m-8>0$. This is true because of $m\ge 5$.

{\it 3rd subcase\/} (Fig. 7.5c). $\deg C_0=3+\cdots+m-1; \deg
C_{\infty}=2+\cdots+m$. $Q(m-1)>\deg C_{\infty}-\deg C_0=m+2\iff
m^2-3m-10>0$. This is fulfilled if $m\ge 6$. As $m\ge c+2=5$, the case $m=5$
remains. But if $m=5$, then from Remark 2.2 it follows that
$g^*(\varphi)=8<g(8)=9$, which contradicts the assumption.

{\it 4th subcase\/} (Fig. 7.5d). $\deg C_0=1+2+4+\cdots+m-1;\deg C_{\infty}=1+3+\cdots+m; Q(m-1)>\deg C_{\infty}-\deg C_0=m+1\iff m^2-3m-8>0$. This is fulfilled if $m\ge 5$.

{\it 5th subcase\/} (Fig. 7.5e). $\deg C_0=2+4+4+\cdots+m-1; \deg
C_{\infty}=2+\cdots+m; Q(m-1)>\deg C_{\infty}-\deg C_0=m-1\iff m^2-3m>4$.
This is fulfilled if $m\ge 5$.

\begin{conclusion}\label{3}
If $c=3$, then \textbf{(*)} is fulfilled.
\end{conclusion}

\subsection{\fbox{$c=4$}}\label{7.2.4}
There are 8 subcases which are shown in Fig. 7.6$a_1$--7.6e. At first we make
the additional assumption that $m\ge 7$. The slope $\rho_1/\rho_2$ of the line
connecting the monomials denoted by + and - is equal to $1/2$ in Fig.
7.6$b_1$, is $\le 1/4$ in Fig. 7.6$b_2$ and is $\le 2/5$ in Fig. 7.6$b_3$. It
follows that the deformations of Fig. 7.6$b_1$ and of Fig. 7.6$b_2$ ( respectively the 
deformations of Fig. 7.6$b_1$ and of Fig. 7.6$b_3$ ) cannot occur
simultaneously, that means, we have only simple deformations.

{\it 1st subcase\/} (Fig. 7.6$a_1$). $\deg C_0=2+4+\cdots+m-1; \deg
C_{\infty}=1+2+4+\cdots+m; Q(m-1)>\deg C_{\infty}-\deg C_0=m+1\iff
m^2-3m-10>0$. This is fulfilled if $m\ge c+2=6$.

{\it 2nd subcase\/} (Fig. 7.6$a_2$). $\deg C_0=2+4+\cdots+m-1; \deg
C_{\infty}=3+4+\cdots+m; \deg C_{\infty}-\deg C_0=m+1$, etc, as in the first
subcase.

{\it 3rd subcase\/} (Fig 7.6$b_1$). $\deg C_0=4+4+5+\cdots+m-1={m\choose 2}-2;
\deg C_{\infty}=2+4+6+5+\cdots+m-1; Q(m-1)>\deg C_{\infty}-\deg C_0=4\iff
m(m-1)>16$. This is fulfilled if $m\ge 5$.

{\it 4th subcase\/} (Fig. 7.6$b_2$). $\deg C_0=4+4+5+\cdots+(m-1); \deg
C_{\infty}=3+\cdots+m; Q(m-1)>\deg C_{\infty}-\deg C_0=m-1\iff m^2-3m-6>0$.
This is fulfilled if $m\ge 5$.

{\it 5th subcase\/} (Fig. 7.6$b_3$). $\deg C_0=4+4+5+\cdots+m-1; \deg
C_{\infty}=2+4+4+\cdots+m; Q(m-1)>m+2\iff m^2-3m-12>0$. This is fulfilled if
$m\ge 6$.

{\it 6th subcase\/} (Fig 7.6c). $\deg C_0=3+\cdots+m-1; \deg C_{\infty}=3+\cdots+m; Q(m-1)>m\iff m^2-3m-8>0$. This is fulfilled if $m\ge 5$.

{\it 7th subcase\/} (Fig. 7.6d). $\deg C_0=1+2+3+5+\ldots+m-1; \deg C_{\infty}=1+2+4+\cdots+m; Q(m-1)>m+1\iff m^2-3m-10>0$. This is fulfilled if $m\ge 6$.

{\it 8th subcase\/} (Fig. 7.6e). $\deg C_0=2+4+6+5\cdots+m-1={m\choose 2}+2;
\deg C_{\infty}=2+4+4+\cdots+m={m+1\choose 2}; Q(m-1)>m-2\iff m^2-3m-4>0$.
This is fulfilled, if $m\ge 5$.

As $m\ge c+2=6$, the case $m=6$ remains. All inequalities are fulfilled if
$m\ge 6$; therefore the possibility remains that in Fig. 7.6$b_1$ and Fig.
7.6$b_3$, if $m=6$ and $\rho=(-3,1,2)$, the deformations
$f=(x^3y+ay^2z^2)\cdot z^2$ and $g=x^6+by^2z^4$ simultaneously occur. Now
$f\wedge g=x^3yz^2\wedge x^6+bx^3yz^2\wedge y^2z^4+ay^2z^4\wedge x^6$ and
therefore\\ $\max-\alpha$-grade $(\cI)\le \max (\deg C_0$ in Fig. 7.6$b_1$,
$\deg C_{\infty}$ in Fig. 7.6$b_3$, $\deg C_{\infty}$ in Fig. 7.6$b_1$) $=\max
\left({m\choose 2}-2,{m+1\choose 2},{m\choose 2}+2\right)$ and
$min-\alpha-grade(\cI)$ =$\min \left({m\choose 2}-2,{m+1\choose 2},{m\choose
    2} +2\right)$. Now ${m+1\choose 2}>{m\choose 2}+2$, if $m\ge 3$, and
therefore $\max-\alpha-grade (\cI)={m+1\choose 2}$ , and\\
$min-\alpha-grade (\cI)={m\choose 2}-2$. Thus \textbf{(*)} follows from
$Q(m-1)>{m+1\choose 2}-{m\choose 2}+2\iff m^2-3m-12>0$, and this is true if
$m\ge 6$.

\begin{conclusion}\label{4} 
If $c=4$, then \textbf{(*)} is fulfilled.\hfill $\Box$
\end{conclusion}

\subsection{}\label{7.2.5}
Summarizing the Conclusions 7.1--7.4 we obtain

\begin{proposition}\label{6}
If $\rho_1>0, \rho_2>0$ and $r=0$, then \textbf{(*)} is fulfilled except in the case $c=1, m=4,\cI=(y(x,y),x^4+yz^3)$.\hfill $\Box$
\end{proposition}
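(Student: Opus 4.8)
The plan is to split according to the two possibilities for the bottom ideal $\cK$ allowed by its being of type $(-1)$: either $g^*(\psi)\le g(c)$, which forces $c\ge 5$, or $0\le c\le 4$. In either regime the decomposition $\cI=y\cK(-1)+f\cO_{\mP^2}(-m)$ of Lemma 2.6 is at hand, and since $r=0$ one has $\iota(1)=1$, so the only instance of \textbf{(*)} that can occur is
\[
  Q(m-1)>(c-1)^2+(s+1)+A,
\]
where $A$ is the quantity of Definition 5.1 and $s+1$ counts the deformations moving a monomial of $f$ into $\cL$. Hence the whole task is to bound $A$ and $s$ and then substitute and estimate.

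First I would handle $g^*(\psi)\le g(c)$, so $c\ge 5$; here Corollary 2.2 supplies the decisive lower bound $m\ge 2c+1$. If no monomial of $f$ lands in $\cL$, then $f=x^m$, $A=0$, and \textbf{(*)} reduces to $m(m-1)>2c^2-2c+2$, which follows from $m\ge 2c+1$. Otherwise I would first establish the auxiliary facts $\rho_1<\rho_2$ and $\kappa=\reg(\cK)\le c-2$ (the latter by comparing Hilbert functions, as in [T1]); combined with the order estimate of Proposition 3.1(d) or 3.2(d) this yields $s<\tfrac16(c-2)$. Since the deformations contribute exactly $A=(s+1)m+\binom{s+1}{2}$, substituting and invoking $m\ge 2c+1$ once more turns \textbf{(*)} into a quadratic inequality in $c$ that holds for $c\ge 5$.

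For $0\le c\le 4$ the argument is a direct verification. When $c=0$ the ideal $\cI=(y,x^m)$ is monomial, so there is nothing to deform and \textbf{(*)} is trivial. For $c=1,2,3,4$ I would read off from the pyramid all $\Gamma\cdot T(\rho)$-admissible vice-monomials of $f_0$; in each case the resulting deformation of $C_0$ into $C_\infty$ is simple, so one computes $\deg C_0$ and $\deg C_\infty$ explicitly and checks $Q(m-1)>|\deg C_\infty-\deg C_0|$, which is a quadratic in $m$ valid above a small threshold. The boundary values of $m$ are dispatched either because $m\ge c+2$ already suffices, or because the genus computed from Remark 2.2 would equal $g(d)$, contradicting $g^*(\varphi)>g(d)$. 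The single genuine exception occurs at $c=1$, $m=4$, where \textbf{(*)} reads $m^2-3m-4>0$ and fails; this is precisely the ideal $\cI=(y(x,y),x^4+yz^3)$ named in the statement. Assembling Conclusions 7.1--7.4 then yields the proposition.

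I expect the main obstacle to be the $c=4$ analysis: it breaks into eight pyramid configurations, and for the single value $m=6$ with $\rho=(-3,1,2)$ two of the deformations, coming from $f=(x^3y+ay^2z^2)z^2$ and $g=x^6+by^2z^4$, can occur simultaneously. There one cannot simply compare $\deg C_0$ and $\deg C_\infty$ but must bound $\max$-$\alpha$-grade$(\cI)$ and $\min$-$\alpha$-grade$(\cI)$ from the three monomials surviving in $f\wedge g$ and verify \textbf{(*)} for that triple. Apart from this, the only delicate bookkeeping is tracking which small $m$ are excluded by $g^*(\varphi)>g(d)$; the remaining estimates are routine once the figures 7.1--7.6 are in place.
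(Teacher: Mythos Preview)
Your proposal is correct and follows essentially the same route as the paper: the split into $g^*(\psi)\le g(c)$ (handled via the three auxiliary lemmas giving $\rho_1<\rho_2$, $\kappa\le c-2$, and $s<\tfrac16(c-2)$, together with $m\ge 2c+1$) versus the direct case analysis for $0\le c\le 4$ using the simple-deformation degree comparisons of Figures~7.3--7.6, including the simultaneous-deformation wedge computation at $c=4$, $m=6$, $\rho=(-3,1,2)$. The only trivial adjustment is that since $\rho_1>0$ you should cite Proposition~3.2(d) rather than 3.1(d) for the order bound.
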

\textbf{Notabene.} Hence the inequality \textbf{(!)}in Section 4.3 is fulfilled with one exception .

\newpage
\begin{minipage}{19cm*\real{0.7}} \label{fig:7.1}
\centering Fig. 7.1\\ 
\tikzstyle{help lines}=[gray,very thin]
\begin{tikzpicture}[scale=0.7]
 \filldraw[fill=gray,opacity=0.5] 
         (10,1) -- (10,11) -- (11,11) -- (11,1)-- cycle;
 \draw[style=help lines]  grid (19,14); 
 \draw[thick] (0,0) -- (0,14);  
 \draw[thick] (0,0) -- (19,0);
 \draw[<->] (0,-1.5) -- (12.8,-1.5); 
 \draw[<-]  (13.2,-1.5) -- (19,-1.5); 
 \draw[->]  (5.5,10.5) -- (10.5,8.5); 
 {
 \pgftransformxshift{0.5cm}
 \pgftransformyshift{-0.55cm}
  \foreach \x in {0,1,2,3,4,5,6,7,8} \draw[anchor=base] (\x,0) node {$\x$}; 
 \draw[anchor=base] (9,0) node {$\kappa$};
 \draw[anchor=base] (10,0) node {$\kappa{+}1$};
 \draw[anchor=base] (12,0) node {$c$};
 \draw[anchor=base] (13,0) node {$c{+}1$};
 \draw[anchor=base] (18,0) node {$m_0$};
 \draw[anchor=base] (4,-1) node[fill=white] {$\cL\cB$};
 \draw[anchor=base] (15,-1) node[fill=white] {$\cR\cB$};
 }
sep=2pt]
\draw[anchor=south west] (1,10) node[fill=white,text width=3.2cm,inner sep=2pt]
   {first column filled with monomials of $H^0(\cK(c))$};
\draw[anchor=west] (14,7) node[fill=white,inner sep=1pt]
   {monomial domain};
 {
 \pgftransformxshift{0.5cm}
 \draw (9,9) node[above] {$L_0$};
 \draw (18,0) node[above] {$M_0$};
 }
 \draw[gray, ultra thick] (13,-2) -- (13,14);  
 \draw[\Red,ultra thick] (18,0) -- (18,1) -- (6,1) -- (6,2) -- (7,2) -- (7,3)
 -- (8,3) -- (8,4) -- (6,4) -- (6,5) -- (7,5) -- (7,6) -- (8,6) -- (8,7) -- (9,7) -- (9,9) -- (10,9) -- (10,11) -- (11,11) -- (11,12) -- (12,12) -- (12,13) -- (13,13);
  \draw[\Black, ultra thick] (0,1) -- (13,14);  
\end{tikzpicture}
\end{minipage} 

\vspace{1cm}
\begin{minipage}{1.0\linewidth}
\begin{minipage}[b]{10cm*\real{0.7}} \label{fig:7.2a}
\centering Fig. 7.2a\\ 
\tikzstyle{help lines}=[gray,very thin]
\begin{tikzpicture}[scale=0.7]
 \draw[style=help lines]  grid (10,11);
 \draw[thick] (0,0) -- (0,11); 
 \draw[thick] (0,0) -- (10,0);
 {
 \pgftransformxshift{0.5cm}
 \pgftransformyshift{-0.55cm}
  \foreach \x in {0,1,2,3,4,5,6,7,8,9} \draw[anchor=base] (\x,0) node {$\x$}; 
 }
 \draw[\Red,ultra thick] (2,0) -- (2,2) -- (3,2) -- (3,3) -- (4,3) -- (4,4) -- (5,4) -- (5,5) -- (6,5) -- (6,6) -- (7,6) -- (7,7) -- (8,7) -- (8,8) -- (9,8) -- (9,10) -- (10,10);
  \draw[\Black, ultra thick] (0,1) -- (10,11);  
\end{tikzpicture}
$\colength(\cK)=10,\ \reg(\cK)=9$
\end{minipage}
\hfill
\begin{minipage}[b]{11cm*\real{0.7}} \label{fig:7.2b}
\centering Fig. 7.2b\\ 
\tikzstyle{help lines}=[gray,very thin]
\begin{tikzpicture}[scale=0.7]
 \draw[style=help lines]  grid (11,12);
 \draw[thick] (0,0) -- (0,12); 
 \draw[thick] (0,0) -- (11,0);
 {
 \pgftransformxshift{0.5cm}
 \pgftransformyshift{-0.55cm}
 \foreach \x in {0,1,2,3,4,5,6,7,8,9,10}\draw[anchor=base] (\x,0) node {$\x$}; 
 }
 \draw[\Red,ultra thick] (2,0) -- (2,2) -- (3,2) -- (3,3) -- (4,3) -- (4,4) -- (5,4) -- (5,5) -- (6,5) -- (6,6) -- (7,6) -- (7,7) -- (8,7) -- (8,8) -- (9,8) -- (9,9) -- (10,9) -- (10,11) -- (11,11);
  \draw[\Black, ultra thick] (0,1) -- (11,12);  
\end{tikzpicture}
$\colength(\cK)=11,\ \reg(\cK)=10$
\end{minipage}
\end{minipage}

\begin{minipage}{1.0\linewidth}
\begin{minipage}{7cm*\real{0.7}} \label{fig:7.3}
\centering Fig. 7.3\\ 
\tikzstyle{help lines}=[gray,very thin]
\begin{tikzpicture}[scale=0.7]
 \draw[style=help lines]  grid (7,7);
 \draw[thick] (0,0) -- (0,7); 
 \draw[thick] (0,0) -- (7,0);
 {
 \pgftransformxshift{0.5cm}
 \pgftransformyshift{-0.55cm}
  \foreach \x in {0,1,2,3} \draw[anchor=base] (\x,0) node {$\x$}; 
 \draw[anchor=base] (4,0) node {$\dots$};
 \draw[anchor=base] (5,0) node {$\dots$}; 
 \draw[anchor=base] (6,0) node {$m$};
 } 
 {
  \pgftransformxshift{0.5cm}
  \draw (6,0) node[above] {$M$};
 }
 \draw[\Red,ultra thick] (6,0) -- (6,1) -- (5,1);
 \draw[\Red,dotted, ultra thick] (5,1) -- (4,1);
 \draw[\Red,ultra thick] (4,1) --
(2,1) -- (2,3) -- (3,3) -- (3,4) -- (4,4) -- (4,5) -- (5,5) -- (5,6) -- (6,6);
  \draw[\Black, ultra thick] (0,1) -- (6,7);  
\end{tikzpicture}
\end{minipage}
\hfill
\begin{minipage}{6cm*\real{0.7}} \label{fig:7.4a}
\centering Fig. 7.4a\\ 
\tikzstyle{help lines}=[gray,very thin]
\begin{tikzpicture}[scale=0.7]
 \draw[style=help lines]  grid (6,7);
 \draw[thick] (0,0) -- (0,7); 
 \draw[thick] (0,0) -- (6,0);
 {
 \pgftransformxshift{0.5cm}
 \pgftransformyshift{-0.55cm}
  \foreach \x in {0,1,2} \draw[anchor=base] (\x,0) node {$\x$}; 
 \draw[anchor=base] (3,0) node {$\dots$}; 
 \draw[anchor=base] (4,0) node {$\dots$};
 \draw[anchor=base] (5,0) node {$m$};
 } 
 {
  \pgftransformxshift{0.5cm}
   \draw (2,2) node[above] {$-$};
   \draw (5,0) node[above] {$+$};
 }
 \draw[\Red,ultra thick] (5,0) -- (5,1) -- (4,1);
 \draw[\Red,dotted, ultra thick] (4,1) -- (3,1);
 \draw[\Red,ultra thick] (3,1) --
(2,1) -- (2,2) -- (3,2) -- (3,4) -- (4,4) -- (4,5) -- (5,5) -- (5,6) -- (6,6);
  \draw[\Black, ultra thick] (0,1) -- (6,7);  
\end{tikzpicture}
\end{minipage}
\hfill
\begin{minipage}{7cm*\real{0.7}} \label{fig:7.4b}
\centering Fig. 7.4b\\ 
\tikzstyle{help lines}=[gray,very thin]
\begin{tikzpicture}[scale=0.7]
 \draw[style=help lines]  grid (7,7);
 \draw[thick] (0,0) -- (0,7); 
 \draw[thick] (0,0) -- (7,0);
 {
 \pgftransformxshift{0.5cm}
 \pgftransformyshift{-0.55cm}
  \foreach \x in {0,1,2,3} \draw[anchor=base] (\x,0) node {$\x$}; 
 \draw[anchor=base] (4,0) node {$\dots$};
 \draw[anchor=base] (5,0) node {$\dots$}; 
 \draw[anchor=base] (6,0) node {$m$};
 } 
 {
  \pgftransformxshift{0.5cm}
  \draw (2,1) node[above] {$-$};
  \draw (6,0) node[above] {$+$};
 }
\draw[\Red,ultra thick] (6,0) -- (6,1) -- (5,1); 
\draw[\Red,dotted, ultra thick] (5,1) -- (4,1); 
\draw[\Red,ultra thick] (4,1) -- (3,1) -- (3,2) --
(2,2) -- (2,3) -- (3,3) -- (3,4) -- (4,4) -- (4,5) -- (5,5) -- (5,6) -- (6,6);
  \draw[\Black, ultra thick] (0,1) -- (6,7);  
\end{tikzpicture}
\end{minipage}

\end{minipage}

\vspace{2cm}
\begin{minipage}{1.0\linewidth}
\begin{minipage}[b]{7cm*\real{0.7}} \label{fig:7.5a}
\centering Fig. 7.5a\\ 
\tikzstyle{help lines}=[gray,very thin]
\begin{tikzpicture}[scale=0.7]
 \draw[style=help lines]  grid (7,6);
 \draw[thick] (0,0) -- (0,6); 
 \draw[thick] (0,0) -- (7,0);
 {
 \pgftransformxshift{0.5cm}
 \pgftransformyshift{-0.55cm}
  \foreach \x in {0,1,2,3} \draw[anchor=base] (\x,0) node {$\x$}; 
 \draw[anchor=base] (4,0) node {$\dots$};
 \draw[anchor=base] (5,0) node {$\dots$}; 
 \draw[anchor=base] (6,0) node {$m$};
 } 
 {
  \pgftransformxshift{0.5cm}
  \draw (2,2) node[above] {$-$};
  \draw (3,1) node[above] {$+$};
 }
\draw[\Red,ultra thick] (6,0) -- (6,1) -- (5,1); 
\draw[\Red,dotted, ultra thick] (5,1) -- (4,1); 
\draw[\Red,ultra thick] (4,1) -- (3,1) -- (3,4) -- (4,4) -- (4,5) --
 (5,5) -- (5,6);
  \draw[\Black, ultra thick] (0,1) -- (5,6);  
\end{tikzpicture}
\end{minipage}
\hfill  
\begin{minipage}[b]{7cm*\real{0.7}} \label{fig:7.5b}
\centering Fig. 7.5b\\ 
\tikzstyle{help lines}=[gray,very thin]
\begin{tikzpicture}[scale=0.7]
 \draw[style=help lines]  grid (7,6);
 \draw[thick] (0,0) -- (0,6); 
 \draw[thick] (0,0) -- (7,0);
 {
 \pgftransformxshift{0.5cm}
 \pgftransformyshift{-0.55cm}
  \foreach \x in {0,1,2,3} \draw[anchor=base] (\x,0) node {$\x$}; 
 \draw[anchor=base] (4,0) node {$\dots$};
 \draw[anchor=base] (5,0) node {$\dots$}; 
 \draw[anchor=base] (6,0) node {$m$};
 } 
 {
  \pgftransformxshift{0.5cm}
  \draw (2,1) node[above] {$-$};
  \draw (6,0) node[above] {$+$};
 }
\draw[\Red,ultra thick] (6,0) -- (6,1) -- (5,1); 
\draw[\Red,dotted, ultra thick] (5,1) -- (4,1); 
\draw[\Red,ultra thick] (4,1) -- (3,1) -- (3,4) -- (4,4) -- (4,5) --
 (5,5);
  \draw[\Black, ultra thick] (0,1) -- (5,6);  
\end{tikzpicture}
\end{minipage}
\hfill  
\begin{minipage}[b]{7cm*\real{0.7}} \label{fig:7.5c}
\centering Fig. 7.5c\\ 
\tikzstyle{help lines}=[gray,very thin]
\begin{tikzpicture}[scale=0.7]
 \draw[style=help lines]  grid (7,7);
 \draw[thick] (0,0) -- (0,7); 
 \draw[thick] (0,0) -- (7,0);
 {
 \pgftransformxshift{0.5cm}
 \pgftransformyshift{-0.55cm}
  \foreach \x in {0,1,2,3} \draw[anchor=base] (\x,0) node {$\x$}; 
 \draw[anchor=base] (4,0) node {$\dots$};
 \draw[anchor=base] (5,0) node {$\dots$}; 
 \draw[anchor=base] (6,0) node {$m$};
 } 
 {
  \pgftransformxshift{0.5cm}
  \draw (2,2) node[above] {$-$};
  \draw (6,0) node[above] {$+$};
 }
\draw[\Red,ultra thick] (6,0) -- (6,1) -- (5,1); 
\draw[\Red,dotted, ultra thick] (5,1) -- (4,1); 
\draw[\Red,ultra thick] (4,1) -- (3,1) -- (3,4) -- (4,4) -- (4,5) --
 (5,5) -- (5,6) -- (6,6);
  \draw[\Black, ultra thick] (0,1) -- (6,7);  
\end{tikzpicture}
\end{minipage}
\end{minipage}

\vspace{1cm}

 \begin{minipage}{1.0\linewidth}
\hfill
\begin{minipage}[b]{7cm*\real{0.7}} \label{fig:7.5d}
\centering Fig. 7.5d\\ 
\tikzstyle{help lines}=[gray,very thin]
\begin{tikzpicture}[scale=0.7]
 \draw[style=help lines]  grid (7,8);
 \draw[thick] (0,0) -- (0,8); 
 \draw[thick] (0,0) -- (7,0);
 {
 \pgftransformxshift{0.5cm}
 \pgftransformyshift{-0.55cm}
  \foreach \x in {0,1,2,3} \draw[anchor=base] (\x,0) node {$\x$}; 
 \draw[anchor=base] (4,0) node {$\dots$};
 \draw[anchor=base] (5,0) node {$\dots$}; 
 \draw[anchor=base] (6,0) node {$m$};
 } 
 {
  \pgftransformxshift{0.5cm}
  \draw (3,3) node[above] {$-$};
  \draw (6,0) node[above] {$+$};
 }
\draw[\Red,ultra thick] (6,0) -- (6,1) -- (5,1); 
\draw[\Red,dotted, ultra thick] (5,1) -- (4,1); 
\draw[\Red,ultra thick] (4,1) -- (2,1) -- (2,2) -- (3,2) -- 
 (3,3) --  (4,3) -- (4,5) -- (5,5) -- (5,6) -- (6,6)
 -- (6,7) -- (7,7);
  \draw[\Black, ultra thick] (0,1) -- (7,8);  
\end{tikzpicture}
\end{minipage}  
\hfill
\begin{minipage}[b]{7cm*\real{0.7}} \label{fig:7.5e}
\centering Fig. 7.5e\\ 
\tikzstyle{help lines}=[gray,very thin]
\begin{tikzpicture}[scale=0.7]
 \draw[style=help lines]  grid (7,8);
 \draw[thick] (0,0) -- (0,8); 
 \draw[thick] (0,0) -- (7,0);
 {
 \pgftransformxshift{0.5cm}
 \pgftransformyshift{-0.55cm}
  \foreach \x in {0,1,2,3} \draw[anchor=base] (\x,0) node {$\x$}; 
 \draw[anchor=base] (4,0) node {$\dots$};
 \draw[anchor=base] (5,0) node {$\dots$}; 
 \draw[anchor=base] (6,0) node {$m$};
 } 
 {
  \pgftransformxshift{0.5cm}
  \draw (3,1) node[above] {$-$};
  \draw (6,0) node[above] {$+$};
 }
\draw[\Red,ultra thick] (6,0) -- (6,1) -- (5,1); 
\draw[\Red,dotted, ultra thick] (5,1) -- (4,1); 
\draw[\Red,ultra thick] (4,1) -- (4,2)  -- (2,2) --
 (2,3) -- (3,3) -- (3,4) -- (4,4) -- (4,5) -- (5,5) -- (5,6) -- (6,6)
 -- (6,7) -- (7,7);
  \draw[\Black, ultra thick] (0,1) -- (7,8);  
\end{tikzpicture}
\end{minipage}  
\hfill
 \end{minipage}
\vfill

\begin{minipage}{1.0\linewidth}
\begin{minipage}{8cm*\real{0.7}} \label{fig:7.6a1}
\centering Fig. 7.6a${}_1$\\ 
\tikzstyle{help lines}=[gray,very thin]
\begin{tikzpicture}[scale=0.7]
 \draw[style=help lines]  grid (8,8);
 \draw[thick] (0,0) -- (0,8); 
 \draw[thick] (0,0) -- (8,0);
 {
 \pgftransformxshift{0.5cm}
 \pgftransformyshift{-0.55cm}
  \foreach \x in {0,1,2,3,4} \draw[anchor=base] (\x,0) node {$\x$}; 
 \draw[anchor=base] (5,0) node {$\dots$};
 \draw[anchor=base] (6,0) node {$\dots$};
 \draw[anchor=base] (7,0) node {$m$};
 }
 {
  \pgftransformxshift{0.5cm}
  \draw (2,1) node[above] {$-$};
  \draw (7,0) node[above] {$+$};
 }
 \draw[\Red,ultra thick] (7,0) -- (7,1) -- (6,1);
 \draw[\Red,ultra thick,dotted] (6,1) -- (5,1);
 \draw[\Red,ultra thick] (5,1) -- (3,1) -- (3,3) -- (4,3) -- (4,5) -- (5,5) --
 (5,6) -- (6,6) -- (6,7) -- (7,7);
  \draw[\Black, ultra thick] (0,1) -- (7,8);  
\end{tikzpicture}
\end{minipage} 
\hfill
\begin{minipage}{8cm*\real{0.7}} \label{fig:7.6a2}
\centering Fig. 7.6a${}_2$\\ 
\tikzstyle{help lines}=[gray,very thin]
\begin{tikzpicture}[scale=0.7]
 \draw[style=help lines]  grid (8,8);
 \draw[thick] (0,0) -- (0,8); 
 \draw[thick] (0,0) -- (8,0);
 {
 \pgftransformxshift{0.5cm}
 \pgftransformyshift{-0.55cm}
  \foreach \x in {0,1,2,3,4} \draw[anchor=base] (\x,0) node {$\x$}; 
 \draw[anchor=base] (5,0) node {$\dots$};
 \draw[anchor=base] (6,0) node {$\dots$};
 \draw[anchor=base] (7,0) node {$m$};
 }
 {
  \pgftransformxshift{0.5cm}
  \draw (3,3) node[above] {$-$};
  \draw (7,0) node[above] {$+$};
 }
 \draw[\Red,ultra thick] (7,0) -- (7,1) -- (6,1);
 \draw[\Red,ultra thick,dotted] (6,1) -- (5,1);
 \draw[\Red,ultra thick] (5,1) -- (3,1) -- (3,3) -- (4,3) -- (4,5) -- (5,5) --
 (5,6) -- (6,6) -- (6,7) -- (7,7);
  \draw[\Black, ultra thick] (0,1) -- (7,8);  
\end{tikzpicture}
\end{minipage} 
\end{minipage}

\vspace{4cm}
\begin{minipage}{1.0\linewidth}
\begin{minipage}{8cm*\real{0.7}} \label{fig:7.6b1}
\centering Fig. 7.6b${}_1$\\ 
\tikzstyle{help lines}=[gray,very thin]
\begin{tikzpicture}[scale=0.7]
 \draw[style=help lines]  grid (8,7);
 \draw[thick] (0,0) -- (0,7); 
 \draw[thick] (0,0) -- (8,0);
 {
 \pgftransformxshift{0.5cm}
 \pgftransformyshift{-0.55cm}
  \foreach \x in {0,1,2,3,4} \draw[anchor=base] (\x,0) node {$\x$}; 
 \draw[anchor=base] (5,0) node {$\dots$};
 \draw[anchor=base] (6,0) node {$\dots$};
 \draw[anchor=base] (7,0) node {$m$};
 }
 {
  \pgftransformxshift{0.5cm}
  \draw (2,2) node[above] {$-$};
  \draw (4,1) node[above] {$+$};
 }
\draw[\Red,ultra thick] (7,0) -- (7,1) -- (6,1);
 \draw[\Red,ultra thick,dotted] (6,1) -- (5,1);
 \draw[\Red,ultra thick] (5,1) -- (4,1) -- (4,2) -- 
(3,2) -- (3,4) -- (4,4) -- (4,5) -- (5,5) -- (5,6) -- (6,6);
  \draw[\Black, ultra thick] (0,1) -- (6,7);  
\end{tikzpicture}
\end{minipage}
\hfill
\begin{minipage}{8cm*\real{0.7}} \label{fig:7.6b2}
\centering Fig. 7.6b${}_2$\\ 
\tikzstyle{help lines}=[gray,very thin]
\begin{tikzpicture}[scale=0.7]
 \draw[style=help lines]  grid (8,7);
 \draw[thick] (0,0) -- (0,7); 
 \draw[thick] (0,0) -- (8,0);
 {
 \pgftransformxshift{0.5cm}
 \pgftransformyshift{-0.55cm}
  \foreach \x in {0,1,2,3,4} \draw[anchor=base] (\x,0) node {$\x$}; 
 \draw[anchor=base] (5,0) node {$\dots$};
 \draw[anchor=base] (6,0) node {$\dots$};
 \draw[anchor=base] (7,0) node {$m$};
 }
 {
  \pgftransformxshift{0.5cm}
  \draw (3,1) node[above] {$-$};
  \draw (7,0) node[above] {$+$};
 }
\draw[\Red,ultra thick] (7,0) -- (7,1) -- (6,1);
 \draw[\Red,ultra thick,dotted] (6,1) -- (5,1);
 \draw[\Red,ultra thick] (5,1) -- (4,1) -- (4,2) -- 
(3,2) -- (3,4) -- (4,4) -- (4,5) -- (5,5) -- (5,6) -- (6,6);
  \draw[\Black, ultra thick] (0,1) -- (6,7);  
\end{tikzpicture}
\end{minipage}
\end{minipage}

\newpage
\begin{minipage}{1.0\linewidth}
\begin{minipage}[b]{8cm*\real{0.7}} \label{fig:7.6b3}
\centering Fig. 7.6b${}_3$\\ 
\tikzstyle{help lines}=[gray,very thin]
\begin{tikzpicture}[scale=0.7]
 \draw[style=help lines]  grid (8,8);
 \draw[thick] (0,0) -- (0,8); 
 \draw[thick] (0,0) -- (8,0);
 {
 \pgftransformxshift{0.5cm}
 \pgftransformyshift{-0.55cm}
  \foreach \x in {0,1,2,3,4} \draw[anchor=base] (\x,0) node {$\x$}; 
 \draw[anchor=base] (5,0) node {$\dots$};
 \draw[anchor=base] (6,0) node {$\dots$};
 \draw[anchor=base] (7,0) node {$m$};
 }
 {
  \pgftransformxshift{0.5cm}
  \draw (2,2) node[above] {$-$};
  \draw (7,0) node[above] {$+$};
 }
 \draw[\Red,ultra thick] (7,0) -- (7,1) -- (6,1);
 \draw[\Red,ultra thick,dotted] (6,1) -- (5,1);
  \draw[\Red,ultra thick] (5,1) -- (4,1) -- (4,2) -- (3,2) -- (3,4) -- (4,4) --
 (4,5) -- (5,5) -- (5,6) -- (6,6) -- (6,7) -- (7,7);
  \draw[\Black, ultra thick] (0,1) -- (7,8);  
\end{tikzpicture}
\end{minipage} 
\hfill
\begin{minipage}[b]{8cm*\real{0.7}} \label{fig:7.6c}
\centering Fig. 7.6c\\ 
\tikzstyle{help lines}=[gray,very thin]
\begin{tikzpicture}[scale=0.7]
 \draw[style=help lines]  grid (8,7);
 \draw[thick] (0,0) -- (0,7); 
 \draw[thick] (0,0) -- (8,0);
 {
 \pgftransformxshift{0.5cm}
 \pgftransformyshift{-0.55cm}
  \foreach \x in {0,1,2,3,4} \draw[anchor=base] (\x,0) node {$\x$}; 
 \draw[anchor=base] (5,0) node {$\dots$};
 \draw[anchor=base] (6,0) node {$\dots$};
 \draw[anchor=base] (7,0) node {$m$};
 }
 {
  \pgftransformxshift{0.5cm}
  \draw (3,2) node[above] {$-$};
  \draw (7,0) node[above] {$+$};
 }
\draw[\Red,ultra thick] (7,0) -- (7,1) -- (6,1);
 \draw[\Red,ultra thick,dotted] (6,1) -- (5,1);
 \draw[\Red,ultra thick] (5,1) -- (3,1) -- (3,2) -- 
(4,2) -- (4,3) -- (3,3) -- (3,4) -- (4,4)-- (4,5) -- (5,5) -- (5,6) -- (6,6);
  \draw[\Black, ultra thick] (0,1) -- (6,7);  
\end{tikzpicture}
\end{minipage}
\end{minipage}

\vspace{4cm}

\begin{minipage}{1.0\linewidth}
\begin{minipage}{8cm*\real{0.7}} \label{fig:7.6d}
\centering Fig. 7.6d \\ 
\tikzstyle{help lines}=[gray,very thin]
\begin{tikzpicture}[scale=0.7]
 \draw[style=help lines]  grid (8,8);
 \draw[thick] (0,0) -- (0,8); 
 \draw[thick] (0,0) -- (8,0);
 {
 \pgftransformxshift{0.5cm}
 \pgftransformyshift{-0.55cm}
  \foreach \x in {0,1,2,3,4} \draw[anchor=base] (\x,0) node {$\x$}; 
 \draw[anchor=base] (5,0) node {$\dots$};
 \draw[anchor=base] (6,0) node {$\dots$};
 \draw[anchor=base] (7,0) node {$m$};
 }
 {
  \pgftransformxshift{0.5cm}
  \draw (4,4) node[above] {$-$};
  \draw (7,0) node[above] {$+$};
 }
 \draw[\Red,ultra thick] (7,0) -- (7,1) -- (6,1);
 \draw[\Red,ultra thick,dotted] (6,1) -- (5,1);
 \draw[\Red,ultra thick] (5,1) --(2,1) -- (2,2) -- (3,2) -- (3,3) -- (4,3) -- (4,4) -- (5,4) -- (5,6) -- (6,6) -- (6,7) -- (7,7);
  \draw[\Black, ultra thick] (0,1) -- (7,8);  
\end{tikzpicture}
\end{minipage} 
\hfill
\begin{minipage}{8cm*\real{0.7}} \label{fig:7.6e}
\centering Fig. 7.6e \\ 
\tikzstyle{help lines}=[gray,very thin]
\begin{tikzpicture}[scale=0.7]
 \draw[style=help lines]  grid (8,8);
 \draw[thick] (0,0) -- (0,8); 
 \draw[thick] (0,0) -- (8,0);
 {
 \pgftransformxshift{0.5cm}
 \pgftransformyshift{-0.55cm}
  \foreach \x in {0,1,2,3,4} \draw[anchor=base] (\x,0) node {$\x$}; 
 \draw[anchor=base] (5,0) node {$\dots$};
 \draw[anchor=base] (6,0) node {$\dots$};
 \draw[anchor=base] (7,0) node {$m$};
 }
 {
  \pgftransformxshift{0.5cm}
  \draw (4,1) node[above] {$-$};
  \draw (7,0) node[above] {$+$};
 }
 \draw[\Red,ultra thick] (7,0) -- (7,1) -- (6,1);
 \draw[\Red,ultra thick,dotted] (6,1) -- (5,1);
 \draw[\Red,ultra thick] (5,1) -- (5,2) -- (2,2) -- (2,3) -- (3,3) -- (3,4) -- (4,4) -- (4,5) -- (5,5) -- (5,6) -- (6,6);
  \draw[\Black, ultra thick] (0,1) -- (7,8);  
\end{tikzpicture}
\end{minipage}   
\end{minipage}


\chapter{Borel-invariant surfaces and standard cycles}\label{8}

\section{Preliminary remarks.}\label{8.1}
The group $\G L(3;k)$ operates on $S$ by matrices, thus $\G L(3;k)$ operates
on $H^d=\Hilb^d(\mP^2_k)$. We recall the subgroups $\Gamma=\left\{ \left(
    \begin{array}{ccc} 1&0&*\\0&1&*\\0&0&1\end{array}\right)\right\}<U(3;k)$
and $T(\rho)<T:=T(3;k)$, already introduced in (2.4.1). As each subspace $U\subset S_n$ is invariant under
$D:=\{ (\lambda,\lambda,\lambda)|\lambda\in k^*\}$, each ideal
$\cI\subset\cO_{\mP^2}$ and each point of $H^d$ is invariant under $D$.
Instead of the operation of $T$ on $H^d$ one can consider the operation of the
group $T/D$, which is isomorphic to $T(2;k)$.

According to the theory of Hirschowitz, the 1-cycles in $H^d$ which are
invariant under $B=B(3;k)$, form a generating system of the first Chow group
of $H^d$, and relations among them are realized in $B$-invariant surfaces
$V\subset H^d$ ([Hi], Mode d'emploi, p. 89).

We distinguish between the cases, whether $V$ is pointwise invariant under the
$\G_m$-operation $\sigma(\lambda):x\mapsto x,y\mapsto y,z\mapsto\lambda z$, or
not. This we call the \emph{homogeneous case} and the \emph{inhomogeneous
  case}, respectively.

\section{The inhomogeneous case.}\label{8.2}
We let $T=T(3;k)$ operate by diagonal matrices and let $\G_a$ operate by
$\pal:x\mapsto x, y\mapsto \alpha x + y , z\mapsto z$ on $S$. Then the group
$G=\G_a\cdot T$ operates on $H^d$. Let $V\subset H^d$ be a $G$-invariant,
closed, two-dimensional variety, which is {\it not} pointwise invariant under
$\G_a$ and is {\it not} pointwise invariant under the $\G_m$-operation
$\sigma(\lambda):x\mapsto x,y\mapsto y,z\mapsto\lambda z$. (For abbreviation
we speak of an \emph{inhomogeneous surface}.)

We suppose $k=\ol{k}$. If $\xi\in H^d(k)$ is a closed point, then $T_{\xi}$
and $G_{\xi}$ denote the inertia group of $\xi$ in $T$ and $G$, respectively.

\subsection{Auxiliary lemma 1.}\label{8.2.1}
There is a point $\xi\in V(k)$ such that $V=\overline{T\cdot\xi}$.

\begin{proof} 
  If $\dim T\cdot\xi<2$ for all $\xi\in V(k)$, then $\xi$ is $T$-invariant or
  $\ol{T\cdot\xi}$ is a $T$-invariant, irreducible curve, $\forall\; \xi\in
  V(k)$. The first case can occur for only finitely many points; in the second
  case one has $T_{\xi}=T(\rho),\rho\in\Z^3-(0)$, such that
  $\rho_0+\rho_1+\rho_2=0$ ([T1], Bemerkung 1, p.2).

There are only finitely many $\rho$, such that there exists an ideal
$\cI\subset\cO_{\mP^2}$ of fixed colength $d$, which is invariant under
$T(\rho)$, but not invariant under $T$ (see [T2], Hilfssatz 6, p. 140). In
other words, there are only finitely many $\rho$, such that
$(H^d)^{T(\rho)}\not\supseteq (H^d)^T$. From the assumption follows $V= \bigcup\{
V^{T(\rho_i)}|i\in I\}$, where $I$ is a finite set of indices. As $V$ is
irreducible, it follows that $V=V^{T(\rho)}$ for a certain $\rho$. Now one has
\[
  G_{g(\xi)}=gG_{\xi}g^{-1}\ \forall\ \xi\in V(k),\ \forall g\in G(k),
\]
and therefore
\[
G_{g(\xi)}\supset T(\rho)\cup gT(\rho)g^{-1},\ \forall\ \xi\in
V(k),\quad\forall\quad g\in G(k).
\]
We show there are $\lambda_0\neq\lambda_1$ in $k^*$ such that
$\tau=(\lambda_0,\lambda_1,1)\in T(\rho)$. We assume that
$(\lambda_0,\lambda_1,\lambda_2)\in T(\rho)$ implies
$\lambda_0/\lambda_2=\lambda_1/\lambda_2$, i.e.
$\lambda_0=\lambda_1$. Then each element in $T(\rho)$ has the form
$(\lambda,\lambda,\lambda_2)$, thus $T(\rho)\subset\G_m\cdot D$, where $D=
\{(\lambda,\lambda,\lambda)|\lambda\in k^*\}$. But then $\rho_2 = 0$ and $V$
is pointwise $\G_m$-invariant, contradiction.

 We thus take $\tau=(\lambda_0,\lambda_1,1)\in T(\rho)$ such that
$\lambda_0\neq\lambda_1$ and take 
\[
g=\left( 
 \begin{array}{ccc} 
   1&\alpha&0\\
    0&1&0\\ 0&0&1\end{array}\right)\in \G_a<G, \quad
 \text{where} \ \alpha\neq 0.
\] 
Then  
\[
g\tau g^{-1} =  
  \left(\begin{array}{ccc}
    \lambda_0&(\lambda_1-\lambda_0)\alpha&0\\ 
     0&\lambda_1&0\\
     0&0&1
  \end{array}\right)
\]
is an element of $G_{g(\xi)}$ and thus 
\[
\tau^{-1}g\tau g^{-1}=
\begin{pmatrix}
  1&\beta&0\\
  0&1&0\\
  0&0&1
\end{pmatrix}
  \quad\beta:=\lambda_0^{-1}(\lambda_1-\lambda_0)\alpha\neq 0
\]
is an element of $G_{g(\xi)}$, too. It follows that $g(\xi)$ is invariant
under $\psi_{\beta}$, thus invariant under $\G_a$ and therefore $\xi$ is
invariant under $\G_a$. This is valid for all $\xi\in V(k)$, contradicting the
assumption.
\end{proof}

\subsection{}\label{8.2.2}
If one lets $T(2;k)$ operate on $S$ by
$x\mapsto\lambda_0x,y\mapsto\lambda_1y,z\mapsto z$ then one sees that
$G=T\cdot\G_a$ operates on $H^d$ in the same way as the group
$T(2;k)\cdot\G_a$ which for simplicity is again denoted by $G$ (and is equal
to $B(2;k)$). If $\xi$ is as in the Auxiliary lemma 1, then from
$V=\ol{G\cdot\xi}$ it follows that $G_{\xi}<G$ is 1-dimensional. There is the
decomposition $G_{\xi}=\bigudot g_iH , H:=G_{\xi}^0 $, in finitely many cosets.
 As $H$ is a 1-dimensional connected group, two cases
can occur:

{\it 1st case:} $H\simeq \G_a$. Now the unipotent elements in $B(2;k)$ are
just the matrices 
$
 \pal=\left(\begin{smallmatrix}
  1&\alpha\\
    0&1
  \end{smallmatrix}
\right)$. 
It follows that there is $\alpha\neq 0$ such that
$\pal\in G_{\xi}$. But then $\xi$ is invariant under $\G_a$. As $\G_a$ is
normalized by $T$, $T\cdot\xi$ is pointwise $\G_a$-invariant. Because of
$V=\ol{T\cdot\xi}$, $V$ is pointwise $\G_a$-invariant, contradiction.

{\it 2nd case:} $H\simeq\G_m$. As each element of $\G_m$ is semi-simple, so is
each element of the isomorphic image $H$. Thus the commutative group $H<\G
L(2;k)$ consists of semi-simple elements. Then there is an $g\in\G L(2;k)$,
such that $g^{-1}Hg$ consists of diagonal matrices ([K], Lemma 1,
p. 150). Because of $\G_m\simeq H\stackrel{\sim}{\rightarrow} g^{-1}Hg<T(2;k)$
one has a 1-psg $f:\G_m\rightarrow T(2;k)$. Let $p:T(2;k)\to\G_m$ be the
projection onto the first component. Then $p\circ f:\G_m\to\G_m$ has the form
$\lambda\to\lambda^n$, $n$ a suitable integer. Thus $g^{-1}Hg=\left\{ \left(
    \begin{array}{cc}\lambda^a&0\\ 0&\lambda^b\end{array}\right) \bigg|
  \lambda\in k^*\right\}=:T(a,b)$, $a$ and $b$ suitable integers. It follows
$H=gT(a,b)g^{-1}\subset G=\G_a\cdot T(2;k)=B(2;k)$. Writing
$g=\left(\begin{array}{cc} a_{11}&a_{12}\\a_{21}&a_{22}\end{array}\right)$
gives $g^{-1}= D^{-1}\left(\begin{array}{rr}
    a_{22}&-a_{12}\\-a_{21}&a_{11}\end{array}\right), D:=\det (g)$. We
compute: \\

$\left(\begin{array}{cc}
    a_{11}&a_{12}\\a_{21}&a_{22}\end{array}\right)\left(\begin{array}{cc}
    \lambda_a&0\\0&\lambda^b\end{array}\right)=\left(\begin{array}{cc}
    \lambda^aa_{11}&\lambda^ba_{12}\\
    \lambda^aa_{21}&\lambda^ba_{22}\end{array}\right)$\;and \\
 
$g\left(\begin{array}{cc} \lambda_a&0\\0&\lambda^b\end{array}\right)g^{-1}=  D^{-1} \left( \begin{array}{cc} \lambda^aa_{11}&\lambda^ba_{12}\\
    \lambda^aa_{21}&\lambda^ba_{22}\end{array}\right) \left( \begin{array}{rr}
    a_{22}&-a_{12}\\-a_{21}&a_{11}\end{array} \right)=\\
     D^{-1} \left(
  \begin{array}{ll} \lambda^aa_{11}a_{22}-\lambda^ba_{12}a_{21} &
    -\lambda^aa_{11}a_{12}+\lambda^ba_{11}a_{12}\\
    \lambda^aa_{21}a_{22}-\lambda^ba_{21}a_{22} &
    -\lambda^aa_{12}a_{21}+\lambda^ba_{11}a_{22}\end{array} \right)=\\
     D^{-1}\left( \begin{array}{ll}
    \lambda^aa_{11}a_{22}-\lambda^ba_{12}a_{21} &
    (\lambda^b-\lambda^a)a_{11}a_{12}\\ (\lambda^a-\lambda^b)a_{21}a_{22} &
    \lambda^ba_{11}a_{22}-\lambda^aa_{12}a_{21}\end{array} \right)$.\\
    
This matrix is an upper triangular matrix if and only if
$(\lambda^a-\lambda^b)a_{21}a_{22}=0,\ \forall\lambda\in k^*$.

{\it Subcase 1.} $a=b\Rightarrow H=T(a,a)$.

{\it Subcase 2.} $a\neq b$ and $a_{21}=0$. Write 
\[
g=\left(
  \begin{array}{cc}a_{11} & a_{12}\\ 0&a_{22}\end{array} \right)=u\tau,\quad 
\text{where} \quad
 \tau= \left( \begin{array}{cc}a_{11} & 0\\ 0&a_{22}\end{array} \right),
  u=\left( \begin{array}{cc} 1&c\\0&1\end{array} \right) \ \text{and}\ 
 c:=a_{12}/a_{22}.
\] 
Then $H=gT(a,b)g^{-1}=uT(a,b)u^{-1}$.  

{\it Subcase 3.} $a\neq b$ and $a_{21}\neq 0$. Then $a_{22}=0$ and $g\left( \begin{array}{cc}\lambda^a&0\\ 0&\lambda^b\end{array} \right)g^{-1}=$\\
$ D^{-1}\left(\begin{array}{cc}-\lambda^ba_{12}a_{21}&(\lambda^b-\lambda^a)a_{11}a_{12}\\
    0& -\lambda^aa_{12}a_{21}\end{array}\right)$. As $D =-a_{12}a_{21}$ , this matrix equals \\
    
    $\left( \begin{array}{cc}\lambda^b&(\lambda^a-\lambda^b)c\\
    0&\lambda^a\end{array} \right)$, where $c=a_{11}a_{12}/a_{12}a_{21}=a_{11}/a_{21}$.\\

Thus $H=gT(a,b)g^{-1}=\left\{ \left( \begin{array}{cc}\lambda^b &
      (\lambda^a-\lambda^b)c\\ 0& \lambda^a\end{array} \right)\bigg|
  \lambda\in k^*\right\}$, where $c:=a_{11}/a_{21}$. If $u:=\\  
  \left(
  \begin{array}{cc} 1&-c\\ 0&1\end{array} \right)$, then $u\left(
  \begin{array}{cc}\lambda^b& (\lambda^a-\lambda^b)c\\ 0 &
    \lambda^a\end{array}\right)u^{-1}=\left(\begin{array}{cc}\lambda^b & 0\\ 0
    &\lambda^a\end{array}\right)$ and thus $H=u^{-1}T(b,a)u$.\\
    
We have proved

\noindent {\bf Auxiliary lemma 2.} There is an element $u\in \G_a$ such that
$H=uT(a,b)u^{-1}$, where $a$ and $b$ are suitable integers.\hfill $\Box$

\subsection{}\label{8.2.3}
Let $\xi$ and $u$ be as in Auxiliary lemma 1 and 2, respectively. Set
$\zeta:=u^{-1}(\xi)$. Then $G_{\zeta}=u^{-1}G_{\xi}u\supset 
u^{-1}Hu=T(a,b)<T(2;k)$ and thus $\dim T(2;k)\cdot\zeta\le 1$. If this
dimension would be equal to 0, then $G_{\zeta}$ and $G_{\xi}$ would have the
dimension 2. Because of $V=\ol{G\cdot\xi}$ the dimension of $V$ would be 1,
contradiction. Thus $\dim T\cdot\zeta=1$ and (Appendix E, Corollary) gives

\noindent {\bf Auxiliary lemma 3.} The inertia group $T_{\zeta}$ of $\zeta$ in
$T(3;k)$ has the form $T(\rho)$, where $\rho=(\rho_0,\rho_1,\rho_2)$ and
$\rho_2\neq 0$.\hfill $\Box$

\subsection{Summary.}\label{8.2.4}

\begin{proposition}\label{7}
  Let $V\subset H^d$ be a closed 2-dimensional subvariety, invariant under
  $G=\G_a\cdot T(3;k)$, where $\G_a$ operates by $\pal:x\mapsto x,\,
  y\mapsto\alpha x+y,\, z\mapsto z$ and $T(3;k)$ operates by diagonal matrices
  on $S$. We suppose that $V$ is not pointwise invariant under this
  $\G_a$-operation and is not pointwise invariant under the $\G_m$-operation
  $\sigma(\lambda):x\mapsto x,y\mapsto y,z\mapsto\lambda z$. Then there is a
  point $\xi\in V(k)$ and $u \in\G_a$ such that : \\
  (i) $V=\ol{T(3;k)\cdot\xi}$ (ii) The inertia group $T_{\zeta}$ of
  $\zeta:=u(\xi)$ in $T(3;k)$ has the form $T( \rho)$, where
  $\rho=(\rho_0,\rho_1,\rho_2)$ and $\rho_2\neq 0$. (iii)
  $V=\ol{\G_m\cdot\G_a\cdot\zeta}$.
\end{proposition}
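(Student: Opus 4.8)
The plan is to assemble Proposition 8.1 from the three Auxiliary Lemmas of (8.2.1)--(8.2.3), which are already established under the standing hypothesis $k=\ol{k}$ of Section~8.2; what remains is to record how (i)--(iii) follow and to carry out the short group-theoretic computation behind (iii). First I would take $\xi\in V(k)$ with $V=\ol{T(3;k)\cdot\xi}$ furnished by Auxiliary Lemma 1 --- this is exactly (i). That lemma is where the real work sits, and I expect it to be the main obstacle: if no orbit $T\cdot\xi$ were $2$-dimensional, then every point of $V$ would be $T$-fixed or lie on a $T$-invariant curve, so its inertia group would contain some $T(\rho)$ with $\rho_0+\rho_1+\rho_2=0$; by the finiteness statement ([T2], Hilfssatz 6, p.~140) only finitely many such $\rho$ occur in colength $d$, so irreducibility of $V$ forces $V=V^{T(\rho)}$ for a single $\rho$; conjugating inertia groups by a suitable $\pal\in\G_a$ and forming the commutator $\tau^{-1}g\tau g^{-1}$ produces a nontrivial $\psi_{\beta}$ fixing $g(\xi)$, hence $\xi$ is $\G_a$-fixed for all $\xi\in V(k)$, contradicting the hypothesis. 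This argument is already carried out in (8.2.1), so within the proof of the proposition I would simply cite it.

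For (ii): since $V=\ol{G\cdot\xi}$ is $2$-dimensional and $G\cong B(2;k)$ is $3$-dimensional, the inertia group $G_\xi$ is $1$-dimensional, and its identity component $H$ is $\G_a$ or $\G_m$. The case $H\cong\G_a$ is impossible, for then some $\pal$ with $\alpha\neq0$ fixes $\xi$, making $\ol{T\cdot\xi}=V$ pointwise $\G_a$-invariant. Hence $H\cong\G_m$, and Auxiliary Lemma 2 gives $u\in\G_a$ with $H=uT(a,b)u^{-1}$ for integers $a,b$; replacing $u$ by its inverse so as to match the statement of the proposition, I set $\zeta:=u(\xi)$, whence $G_\zeta\supset T(a,b)<T(2;k)$, so $\dim T(3;k)\cdot\zeta\le1$. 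Dimension $0$ is excluded (it would force $\dim V\le1$), so $\dim T(3;k)\cdot\zeta=1$, and Appendix E, Corollary, yields $T_\zeta=T(\rho)$ with $\rho_2\neq0$ --- this is Auxiliary Lemma 3 and gives (ii).

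For (iii) I would argue directly. Since $\rho_0+\rho_1+\rho_2=0$ one has $D\subset T(\rho)$, and since $\rho_2\neq0$ the intersection $\{(1,1,\lambda):\lambda\in k^*\}\cap T(\rho)$ is finite; therefore $T(\rho)\cdot\{(1,1,\lambda)\}$ is a $3$-dimensional closed subgroup of $T(3;k)$, hence equals $T(3;k)$. Because $T_\zeta=T(\rho)$ it follows that $T(3;k)\cdot\zeta=\G_m\cdot\zeta$, where $\G_m=\{\sigma(\lambda)\}$. Now $T(3;k)$ normalizes $\G_a$ and $\G_m$ commutes with $\G_a$ (both act trivially on $x$ and $y$), so writing $\xi=u^{-1}(\zeta)$ with $u^{-1}\in\G_a$ and moving the $T$-action past $u^{-1}$ gives $T(3;k)\cdot\xi\subset\G_a\cdot(T(3;k)\cdot\zeta)=\G_m\cdot\G_a\cdot\zeta$, hence $V=\ol{T(3;k)\cdot\xi}\subset\ol{\G_m\cdot\G_a\cdot\zeta}$. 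Conversely $\G_m\cdot\G_a\cdot\zeta\subset G\cdot\xi$, and $\ol{G\cdot\xi}=V$ because $V$ is closed, irreducible, $G$-invariant and contains the orbit of $\xi$; this forces equality, proving (iii). The only genuinely new check beyond quoting the auxiliary lemmas is the identity $T(3;k)=T(\rho)\cdot\G_m$, which is where the hypothesis $\rho_2\neq0$ is used a second time.
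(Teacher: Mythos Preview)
Your argument is correct. For (i) and (ii) you cite the Auxiliary Lemmas exactly as the paper does, so those parts match. For (iii) you take a genuinely different route. The paper argues by contradiction on the inertia group: setting $G=\G_m\times\G_a$, if $\dim G_\zeta\ge 1$ then $G_\zeta^0$ contains a copy of $\G_m$ or of $\G_a$; in the first case the projection to $\G_a$ is trivial (since $\G_a$ contains no torus), so $\zeta$ and hence $\xi$ is $\sigma$-invariant, making $V=\ol{T\cdot\xi}$ pointwise $\sigma$-invariant; in the second case $\zeta$ and $\xi$ are $\G_a$-invariant, making $V$ pointwise $\G_a$-invariant. Either way one contradicts the hypotheses. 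Your approach is instead constructive: from $\rho_2\neq 0$ you deduce the group-theoretic identity $T(3;k)=T(\rho)\cdot\G_m$, whence $T(3;k)\cdot\zeta=\G_m\cdot\zeta$, and then push the $T$-action past $u^{-1}\in\G_a$ via normalization to embed $T(3;k)\cdot\xi$ into $\G_m\cdot\G_a\cdot\zeta$. Both arguments are short; yours has the virtue of making explicit \emph{why} $\rho_2\neq 0$ suffices (the torus factorization), while the paper's has the virtue of using exactly the two non-invariance hypotheses symmetrically and nothing else. One small point: when you write ``$\ol{G\cdot\xi}=V$ because $V$ is closed, irreducible, $G$-invariant and contains the orbit of $\xi$'', you should note that this uses $\dim G\cdot\xi\ge 2$, which you already have since $T(3;k)\cdot\xi$ is $2$-dimensional by (i).
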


\begin{proof}
  The statements (i) and (ii) follow from (8.2.1) -- (8.2.3). Put
  $G:=\G_m\cdot\G_a=\G_m\times\G_a$. If the statement (iii) were wrong, then
  the inertia group $G_{\zeta}$ would have a dimension $\ge 1$ and thus would
  contain a subgroup $H$ isomorphic to $\G_m$ or $\G_a$. If $H\simeq\G_m$,
  then $\zeta$ would be invariant under $p_1(H)=\{ (\lambda^n,1)|\lambda\in
  k^*\}, n\in\Z-(0)$. Then $\zeta$ and $\xi$ would be invariant under $\G_m$
  and thus $V$ would be pointwise $\G_m$ - invariant, contradiction.

  \noindent If $H\simeq\G_a$ then $\zeta$ and $\xi$ would be invariant under
  $\G_a$. As $\G_a$ is normalized by $T(3;k), \\T(3;k)\cdot\xi$ would be
  pointwise $\G_a$-invariant and the same would follow for $V$.
\end{proof}

\section{The homogeneous case.}\label{8.3}
We now assume $V\subset H^d$ is a 2-dimensional subvariety, invariant under $G
: =\\ \G_a\cdot T(3;k)$, \emph{not} pointwise invariant under $\G_a$, but now
pointwise invariant under the $\G_m$-operation $\sigma$ as in (8.1). As there
are only finitely many $T(3;k)$-fixed points in $H^d$, it follows that $V$ is
not pointwise fixed by the $\G_m$-operation $\tau(\lambda):x\mapsto\lambda
x,y\mapsto y,z\mapsto z$.

Let $\xi\in V(k)$ be not $\G_a$-invariant and not $\G_m$-invariant. We define
a morphism $f$ by $f:\G_a\times\G_m\to V$,
$(\alpha,\lambda)\mapsto\pal\tau(\lambda)\xi$.

Assume that $f$ has an infinite fibre. Then there is an element
$(\beta,\mu)\in\G_a\times\G_m$ such that
$\pal\tau(\lambda)\xi=\psi_{\beta}\tau(\mu)\xi$, i.e.
\begin{equation}\label{1}
  \psi_{(\alpha -\beta)\lambda^{-1}}(\xi) = \tau(\lambda^{-1}\mu)\xi
\end{equation}
for infinitely many $(\alpha,\lambda )$ in $ \G_m \times \G_a $.  

By assumption, $C:=\{\pal (\xi)|\alpha\in k\}^-$ and $D:=\{ \tau
(\lambda)(\xi)|\lambda\in k^*\}^-$ are curves in $V$. If one assumes that only
finitely many different $\lambda$ can occur in (8.1), then on the left side of
(8.1) also only finitely many $\alpha$ can occur. For $\xi$ is not a fixed point
of $ \G_a$, so that from $\pal (\xi)=\psi_{\beta}(\xi)$ it follows that
$\alpha=\beta$. This contradicts the last assumption. Thus $C$ and $D$ have in
common infinitely many points and hence are equal (as subschemes with the
induced reduced scheme structure).

The fixed-point set of $C$ under $\G_m$ consists of the two points
$\xi_{0/\infty}:=\lim\limits_{\lambda\to 0/\infty}\tau(\lambda)\xi$. As $C$
has an unique $\G_a$-fixed-point, and as $\G_a$ is normalized by $ \G_m$, one
of the two points, say $\xi_{\infty}$, is fixed by $\G_a$. Thus $C=\{\pal
(\xi_0)|\alpha\in k\}^-$ and $\xi_0$ corresponds to a monomial ideal. There
are only finitely many $T(3;k)$-fixed points $\xi_i$, $1\le i\le n$, in $H^d$.
Set $M:=\bigcup\limits^n_1\overline{\G_a\cdot\xi_i}$. Then $C\setminus M$ is
open and non empty, and choosing $\xi\in C\setminus M$ it follows that $f$ as
defined above has no infinite fiber.

\begin{proposition}\label{8}
  Let $V\subset H^d$ be a closed 2-dimensional subvariety, invariant under
  $G=\G_a\cdot T(3;k)$, not pointwise $\G_a$-invariant, but pointwise
  invariant under the $\G_m$-operation $\sigma(\lambda):x\mapsto x, y\mapsto
  y,z\mapsto\lambda z$. Then $V$ is not pointwise invariant under the
  $\G_m$-operation $\tau(\lambda):x\mapsto\lambda x, y\mapsto y,z\mapsto z$.
  And for all closed points $\zeta$ in an open non empty subset of $V$ one has
  $V=\overline{\G_a\cdot\G_m\cdot\zeta}$.\hfill $\Box$
\end{proposition}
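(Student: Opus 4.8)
The plan is to prove the two assertions separately, reusing the setup developed earlier in §8.3. Throughout, $\G_m$ in the statement is understood to act via $\tau(\lambda):x\mapsto\lambda x,\,y\mapsto y,\,z\mapsto z$: the $\sigma$-action fixes $V$ pointwise by hypothesis, so $\tau$ is the only $\G_m$-action available. First I would prove that $V$ is not pointwise $\tau$-fixed. Suppose it were. Every point of $H^d$ is fixed by the diagonal torus $D=\{(\lambda,\lambda,\lambda)\}$, and $\sigma$, $\tau$ and $D$ together generate all of $T(3;k)$ (given a diagonal matrix $\mathrm{diag}(a,b,c)$ one writes it as $D(b)\tau(a/b)\sigma(c/b)$). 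Hence the pointwise stabilizer of $V$ in $T(3;k)$, being a subgroup containing $\sigma(\G_m)$, $\tau(\G_m)$ and $D$, would be all of $T(3;k)$, so $V\subset (H^d)^{T(3;k)}$, which is finite — contradicting $\dim V=2$. Thus $V$ is not pointwise $\tau$-invariant, and together with the hypothesis this means $V^{\G_a}$ and $V^{\tau}$ are \emph{proper} closed subvarieties of the irreducible surface $V$.

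For the second assertion, I would introduce, for a closed point $\xi\in V(k)$, the morphism $f\colon\G_a\times\G_m\to V$, $(\alpha,\lambda)\mapsto\psi_\alpha\,\tau(\lambda)\,\xi$. The whole argument reduces to showing that $f$ has finite fibres whenever $\xi$ ranges over a suitable open dense subset of $V$: granting this, $f(\G_a\times\G_m)$ is a $2$-dimensional constructible subset of the irreducible $2$-dimensional $V$, hence dense, and therefore $V=\overline{\G_a\cdot\G_m\cdot\xi}$, as claimed.

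To control the fibres, I would argue contrapositively. An infinite fibre of $f$ produces $(\beta,\mu)$ with $\psi_\alpha\tau(\lambda)\xi=\psi_\beta\tau(\mu)\xi$, equivalently $\psi_{(\alpha-\beta)\lambda^{-1}}(\xi)=\tau(\lambda^{-1}\mu)\xi$, for infinitely many pairs $(\alpha,\lambda)$. Since $\xi\notin V^{\G_a}$, distinct $\alpha$ give distinct points $\psi_\alpha(\xi)$, so infinitely many $\lambda$ must occur; hence the curves $C:=\overline{\G_a\cdot\xi}$ and $D:=\overline{\tau(\G_m)\cdot\xi}$ (genuine curves because $\xi\notin V^{\G_a}\cup V^{\tau}$) share infinitely many points and so coincide as reduced curves. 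Now $C$ has a unique $\G_a$-fixed point, and since $\tau$ normalizes the $\psi$-action — explicitly $\tau(\lambda)\psi_\alpha\tau(\lambda)^{-1}=\psi_{\alpha\lambda}$ — that point is $\tau$-fixed, hence one of the two limits $\xi_{0/\infty}:=\lim\limits_{\lambda\to 0/\infty}\tau(\lambda)\xi$ (which exist since $H^d$ is projective); say $\xi_\infty$ is the $\G_a$-fixed one. Then $\xi_0$ is fixed by $\tau$, by $\sigma$ (all of $V$ is), and by $D$, hence by $T(3;k)$, so $\xi_0$ is one of the finitely many monomial ideals $\xi_1,\dots,\xi_n$ in $H^d$ and $C=D=\overline{\G_a\cdot\xi_0}$. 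Therefore $\xi\in M:=\bigcup_{i=1}^{n}\overline{\G_a\cdot\xi_i}$, a closed subset of dimension $\le 1$, hence proper in $V$. Consequently, for every $\xi$ in the open dense set $V\setminus(V^{\G_a}\cup V^{\tau}\cup M)$ the morphism $f$ is quasi-finite, which is exactly what is needed.

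The main obstacle is precisely this fibre-finiteness step, i.e. the chain ``infinite fibre $\Rightarrow$ $C=D$ $\Rightarrow$ $\xi$ lies on one of finitely many $\G_a$-orbit closures through monomial ideals''; everything else (the torus-generation remark, the constructibility/dimension count, the normalization identity $\tau(\lambda)\psi_\alpha\tau(\lambda)^{-1}=\psi_{\alpha\lambda}$) is routine. The details I would double-check are the exact conjugation exponent when the orbit equation is rewritten, the existence of the limit points $\xi_{0/\infty}$, and the standard fact that a $\G_a$-orbit closure which is a curve carries exactly one fixed point.
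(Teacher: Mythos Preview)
Your proposal is correct and follows essentially the same route as the paper: the paper argues in \S8.3 exactly as you do, first noting that pointwise $\tau$-invariance would force $V$ into the finite set $(H^d)^{T(3;k)}$, and then showing that an infinite fibre of $f$ forces $C=D$, hence $\xi_0$ monomial and $\xi\in M:=\bigcup_i\overline{\G_a\cdot\xi_i}$. Your write-up is in fact slightly more explicit than the paper's (you spell out the conjugation identity and the generation of $T(3;k)$ by $\sigma,\tau,D$), but the argument is the same.
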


\section{Standard cycles.}\label{8.4}
In the following we suppose $d\ge 5$ and we recall to memory the closed
subscheme
\[
  \cH=\bigcup \{H_{\varphi}\subset H^d\;|\; g^*(\varphi)>g(d)\}
\]
of $H^d$ (cf. 1.2.2). As usual, we let $\G_a$ operate by $\psi_{\alpha} :
x\mapsto x,y\mapsto\alpha x+y,z\mapsto z$ and we let $\G_m$ operate by
$\sigma(\lambda):x\mapsto x,y\mapsto y,z\mapsto\lambda z$ (by
$\sigma(\lambda):x\mapsto\lambda x,y\mapsto y,z\mapsto z$) on $S$ in the
inhomogeneous case (in the homogeneous case, respectively).

\subsection{}\label{8.4.1}
\begin{definition}
  Let $C\subset\cH$ be a $B=B(3;k)$ -invariant curve, which is not pointwise
  $\G_a$-invariant. Then $C=\{\pal (\xi)|\alpha\in k\}^-$, where
  $\xi\leftrightarrow\cI$
  is an ideal, invariant under\\
  $T=T(3;k)$ and $\Gamma=\left\{ \left(
      \begin{array}{ccc}
        1&0&*\\0&1&*\\0&0&1\end{array}\right)\right\}<\Delta:=U(3;k)$, which is
  uniquely determined by $C$ (cf. [T1], Proposition 0, p. 3). We call $C$ a
  $x$-standard cycle respectively a $y$-standard cycle, if $\cI$ has
  $x$-standard form respectively $y$-standard form (see 2.4.3 Definition 2).
\end{definition}

\subsection{}\label{8.4.2}
Let $V\subset Z=Z(\cH)$ be a 2-dimensional, $B$-invariant subvariety, where
$Z$ is defined as in (1.2.1). We suppose, that $V$ contains a $y$-standard
cycle. Then $V$ is not pointwise $\G_a$-invariant, so that we can write
$V=\overline{\G_m\cdot\G_a\cdot\zeta}$, where $\zeta\in V(k)$ is as in
Proposition 8.1 or Proposition 8.2 , respectively. By the definition of $Z$, the
orbit $\Delta\cdot\zeta$ has a dimension $\le 1$. As $\zeta$ is not
$\Delta$-invariant, the inertia group $\Delta_{\zeta}$ of $\zeta$ in $\Delta$
has the form \\
$G(a:b)=\left\{ \left(\begin{array}{ccc}
      1&\alpha&*\\0&1&\beta\\0&0&1\end{array}\right)
  \in\Delta|a\alpha+b\beta=0\right\}$, where $a,b\in k$ and not both elements
are\\
zero (cf. Appendix D, Lemma 1). Let $\cI$ be the ideal corresponding to
$\zeta$. If $\varphi$ is the Hilbert function of $\cI$, then
$g^*(\varphi)>g(d)$ by the definition of $\cH$ and thus
$\cI=\ell\cK(-1)+f\cO_{\mP^2}(-m)$, where $\cK\subset\cO_{\mP^2}$ has the
colength $ c $, $f\in S_m, c+m=d$ and $m=\reg (\cI)\ge c+2$ ( see 
Lemma 2.1 -- Lemma 2.4). If $\nu : = \min \{ n|H^0(\cI(n))\neq (0)\}$, then
$\nu<m$ . This follows from Lemma 2.2 and Corollary 2.1 . As $G(a:b)$ is
unipotent, there is an eigenvector $f\in H^0(\cI(\nu))$. From $x\;\partial
f/\partial z\in\langle f\rangle$ and $bx\;\partial f/\partial y-ay\partial
f/\partial z\in\langle f\rangle$ we conclude that $f=x^{\nu}$, if we assume
$b\neq 0$, which we do now. Let be $\eta\in V(k)$ any point. If $\cL$ is the
corresponding ideal, then $x^{\nu}\in H^0(\cL(\nu))$. ( Proof : This is first
of all true for all $\eta\in W:=\G_m\cdot\G_a\cdot\zeta$. By means of the
mapping $\cJ\mapsto H^0(\cJ(d))$ we can regard $H^d$ as a closed subscheme of
$\GG=\Grass_{Q(d)}(S_d)$. If $\cJ\in H^d$ is any ideal, the condition
$x^{\nu}\in H^0(\cJ(\nu))$ is equivalent to the condition
$x^{\nu}S_{d-\nu}\subset H^0(\cJ(d))$. An element of $\GG(\Spec A)$ is a
subbundle $L\subset S_d\otimes A$ of rank $Q(d)$, and the condition
$x^{\nu}S_{d-\nu}\subset L$ defines a closed subscheme of $\GG$.  Thus the
condition $x^{\nu}\in H^0(\cL(\nu))$ defines a closed subset of $V$.  As
$V=\overline{W}$, this condition is fulfilled for all points of $V$.)  Assume
that $\cL$ has $y$-standard form. Then $\cL=y\cdot\cM(-1)+g\cO_{\mP^2}(-n)$,
where $e:=$ colength $(\cM), n:=\reg (\cL)\ge m$, and $e+n=d$. As $\nu<m\le n$
we get $x^{\nu}\in H^0(\cL(\nu))=yH^0(\cM(\nu-1))$, contradiction.

\begin{lemma}\label{10}
  If $V\subset Z(\cH)$ is a $B$-invariant surface which contains a
  $y$-standard cycle, then $V$ is point-wise invariant under $\Gamma$.
\end{lemma}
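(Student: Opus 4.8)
The plan is to continue the line of argument begun in (8.4.2). Since $V$ is two–dimensional and not pointwise $\G_a$-invariant, Proposition 8.1 (in the inhomogeneous case) or Proposition 8.2 (in the homogeneous case) supplies a closed point $\zeta\in V(k)$ with $V=\overline{\G_m\cdot\G_a\cdot\zeta}$; because $V\subset Z(\cH)$ the orbit $\Delta\cdot\zeta$ has dimension $\le 1$, and $\zeta$ cannot be $\Delta$-invariant (else $\zeta$ would be $\G_a$-invariant and $V=\overline{\G_m\cdot\zeta}$ would have dimension $\le 1$), so by Appendix D, Lemma 1 the inertia group has the form $\Delta_\zeta=G(a:b)$ for some $(a,b)\in k^2\setminus\{0\}$. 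First I would invoke the computation already carried out in (8.4.2): choosing an eigenvector $f\in H^0(\cI(\nu))$ of the unipotent group $G(a:b)$ (where $\nu:=\min\{n:H^0(\cI(n))\neq 0\}<m$ by Lemma 2.2 and Corollary 2.1), the relations $x\,\partial f/\partial z\in\langle f\rangle$ and $bx\,\partial f/\partial y-ay\,\partial f/\partial z\in\langle f\rangle$ force $f=x^{\nu}$ as soon as $b\neq 0$; since ``$x^{\nu}\in H^0(\cdot(\nu))$'' is a closed condition stable under $\G_m$ and $\G_a$, it then holds on all of $V=\overline{\G_m\cdot\G_a\cdot\zeta}$, and evaluating it at a point of the $y$-standard cycle — whose ideal $\cL$ has $y$-standard form, so $H^0(\cL(\nu))=yH^0(\cM(\nu-1))$ because $\nu<m\le\reg(\cL)$ — gives a contradiction. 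Hence $b=0$, whence $a\neq 0$, and the defining relation $a\alpha+b\beta=0$ of $G(a:b)$ collapses to $\alpha=0$; that is, $\Delta_\zeta=G(a:0)=\left\{\left(\begin{smallmatrix}1&0&*\\0&1&*\\0&0&1\end{smallmatrix}\right)\right\}=\Gamma$.

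It remains to upgrade ``$\zeta$ is $\Gamma$-invariant'' to ``$V$ is pointwise $\Gamma$-invariant''. The point to use is that $\Gamma$ is normal in $B=B(3;k)$: its Lie algebra $\langle x\,\partial/\partial z,\ y\,\partial/\partial z\rangle$ is an ideal of $\mathrm{Lie}(B)$ — equivalently, a direct matrix multiplication shows $g\Gamma g^{-1}=\Gamma$ for every $g$ in $U(3;k)$ and in $T(3;k)$. In particular $\Gamma$ is normalised by both $\G_m$ and $\G_a$. Therefore, for any $g\in\G_m\cdot\G_a$ and any $h\in\Gamma$ we have $h\cdot(g\zeta)=g\cdot(g^{-1}hg)\cdot\zeta=g\zeta$, since $g^{-1}hg\in\Gamma=\Delta_\zeta$; so every point of the orbit $\G_m\cdot\G_a\cdot\zeta$ lies in the fixed-point scheme $V^{\Gamma}$. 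As $V^{\Gamma}$ is closed, $V=\overline{\G_m\cdot\G_a\cdot\zeta}\subset V^{\Gamma}$, which is the assertion.

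The steps I would not dwell on are the routine verifications flagged above: that $\nu<m$, that the locus of $\cJ\in V$ with $x^{\nu}\in H^0(\cJ(\nu))$ is closed (using the embedding $V\hookrightarrow\Grass_{Q(d)}(S_d)$ and that $x^{\nu}\in H^0(\cJ(\nu))$ is equivalent to $x^{\nu}S_{d-\nu}\subset H^0(\cJ(d))$), and that $\G_m$ and $\G_a$ each send $\langle x^{\nu}\rangle$ to itself so that this condition propagates along the orbit. The genuinely substantive ingredient is the eigenvector computation together with the incompatibility of ``$x^{\nu}\in H^0(\cL(\nu))$'' with $\cL$ being of $y$-standard form — this is precisely where the hypothesis that $V$ contains a $y$-standard cycle enters — and I expect this, rather than the (purely formal) spreading argument, to be the heart of the matter; it has in fact been essentially established already in the discussion preceding the lemma, so the proof proper is mostly a matter of extracting the conclusion $\Delta_\zeta=\Gamma$ and propagating it.
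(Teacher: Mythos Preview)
Your proposal is correct and follows essentially the same approach as the paper: the argument in (8.4.2) forces $b=0$ via the eigenvector $x^{\nu}$ and its incompatibility with $y$-standard form, whence $\Delta_\zeta=\Gamma=G(1:0)$, and then the paper concludes exactly as you do by noting that $\Gamma$ is normalized by $\G_a$ and $T$ so that pointwise $\Gamma$-invariance propagates from $\zeta$ to the dense orbit $\G_m\cdot\G_a\cdot\zeta$ and hence to $V$. Your identification of which steps are routine and which carry the content matches the paper's own division of labour.
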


\begin{proof}
  From the foregoing reasoning it follows that $b=0$, i.e., $\zeta$ is
  invariant under $\Gamma=G(1:0)$. As $\Gamma$ is normalized by $\G_a$ and
  $T$, it follows that $\G_m\cdot\G_a\cdot\zeta$ is point-wise invariant under
  $\Gamma$, and the same is true for $V$.
\end{proof}

\subsection{}\label{8.4.3}
We suppose that $V\subset Z(\cH)$ is a $B$-invariant surface, which contains a
$y$-standard cycle. We represent $V$ in the form
$V=\overline{\G_m\cdot\G_a\cdot\zeta}$, according to Proposition 8.1 or
Proposition 8.2, respectively.

\begin{lemma}\label{11}
  (a) One can assume without restriction that $\cJ\leftrightarrow\zeta$ has $y$-standard form.\\
  (b) $\cI_{0/\infty}$ are monomial ideals and have $y$-standard form.
\end{lemma}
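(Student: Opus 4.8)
Here is my plan for proving Lemma 8.2.

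\smallskip

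\textbf{Overall strategy.} The surface $V$ is represented as $V=\overline{\G_m\cdot\G_a\cdot\zeta}$ via Proposition 8.1 (inhomogeneous case) or Proposition 8.2 (homogeneous case), and by Lemma 8.1 it is pointwise invariant under $\Gamma$, hence $V\subset U_t$ need not be invoked but the $\Gamma$-invariance of every point is the key hypothesis. The two assertions are closely linked: once we know $\cJ\leftrightarrow\zeta$ can be taken of $y$-standard form, part (b) will follow from the semicontinuity/limit behaviour of standard forms under the $\sigma$-action recorded in Remark 2.3 and Remark 2.5. So the plan is: first establish (a), then deduce (b).

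\smallskip

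\textbf{Proof of (a).} By hypothesis $V$ contains a $y$-standard cycle $C_0=\overline{\G_a\cdot\eta_0}$ with $\eta_0\leftrightarrow\cI_0$ of $y$-standard form. Since $V$ is irreducible and $C_0\subset V$ is a curve, the set of points of $V$ lying on $y$-standard cycles is a constructible subset; I would argue it contains a nonempty open subset of $V$ and that $\zeta$ can be chosen inside it, because in both Proposition 8.1 and Proposition 8.2 the point $\zeta$ may be taken in an arbitrary nonempty open subset of $V$ (this is exactly the force of the phrase ``for all closed points $\zeta$ in an open non empty subset of $V$''). The point is to show that ``$\cJ$ has $y$-standard form'' is an open (or at least generic) condition on $V$: the Hilbert function $\varphi$ of the ideal is constant on a dense open subset of $V$, and on that open set, because $V$ is pointwise $\Gamma$-invariant and $T$-orbits are at most $1$-dimensional (Definition of $Z$), the generic ideal $\cJ$ is invariant under $\Gamma\cdot T(\rho)$ with $\rho_2\neq 0$ (Proposition 8.1(ii)/8.2); then Lemma 2.6 applies and gives $\cJ=x\cK(-1)+f\cO_{\mP^2}(-m)$ or $\cJ=y\cK(-1)+f\cO_{\mP^2}(-m)$. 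By Remark 2.4 exactly one of these holds, and which one is determined by the Hilbert function $\varphi$ (the shape of $E(H^0(\cJ_0(d)))$, cf.\ Remark 2.3). Hence if one point of $V$ with Hilbert function $\varphi$ has $y$-standard form, then every point of the dense open locus $\{\varphi_{\cdot}=\varphi\}$ does; since $C_0$ is a $y$-standard cycle its generic point has this property, and $\varphi$ must be the generic Hilbert function on $V$ (Hilbert function is upper semicontinuous and $C_0$ is a curve through which $V$ degenerates). Therefore the generic $\zeta$ has $y$-standard form, which is what (a) asserts.

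\smallskip

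\textbf{Proof of (b).} Write $\cJ\leftrightarrow\zeta$ of $y$-standard form, $\reg(\cJ)=m$, $\colength=d$. In the inhomogeneous case $\G_m$ acts by $\sigma(\lambda)\colon z\mapsto\lambda z$, so $\cI_0=\lim_{\lambda\to0}\sigma(\lambda)\cJ$ and $\cI_\infty=\lim_{\lambda\to\infty}\sigma(\lambda)\cJ$; by Remark 2.3 and Remark 2.5 both limits again have $y$-standard form, and being $T$-fixed limits of the $\sigma$-action they are monomial ideals. In the homogeneous case $V$ is pointwise $\sigma$-invariant and $\G_m$ acts by $\tau(\lambda)\colon x\mapsto\lambda x$; here $\cJ$ is already $\sigma$-invariant (so $z$-degrees are as in the monomial picture), and $\cI_{0/\infty}=\lim_{\lambda\to0/\infty}\tau(\lambda)\cJ$ are $T$-fixed, hence monomial; that they retain $y$-standard form follows again from the determination of the standard-form type by the Hilbert function (Remark 2.4) together with the fact that $x^\nu\notin H^0(\cJ(\nu))$ for $\nu=\min\{n\mid H^0(\cJ(n))\neq0\}$ is preserved in the limit — the argument in (8.4.2) showing $y$-standard form is incompatible with $x^\nu\in H^0$ applies verbatim to $\cI_{0/\infty}$ since $x^\nu\in H^0(\cL(\nu))$ for all $\cL$ in $V$, in particular for $\cL=\cI_{0/\infty}$, would contradict $y$-standard form. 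Hence $\cI_{0/\infty}$ are monomial and of $y$-standard form.

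\smallskip

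\textbf{Main obstacle.} The delicate point is part (a): rigorously justifying that ``$y$-standard form'' propagates from $C_0$ to the generic point of $V$. The content is that the $x$- versus $y$-standard dichotomy is governed purely by the Hilbert function (Remark 2.4 plus the figures), so I must be careful that $C_0$'s generic Hilbert function equals $V$'s generic Hilbert function — this uses that $V$ is a degeneration of $C_0$ under $\G_m$ and semicontinuity of $h^0$. Everything else is an assembly of Lemma 2.6, Remarks 2.3–2.5, and Propositions 8.1–8.2.
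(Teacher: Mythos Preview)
Your approach to part (a) has a genuine gap. The claim that ``the $x$- versus $y$-standard dichotomy is governed purely by the Hilbert function'' is false: the ideals $(x,y^d)$ and $(y,x^d)$ have the same Hilbert function $\varphi$ (with $g^*(\varphi)=\binom{d-1}{2}>g(d)$), yet one has $x$-standard form and the other $y$-standard form. Remark~2.4 only shows an ideal cannot have both forms simultaneously; it does not say the type is read off from $\varphi$. Consequently your propagation argument ``if one point of $V$ with Hilbert function $\varphi$ has $y$-standard form, then every point of the dense open locus $\{\varphi_\cdot=\varphi\}$ does'' does not go through. There is also a direction error in your obstacle paragraph: the $y$-standard cycle $C_0\subset V$ is $B$-invariant, hence lies in the boundary of the open $\G_m\cdot\G_a$-orbit, so $C_0$ is a \emph{specialization} of the generic point of $V$, not the other way round; semicontinuity gives $\varphi_{C_0}\ge\varphi_{\mathrm{gen}}$, not equality.

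The paper's argument is different and avoids this problem entirely. It does not compare Hilbert functions but instead uses a \emph{closed} incidence condition: if $\cI\leftrightarrow\zeta$ had $x$-standard form, then by Corollary~2.3 one has $xR_{m-2}\subset H^0(\cI(m-1))$. Since $xR_{m-2}$ is stable under both $\G_a$ and the relevant $\G_m$-action, and since containment of a fixed subspace is a closed condition on the Grassmannian, this inclusion propagates from the dense orbit $\G_m\cdot\G_a\cdot\zeta$ to all of $V$. But for any ideal $\cJ\in V$ in $y$-standard form one has $H^0(\cJ(m-1))=yH^0(\cM(m-2))$ (using $\reg(\cJ)\ge m$ by semicontinuity), and $xR_{m-2}\subset yH^0(\cM(m-2))$ forces $y\mid x^{m-1}$, a contradiction. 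So no point of $V$ can have $y$-standard form, contradicting the hypothesis. In the homogeneous case one additionally observes $\ell=\beta x+y$ and applies $\psi_{-\beta}$ to arrange $\ell=y$.

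For part (b), your appeal to Remarks~2.3 and~2.5 is exactly the paper's argument in the inhomogeneous case (those remarks encode the same $yR_{m-2}\subset H^0(\cdot(m-1))$ closed-condition reasoning). But your treatment of the homogeneous case is muddled: Remarks~2.3/2.5 concern the $\sigma$-action $z\mapsto\lambda z$, not $\tau\colon x\mapsto\lambda x$, and your invocation of (8.4.2) goes in the wrong direction (that section derives $x^\nu\in H^0$ from $b\neq0$, which is not the situation here). The paper simply reuses the same closed-condition argument: $yR_{m-2}$ is also $\tau$-invariant, so $yR_{m-2}\subset H^0(\cI_{0/\infty}(m-1))$ follows, forcing $\ell=y$ in the standard decomposition of $\cI_{0/\infty}$. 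Monomiality of $\cI_{0/\infty}$ is immediate since in either case $\zeta_{0/\infty}$ is fixed by all of $T(3;k)$.
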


\begin{proof}
  Let be $\zeta \leftrightarrow \cI$ as in Proposition 8.1 (resp. in Proposition 8.2 ).\\
  (a) By Lemma 2.6), $\cI$ has $x$- or $y$-standard form. First we
  consider the inhomogeneous case . From $\cI=x\cK(-1)+f\cO_{\mP}(-m)$ and the
  $\Gamma$-invariance of $\cI$ (Lemma 8.1), the $\Gamma$-invariance of $\cK$
  follows. By (Appendix C, Remark 2) we have $R_c\subset H^0(\cK(c))$, and
  because of $m\ge c+2$ we get $R_{m-2}\subset H^0(\cK(m-2))$, thus
  $xR_{m-2}\subset xH^0(\cK(m-2))=H^0(\cI(m-1))$. We conclude that
  $xR_{m-2}\subset H^0(\cJ(m-1))$, hence $xR_{m-2}\cdot S_{d-m+1}\subset
  H^0(\cJ(d))$, if $\cJ\leftrightarrow\eta$ is any point of
  $\G_m\cdot\G_a\cdot\zeta$. The same reasoning as in (8.4.2) shows that
  $xR_{m-2}\cdot S_{d-m+1}\subset H^0(\cJ(d))$ and hence $xR_{m-2}\subset
  H^0(\cJ(m-1))$ is true for all $\cJ\leftrightarrow\eta\in V$. As
  $\cJ=\ell\cM(-1)+g\cO_{\mP^2}(-n), e=$ colength $(\cM), n=\reg (\cJ)\ge m,
  e+n=d$ and $\ell\in R_1-(0)$, it follows that $H^0(\cJ(m-1))=\ell
  H^0(\cM(m-2))\subset xR_{m-2}$, hence $\ell=x$. It follows, that no ideal in
  $V$ has $y$-standard form, contradiction. Thus the statement (a) is proved
  in the inhomogeneous case. If the homogeneous occurs and if we assume that
  $\zeta\leftrightarrow\cI$ would have $x$-standard form, the same
  argumentation as in the inhomogeneous case gives a contradiction. By 
  Lemma 2.2 we have a representation $\cI=\ell \cK(-1)+f\cO_{\mP^2}(-m)$, and
  if the homogeneous case occurs, because of the $\Gamma$-invariance of $\cI$,
   we conclude that $\ell= - \beta x+y$, where $\beta\in k$. Replacing $\cI$ by
  $\psi_{\-\beta}(\cI)=y\psi_{\-\beta}(\cK)(-1)+\psi_{\-\beta}(f)\cO_{\mP^2}(-m)$,
  we can assume without restriction that $\cI$ has $y$-standard form. \\
  (b) If $\cI=y\cK(-1)+f\cO_{\mP^2}(-m)$, colength $(\cK)=c, \reg (\cI)=m,
  c+m=d, m\ge c+2$, then $R_{m-2}\subset H^0(\cK(m-2))$ (Appendix C, Remark
  2), thus $yR_{m-2}\subset H^0(\cI(m-1))$, hence $yR_{m-2}\subset
  H^0(\sigma(\lambda)\cI(m-1)), \lambda\in k^*$. As $yR_{m-2}\subset
  H^0(\cJ(m-1))$ is equivalent to $yR_{m-2}\cdot S_{d-m+1}\subset
  H^0(\cJ(d))$, the condition $yR_{m-2}\subset H^0(\cJ(m-1))$ defines a closed
  subscheme of $V$, which is invariant under the $\G_m$-operation $\sigma$, as
  one sees by the same reasoning as in the proof of Lemma 8.1 . It follows that
  $yR_{m-2}\subset H^0(\cI_{0/\infty}(m-1))$. By Lemma 2.2 we can write
  $\cI_0=\ell\cM(-1)+g\cO_{\mP^2}(-n)$, where $n=\reg(\cI_0)\ge m$. But then
  $yR_{m-2}\subset H^0(\cI_0(m-1))=\ell H^0(\cM(m-2))$, showing that $\ell=y$.
  The same argument shows that $\cI_{\infty}$ has $y$-standard form, too.

  In the inhomogeneous case $\zeta$ is fixed by $T(\rho)$, where $\rho_2\neq
  0$ ( Proposition 8.1) , hence $\zeta_{0/\infty}$ are fixed by 
  $T(\rho)\cdot\G_m = T(3 ; k ) $ \\
  In the homogeneous case $\zeta_{0/\infty}$ are invariant under the two
  $\G_m$-operations $\sigma$ and $\tau$, hence are invariant under $T(3;k)$.
\end{proof}


\chapter{Relations between $B$-invariant 1-cycles} 
\label{ch:chap9}

 \section{Generalities on relations between 1-cycles} \label{sec:9.1}

 We describe the method of Hirschowitz in our special situation. $B=B(3;k)$
 operates on $H^d$ and we take a closed subscheme $X\subset H^d$, which is
 $B$-invariant. $Z^B_1(X)$ is the free group generated by $B$-invariant curves
 in $X$. It is easy to see that the canonical homomorphism $Z^B_1(X)\to
 A_1(X)$ is surjective (cf. [T1], Lemma 1, p. 6). By ([Hi], Theorem 1, p.87)
 the kernel ${\rm Rat}^B_1(X)$ can be described as follows. One has to
 consider all operations of $B$ on $\P^1$ and all two dimensional subvarieties
 $\cB\subset X\mathop{\times}\limits_k\P^1$ with the following properties:

\begin{enumerate}[(i)]
\item $p_2:\cB\to\P^1$ is dominant, hence surjective and flat.
\item The operation of $B$ on $\P^1$ fixes 0 and $\infty$.
\item $\cB$ is invariant under the induced operation $(\xi,\mu)\mapsto
  (g\xi,g\mu), \xi\in X, \mu\in\P^1, g\in B$, of $B$ on $X\times_k\P^1$.
\end{enumerate}

N.B. According to a general theorem of Fogarty, the fixed-point scheme
$(\P^1)^{U(3;k)}$ is connected; hence from (iii) it follows that $U(3;k)$
operates trivially on $\P^1$.

If $\cB_{\mu}:=p_2^{-1}(\mu)$ is the fibre and $B_{\mu}:=p_1(\cB_{\mu})$ is
the isomorphic image in $X$, then one has:
\begin{align*}
  g(\cB_{\mu}) & = \Set{ (g\xi,g\mu) | \xi\in X, \; \text{such that}\; (\xi,\mu)\in\cB} \\
  & = \Set{ (\xi,g\mu)|\xi\in X, \; \text{such that}\; (\xi,g\mu)\in\cB }\\
  & = \cB_{g\mu}, \; \text{for all}\; g\in B, \mu\in\P^1
\end{align*}
\noindent We conclude that
\begin{equation}\label{eq:1}
  gB_{\mu} = B_{g\mu},\ \mbox{for all}\ g\in B, \mu\in\P^1.
\end{equation}

From property (ii) it follows that $B_0$ and $B_{\infty}$ are $B$-invariant
1-cycles in $X$. Then $\Rat_1^B(X)$ is defined to be the subgroup of
$Z_1^B(X)$ generated by all elements of the form $B_0-B_{\infty}$, and the
theorem of Hirschowitz says that $A_1^B(X):=Z_1^B(X)/\Rat_1^B(X)$ is
canonically isomorphic to $A_1(X)$ (loc.cit.). We consider $V:=p_1(\cB)$ as a
closed subscheme of $X$ with the induced reduced scheme structure. As
$\cB\subset V\times_k\P^1, \dim V\ge 1$. If $\dim V=1$, then $\cB=V\times\P^1$
and $B_0-B_{\infty}$ is equal to zero in $Z_1^B(X)$. Thus we can assume
without restriction that $V$ is a $B$-invariant, 2-dimensional subvariety of
$X$.

\section{The standard construction}\label{sec:9.2}

Let $X\subset H^d$ be a $B(3;k)$-invariant subvariety. Start with a subvariety
$\cB\subset X\times_k\P^1$ as in (9.1), such that $V:=p_1(\cB)\subset
H^d$ is a 2-dimensional subvariety, which is automatically $B(3;k)$-invariant.
Assume $V$ is \emph{not} pointwise invariant under the $\G_a$-operation as in
8.2. Write
\begin{equation}\label{eq:2}
  V=\overline{\G_a\cdot\G_m\cdot\xi}
\end{equation}
where $\xi\in V(k)$ is a suitable point and $\G_m$ operates by
$\sigma(\lambda):x\mapsto x,y\mapsto y,z\mapsto\lambda z$ or by
$\sigma(\lambda):x\mapsto\lambda x,y\mapsto y,z\mapsto z$, respectively (cf.
 Proposition 8.1 and Proposition 8.2). Then $\G_m$ is a subgroup of $T$
and fixes $0=(0:1)$ and $\infty=(1:0)$ by assumption. Assume that $\sigma$
operates trivially on $\G_m$. From (9.1) it follows that
$\sigma(\lambda)B_{\mu} = B_{\mu} , \forall\lambda\in k^*$. If $\xi$ is the
point of (9.2), then one chooses $\mu$ in such a way that
$(\xi,\mu)\in\cB_{\mu}$. Then
$\pal(\xi,\mu)=(\pal(\xi),\mu)\in\cB_{\mu},\forall \alpha\in k$, hence
$\pal(\xi)\in B_{\mu},\forall \alpha\in k$. Because of (9.1) and
 (9.2) we would get $V\subset B_{\mu}$. Then the closed subscheme
$\cB_{\mu}\subset\cB$ has the dimension two and it follows $\cB_{\mu}=\cB$,
contradiction, as $p_2$ is dominant by assumption. This argumentation also
shows $\xi\notin B_0\cup B_{\infty}$, i.e. there is $\mu\in\P^1-\{0,\infty\}$
such that $(\xi,\mu)\in\cB_{\mu}$. Thus one can find $\lambda\in k^*$ such
that $\sigma(\lambda)\mu=(1:1)$. Replacing $\xi$ and $\mu$ by
$\xi'=\sigma(\lambda)\xi$ and $\sigma(\lambda)\mu$, respectively, then
 (9.2) is fulfilled with $\xi'$ instead of $\xi$. Thus one can assume
without restriction that $\mu=(1:1)$. As $\A^1=\P^1-\{\infty\}$ is fixed by
$\sigma, \G_m$ operates by $\sigma$ on $\A^1$ and fixes $(0:1)$. Then there is
$m\in\Z$ such that $\sigma(\lambda)(a:1)=(\lambda^ma:1)$ for all $a\in
k,\lambda\in k^*$. As the action of $\G_m$ on $\P^1$ is not trivial, $m\neq
0$.

$C:=\{\pal(\xi)|\alpha\in k\}^-\subset B_{(1:1)}$ is a curve in $V$; let $h$
be its Hilbert polynomial with respect to the usual embedding of $H^d$ in a
projective space (cf. [T2], 4.1.1). The association $\lambda\mapsto\sigma
(\lambda)C$ defines a morphism $\G_m\to\mathcal{X}: =\Hilb^h(V)$. It has an
extension to a morphism $\P^1\to\mathcal{X}$, which defines a flat family of
curves
\[
  \begin{array}{ccc}
  \cC & \hookrightarrow & V\times_k\P^1\\ & &\\
  & & \;\;\;\downarrow p_2\\ & &\\
  & & \P^1
 \end{array}
\]
If $\cC_{\lambda}:=p_2^{-1}(\lambda)$, then
$C_{\lambda}:=p_1(\cC_{\lambda})=\sigma(\lambda)C,\forall \lambda\in k^*$ and
\begin{equation}\label{eq:3}
  [p_1(\cC_0)]=[C]=[p_1(\cC_{\infty})] \in A_1(V).
\end{equation}

The finite morphism $\A^1-(0)\to\A^1-(0)$ defined by $\lambda\mapsto\lambda^m$
has an extension $f:\P^1\to\P^1$ such that $(\lambda:1)\mapsto
(\lambda^m:1),\forall\lambda\in k$, and $(1:0)\mapsto (1:0)$. For simplicity, the
morphism $1_V\times f:V\times\P^1\to V\times\P^1$ is denoted by $f$, again. By
construction $C\subset B_{(1:1)}$ and hence
$\sigma(\lambda)C\subset\sigma(\lambda) B_{(1:1)}=B_{\sigma(\lambda)(1:1)}$.
The fibre $\cC_{\lambda}=\sigma(\lambda)C\times \{ (\lambda:1)\}$ is mapped by
$f$ into $\sigma(\lambda)C\times \{ (\lambda^m:1)\}=\sigma(\lambda)(C\times \{
(1:1)\} )\subset\sigma(\lambda)(B_{(1:1)}\times \{ (1:1)\}
)=\sigma(\lambda)\cB_{(1:1)}=\cB_{\sigma(\lambda)(1:1)}, \forall\lambda\in
k^*$.

This construction of the family $\cC$ is called \emph{standard construction}
and the proof in ([T1], Lemma 1, p.6) shows that $\cC\subset V\times_k\P^1$ is
a subvariety. As $\cC_0$ and $\cC_{\infty}$ are closed in $\cC,\
\overset{o}{\cC}\ : = \cC \ - (\cC_0\cup\cC_{\infty})$ is open in $\cC$.
Because $\cC$ and $\cB$ are irreducible and $f$ is projective, from
$f(\overset{o}{\cC}) \subset\cB$ we conclude that $f(\cC)=\cB$. As
$V\times \{ 0\}$ and $V\times \{\infty\}$ are mapped by $f$ into itselve,
$\cC_0\subset V\times \{0\}$ and $\cC_{\infty}\subset V\times \{\infty\}$ are
mapped by $f$ into $\cB\cap V\times \{0\}=\cB_0$ and $\cB\cap V\times
\{\infty\}=\cB_{\infty}$, respectively. As $f(\cC)=\cB$, we get
$f(\cC_0)=\cB_0$ and $f(\cC_{\infty})=\cB_{\infty}$, i.e. $\cC_0=\cB_0$ and
$\cC_{\infty}=\cB_{\infty}$. The curves $p_1(\cC_0)$ and $p_1(\cC_{\infty})$
are called the \emph{limit curves} (of the standard construction) and are
denoted by $\CC_0=\lim\limits_{\lambda\to 0}\sigma(\lambda)C$ and
$\CC_{\infty}=\lim\limits_{\lambda\to\infty}\sigma(\lambda)C$, respectively,
and one has
\begin{equation}\label{eq:4}
  B_0-B_{\infty}=\CC_0-\CC_{\infty} \quad \mathrm{in}\quad Z_1^B(X).
\end{equation}

We note that the relation (9.4) was derived under the assumption that
$V$ is not pointwise invariant under $\G_a$. If $V$ is pointwise invariant
under $\G_a$, the standard construction cannot be carried out. We get

\begin{lemma}\label{lem:12}
  Let $X$ be as before. Elements of $\Rat_1^B(X)$ are either of the form $\sum
  q_iC_i$, where $q_i\in \Q$ and $C_i\subset X$ is a $B(3;k)$-invariant 1-prime
  cycle in $X^{\G_a}$ or they occur in the following way: One considers all
  $B(3;k)$-invariant 2-dimensional subvarieties $V\subset X$, which are not
  pointwise invariant under $\G_a$. Then there are points $\xi\in V(k)$ such
  that $V=\ol{\G_m\cdot\G_a\cdot\xi}$, where $\G_m$ operates by
  $\sigma(\lambda):x\mapsto x,y\mapsto y,z\mapsto\lambda z$ on $S$
  (inhomogeneous case) or by $\sigma(\lambda):x\mapsto\lambda x, y\mapsto
  y,z\mapsto z$ (homogeneous case). $C:= \{\pal (\xi)|\alpha \in k\}^- $ is a
  curve in $V$ (with the reduced induced scheme structure). By means of the
  standard construction it defines a family of curves $\cC\subset
  V\times_k\P^1$, which is flat over $\P^1$ and with the fibres
  $\cC_{\lambda}=\sigma(\lambda)C$ for all $\lambda\in\P^1-\{0,\infty\}$.
  $\Rat_1^B(X)$ is generated by the relations in $X^{\G_a}$ noted above as
  well as by the relations $\CC_0-\CC_{\infty}$, where
  $\CC_{0/\infty}=\lim\limits_{\lambda\to 0/\infty} \sigma(\lambda) C$ are the
  limit curves of $C$ in $V$ and these are $\G_a$-invariant 1-cycles.
\end{lemma}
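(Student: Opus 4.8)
The statement is a summary of the Hirschowitz machinery specialized to our setting, so the plan is to assemble it from the three ingredients already in hand: the surjectivity of $Z_1^B(X)\to A_1(X)$ (from [T1], Lemma~1), the explicit description of the kernel via two-dimensional $B$-invariant subvarieties $\cB\subset X\times_k\P^1$ (from [Hi], Theorem~1), and the dichotomy of Propositions~8.1 and~8.2 for $B$-invariant surfaces $V\subset H^d$ that are not pointwise $\G_a$-invariant. First I would recall from Section~9.1 that every generator of $\Rat_1^B(X)$ has the form $B_0-B_\infty$ coming from such a $\cB$, and that we may assume $V:=p_1(\cB)$ is a $B$-invariant $2$-dimensional subvariety (if $\dim V=1$ then $\cB=V\times\P^1$ and $B_0-B_\infty=0$).

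\textbf{Key steps.} Next I would split into two cases according to whether $V$ is pointwise invariant under $\G_a$. If it is, then no nontrivial $\cB$ can be built over $V$ by the standard construction, and one checks directly that the corresponding relation reduces to a $\Q$-combination of $B$-invariant $1$-prime cycles lying in $X^{\G_a}$; this gives the first alternative in the statement. If $V$ is \emph{not} pointwise $\G_a$-invariant, then by Proposition~8.1 (inhomogeneous case) or Proposition~8.2 (homogeneous case) there is a point $\xi\in V(k)$ with $V=\overline{\G_m\cdot\G_a\cdot\xi}$, where $\G_m$ acts by $\sigma(\lambda):x\mapsto x,y\mapsto y,z\mapsto\lambda z$ or by $\sigma(\lambda):x\mapsto\lambda x,y\mapsto y,z\mapsto z$ respectively. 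One then runs the standard construction of Section~9.2 verbatim: $C=\{\psi_\alpha(\xi)\mid\alpha\in k\}^-$ is a curve in $V$, the morphism $\lambda\mapsto\sigma(\lambda)C$ extends over $\P^1$ to a flat family $\cC\subset V\times_k\P^1$ with $\cC_\lambda=\sigma(\lambda)C$ for $\lambda\neq 0,\infty$, and the identification $f(\cC_0)=\cB_0$, $f(\cC_\infty)=\cB_\infty$ gives
\begin{equation*}
  B_0-B_\infty=\CC_0-\CC_\infty\quad\text{in }Z_1^B(X),
\end{equation*}
with $\CC_{0/\infty}=\lim_{\lambda\to 0/\infty}\sigma(\lambda)C$. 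Finally one verifies that $\CC_0$ and $\CC_\infty$ are $\G_a$-invariant $1$-cycles: their support is fixed by the limiting $\G_m$-action, hence (using that $\G_a$ is normalized by $T$ and that the limit points are $T(3;k)$-fixed, as recorded at the end of the proof of Lemma~8.3) they are pointwise $\G_a$-invariant.

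\textbf{Main obstacle.} The bookkeeping is routine once Propositions~8.1--8.2 are available; the one point requiring genuine care is the reconciliation of the abstract Hirschowitz family $\cB$ with the concretely built family $\cC$ — namely showing $f(\cC)=\cB$, $\cC_0=\cB_0$, $\cC_\infty=\cB_\infty$, and that the standard construction applies, which forces one to check $\xi\notin B_0\cup B_\infty$ and that $\sigma$ acts nontrivially on $\P^1$. These verifications are exactly the content of Section~9.2, so the proof of the lemma is essentially a matter of collecting that discussion together with the case distinction; I would write it as a short synthesis citing (9.4), Section~9.1, and Propositions~8.1 and~8.2, rather than reproving anything.
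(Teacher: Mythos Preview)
Your proposal is correct and follows the paper's approach: the lemma is indeed a synthesis of Section~9.1, the standard construction of Section~9.2 (culminating in (9.4)), and the dichotomy of Propositions~8.1/8.2, with the only residual point being the $\G_a$-invariance of $\CC_{0/\infty}$.

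One small correction on that last point: your argument for $\G_a$-invariance is more roundabout than needed (and the reference to ``Lemma~8.3'' does not exist). The paper's proof is a single line: from the commutation relation $\sigma(\lambda)\psi_\alpha=\psi_{\alpha\lambda}\sigma(\lambda)$ in the homogeneous case (and $\sigma(\lambda)\psi_\alpha=\psi_\alpha\sigma(\lambda)$ in the inhomogeneous case) one sees that \emph{every} fibre $\sigma(\lambda)C=\{\psi_\alpha\sigma(\lambda)\xi\mid\alpha\in k\}^-$ is already a $\G_a$-orbit closure, hence $\G_a$-invariant for all $\lambda\in k^*$; the invariance of the limits then follows by closedness. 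You do not need to invoke $T(3;k)$-fixedness of limit points.
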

\begin{proof}
  As $\sigma(\lambda)\pal=\psi_{\alpha\lambda}\sigma(\lambda)$ in the
  homogeneous case, $\sigma(\lambda) C=\{\pal \sigma(\lambda)\xi|\alpha\in
  k\}^-$ is $\G_a$-invariant, $\forall \lambda\in k^*$. And the same is true
  in the inhomogeneous case .
\end{proof}

\section{Relations containing a $y$-standard cycle}\label{sec:9.3}

Suppose $d\ge 5$. The closed subscheme $\cH\subset H^d$ (cf.~Section 8.4) is
clearly $B$-invariant. As $U(3;k)$ is normalized by $T(3;k)$, the closed
subscheme $X=Z(\cH)$ is $B$-invariant, too (cf.Section 1.2). From 
Lemma 8.1 it follows that relations in $Z_1^B(X)$ containing a $y$-standard
cycle are defined by 2-dimensional $B$-invariant surfaces $V\subset X$, which
are pointwise $\Gamma$-invariant but not pointwise $\G_a$-invariant.

We can write $V=\ol{\G_m\cdot\G_a\cdot\zeta}$, where $\zeta\in V(k)$ is a
point as in Proposition 8.1 or Proposition 8.2. By 
Lemma 8.2 we can assume that $\cI\leftrightarrow\zeta$ has $y$-standard form.
Let be $\zeta_{0/\infty}=\lim\limits_{\lambda\to
  0/\infty}\sigma(\lambda)\zeta$ , where $ \sigma (\lambda) $ denotes one of the
  two $ \G_m $ - operations mentioned in Lemma 9.1 .Then carry out the standard construction by
using the curve $C:= \Set{ \pal (\zeta)|\alpha\in k }^-$. By Lemma 8.2 
$C_{0/\infty}$ are $y$-standard cycles. We conclude from Proposition 8.1
 and Proposition 8.2 that $W:=\G_m\cdot\G_a\cdot\zeta$ is
$B$-invariant and therefore $V - W$ is a union of $B$-invariant curves and
points. As $\CC_{0/\infty}$ is invariant under $\G_m$ and $\G_a$, from
$W\cap\CC_{0/\infty}\neq\emptyset$ it would follow that
$V\subset\CC_{0/\infty}$. Hence $\CC_{0/\infty}\subset V-W$, and
$\CC_{0/\infty}$ is a union of $B$-invariant curves, too. As
$\zeta_{0/\infty}\in\CC_{0/\infty}$, one has
$C_{0/\infty}\subset\CC_{0/\infty}$. Now from the Propositions 5.1, 6.1 and 7.1 it
follows that the inequality (!) in ( 4.3) is fulfilled with one exception (cf.
 Proposition 7.1 ). Let be $m=\reg (\cI)$. Because of $\max-\alpha$-grade
$(\cI)\ge\alpha$-grade $(\cI)=\deg C=\deg\CC_0=\deg \CC_{\infty}$ and $\min
(\deg C_0,\deg C_{\infty})\ge\min-\alpha$-grade $(\cI)$ (cf. the definitions
and the inequality (4.9) in Section 4.3), from \textbf{(!)} it follows that
\[ 
  Q(m-1)+\min(\deg C_0,\deg C_{\infty})>\deg \CC_0=\deg\CC_{\infty}.\tag{**}
\]
As $V=\ol{W}$, each ideal $\cJ$ in $V$ has a regularity $n\ge m$. If $\cJ$ is
monomial and has $y$-standard form , then from Remark 4.4 it follows that
the $y$-standard cycle generated by $\cJ$ has a degree $\ge Q(n-1)\ge Q(m-1)$.
\begin{lemma}\label{lem:13}
  $C_0$ and $C_{\infty}$ are the only $y$-standard cycles contained in the
  $B$-invariant cycles $\CC_0$ and $\CC_{\infty}$, respectively, and they both
  occur with the multiplicity 1.
\end{lemma}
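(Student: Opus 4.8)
The plan is to combine the inequality $(**)$ — which by the previous lemmas holds in all cases except the single exceptional ideal $\cI=(y(x,y),x^4+yz^3)$ — with the degree lower bound for $y$-standard cycles sitting inside a $\G_a$-invariant cycle. First I would fix notation: let $C=\overline{\G_a\cdot\zeta}$ with $\zeta\leftrightarrow\cI$ of $y$-standard form, $m=\reg(\cI)$, and $\CC_{0/\infty}=\lim_{\lambda\to 0/\infty}\sigma(\lambda)C$. By Lemma~\ref{lem:12} and the discussion preceding it, $\CC_{0/\infty}$ is a $\G_a$-invariant $1$-cycle, so it decomposes as $\CC_0=\sum_j n_j D_j$ with $D_j$ irreducible $\G_a$-invariant curves (or $\G_a$-fixed points, which contribute nothing as $1$-cycles) and $n_j\in\N$. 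Among these $D_j$ are the possible $y$-standard cycles; I want to show $C_0$ appears with coefficient exactly $1$ and no other $y$-standard cycle appears.

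The key step is the degree count. Each ideal $\cJ\leftrightarrow\eta\in V=\overline{W}$ has regularity $n\ge m$ by semicontinuity (as noted just before Lemma~\ref{lem:13}). If $D_j$ is a $y$-standard cycle, it is of the form $\overline{\G_a\cdot\eta_j}$ with $\cJ_j\leftrightarrow\eta_j$ monomial of $y$-standard form and regularity $n_j'\ge m$; by Remark~4.4 (the shifted-pyramid computation) $\deg D_j\ge Q(n_j'-1)\ge Q(m-1)$. Now suppose $\CC_0$ contained a $y$-standard cycle $D$ different from $C_0$, or contained $C_0$ with multiplicity $\ge 2$. Since $C_0\subset\CC_0$ (because $\zeta_0\in\CC_0$ and $C_0=\overline{\G_a\cdot\zeta_0}$ is the orbit closure), in either case $\CC_0$ would contain, counted with multiplicity, at least $C_0$ plus one further copy of a $y$-standard cycle; hence $\deg\CC_0\ge \deg C_0+Q(m-1)$. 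But $\deg C_0\ge \min(\deg C_0,\deg C_\infty)\ge \min\text{-}\alpha\text{-grade}(\cI)$ by the inequality (4.9), so $\deg\CC_0\ge Q(m-1)+\min(\deg C_0,\deg C_\infty)$, contradicting $(**)$. The same argument applies verbatim to $\CC_\infty$ using $\deg\CC_\infty=\deg\CC_0=\alpha\text{-grade}(\cI)$ and $C_\infty\subset\CC_\infty$. Therefore each of $\CC_0,\CC_\infty$ contains no $y$-standard cycle besides $C_0$, $C_\infty$ respectively, each with multiplicity $1$.

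The main obstacle — and the place the argument genuinely needs care — is the single exceptional case $c=1$, $m=4$, $\cI=(y(x,y),x^4+yz^3)$, where $(**)$ as stated was not established. Here I would argue directly: one computes the limit curves $\CC_{0/\infty}$ explicitly (this is the ``simple deformation'' $f_0=x^4+yz^3$ of (7.2.1), where $\deg C_0=\binom{4}{2}-1=5$ and $\deg C_\infty=\binom{5}{2}=10$, so $\alpha\text{-grade}(\cI)=10$), identify the irreducible components of $\CC_0$ and $\CC_\infty$ by hand, and check that the only $y$-standard cycle among them is $C_0$ (resp.\ $C_\infty$) with multiplicity~$1$; alternatively, one defers this to (9.3) as the text signals, treating $c=1,m=4$ by the direct degree bookkeeping already sketched in (7.2.1). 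A secondary subtlety is justifying $C_{0/\infty}\subset\CC_{0/\infty}$ rigorously: this follows because $\zeta_{0/\infty}$ lies in the fibre $\cC_{0/\infty}$ whose image is $\CC_{0/\infty}$, and $\CC_{0/\infty}$ is $\G_a$-invariant and closed, so it contains the whole orbit closure $\overline{\G_a\cdot\zeta_{0/\infty}}=C_{0/\infty}$; this is exactly the observation made in the two sentences preceding the lemma statement, so it may simply be cited.
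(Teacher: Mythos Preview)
Your argument for the generic case is correct and is exactly what the paper does: the inequality $(**)$ together with the lower bound $\deg D\ge Q(m-1)$ for any $y$-standard cycle $D\subset V$ (coming from Remark~4.4 and semicontinuity of regularity on $\overline{W}$) forces $C_{0/\infty}$ to be the unique $y$-standard component of $\CC_{0/\infty}$, with multiplicity one.

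For the exceptional ideal $\cI=(y(x,y),x^4+yz^3)$ your sketch is incomplete, and ``defer to (9.3)'' is circular since this \emph{is} (9.3). The paper does not literally compute the components of $\CC_{0/\infty}$ by hand; instead it argues structurally. One checks $\zeta_0\leftrightarrow(y(x,y),x^4)$, $\zeta_\infty\leftrightarrow(y,x^5)$, so $\deg C_0=5$, $\deg C_\infty=10$, and $\deg\CC_0=\deg\CC_\infty=\deg C=10$; hence $\CC_\infty=C_\infty$ immediately. For $\CC_0$ the paper identifies the unique $U(3;k)$-fixed point $\eta\leftrightarrow(x,y^5)$ of every $\sigma(\lambda)C$, writes $V=W\cup\{\eta\}\cup\CC_0\cup\CC_\infty$, and invokes Fogarty's theorem that $V^{U(3;k)}$ is connected: any pointwise $U(3;k)$-invariant curve $E$ in $V$ must lie in $\CC_0\cup\CC_\infty=\CC_0\cup C_\infty$, hence in $\CC_0$. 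Since every $y$-standard cycle in $V$ has degree $\ge Q(3)=5$ and $\deg\CC_0=10$, the cycle $C_0$ (of degree $5$) is the only $y$-standard component and has multiplicity one. You should incorporate this Fogarty-connectedness step; your proposed ``identify components by hand'' would presumably also work but is not what is carried out.
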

\begin{proof}
  Except when $c=1, m=4$ the statements follow from (**) and the foregoing
  discussion. In the remaining exceptional case (cf. Proposition 7.1 ) we
  carry out the standard construction with
  $\cI=(y(x,y),x^4+yz^3)\leftrightarrow\zeta$. Then $\zeta_0\leftrightarrow
  (y(x,y),x^4),\zeta_{\infty}\leftrightarrow (y,x^5)$ and $C_0$ and
  $C_{\infty}$ are $y$-standard cycles of degrees 5 and 10, respectively, as
  one can see from Figure 9.1.

  If $\eta:=\lim\limits_{\lambda\to\infty}\pal(\zeta)$, the corresponding
  ideal $\cL$ is invariant under $\G_a$, hence invariant under $U(3;k)$. One
  sees that $xz^3\in\cL$, and hence $x\in\cL$ follows. One concludes from this
  that $\cL=(x,y^5)$.

  As $\sigma$ and $\pal$ commute, it follows that $\eta$ is the only point in
  $\sigma(\lambda)C$ for all $\lambda$, which is $U(3;k)$-invariant. From
  $V=p_1(\cC)=\bigcup \{p_1 (\cC_{\lambda})|\lambda\in\P^1\}=\bigcup \{\sigma
  (\lambda)C|\lambda\in k^*\}\cup\CC_0\cup\CC_{\infty}$ it follows that
\[
  V=W\cup\{\eta\}\cup\CC_0\cup\CC_{\infty}
\]
where $W:=\G_m\cdot\G_a\cdot\zeta$. Now
$\eta_0:=\lim\limits_{\alpha\to\infty}\pal(\zeta_0)\leftrightarrow
(x(x,y),y^4)$ and $\eta_{\infty}:=\lim\limits_{\alpha\to\infty}\pal
(\zeta_{\infty})\leftrightarrow (x,y^5)$ are the only $B(3;k)$-invariant
points in $C_0$ and $C_{\infty}$, respectively. By the theorem of Fogarty
$V^{U(3;k)}$ is connected, hence is a union of pointwise $U(3;k)$-invariant,
closed and irreducible curves. If $E$ is such a curve, then
$E\subset\CC_0\cup\CC_{\infty}$ follows. Now $\deg\CC_0=\deg\CC_{\infty}=\deg
C=10$. Hence $\CC_{\infty}=C_{\infty}$ and $E\subset\CC_0$ follows. As each
$y$-standard cycle in $V$ has a degree $\ge Q(3)=5, C_0$ is the only
$y$-standard cycle contained in $\CC_0$, and $C_0$ occurs with multiplicity 1.
\end{proof}
\begin{proposition}\label{prop:9}
  Let be $X=Z(\cH)$. Relations in $\Rat_1^B(X)$ , which are defined by a
  $B$-invariant surface in $X$ and which contain a $y$-standard cycle, are
  generated by relations of the form $C_1-C_2+Z$ where $C_1$ and $C_2$ are
  $y$-standard cycles and $Z$ is a 1-cycle in $X$, whose prime components are
  $B$-invariant but are not $y$-standard cycles.
\end{proposition}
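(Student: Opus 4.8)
The plan is to assemble the statement from the machinery developed through Chapter~9. By Lemma~\ref{lem:12} (the description of $\Rat_1^B(X)$ via the standard construction), every generator of $\Rat_1^B(X)$ is either a sum $\sum q_iC_i$ of $B$-invariant $1$-prime cycles lying in $X^{\G_a}$, or is of the form $\CC_0-\CC_\infty$ coming from a $B$-invariant $2$-dimensional subvariety $V\subset X$ that is not pointwise $\G_a$-invariant, where $\CC_{0/\infty}=\lim_{\lambda\to 0/\infty}\sigma(\lambda)C$ for a suitable curve $C=\overline{\G_a\cdot\zeta}$. First I would dispose of the first alternative: a cycle supported in $X^{\G_a}$ cannot contain a $y$-standard cycle, since a $y$-standard cycle $\{\psi_\alpha(\xi)\}^-$ is by definition not pointwise $\G_a$-invariant. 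So a relation containing a $y$-standard cycle must come from such a surface $V$.

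Next I would invoke Lemma~\ref{lem:10}: if the relation $\CC_0-\CC_\infty$ defined by $V$ contains a $y$-standard cycle, then $V$ must itself contain a $y$-standard cycle (the $y$-standard cycle occurring in $\CC_0-\CC_\infty$ is a prime component of $\CC_0$ or of $\CC_\infty$, hence lies in $V$), and therefore $V$ is pointwise invariant under $\Gamma$. Then Lemma~\ref{lem:13} applies: writing $\cI\leftrightarrow\zeta$ in $y$-standard form (Lemma~\ref{lem:13}(a)) and carrying out the standard construction, $C_0=\lim_{\lambda\to 0}\sigma(\lambda)C$ and $C_\infty=\lim_{\lambda\to\infty}\sigma(\lambda)C$ are $y$-standard cycles, and by Lemma~\ref{lem:13} they are the \emph{only} $y$-standard cycles contained in the $B$-invariant cycles $\CC_0$ and $\CC_\infty$ respectively, each occurring with multiplicity~$1$. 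Hence we may write $\CC_0=C_0+Z_0$ and $\CC_\infty=C_\infty+Z_\infty$, where $Z_0$ and $Z_\infty$ are $1$-cycles in $X$ none of whose prime components are $y$-standard cycles. Setting $Z:=Z_0-Z_\infty$ and $C_1:=C_0$, $C_2:=C_\infty$ gives the desired form $C_1-C_2+Z$.

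The one point requiring care is the exceptional case $c=1,\,m=4$ left open by Proposition~\ref{prop:6}, where the inequality \textbf{(!)} fails; but this case is precisely the content of the second half of the proof of Lemma~\ref{lem:13}, where the explicit analysis of $\cI=(y(x,y),x^4+yz^3)$ shows that even then $C_0$ is the unique $y$-standard cycle in $\CC_0$ (with multiplicity~$1$) and $\CC_\infty=C_\infty$. So Lemma~\ref{lem:13} covers all cases uniformly, and no separate treatment is needed here. I would also remark that a relation of the degenerate kind where $\dim p_1(\cB)=1$ contributes $0$ to $Z_1^B(X)$ (as noted at the end of Section~\ref{sec:9.1}), so it contains no $y$-standard cycle and may be ignored.

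The main obstacle is not in the present proposition itself — which is essentially a bookkeeping consequence of Lemmas~\ref{lem:12}, \ref{lem:10} and \ref{lem:13} — but lies upstream, in the inequality \textbf{(!)} of Section~(4.3) whose proof occupies Chapters~5--7 (Propositions~\ref{prop:4}, \ref{prop:5}, \ref{prop:6}) and which feeds into Lemma~\ref{lem:13} via the estimate \textbf{(**)}. Granting those results, the proof here is short: combine the three cited lemmas, peel off the at-most-one $y$-standard component with multiplicity one from each limit curve, and collect the remaining prime components into $Z$. Thus the write-up amounts to stating this deduction cleanly, noting the $X^{\G_a}$ alternative is vacuous for relations containing a $y$-standard cycle, and citing Lemma~\ref{lem:13} for the multiplicity-one uniqueness including its exceptional subcase.
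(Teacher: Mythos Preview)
Your proposal is correct and follows essentially the same approach as the paper, whose proof is the one-line ``This follows from Lemma~9.1 and Lemma~9.2''; you have simply unpacked the deduction in detail, including the disposal of the $X^{\G_a}$ alternative and the exceptional case $c=1,\,m=4$. One small slip: when you write ``Lemma~\ref{lem:13}(a)'' for putting $\cI\leftrightarrow\zeta$ in $y$-standard form, you mean Lemma~\ref{lem:11}(a) (i.e.\ Lemma~8.2(a)); Lemma~\ref{lem:13} is the multiplicity-one uniqueness statement and has no part~(a).
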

\begin{proof}
This follows from Lemma 9.1 and Lemma 9.2 .
\end{proof}


\begin{minipage}{14cm} 
\begin{minipage}{5cm*\real{0.7}} \label{fig:Fig 9.1}
\centering Fig. 9.1\\ 
\tikzstyle{help lines}=[gray,very thin]
\begin{tikzpicture}[scale=0.7]
 \draw[style=help lines]  grid (5,6);
 \draw[thick] (0,0) -- (0,6); 
 \draw[thick] (0,0) -- (5,0);
 {
 \pgftransformxshift{0.5cm}
 \pgftransformyshift{-0.55cm}
  \foreach \x in {0,1,2,3,4} \draw[anchor=base] (\x,0) node {$\x$}; 
 }
{
 \pgftransformxshift{0.5cm}
  \draw (1,1) node[above] {$-$};
  \draw (4,0) node[above] {$+$};
  \draw (4,2) node[above] {$C_0$};
}
 \draw[\Red,ultra thick] (4,0) -- (4,1) -- (2,1) -- (2,3) -- (3,3) -- (3,4) --
   (4,4) -- (4,5) -- (5,5);
  \draw[\Black, ultra thick] (0,1) -- (5,6);  
\end{tikzpicture}
\end{minipage}
\hspace{1.5cm}
\begin{minipage}{5cm*\real{0.7}} \label{fig:Fig 9.2}
\centering Fig. 9.2\\ 
\tikzstyle{help lines}=[gray,very thin]
\begin{tikzpicture}[scale=0.7]
 \draw[style=help lines]  grid (6,6);
 \draw[thick] (0,0) -- (0,6); 
 \draw[thick] (0,0) -- (6,0);
 {
 \pgftransformxshift{0.5cm}
 \pgftransformyshift{-0.55cm}
  \foreach \x in {0,1,2,3,4,5} \draw[anchor=base] (\x,0) node {$\x$}; 
 }
{
 \pgftransformxshift{0.5cm}
  \draw (4,2) node[above] {$C_\infty$};
}
 \draw[\Red,ultra thick] (5,0) -- (5,1) -- (1,1) -- (1,2) -- (2,2) -- (2,3) --
 (3,3) -- (3,4) -- (4,4) -- (4,5) -- (5,5);
  \draw[\Black, ultra thick] (0,1) -- (5,6);  
\end{tikzpicture}
\end{minipage}
\end{minipage}


\chapter{Proof of Theorem 1.1} \label{ch:chap10}

This had been formulated in Chapter 1 and we repeat the notations and
assumptions introduced there: $d\ge 5, g(d)=(d-2)^2/4, \cH=\bigcup\{
H_{\ge\varphi}\subset H^d|g^*(\varphi)>g(d)\}, \cA(\cH)=\Im
(A_1(\cH^{U(3;k)})\to A_1 (\cH)), c_3=\{ (\alpha x+y,x^d)|\alpha\in k\}^-$. \\
We make the assumption $[c_3]\in\cA(\cH)$. Then $[c_3]=\sum q_i[U_i]$ in
$A_1(\cH)$, where $q_i\in \mathbb{Q}$, and $U_i\subset\cH$ are $T$-invariant curves,
which are pointwise $U(3;k)$-invariant. ($A_1(\cH^{U(3;k)})$ is generated by
such curves, as follows from the theorem of Hirschowitz or more directly from
an application of Lemma 1 in [T1], p. 6.) If $X:=Z(\cH)$, then
$A_1(X)\stackrel{\sim}{\to} A_1(\cH)$ ([T2], Lemma 24, p.121) and
$A_1^B(X)\stackrel{\sim}{\to} A_1(X)$ ([Hi], Theorem 1, p.87).

Hence we can regard $\alpha:=c_3-\sum q_iU_i$ as a 1-cycle in $Z_1^B(X)$ ,whose canonical image in $A_1^B(X) :=
Z_1^B(X)/\Rat_1^B(X)$ vanishes. (We recall that $Z_1^B(X)$ is the free group
generated by all closed, reduced and irreducible curves in $X$.) Define
$\cB(X)$ to be the subgroup of $Z_1^B(X)$, which is generated by all
$B(3;k)$-invariant curves in $X$ (closed, reduced, irreducible), which are not
$y$-standard cycles. Define:
\[
  F_1(X)=Z_1^B(X)/\cB(X),\ R_1 (X)=\Rat_1^B(X)+\cB(X)/\cB(X)
\]
$F_1(X)$ is the free group generated by the $y$-standard cycles, and by 
Proposition 9.1 $R_1(X)$ is generated by elements of $F_1(X)$ of the form
$C_1-C_2$, where $C_1$ and $C_2$ are (different) $y$-standard cycles. It
follows that the canonical image of $\alpha$ in $F_1(X)/R_1(X)$ on the one
hand vanishes and on the other hand is equal to the canonical image of $c_3$
in this $\Q$-vector space. Hence $c_3\in R_1(X)$. We will show that this is
not possible. To each of the finitely many $y$-standard cycles we associate a
canonical basis element $e_i\in\Q^n, 1\le i\le n$. Especially, we associate
$c_3$ to the element $e_n=(0,\ldots,1)$. From $c_3\in R_1(X)$ it follows that
$e_n\in \langle \{ e_i-e_j|1\le i<j\le n\}\rangle$, which is not possible. All
in all we have
\begin{theorem}\label{thm:1} $[c_3]\notin\cA(\cH)$.
\qed
\end{theorem}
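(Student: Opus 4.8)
The plan is to combine everything established in Chapters~2--9 with the theorem of Hirschowitz to reduce the statement $[c_3]\notin\cA(\cH)$ to a trivial linear-algebra fact about standard basis vectors in $\Q^n$. First I would recall the chain of isomorphisms that replaces $\cH$ by the auxiliary scheme $X:=Z(\cH)$: one has $A_1(X)\xrightarrow{\sim}A_1(\cH)$ (from [T2]) and $A_1^B(X)\xrightarrow{\sim}A_1(X)$ (from [Hi]), so an equation $[c_3]=\sum q_i[U_i]$ in $A_1(\cH)\otimes\Q$ with the $U_i$ pointwise $U(3;k)$-invariant $T$-invariant curves lifts to a $1$-cycle $\alpha:=c_3-\sum q_iU_i\in Z_1^B(X)\otimes\Q$ whose class vanishes in $A_1^B(X)=Z_1^B(X)/\Rat_1^B(X)$, i.e.\ $\alpha\in\Rat_1^B(X)\otimes\Q$. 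That is the setup; the assumption for contradiction is precisely $[c_3]\in\cA(\cH)$.

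The second, and conceptually central, step is to pass to the quotient by the subgroup $\cB(X)\subset Z_1^B(X)$ generated by all $B$-invariant prime cycles that are \emph{not} $y$-standard cycles. Write $F_1(X)=Z_1^B(X)/\cB(X)$ and $R_1(X)=(\Rat_1^B(X)+\cB(X))/\cB(X)$. Then $F_1(X)$ is free on the (finitely many) $y$-standard cycles. The key input here is Proposition~9.1: every relation in $\Rat_1^B(X)$ coming from a $B$-invariant surface and containing a $y$-standard cycle has the form $C_1-C_2+Z$ with $C_1,C_2$ $y$-standard and $Z$ a cycle whose prime components are $B$-invariant but not $y$-standard; relations not involving any $y$-standard cycle lie entirely in $\cB(X)$. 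Hence in $F_1(X)/R_1(X)$ the subspace $R_1(X)$ is spanned by the differences $C_i-C_j$ of basis vectors. Now $c_3$ itself is a $y$-standard cycle (its preimage is $(\alpha x+y,x^d)$, invariant under $T$ and $\Gamma$, of $y$-standard form), so it is one of the basis elements, say $e_n$; and the image of $\alpha$ in $F_1(X)/R_1(X)$ is on the one hand $0$ (since $\alpha\in\Rat_1^B(X)\otimes\Q\subset R_1(X)$) and on the other hand equal to the image of $c_3=e_n$ (since $\sum q_iU_i$, being pointwise $U(3;k)$-invariant, maps into $\cB(X)$, as such curves are not $y$-standard cycles). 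Therefore $e_n\in R_1(X)$, i.e.\ $e_n\in\langle e_i-e_j : 1\le i<j\le n\rangle$ inside $\Q^n$, which is absurd because every vector in that span has coordinate sum $0$.

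I expect the only genuine obstacle to be bookkeeping rather than mathematics: one must check carefully that $c_3$ really is a $y$-standard cycle in the sense of Definition~2 of (2.4.3) (so that it survives to a nonzero basis vector of $F_1(X)$ and is not accidentally absorbed into $\cB(X)$), and that each curve $U_i$ appearing in the hypothetical expression for $[c_3]$, being pointwise $U(3;k)$-invariant, cannot be a $y$-standard cycle (a $y$-standard cycle $\overline{\G_a\cdot\xi}$ is by construction \emph{not} pointwise $\G_a$-invariant, whereas $U_i\subset\cH^{U(3;k)}$ is). Both are immediate from the definitions, but they are exactly the points on which the contradiction hinges. Everything deeper---that the combinatorial inequality $(\textbf{!})$ of Section~4.3 holds, hence that limit curves of the standard construction contain no extra $y$-standard cycle (Lemma~9.2), hence Proposition~9.1---has already been proved in Chapters~5--9 and is simply invoked. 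So the proof of Theorem~1.1 is short: set up the isomorphisms, quotient by $\cB(X)$, apply Proposition~9.1, and observe the coordinate-sum obstruction in $\Q^n$.
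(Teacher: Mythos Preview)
Your proposal is correct and follows essentially the same argument as the paper's proof in Chapter~10: assume $[c_3]\in\cA(\cH)$, pass to $X=Z(\cH)$ via the isomorphisms from [T2] and [Hi], quotient $Z_1^B(X)$ by the subgroup $\cB(X)$ generated by non-$y$-standard $B$-invariant curves, invoke Proposition~9.1 to see that $R_1(X)$ is spanned by differences $e_i-e_j$, and derive the contradiction $e_n\in\langle e_i-e_j\rangle$. The bookkeeping points you flag (that $c_3$ is a $y$-standard cycle and that the $U_i$, being pointwise $U(3;k)$-invariant, are not) are exactly the ones the paper relies on implicitly.
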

\begin{corollary} $\dim_\Q A_1 (\cH)\ge 3$.
\end{corollary}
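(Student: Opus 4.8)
The plan is to obtain this from the theorem just proved by a straightforward dimension count. Since $[c_3]\notin\cA(\cH)$, the image of $c_3$ in the quotient $\Q$-vector space $A_1(\cH)/\cA(\cH)$ is nonzero, so
\[
  \dim_{\Q}A_1(\cH)\ \ge\ 1+\dim_{\Q}\cA(\cH).
\]
Thus it is enough to show $\dim_{\Q}\cA(\cH)\ge 2$; that is, to produce two pointwise $U(3;k)$-invariant curves in $\cH$ whose classes are $\Q$-linearly independent in $A_1(\cH)$.

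First I would exhibit two such curves. One source is intrinsic to $\cH$: since $g(d)$ is of order $d^{2}/4$ while the Hilbert function $\varphi$ with $\reg(\varphi)=d$ satisfies $g^{*}(\varphi)=(d-1)(d-2)/2>g(d)$, the closed subscheme $H_{\ge\varphi}\subset\cH$ is positive-dimensional and its fixed-point scheme $(H_{\ge\varphi})^{U(3;k)}\subset\cH^{U(3;k)}$ contains one-dimensional components (e.g.\ ideals $(\ell,f)$ with $\ell$ the $U(3;k)$-fixed linear form and $f$ running through a $U(3;k)$-invariant pencil of degree-$d$ forms). A second, more systematic source: by (ii) of (1.2.2) the restriction morphism $h$ carries the algebraic generators of $\cA(H_{d,g})$ (the cycles $E$ and $D$ of Section~1.1) into $\cH$, because we are in the range $g>g(d)$; being images under the equivariant morphism $h$ of pointwise $U(3;k)$-invariant cycles, their classes lie in $\cA(\cH)$. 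Either way one gets two candidate curves $\Gamma_1,\Gamma_2\subset\cH$.

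Then I would prove $[\Gamma_1],[\Gamma_2]$ are linearly independent by a degree computation of exactly the type used throughout [T1]--[T4]. Since $\Pic(H^d)$ has rank $2$, restricting two generating line bundles $L_1,L_2$ to $\cH$ and pairing with the curves yields a $2\times2$ matrix of intersection numbers $\bigl(\deg(L_i|_{\Gamma_j})\bigr)_{i,j}$; the point is that $\Gamma_1,\Gamma_2$ can be chosen so that this matrix is nonsingular. Any relation $a_1[\Gamma_1]+a_2[\Gamma_2]=0$ in $A_1(\cH)$ would then pull back to $a_1\deg(L_i|_{\Gamma_1})+a_2\deg(L_i|_{\Gamma_2})=0$ for $i=1,2$, forcing $a_1=a_2=0$. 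Hence $\dim_{\Q}\cA(\cH)\ge 2$, and combined with $[c_3]\notin\cA(\cH)$ this gives $\dim_{\Q}A_1(\cH)\ge 3$.

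The main obstacle is the nonvanishing of that $2\times2$ determinant: one must choose the two $U(3;k)$-invariant curves carefully enough that they are not rationally equivalent \emph{within $\cH$} (which is stronger than being inequivalent in $H^{d}$, and is not automatic since $h$ may collapse classes), and then carry the binomial-coefficient bookkeeping for the degrees through to the end, exactly as in ([T2], p.\,134) and ([T3], Anhang~2). Everything else — the reduction via the theorem just proved, the equivariance of $h$, and the verification that $g^{*}(\varphi)>g(d)$ holds along the chosen curves so that they indeed lie in $\cH$ — is routine given the material already assembled in Chapters~1--9.
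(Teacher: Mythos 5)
Your reduction is not the paper's argument, and the key step in it does not go through. You reduce the corollary to the claim $\dim_{\Q}\cA(\cH)\ge 2$, to be proved by exhibiting two pointwise $U(3;k)$-invariant curves and a nonsingular $2\times 2$ matrix of intersection numbers with (restrictions of) the two generators of $\Pic(H^d)\otimes\Q$. But the very fact the paper uses in this proof (and cites from [T2]/[T3], Anhang 2) is that for a curve that is pointwise invariant under the unipotent group the tautological degrees $\deg(\cL_n|C)$ are \emph{constant} in $n$; equivalently, every such curve pairs to zero with the difference class $\cL_{n+1}\otimes\cL_n^{-1}$, which together with $\cL_n$ spans $\Pic(H^d)\otimes\Q$. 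Hence for any two pointwise $U(3;k)$-invariant curves the $2\times 2$ matrix you propose is automatically singular, and degree bookkeeping can never certify linear independence of two classes inside $\cA(\cH)$. Your candidate curves also misfire: an ideal $(x,f)$ of colength $d$ that is invariant under all of $U(3;k)$ forces $f=y^d$ up to scalar (invariance under $z\mapsto\gamma y+z$), so a pencil of such ideals is not pointwise $U(3;k)$-invariant — a curve like $\{(x,y^{d-1}(\alpha y+z))\}^-$ is only pointwise $G(0,1)$-invariant; and the claim that $h$ carries $E$ and $D$ from $H_{d,g}$ into curves in $\cH$ lying in $\cA(\cH)$ is unverified (one must check they lie in $U_t$, are not contracted, and that their images are pointwise $U(3;k)$-invariant). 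In fact the inequality $\dim_{\Q}\cA(\cH)\ge 2$ is nowhere established in the paper and is not needed.

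The paper's proof is shorter and sidesteps all of this: it takes $E:=\{(x^2,xy,y^{d-1}+\alpha xz^{d-2})\}^-$ (pointwise $U(3;k)$-invariant, constant tautological degree) and $F:=\{(x,y^{d-1}(\alpha y+z))\}^-$ (pointwise invariant only under $G(0,1)$, with $\deg(\cL_n|F)$ growing linearly in $n$), and tests a hypothetical relation $[c_3]=q_1[E]+q_2[F]$ against all $\cL_n$: the identity $\binom{d}{2}=q_1+q_2(n-d+1)$ for all large $n$ forces $q_2=0$, and then $q_1\neq 0$ would put $[c_3]$ in $\cA(\cH)$, contradicting Theorem 10.1. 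Since $[E]$ and $[F]$ are independent (constant versus non-constant degree function) and $[c_3]$ lies outside their span, this gives the three independent classes directly. If you want to salvage your plan, you would have to replace the second $U(3;k)$-invariant curve by a curve such as $F$ whose degree function is non-constant — which is exactly the paper's choice.
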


\begin{proof} $E:=\{(x^2,xy,y^{d-1}+\alpha xz^{d-2})|\alpha\in k\}^-$ and
  $F:=\{ (x,y^{d-1}(\alpha y+z))|\alpha\in k\}^-$ are 1-cycles in $\cH$, which
  are pointwise invariant under $U(3;k)$ and $G(0,1)$, respectively (see the notation in Appendix D). If
  $[c_3]=q_1[E]+q_2[F], q_1,q_2\in \mathbb{Q}$, we compute the intersection numbers
  with the tautological line bundles and get
\[
  {d\choose 2} = q_1+q_2 (n-d+1)
\]
for all $n\ge d-1$. Hence $q_2=0$. If $q_1\neq 0$, then $[c_3]\in\cA(\cH)$
would follow.
\end{proof}


\chapter{Surfaces in $\HH$ 
invariant under $\G_a\cdot T(4;k)$} 
\label{ch:chap11}

We let $\G_a$ operate by $\pal:x\mapsto x,y\mapsto\alpha x+y, z\mapsto
z,t\mapsto z$ and let $T(4;k)$ operate by diagonal matrices on $P=k[x,y,z,t]$,
hence on $\HH=\HH_Q$. Let $V\subset\HH$ be a 2-dimensional subvariety, which
is invariant under $\G_a\cdot T(4;k)$ but not pointwise invariant under
$\G_a$.

\section{The inhomogeneous case}\label{sec:11.1}
We suppose that $V$ is not pointwise invariant under the $\G_m$-operation
$\sigma(\lambda):x\mapsto x,y\mapsto y,z\mapsto z, t\mapsto \lambda t$.

\subsection{Auxiliary Lemma 1.}\label{sec:11.1.1}
There is a $\xi\in V(k)$ such that $V=\overline{T(4;k)\cdot\xi}$.

\begin{proof}
  If $\dim T(4;k)\cdot\xi\le 1$, then the inertia group $T_{\xi}\subset
  T(4;k)$ has the dimension $\ge 3$. If the dimension is 4, then $\xi$
  corresponds to a monomial ideal with Hilbert polynomial $Q$, and there are
  only finitely many such ideals. If $\dim T_{\xi}=3$, then $T_\xi=T(\rho)$
  ([T2], Hilfssatz 7, p.141). If $\xi$ corresponds to the ideal $\cI$ with
  Hilbert polynomial $Q$, then $H^0(\cI(b))$ has a basis of the form
  $f_i=m_i p_i(X^\rho)$, where $m_{} $ is a monomial and $p_i$ is a polynomial in
  1 variable. At least for one index $i$ the polynomial $p_i(X^{\rho})$
  contains a positive power of $X^{\rho}$. Then $m_iX^{\rho}\in P_b$. As
  $m_i\in P_b$ and $\rho=(\rho_0,\ldots,\rho_3)\in\Z^4-(0)$ is a vector such
  that $\rho_0+\cdots +\rho_3=0$, this implies $|\rho_i|\le b$. Hence there
  are only finitely many such vectors $\rho$ such that the fixed-point scheme
  $\HH^{T(\rho)}$ does not consist of only finitely many $T(4;k)$-fixed
  points.

  Suppose that $\dim T(4;k)\cdot\xi\le 1$ for all $\xi\in V(k)$. Then
  $V=\mathop{\bigcup}\limits^n_1 V^{T(\rho_i)}, \rho_i\in\Z^4-(0)$, 
hence $V=V^{T(\rho)},
  \rho\in\Z^4-(0)$ suitable vector. We show there is
  $\tau=(\lambda_0,\ldots,\lambda_3)\in T(\rho)$ such that
  $\lambda_0\neq\lambda_1$. If not, it follows that $T(\rho)\subset
  T(1,-1,0,0)$, hence $\nu\rho=(1,-1,0,0),\nu\in\Z$. But then $\rho_3=0$
  and $V$ would be pointwise invariant under $\G_m$, contradiction.

  Take this $\tau$ and an arbitrary $\alpha\neq 0$. Then
  $\tau^{-1}\pal\tau\pal^{-1}= \begin{pmatrix} 1 & \alpha
    (\lambda_0^{-1}\lambda_1-1)\\ 0 & 1\end{pmatrix} =\psi_{\beta}$, where
  $\beta:=\alpha (\lambda_0^{-1}\lambda_1-1)\neq 0$. The same argumentation as
  in the proof of ( 8.2.1 Auxiliary Lemma 1) gives a contradiction.
\end{proof}

\subsection{}\label{sec:11.1.2}
If $0\le i<j\le 3$ define $T_{ij}=\{
(\lambda_0,\lambda_1,\lambda_2,\lambda_3)\in T(4;k)|\lambda_{\ell}=1$ if
$\ell\neq i$ and $\ell\neq j\}$. We let $G:=\G_a\cdot T_{23}$ operate von $V$.
If $\xi\in V(k)$ is as in  Auxiliary Lemma~1, let $G_{\xi}$ be the inertia
group of $\xi$ in $G$. From $1\le\dim G_{\xi}\le 2$ it follows that there is a
1-dimensional connected subgroup $H<G_{\xi}^0$. Then $H\simeq\G_a$ or
$H\simeq\G_m$. In the first case it follows that $H=\G_a$ and hence
$T(4;k)\cdot\xi$ is pointwise $\G_a$-invariant, contradiction. Hence
$H\simeq\G_m$. In the diagram
\[
  \begin{array}{ccc}
  \G_m\simeq H\stackrel{i}{\hookrightarrow} & G=\G_a\times T_{23}\\
   & p_1\swarrow\hfill \searrow p_2\\
   & \G_a \hfill T_{23}
\end{array}
\]
$p_1\circ i$ is the trivial map, hence
$H=\{(1,1,\lambda^r,\lambda^s)|\lambda\in k^*\}=:T_{23}(r,s)$, where $r,s\in\Z$
are not both equal to zero.

\subsection{}\label{sec:11.1.3}
Let be $G:=\Set{ \begin{pmatrix}
                 \lambda & \alpha\\ 
                 0&\mu
                \end{pmatrix}
             |\lambda,\mu\in k^*, \alpha\in k}$ 
and let $\xi$ be as in Auxiliary Lemma 1.
Then there is again a subgroup $H<G_{\xi}^0$ isomorphic to $\G_m$. It has the 
form 
$H= \Set{ \begin{pmatrix} 
          \lambda^a & (\lambda^b-\lambda^a)c\\
            0&\lambda^b
         \end{pmatrix} 
        |\lambda\in k^* }$, 
where $a,b\in\Z$ are
not both equal to zero and $c\in k$ is a fixed element (cf. 8.2.2). Let be
$ u=
\begin{pmatrix}
  1&-c&0&0\\
  0&1&0&0\\
  0&0&1&0\\
  0&0&0&1
\end{pmatrix}
$.

Then $uHu^{-1}= \Set{ 
\begin{pmatrix} 
 \lambda^a & & &\\ 
  &\lambda^b & &\\
  &          & 1 & \\ 
  & & & 1
 \end{pmatrix} 
  |\lambda\in k^* }=:T_{01}(a,b)$. 
If $\zeta:=u(\xi)$ one gets $T_{\zeta}\supset T_{01}(a,b)$. From (11.1.2) it
follows $T_{\zeta}\supset T_{23}(r,s)$, too. As $T_{\zeta}$ contains the
diagonal group, $\dim T_{\zeta}\ge 3$ follows. If $\zeta$ were fixed by
$T(4;k)$, then $\xi=u^{-1}(\zeta)$ would be fixed by $T_{23}$, contradiction.
It follows that $T_{\zeta}=T(\rho)$. Here $\rho_3\neq 0$, because $\xi$ is not
invariant under $\sigma$, for otherwise $V$ would be pointwise
$\G_m$-invariant. If $c=0$, then $T_{\xi}=T(\rho)$, which contradicts the
choice of $\xi$.

\begin{conclusion}
  The element $u$ is different from ${\bf 1}$, and putting $\zeta:=u(\xi)$ one
  has $V=\overline{\G_a\cdot\G_m\cdot\zeta}$.
\end{conclusion}

\begin{proof}
  Put $G:=\G_a\cdot\G_m=\G_a\times\G_m$. If $\dim G_{\zeta}\ge 1$, then
  $G^0_{\zeta}$ would contain a subgroup $H$ isomorphic to $\G_a$ or $\G_m$.
  But then $\zeta$ would be invariant under $\G_m$ or $\G_a$, and then the
  same would be true for $\xi$, which gives a contradiction as above.
\end{proof}

\begin{conclusion}\label{con:11.2}
  $V\setminus \G_a\cdot\G_m\cdot\zeta$ is a union of points and curves which
  are invariant under $\G_a\cdot T(4;k)$.
\end{conclusion}

\begin{proof}
  If $\tau\in T(\rho)$, then
  $\tau\cdot\G_a\cdot\G_m\cdot\zeta=\G_a\cdot\tau\cdot\G_m\cdot\zeta=\G_a\cdot\G_m\cdot\tau\cdot\zeta =\G_a\cdot\G_m\cdot\zeta $,
  and if $\tau\in\G_m$, the same is true.
\end{proof}

\section{The homogeneous case}\label{sec:11.2}

\subsection{}\label{sec:11.2.1} 
We first suppose that $V$ is pointwise invariant under the $\G_m$-operation
$\sigma(\lambda):x\mapsto x,y\mapsto y,z\mapsto z,t\mapsto\lambda t$, but not
pointwise invariant under the $\G_m$-operation $\tau(\lambda):x\mapsto
x,y\mapsto y,z\mapsto\lambda z,t\mapsto t$. Then $V$ is invariant under
$T(3;k)\simeq \{ (\lambda_0,\lambda_1,\lambda_2,1)|\lambda_i\in k^*\}$ and we
have the same situation as in Chapter 8. We use the same notations introduced
there and get $V=\overline{T(2;k)\cdot\xi}$ (8.2.1 Auxiliary Lemma 1). We let
$G:=\G_a\cdot T(2;k)= \Set{
  \begin{pmatrix}\lambda&\alpha\\0&\mu\end{pmatrix}|\lambda,\mu\in k^*,
  \alpha\in k }$ operate on $V$. Then there is a subgroup $H$ of
$G_{\xi}$, which is isomorphic to $\G_a$ or $\G_m$. In the first case $\xi$
would be $\G_a$-invariant, hence $V$ would be pointwise $\G_a$-invariant. In
the second case 
$H=\Set{ 
  \begin{pmatrix} \lambda^a &
    (\lambda^b-\lambda^a)c\\
    0&\lambda^b\end{pmatrix}
 |\lambda\in k^*}$. 
The same argumentation as in (11.1.3) shows that the inertia group of
$\zeta=u(\xi)$ in $T(3;k)$ has a dimension $\ge 2$. If $T_{\zeta}=T(3;k)$,
then $\zeta$ would be invariant under the $\G_m$-operation $\tau$, hence $\xi$
invariant under $\tau$, too, and $V$ would be pointwise invariant under
$\tau$. It follows that $T_{\zeta}=T(\rho)$, where
$\rho=(\rho_0,\rho_1,\rho_2,0)$ and $\rho_2\neq 0$.  We note that $u\neq
\mathbf{1}$, for otherwise the inertia group of $\xi$ in $T(4;k)$ would have
  a dimension $\ge 3$.

\begin{conclusion}\label{con:1}
  The element $u$ is different from ${\bf 1}$ and putting $\zeta=u(\xi)$ one
  has $V=\overline{\G_a\cdot\G_m\cdot\zeta}$.
\end{conclusion}
The same argumentation as in (11.1.3) gives

\begin{conclusion}\label{con:2}
  $V\setminus \G_a\cdot\G_m\cdot\zeta$ is a union of points and curves which
  are invariant under $\G_a\cdot T(4;k)$.\qed
\end{conclusion}

\subsection{}\label{sec:11.2.2}
We now suppose $V$ is pointwise invariant under $T_{23}=\Set{
(1,1,\lambda_2,\lambda_3)|\lambda_i\in k^*}$. Then $V$ is not pointwise
invariant under the $\G_m$-operation $\sigma(\lambda):x\mapsto \lambda
x,y\mapsto y,z\mapsto z, t\mapsto t$. Let $\xi\in V\setminus (V^{\G_m}\cup
V^{\G_a})$ be a closed point, and put 
$G:=\G_a\ltimes\G_m=\Set{
  \begin{pmatrix}
    \lambda&\alpha\\ 
       0&1
 \end{pmatrix}
   | \lambda\in k^*,\alpha\in k}$.

 Assume that $\dim G_{\xi}\ge 1$. Then $H:=G^0_{\xi}$ is 1-dimensional and
 connected. As $\xi$ is not $\G_a$-invariant, $H\simeq\G_m$ hence 
$H=\Set{
   \begin{pmatrix} \lambda^a & (1-\lambda^a)c\\0&1\end{pmatrix}|\lambda\in
   k^* }, a\in\Z-(0)$.
 Putting 
  $u=\begin{pmatrix}
     1&-c&0&0\\ 
     0&1&0&0\\ 
      0&0&1&0\\
      0&0&0&1
    \end{pmatrix}$ 
    and $\zeta=u(\xi)$ we get $uHu^{-1}= \Set{ (\lambda^a,1,1,1)|\lambda\in
      k^* }$. It follows that $\zeta$ is $\G_m$-invariant, hence
    $T(4;k)$-invariant. Thus $\xi$ corresponds to an ideal $\cI$ such that
    $H^0(\cI(b))$ has a generating system of the form $M_i(y - cx )^{n_i}$, where
    $M_i$ is a monomial without $y$ and $n_i\in\N$. There are only finitely
    many points $\zeta_i\in V(k)$ which are $T(4;k)$ invariant and it follows
    that $\xi\in \bigcup\G_a\cdot\zeta_i$.

\begin{conclusion}\label{sec:11.2.3}
  If $\xi\in V\setminus [ \;\bigcup\G_a\cdot\zeta_i\cup V^{\G_a}\cup V^{\G_m}]$ is a\
  closed point, then $V=\overline{\G_a\cdot\G_m\cdot\xi}$.\qed
\end{conclusion}

\section{Summary}\label{sec:11.3}
Let $V\subset\HH_Q$ be a 2-dimensional subvariety, invariant under
$G:=\G_a\cdot T(4;k)$ but not pointwise invariant under $\G_a$. We distinguish
between three cases:

\begin{enumerate}[1.]
\item 
 $V$ is not pointwise invariant under the $\G_m$-operation
  $\sigma(\lambda):x\mapsto x,y\mapsto y,z\mapsto z,t\mapsto\lambda t$.
\item 
 $V$ is pointwise invariant under the $\G_m$-operation $\sigma$
  as in the first case, but not pointwise invariant under the $\G_m$-operation
  $\tau(\lambda):x\mapsto x,y\mapsto y,z\mapsto\lambda z,t\mapsto t$.
\item 
 $V$ is pointwise invariant under the $\G_m$-operations $\sigma$
  and $\tau$ as in the 1th and 2nd case. Then $V$ is not pointwise invariant
  under the $\G_m$-operation $\omega(\lambda):x\mapsto\lambda x,y\mapsto
  y,z\mapsto z, t\mapsto t$.
\end{enumerate}
\begin{lemma}\label{lem:11.14} 
  (a) In the 1st case (respectively in the 2nd case) there is $\zeta\in V(k)$
  with the following properties: \\
  (i) The inertia group $T_{\zeta}$ of $\zeta$ in $T(4;k)$ has the form
  $T(\rho)$ where $\rho_3>0$ (respectively $\rho_2>0$ and $\rho_3=0$). \\
  (ii) $V=\overline{\G_a\cdot\G_m\cdot\zeta}$ and
  $V\setminus\G_a\cdot\G_m\cdot\zeta$ is a union of points and curves
  invariant under $ \G_a\cdot T(4;k)$. \\
  (iii) There is $u\in \G_a$, different from ${\bf 1}$, such that
  $V=\overline{T(4;k)\cdot\xi}$, where $\xi:=u(\zeta)$. \\
  (b) In the 3rd case, if one chooses $\zeta\in V(k)$ such that $\zeta$ is
  neither $\G_a$ -- nor $\G_m$-invariant and does not lie in the set
 $\Set{\xi\in V(k)|\exists u\in\G_a \text{ and } \exists\mu\in V^{T(4;k)}(k)
 \text{ such that } \xi=u(\mu) }$,
 then $V=\overline{\G_a\cdot\G_m\cdot\zeta}$.\qed
\end{lemma}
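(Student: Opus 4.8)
The plan is to read the assertion off, case by case, from the structural facts already established in Sections 11.1 and 11.2; the lemma is in effect their common summary, and the only genuine work is to match up the three alternatives and to normalize the sign of the distinguished weight vector $\rho$.

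First I would check that the trichotomy is exhaustive and, in particular, that in the third case $V$ is automatically not pointwise invariant under $\omega$. If $V$ is not pointwise $\sigma$-invariant we are in the first case; if it is pointwise $\sigma$-invariant but not pointwise $\tau$-invariant we are in the second; otherwise $V$ is pointwise invariant under both $\sigma$ and $\tau$. In that last situation, suppose $V$ were also pointwise $\omega$-invariant: the images $\sigma(\G_m),\tau(\G_m),\omega(\G_m)$ together generate the subtorus $\{(\lambda_0,1,\lambda_2,\lambda_3)\}$ of $T(4;k)$, which, together with the scalar subtorus $D=\{(\lambda,\lambda,\lambda,\lambda)\}$, generates all of $T(4;k)$; since $D$ acts trivially on $\HH_Q$, $V$ would then be pointwise $T(4;k)$-invariant, which is impossible for a two-dimensional subvariety because $\HH_Q$ has only finitely many $T(4;k)$-fixed points. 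This gives the last sentence of the statement.

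For the first case I would take $\xi\in V(k)$ with $V=\overline{T(4;k)\cdot\xi}$ (the Auxiliary Lemma of Section 11.1.1) and then use the analysis of Sections 11.1.2--11.1.3 to produce $u\in\G_a$ with $u\neq\mathbf{1}$ such that $\zeta:=u(\xi)$ has inertia group $T_\zeta=T(\rho)$ with $\rho_3\neq 0$. Since $T(\rho)=T(-\rho)$ as subgroups, replacing $\rho$ by $-\rho$ if necessary gives $\rho_3>0$, so (i) holds. The two Conclusions of Section 11.1.3 give $V=\overline{\G_a\cdot\G_m\cdot\zeta}$ together with the stated description of $V\setminus\G_a\cdot\G_m\cdot\zeta$, which is (ii); and taking $\xi=u^{-1}(\zeta)$ and relabelling $u^{-1}$ as the element of (iii) yields (iii). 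The second case goes through the same way, now via the reduction carried out in Section 11.2.1 to the $T(3;k)$-situation of Chapter 8 and the argument of Sections 8.2.1--8.2.3 transported there: one obtains $\zeta$ with $T_\zeta=T(\rho)$, $\rho=(\rho_0,\rho_1,\rho_2,0)$, $\rho_2\neq 0$ (normalized to $\rho_2>0$), and $u\neq\mathbf{1}$ because otherwise the inertia group of $\xi$ in $T(4;k)$ would be at least three-dimensional; the two Conclusions of Section 11.2.1 then supply (ii), and (iii) follows as before, noting that $\overline{T(2;k)\cdot\xi}=\overline{T(4;k)\cdot\xi}=V$ since $V$ is closed and $T(4;k)$-invariant.

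For the third case, statement (b) is exactly the Conclusion of Section 11.2.2: once $\zeta$ is chosen outside $V^{\G_a}\cup V^{\G_m}$ and outside the union of the finitely many $\G_a$-orbits of $T(4;k)$-fixed points of $V$, the orbit $\G_a\cdot\G_m\cdot\zeta$ is two-dimensional, hence dense in the irreducible surface $V$. The only thing requiring care — and the only place a slip is likely — is the bookkeeping: the sign normalization $T(\rho)=T(-\rho)$ to arrange $\rho_3>0$ (resp.\ $\rho_2>0$), and the consistent passage between the form $\zeta=u(\xi)$ in which Sections 11.1--11.2 state their results and the form $\xi=u(\zeta)$ wanted in part (iii). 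I do not expect any substantive obstacle, since all the geometric input — the orbit-closure structure, the classification of the one-parameter inertia subgroups, and the rigidity ruling out extra $\G_a$- or $\G_m$-invariance — has already been assembled in the preceding subsections.
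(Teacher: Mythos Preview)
Your proposal is correct and matches the paper's approach exactly: the lemma is stated with a bare $\qed$ and is intended purely as a summary of the structural results already proved in Sections 11.1.1--11.1.3 and 11.2.1--11.2.2, which you have correctly identified and assembled. Your explicit argument that $V$ cannot be pointwise $\omega$-invariant in the third case (via the torus generated by $\sigma,\tau,\omega$ and the diagonal) even fills in a step the paper leaves implicit at the start of Section 11.2.2.
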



\chapter{Surfaces in $\HH$ invariant under $B(4;k)$} \label{ch:chap12}

\section{The operation of the unipotent group on $\HH$}\label{sec:12.1}

\subsection{} \label{sec:12.1.1} Let be $p = (a:b:c)\in\P^2(k)$ and
\[
  G(p):=\Set{ 
\begin{pmatrix} 
 1&\alpha&*&*\\  
  0&1&\beta&*\\
  0&0&1&\gamma\\
  0&0&0&1
\end{pmatrix} | a\alpha+b\beta+c\gamma=0 } .
\]
This is a 5-dimensional subgroup of $\Delta:=U(4:k)$ and each 5-dimensional
subgroup of $\Delta$ has this form, where $p\in\P^2(k)$ is uniquely determined
(Appendix D, Lemma 1).

Especially one has the groups $G_i=G(p_i)$, where $p_1=(0:0:1), p_2=(0:1:0),
p_3=(1:0:0)$ and $G_1=\left\{ \begin{pmatrix}
    1&*&*&*\\
    0&1&*&*\\
    0&0&1&0\\
    0&0&0&1\end{pmatrix}\right\}, 
G_2=\left\{ \begin{pmatrix}
    1&*&*&*\\
    0&1&0&*\\
    0&0&1&*\\
    0&0&0&1\end{pmatrix}\right\}, 
G_3=\left\{ \begin{pmatrix}
    1&0&*&*\\
    0&1&*&*\\
    0&0&1&*\\
    0&0&0&1\end{pmatrix}\right\}.$

\begin{remark}\label{rem:12.1}
If $\pal=\begin{pmatrix} 
1&\alpha&0&0\\
0&1&0&0\\
0&0&1&0\\
0&0&0&1\end{pmatrix}$, then $\pal G(p)\pal^{-1}=G(p)$.
\end{remark}

\begin{remark}\label{rem:12.2}
  If $\tau=(\lambda_0,\lambda_1,\lambda_2,\lambda_3)\in T(4;k)$, then $\tau
  G(p)\tau^{-1}=G(\tau p)$, where $\tau p:=(a\lambda_0^{-1}\lambda_1:
  b\lambda_1^{-1}\lambda_2:c\lambda_2^{-1}\lambda_3$).
\end{remark}

\subsection{}\label{sec:12.1.2} In ([T2], 3.2.1) and ([T3], 10.2) we had
  introduced a closed, reduced subscheme $Z=Z(\HH_Q)$ of $\HH_Q$ such that
  $Z(k)=\{ x\in\HH_Q(k)|\dim\Delta\cdot x\le 1\}$. From the theorem of
  Hirschowitz it follows that $A_1(Z)\stackrel{\sim}{\longrightarrow}
  A_1(\HH_Q)$ ([T2], Lemma 24, p. 121).
  In the following we consider a surface $V\subset Z$ (i.e. a closed
  2-dimensional subvariety) which is $B(4;k)$-invariant, but not pointwise
  invariant under $\Delta = U(4;k)$.

 \subsection{Auxiliary Lemma 1.}\label{sec:12.1.3} Let be $\xi \in Z(k)$
 but $\xi$ not invariant under $\Delta$ ,hence $\Delta_{\xi}= G(p)$ .
 If $\tau \in T_{\xi}$ , then $\tau p = p$ .
  \begin{proof}
   If $\tau \in T_{\xi}$, then $\tau G(p)\tau^{-1}\tau \xi = \tau \xi$ 
   and thus $G(\tau p)\xi = \xi $. If $\tau p \neq p $ ,                                                                                                                           then $G(\tau p) \neq G(p) $ and $\xi$ would be fixed by the subgroup of
   $\Delta$ , which is generated by $G(p)$ and $G(\tau p)$ , i.e.
   fixed by $\Delta$ .
   
\end{proof}

\subsection{Auxiliary Lemma 2.}\label{sec:12.1.4}
Let $\xi$ be as in Auxiliary Lemma 1 . If $p = ( a:b:0)$ and $ a , b \neq 0$ ,
then $T_{\xi} \subset T(1,-2,1,0)$ .
\begin{proof}
This follows from Remark 12.2 and Auxiliary Lemma 1 .
\end{proof}

\section{The case $p=(a:b:c)$ where $a,b,c\neq 0$.}\label{sec:12.2}
  Let be $V\subset Z$ a $B$-invariant surface and $\xi\in V(k)$ a point such
  that $\Delta_{\xi}=G(a:b:c)$, where $a,b,c\neq 0$. Then $T_{\xi}\subset
  \Lambda:=\{
  (\lambda_0,\lambda_1,\lambda_0^{-1}\lambda_1^2,\lambda_0^{-2}\lambda_1^3)|\lambda_0,\lambda_1\in
  k^*\}$. As $\dim T(4;k)\cdot\xi\le 2$, one has $\dim T_{\xi}\ge 2$ and thus
  $T_{\xi}=\Lambda $.

  We let $G:=\G_a\cdot T_{01}=\Set{\begin{pmatrix} \lambda_0&\alpha&0&0\\
      0&\lambda_1 &0&0\\ 0&0&1&0\\ 0&0&0&1\end{pmatrix} | \alpha\in
    k,\lambda_0,\lambda_1\in k^*}$ operate on $V$.

\begin{remark}\label{rem:12.3} $T(4;k)_{\xi}\cap G=(\mathbf{1})$.
\end{remark}

\begin{proof}
  If $(\lambda_0,\lambda_1,
  \lambda_0^{-1}\lambda_1^2,\lambda_0^{-2}\lambda_1^3)\in
  G$, then $\lambda_0^{-1}\lambda_1^2=\lambda_0^{-2}\lambda_1^3=1$ which
  implies $\lambda_0/\lambda_1=1$, and then $ \lambda_0 = \lambda_1 = 1$
  follows.
\end{proof}

\begin{remark}\label{rem:12.4}
$V=\overline{T_{01}\cdot\xi}$.
\end{remark}
\begin{proof}
Because of Remark (12.3) $\dim T_{01}\cdot\xi<2$ is not possible.
\end{proof}

\indent From Remark (12.4) it follows that $G_{\xi}<G$ is 1-dimensional. If
$H:=G_{\xi}^0$ would be isomorphic to $\G_a$, then $\xi$ would be invariant
under $\G_a$, hence invariant under $\Delta$, contradiction. Thus 
$H=\Set{
  \begin{pmatrix} \lambda^m & (\lambda^n-\lambda^m)c\\ 0 &
    \lambda^n\end{pmatrix} | \lambda\in k^*}, m,n\in\Z$ not both
equal to zero, $c\in k$ (see 8.2.2 Auxiliary Lemma 2). If $u:=
\begin{pmatrix} 1&-c\\ 0&1\end{pmatrix}$, then $uHu^{-1}=\left\{
  \begin{pmatrix} \lambda^m&0\\ 0&\lambda^n\end{pmatrix}\big| \lambda\in
  k^*\right\}=:T_{01}(m,n)$. Putting $\zeta:=u(\xi)$ one gets
$T_{\zeta}\supset T_{01}(m,n)$. Because of Remark (12.1) from
$\Delta_{\xi}=G(p)$ it follows that $\Delta_{\zeta}=G(p)$, too. Hence
$T_{\zeta}=\Lambda\supset T_{01}(m,n)$, which is not possible. We have proved

\begin{lemma}\label{lem:12.15}
  If $V\subset Z$ is a $B(4;k)$-invariant surface, which is not
  pointwise$\Delta$ -invariant , then there is no point $\xi\in V(k)$, whose
  inertia group $\Delta_{\xi}$ in $\Delta$ has the form $G(a:b:c)$, where
  $a,b,c\neq 0$.\qed
\end{lemma}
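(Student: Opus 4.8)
The statement (Lemma 12.15) asserts that a $B(4;k)$-invariant surface $V\subset Z$ which is not pointwise $\Delta$-invariant cannot contain a point $\xi$ with $\Delta_\xi=G(a:b:c)$, $a,b,c\neq 0$. I would proceed by contradiction and by a rigidification argument almost identical to the one already carried out in (8.2.1)–(8.2.3) and (11.1.3): one isolates a ``generic'' point $\xi$ of $V$, identifies its isotropy in the full torus $T(4;k)$ as the largest possible subgroup compatible with $\Delta_\xi=G(a:b:c)$, then exhibits an extra one-parameter subgroup in the isotropy of $\xi$ that — after conjugation by a unipotent element $u$ — forces $\Delta_{u(\xi)}=G(a:b:c)$ still, yet also gives $u(\xi)$ too large a torus isotropy, a contradiction with $\dim T(4;k)\cdot u(\xi)\le 1$ (the defining property of $Z$). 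In fact the excerpt already contains essentially the whole argument: Remarks 12.1–12.4 and the displayed computation culminating in ``which is not possible'', so the proof is a short assembly of those pieces.

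First I would record that if $\Delta_\xi=G(a:b:c)$ with all of $a,b,c\neq 0$, then any $\tau=(\lambda_0,\lambda_1,\lambda_2,\lambda_3)\in T_\xi$ must normalize $G(p)$ in a way that fixes $p=(a:b:c)\in\P^2$; by Remark 12.2, $\tau p=(a\lambda_0^{-1}\lambda_1:b\lambda_1^{-1}\lambda_2:c\lambda_2^{-1}\lambda_3)$, and $\tau p=p$ together with $a,b,c\neq 0$ forces $\lambda_0^{-1}\lambda_1=\lambda_1^{-1}\lambda_2=\lambda_2^{-1}\lambda_3$, i.e. $T_\xi\subset\Lambda:=\{(\lambda_0,\lambda_1,\lambda_0^{-1}\lambda_1^2,\lambda_0^{-2}\lambda_1^3)\mid\lambda_0,\lambda_1\in k^*\}$. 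Since $\xi\in V\subset Z$, $\dim T(4;k)\cdot\xi\le 1$, so $\dim T_\xi\ge 2$, hence $T_\xi=\Lambda$. Next, letting $G:=\G_a\cdot T_{01}$ act on $V$, Remark 12.3 gives $T(4;k)_\xi\cap G=(\mathbf 1)$ (because an element of $\Lambda$ lying in $G$ has $\lambda_0^{-1}\lambda_1^2=\lambda_0^{-2}\lambda_1^3=1$, forcing $\lambda_0=\lambda_1=1$), so by Remark 12.4 $V=\overline{T_{01}\cdot\xi}$ and therefore $\dim G_\xi=1$.

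Then $H:=G_\xi^0$ is a connected one-dimensional subgroup of $G=\G_a\ltimes T_{01}$. It cannot be $\cong\G_a$: the unipotent elements of $G$ are the $\psi_\alpha$'s, so $H\cong\G_a$ would make $\xi$ invariant under $\G_a$, hence (as $\G_a$ together with $\Delta_\xi=G(a:b:c)$ generates all of $\Delta$) pointwise $\Delta$-invariance would propagate over $V=\overline{T\cdot\xi}$, contradicting the hypothesis. So $H\cong\G_m$, and by the structure result 8.2.2 (Auxiliary Lemma 2) there is $u\in\G_a$ (here $u=\left(\begin{smallmatrix}1&-c\\0&1\end{smallmatrix}\right)$ in the obvious $2\times 2$ block) with $uHu^{-1}=T_{01}(m,n)$ for suitable integers $m,n$ not both zero. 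Put $\zeta:=u(\xi)$. By Remark 12.1, $u$ normalizes $G(p)$, so $\Delta_\zeta=G(a:b:c)$ and hence by the first step $T_\zeta\subset\Lambda$; but also $T_\zeta\supset uHu^{-1}=T_{01}(m,n)$, and $T_{01}(m,n)\not\subset\Lambda$ (an element $(\lambda^m,\lambda^n,1,1)$ of $T_{01}(m,n)$ lies in $\Lambda$ only for $\lambda^m=\lambda^n$ and trivial third/fourth entries, which forces $m=n$ and then $\lambda^m=1$, i.e. the subgroup is trivial, contradicting $(m,n)\neq(0,0)$). This contradiction completes the proof.

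\textbf{Main obstacle.} There is no serious analytic difficulty; the whole argument is a bookkeeping exercise in which isotropy groups live. The one place demanding care is checking that the two candidate containments for $T_\zeta$ are genuinely incompatible — i.e. that $T_{01}(m,n)\not\subset\Lambda$ for every $(m,n)\neq(0,0)$ — and, more subtly, verifying that $u$ really does centralize the constraint ``$\Delta$-isotropy equals $G(p)$'' (this is exactly Remark 12.1, $\psi_\alpha G(p)\psi_\alpha^{-1}=G(p)$, applied to our $u$). Once those are in hand the contradiction is immediate. I would also make sure to invoke the hypothesis ``$V$ not pointwise $\Delta$-invariant'' at precisely the point where the $H\cong\G_a$ case is excluded, since that is the only use of it.
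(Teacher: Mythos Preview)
Your argument is essentially the paper's own, step for step: constrain $T_\xi$ to the two-dimensional torus $\Lambda$ via Remark~12.2, use Remarks~12.3--12.4 to get $V=\overline{T_{01}\cdot\xi}$ and hence a one-dimensional $G_\xi$, rule out $H\cong\G_a$, conjugate $H\cong\G_m$ into $T_{01}(m,n)$ by some $u\in\G_a$, and reach a contradiction because $T_{01}(m,n)\not\subset\Lambda$ while $\Delta_{u(\xi)}=G(p)$ (Remark~12.1) forces $T_{u(\xi)}\subset\Lambda$.

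One correction: the bound you need is $\dim T(4;k)\cdot\xi\le 2$, and it comes from $T\cdot\xi\subset V$ with $\dim V=2$, \emph{not} from $\xi\in Z$ --- the defining condition of $Z$ is $\dim\Delta\cdot\xi\le 1$, which constrains unipotent orbits, not torus orbits. (Your stated bound $\le 1$ would give $\dim T_\xi\ge 3>\dim\Lambda=2$, an immediate contradiction you did not intend.) Also, your parenthetical check that $T_{01}(m,n)\not\subset\Lambda$ is garbled: the correct condition for $(\lambda^m,\lambda^n,1,1)\in\Lambda$ is $\lambda^{2n-m}=\lambda^{3n-2m}=1$, forcing $2n-m=3n-2m=0$ and hence $m=n=0$.
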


\section{1-cycles of proper type 3.}\label{sec:12.3}

\subsection{Recalling the restriction morphism $h$.}\label{sec:12.3.1}
The ideals $\cI\subset\cO_{\P^3}$ with Hilbert polynomial $Q$ such that $t$ is
not a zero divisor of $\cO_{\P^3}/\cI$ form an open non empty subset
$U_t\subset\HH_Q$, and $\cI\mapsto\cI':=\cI+t\cO_{\P^3}(-1)/t\cO_{\P^3}(-1)$
defines the so called restriction morphism $h:U_t\to H^d=\Hilb^d(\P^2)$. If
$\Gamma:=\left\{ \begin{pmatrix} 1&0&0&*\\ 0&1&0&*\\ 0&0&1&*\\
    0&0&0&1\end{pmatrix}\right\}<\Delta$, then $\HH_Q^{\Gamma}$ is contained
in $U_t$.

\subsection{1-cycles of  proper type 3.}\label{sec:12.3.2}
We recall, respectively introduce, the notations: An ideal
$\cJ\subset\cO_{\P^3_k}$ with Hilbert polynomial $Q$ corresponds to a point
$\xi\in \HH_Q(k)$. $\cJ$ has the type 3, if $\cJ$ is invariant under $T(4;k)$
and $G_3=G(1:0:0)$, but not invariant under $\Delta$. The curve
$C=\overline{\G_a\cdot\xi}=\{\psi_{\alpha}(\xi)|\alpha\in k\}^-$ in $\HH_Q$ is
called the 1-cycle of type 3 defined by $\xi$. We say $C$ is a 1-cycle of
\emph{proper type} 3, if $\cI:=\cJ'=h(\cJ)$ has $y$-standard form (cf. 2.4.3
Definition 2). If $\varphi$ is the Hilbert function of $\cI\leftrightarrow
\xi'\in H^d(k)$, then $g^*(\varphi)>g(d)$ by definition, and one has
$\cI=y\cK(-1)+x^m\cO_{\P^2}(-m)$, where $\cK\subset\cO_{\P^2}$ has the
colength $c$ and is invariant under $T(3;k)$ and $G'_3:=\left\{
  \begin{pmatrix} 1&0&*\\ 0&1&*\\ 0&0&1\end{pmatrix}\right\} <U(3;k)$.
Moreover one has $d=m+c$ and $m\ge c+2$.

\indent If $Q(T)={T-1+3\choose 3}+{T-a+2\choose 2}+{T-b+1\choose 1}$ is the
Hilbert polynomial of $\cJ$, then the closed subscheme $V_+(\cJ)\subset\P^3$
defined by $\cJ$ has the ``complementary'' Hilbert polynomial $P(n)=dn-g+1$,
where $d=a-1,g=g(\cJ)=(a^2-3a+4)/2-b$ (cf. [T1], p. 92).

\begin{lemma}

Let $\cJ$ be of proper type 3 and put $ \nu:=\min \{ n|H^0(\cJ(n))\neq (0)\}$.
If $x^{\nu}\in H^0(\cJ(\nu))$, then $g(\cJ)<0$.
\end{lemma}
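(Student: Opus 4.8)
The statement to prove is: if $\cJ\subset\cO_{\P^3_k}$ is an ideal of proper type $3$ and $\nu:=\min\{n\mid H^0(\cJ(n))\neq(0)\}$, then $x^\nu\in H^0(\cJ(\nu))$ implies $g(\cJ)<0$. The plan is to translate the hypothesis $x^\nu\in H^0(\cJ(\nu))$ into a statement about the restriction ideal $\cI:=\cJ'=h(\cJ)$ and then read off the genus from the Hilbert polynomial. First I would record what we already know: since $\cJ$ has proper type $3$, the restriction $\cI$ has $y$-standard form, so by Lemma 2.6 and Definition 2 in (2.4.3) we have $\cI=y\cK(-1)+x^m\cO_{\P^2}(-m)$ with $c:=\colength(\cK)$, $m=\reg(\cI)$, $d=m+c$ and $m\ge c+2$ (Lemma 2.4). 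Because $t$ is a non-zero-divisor on $\cO_{\P^3}/\cJ$, we have $H^0(\cJ(n))\otimes$ restriction behaving well, and in particular $H^0(\cI(n))=\bigoplus_{i=0}^n t^{n-i}$-image of $H^0(\cJ(i))$ in the sense of (1.2.2)(iv); the key point is that $\nu=\min\{n\mid H^0(\cI(n))\neq(0)\}$ as well, since $t$ is regular, and a nonzero section of $\cJ(\nu)$ restricts to a nonzero section of $\cI(\nu)$.

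The core step: from $x^\nu\in H^0(\cJ(\nu))$ it follows that $x^\nu\in H^0(\cI(\nu))$. But for an ideal of $y$-standard form, $H^0(\cI(n))=yH^0(\cK(n-1))$ for $n<m$ and $H^0(\cI(n))=yH^0(\cK(n-1))\oplus x^m k[x,z]_{n-m}$ for $n\ge m$ (Lemma 2.6). A monomial $x^\nu$ lies in $yH^0(\cK(n-1))$ only if it is divisible by $y$, which it is not; and it lies in $x^m k[x,z]_{n-m}$ only if $\nu\ge m$. Hence $x^\nu\in H^0(\cI(\nu))$ forces $\nu\ge m$. On the other hand $\nu\le m-1$: indeed $H^0(\cI(m-1))=yH^0(\cK(m-2))$ is nonzero because $m-1\ge c+1>\reg(\cK)$ forces $H^0(\cK(m-2))\neq(0)$ (this is exactly the remark used in (8.4.2), that $\nu<m$ by Lemma 2.2 and Corollary 2.1). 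This is a contradiction, so in fact the hypothesis $x^\nu\in H^0(\cJ(\nu))$ can only hold if the standard form degenerates — and the only way out is that $\cK=\cO_{\P^2}$, i.e. $c=0$. The hard part will be pinning down precisely this degenerate case and checking it is the only one; I expect this to be the main obstacle, because one must be careful that $\nu$ computed on $\P^3$ really equals $\reg$-type quantities on $\P^2$ and that no twisting subtlety in the definition of $h$ is being overlooked.

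So assume $c=0$. Then $\cI=(y,x^m)$ and $\cK=\cO_{\P^2}$, whence $d=m$ and $\cI$ is the ideal of $m$ points on a line; correspondingly $\cJ$ is the saturated ideal whose $V_+(\cJ)\subset\P^3$ is (up to the structure pushed up by $\cI^*$ as in (1.2.2)(iv)) a plane curve of degree $d=m$ with the smallest possible genus in its Hilbert scheme. Concretely, with $a$ and $b$ the Macaulay coefficients so that the complementary Hilbert polynomial of $\cJ$ is $Q(T)=\binom{T-1+3}{3}+\binom{T-a+2}{2}+\binom{T-b+1}{1}$, one has $d=a-1$ and $g(\cJ)=(a^2-3a+4)/2-b$ (as recalled in (12.3.2)). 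The final step is a direct computation of $b$ in this degenerate situation: since $c=0$ forces the Hilbert function of $\cI$ to be minimal, $b$ is forced to be as large as possible, and substituting into $g(\cJ)=(a^2-3a+4)/2-b$ yields a strictly negative value. I would carry this out by using Remark 2.2 and the formula $g^*(\varphi)=s(\varphi)-\binom{e+1}{3}+d(e-2)+1$ together with $g^*(\varphi)>g(d)$ (which holds by the definition of proper type $3$, since $\cI$ has $y$-standard form) to bound $g(\cJ)$: in fact $g(\cJ)$ equals $g^*$ of the restriction only up to the correction coming from the genus formula, and the degenerate Hilbert function gives $e$ and $s(\varphi)$ small enough that $g(\cJ)<0$. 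The computation is routine once the degeneracy $c=0$ is established, so the whole weight of the argument rests on the dichotomy in the previous paragraph.
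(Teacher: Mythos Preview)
Your argument has a genuine gap at its core step. You claim that $\nu:=\min\{n\mid H^0(\cJ(n))\neq 0\}$ coincides with $\min\{n\mid H^0(\cI(n))\neq 0\}$, and you invoke the observation from (8.4.2) that the latter is strictly less than $m$. But the $\nu$ appearing in (8.4.2) is defined with respect to the ideal $\cI$ on $\P^2$, not with respect to $\cJ$ on $\P^3$. Writing $H^0(\cJ(n))=\bigoplus_{i=0}^n t^{n-i}U_i$ with $U_i\subset H^0(\cI(i))$, one only has the inclusion $U_i\subset H^0(\cI(i))$, not equality; so $H^0(\cI(n))\neq 0$ does \emph{not} force $H^0(\cJ(n))\neq 0$, and your inequality $\nu\le m-1$ is unjustified. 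In fact the inequality $\nu\ge m$ (which you correctly derive from $x^\nu\in H^0(\cI(\nu))$ and the $y$-standard form) is exactly the situation that occurs, with no contradiction. Note also that your proposed escape route $c=0$ does not help: even then $H^0(\cI(m-1))=yH^0(\cK(m-2))=yS_{m-2}\neq 0$, so your argument would rule out this case too.

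The paper's proof handles this correctly by first replacing $\cJ$ by the larger ideal $\cJ^*$ with $H^0(\cJ^*(n))=\bigoplus_{i=\nu}^n t^{n-i}H^0(\cI(i))$; this increases $g$, so it suffices to treat $\cJ^*$. After a further sequence of graded deformations (which preserve the Hilbert polynomial and hence $g$, while only increasing the Hilbert function of the restriction $\cI$), one reduces to the situation where $\cI$ has a specific maximal shape and the pyramid is complete from level $\nu$ on. At that point $g(\cJ)$ is computed explicitly as $(\nu-1)d-\binom{\nu+2}{3}+1$, and an elementary calculus argument shows this is a decreasing function of $\nu$ on $[m,\infty)$ whose value at $\nu=m$ is already negative (using $m\ge c+2$). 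There is no dichotomy and no reduction to $c=0$; the genus is bounded directly.
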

\begin{proof}
  We start with an ideal $\cJ$ fulfilling these conditions. There are
  subspaces $U_i\subset S_i$, invariant under $T(3;k)\cdot G'_3$ such that
  $S_1U_i\subset U_{i+1},i=0,1,2,\ldots$ and
  $H^0(\cJ(n))=\bigoplus\limits^n_{i=0}t^{n-i}U_i$, for all $n\in\N$. Besides
  this, $U_n=H^0(\cI(n))$, at least if $n\ge b$, where $\cI=\cJ'$ is the
  restriction ideal of $\cJ$ (cf. [G78], Lemma 2.9, p. 65). We replace $U_n$
  by $H^0(\cI(n))$ for all $\nu\le n\le b-1$ and we get an ideal
  $\cJ^*\supset\cJ$ such that $H^0(\cJ^*(n))=(0)$, if $n<\nu$, and
  $H^0(\cJ^*(n))=\bigoplus\limits^n_{i=\nu}t^{n-i}H^0(\cI(i))$, if $n\ge\nu$.
  (N.B.: $x^{\nu}\in H^0(\cJ(\nu))\Rightarrow x^{\nu}\in\Im
  (H^0(\cJ(\nu))\stackrel{\longrightarrow}{\mathrm{res}} H^0(\cI(\nu)))$). Then
  $(\cJ^*)'$ is equal to $\cI$, $\cJ^*$ is of proper type 3, the Hilbert
  polynomial of $\cJ^*$ is equal to $Q^*(n)={n-1+3\choose 3}+{n-a+2\choose
    2}+{n-b^*+1\choose 1}$, the length of $\cJ^*/\cJ$ is equal to
  $Q^*(n)-Q(n)=b-b^*\ge 0$, and because of $g(\cJ^*)=(a^2-3a+4)/2-b^*$ one has
  $g(\cJ^*)\geq g(\cJ)$. Thus it suffices to show $g(\cJ^*) < 0 $ We thus can
  assume without restriction $\cJ=\cJ^*$, i.e.
  $H^0(\cJ(n))=\bigoplus\limits^n_{i=\nu}t^{n-i}H^(\cI(i))$ for all $n\ge\nu$,
  and $x^{\nu}\in H^0(\cI(\nu))$. Using the terminology of [T1]--[T4], one can
  say the pyramid $E(\cJ)$ is complete up to the level $\nu$ over the platform
  $H^0(\cI(n))$, where $n\ge b$, for instance (cf. Fig. 12.1). Further note
  that
\begin{equation}\label{eq:1}
  H^0(\cI(n))=yH^0(\cK(n-1))\oplus x^mk[x,z]_{n-m}
\end{equation}

for all $n\in\N$ (cf. Lemma 2.6). We associate to $\cI$ the ideal
$\tilde{\cI}$ represented in Figure 12.2; that means $\tilde{\cI}$ arises from
$\cI$ by shifting $yH^0(\cK(n-1))$ into $yH^0(\tilde{\cK}(n-1))$, where the
Hilbert functions of $\cK$ and $\tilde{\cK}$ agree and hence $\cI$ and
$\tilde{\cI}$ have the same Hilbert function $\varphi$. Then $\tilde{\cJ}$ is
defined by
$H^0(\tilde{\cJ}(n))=\bigoplus\limits^n_{i=\nu}t^{n-i}H^0(\tilde{\cI}(i))$ and
$H^0(\tilde{\cJ}(n))=(0)$, if $n<\nu$. Then $\tilde{\cI}$ and $\tilde{\cJ}$
fulfil the same assumptions as $\cI$ and $\cJ$ do, and
$g(\cJ)=g(\tilde{\cJ})$. Thus we can assume without restriction that $\cI$ has
the shape as represented by Fig. 12.2. Then one makes the graded deformations
$\square^{\bullet}\mapsto \fbox{1}^{\bullet}$ (or $\square^{\bullet}\to
\fbox{2}^{\bullet}$, etc.) in $E(\cJ)$. Each of the orbits which are to be
exchanged, have the same length. One gets an ideal $\tilde{\cJ}$ with the same
Hilbert polynomial, which is again of proper type 3. If $\tilde{\varphi}$ is
the Hilbert function of $\tilde{\cI}:=(\tilde{\cJ})'$, then
$\tilde{\varphi}>\varphi$, hence $g^*(\tilde{\varphi})>g^*(\varphi)>g(d)$ (cf.
 Remark 2.1). As $\cJ$ and $\tilde{\cJ}$ have the same Hilbert polynomial,
one has $g(\tilde{\cJ})=g(\cJ)$. The colength of $\tilde{\cI}$ in $\cO_{\P^2}$
is the same as the colenght $d$ of $\cI$ in $\cO_{\P^2}$, hence the
coefficient $a$, remains unchanged. Now one can again pass to
$(\tilde{\cJ})^*$ and has again $x^{\nu}\in H^0((\tilde{\cJ})^*(\nu)),
\nu=\min \{ n|H^0((\tilde{\cJ})^*(n))\neq (0)\}, (\tilde{\cJ})^*$ of proper
type 3. Thus it suffices to show $g((\tilde{\cJ})^*)< 0$. Continuing in this
way one sees that one can assume without restriction: $\cJ$ is of proper type
3, $\cI=\cJ'$ has the shape of Figure 12.4,
$H^0(\cJ(n))=\bigoplus\limits^n_{i=\nu}t^{n-i}H^0(\cI(i))$, for all $n\ge\nu,
H^0(\cJ(n))=(0)$, if $n<\nu$ and $x^{\nu}\in H^0(\cJ(\nu))$, i.e., $x^{\nu}\in
H^0(\cI(\nu))$. From (12.1) it follows that $\nu\ge m$. As $m\ge c+2$, we have
$h^0(\cK(n))={n+2\choose 2}-c,n\ge m-1$, hence
$h^0(\cI(n))=h^0(\cK(n-1))+(n-m+1)={n-1+2\choose 2}-c+(n-m+1)={n-1+2\choose
  2}+{n-1+1\choose 1}+1-(c+m)={n+2\choose 2}-d, n\ge m$. But then
\begin{align*}
    h^0(\cJ(n))= \sum^n_{i=\nu}h^0(\cI(i))& =\sum^n_{i=\nu}\left[ {i+2\choose 2}-d\right]\\
    & = \sum^n_{i=0}{i+2\choose 2}-\sum^{\nu-1}_{i=0}{i+2\choose 2}-(n-\nu+1)d\\
    & = {n+3\choose 3}-{\nu+2\choose 3}-(n-\nu+1)d.  
\end{align*}

From this we get $P(n)=(n-\nu +1)d+{\nu+2\choose 3}$, thus:
\begin{equation}\label{eq:2}
  g(\cJ)=(\nu-1)d-\tbinom{\nu+2}{3}+1\;.
\end{equation}

We regard $g(\cJ)$ as a function of $\nu\ge m$, and we have to determine the
maximum of
\[
  g(x):=(x-1)d-\frac{1}{6}x^3-\frac{1}{2}x^2-\frac{1}{3}x+1,\quad x\ge m.
\]

We have $g'(x)=-\frac{1}{2}x^2-x+(d-\frac{1}{3})=0\Leftrightarrow
x=-1\pm\sqrt{2d+\frac{1}{3}}$, and we show $m\ge -1+\sqrt{2d+\frac{1}{3}}$.
This last inequality is equivalent to $(m+1)^2>2d+\frac{1}{3}\Leftrightarrow
m^2+\frac{2}{3}\ge 2c$ (because of $d=c+m$). As $m\ge c+2$, this is true.

It follows that $g'(x)\le 0$, if $x\ge m$, hence $g(x)$ is monotone decreasing
if $x\ge m$. Now
\begin{align*}
  g(m) & = (m-1)d-\frac{1}{6}m^3-\frac{1}{2}m^2-\frac{1}{3}m+1\\
  & = (m-1)(c+m)-\frac{1}{6}m^3-\frac{1}{2}m^2-\frac{1}{3}m+1\\
  & \le (m-1)(2m-2)-\frac{1}{6}m^3-\frac{1}{2}m^2-\frac{1}{3}m+1.
\end{align*}
The right side of this inequality is smaller than 0 if and only if $18<m^3-9m^2+26m$, which is true as $m\ge c+2\ge 2$.
\end{proof}

\section{$B(4;k)$-invariant surfaces containing a 1-cycle of proper type
  3.}\label{sec:12.4}

Let be $V\subset Z=Z(\HH_Q)$ a $B(4;k)$-invariant surface containing a 1-cycle
$D$ of proper type 3. Then $V$ is not pointwise invariant under the
$\G_a$-operation $\psi_{\alpha}:x\mapsto x,y\mapsto\alpha x+y,z\mapsto z,t\mapsto
t$. According to Lemma 11.1 one can write
$V=\overline{\G_a\cdot\G_m\cdot\zeta}$. The inertia group $\Delta_{\zeta}$ has
the form $G(p)$ and by Lemma 12.1 it follows that $p=(a:b:c),a,b,c\neq 0$, is
not possible.

\subsection{The case $p=(a:0:c), a,c\neq 0$}\label{sec:12.4.1}
Then $V$ is not pointwise invariant under the $\G_m$-operation $\sigma$
(notations as in Lemma 11.1), as the following argumentation will show: Let
$\cJ\leftrightarrow\zeta$ be the corresponding ideal, let $\cP$ be an
associated prime ideal, which is $G(p)$-invariant. If $t\in\cP$, then from
(Appendix D, Lemma 2) it follows that $\cP=(x,y,z,t)$, contradiction. Thus $t$
is a non zero divisor of $\cO_{\P^3}/\cJ$. If $\cJ$ would be invariant under
$\sigma$, then $\cJ$ would be generated by elements of the form $f\cdot t^n$,
where $f\in S_m,m,n\in\N$. It follows that $f\in\cJ$ and thus $\cJ$ is
invariant under $\Gamma$ (cf. 12.3.1). But by assumption
$\Delta_{\zeta}=G(a:0:c)\not\supset\Gamma$.

The inertia group $T_{\zeta}\subset T(4;k)$ has the form $T(\rho)$, where
$\rho_3>0$ ( Lemma 11.1 a). By Appendix E $H^0(\cJ(b))$ has a standard basis
$f_i=M_ip_i(X^\rho)$.

  Let be $W:=\G_a\cdot\G_m\cdot\zeta$ The morphism $h$is defined on $V\cap
  U_t\supset W$. As $\overline{W}=V$, it follows that
  $h(W)=\{\psi_{\alpha}(\zeta')|\alpha\in k\}$ is dense in $h(V\cap U_t)$,
  where $\zeta'=h(\zeta)\leftrightarrow\cJ'$. As $\zeta\in U_t$, it follows
  that $\zeta_0=\lim\limits_{\lambda\to 0}\sigma(\lambda)\zeta\in U_t$ too 
  (see [G88], Lemma 4, p. 542, and [G89], 1.6, p.14). As $\rho_3>0$ it follows
  that $\zeta'_0=\zeta'$. Now write $D=\{ \psi_{\alpha}(\eta)|\alpha \in
  k\}^-$, where $\eta\in V(k)$ is invariant under $T(4;k)$ and $G_3$, hence
  $C\subset V\cap U_t$ and $h(\eta)\in \{\psi_{\alpha}(\zeta')|\alpha\in
  k\}^-$. As $\eta$ is not $\G_a$-invariant , $h(\eta)$ is not
  $\G_a$-invariant, hence $h(\eta)\in\{\psi_{\alpha} (\zeta')|\alpha\in k\}$.
  Then (Appendix D, Lemma 3) shows $h(\eta)=\zeta'=\zeta'_0$. $H:=\left\{
    \begin{pmatrix} 1&0&*&*\\ 0&1&*&*\\ 0&0&1&0\\
      0&0&0&1\end{pmatrix}\right\}<G(p)$ is normalized by $\sigma$, hence
  $\zeta_0$ is $H$-invariant. As $\zeta_0\in U_t$ is $\G_m$-invariant , it
  follows that $\zeta_0$ is $\Gamma$-invariant. But then $\zeta_0$ is
  invariant under $G_3$, and as $\zeta'_0=\eta'$ corresponds to an ideal in
  $y$-standard form, $\cJ_0\leftrightarrow \zeta_0$ is of proper type 3.

  As $G(p)$ is unipotent there is an eigenvector $f\neq 0$ in
  $H^0(\cJ(\nu)),\nu:=\min \{ n|H^0(\cJ(n))\neq (0)\}$. From $x \partial
  f/\partial t \in\langle f\rangle$ it follows that $\partial f/\partial t=0$.
  From $y\partial f/\partial z\in\langle f\rangle$ it follows $\partial
  f/\partial z=0$ and from $cx\partial f/\partial y-az\partial f/\partial
  t\in\langle f\rangle$ we deduce $f=x^{\nu}$ (cf. Appendix D, Lemma 2). But
  then $x^{\nu}\in\cJ_0$, too. Now $h^0(\cJ_0(n))=h^0(\cJ(n)), n\in\Z$ (cf.
  [G88], Lemma 4, p.542 and [G89], 1.6, p.14). But then from Lemma 12.2 it
  follows that $g<0$.

\begin{conclusion}\label{con:12.1}
  In the case $p=(a:0:c), a,c\neq 0$, $V$ does not contain a $1$-cycle of
  proper type 3, if $g>g(d)$ is supposed.
\end{conclusion}

\subsection {The case $ p = (a:b:0), a,b\neq 0$}\label{sec:12.4.2}
As $T_{\zeta}\subset T(1,-2,1,0)$ (cf. Auxiliary Lemma 2 ), it follows
from Lemma 11.1 that $V$ is pointwise invariant under $
\sigma(\lambda):x\mapsto x,y\mapsto y,z\mapsto z,t\mapsto\lambda t$. As $V$ is
pointwise invariant under $\Gamma$, one has $V\subset G_{\Phi}$, where $\Phi$
is the Hilbert function of $\cJ\leftrightarrow\zeta$ and $G_{\Phi}$ is the
corresponding ``graded Hilbert scheme''. This had been defined in ([G4],
Abschnitt 2) as follows: $\G_m$ operates on $\HH_Q$ by $\sigma$. Then it is
shown in (loc. cit.) that $G:=(\HH_Q)^{\G_m}\cap U_t$ is a closed subscheme of
$\HH_Q$, and $G$ is a disjunct union of closed subschemes $G_{\Phi}$, where
$G_{\Phi}$ parametrizes the corresponding ideals with Hilbert function $\Phi$.

Now suppose $D=\{\psi_{\alpha}(\eta)|\alpha\in k\}^-\subset V$ is a $1$-cycle
of proper type 3, where $\eta$ corresponds to an ideal $\cL$ of proper type 3.
Then $\cL$ and $\cJ\leftrightarrow\zeta$ have the same Hilbert function.

Let $\nu$ and $f\in H^0(\cJ( \nu))$ be defined as in (12.4.1). From $x\partial
f/\partial z\in\langle f\rangle$ and $y\partial f/\partial t\in\langle
f\rangle$ it follows that $\partial f/\partial z=\partial f/\partial t=0$. But
then from $bx\partial f/\partial y-ay\partial f/\partial z\in\langle f\rangle$
it follows that $\partial f/\partial y=0$, hence $f=x^{\nu}$. (cf. Appendix D,
Lemma 2). If $\cI$ corresponds to any point $\xi\in\G_a\cdot \G_m\cdot\zeta$,
then $x^{\nu}\in H^0(\cI(\nu))$, and the same argumentation as in (8.4.2)
shows this is true for all points in $V$. But from $x^{\nu}\in H^0(\cL(\nu))$
it follows that $g<0$.

\begin{conclusion}\label{con:12.2}
  In the case $p=(a:b:0), a,b\neq 0$, $V$ does not contain a $1$-cycle of
  proper type 3, if $g>g(d)$ is supposed.\qed
\end{conclusion}

\subsection{}\label{sec:12.4.3}
If \;$V$ contains a 1-cycle of proper type 3, then $V$ cannot be pointwise
$\G_a$-invariant, so the case $p=(0:b:c)$ cannot occur, at all.

\begin{lemma}\label{lem:12.17}
  Suppose $g>g(d)$. If\;$V\subset Z(\HH_Q)$ is a $B(4;k)$-invariant surface
  containing a 1-cycle of proper type 3, then $V$ is pointwise invariant under
  $G_3=G(1:0:0)$.\qed
\end{lemma}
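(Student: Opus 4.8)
The plan is to reduce the statement to the case analysis already prepared in Sections 12.2 and 12.4. First I would record that since $V$ contains a $1$-cycle $D$ of proper type $3$, and such a cycle is by construction the closure $\overline{\G_a\cdot\xi}$ of a single $\G_a$-orbit for the operation $\psi_\alpha:x\mapsto x,\,y\mapsto\alpha x+y,\,z\mapsto z,\,t\mapsto t$, the surface $V$ cannot be pointwise invariant under this $\G_a$: the point $\xi\leftrightarrow\cJ$ defining $D$ is not $\G_a$-invariant, because $\cJ$ is $T(4;k)$- and $G_3$-invariant while $\langle G_3,\G_a\rangle=\Delta$, so $\G_a$-invariance of $\cJ$ would make it $\Delta$-invariant, contradicting its type $3$. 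Hence Lemma 11.1 applies, and we may write $V=\overline{\G_a\cdot\G_m\cdot\zeta}$ for a suitable point $\zeta\in V(k)$, where $\G_m$ is one of the one-parameter subgroups of $T(4;k)$ occurring there.

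Next I would pin down the inertia group $\Delta_\zeta$ of $\zeta$ in $\Delta=U(4;k)$. Since $\zeta\in V\subset Z$, one has $\dim\Delta\cdot\zeta\le 1$, so $\dim\Delta_\zeta\ge 5$; and $\Delta_\zeta=\Delta$ is impossible, for then $\G_a\subset\Delta$ would fix $\zeta$ and $V=\overline{\G_a\cdot\G_m\cdot\zeta}=\overline{\G_m\cdot\zeta}$ would be at most one-dimensional. Therefore $\dim\Delta_\zeta=5$, and by Appendix D, Lemma 1 there is a unique $p=(a:b:c)\in\P^2(k)$ with $\Delta_\zeta=G(p)$.

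Then comes the case distinction according to which coordinates of $p$ vanish. If $a,b,c$ are all nonzero, this contradicts Lemma 12.1. If $a=0$, i.e.\ $p=(0:b:c)$ — which also covers $p_1=(0:0:1)$ and $p_2=(0:1:0)$ — then the defining relation $a\alpha+b\beta+c\gamma=0$ of $G(p)$ holds for every $\alpha$ with $\beta=\gamma=0$, so $\G_a\subset G(p)=\Delta_\zeta$; hence $\zeta$, and by the usual closedness argument (as in 8.4.2) all of $V$, would be pointwise $\G_a$-invariant, a contradiction. If $a\ne0$ but $b=0\ne c$, i.e.\ $p=(a:0:c)$, this is excluded by Conclusion 12.1, which is exactly where the hypothesis $g>g(d)$ is used (through Lemma 12.2). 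If $a\ne0$ but $c=0\ne b$, i.e.\ $p=(a:b:0)$, this is excluded by Conclusion 12.2, again under $g>g(d)$. The only remaining possibility is $b=c=0$, i.e.\ $p=(1:0:0)=p_3$, so $\zeta$ is invariant under $G_3=G(1:0:0)$.

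Finally I would upgrade invariance of the single point $\zeta$ to pointwise invariance of $V$. The group $G_3$ is normalized by $\G_a$ (Remark 12.1) and by every $\tau\in T(4;k)$, in particular by $\G_m$ (Remark 12.2, since $\tau\cdot p_3=p_3$ for all $\tau$ because the first coordinate $a\lambda_0^{-1}\lambda_1$ stays nonzero). Consequently, for $g\in\G_a$ and $\tau\in\G_m$ and $h\in G_3$ one computes $h(g\tau\zeta)=g\tau\big(\tau^{-1}g^{-1}hg\tau\big)\zeta=g\tau\zeta$, so $\G_a\cdot\G_m\cdot\zeta$ is pointwise $G_3$-invariant; since the fixed locus of each $h\in G_3$ is closed, the closure $V=\overline{\G_a\cdot\G_m\cdot\zeta}$ is pointwise $G_3$-invariant, which is the assertion. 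The main obstacle is not this assembly — it is essentially bookkeeping — but the two inputs it rests on: Lemma 12.1 and, more seriously, Lemma 12.2, whose proof via graded deformations of the pyramid $E(\cJ)$ and the monotonicity of $g(\nu)=(\nu-1)d-\tbinom{\nu+2}{3}+1$ for $\nu\ge m$ underlies both Conclusions 12.1 and 12.2; within the present proof the only point needing care is verifying that the split on $p$ is exhaustive and that the degenerate points $p_1,p_2$ are absorbed by the easy $a=0$ case.
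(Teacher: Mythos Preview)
Your proposal is correct and follows essentially the same route as the paper: the paper's Section 12.4 sets up exactly this case analysis on $p$, excluding $a,b,c\neq 0$ by Lemma 12.1, the mixed cases $(a:0:c)$ and $(a:b:0)$ by Conclusions 12.1 and 12.2, and the case $a=0$ (including $p_1,p_2$) by the pointwise $\G_a$-invariance contradiction, leaving only $p_3$. Your write-up supplies a few details the paper leaves implicit --- the dimension count forcing $\dim\Delta_\zeta=5$ and the explicit normalization argument passing from $G_3$-invariance of $\zeta$ to pointwise $G_3$-invariance of $V$ --- but the architecture is identical.
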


\begin{minipage}{19cm*\real{0.7}} \label{fig:12.1}
\centering Fig.: 12.1\\ 
\tikzstyle{help lines}=[gray,very thin]
\begin{tikzpicture}[scale=0.7]
 \draw[style=help lines]  grid (19,14);
 \draw[thick] (0,0) -- (0,4) (0,6) -- (0,14); 
 \draw[thick,dotted] (0,4) -- (0,6);
 \draw[->] (11.5,12.2) -- (8.7,11.4);  
 {
 \pgftransformxshift{0.5cm}
 \draw (0.75,0.2) node[above,fill=white,inner sep=1pt] {$x^mz^{n-m}$};
\draw[anchor=base] (14,12.2) node[fill=white,inner sep=1pt] 
                     {$c$ monomials are missing};
 }
 \draw[\Red,ultra thick] (0,0) -- (0,1) -- (1,1) -- (1,4)
   (1,6) -- (1,8) -- (2,8) -- (2,10) -- (3,10) -- (3,13) -- (5,13) -- (5,12) --
   (6,12) -- (6,11) -- (7,11) -- (7,10) -- (9,10) -- (9,9) -- (10,9) -- (10,8)
    -- (11,8) -- (11,7) -- (13,7) -- (13,6) -- (14,6) -- (14,5) -- (17,5) --
    (17,4) -- (18,4) -- (18,3);
 \draw[\Red,ultra thick,dotted] (1,4) -- (1,6);
  \draw[\Black, ultra thick] (8,14) -- (19,3);  
\end{tikzpicture}
\end{minipage}
\vspace{0.5cm}

\begin{minipage}{19cm*\real{0.7}} \label{fig:12.2}
\centering Fig.: 12.2\\ 
\tikzstyle{help lines}=[gray,very thin]
\begin{tikzpicture}[scale=0.7]
 \draw[style=help lines]  grid (19,11);
 \draw[thick] (0,0) -- (0,4) (0,6) -- (0,11); 
 \draw[thick,dotted] (0,4) -- (0,6);
 {
 \pgftransformxshift{0.5cm}
 \draw (0.75,0.2) node[above,fill=white,inner sep=1pt] {$x^mz^{n-m}$};
 }
 \draw[\Red,ultra thick] (0,0) -- (0,1) -- (1,1) -- (1,4)
   (1,6) -- (1,10) -- (3,10) -- (3,9) -- (4,9) -- (4,8) -- (5,8) -- (5,7) --
   (8,7) -- (8,6) -- (9,6) -- (9,5) -- (11,5) -- (11,4) -- (13,4) -- (13,3) --
   (14,3) -- (14,2) -- (17,2) -- (17,1) -- (18,1) -- (18,0);
 \draw[\Red,ultra thick,dotted] (1,4) -- (1,6);
  \draw[\Black, ultra thick] (8,11) -- (19,0);  
\end{tikzpicture}
\end{minipage}


\begin{minipage}{19cm*\real{0.7}} \label{fig:12.3}
\centering Fig.: 12.3\\ 
\tikzstyle{help lines}=[gray,very thin]
\begin{tikzpicture}[scale=0.7]
\filldraw[fill=gray,opacity=0.5] (16,5) -- (16,6) -- (17,6) -- (17,5) -- cycle;
 \draw[style=help lines]  grid (19,16);
 \draw[thick] (0,0) -- (0,3) (0,4) -- (0,16); 
 \draw[thick,dotted] (0,3) -- (0,4); 
 {
 \pgftransformxshift{0.5cm}
  \draw (5,11) node[above] {$2$};
  \draw (9,9) node[above] {$1$};
  \draw (0.75,1.2) node[above,fill=white,inner sep=1pt] {$x^mz^{n-m}$};
 }
 \draw[\Red,ultra thick] (0,0) -- (0,2) -- (1,2) -- (1,3) 
    (1,4) -- (1,14) -- (3,14) -- (3,13) -- (4,13) -- (4,12) -- (5,12) -- (5,11)
     -- (8,11) -- (8,10) -- (9,10) -- (9,9) -- (11,9) -- (11,8) -- (13,8) --
     (13,7) -- (14,7) -- (14,6) -- (17,6) -- (17,5) -- (18,5) -- (18,4);
 \draw[\Red,ultra thick,dotted] (1,3) -- (1,4);
  \draw[\Black, ultra thick] (7,16) -- (19,4);  
\end{tikzpicture}
\end{minipage}
\vspace{0.5cm}

\begin{minipage}{14cm*\real{0.7}} \label{fig:12.4}
\centering Fig.: 12.4\\ 
\tikzstyle{help lines}=[gray,very thin]
\begin{tikzpicture}[scale=0.7]
 \draw[style=help lines]  grid (14,15);
 \draw[thick] (0,0) -- (0,6) (0,8) -- (0,15); 
 \draw[thick,dotted] (0,6) -- (0,8); 
 \draw[thick,->] (7.4,11.4) -- (5.7,10.3);  
 {
 \pgftransformxshift{0.5cm}
 \draw[anchor=base] (10,11.2) node[fill=white,inner sep=1pt]
                                   {$c$ monomials are missing};
 \draw (0.75,1.2) node[above,fill=white,inner sep=1pt] {$x^mz^{n-m}$};
 }
 \draw[\Red,ultra thick] (0,0) -- (0,2) -- (1,2) -- (1,6) 
 (1,8) -- (1,14) -- (2,14) -- (2,13) -- (3,13) -- (3,12) -- (4,12) -- (4,11)
 -- (5,11) -- (5,10) -- (6,10) -- (6,9) -- (7,9) -- (7,8)
(9,6) -- (10,6) -- (10,5) -- (12,5) -- (12,4);
 \draw[\Red,ultra thick,dotted] (7,8) -- (8,8) -- (8,7) -- (9,7) -- (9,6)
      (1,6) -- (1,8);
  \draw[\Black, ultra thick] (2,15) -- (14,3);  
\end{tikzpicture}
\end{minipage}


\chapter{Relations in $B(4;k)$-invariant surfaces} \label{ch:chap13} 
We suppose $g>g(d)$ and let $V\subset X=Z(\HH_Q)$ be a $B(4;k)$-invariant
surface, which contains a 1-cycle of proper type 3. From 
 it
follows that $V$ is pointwise $G_3$-invariant. Then from ([T1], Proposition 0,
p.3) we conclude that any $B$-invariant 1-prime cycle $D\subset V$ is either
pointwise $\Delta$-invariant or a 1-cycle of type 3. The aim is to describe
the relations in $Z_1^B(X)$ defined by the standard construction of Section
9.2 carried out with regard to $V$.

\section{First case : $V$ is not pointwise invariant under the
  $\G_m$-operation $\sigma$}
We use the notation of (11.1). According to 
Lemma 11.1  $V=\overline{\G_m\cdot\G_a\cdot\zeta}$, where $T_{\zeta}=T(\rho)$
and $\rho_3>0$, for otherwise $\zeta$ would be $\G_m$-invariant and hence $V$
would be pointwise $\G_m$-invariant. Let $\cJ\leftrightarrow\zeta$ and
$C:=\{\psi_{\alpha}(\zeta)|\alpha \in k\}^-$. If one chooses a standard basis
of $H^0(\cJ(b))$ consisting of $T(\rho)$-semi-invariants, then one sees that
$h(V)=C':=\{\psi_{\alpha}(\zeta')|\alpha\in k\}^-$, where $\zeta'=h(\zeta)$.
As $V$ contains a 1-cycle of proper type 3, $C'$ is a $y$-standard cycle,
generated by $\zeta'\leftrightarrow\cJ'$.

Now if $D\subset V$ is a 1-cycle of proper type 3, then $h(D)=C'$. If $D$ is
of type 3, but not of proper type 3, then $h(D)$ is not a $y$-standard cycle.
Write $D=\overline{\G_a\cdot\eta},\eta\in V(k)$ invariant under $T(4;k)$. It
follows that $\eta'=h(\eta)$ is one of the two $T(3;k)$-fixed points of $C'$
(cf. Appendix D, Lemma 3). If $\eta'=\zeta'$, then $D$ would be of proper type
3. Hence $\eta'$ is the unique point of $C'$, invariant under $B(3;k)$ and $h(D)$ equals the point $\eta'$ .\\
\indent If $D$ is pointwise $\Delta$-invariant, then $h(D)$ is pointwise invariant under $U(3;k)$ and hence equals the point $\eta'$ , too .\\

Let $\CC_{0/\infty}=\lim\limits_{\lambda\to 0/\infty}\sigma(\lambda)C$ be the
$B$-invariant limit curves of $C$ coming from the standard construction. We
write in $Z_1^B(V)$:
\[
  \CC_0=\sum m_iA_i+Z_0 \quad \text{and} \quad\CC_{\infty}=\sum n_j B_j+Z_{\infty}
\]
where $A_i$ and $B_j$ are the 1-cycles of proper type 3, occurring in $\CC_0$
and $\CC_{\infty}$, respectively, and $m_i,n_j\in\N-(0)$. All the other prime
cycles occurring in $\CC_0$ and $\CC_{\infty}$ are summed up in $Z_0$ and
$Z_{\infty}$, respectively.

Let $\cM_n$ be a tautological line bundle on $\HH_Q$. Then $(\cM_n\cdot
A_i)=\delta n+a_i, (\cL_n\cdot B_j)=\delta n+b_j$. Here $\delta\in\N-(0)$ is
independent of $i$ and $j$, as $h(A_i)=h(B_j)=C'$ for all $i,j$, whereas the
constants $a_i$ and $b_j$ still depend on $A_i$ and $B_j$, respectively. From
$[\CC_0]=[\CC_{\infty}]$ it follows that $\sum m_i(\delta n+a_i)=\sum
n_j(\delta n+b_j) + c$ for all $n\gg 0$, where $c$ depends only on $Z_0$ and $Z_{\infty}$ (see Appendix D, Lemma 4) . It follows that $\sum m_i=\sum n_j$. Therefore we can write
\begin{equation}\label{eq:1}
  \CC_0-\CC_{\infty}=\sum_k(E_k-F_k)+\sum_k G_k
\end{equation}
where $(E_1,E_2,\dots):=(A_1,\dots,A_1,A_2,\dots,A_2,\dots)$ and
$(F_1,F_2,\dots):=(B_1,\dots,B_1,B_2,\\ \dots,B_2,\dots)$. Here $A_1$
(respectively $B_1$) are to occur $m_1$-times (respectively $n_1$-times) etc.
By the way, the arbitrary association $E_k\mapsto F_k$ is possible because of
$\sum m_i=\sum n_j$. If $E_k=F_k$, the summand $E_k-F_k$ is left out. $G_k$ is
composed of either pointwise $U(4;k)$-invariant curves or 1-cycles of type 3,
which are not proper.

\section{Second case: $V$ is pointwise invariant under the $\G_m$-operation
  $\sigma$, but not pointwise invariant under the $\G_m$-operation $\tau$}\label{sec:13:2}
We use the same notations as in (11.3).

\noindent {\it 1st subcase:} $h(V)$ is not 2-dimensional. By assumption $V$
contains a 1-cycle $D$ of proper type 3, hence $h(V)=h(D)=:D'$ is a
$y$-standard cycle and all the other 1-cycles in $V$, which are of proper type
3, are mapped by $h$ onto $D'$. Then one carries out the standard construction
by means of the operation $\tau$ and one gets formally the same relations as
(13.1).

\noindent {\it 2nd subcase:} $h(V)=V'\subset H^d$ is a $B(3;k)$-invariant
surface, which is pointwise invariant under $G'_3=\left\{ \begin{pmatrix}
    1&0&*\\ 0&1&*\\ 0&0&1\end{pmatrix}\right\}<U(3;k)$. As $V'$ contains a
$y$-standard cycle, $V'$ is not pointwise $\G_a$-invariant.

At first, the standard construction is carried out in
$V=\overline{\G_m\cdot\G_a\cdot\zeta}$ (cf. Lemma 11.1):
$C:=\{\psi_{\alpha}(\zeta)|\alpha\in k\}^-$ is a closed curve in $V$ with
Hilbert polynomial $p$ and $\lambda\mapsto\tau(\lambda)C$ defines a morphism
$\G_m\to\Hilb^p(V)^{\G_a}$, whose extension to $\P^1$ defines a flat family
$\cC\hookrightarrow V\times\P^1$ over $\P^1$ such that
$\cC_{\lambda}:=p_2^{-1}(\lambda)=\tau(\lambda)C\times \{\lambda\}$, if
$\lambda\in k^*$. $\CC_{0/\infty}=:p_1(\cC_{0/\infty})$ are called the limit
curves of $C$ and $[\CC_0]-[\CC_{\infty}]$ is the ``standard relation''
defined by $V$.

\noindent Put $U : =\{ \tau(\lambda)\psi_{\alpha}(\zeta)|\alpha\in k,
\lambda\in k^*\}$.

\noindent Put $U': =\{\tau(\lambda)\psi_{\alpha}(\zeta')|\alpha\in k,
\lambda\in k^*\}$.

\noindent Then $\overline{U}=V $\;and\; $\overline{U'}=V'$. Carrying out the
standard construction by means of $C':=\{\psi_{\alpha}(\zeta')|\alpha\in
k\}^-$ one gets a flat family $\cC'\hookrightarrow
V'\times\P^1\stackrel{p_2}{\longrightarrow}\P^1$. One has a morphism
$\cC\hookrightarrow V\times\P^1\stackrel{h\times id}{\longrightarrow}
V'\times\P^1$, which is denoted by $f$.

\noindent Put $ {\mathbf{U}}:
=\{(\tau(\lambda)\psi_{\alpha}(\zeta),\lambda)|\alpha\in k, \lambda\in
k^*\}\subset\cC$.

\noindent Put $ {\mathbf{U'}}:
=\{(\tau(\lambda)\psi_{\alpha}(\zeta'),\lambda)|\alpha\in k, \lambda\in
k^*\}\subset\cC'$.

\noindent $\cC$ and $\cC'$ are reduced and irreducible (see [T1], proof of
Lemma 1, p.6). Hence $\overline{\mathbf{U}} =\cC$ and
$\overline{\mathbf{U'}}=\cC'$.  As $f(\mathbf{U})=\mathbf{U'}$ and $f$ is
projective, $f(\cC)=\cC'$ follows. As the diagram
\[
  \begin{array}{ccccc}
  \cC &&\stackrel{f}{\longrightarrow}&&\cC'\\
  &p_2\searrow&&\swarrow p_2&\\
  &&\P^1&&
  \end{array}
\]
is commutative, $f(\cC_0)=\cC'_0$ and $f(\cC_{\infty})=\cC'_{\infty}$ follows. As the diagram
\[
  \begin{array}{ccc}
  V\times\P^1 & \stackrel{h\times id}{\longrightarrow} & V'\times\P^1\\ \\
  p_1\big\downarrow && \big\downarrow p_1\\ \\
  V & \stackrel{h}{\longrightarrow} & V'
  \end{array}
\]
is commutative, it follows that $\CC'_{0/\infty}: =p_1(\cC'_{0/\infty})=p_1
f(\cC_{0/\infty})=h(\CC_{0/\infty})$. Let be $\zeta_{0/\infty}:
=\lim\limits_{\lambda\to 0/\infty}\tau(\lambda)\zeta$ and 
 $\zeta'_{0/\infty}:=\lim\limits_{\lambda\to 0/\infty}\tau(\lambda)\zeta'$.

Now from Lemma 9.2, it follows that $C'_{0/\infty}:=\{
\psi_{\alpha}(\zeta'_{0/\infty})|\alpha\in k\}^-$ are the only $y$-standard
cycles in $\CC'_{0/\infty}$ and they both occur with multiplicity 1. We want
to show that $C_{0/\infty}: =\{\psi_{\alpha}(\zeta_{0/\infty})|\alpha\in
k\}^-$ are the only 1-cycles of proper type 3 in $\CC_{0/\infty}$, and they
both occur with multiplicity 1. In order to show this, we consider the
extension of $\tau:\lambda\mapsto \tau(\lambda)\zeta$ to a morphism
$\overline{\tau}:\A^1\to V$. Then $\zeta_0=\overline{\tau}(0)$. As there is a
commutative diagram
\[
  \begin{array}{ccc}
  \G_m & \stackrel{\tau}{\longrightarrow} & V\\ \\
  & \tau'\searrow\qquad & \big\downarrow h\\ \\
  & &V'
 \end{array}
\]
where $\tau'(\lambda): =\tau(\lambda)\zeta'$ and as the extensions are
determined uniquely, it follows that $\overline{\tau'}=h\circ\overline{\tau}$,
hence $h(\zeta_0)=\zeta'_0$. By constructing the corresponding extensions to
$\P^1$, it follows in the same way that $h(\zeta_{\infty})=\zeta'_{\infty}$.
Hence we get $h(C_{0/\infty})=C'_{0/\infty}$. Therefore $C_0$ and $C_{\infty}$
are 1-cycles of proper type 3, and from $\zeta_{0/\infty}\in\CC_{0/\infty}$ we
conclude $C_{0/\infty}\subset\CC_{0/\infty}$.

Assume there is another 1-cycle $D$ of proper type 3 contained in $\CC_0$, or
assume that $C_0$ occurs with multiplicity $\ge 2$ in $\CC_0$. Then there is
an equation $ [\CC_0] = [C_0]+[D]+\cdots$ in $Z_1^B(V)$, where $D=C_0$, if $C_0$
occurs with multiplicity $\ge 2$. From Lemma 9.2 it follows that
$h(D)=C'_0$. It follows that
\begin{align*}
  h_*([\CC_0])& =h_*([C_0])+h_*([D])+\cdots\\
             &  =\deg (h|C_0)[h(C_0)]+\deg (h|D)[h(D)]+\cdots\\
             & =h_*([C])=\deg (h|C)[C'].
\end{align*}

\noindent As $C=\{\psi_{\alpha}(\zeta)|\alpha\in k\}^-$ and
$\alpha\mapsto\psi_{\alpha}(\zeta)$ is injective and the same is true for $C'$
and $\alpha\mapsto\psi_{\alpha}(\zeta') , h|C$ is an isomorphism. The same
argumentation shows that $h|C_0$ and $h|C_{\infty}$ are isomorphisms. Hence we
get $[C'_0]+[C'_0]+\cdots=[C']=[\CC'_0]$, which means that $C'_0$ occurs with
multiplicity $\ge 2$ in $\CC'_0$, contradiction. \\
\noindent The same argumentation shows that $C_{\infty}$ is the only 1-cycle
of proper type 3 in $\CC_{\infty}$, and it occurs with multiplicity 1. This
gives a relation of the form
\begin{equation}\label{eq:2}
  \CC_0-\CC_{\infty}=C_0-C_{\infty}-Z
\end{equation}
where $C_0$ and $C_{\infty}$ are 1-cycles of proper type 3 and $Z$ is a
1-cycle whose components are either pointwise $U(4;k)$-invariant curves or
1-cycles of type 3, which are not proper.

\section{Third case:$V$ is pointwise invariant under $\sigma$ and $\tau$}
Then we write $V=\overline{\G_m\cdot\G_a\cdot\zeta}$ as in the 3rd case of
 Lemma 11.1.

\noindent {\it 1st subcase :} $h(V)$ is not 2-dimensional. The same
argumentation as in the first subcase of (13.2), using the $\G_m$-operation
$\omega$ as in the third case of Lemma 11.1 instead of the
$\G_m$-operation $\tau$, gives relations of the form (13.1).

\noindent {\it 2nd subcase:} If $h(V)$ is 2-dimensional, the same
argumentation as in the second subcase of 13.2 , with $\omega$
instead of $\tau$, gives relations of the form (13.2).

\begin{proposition}
Assume $g>g(d)$ and let $V\subset X:=Z(\HH_Q)$ be a $B(4;k)$-invariant surface
containing a 1-cycle of proper type 3. Then each relation in $Z_1^B(X)$
defined by $V$, which contains a 1-cycle of proper type 3 is a sum of
relations of the form $C_1-C_2+Z$. Here $C_1$ and $C_2$ are 1-cycles of proper
type 3, and $Z$ is a 1-cycle whose prime components either are pointwise
$\Delta$-invariant or 1-cycles of type 3 , which are not proper .
\end{proposition}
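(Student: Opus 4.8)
The plan is to assemble the statement from the three cases already set up in \S13.1--\S13.3. First I would record the two structural facts that drive everything. Since $g>g(d)$ and $V$ contains a $1$-cycle of proper type $3$, Lemma 12.3 gives that $V$ is pointwise $G_3$-invariant, and hence by ([T1], Proposition 0) every $B$-invariant $1$-prime cycle in $V$ is either pointwise $\Delta$-invariant or a $1$-cycle of type $3$ (proper or not). Second, a $1$-cycle of proper type $3$ is not pointwise $\G_a$-invariant, so $V$ is not pointwise $\G_a$-invariant and the standard construction of \S9.2 applies: writing $V=\overline{\G_m\cdot\G_a\cdot\zeta}$ as in Lemma 11.1 and setting $C=\{\psi_{\alpha}(\zeta)\mid\alpha\in k\}^-$, it produces the ``standard relation'' $\CC_0-\CC_\infty$ in $Z_1^B(X)$ whose prime components lie among the two classes just described. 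By Hirschowitz (\S9.1) every relation in $Z_1^B(X)$ defined by a $B$-invariant surface is a sum of such standard relations, so it suffices to treat a single such $V$.

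The heart of the matter is to show that the $1$-cycles of proper type $3$ occurring in $\CC_0$ and in $\CC_\infty$ are controlled, and this is done by pushing the relation down to $H^d$ along the restriction morphism $h\colon U_t\to H^d$. I would split into the three cases of Lemma 11.1. In the first case ($V$ not pointwise invariant under $\sigma$, $T_\zeta=T(\rho)$ with $\rho_3>0$), $h$ maps the dense orbit of $V$ onto the single $y$-standard cycle $C'=h(C)$, so every proper-type-$3$ cycle $A_i$ of $\CC_0$ and $B_j$ of $\CC_\infty$ has $h(A_i)=h(B_j)=C'$; intersecting with a family of tautological line bundles $\cL_n$ gives $(\cL_n\cdot A_i)=\delta n+a_i$, $(\cL_n\cdot B_j)=\delta n+b_j$ with the \emph{same} $\delta$, and $[\CC_0]=[\CC_\infty]$ in $A_1$ forces $\sum m_i=\sum n_j$. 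Pairing the $A_i$ with the $B_j$ arbitrarily then writes $\CC_0-\CC_\infty=\sum_k(E_k-F_k)+\sum_k G_k$ with $E_k,F_k$ of proper type $3$ and each $G_k$ a sum of pointwise $\Delta$-invariant curves and non-proper type-$3$ cycles, i.e.\ relations of the form $C_1-C_2+Z$ as in (13.1).

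In the second case ($V$ pointwise $\sigma$-invariant, not pointwise $\tau$-invariant) there are two subcases. If $h(V)$ is not $2$-dimensional it collapses onto a $y$-standard cycle and one argues as in the first case. If $h(V)=V'$ is a $2$-dimensional $B(3;k)$-invariant (hence pointwise $G'_3$-invariant) surface, I would run the standard construction simultaneously in $V$ (with $\tau$) and in $V'$, comparing the flat families $\cC\to\P^1$ and $\cC'\to\P^1$ via the morphism $f=h\times\mathrm{id}$; uniqueness of the extension of a one-parameter family to $\P^1$ gives $h(\CC_{0/\infty})=\CC'_{0/\infty}$ and $h(C_{0/\infty})=C'_{0/\infty}$, so $C_0,C_\infty$ are of proper type $3$. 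By Lemma 9.2, $C'_0$ and $C'_\infty$ are the only $y$-standard cycles in $\CC'_0$, $\CC'_\infty$, each with multiplicity $1$. Since $h$ restricted to each of $C,C_0,C_\infty$ is an isomorphism and $h_*$ carries a $1$-cycle of proper type $3$ to a positive multiple of a $y$-standard cycle, the hypothesis that some other proper-type-$3$ cycle $D$ occurs in $\CC_0$, or that $C_0$ occurs there with multiplicity $\ge 2$, would (apply $h_*$) force $C'_0$ to appear in $\CC'_0$ with multiplicity $\ge 2$, contradicting Lemma 9.2; likewise for $\CC_\infty$. This yields $\CC_0-\CC_\infty=C_0-C_\infty-Z$ with $Z$ of the required form, i.e.\ (13.2). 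The third case ($V$ pointwise invariant under $\sigma$ and $\tau$) is identical, using the $\G_m$-operation $\omega\colon x\mapsto\lambda x,\;y\mapsto y,\;z\mapsto z,\;t\mapsto t$ in place of $\tau$. Collecting the three cases and summing over the surfaces entering a given relation gives the Proposition.

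The step I expect to be the real obstacle is the comparison in the $2$-dimensional subcase: one needs $h(\CC_{0/\infty})=\CC'_{0/\infty}$ together with the compatibility $h_*[C_0]=\deg(h|C_0)\,[C'_0]$ to hold \emph{with multiplicities}, so that Lemma 9.2 downstairs --- itself the payoff of the $\alpha$-grade inequality \textbf{(!)} of \S4.3, proved throughout Chapters 5--7 --- can be transported back upstairs. The rest (the intersection-number computation giving $\sum m_i=\sum n_j$, and checking that $Z_0,Z_\infty,G_k,Z$ carry no proper-type-$3$ components) is bookkeeping already carried out in \S13.1--\S13.3.
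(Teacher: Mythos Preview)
Your proposal is correct and follows essentially the same approach as the paper: the paper's own proof is the single line ``This follows from the foregoing discussion,'' referring precisely to the three-case analysis of \S13.1--\S13.3 that you have summarized. Your account of the key mechanisms --- the degree comparison via $(\cL_n\cdot -)$ in the first case giving $\sum m_i=\sum n_j$, and the transport of Lemma~9.2 upstairs via $h_*$ in the $2$-dimensional subcase --- matches the paper's reasoning exactly.
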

\begin{proof} This follows from the foregoing discussion.
\end{proof}


\chapter{Necessary and sufficient conditions} \label{ch:chap14}
We take up the notations introduced in Chapter 1. Let be $d\ge 5,
g(d)=(d-2)^2/4, \HH=H_{d,g}, \cA(\HH) = \Im (A_1(\HH^{U(4;k)})\longrightarrow
  A_1(\HH))$. Obviously, the cycle $C_3 
=\{(x^a,\alpha x+y,x^{a-1}z^{b-a+1})|\alpha\in k\}^-$, where
$d=a-1,g=(a^2-3a+4)/2-b$, is a  1-cycle of proper type 3.

\section{The necessary condition.}\label{sec:14.1}
In the proof of Theorem 1.1 in Chapter 10 we replace $\cH$, $U(3;k)$, $B(3;k)$
and ``$y$-standard cycle'' by $\HH$, $U(4;k)$, $B(4;k)$ and ``1-cycle of
proper type 3'', respectively. Then using Proposition 13.1 instead of
 Proposition 9.1 , the same reasoning as in the proof of Theorem 1.1 gives the

\begin{conclusion}\label{con:14:1} If $d\ge 5$ and $g>g(d)$, then $[C_3]\notin\cA(\HH)$.\qed
\end{conclusion}

\section{The sufficient condition.}\label{sec:14.2}

\subsection{The case $d\ge 5$ and $d$ odd.}\label{sec:14.2.1}
In ([T2], 3.3.2, Folgerung, p.129) it had been shown $[C_3]\in\cA(\HH)$, if
$a\ge 6$ and $b\ge a^2/4$. We express this condition by means of the formulas
in (1.1): 
\[
b\ge a^2/4\Leftrightarrow g\le\frac{1}{2}
[(d+1)^2-3(d+1)+4]-\frac{1}{4}(d+1)^2=\frac{1}{4}(d^2-4d+3). 
\]
As $d$ is odd, this is equivalent to $g\le g(d)=\frac{1}{4}(d-2)^2$.

\begin{conclusion}\label{con:14.2} If $d\ge 5$, $d$ is odd and $g\le g(d)$,
  then $[C_3]\in\cA(H_{d,g})$.\qed
\end{conclusion}

\subsection{The case $d\ge 6$ and $d$ even.}\label{sec:14.2.2}
We will show that the bound given in \\                                                                                                                             ([T2], 3.3.3 Folgerung, p.129) is already valid, if $a\ge 7$. \\
\noindent $1^{\circ}$ We consider the Hilbert function $\varphi$ of the
monomial ideal $(x^2,xy^{e-2},y^e),e:=d/2+1\ge 4$, which is represented in
Fig. 14.1. In Section (2.2.3) this Hilbert function had been denoted by $\chi$
and it had been shown that $g^*(\varphi)=\frac{1}{4}(d-2)^2=:g(d)$. The
figures 14.1 - 14.6 show all possible monomial ideals with Hilbert function
$\varphi$. Besides this, we consider the ideal $\cI :
=(xy,x^e,y^e,x^{e-1}+y^{e-1})$ which corresponds to a closed point $\xi$ in
the Iarrobino variety $I_{\varphi}$. This variety parametrizes all sequences
$(U_0,U_1,\dots)$ of subspaces $U_i\subset R_i$ with dimension
$\varphi'(i)=\varphi(i)-\varphi(i-1)$, such that $R_1U_i\subset
U_{i+1}, i\in\N$ . Here $R$ is the polyno - \\                                                                                                                       mial ring in two variables . \\
\noindent $2^{\circ}$ We show that $V=\overline{\G_a\cdot\G_m\cdot\xi}$ is
2-dimensional, where $\G_m$ operates by $\tau(\lambda):x\mapsto\lambda
x,y\mapsto y,z\mapsto z$. If this would not be the case, then
$\G_a\times\G_m\stackrel{f}{\longrightarrow}I_{\varphi}$ defined by
$(\alpha,\lambda)\mapsto\psi_{\alpha}\tau(\lambda)\xi$ would have a fibre with
infinitely many points. The argumentation in (8.3) then showed that one of the
points $\xi_{0/\infty}=\lim\limits_{\lambda\to 0/\infty}\tau(\lambda)\xi$ is
invariant under $\G_a$. As $\xi_0\leftrightarrow\cI_0=(xy,x^e,y^{e-1})$ and
$\xi_{\infty}\leftrightarrow\cI_{\infty}=(xy,x^{e-1},y^e)$, this is wrong as
$e\ge 4$. Thus we can carry out the standard construction with
$C=\{\psi_{\alpha}(\xi)|\alpha\in k\}^-$, if $e\ge 4$. As we have already
noted in the proof of Lemma 9.1 , the curves $\tau(\lambda)C$ are
$\G_a$-invariant, hence the construction of the family $\cC$ takes place in
$(\Hilb^p(I_{\varphi}))^{\G_a}$. Therefore the limit curves $\CC_{0/\infty}$ are
invariant under $B(3;k)$.

As usual, we put $C_{0/\infty}=\{ \psi_{\alpha}(\xi_{0/\infty})|\alpha\in k\}^-$.\\
\noindent $3^{\circ}$ Let $\cI\leftrightarrow (U_0,U_1,\cdots)$, where
$U_i=xyR_{i-2}$, if $0\le i\le e-2, U_{e-1}=xyR_{e-3}+\langle
x^{e-1}+y^{e-1}\rangle, U_i=R_i,i\ge e$. We get
\begin{align*}
  \psi_{\alpha}(U_{e-1}) & =x(\alpha x+y)R_{e-3}+\langle x^{e-1}+(\alpha x+y)(\alpha x+y)^{e-2}\rangle\\
  & =x(\alpha x+y)R_{e-3}+\langle x^{e-1}+(\alpha x+y)y^{e-2}\rangle\\
  \Rightarrow \quad\dot\wedge \psi_{\alpha}(U_{e-1})& =\alpha x^{e-1}\wedge\alpha
  x^{e-2}y\wedge\cdots\wedge\alpha x^2y^{e-3}\wedge\alpha xy^{e-2} \\
     &  \qquad+ \text{terms with smaller powers of } \alpha.
\end{align*}
Hence $\alpha$-grade $(U_{e-1})=e-1$. On the other hand, by formula (4.2)
in 4.1 we get $\alpha$-grade $(\langle x^{e-2}y,\dots,y^{e-1}\rangle)=e-1$,
too. From this it follows that $\CC_0=C_0$. \\
\noindent $4^{\circ}$ Besides $C_{\infty}$, the limit cycle $\CC_{\infty}$
contains other prime components $D_1, D_2,\dots$ which are $B(3;k)$-invariant
curves in $I_{\varphi}$.

\indent As char$(k)=0$ is supposed, if $m\le n , \GG:=\Grass_m(R_n)$ has only
one $\G_a$-fixed point, namely the subspace $\langle
x^n,\cdots,x^{n-m+1}y^{m-1}\rangle$. Then ([T1], Proposition 0, p.3) shows
that each $B(2;k)$-invariant curve in $\GG$ has the form
$\overline{\G_a\cdot\eta}$, where $\eta\in\GG(k)$ corresponds to a monomial
subspace of $R_n$. It follows that each $B(3;k)$-invariant curve in
$I_{\varphi}$ also has the form $\overline{\G_a\cdot\eta}$, where $\eta\in
I_{\varphi}(k)$ corresponds to a monomial ideal. \\
\noindent $5^{\circ}$ Figure 14.1 - Figure 14.6 show all possible monomial
ideals with Hilbert function $\varphi$, and using formula (4.2) we
compute the degrees of the corresponding 1-cycles: \\
$\deg c_2={e-2\choose 2},\deg c_3={e-2\choose 2}+(e-1),\deg c_4=2{e-2\choose 2}+(e-1),\deg c_5=2{e-2\choose 2}+(e-2),\deg c_6=1$.

\indent We see that $c_2$ (respectively $c_3$) is equal to $C_{\infty}$ (respectively $C_0$). If $C_0$ (respectively $D_i$) occurs in $\CC_{\infty}$ with the multiplicity $n$ (respectively $n_i$), then from $\deg \CC_{\infty}=\deg\CC_0=\deg C_0$ it follows that  ${e-2\choose 2}+(e-1)=n{e-2\choose 2}+n_1\deg D_1+\cdots$ where $n>0,n_i\ge 0$.\\
{\it 1st case:} $e\ge 6$. Then ${e-2\choose 2}>e-1$, hence $n=1$. From $D_1\in
\{ c_4,c_5,c_6\}$ and $e-1\ge n_1\deg D_1$ we conclude that $D_1=c_6$ and
\[
  [C_0]=[C_{\infty}]+(e-1)[c_6]
\]
{\it 2nd case:} $e=5$. Then $\deg C_0=7, \deg C_{\infty}=3,\deg c_4=10,\deg c_5=9$, and we get $[C_0]=2[C_{\infty}]+[c_6]$ or $[C_0]=[C_{\infty}]+4[c_6]$.\\
{\it 3rd case:} $e=4$. Then $\deg C_0=4, \deg C_{\infty}=1,\deg c_4=5,\deg c_5=4$. Therefore one gets
\[
  [C_0]=n[C_{\infty}]+m[c_6]
\]
where $n,m\in\N, n\ge 1,n+m=4$. \\
\noindent $6^{\circ}$ One has a closed immersion $I_{\varphi}\to H_{d,g(d)}$
defined by $\cI\mapsto\cI^*$, where $\cI^*$ is the ideal generated by $\cI$ in
$\cO_{\P^3}$. That means
$H^0(\P^3;\cI^*(n))=\bigoplus\limits^n_{i=0}t^{n-i}H^0(\P^2;\cI(i)),n\in\N$.
In any case, one gets an equation
\[
  [C_0^*]=n[C^*_{\infty}]+m[c_6^*], \quad m,n\in\N,n\ge 1.
\]
Now from ([T1], Lemma 5, p.45) it follows that one can deform $C_0^*$
(respectively $C_{\infty}^*)$ by a sequence of graded deformations of type 3
(cf. [T1], p.44) modulo $\cA(\HH)$ in the cycle $C_3$ (respectively in the
zero cycle). The cycle $c_6^*$ is equal to the cycle
$D_{\alpha},\alpha=(d+2)/2=e$, which had been introduced in ([T4], p.20f). By
([T4], Lemma 5, p.25) one gets $D_e\equiv D_2$ modulo $\cA(\HH)$, and in
([T4], Abschnitt 3.3, p.25) it had been noted that $[D_2]=[D]$, where $D$ is
the cycle introduced in (1.1). From ([T4], Satz 1, p.26) it follows
$[C_3]\in\cA(H_{d,g(d)})$. Applying the shifting morphism (cf. [T3],
Folgerung, p.55 and Proposition, p.56) we get:

\begin{conclusion}\label{sec:14.3}
If $d\ge 6$ is even and $g\le g(d)$, then $[C_3]\in\cA(H_{d,g})$.\qed
\end{conclusion}

\section{Summary.}\label{sec:14.3}
\begin{theorem}\label{thm:14.1}
Let be $d\ge 5$. Then $[C_3]\in\cA(H_{d,g})$ if and only if $g\le g(d)$.
\end{theorem}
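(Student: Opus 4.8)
The plan is to assemble Theorem 14.1 directly from the pieces already established in the excerpt, treating it essentially as a bookkeeping statement. The two implications are proved separately, and for the ``only if'' direction the case distinction on the parity of $d$ is irrelevant, whereas for the ``if'' direction it is essential.

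First I would dispose of the necessary condition. Suppose $g > g(d)$. Then Conclusion 14.1 gives immediately $[C_3]\notin\cA(H_{d,g})$, so in particular $[C_3]$ cannot lie in $\cA(H_{d,g})$ — contrapositively, $[C_3]\in\cA(H_{d,g})$ forces $g\le g(d)$. I would recall in one sentence that Conclusion 14.1 itself rests on the machinery of Chapters 11--13: the reduction of relations among $B(4;k)$-invariant $1$-cycles to relations carried by $B(4;k)$-invariant surfaces (Hirschowitz), the fact (Lemma 12.3) that such a surface containing a $1$-cycle of proper type $3$ is pointwise $G_3$-invariant, and Proposition 13.1, which shows that a relation involving a proper type-$3$ cycle has the shape $C_1 - C_2 + Z$ with $Z$ free of proper type-$3$ components; feeding this into the argument of Theorem 1.1 (the basis-vector $e_n$ cannot be a combination of differences $e_i - e_j$) yields $[C_3]\notin\cA(\HH)$.

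Next the sufficient condition, assuming $g\le g(d)$. If $d$ is odd and $d\ge 5$, Conclusion 14.2 gives $[C_3]\in\cA(H_{d,g})$ directly (this is the translation of the old bound $b\ge a^2/4$ from [T2], 3.3.2, into the inequality $g\le \tfrac14(d-1)(d-3) = \tfrac14(d-2)^2$ valid for odd $d$). If $d$ is even and $d\ge 6$, Conclusion 14.3 gives $[C_3]\in\cA(H_{d,g})$: one works in the Iarrobino variety $I_\varphi$ attached to the Hilbert function $\varphi$ of $(x^2,xy^{e-2},y^e)$ with $e=d/2+1$, for which $g^*(\varphi)=g(d)$; the surface $V=\overline{\G_a\cdot\G_m\cdot\xi}$ with $\xi\leftrightarrow(xy,x^e,y^e,x^{e-1}+y^{e-1})$ is two-dimensional, the standard construction produces limit cycles $\CC_0=C_0$ and $\CC_\infty = C_\infty + (\text{multiple of } c_6) $, and after applying $\cI\mapsto\cI^*$ and the shifting morphism one reads off $[C_3]\in\cA(H_{d,g})$ from $[C_0^*] = n[C_\infty^*] + m[c_6^*]$ together with ([T1], Lemma 5) and the identification $[c_6^*]=[D_2]=[D]$ from [T4]. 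The remaining cases $d=3,4$ are excluded since the hypothesis is $d\ge 5$.

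The two implications together give: $[C_3]\in\cA(H_{d,g})$ iff $g\le g(d)$, which is Theorem 14.1. The main obstacle in this final assembly is purely organizational — one must be careful that the parity split in the definition of $g(d)$ in Definition 2.1 is genuinely consistent with the single formula $g(d)=(d-2)^2/4$ only for even $d$, while for odd $d$ one has $g(d)=\tfrac14(d-1)(d-3)$, and Conclusions 14.2 and 14.3 are stated precisely so that in each parity class the relevant sufficient inequality is exactly $g\le g(d)$. No new mathematical input is needed at this last step; all the substance is in Conclusions 14.1, 14.2 and 14.3, i.e. in Chapters 2--13.
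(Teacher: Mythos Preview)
Your proposal is correct and follows exactly the paper's approach: the paper's own proof of Theorem~14.1 consists of the single sentence ``This follows from Conclusion 14.1--14.3,'' and your write-up simply unpacks what each of these three conclusions contributes (necessity from 14.1, sufficiency for odd $d$ from 14.2, sufficiency for even $d$ from 14.3).
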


\begin{proof} This follows from Conclusion 14.1- 14.3.
\end{proof}


\begin{minipage}{16cm}
\begin{minipage}[b]{8cm*\real{0.7}} \label{fig:14.1}
\centering Fig.: 14.1\\ 
\tikzstyle{help lines}=[gray,very thin]
\begin{tikzpicture}[scale=0.7]
 \draw[style=help lines]  grid (8,9);
 \draw[thick] (0,0) -- (0,9); 
 \draw[thick] (0,0) -- (8,0);
 {
 \pgftransformxshift{0.5cm}
 \pgftransformyshift{-0.55cm}
  \foreach \x in {0,1,2,3,4} \draw[anchor=base] (\x,0) node {$\x$}; 
 \draw[anchor=base] (5,0) node {$\dots$};
 \draw[anchor=base] (6,0) node {$\dots$};
 \draw[anchor=base] (7,0) node {$e$};
 }
 \draw[\Red,ultra thick] (2,0) -- (2,1) -- (3,1) -- (3,2) -- (4,2) -- (4,3) --
 (5,3) -- (5,4) -- (6,4) -- (6,6) -- (7,6) -- (7,8) -- (8,8);
  \draw[\Black, ultra thick] (0,1) -- (8,9);  
\end{tikzpicture}
\end{minipage} 
\hspace{1.5cm}
\begin{minipage}[b]{8cm*\real{0.7}} \label{fig:14.2}
\centering Fig.: 14.2\\ 
\tikzstyle{help lines}=[gray,very thin]
\begin{tikzpicture}[scale=0.7]
 \draw[style=help lines]  grid (8,9);
 \draw[thick] (0,0) -- (0,9); 
 \draw[thick] (0,0) -- (8,0);
 {
 \pgftransformxshift{0.5cm}
 \pgftransformyshift{-0.55cm}
  \foreach \x in {0,1,2,3,4} \draw[anchor=base] (\x,0) node {$\x$}; 
 \draw[anchor=base] (5,0) node {$\dots$};
 \draw[anchor=base] (6,0) node {$\dots$};
 \draw[anchor=base] (7,0) node {$e$};
 }
{
 \pgftransformxshift{0.5cm}
 \draw (7,3) node[above] {$c_2$};
}
 \draw[\Red,ultra thick] (6,0) -- (6,1) -- (2,1) -- (2,2) -- (3,2) -- (3,3) --
 (4,3) -- (4,4) -- (5,4) -- (5,5) -- (6,5) -- (6,6) -- (7,6) -- (7,8) --
 (8,8);
  \draw[\Black, ultra thick] (0,1) -- (8,9);  
\end{tikzpicture}
\end{minipage}
\end{minipage}
\vspace{1cm}

\begin{minipage}{16cm}
\begin{minipage}[b]{8cm*\real{0.7}} \label{fig:14.3}
\centering Fig.: 14.3\\ 
\tikzstyle{help lines}=[gray,very thin]
\begin{tikzpicture}[scale=0.7]
 \draw[style=help lines]  grid (8,8);
 \draw[thick] (0,0) -- (0,8); 
 \draw[thick] (0,0) -- (8,0);
 {
 \pgftransformxshift{0.5cm}
 \pgftransformyshift{-0.55cm}
  \foreach \x in {0,1,2,3,4} \draw[anchor=base] (\x,0) node {$\x$}; 
 \draw[anchor=base] (5,0) node {$\dots$};
 \draw[anchor=base] (6,0) node {$\dots$};
 \draw[anchor=base] (7,0) node {$e$};
 }
{
 \pgftransformxshift{0.5cm}
 \draw (6,3) node[above] {$c_3$};
}
 \draw[\Red,ultra thick] (7,0) -- (7,1) -- (2,1) -- (2,2) -- (3,2) -- (3,3) --
 (4,3) -- (4,4) -- (5,4) -- (5,5) -- (6,5) -- (6,7) -- (7,7);
  \draw[\Black, ultra thick] (0,1) -- (7,8);  
\end{tikzpicture}
\end{minipage}
\hspace{1.5cm}
\begin{minipage}[b]{8cm*\real{0.7}} \label{fig:14.4}
\centering Fig.: 14.4\\ 
\tikzstyle{help lines}=[gray,very thin]
\begin{tikzpicture}[scale=0.7]
 \draw[style=help lines]  grid (8,8);
 \draw[thick] (0,0) -- (0,8); 
 \draw[thick] (0,0) -- (8,0);
 {
 \pgftransformxshift{0.5cm}
 \pgftransformyshift{-0.55cm}
  \foreach \x in {0,1,2,3,4} \draw[anchor=base] (\x,0) node {$\x$}; 
 \draw[anchor=base] (5,0) node {$\dots$};
 \draw[anchor=base] (6,0) node {$\dots$};
 \draw[anchor=base] (7,0) node {$e$};
 }
{
 \pgftransformxshift{0.5cm}
 \draw (7,3) node[above] {$c_4$};
}
 \draw[\Red,ultra thick] (7,0) -- (7,1) -- (6,1) -- (6,2) -- (2,2) -- (2,3) --
 (3,3) -- (3,4) -- (4,4) -- (4,5) -- (5,5) -- (5,6) -- (6,6) -- (6,7) --
 (7,7);
  \draw[\Black, ultra thick] (0,1) -- (7,8);  
\end{tikzpicture}
\end{minipage}
\end{minipage}
\vspace{1cm}

\begin{minipage}{16cm}
\begin{minipage}[b]{8cm*\real{0.7}} \label{fig:14.5}
\centering Fig.: 14.5\\ 
\tikzstyle{help lines}=[gray,very thin]
\begin{tikzpicture}[scale=0.7]
 \draw[style=help lines]  grid (8,8);
 \draw[thick] (0,0) -- (0,8); 
 \draw[thick] (0,0) -- (8,0);
 {
 \pgftransformxshift{0.5cm}
 \pgftransformyshift{-0.55cm}
  \foreach \x in {0,1,2,3,4} \draw[anchor=base] (\x,0) node {$\x$}; 
 \draw[anchor=base] (5,0) node {$\dots$};
 \draw[anchor=base] (6,0) node {$\dots$};
 \draw[anchor=base] (7,0) node {$e$};
 }
{
 \pgftransformxshift{0.5cm}
 \draw (6,4) node[above] {$c_5$};
}
 \draw[\Red,ultra thick] (6,0) -- (6,1) -- (7,1) -- (7,2) -- (2,2) -- (2,3) --
 (3,3) -- (3,4) -- (4,4) -- (4,5) -- (5,5) -- (5,6) -- (6,6) -- (6,7) --
 (7,7) -- (7,8);
  \draw[\Black, ultra thick] (0,1) -- (7,8);  
\end{tikzpicture}
\end{minipage}
\hspace{1.5cm}
\begin{minipage}[b]{8cm*\real{0.7}} \label{fig:14.6}
\centering Fig.: 14.6\\ 
\tikzstyle{help lines}=[gray,very thin]
\begin{tikzpicture}[scale=0.7]
 \draw[style=help lines]  grid (8,9);
 \draw[thick] (0,0) -- (0,9); 
 \draw[thick] (0,0) -- (8,0);
 {
 \pgftransformxshift{0.5cm}
 \pgftransformyshift{-0.55cm}
  \foreach \x in {0,1,2,3,4} \draw[anchor=base] (\x,0) node {$\x$}; 
 \draw[anchor=base] (5,0) node {$\dots$};
 \draw[anchor=base] (6,0) node {$\dots$};
 \draw[anchor=base] (7,0) node {$e$};
 }
{
 \pgftransformxshift{0.5cm}
 \draw (7,3) node[above] {$c_6$};
}
 \draw[\Red,ultra thick] (2,0) -- (2,1) -- (3,1) -- (3,2) -- (4,2) -- (4,3) --
 (5,3) -- (5,4) -- (6,4) -- (6,5) -- (7,5) -- (7,6) -- (6,6) -- (6,7) -- (7,7)
 -- (7,8) -- (8,8);
  \draw[\Black, ultra thick] (0,1) -- (8,9);  
\end{tikzpicture}
\end{minipage}
\end{minipage}


\chapter{The case $d\ge 5$ } \label{ch:chap15}

We suppose $d\ge 5$, i.e. $a\ge 6$, and $g<{d-1\choose 2}$, i.e. $g$ not
maximal.

From ([T1], Satz 2, p.91) and ([T4], Proposition 2, p.26) it follows that
$A_1(H_{d,g})/\cA(H_{d,g})$ is generated by the so-called combinatorial cycles
$C_1,C_2,C_3$. Using the formulas in (1.1), one shows that $g\le \gamma
(d):={d-2\choose 2}$ is equivalent to $b\ge 2a-4$. \\
{\it 1st case:} $g>\gamma(d)$. This is equivalent to $b<2a-4$. By ([T4], Satz
1, p.26) one has $\cA(H_{d,g})=\langle E\rangle$. Assume there is a relation
$q_0[E]+q_1[C_1]+q_2[C_2]+q_3[C_3]=0, q_i\in\Q$. Computing the intersection
numbers with the tautological line bundles $\cL_n$ by means of the formulas
(1)-(5) in ([T2], p.134f) and the formula in ([T3], Hilfssatz 1, p.50), one
sees that $q_2=0$. Put $r:=2a-4-b$ and denote the $r$-times applied shifting
morphism by $f$ (see [T3], p.52 ). The images of $E,C_1,C_3$ under $f$ in
$H_{d,\gamma(d)}$ are denoted by $e,c_1,c_3$. By ([T2], 3.2.2 Folgerung,
p.124) and ([T3], Anhang 2, p.54f) it follows that $[f(E)]\equiv [e], \langle
f(C_1)\rangle\equiv\langle c_1\rangle$ and $\langle f(C_3)\rangle\equiv\langle
c_3\rangle$ modulo $\cA(H_{d,\gamma(d)})$. By ([T3], Proposition 4, p. 22)
$[c_1] \in \cA( H_{d,\gamma(d)}$. As $\gamma(d)>g(d)$ if $d\ge 5$, from
 Theorem 14.1 it follows that $[c_3]\notin\cA(H_{d,\gamma(d)})$. Hence
  $f(C_3)$ is not a single point, hence $\deg (f|C_3)\neq 0$. Applying $f_*$
  to the relation $q_0[E]+q_1[C_1]+q_3[C_3]=0$ then gives $q_3\deg
  (f|C_3)\cdot [c_3]\in\cA(H_{d,\gamma (d)})$. If $q_3\neq 0$ it would follow
  that $[c_3]\in\cA(H_{d,\gamma(d)})$, contradiction. But from
  $q_0[E]+q_1[C_1]=0$ it follows $q_0=q_1=0$ (cf. [T2], 4.1.3). \\
  {\it 2nd case:} $g(d)<g\le\gamma(d)$. Then $b\ge 2a-4$ and in (1.1) it was
  explained that in this case $A_1(H_{d,g})$ is generated by $E,D,C_2$ and
  $C_3$. If $q_0[E]+q_1[D]+q_2[C_2]+q_3[C_3]=0$, then $q_2=0$ by ([T2],
  loc.cit.). If one assumes that $q_3\neq 0$, it follows that $[C_3]\in\langle
  E,D\rangle\subset\cA(H_{d,g})$, contradicting Theorem 14.1. As in the
  first case we get $q_0=q_1=0$. \\
  {\it 3rd case:} $g\le g(d)$. Then $\cA(H_{d,g})=\langle E,D\rangle,
  [C_1]\in\cA(H_{d,g})$ and $[C_3]\in\cA(H_{d,g})$ (see.[T3], Proposition 4,
  p.22 ; [T4], Satz 1, p.26 ; and finally Theorem 14.1 ). Thus
  $A_1(H_{d,g})=\langle E,D,C_2\rangle$.

  All in all we get: Suppose that $d\ge 5$ and $g<{d-1\choose 2}$, i.e. $g$ is
  not maximal. Put $g(d):=(d-2)^2/4$ and $\gamma(d):={d-2\choose 2}$.

  \begin{theorem}\label{3}
\quad(i) If $g>\gamma(d)$, then $A_1(H_{d,g})$ is freely generated by
$E,C_1,C_2,C_3$. \\
\quad(ii) If $g(d)<g\le\gamma(d)$, then $A_1(H_{d,g})$ is freely generated by
$E,D,C_2,C_3$. \\
\quad(iii) If $g\le g(d)$, then $A_1(H_{d,g})$ is freely generated by $E,D,C_2$.\qed
\end{theorem}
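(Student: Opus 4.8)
The plan is to handle the three genus ranges $g>\gamma(d)$, $g(d)<g\le\gamma(d)$ and $g\le g(d)$ separately, and in each range to first fix the algebraic part $\cA(H_{d,g})$ together with a generating set of $A_1(H_{d,g})$, and then to prove that the listed cycles are $\Q$-linearly independent. For the generation statement the inputs are already available: by ([T1], Satz 2) and ([T4], Satz 1 and Proposition 2), recalled in Section 1.1, if $g>\gamma(d)$ (equivalently $b<2a-4$) then $\cA(H_{d,g})=\langle E\rangle$ and $A_1(H_{d,g})=\langle E,C_1,C_2,C_3\rangle$, while if $g\le\gamma(d)$ then $\cA(H_{d,g})=\langle E,D\rangle$ and $A_1(H_{d,g})=\langle E,D,C_2,C_3\rangle$. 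Combining the second statement with Theorem 14.1, which says $[C_3]\in\cA(H_{d,g})$ exactly when $g\le g(d)$: for $g(d)<g\le\gamma(d)$ the generating set $E,D,C_2,C_3$ is unchanged, whereas for $g\le g(d)$ one has $[C_3]\in\langle E,D\rangle$ and hence $A_1(H_{d,g})=\langle E,D,C_2\rangle$. This yields the generating sets in all three cases; it remains to prove freeness.

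For freeness, suppose a rational relation among the listed generators. The step common to all three cases is to kill the coefficient of $C_2$: computing intersection numbers with the tautological line bundles $\cL_n$ via the formulas of ([T2], p.~134f) and ([T3], Hilfssatz~1, p.~50) shows that $C_2$ cannot enter any relation, so its coefficient is zero. When $g(d)<g\le\gamma(d)$ the surviving relation is $q_0[E]+q_1[D]+q_3[C_3]=0$; if $q_3\ne 0$ this would give $[C_3]\in\langle E,D\rangle=\cA(H_{d,g})$, contradicting Theorem 14.1 since $g>g(d)$, so $q_3=0$, and then $q_0=q_1=0$ by the freeness of $\langle E,D\rangle$ from ([T2], 4.1.3). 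When $g\le g(d)$ there is no $C_3$ to eliminate, $\langle E,D\rangle$ is free of rank $2$ by ([T2], 4.1.3), and $C_2\notin\langle E,D\rangle$ by the degree argument recalled in Section 1.1, so $E,D,C_2$ are independent.

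The delicate case is $g>\gamma(d)$, where $\cA(H_{d,g})=\langle E\rangle$ only and the relation $q_0[E]+q_1[C_1]+q_3[C_3]=0$ remains after removing $C_2$. Here I would iterate the shifting morphism $r:=2a-4-b$ times to obtain a morphism $f$ to $H_{d,\gamma(d)}$; by ([T2], 3.2.2 Folgerung) and ([T3], Anhang~2) the cycles $E,C_1,C_3$ are carried, modulo $\cA(H_{d,\gamma(d)})$, to $e,c_1,c_3$ there. Since $\gamma(d)>g(d)$ for $d\ge 5$, Theorem 14.1 gives $[c_3]\notin\cA(H_{d,\gamma(d)})$, so $f(C_3)$ is a curve and $\deg(f|C_3)\ne 0$; applying $f_*$ then yields $q_3\deg(f|C_3)[c_3]\in\cA(H_{d,\gamma(d)})$, forcing $q_3=0$, and the residual relation $q_0[E]+q_1[C_1]=0$ gives $q_0=q_1=0$ by ([T2], 4.1.3) together with $[c_1]\notin\cA(H_{d,\gamma(d)})$ from ([T3], Proposition~4). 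I expect the main obstacle to be exactly this bookkeeping with the shifting morphism — tracking the images of $E,C_1,C_3$ modulo $\cA(H_{d,\gamma(d)})$ and confirming $\deg(f|C_3)\ne 0$ — since this is where Theorem 14.1 is brought to bear and where a careless degree computation is precisely what produced the erroneous bound in the original [T4]. The intersection-number computation isolating $C_2$ and the freeness of the pieces $\langle E,D\rangle$ and $\langle E,C_1\rangle$ are routine, being contained in [T2]–[T3].
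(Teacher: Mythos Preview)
Your proposal is correct and follows essentially the same route as the paper: the same generation inputs from [T1], [T3], [T4] and Theorem~14.1, the same intersection-number argument to eliminate $C_2$, the same shifting-morphism push-forward to $H_{d,\gamma(d)}$ in the case $g>\gamma(d)$ combined with $[c_3]\notin\cA(H_{d,\gamma(d)})$ from Theorem~14.1, and the same appeal to [T2], 4.1.3 for the residual independence of $E,C_1$ (resp.\ $E,D$). Even the point you flag as the delicate step is exactly where the paper spends its effort.
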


\noindent N.B.: If $g={d-1\choose 2}$, then $A_1(H_{d,g})$ is freely generated
by $C_1:=\{(x,y^d(\alpha y+z))|\alpha\in k\}^-$ and $C_2:=\{(\alpha
x+y,x^{d+1})|\alpha\in k\}^-$ ([T1], Satz 2, p.91).

\chapter{The cases $d=3$ and $d=4$} \label{ch:chap16}

\section{The case $d=3$.}\label{sec:16.1}
If $g$ is not maximal, i.e., if $g\le 0$, then $Q(T)={T-1+3\choose
  3}+{T-4+2\choose 2}+{T-b+1\choose 1}$, where $b\ge 4$. If $b=4$, then in
([T2], p.137) it had been shown that $[C_3]\in\cA(\HH)$. Applying the shifting
morphism , (see [T3] ,p .54 ) then shows that this statement is true for all
$g<0$ (cf. [T3], Anhang 2, Folgerung, p.55 and Proposition, p.56). By ([T1],
Satz, p.91, [T3], Prop. 4, p.22 and Satz 1, p.26) it follows that
$A_1(H_{3,g})$ is generated by $[E],[D],[C_2]$, if $g\le 0$. The three cycles
are linear independent, as already was noted in Chapter 15.

\section{The case $d=4$}\label{sec:16.2}
If $g$ is not maximal, i.e., if $g<3$, then $Q(T)={T-1+3\choose
  3}+{T-5+2\choose 2}+{T-b+1\choose 1}$ and $b\ge 5$. If $b=5$, then $g=2$,
and then $A_1(\HH)=\Q^4$, as had been shown in ([T3], Anhang 1).

We now treat the case $b=6$, i.e., $g=1$. As the condition $b\ge 2a-4$ is
fulfilled, $\cA(\HH)=\langle E,D\rangle$ by ([T4], Satz 1, p.26). The quotient
$A_1(\HH)/\cA(\HH)$ is generated by $C_1,C_2$ and further cycles $C_3,C_4,C_5$
of type 3 (cf. [T1], Satz 2c, p.92). By ([T3], Proposition 4, p.22)
$[C_1]\in\cA(\HH)$, and we are going to simplify the reductions of
$C_3,C_4,C_5$ described in ([T3], Abschnitt 8).

\subsection{The cycle $C_3$.}\label{sec:16.2.1}
By definition a cycle of type 3 has the form $C=\overline{\G_a\cdot\xi}$,
where $\xi$ corresponds to a monomial ideal $\cJ$ with Hilbert polynomial $Q$,
which is invariant under the subgroup $G_3<U(4;k)$ (cf. Chapter 1). Here
$\G_a$ operates as usual by $\psi_{\alpha}:x\mapsto x,y\mapsto\alpha
x+y,z\mapsto z,t\mapsto t$. Let $\cI:=\cJ'\in H^4(k)$ be the image of $\cJ$
under the restriction morphism. If $\cI$ is $\G_a$-invariant, then $\cI$ is
$B(3;k)$-invariant, hence $\deg (\cL_n|C)=$ constant and $[C]\in\cA(\HH)$ (cf.
[T3], Anhang 2, Hilfssatz 2, p.50). Therefore we can assume without
restriction that $\cI$ is not $\G_a$-invariant. As the colength of $\cI$ in
$\cO_{\P^2}$ is equal to 4, the Hilbert function of $\cI$ is equal to one of
the functions represented in Figure 2.7a and Figure 2.7b. In (2.2.1) we had
obtained $g^*(\varphi_1)=1$ and $g^*(\varphi_2)=3$. The Hilbert function
$\varphi_1$ leads to two possible 1-cycles of proper type 3, namely to

\begin{align*}
  F_1&:=\{\psi_{\alpha}(\xi_1)|\alpha\in k\}^-, \quad  \xi_1\leftrightarrow
  (xy,y^2,x^3), \quad \text{and} \\
  F_2&:=\{\psi_{\alpha}(\xi_2)|\alpha\in k\}^-, \quad \xi_2\leftrightarrow (x^2,y^2).
\end{align*}
The Hilbert function $\varphi_2$ leads to different 1-cycles of proper type 3,
which however by means of admissible $G_3$-invariant deformations all can be
deformed modulo $\cA(\HH)$ in $C_3=\{\psi_{\alpha}(\xi_3)|\alpha\in
k\}^-,\xi_3\leftrightarrow (x^5,y,x^4z^2)$.

\noindent With such admissible $G_3$-invariant deformations we can deform
$C_3$ in the cycle generated by $(x^4,xy,y^2,yz^2)$, and afterwards this cycle
can be deformed by the graded deformation $(yz^2,yz^3,\dots)\mapsto
(x^3,x^3z,\dots)$ in the cycle $F_1$.

We deform $F_1$ into $F_2$ in the following way: Put
$\cK:=(x^3,x^2y,xy^2,y^3)$. If $L\subset R_2=k[x,y]_2$ is a 2-dimensional
subspace, then $\langle x,y\rangle L\subset H^0(\cK(3))=R_3$ and $z^nL\cap
H^0(\cK(n+2))=(0)$ for all $n\ge 0$. It follows that the ideal
$(L,\cK)\subset\cO_{\P^3}$ has the Hilbert polynomial $Q$, and $L\mapsto
(L,\cK)$ defines a closed immersion $\P^2\simeq\Grass_2(R_2)\to\HH_Q$. Let
$\langle xy,y^2\rangle\leftrightarrow\eta_1\in\P^1,\langle
x^2,y^2\rangle\leftrightarrow\eta_2\in\P^2,\ell_i:= 
\{\psi_{\alpha}(\eta_i)|\alpha\in k\}^-, i=1,2$. 
Because of reasons of degree one has $[\ell_1]=2[\ell_2]$ in
$A_1(\P^2)$, hence $[F_1]=2[F_2]$ in $A_1(\HH_Q)$. Now
$F_2=\{(x^2,xy^2,y^3,(\alpha x+y)^2|\alpha\in k\}^-=\{(x^2,xy^2,y^3,\lambda
xy+\mu y^2)|(\lambda:\mu)\in\P^1\}$ is equal to the cycle $D_3$ which had been
introduced in ([T4], Abschnitt 3.2, p.20). By Lemma 5 in (loc.cit.) $D_3\equiv
D_2$ modulo $\cA(\HH)$, where $D_2:=\{ (x^2,xy,y^4,\lambda xz^2+\mu
y^3)|(\lambda:\mu)\in\P^1\}$ is the cycle , which had been denoted by $D$ in
Chapter 1. It follows that $[C_3]\in\cA(\HH)$, and applying the shifting
morphism gives

\begin{conclusion}\label{con:16.1}
$[C_3]\in\cA(H_{4,g})$ for all $g\le 1$.\qed
\end{conclusion}

\begin{remark}
  If $d=3$ and $g>g(3)=1/4, C_3$ does not occur at all. If $d=4$ and
  $g>g(4)=1$, then $\cA(H_{4,2})=\langle E\rangle$ ([T4], Satz 1, p.26) and
  hence $[C_3]\notin\cA(H_{4,2})$.
\end{remark}

\subsection{}\label{sec:16.2.2}
$[C_4]\in\cA(\HH)$ is true for arbitrary $d$ and $g$ ([T4], Proposition 2,
p.26).

\subsection{The cycle $C_5$.}\label{sec:16.2.3}
In (loc.cit.) $[C_5]\in\cA(\HH)$ was shown, too, but the proof required
tedious computations, which can be avoided by the following argumentation. One
has only to treat the cases $g=0$ and $g=-2$, that means, the cases $b=7$ and
$b=9$ ([T1], Satz 2c(iii), p.92).

We start with $b=7$. Then $C_5=\overline{\G_a\cdot\xi_0}$, where
$\xi_0\leftrightarrow\cJ_0:=(y^2,\cK),\cK:=(x^3,x^2y,xy^2,\\y^3, x^2z,y^2z)$.
Put $\cJ:=(x^2+y^2,\cK)\leftrightarrow\xi$. As $\cJ_0$ and $\cJ$ have the same
Hilbert function, $\xi_0$ and $\xi$ both lie in the same graded Hilbert scheme
$\HH_{\varphi}$ (cf. Appendix A). If $\G_m$ operates by
$\tau(\lambda):x\mapsto\lambda x,y\mapsto y,z\mapsto z,t\mapsto t$, then one
sees that indeed $\xi_0=\lim\limits_{\lambda\to 0}\tau(\lambda)\xi$ and
$\xi_{\infty}:=\lim\limits_{\lambda\to\infty}\tau(\lambda)\xi\leftrightarrow\cJ_{\infty}:=(x^2,\cK)$.
Put $C:=\{\psi_{\alpha}(\xi)|\alpha\in k\}^-,
C_{0/\infty}:=\{\psi_{\alpha}(\xi_{0/\infty})|\alpha\in k\}^-$. One sees that
$\alpha$-grade $H^0(\cJ(n))=\alpha$-grade $H^0(\cJ_0(n))=\alpha$-grade
$H^0(\cJ_{\infty}(n))+2$, for all $n\ge 2$, hence $\deg C=\deg C_0=\deg
C_{\infty}+2$, relative to an appropriate imbedding of $\HH_{\varphi}$ into
$\P^N$.

Put $V:=\overline{\G_m\cdot\G_a\cdot\xi}$. As $\xi$ is invariant under
$G:=G_3\cdot T_{23}$ and as $G_3$ and
$T_{23}=\{(1,1,\lambda_2,\lambda_3)|\lambda_2,\lambda_3\in k^*\}$ are
normalized by $\G_m\cdot\G_a$, $V$ is pointwise $G$-invariant. Let $p$ be the
Hilbert polynomial of $C\subset\P^N$. Then the standard construction by means
of $V$ takes place in $\mathcal{X} : = \Hilb^p(V)^{\G_a}$, as $C$ is
$\G_a$-invariant. Thus the limit curves $\CC_{0/\infty}$ are pointwise
$G$-invariant and invariant under $\G_a$ and $\G_m$, hence they are
$B(4;k)$-invariant. Now $C_{0/\infty}\subset\CC_{0/\infty}$ and the above
computation of the degrees shows that $C_0=\CC_0 $, whereas
$\CC_{\infty}$ except from $C_{\infty}$ has to contain a further 1-cycle $Z$
of degree 2, i.e., $\CC_{\infty}$ has to contain an irreducible curve $F$ of
degree $\ge 1$, which is $B(4;k)$-invariant. As $V$ is pointwise
$G$-invariant, $F$ is either pointwise $U(4;k)$-invariant or an 1-cycle of
type 3. As $\deg (\cL_n|F)$ is constant it follows that $[F]\in\cA(\HH)$ 
(see [T3], Anhang 2, Hilfssatz 2, p.50). It follows that $Z\in\cA(\HH)$.

From $[C_5]=[C_0]=[C_{\infty}]+Z$ and $C_{\infty}=C_4$ (cf. [T1], Satz 2,
p. 91), because of (16.2.2) it follows that $[C_5]\in\cA(\HH)$.

We now treat the case $b=9$. Here $C_5=\overline{\G_a\cdot\eta}$, where
$\eta\leftrightarrow (x^3,x^2y,y^2,x^2z^3)$. By the $G_3$-admissible
deformation $y^2\mapsto x^2z^2$ $C_5$ is deformed in the cycle
$C'_5:=\overline{\G_a \cdot\xi_0}$, where
$\xi_0\leftrightarrow\cJ_0:=(x^3,x^2y,xy^2,y^3,y^2z,x^2z^2)$. By ([T1], 1.4.4)
$[C_5]=q[C'_5]$ modulo $\cA(\HH)$. Hence we can argue with $C'_5$ and $\cJ_0$
in the same way as in the case $b=7$: Let
$\cK:=(x^3,x^2y,xy^2,y^3,x^2z^2,y^2z^2)$ and
$\cJ:=(x^2z+y^2z,\cK)\leftrightarrow\xi$. If
$\xi_{0/\infty}:=\lim\limits_{\lambda\to 0/\infty}\tau(\lambda)\xi$ , then
$\xi_0$ is as above and
$\xi_{\infty}\leftrightarrow\cJ_{\infty}=(x^3,x^2y,xy^2,y^3,x^2z,y^2z^2)$.

Obviously, one has the same computation of degree as in the case $b=7$ and the
same argumentation shows that $[C'_5]=[C_{\infty}]$ modulo $\cA(\HH)$.
$C_{\infty}$ is deformed by the $G_3$-admissible deformation $y^2z^2\mapsto
x^2$ in the cycle $\overline{\G_a\cdot\zeta}$, where $\zeta\leftrightarrow
(x^2,xy^2,y^3,y^2z^3)$ As this is the cycle $C_4$, by 16.2.2 we get
$[C_5]\in\cA(\HH)$.

\begin{conclusion}\label{con:16.2}
$[C_5]\in\cA(H_{4,g})$ if $g=0$ or $g=-2$.\qed
\end{conclusion}

\subsection{Summary}\label{sec:16.3}
The results of 16.1 and 16.2 give

\begin{theorem}\label{thm:16.4}
  (i) If $g\le 0$, then $A_1(H_{3,g})$ is freely generated by $[E], [D], [C_2]$.\\
  (ii) $A_1(H_{4,2})\simeq\Q^4$ and if $g\le 1$, then $A_1(H_{4,g})$ is freely
  generated by $[E],[D],[C_2]$.\qed
\end{theorem}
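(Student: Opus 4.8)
The plan is to assemble Theorem~16.4 from two ingredients: the \emph{generating systems} for $A_1(H_{d,g})$ established in [T1]--[T4], and the new \emph{vanishing statements} $[C_3]\in\cA(\HH)$ and $[C_5]\in\cA(\HH)$ proved in Sections 16.1 and 16.2. In both cases $d=3$ and $d=4$ the earlier results provide a generating system of $A_1(H_{d,g})$ consisting of the ``algebraic'' cycles $E$ and $D$ (which span $\cA(H_{d,g})$ once $g$ is small enough) together with finitely many combinatorial $1$-cycles of type $1$, $2$, $3$; the whole point is to show that every combinatorial cycle other than $C_2$ already lies in $\cA(H_{d,g})=\langle [E],[D]\rangle$, and then to verify that the three surviving classes are $\Q$-linearly independent.

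For part~(i) ($d=3$): first I would quote [T1, Satz~2, p.~91] together with [T3, Prop.~4, p.~22] and [T4, Satz~1, p.~26] to get that for $g\le 0$ (equivalently $b\ge 4$) one has $\cA(H_{3,g})=\langle [E],[D]\rangle$ and $A_1(H_{3,g})/\cA(H_{3,g})$ is generated by the combinatorial cycles $[C_1],[C_2],[C_3]$. Next, $[C_1]\in\cA(H_{3,g})$ by [T3, Prop.~4], and $[C_3]\in\cA(H_{3,g})$ by the argument of Section~16.1 (the case $b=4$ was settled in [T2, p.~137], and the shifting morphism of [T3] carries it to all $g<0$). Hence $A_1(H_{3,g})=\langle [E],[D],[C_2]\rangle$. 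To upgrade ``generated by'' to ``freely generated by'', I would invoke [T2, 4.1.3] for the independence of $[E]$ and $[D]$ and a degree argument for $[C_2]$ (as in Chapter~1 and Chapter~15): computing the intersection numbers $(\cL_n\cdot E)$, $(\cL_n\cdot D)$, $(\cL_n\cdot C_2)$ with the tautological line bundles for $n\gg 0$ from the formulas of [T2, p.~134f] and [T3, Anhang~2] and observing that the resulting affine-linear functions of $n$ are independent forces any rational relation among $[E],[D],[C_2]$ to be trivial.

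For part~(ii) ($d=4$): the case $g=2$ (i.e.\ $b=5$) is [T3, Anhang~1], which gives $A_1(H_{4,2})\simeq\Q^4$. For $g\le 1$ the condition $b\ge 2a-4$ holds, so $\cA(\HH)=\langle [E],[D]\rangle$ by [T4, Satz~1, p.~26], and by [T1, Satz~2c, p.~92] the quotient $A_1(\HH)/\cA(\HH)$ is generated by $[C_1],[C_2]$ together with the type-$3$ cycles $C_3,C_4,C_5$. I would then discard all but $C_2$: $[C_1]\in\cA(\HH)$ by [T3, Prop.~4]; $[C_3]\in\cA(\HH)$ by Conclusion~16.1; $[C_4]\in\cA(\HH)$ by [T4, Prop.~2, p.~26]; and $[C_5]\in\cA(\HH)$ by Conclusion~16.2 for $b=7$ and $b=9$, the remaining non-maximal $g$ being reached by the shifting morphism (with [T1, Satz~2c(iii)] guaranteeing that no further combinatorial type-$3$ cycles occur). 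This yields $A_1(H_{4,g})=\langle [E],[D],[C_2]\rangle$ for $g\le 1$, and the same intersection-number argument gives freeness.

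The genuinely substantive work --- proving $[C_3]\in\cA(\HH)$ and $[C_5]\in\cA(\HH)$ via the standard construction and graded $G_3$-admissible deformations --- has already been carried out in Sections~16.1 and 16.2, so the main obstacle in assembling Theorem~16.4 is bookkeeping of two kinds: (a) checking that the lists of combinatorial cycles in the cited statements of [T1]--[T4] match exactly the cycles $C_1,\dots,C_5$, $E$, $D$ used here, and that the shifting morphism of [T3] legitimately transports each vanishing statement across \emph{all} non-maximal $g$; and (b) confirming, via the explicit degree formulas, that $[E]$, $[D]$, $[C_2]$ form a \emph{basis} rather than just a spanning set. Step~(a) is routine once the notation is aligned, but~(b) is where care is needed, since a single spurious relation among the three classes would lower the rank and falsify the theorem.
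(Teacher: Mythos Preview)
Your proposal is correct and follows essentially the same route as the paper: Theorem~16.4 is a summary statement, and the paper's Section~16.3 simply says ``The results of 16.1 and 16.2 give'' the theorem, with linear independence deferred to Chapter~15 exactly as you indicate. One small imprecision: for $C_5$ the paper does not invoke the shifting morphism to reach the remaining $g\le 1$; rather, [T1, Satz~2c(iii)] asserts that $C_5$ only appears in the generating system for $b=7$ and $b=9$, so Conclusion~16.2 already covers every case where $C_5$ is present, and for the other values of $b$ there is nothing to transport.
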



\chapter{Correction of the results of [T4] and summary} \label{ch:chap17}

In [T4] the computation of the degree on page 28 is wrong, and hence
Conclusions~1-3 and Proposition 3 on the pages 29-32 are wrong. As well ([T4],
Lemma 7, p.33) is wrong. The statement of ([T4], Proposition 4, p.33) is
right, but with regard to Theorem 15.1(i) it is irrelevant. The
statement in ([T4], Satz 2, p.35) is wrong and has to be replaced by the
statements of Theorem 15.1 and Theorem 16.1 , respectively. The results of the sections 8 and 9 in [T4] remain valid, if one
replaces the old bound by the bound $g(d) =(d-2)^2/4$. Furthermore,
 ([T4], Satz 3, p.35) has to be replaced by:

\begin{theorem}\label{thm:5}
  Let be $d\ge 3$ and $g$ not maximal, let $\CC$ be the universal curve with degree $d$ and genus $g$ over $H_{d,g}$.\\
  (i) If $g>\gamma(d):={d-2\choose 2}$, then $A_1(\CC)$ is freely generated by $E^*,C_1^*,C_2^*,C_3^*$ and $L^*$.\\
  (ii) If $g(d)<g\le\gamma(d)$, then $A_1(\CC)$ is freely generated by $E^*,D^*,C_2^*,C_3^*$ and $L^*$.\\
  (iii) If $g\le g(d)$, then $A_1(\CC)$ is freely generated by $E^*,D^*,C_2^*$
  and $L^*$.
\end{theorem}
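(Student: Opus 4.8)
The plan is to transport the description of $A_1(\HH)$ from Theorem~15.1 and Theorem~16.1 to the universal curve via the first projection $p\colon\CC\to\HH$; write $q\colon\CC\to\mP^3$ for the second projection. The Borel group $B=B(4;k)$ acts on $\HH\times\mP^3$ and preserves $\CC$, so Hirschowitz's theorem applies to $\CC$ just as to $\HH$. The arguments of [T4], $\S 8$ and $\S 9$ — which, as recorded in Chapter~17, remain valid once the bound is taken to be $g(d)=(d-2)^2/4$ — give the relation $\dim_{\Q}A_1(\CC)=\dim_{\Q}A_1(\HH)+1$, and an explicit $\Q$-basis of $A_1(\HH)$ is furnished by Theorem~15.1 for $d\ge5$ and by Theorem~16.1 for $d=3,4$. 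Put $N:=\dim_{\Q}A_1(\HH)$ and let $\Gamma_1,\dots,\Gamma_N$ denote that basis (namely $E,C_1,C_2,C_3$ if $g>\gamma(d)$; $E,D,C_2,C_3$ if $g(d)<g\le\gamma(d)$; $E,D,C_2$ if $g\le g(d)$, and the analogous lists when $d=3,4$, cf.\ Theorem~16.1). Since $\dim_{\Q}A_1(\CC)=N+1$ is already known, it suffices to show that $p_*\colon A_1(\CC)\to A_1(\HH)$ is surjective with one-dimensional kernel spanned by a suitable class $L^*$; then a basis of $A_1(\CC)$ is obtained by adjoining $L^*$ to any set of lifts of $\Gamma_1,\dots,\Gamma_N$, and the three cases of the theorem follow at once.

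For the surjectivity I would lift each generator: $\Gamma_i=\overline{\G_a\cdot\eta_i}$ with $\eta_i\in\HH$ fixed by $T(4;k)$ and by the relevant $5$-dimensional unipotent group $G_j\subset U(4;k)$, so the fibre $C_{\eta_i}=p^{-1}(\eta_i)\subset\mP^3$ is $G_j$-invariant and (being projective, with $G_j$ unipotent) contains a $G_j$-fixed closed point $P_i$. Translating by $\G_a$ gives such a point in every fibre $C_{\psi_{\alpha}(\eta_i)}$, and the curve $\Gamma_i^*:=\overline{\{(\psi_{\alpha}(\eta_i),\psi_{\alpha}(P_i))\mid\alpha\in k\}}\subset\CC$ maps onto $\Gamma_i$ with degree one under $p$; this is the cycle written $E^*,C_1^*,\dots$ in [T4]. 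Hence $p_*[\Gamma_i^*]=[\Gamma_i]$, and since the $[\Gamma_i]$ generate $A_1(\HH)$ the map $p_*$ is surjective; by the rank-nullity relation $\ker p_*$ is then one-dimensional.

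It remains to exhibit a nonzero element of $\ker p_*$. Choose a Borel-fixed point $\eta_0\in\HH$; the associated subscheme of $\mP^3$ has $1$-dimensional support, a union of coordinate lines, so it contains a reduced coordinate line $\ell$. Set $L^*:=\ell\times\{\eta_0\}\subset p^{-1}(\eta_0)\subset\CC$ (this is the class denoted $L^*$ in [T4]). Then $p_*[L^*]=0$ because $p(L^*)$ is a point, while intersecting with the pulled-back hyperplane class gives $\bigl(q^*\cO_{\mP^3}(1)\cdot L^*\bigr)=\deg\ell=1\ne0$, so $[L^*]\ne0$. Consequently $\ker p_*=\Q\cdot[L^*]$, and $\{[L^*],[\Gamma_1^*],\dots,[\Gamma_N^*]\}$ is a $\Q$-basis of $A_1(\CC)$; reading off the lists above yields precisely statements (i), (ii) and (iii).

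The substantive ingredients are thus Theorem~15.1 (the corrected structure of $A_1(\HH)$, whose proof occupies most of the paper) and the dimension relation $\dim_{\Q}A_1(\CC)=\dim_{\Q}A_1(\HH)+1$; granting these, what remains above is bookkeeping plus the single computation $\bigl(q^*\cO_{\mP^3}(1)\cdot L^*\bigr)=1$. The only place where a genuine difficulty could re-enter is a self-contained proof of the dimension relation: that requires running the Hirschowitz analysis directly on $\CC$ and showing that every $B$-invariant $1$-cycle supported in a fibre of $p$ is, modulo the relations coming from $B$-invariant surfaces of $\CC$ not contained in a fibre, a rational multiple of $[L^*]$ — and it is exactly this ``vertical'' reduction that [T4], $\S 8$--$\S 9$ establishes and that survives the correction of the bound.
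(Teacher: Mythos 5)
Your proposal is correct and follows essentially the same route as the paper: Chapter~17 obtains the theorem precisely by feeding the corrected basis of $A_1(H_{d,g})$ (Theorems~15.1 and 16.1) into the arguments of [T4], Sections~8--9, which supply exactly the ``vertical'' reduction and the relation $\dim_{\Q}A_1(\CC)=\dim_{\Q}A_1(\HH)+1$ that you also take as input. Your explicit bookkeeping with $p_*$, the lifted cycles and the fibre line $L^*$ is a faithful reconstruction of that transfer, with the understanding that identifying your lifts with the cycles $E^*,D^*,C_i^*$ named in the statement rests on their definitions in [T4].
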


\indent The statements of ([T4], Satz 4 and Satz 5, p. 36) are correct, if the
bound mention there is replaced by $g(d)=(d-2)^2/4$. The reason is that the
arguments used in (loc.cit.) formally do not depend on the bound.

\indent All in all, one gets the results which had been stated in the introduction. \\

\indent{Concluding Remark}: Having arrived at this point, it is not so difficult any more to explicitly determine the cone of curves and the ample cone of $ H_{d,g}$ (and of $\CC $).

\backmatter

\appendix{}

\chapter{Notations} \label{appnd:A}

The ground field is $\C$; all schemes are of finite type over $\C$;
$k$ denotes an extension field of $\C$. $P=k[x,y,z,t], S=k[x,y,z],
R=k[x,y]$ are the graded polynomial rings.\\
$T=T(4;k)$ group of diagonal matrices\\
$\Delta=U(4;k)$ unitriangular group\\
$B=B(4;k)$ Borel group\\
$T(\rho)$ subgroup of $T(3;k)$ or of $T(4;k)$ (cf. 2.4.1 and [T1], p.2).\\
$\Gamma=\left\{ \begin{pmatrix}
1&0&0&*\\
0&1&0&*\\
0&0&1&*\\
0&0&0&1\end{pmatrix}\right\} <U(4;k)$\\
$G_1,G_2,G_3$ subgroups of $U(4;k)$ (cf. 1.1)\\
$\HH=H_{d,g}$ Hilbert scheme of curves in $\P^3$ with degree $d\ge 1$ and genus $g$,
 i.e. $\HH=\Hilb^p(\P^3_k)$, where $P(T)=dT-g+1$. \\
$Q(T)={T+3\choose 3}-P(T)$ complementary Hilbert polynomial\\
$\HH_Q=$ Hilbert scheme of ideals $\cI\subset\cO_{\P^3}$ with Hilbert polynomial $Q(T)$, i.e. $\HH=H_{d,g}=\HH_Q$.\\
$\HH_Q\neq \emptyset$ if and only if $Q(T)={T-1+3\choose
  3}+{T-a+2\choose 2}$ or $Q(T)={T-1+3\choose 3}+{T-a+2\choose
  2}+{T-b+1\choose 1}$, where $a$ and $b$ are natural numbers $1\le
a\le b$. The first case is equivalent with $d=a$ and $g=(d-1)(d-2/2$,
i.e., equivalent with the case of plane curves. We consider only the
case $g<(d-1)(d-2)/2$. In this case we have the relations $d=a-1$ and
$g=(a^2-3a+4)/2-b$.

\indent It was not possible to reserve the letter $d$ for denoting the
degree of a curve. If necessary $d$ denotes a number large enough,
e.g. $d\ge b=$ bound of regularity of all ideals in $\cO_{\P^3}$ with
 Hilbert polynomial $Q$ (cf. [G1], Lemma 2.9, p.65). \\
$\GG=\Grass_m(P_d)$ Grassmann scheme of $m$-dimensional subspaces of
$P_d$.

\indent Let $\varphi:\N\to\N$ be a function with the following
properties: There is an ideal $\cI\subset\cO_{\P^2}$ of finite
colength with Hilbert function $h(n)=h^0(\cI(n))$, such that
$0\le\varphi(n)\le h(n)$ for all $n\in\N$ and $\varphi(n)=h(n)$ for
$n\ge d$, where $n$ is large enough, e.g. $n\ge d :=\colength(\cI)$.
On the category of $k$-schemes a functor is defined by

\indent $\HH_{\varphi}(\Spec A)=\{(U_0,\cdots,U_d)|U_n\subset S_n\otimes A$ subbundle of rank $\varphi(n)$ such that $S_1U_{n-1}\subset U_n, 1\le n\le d\}$

$\HH_{\varphi}$ is a closed subscheme of a suitable product of Grassmann schemes; it is called graded Hilbert scheme.

To each ideal $\cJ\subset\cO_{\P^3_k}$ with Hilbert polynomial $Q$ corresponds a point $\xi\in\HH(k)$, which we denote by $\xi\leftrightarrow\cJ$.

$h(\cJ)$ denotes the Hilbert function of $\cJ$, that means
$h(\cJ)(n)=\dim_kH^0(\cJ(n)), n\in\N$.

If $\varphi$ is the Hilbert function of an ideal in $\cO_{\P_k^2}$ of colength $d$, then 
\[
H_\varphi : = \{ \cI \subset \cO_{\P_k^2} | h^0(\cI(n)) = \varphi(n), n\in \N  \}
\]
\noindent is a locally closed subset of $\Hilb^d(\P^2)$, which we regard to have the induced reduced scheme structure. \\
\indent If $G$ is a subgroup of $GL(4;k)$, then $ \HH^G$ denotes the
fixed-point scheme, which is to have the induced reduced scheme structure. 
The same convention is to be valid for all fixed-point subschemes of $ H^d =
\Hilb^d(\P^2)$ .  \\
\indent If $ C\hookrightarrow \HH $ is a curve, then by means of the
Grothendieck-Pl\"ucker embedding $\HH \longrightarrow \P^N$ we can regard $C$
as a curve in a projective space, whose Hilbert polynomial has the form
$\deg(C)\cdot T + c$. Here $\deg(C)$ is defined as follows : If $\cI$
is the universal sheaf of ideals on $ X = \HH \times \P_k^3 $, then $
\cF : = \cO_X /\cI$ is the structure sheaf of the universal
curve $\CC $ over $\HH $, and the direct image $
\pi_*(\cF(n))$ is locally free on $\HH$ of rank $P(n)$ for all $n
\geq b $. The line bundles $\cM_n : = \dot\wedge\pi_*(\cF(n))$ are
called the tautological line bundles on $\HH$,which are very ample and thus
define the Grothendieck - Pl\"ucker embeddings in suitable projective
spaces. Here $\dot\wedge$ is to denote the highest exterior power.
Then $\deg(C)$ is the intersection number
$\deg(\cM_n|C) : = (\cM_n\cdot C)$. (If $C$ is a so called tautological or
basis cycle one can compute this intersection number directly, see [T2],
Section 4.1.) 

After these more or less conventional notations we introduce some notations
concerning monomial ideals. If $ \cJ \subset \cO_{\P^3}$ is $T$-invariant,
then $H^0(\P_k^3;\cJ(d)) \subset P_d $ is generated by monomials . To
each monomial $ x^{d-(a+b+c)}y^{a}z^{b}t^{c}$ in $ H^0(\cJ(d))$ we associate
the cube $ [a,a+1]\times [b,b+1]\times [c,c+1]$ in an $ y - z - t$ - coordinate
system, and the union of these cubes gives a so called pyramid, which is
denoted by $ E(\cJ(d))$. Usually we assume that $\cJ$ is invariant under
$\Delta$ or $\Gamma$. Then we can write $H^0(\cJ(d)) =
\bigoplus\limits^d_{n=0}t^{d-n}U_n $, where $ U_n \subset S_n $ are subspaces
such that $ S_1\cdot U_n \subset U_{n+1}, 0\leq n\leq d-1 $, which we call
the layers of the pyramid. (In [T1]--[T4] we made extensive use of this
concept, but here it occurs only once in 12.3.3.) \\
\indent $ A_1( - ) $ denotes the group of rational equivalence classes with
coefficients in $ \mathbb{Q}$.
\vfill
\newpage
\begin{minipage}{19cm*\real{0.7}} \label{fig:A.1}
\centering Fig.: A.1\\ 
\tikzstyle{help lines}=[gray,very thin]
\begin{tikzpicture}[scale=0.7]
 \draw[style=help lines]  grid (19,13);
 \draw[thick] (0,0) -- (0,13); 
 \draw[thick] (0,0) -- (19,0);
 {
 \pgftransformxshift{0.5cm}
 \pgftransformyshift{-0.55cm}
  \foreach \x in {0,1,2} \draw[anchor=base] (\x,0) node {$\x$}; 
  \foreach \x in {3,4,5,6,7,8} \draw[anchor=base] (\x,0) node {$\dots$}; 
 }
 \draw[anchor=south east] (0,0) node {$0$};
 \draw[anchor=south east] (0,1) node {$1$};
 \draw[\Red,ultra thick] (9,0) -- (9,1) -- (10,1) -- (10,3) -- (11,3) --
 (11,5) -- (12,5) -- (12,6) -- (13,6) -- (13,8) -- (14,8) -- (14,9) -- (15,9)
 -- (15,12) -- (16,12) -- (16,13) -- (17,13);
 \draw[\Red,ultra thick,dotted] (15,9) -- (16,9) -- (16,10) -- (16,11) -- 
 (17,11) -- (17,12) -- (18,12);
  \draw[\Black, ultra thick] (0,1) -- (12,13);  
  \draw[\Black, ultra thick,dashed] (5,0) -- (18,13);  
\end{tikzpicture}
\end{minipage}

\chapter{Hilbert functions without Uniform Position Property }\label{appnd:B}

\begin{lemma}
  Let be $ k $ be an algebraically closed field, $\cI \subset \cO_{\P_{k}^2}$
  an ideal of finite colength with Hilbert function $\varphi$ and difference
  function $\varphi'(n) = \varphi(n) - \varphi(n-1)$. Let $m$ be a natural
  number such that $\varphi'(m+1) = \varphi'(m)+1$. The ideal $\cJ \subset
  \cO_{\P_{k}^2} $ generated by $ H^0(\cI(m))$ has the following
  properties: \\
  \noindent(i) $\cJ $ is $m $-regular;\\
  \noindent (ii) $H^0(\cJ(n)) = H^0(\cI(n))$ for all $ n \leqq m+1$; \\
  \noindent (iii) If $\delta : = m+1-\varphi'(m) > 0 $, then there is a form
  $f \in S_{\delta}$ and an ideal $\cL \in \cO_{\P^2}$ of finite colength such
  that $\cJ = f\cdot \cL(-\delta)$.
\end{lemma}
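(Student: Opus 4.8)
The plan is to reduce everything to the well‑understood two–variable situation by restricting to a general line, then to lift the resulting structure back to $\P^2$, and finally to read off (i) and (ii) from a Hilbert–function computation once (iii) is in hand. Write $I:=\bigoplus_n H^0(\cI(n))\subseteq S=k[x,y,z]$ for the (saturated) homogeneous ideal, $W:=I_m=H^0(\cI(m))$, and let $J:=(W)$ be the homogeneous ideal it generates, so $J_n=S_{n-m}W$ for $n\ge m$ and $J_n=0$ otherwise; $\cJ$ is the associated ideal sheaf and $H^0(\cJ(n))$ is the degree‑$n$ piece of the saturation $\bar J$ of $J$. First I would settle (ii) in degrees $n\le m$: since $J\subseteq I$ and $I$ is saturated, $\bar J_n\subseteq I_n$; conversely if $s\in I_n$ with $n\le m$ then $sS_{m-n}\subseteq I_m=W$, hence $sS_r\subseteq J_{n+r}$ for $r\gg 0$ and $s\in\bar J_n$. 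Thus $H^0(\cJ(n))=H^0(\cI(n))$ for $n\le m$, and (ii) is reduced to $\bar J_{m+1}=I_{m+1}$. Note $\delta=0$ exactly when $m\ge\reg(\varphi)$, in which case $\cI(m)$ is globally generated, $\cJ=\cI$, and (i)--(iii) are trivial; so I assume $\delta\ge1$ and $W\ne 0$ from now on.

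Next I would restrict to a general line. Since $k=\overline k$ and $\cI$ defines a zero–dimensional scheme, a general linear form $\ell$ is a nonzerodivisor on $S/I$ and on $S/\bar J$; put $R=k[x,y]\cong S/\ell S$ and $\bar I:=(I+\ell S)/\ell S$. From $0\to(S/I)(-1)\xrightarrow{\ \ell\ }S/I\to R/\bar I\to 0$ one reads off $\dim_k\bar I_n=\varphi'(n)$ for all $n$; in particular $\bar I$ is Artinian and the hypothesis says $\dim_k\bar I_{m+1}=\dim_k\bar I_m+1$. Since $R_1\bar I_m\subseteq\bar I_{m+1}$ and $\dim_k(R_1V)\ge\dim_kV+1$ for every nonzero $V\subseteq R_m$, both inequalities are forced to be equalities: $R_1\bar I_m=\bar I_{m+1}$ with $\dim_k(R_1\bar I_m)=\dim_k\bar I_m+1$. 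The one genuinely structural input is now: in two variables, a subspace $V\subseteq R_m$ with minimal growth $\dim_k(R_1V)=\dim_kV+1$ is of the form $V=g\cdot R_{m-e}$ for a single form $g$ of degree $e:=m+1-\dim_kV$. (One proves this by factoring out $\gcd(V)$ and showing a $\gcd$–free proper subspace of $R_m$ cannot have growth $+1$; alternatively it is Macaulay's/Gotzmann's persistence theorem in the two–variable case.) Applied to $V=\bar I_m$, of dimension $\varphi'(m)$, this gives $\bar I_m=g\cdot R_{m-\delta}$ with $g\in R_\delta$.

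With the two–variable picture I would prove (iii). The sheaf restriction $\cJ|_L$ to the general line $L=V(\ell)\cong\P^1$ is the ideal sheaf generated by the image of $W$ in $H^0(\cO_L(m))=R_m$, and that image is exactly $\bar I_m=gR_{m-\delta}$; hence $\cJ|_L$ is the ideal of a length‑$\delta$ divisor on $L$. As this holds for a general $L$, the divisorial (pure one–dimensional) part of $V(\cJ)$ has degree $\delta$, and the residual scheme is zero–dimensional. Let $f\in S_\delta$ be the form generating the divisorial hull $(f)=\bigcap_{\operatorname{ht}\mathfrak p=1}\cJ_\mathfrak p\cap S$ of $\cJ$ (principal, since $\cO_{\P^2}$ is a UFD); then $\deg f=\delta$ by the line computation and $\cJ\subseteq(f)=f\cO_{\P^2}(-\delta)$. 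Therefore $\cJ=f\cdot\cL$ with $\cL:=(\cJ:f)$, and because $\cL_\eta=f_\eta^{-1}\cJ_\eta=\cO_\eta$ at the generic point of each component of $V(f)$, the sheaf $\cL$ is zero–dimensional, i.e.\ of finite colength; thus $\cJ=f\cdot\cL(-\delta)$, which is (iii).

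Finally (i) and (ii) follow by bookkeeping. From $\cJ\cong\cL(-\delta)$ as abstract sheaves, $H^0(\cJ(n))=H^0(\cL(n-\delta))$ for all $n$; combined with $H^0(\cJ(n))=H^0(\cI(n))$ for $n\le m$ this determines the Hilbert function of $\cL$ near degree $m-\delta$, and the identity $\varphi'(m)=(m-\delta)+1$ (just the definition of $\delta$) shows $m-\delta$ is a regularity index for $\cL$ in the sense of Appendix B, Lemma 2, so $\cL$ is $(m-\delta)$–regular; twisting back, $\cJ=f\cL(-\delta)$ is $m$–regular, which is (i). And since $\reg(\cL)\le m-\delta$, the value $H^0(\cJ(m+1))=H^0(\cL(m+1-\delta))$ equals $\binom{m+3-\delta}{2}-\colength(\cL)=\varphi(m)+\varphi'(m+1)=\varphi(m+1)=\dim I_{m+1}$, whence $\bar J_{m+1}=I_{m+1}$, completing (ii). I expect the main obstacle to be exactly the combination flagged in the middle: establishing the two–variable minimal–growth statement, and upgrading "$\cJ|_L$ is principal for general $L$" to the honest global factorization $\cJ=f\cL(-\delta)$ — i.e.\ controlling the divisorial hull and ruling out pathologies along the fixed curve $V(f)$, which is where one genuinely leans on the structure results of [G4].
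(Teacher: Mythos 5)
Your proof is correct, but it follows a genuinely different route from the paper's. The paper first passes to a generic coordinate system via Galligo's theorem, so that the initial ideal is Borel-normed, reads off $S_1\,\itin(I_m)=\itin(I_{m+1})$ from the staircase of Fig.~A.1, and obtains (i) and (ii) by citing persistence-type results ([G4], [Gre, Prop.~2.28]); for (iii) it computes that the Hilbert polynomial of $\cO_{\P^2}/\cJ$ is linear with leading coefficient $\delta$, invokes Krull's Hauptidealsatz to find an irreducible curve component $V(f)$, writes $\cJ=f\cdot\cK(-\nu)$ and iterates until the full degree $\delta$ is extracted. You avoid generic initial ideals altogether: restriction to a general line reduces the hypothesis $\varphi'(m+1)=\varphi'(m)+1$ to the elementary two-variable statement that minimal growth $\dim_k R_1V=\dim_kV+1$ forces $V=gR_{m-e}$ (your gcd/induction sketch is sound, e.g.\ via $\dim(V:R_1)=\dim V-1$), so the restricted ideal is a length-$\delta$ divisor on the line; you then peel off the full divisorial part $f$ in one step (no irreducibility, no iteration) using the standard factorization $\cJ=f\cdot(\cJ:f)(-\delta)$ on the smooth surface $\P^2$, which is (iii); finally (ii) in degrees $\le m$ is a direct saturation argument, and (i) together with (ii) at $n=m+1$ follow from (iii) plus Appendix~B, Lemma~2 applied to $\cL$ (no circularity, since Lemma~2's proof does not use Lemma~1) and a short Hilbert-function count giving $h^0(\cJ(m+1))=\varphi(m)+\varphi'(m+1)$. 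What each approach buys: yours is more self-contained and elementary, needing only classical facts about $k[x,y]$ and divisorial parts of ideal sheaves, while the paper's gin-based argument is shorter given the cited machinery and produces the Borel-fixed staircase picture that the rest of the paper reuses. One small point to make explicit: like the paper, you implicitly need $H^0(\cI(m))\neq 0$ (equivalently $\varphi'(m)\ge 1$), since otherwise $\cJ=0$ and (ii) at $n=m+1$ fails; your explicit reduction to $\delta\ge 1$, $W\neq 0$ handles exactly the intended situation.
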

\begin{proof} Let be $ I_n := H^0(\cI(n)), I : = \bigoplus
  \limits^{\infty}_{n=0} I_n $. The ideal $ I$ is called $\emph{
    Borel-normed}$, if $ \emph{in}(I) $ is invariant under $B(3;k)$, where
  $\emph{in}(I)$ is the ideal generated by the initial monomials of all forms
  in $I$. According to a theorem of Galligo, there is a $ g \in GL(3;k)$ such
  that $ g(I)$ is Borel-normed. (In [G4], Anhang IV,  in the special case of
  three variables, there is an ad-hoc-proof.) Therefore we can assume without
  restriction that $I$ is Borel-normed . Then Fig.A.1 shows not only the graph
  of $\varphi'$, but also the monomials in $H^0(\cI_0(n))$, where $\cI_0 : =
  [\emph{in}(I)]^{\sim} $ (cf. [G4], Anhang V, Hilfssatz 1, p.116). One sees
  that $S_1\emph{in}(I_{m}) = \itin(I_{m+1})$ and this implies the
  statements $(i)$ and $(ii)$ (cf. loc.cit., Lemma, p. 116 or [Gre],
  Proposition 2.28, p. 41). \\
  \indent Let $\psi$ be the Hilbert function of $\cJ$. Then $\psi$ is also
  the Hilbert function of $\cJ_0 = \emph{in}(\cJ)$ (cf. [G4], Hilfssatz 1,
  p.114), and the further development of the graph of $\psi'$ is marked by
  $\cdots$ in Fig. A.1. The line $\emph{l} : y = x-\delta+1 $ is marked by -
  - - - . If $ c$ is the number of monomials between the graphs of $\psi'$ and
  $\emph{l}$, then $\psi(n) = {n-\delta+2\choose 2} - c, n \geq m $. Then
  the Hilbert polynomial of $\cO_{\P^2}/\cJ$ is equal to $ p(n) = {n+2\choose
    2}- \psi(n) = \delta n + 1.5\delta - 0.5\delta^2+c $. Hence $ V_+(\cJ)
  \subset \P_k^2$ is $1$- codimensional and there is an irreducible component
  which is equal to a hypersurface $V(f), f\in S_\nu $ an irreducible form
  (Hauptidealsatz of Krull). From $\cJ \subset Rad(\cJ) \subset (f)$ it
  follows $\cJ = f\cdot\cK(-\nu)$, where $\cK \subset \cO_{\P^2}$ is an ideal
  with Hilbert function $ \chi(n): = \psi(n+\nu) = {n-(\delta-\nu)+2\choose
    2} - c $. If $\delta-\nu > 0$, one argues with $\cK$ in the same way as
  with $\cJ$, and one finally gets the statement $(iii)$ .
\end{proof}

\begin{lemma}
  Let the assumptions and the notations be as before. Then $\reg(\cI) = \min
  \Set{n \in \Z | \varphi'(n) = n+1 }$.
\end{lemma}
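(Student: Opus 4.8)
The plan is to convert this purely numerical identity into a cohomological one. Write $Z\subset\mP^2_k$ for the length-$d$ subscheme defined by $\cI$, and recall the Castelnuovo--Mumford criterion: $\cI$ is $m$-regular iff $h^1(\cI(m-1))=0$ and $h^2(\cI(m-2))=0$. From the structure sequence $0\to\cI(n)\to\cO_{\mP^2}(n)\to\cO_Z(n)\to0$ and the vanishing $h^1(\cO_Z(n))=h^2(\cO_Z(n))=0$ (because $Z$ is $0$-dimensional) one gets $h^2(\cI(n))=h^2(\cO_{\mP^2}(n))=0$ for all $n\ge-2$, so the $h^2$-condition is automatic for $m\ge0$; and for $n\ge0$ the same sequence gives
\[
\psi(n):=h^1(\cI(n))=\varphi(n)-\Bigl[\tbinom{n+2}{2}-d\Bigr]\ge0 ,\qquad h^1(\cI(-1))=d .
\]
Hence $\reg(\cI)=1+\min\{n\ge0\mid\psi(n)=0\}$, and (since $\cI$ is a coherent ideal sheaf) $\psi(n)=0$ for $n\gg0$ by Serre vanishing, so this minimum exists. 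The whole problem is thus to locate the first degree where $\psi$ vanishes.

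The engine is the elementary identity $\psi(n)-\psi(n-1)=\varphi'(n)-(n+1)$, obtained by subtracting the displayed formula at $n$ and $n-1$ and using $\binom{n+2}{2}-\binom{n+1}{2}=n+1$. By Iarrobino's condition (a), $\varphi'(n)\le n+1$, the right-hand side is $\le0$, so $\psi$ is non-increasing on $\N$; being $\ge0$ and eventually $0$, its zero-set is an interval $[n_1,\infty)$ and $\reg(\cI)=n_1+1$. On the other side, by Iarrobino's condition (b') the function $\varphi'$ is strictly increasing for $n\ge\alpha$ while (a) bounds it by $n+1$; hence as soon as $\varphi'(n)=n+1$ one must have $\varphi'(n+1)=n+2$, and by induction $\{n\in\N\mid\varphi'(n)=n+1\}=[e',\infty)$, nonempty because $\varphi$ eventually equals its Hilbert polynomial. (The minimum is over $\N$; note $\varphi'(0)=\varphi(0)=0$ for $d>0$, so $e'\ge1$ automatically, which is also why the ``$\Z$'' in the statement should be read as $\N$.) It then remains to read off $n_1=e'-1$ from the identity: for $n\ge e'$ we have $\varphi'(n)=n+1$, so $\psi(n)=\psi(n-1)$ and $\psi$ is constant (hence $\equiv0$) on $[e'-1,\infty)$, giving $n_1\le e'-1$; while for $1\le n\le e'-1$ we have $\varphi'(n)\le n+1$ and $\varphi'(n)\ne n+1$, so $\varphi'(n)\le n$ and $\psi(n)<\psi(n-1)$ strictly, whence (when $e'\ge2$) $\psi(e'-2)>\psi(e'-1)=0$, forcing $n_1\ge e'-1$. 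Combining, $\reg(\cI)=n_1+1=e'$.

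The only genuinely delicate points, which I would spell out carefully, are the off-by-one bookkeeping linking ``first degree where $\psi$ vanishes'' to ``first degree where $\varphi'$ attains $n+1$'', the behaviour of the structure sequence at negative and small twists (used both for the automatic $h^2$-vanishing and for $h^1(\cI(-1))=d$), and the degenerate case $e'=1$ — which by the above forces $n_1=0$, i.e. $\psi(0)=d-1=0$, i.e. $d=1$, so it is consistent. An alternative, cohomology-free route would invoke Appendix~B, Lemma~1 at $m=e'$: its hypothesis $\varphi'(e'+1)=\varphi'(e')+1$ holds since both sides equal the maximal value $e'+2$, and it produces an $e'$-regular ideal $\cJ$ generated by $H^0(\cI(e'))$ with $H^0(\cJ(n))=H^0(\cI(n))$ for $n\le e'+1$; one then argues that $\cJ$ equals the saturation of $\cI$ (comparing Hilbert functions degree by degree, using that $\varphi'(n)=n+1$ for $n\ge e'$ leaves no room for minimal generators beyond degree $e'$), so $\reg(\cI)=\reg(\cJ)\le e'$, with the reverse inequality coming from $\varphi'(e'-1)<e'$, i.e. $h^1(\cI(e'-2))\ne0$. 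I would present the cohomological argument as the main proof, since that ``no further generators'' step in the alternative needs its own little verification.
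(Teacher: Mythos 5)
Your argument is correct, but it is a genuinely different proof from the one in the paper. The paper disposes of the lemma by degeneration: after Galligo's theorem one may assume the ideal is Borel-normed, the limit $\lim_{\lambda\to 0}\sigma(\lambda)\cI$ under a suitable one-parameter subgroup is the monomial initial ideal with the same Hilbert function, regularity is preserved in this passage by [Gre], Theorem 2.27, and for a Borel-fixed monomial ideal in three variables the identity $\reg(\cI)=\min\{n\mid\varphi'(n)=n+1\}$ is read off combinatorially from [T1], Anhang 2. You instead work directly with the ideal sheaf: from $0\to\cI(n)\to\cO_{\mP^2}(n)\to\cO_Z(n)\to 0$ you get $h^2(\cI(n))=0$ for $n\ge -2$ and $\psi(n):=h^1(\cI(n))=\varphi(n)-\tbinom{n+2}{2}+d$, so the Castelnuovo--Mumford criterion becomes $\reg(\cI)=1+\min\{n\ge 0\mid\psi(n)=0\}$, and the identity $\psi(n)-\psi(n-1)=\varphi'(n)-(n+1)$ together with Iarrobino's conditions (a) and (b') (quoted in Section 2.1) yields exactly the bookkeeping you carry out, including the boundary cases. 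I checked the off-by-one steps and they are sound; the two glosses worth making explicit are that (b') applies at $n$ with $\varphi'(n)=n+1$ because this value is positive, hence $n\ge\alpha$, and that the minimum must indeed be taken over $\N$ (over $\Z$ the convention $\varphi(n)=0$ for $n<0$ would make $n=-1$ a spurious solution), which you already note. What each approach buys: yours is self-contained apart from Mumford's criterion and Iarrobino's lemma, avoids the generic-initial-ideal machinery and the external reference to [T1], and needs no hypothesis $k=\overline{k}$, so it is both more elementary in its inputs and slightly more general; the paper's proof is shorter on the page and fits its systematic practice of reducing every statement about Hilbert functions to the monomial (Borel-fixed) case, where the claim is visible from the staircase pictures used throughout.
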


\begin{proof} As in the proof of Lemma 1, we can assume that $\cI$ is
  Borel-normed. We let $\G_m$ operate on $S$ by $\sigma (\lambda) : x \mapsto
  x, y \mapsto \lambda^{g}y, z \mapsto \lambda^{g^2}z $, where $g$ is a high
  enough natural number. Then $\lim \limits_{\lambda \to 0}\sigma(\lambda)\cI
  $ is equal to the ideal $\cI_0$ as in the proof of Lemma 1, $\cI$ and
  $\cI_0$ have the same Hilbert function, and $\reg(\cI) = \reg(\cI_0)$ 
   ( cf. [Gre], Theorem 2.27). Hence one can assume without
  restriction that $\cI$ is monomial. But then the statement follows from 
([T1], Anhang 2), for instance.
\end{proof}

\chapter{Ideals with many monomials } \label{appnd:C}

If $k$ is a field, let be $S = k[x_1,\dots,x_r,t]$ and $R = k[x_1,
\dots,x_r]$. $\G_m $ operates on $S$ by $ \sigma(\lambda): x_i \mapsto x_i, 
1 \leq i \leq r$, and $t\mapsto \lambda t, \lambda \in k^*$ . Let $ \HH $
be the Hilbert scheme of ideals $\cI \subset \cO_{\P^r}$ with Hilbert polynomial
$Q$, i.e., $\HH = \Hilb ^P(\P_k^r)$, where $ P(n) = {n+r\choose r} - Q(n)$
is the complementary Hilbert polynomial of the subscheme $ V_+(\cI) \subset
\P^r $. We suppose that $\HH$ is not empty. Then the ideals $\cI \subset
\cO_{\P^r}$ with Hilbert polynomial $Q$, for which $t$ is a non-zero divisor
of $\cO_{P^r}/\cI$, form an open, non-empty subset $U_t \subset \HH$.\\

If $K/k$ is an extension field and if $\cI \in \HH(K)$, then the limit ideals
$\cI_{0/\infty} : = \lim\limits_{\lambda\to 0/\infty}\sigma(\lambda)\cI$ are
in $\HH(K)$ again, and if $\cI \in U_t$, then $\cI_0 \in U_t$, too (cf.
[G2], Lemma 4). We say that $\cI$ fulfils the $\emph{limit condition}$, if
$\cI_{\infty} \in U_t$.
\begin{remark} If $\cI$ is fixed by the subgroup $ \Gamma : x_i \mapsto x_i,
  t \mapsto \alpha_1 x_1 + \cdots + \alpha_r x_r + t $ of $U(r+1;k)$, then
  $\cI$ does fulfil the limit condition (cf. [G2], proof of Lemma 3, p. 541).
\end{remark}

If $\cI \in U_t$, then $\cI' : = \cI + t\cO_{\P^r}(-1)/ t\cO_{\P^r}(-1)$ can
be regarded as an ideal in $\cO_{\P^{r-1}}$ with Hilbert polynomial $ Q'(T) =
Q(T) - Q(T-1)$.

\subsection{Lemma} Let $\cI \in \HH(k)\cap U_t $ be an ideal which fulfils
 the limit condition.
\begin{enumerate}[(i)]
\item 
If $d \geq \max(\reg(\cI_0), \reg(\cI_{\infty}))$, then
$H^0(\P_k^r,\cI(d))\cap R_d$ has the dimension $Q'(d)$.\\
\item
If $d \geq \reg(\cI')$ and $H^0(\cI(d))\cap R_d$ has a dimension $\geq Q'(d)$,
then $d \geq \max(\reg(\cI_0), \reg(\cI_{\infty}))$.
\end{enumerate}
\begin{proof}(i) There is a basis of $M : = H^0(\cI(d))$ of the form $ g_i =
  t^{e_i}g_i^0 + t^{e_i-1}g_i^1 + \cdots $, such that $0\leq e_1 \leq e_2
  \leq \cdots \leq e_m, m := Q(d), g_i^j \in R $ and $ g_i^0 \in R_{d-e_i},
  1\leq i \leq m$, linear independent. Then $M_{\infty} : =
  \lim\limits_{\lambda\to \infty}\sigma(\lambda)M = \langle {\{ t^{e_i}g_i^0 | 1
    \leq i \leq m \}}\rangle $ (limit in $\Grass_m(S_d)$) has the dimension
  $m$. As $ d \geq \reg(\cI_{\infty})$ by assumption, it follows that $ Q(d) =
  h^0(\cI_{\infty}(d))$, and hence $ M_{\infty} = H^0(\cI_{\infty}(d))$. Now
  $t$ is a non-zero divisor of $S /\bigoplus\limits_{n\geq
    0}H^0(\cI_{\infty}(n)$ by assumption, Thus it follows that
  $H^0(\cI_{\infty}(n)) = \langle{ \{ t^{e_i-(d-n)}g_i^0 | e_i \geq d-n
    \}}\rangle$ for all $ 0 \leq n \leq d $. If $ n = d-1 $ one gets
  $H^0(\cI_{\infty}(d-1)) = \langle\{t^{e_i-1}g_i^0 | e_i \geq 1 \}\rangle$,
  hence $Q(d-1) = |\{ i | e_i \geq 1\}| $. It follows that $Q'(d) = |\{i | e_i
  = 0\} |$. Thus $M\cap R_d \supset \langle \{g_i^0 |e_i = 0\}\rangle$ has a
  dimension $\geq Q'(d)$. Because of $ \reg(\cI') \leq \reg(\cI)$ one has
  $h^0(\cI'(d)) = Q'(d)$ and the canonical restriction mapping $\rho_d : M =
  H^0(\cI(d)) \longrightarrow H^0(\cI'(d))$ is injective on $ M\cap R_d$. It
  follows that the dimension of $M \cap R_d $ can not be greater than $Q'(d)$. \\
\noindent (ii) From the exact sequence 
 \begin{equation}\label{eq:A1}
   0 \longrightarrow \cI(-1) \stackrel{\cdot t}{\longrightarrow} \cI \longrightarrow \cI' \longrightarrow 0
 \tag{1}
\end{equation}

\noindent it follows that $H^i(\cI(n-i)) = (0) $ if $i \geq 2$ and $n \geq e : = \reg(\cI')$ (see [M], p.102). The sequence
\[
0 \longrightarrow H^0(\cI(d-1))\longrightarrow H^0(\cI(d))
\stackrel{\rho_d}{\longrightarrow}
H^0(\cI'(d))\longrightarrow H^1(\cI(d-1))\longrightarrow
H^1(\cI(d))\longrightarrow 0
\]
is exact as $ d \geq e$, where $\rho$ is induced by the canonical restriction
mapping $ S \longrightarrow S/tS(-1) = R$. As $\rho_d$ is injective on
$H^0(\cI(d))\cap R_d$ and $h^0(\cI'(d)) = Q'(d)$, it follows from the
assumption that $\rho_d$ is surjective. From the $e$ - regularity of $\cI'$
it follows that $ R_1H^0(\cI'(n)) = H^0(\cI'(n+1))$, for all $n\geq e$. Hence
$\rho_n$ is surjective for all $n\geq d$. Hence $ 0 \longrightarrow
H^1(\cI(n-1)) \longrightarrow H^1(\cI(n)) \longrightarrow 0$ is exact for all
$n\geq d$, thus $H^1(\cI(n-1)) = (0)$ for all $n\geq d$. It follows that
$\reg(\cI) \leq d$. One again has the exact sequences:
\begin{equation}\label{eq:A2}
0 \longrightarrow \cI_{0/\infty}(-1) \stackrel{\cdot t}{\longrightarrow}
\cI_{0/\infty} \longrightarrow \cI'_{0/\infty} \longrightarrow 0
\tag{2}
\end{equation}

\noindent 
As $(\cI')_{0/\infty} = (\cI_{0/\infty})' \supset \cI' $ and all
these ideals have the Hilbert polynomial $Q'$, it follows that
$(\cI')_{0/\infty} = \cI'$. As $H^0(\cI(d))\cap R_d$ is fixed by
$\sigma(\lambda)$, it follows that $ H^0(\cI(d))\cap R_d \subset
H^0(\cI_{0/\infty}(d)) $. Then one argues as before, using (2) instead of (1).
\end{proof}

\begin{remark} Let $\cI \subset \cO_{\P^2}$ be an ideal of colength $d$, let
  be $S = k[x,y,z], R = k[x,y]$, and let $ \G_m $ operate by $
  \sigma(\lambda) : x \mapsto x, y \mapsto y, z \mapsto \lambda z$ . We
  assume $\cI$ to be invariant under $\Gamma $ (see above). As $d \geq
  \reg(\cI)$ for all ideals $\cI \subset \cO_{\P^2}$ of colength d, the
  assumption of part (i) of the lemma is fulfilled, hence $H^0(\cI(n))\cap
  R_n$ has the dimension $ Q'(n) = {n+1\choose 1}$ for all $ n\geq d$ and
  therefore:
\begin{equation}\label{eq:A3}
 H^0(\cI(n)) \supset R_n \quad \text{for all} \quad n\geq d.
\tag{3}
\end{equation}
\noindent This inclusion has been used in the text for several times, e.g. in Section (2.2).
 \end{remark}

 \begin{remark} Let $\cI \subset \cO_{\P^2}$ be an ideal of finite colength,
   with Hilbert function $\varphi$, which is invariant under $\Gamma\cdot
   T(\rho)$. Let $\G_m$ operate on $S$ by $\sigma(\lambda) : x \mapsto x, y
   \mapsto y,z \mapsto \lambda z$.  If $\emph{in}(\cI)$ is the initial ideal
   with regard to the inverse lexicographical order, then $\emph{in}(\cI)$ is
   equal to the limit ideal $\cI_0 = \lim\limits_{\lambda\to
     0}\sigma(\lambda)\cI$. As $h^0(\cI_0(n)) = \varphi(n)$, it follows that
   $\emph{in}(H^0(\cI(n))) = H^0(\cI_0(n))$, for all $ n\in \N $ (cf. Appendix E and [G2],
   Lemma 3 and Lemma 4, pp. 541).  Thus the number of the initial monomials
   and of the monomials in $H^0(\cI_0(n))$, which are represented in our
   figures, can be determined by means of the Hilbert function, alone.
\end{remark}

\chapter{Unipotent groups acting on polynomial rings }
\label{appnd:D}
\begin{lemma}
The 5-dimensional subgroups of $\Delta = U(4;k)$ have the form  
\[
  G(p) : = \Set{ 
\begin{pmatrix} 
 1&\alpha&*&*\\  
  0&1&\beta&*\\
  0&0&1&\gamma\\
  0&0&0&1
\end{pmatrix} | a\alpha+b\beta+c\gamma=0}
\]
where $p = (a:b:c) \in \P^2(k) $ is uniquely determined.
\end{lemma}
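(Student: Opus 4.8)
The plan is to pass to the abelianization of $\Delta$. Write $E_{ij}$ for the matrix unit with a single $1$ in position $(i,j)$, and let $\pi\colon\Delta\to\G_a^3$ be the map sending an upper unitriangular matrix $M$ to the triple $(M_{12},M_{23},M_{34})$ of its superdiagonal entries. A short computation with the matrix product, using $(MN)_{i,i+1}=M_{i,i+1}+N_{i,i+1}$, shows that $\pi$ is a surjective homomorphism of algebraic groups whose kernel $N$ is the $3$-dimensional commutative subgroup $\{I+aE_{13}+bE_{14}+cE_{24}\}$. Since $\G_a^3$ is commutative we have $[\Delta,\Delta]\subseteq N$, while the commutator identities $[I+aE_{12},I+bE_{23}]=I+ab\,E_{13}$, $[I+aE_{23},I+bE_{34}]=I+ab\,E_{24}$ and $[I+aE_{12},I+bE_{24}]=I+ab\,E_{14}$ show that $N\subseteq[\Delta,\Delta]$; hence $N=[\Delta,\Delta]$.

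Next I would show that every $5$-dimensional (closed) subgroup $G\le\Delta$ contains $N$, and is therefore $\pi^{-1}(\pi(G))$. In characteristic $0$ a closed subgroup of the unipotent group $\Delta$ is connected, and a proper closed subgroup of the nilpotent group $\Delta$ is strictly contained in its normalizer; a subgroup of codimension $1$ thus has normalizer equal to $\Delta$, i.e. $G\trianglelefteq\Delta$. Then $\Delta/G$ is one-dimensional, hence commutative, so $[\Delta,\Delta]=N\subseteq G$. Consequently $G=\pi^{-1}(\pi(G))$, where $\pi(G)$ is a $2$-dimensional — hence codimension-one — subgroup of the vector group $\G_a^3$, i.e. a hyperplane $\{(\alpha,\beta,\gamma)\mid a\alpha+b\beta+c\gamma=0\}$ with $(a,b,c)\neq(0,0,0)$. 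Pulling back along $\pi$ gives exactly $G=G(p)$ with $p=(a:b:c)$.

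For the converse direction I would observe that, for any $p=(a:b:c)\in\P^2(k)$, the equation $a\alpha+b\beta+c\gamma=0$ cuts out a $2$-dimensional subgroup of $\G_a^3$, so its $\pi$-preimage $G(p)$ is a $5$-dimensional subgroup of $\Delta$, visibly of the displayed matrix form. Uniqueness of $p$ is then immediate: $\pi(G)$ is intrinsically attached to $G$, and two hyperplanes $a\alpha+b\beta+c\gamma=0$ and $a'\alpha+b'\beta+c'\gamma=0$ coincide precisely when the linear forms are proportional, i.e. when $(a:b:c)=(a':b':c')$ in $\P^2(k)$.

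The only point requiring genuine care is the normalizer step — that a codimension-one subgroup of $\Delta$ is automatically normal. I would handle it through the exponential isomorphism $\exp\colon\mathfrak n\xrightarrow{\ \sim\ }\Delta$, valid in characteristic $0$ since $\mathfrak n$ (the strictly upper triangular $4\times4$ matrices) is nilpotent: closed connected subgroups of $\Delta$ correspond bijectively to Lie subalgebras of $\mathfrak n$, and in a nilpotent Lie algebra every proper subalgebra sits strictly inside its normalizer, so a codimension-one subalgebra is an ideal and hence contains $[\mathfrak n,\mathfrak n]$. With that in hand, everything else is routine bookkeeping with $4\times4$ upper triangular matrices.
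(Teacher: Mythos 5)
Your proof is correct, and while it rests on the same skeleton as the paper's --- both pass to the normal subgroup $N$ of unitriangular matrices with vanishing superdiagonal, identify $\Delta/N\simeq\G_a^3$, and finish by recognizing a $2$-dimensional closed subgroup of $\G_a^3$ (characteristic zero) as the kernel of a linear form $a\alpha+b\beta+c\gamma$, unique up to scalar --- the crucial step, namely that a $5$-dimensional subgroup $G$ must contain $N$, is handled quite differently. The paper argues by cases on $\dim(G\cap N)$: if $N\not\subseteq G$, then $\dim(G\cap N)=2$, the superdiagonal entries of elements of $G$ are unconstrained, and multiplying two general elements of $G$ shows the linear condition cutting out $G\cap N$ in $N$ cannot be preserved unless $a=b=c=0$, a contradiction; this is elementary bookkeeping with the group law of $U(4;k)$ and nothing more. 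You instead identify $N=[\Delta,\Delta]$ through the commutator identities and invoke structure theory: in characteristic zero closed subgroups of a unipotent group are connected, a proper closed subgroup of a connected nilpotent group is properly contained in its normalizer (equivalently, via $\exp$, a codimension-one subalgebra of the nilpotent Lie algebra $\mathfrak{n}$ is an ideal), so a codimension-one $G$ is normal with $\Delta/G\simeq\G_a$ abelian, whence $[\Delta,\Delta]\subseteq G$ and $G=\pi^{-1}(\pi(G))$. Your route buys generality --- it shows, with no extra work, that in any unipotent group over a field of characteristic zero the codimension-one closed subgroups are exactly the preimages of hyperplanes in the abelianization --- at the cost of citing connectedness, the normalizer property, and the Lie-algebra correspondence, whereas the paper's argument is self-contained; both tacitly use the same fact that closed connected subgroups of a vector group in characteristic zero are linear subspaces.
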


\begin{proof} $N : = \left\{\begin{pmatrix} 
1&0&*&*\\
0&1&0&*\\
0&0&1&0\\
0&0&0&1\end{pmatrix}\right\}\subset \Delta $ is a normal subgroup. Let $G$ be
a 5-dimensional subgroup of $\Delta$. Then $G/G \cap N \longrightarrow
\Delta/N $ is an injective homomorphism and $ \Delta /N \simeq \G_a^3 $ . \\
\noindent $\emph{First case}$: dim $G \cap N = 2$. Then $ G\cap N = \Set{\begin{pmatrix} 
1&0&x&y\\
0&1&0&z\\
0&0&1&0\\
0&0&0&1\end{pmatrix} | x,y,z \in k, ax+by+cz = 0}$ where $(a:b:c) \in \P^2(k)$ is a suitable point. It follows that $ G = \left\{\begin{pmatrix} 
1&\alpha&x&y\\
0&1&\beta&z\\
0&0&1&\gamma\\
0&0&0&1\end{pmatrix}\right\}$, where $ \alpha,\beta, \gamma $ are any element of $k$, and $x,y,z \in k$ have to fulfil the conditions noted above. If $ \left\{\begin{pmatrix} 
1&\alpha'&x'&y'\\
0&1&\beta'&z'\\
0&0&1&\gamma'\\
0&0&0&1\end{pmatrix}\right\}$ is any other element of $G$, then \\
\[
a(x'+\alpha\beta'+x) + b(y'+\alpha z'+x\gamma'+y) + c(z'+\beta\gamma'+z) = 0.
\]
As $ \alpha, \beta, \gamma, \alpha', \beta', \gamma'$ are any elements of $k$,
we conclude that $ a = b= c = 0 $, contradiction. \\
$\emph{Second case}$: $N \subset G$. Then $ G/N \hookrightarrow \G_a^3$ is
2-dimensional, and one concludes from this that $G$ has the form noted above
and that $p \in\P^2(k)$ is uniquely determined. Furthermore it is easy to see
that $G(p)$ is a subgroup.
\end{proof}

\begin{lemma}
  Let be $P = k[x,y,z,t]$, let $ V \subset P$ be a subspace which is invariant
  under $ G(p)$. If $f \in P $ is invariant under $G(p)$
  modulo $V$, then the polynomials \\
  $x \partial f/\partial z, y \partial f/\partial t, x \partial f/\partial t,
   bx \partial f/\partial y - ay \partial f/\partial z, cx \partial
  f/\partial y - az \partial f/\partial t, cy \partial f/\partial z - bz
  \partial f/\partial t $ all lie in $V$.
\end{lemma}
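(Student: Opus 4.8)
The plan is to pass from the group‑level statement to its infinitesimal version. Recall first that ``$f$ is invariant under $G(p)$ modulo $V$'' means $g\cdot f-f\in V$ for every $g\in G(p)$; since $V$ is $G(p)$‑stable this says exactly that the class of $f$ in $P/V$ is fixed by $G(p)$. Recall also that $GL(4;k)$ acts on $P=k[x,y,z,t]$ by linear substitution of the degree‑one variables, so that for a nilpotent upper‑triangular matrix $N$ the one‑parameter subgroup $\exp(\varepsilon N)\subset\Delta$ acts on a polynomial by substituting $x_j\mapsto\sum_i\exp(\varepsilon N)_{ij}\,x_i$ (with $(x_1,x_2,x_3,x_4)=(x,y,z,t)$); this is a polynomial in $\varepsilon$, and differentiating at $\varepsilon=0$ shows that the matrix unit $e_{ij}$ with $i<j$ acts on $P$ as the derivation $x_i\,\partial/\partial x_j$, the overall sign depending only on the convention for the action and playing no role below. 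By Appendix~D, Lemma~1 and a direct computation, the Lie algebra of $G(p)$ is spanned by the three ``free'' matrix units $e_{13},e_{14},e_{24}$ together with the two‑dimensional subspace $\{\alpha e_{12}+\beta e_{23}+\gamma e_{34}:a\alpha+b\beta+c\gamma=0\}$; and for each $N$ in this span the curve $\exp(\varepsilon N)$ really lies in $G(p)$, since one checks that its superdiagonal entries are $(\varepsilon\alpha,\varepsilon\beta,\varepsilon\gamma)$ (with $\alpha=\beta=\gamma=0$ in the three free cases) and hence satisfy the defining relation $a\alpha+b\beta+c\gamma=0$.

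The key step is the implication: if the class of $f$ in $P/V$ is $G(p)$‑fixed, then $Nf\in V$ for every $N$ in the Lie algebra of $G(p)$. Fix such an $N$. The map $\varepsilon\mapsto\exp(\varepsilon N)\cdot f$ is a polynomial in $\varepsilon$ with values in the finite‑dimensional space $W$ spanned by the monomials of degree $\le\deg f$ (the $GL(4;k)$‑action preserving total degree), say $\exp(\varepsilon N)\cdot f=f+\varepsilon\,(Nf)+\varepsilon^2 f_2+\cdots$. By hypothesis $\exp(\varepsilon N)\cdot f-f\in V$ for every $\varepsilon\in k$; since $k\supseteq\C$ is infinite and $V\cap W$ is a linear subspace, interpolation (Vandermonde) forces every coefficient of $\sum_{j\ge1}\varepsilon^jf_j$ to lie in $V$, in particular $Nf=f_1\in V$. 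This is exactly the device already used in Section~2.4.2 (cf.\ [T2], Hilfssatz~1, p.~142). Nothing beyond ``$V$ a $G(p)$‑stable linear subspace'' and ``$k$ infinite'' is needed; in particular neither $f$ nor $V$ need be homogeneous, since one argues inside the fixed finite‑dimensional $W$.

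It remains to read off the six assertions. Applying the previous step to $N=e_{13},\,e_{24},\,e_{14}$ yields $x\,\partial f/\partial z,\ y\,\partial f/\partial t,\ x\,\partial f/\partial t\in V$. Applying it to $N=\alpha e_{12}+\beta e_{23}+\gamma e_{34}$ with $a\alpha+b\beta+c\gamma=0$ yields $\alpha x\,\partial f/\partial y+\beta y\,\partial f/\partial z+\gamma z\,\partial f/\partial t\in V$; specializing $(\alpha,\beta,\gamma)$ to $(b,-a,0)$, $(c,0,-a)$ and $(0,c,-b)$ — each satisfying the relation, since $ab-ba=ac-ca=bc-cb=0$ — gives $b\,x\,\partial f/\partial y-a\,y\,\partial f/\partial z$, $c\,x\,\partial f/\partial y-a\,z\,\partial f/\partial t$ and $c\,y\,\partial f/\partial z-b\,z\,\partial f/\partial t$ in $V$ (the first correcting the evident misprint $by$ for $bx$ in the statement). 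The only point calling for genuine care — the ``main obstacle'', such as it is — is the passage from invariance modulo $V$ at the group level to the differentiated identities; everything else is bookkeeping of matrix units and their derivations.
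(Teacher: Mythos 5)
Your proof is correct and rests on the same mechanism as the paper's: expand the action of a unipotent element of $G(p)$ on $f$, use that $k\supseteq\C$ is infinite to isolate the first-order term, and read off the derivations $x_i\,\partial/\partial x_j$. The difference is organizational, and in your favor: you parametrize the admissible infinitesimal directions by the Lie algebra (the free units $e_{13},e_{14},e_{24}$ plus $\{\alpha e_{12}+\beta e_{23}+\gamma e_{34}:a\alpha+b\beta+c\gamma=0\}$), check that $\exp(\varepsilon N)$ lies in $G(p)$, and then specialize $(\alpha,\beta,\gamma)$ to $(b,-a,0)$, $(c,0,-a)$, $(0,c,-b)$ uniformly; the paper instead works with explicit group elements carrying only superdiagonal entries, solves the constraint $a\alpha+b\beta+c\gamma=0$ for one variable, and is therefore forced into the case distinction $c\neq0$; $c=0,b\neq0$; $b=c=0$, handling the three ``free'' directions separately via the normal subgroup $N\subset G(p)$ and a reference to [T2], Hilfssatz 1. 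You also make explicit the interpolation (Vandermonde) step that the paper leaves implicit in phrases like ``Put $\alpha=0$'', and you correctly note that the statement's $by\,\partial f/\partial y$ is a misprint for $bx\,\partial f/\partial y$, which agrees with what the paper's own proof actually derives. So: same idea, but a cleaner, case-free execution.
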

 \begin{proof} If $ g = \begin{pmatrix}
 1&\alpha&0&0\\
 0&1&\beta&0\\
 0&0&1&\gamma\\
 0&0&0&1\end{pmatrix} \in G(p)$ 
and $ M = x^ry^sz^mt^n $, then $g(M) - M = x^r(\alpha x+y)^s(\beta
y+z)^m(\gamma z+t)^n - M = \alpha s x^{r+1}y^{s-1}z^mt^n + \beta m
x^ry^{s+1}z^{m-1}t^n + \gamma nx^ry^sz^{m+1}t^{n-1}\\ + $ terms containing $
\alpha^i, \beta^i, \gamma^i$, where $ i \geq 2 $. \\

If $g(f) - f \in V $, then it follows that $\alpha x\partial f/\partial y +
\beta y\partial f/\partial z + \gamma z\partial f/\partial t + $ terms
containing $ \alpha^{i}, \beta^{i}, \gamma^{i} $, where $i\geq 2 $, lies in
$V$. \\
$\emph{First case}$: $ c \neq 0$. Then $\gamma = -(a\alpha/c +b\beta/c)$,
hence $ \alpha x \partial f/\partial y + \beta y\partial f/\partial z -
(a\alpha/c + b\beta/c)z\partial f/\partial t $ + terms containing $\alpha^2,
\alpha\beta,\beta^2 \cdots \in V$.
It follows that $ \alpha (cx\partial f/\partial y - az\partial f/\partial t)
+ \beta(cy\partial f/\partial z - bz\partial f/\partial t) + $ terms
containing $\alpha^2, \alpha\beta, \beta^2,\dots  \in V$. Put $\alpha = 0 $
and $\beta = 0 $, respectively. It follows that $ cy\partial f/\partial z -
bz\partial f/\partial t \in V$ and $cx\partial f/\partial y - az\partial
f/\partial t \in V $. Multiplication by $a$ and $b$, respectively, and then
subtracting the two polynomials from each other gives $ c(bx\partial
f/\partial y - ay\partial f/\partial z) \in V$. As $ c \neq 0$, the last
three statements of the assertion follow. \\
$\emph{ Second case}$ : $ c = 0, b \neq 0 $. Then we can choose $\gamma \in k
$ arbitrarily and $ -a\alpha/b = \beta$. It follows that $\alpha x\partial
f/\partial y - (a\alpha/b)y\partial f/\partial z + \gamma z\partial f/\partial
t $ + terms containing $\alpha^2, \gamma^2, \cdots \in V$. Putting $\alpha = 0
$ gives $z\partial f/\partial t \in V$. Putting $\gamma = 0$ gives $bx\partial
f/\partial y - ay\partial f/\partial z \in V $. \\
$\emph{Third case}$: $ b= 0, c = 0$. Then $ a = 1$ and $\beta$ and $ \gamma$
are any elements of $k$, whereas $\alpha = 0$. This gives $y\partial
f/\partial z$ and $z\partial f/\partial t \in V$. As 
$ N = \left\{\begin{pmatrix}
 1&0&*&*\\
 0&1&0&*\\
 0&0&1&0\\
 0&0&0&1\end{pmatrix}\right\}\subset G(p)$, the same reasoning as in the proof
of ([T2], Hilfssatz 1, p. 142) shows that $ x\partial f/\partial z, x\partial f/\partial t, y\partial f/\partial t \in V $.
\end{proof}

\begin{lemma} Let $\cI \subset \cO_{\P^2}$ be a monomial ideal of
colength $d > 0 $ and let $\xi$ be the corresponding point in $H^d(k)$. If
$\xi$ is not invariant under the $\G_a$-operation $\psi_{\alpha} : x \mapsto
x, y \mapsto \alpha x +y, z \mapsto z$, then $ C : = \overline{
  \G_a\cdot\xi}$ contains exactly two fixed points under $T(3;k)$, namely the
point $\xi$ and the $\G_a$ - fixed point $\psi_{\infty}(\xi) : =
\lim\limits_{\alpha \longrightarrow \infty}\psi_{\alpha}(\xi)$.
\end{lemma}
\begin{proof} Embedding $H^d$ in $\Grass^q(S_d)$, where $ q : = \tbinom{d+2}{2}
  - d$, one sees that it is sufficient to prove the corresponding statement
  for a $T(3;k)$-invariant $q$ - dimensional subspace $U \subset S_d$ and
  the corresponding point in $\Grass^q(S_d)$. As one can write $U =
  \bigoplus\limits^d_{i=0}z^{d-i}U_i$, where $U_i \subset R_i$, is a subspace,
  it suffices to prove the corresponding statement for an $r$-dimensional
  subspace $U \subset R_n$, which is invariant under $T(2;k)$, but not
  invariant under $\G_a$. As
  $\lim\limits_{\alpha\longrightarrow\infty}\psi_{\alpha}(U)$ is a $\G_a$ -
  invariant subspace and as $ char(k) = 0$, it follows that this subspace is
  equal to $\langle x^n, x^{n-1}y, \dots,x^{n-r+1}y^{r-1} \rangle$. It
  follows that $C$ has two fixed-points under $T(2;k)$. In order to prove
  there are no more fixed-points, it suffices to show the following: If there
  is an element $\alpha \neq 0$ in $k$ such that $\psi_{\alpha}(U)$ is
  $T(2;k)$-invariant, then $\psi_{\alpha}(U)$ is $T(2;k)$-invariant for
  all $\alpha \neq 0 $. If one takes a monomial $ M = x^{n-r}y^r \in U $,
  then $ \psi_{\alpha}(M) = x^{n-r}(\alpha x+y)^r \in \psi_{\alpha}(U)$. As
  this is a monomial subspace by assumption, it follows that
  $x^{n-r}x^iy^{r-i} \in \psi_{\alpha}(U)$, $0\leq i\leq r$. Thus one has $ M
  \in \psi_{\alpha}(U)$ and it follows that $ U = \psi_{\alpha}(U)$, But this
  implies $\psi_{n\alpha}(U) = U $ for all $n \in \N$ and thus $
  \psi_{\alpha}(U) = U $ for all $ \alpha \in k$.
\end{proof} 

\begin{lemma} Suppose $C\subset \HH_Q$ is a $B$-invariant, irreducible, reduced curve, which is pointwise invariant under $\Gamma$ such that the image of $C$ under the restriction morphism $h$ is a single point. Then $(\cM_n\cdot C)$ is constant for $n\gg 0$ .
\end{lemma}
\begin{proof} From ([T1], Proposition 0, p.3) it follows that $C$ is either
  pointwise $\Delta$-invariant or a 1-cycle of type 3.  We consider the
  first case. Then $C = \{\sigma(\lambda)\xi |\lambda \in k^*\}^-$, where
  $\sigma $ is a suitable $\G_m$-operation and $\xi \in \HH(k)$ corresponds
  to a $\Delta$-invariant ideal $\cJ$. Let be $\cI = h(\cJ)$ and $\cI^*$
  the ideal on $\P^3$ ,which is generated by $\cI$ (cf. 1.2.2). Then $
  H^0(\cJ(n)) \subset \mathop{\oplus}\limits^n_{i=0}t^{n-i}H^0(\cI(i))$ for
  all $n$ (cf. [G5], Hilfssatz 3.2, p.295).
  Now the assertion follows, as $\cI$ is fixed by $\sigma$.\\
  \indent In the second case on has $ C = \{\psi_{\alpha}(\xi) | \alpha\in
  k\}^-$, where $\psi_{\alpha}$ is the usual $\G_a$-operation and
  $\xi\in\HH(k)$ corresponds to a monomial ideal $\cJ$. Then one argues as
  in the first case, with $\psi_{\alpha}$ instead of $\sigma$.

\end{proof}

\chapter{Standard bases }\label{appnd:E}
Let $k$ be an extension field of $\C$. If $\rho = (\rho_0, \dots,\rho_r)
\in\Z^{r+1} - (0)$, then $T(\rho) : =\Set{\lambda = (\lambda_0,\cdots,
\lambda_r) |  \lambda_i \in k^* \; \text{and}\;\lambda^{\rho} : = 
\lambda_0^{\rho_0}\cdots\lambda_r^{\rho_r} = 1}$ is a subgroup of dimension
$r$ of $\G_m^{r+1}$.\\

\noindent{\bf Auxiliary lemma:} If $\sigma, \tau \in \Z^{r+1}-(0)$ such that
$T(\sigma) \subset T(\tau)$, then there is an integer $n$ such that $\tau =
n\cdot\sigma$.
\begin{proof} Write $\sigma = (a_0,\cdots,a_r),\tau =(b_0,\cdots,b_r)$ . As
  the dimension of $T(\sigma)$ is equal to $r$, there is an index $i$ such
  that $a_i \neq 0$ and $b_i \neq 0$.Without restriction one can assume that
  $ a_{0} \neq 0 $ and $ b_{0} \neq 0$ . Choose $ p, q \in\Z$ such that $p
  a_0 = q b_0 $ and $ (p,q) = 1 $. Then
  $\lambda_1^{pa_1-qb_1}\cdots\lambda_r^{pa_r-qb_r} = 1 $ for all $\lambda
  \in T(\sigma)$ follows. Because of $\dim\,T(\sigma) = r $ one gets
  $pa_i-qb_i = 0 $, $ 0 \leq i \leq r $, and thus $\sigma = q\rho,\,\tau
  =p\rho$, where $\rho \in \Z^{r+1}-(0)$ is a suitable vector. If $\epsilon$
  is any q-th root of unity in $\C$, one can choose $\lambda \in
  (\C^*)^{r+1}$ such that $\lambda^{\rho} = \epsilon$. From $\lambda^{q\rho}
  = \lambda^{\sigma}= 1 $ it follows that $ \epsilon^{p} = \lambda^{p\rho} =
  \lambda^{\tau} = 1 $, too, and $ q = 1 $ follows.
\end{proof}

We let $\G_m^{r+1}$ operate on $ S = k[ X_0,\cdots,X_r]$ by $ X_i \mapsto
\lambda_iX_i $. If $ \rho = (\rho_0, \dots,\rho_r)$, then $ X^{\rho} :=
X_0^{\rho_0} \cdots X_r^{\rho_r}$.

\begin{lemma}
  Let $ V \subset S_d $ be a $ T(\rho)$-invariant subspace. Then V has a
  basis of the form $ f_i = m_i\cdot p_i(X^{\rho})$, where the $m_i$ are
  different monomials, $ p_i $ is a polynomial in one variable with constant
  term $ 1 $ and $ m_i $ does not appear in $ m_j\cdot p_j(X^{\rho})$ if $i
  \neq j $.
\end{lemma}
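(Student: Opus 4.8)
The plan is to split $S_d$ into $T(\rho)$-weight spaces, reduce the problem to a single such space, and there obtain the standard basis by Gaussian elimination.

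First I would decompose $S_d=\bigoplus_a\langle\cM_a\rangle$, where $\cM_1,\dots,\cM_N$ are the $T(\rho)$-equivalence classes of degree-$d$ monomials (two monomials $X^\mu,X^\nu$ being equivalent when $\lambda^\mu=\lambda^\nu$ for all $\lambda\in T(\rho)$). Since $T(\rho)$ is a torus acting diagonally and $V$ is $T(\rho)$-invariant, $V=\bigoplus_a(V\cap\langle\cM_a\rangle)$, so it is enough to produce a basis of the required type for each $V_a:=V\cap\langle\cM_a\rangle$. Granting this, the union of these bases is a basis of $V$, and the condition that $m_{a,i}$ not occur in $m_{b,j}p_{b,j}(X^\rho)$ for $(a,i)\neq(b,j)$ comes for free: any vector in $V_b$ is supported on monomials of $\cM_b$, so $m_{a,i}\in\cM_a$ can appear in $f_{b,j}$ only if $a=b$, and the within-class case will be excluded by the construction below.

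Next I would pin down the shape of a single class. Fixing $a$ and a monomial $X^{\mu^{(0)}}\in\cM_a$, any other $X^\nu\in\cM_a$ satisfies $T(\rho)\subset T(\mu^{(0)}-\nu)$, so the Auxiliary Lemma (with $\sigma=\rho$) forces $\mu^{(0)}-\nu=n\rho$ for some $n\in\Z$; conversely $X^{\mu^{(0)}+n\rho}$ lies in $\cM_a$ whenever it is an honest degree-$d$ monomial. Choosing $M_a\in\cM_a$ with minimal such $n$, this gives $\cM_a=\{\,M_aX^{j\rho}\mid j\in J_a\,\}$ for a finite set $J_a\subset\Z_{\ge 0}$ with $0\in J_a$. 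Then I would take any basis of $V_a$, write it in the monomial coordinates ordered by the exponent $j$, and pass to reduced row echelon form; with $d_a:=\dim V_a$ this yields a basis $f_{a,1},\dots,f_{a,d_a}$ whose pivots sit in distinct columns $j_1<\cdots<j_{d_a}$, are normalized to $1$, and vanish in the other rows. Writing $m_{a,i}:=M_aX^{j_i\rho}$, each row then factors as
\[
f_{a,i}=m_{a,i}\Bigl(1+\sum_{j\in J_a,\ j>j_i}c^{(i)}_j\,X^{(j-j_i)\rho}\Bigr)=m_{a,i}\,p_{a,i}(X^\rho),
\]
with $p_{a,i}$ a polynomial in one variable of constant term $1$, the $m_{a,i}$ pairwise distinct, and $m_{a,i}$ absent from $f_{a,j}$ for $j\neq i$ — precisely the within-class conditions needed. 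Combining over all $a$ and invoking the observation of the first paragraph completes the argument.

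The hard part — such as it is — is the identification of a weight class with the one-parameter family $\{M_aX^{j\rho}\}$: this is where the Auxiliary Lemma is essential, and it is exactly what lets the off-pivot part of a $T(\rho)$-semi-invariant be packaged as a single polynomial in $X^\rho$. I do not expect any genuine obstacle beyond this point; the remainder is the standard reduced-row-echelon normalization together with the bookkeeping across the finitely many weight spaces.
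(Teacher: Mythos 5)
Your argument is correct, but it is routed differently from the paper's. You first split $S_d$ (and hence $V$) into $T(\rho)$-weight spaces, invoking complete reducibility of a diagonally acting group to get $V=\bigoplus_a\bigl(V\cap\langle\cM_a\rangle\bigr)$, then use the Auxiliary Lemma to identify each class as $\{M_aX^{j\rho}\}$, and finish with reduced row echelon form inside each class. The paper proceeds in the opposite order and stays entirely elementary: it first performs the Gaussian elimination on all of $V$ at once, with respect to the inverse lexicographic order, obtaining a basis $f_i=m_i+g_i$ whose initial monomials $m_i$ are distinct and absent from the other $f_j$; it then observes that any $g\in T(\rho)$ preserves the monomial support of $f_i$, so writing $g(f_i)$ in this pivot basis forces $g(f_i)\in\langle f_i\rangle$, i.e.\ each $f_i$ is a semi-invariant; only then does it apply the Auxiliary Lemma to the exponents of a single semi-invariant to get the shape $m_i\,p_i(X^{\rho})$. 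In effect, the paper's semi-invariance step proves by hand exactly the weight-space splitting that you import from representation theory, so the paper's proof is self-contained, while yours trades that for a cleaner within-class picture (the identification $\cM_a=\{M_aX^{j\rho}\mid j\in J_a\}$ makes the ``polynomial in one variable $X^{\rho}$ with constant term $1$'' structure immediate from the echelon form). One small imprecision on your side: $T(\rho)$ need not be a torus (it can be disconnected, e.g.\ when the entries of $\rho$ have a common factor), but it is diagonalizable, and since the finitely many characters occurring in $S_d$ are separated by elements of $T(\rho)(k)$, the decomposition you use still holds; with that wording fixed, your proof is complete.
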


\begin{proof} By linearly combining any basis of $V$ one obtains a basis $f_i
  = m_i + g_i$, where the $m_i$ are different monomials, each $g_i$ is a sum
  of monomials, each of which is greater than $ m_i $ in the inverse
  lexicographic order, and $ m_i $ does not appear in $ g_j$. If $g \in
  T(\rho) $, then $ g(f_i)$ contains the same monomials as $ f_i$ and from $
  g(f_i) \in \langle\{f_i\}\rangle $ we conclude that each $ f_i$ is a
  semi-invariant, i.e,
  $\langle g(f_i)\rangle = \langle f_i\rangle$ for all $ g \in T(\rho) $ .\\[2mm]
  Now let be $ f = \bigsum a_i X^{\alpha_i} $ any $ T(\rho)$-semi-invariant.
  Let be $ \lambda \in T(\rho)$. Then $\bigsum a_i
  \lambda^{\alpha_i}X^{\alpha_i} = c(\lambda)\cdot \bigsum a_i X^{\alpha_i}$,
  where $ \lambda^{\alpha_i} := \lambda_0^{\alpha_i(0)} \dots
  \lambda_r^{\alpha_i(r)}$. It follows that $ \lambda^{\alpha_i} =
  c(\lambda)$, if $a_i\neq 0$, and therefore $ \lambda^{\alpha_i -\alpha_j} =
  1 $ for all $ i, j $, such that $ a_i \neq 0$ and $ a_j \neq 0$. Thus $
  T(\rho) \subset T(\alpha_i- \alpha_j)$, and the Auxiliary lemma gives $
  \alpha_i - \alpha_j = n_{ij}\rho $, $n_{ij} \in \Z $, if $ a_i \neq 0 $ and
  $ a_j \neq 0 $. One sees that there is an exponent $ \alpha_0
  \in{\{\alpha_i}\} $ and natural numbers $ n_i$, such that $ f =
  X^{\alpha_0}\cdot\bigsum a_i X^{n_i\rho}$.
\end{proof}

\begin{corollary}
  Let $ V \subset S_d $ be a $m$-dimensional subspace, let $ x \in
  \Grass_m(S_d) $ be the closed point of $\Grass_m(S_d) $ defined by $V$. If
  the orbit $ T\cdot x $ has the dimension 1, then the inertia group $ T_x $
  of $x$ has the form $T(\rho)$, where $\rho \in \mathbf{Z}^{r+1}-(0) $.
\end{corollary}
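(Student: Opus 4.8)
The plan is to argue entirely with the structure of closed subgroups of the split torus $T=\G_m^{r+1}$ acting on $S=k[X_0,\dots,X_r]$ by $X_i\mapsto\lambda_iX_i$, the preceding Auxiliary Lemma supplying only the final uniqueness remark. First I would use the orbit--stabilizer relation: the orbit map $T\to T\cdot x$, $g\mapsto g\cdot x$, is surjective with fibres the cosets of $T_x$, so $\dim T=\dim(T\cdot x)+\dim T_x$; with $\dim T=r+1$ and the hypothesis $\dim T\cdot x=1$ this gives $\dim T_x=r$. Since the ground field $k$ has characteristic $0$, $T_x$ is automatically smooth, hence a closed subgroup variety of $T$ of codimension one.

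Next I would pin down the identity component $H:=(T_x)^{\circ}$. It is a connected smooth closed subgroup of $\G_m^{r+1}$ of dimension $r$, hence a subtorus of codimension one, and every such subtorus is cut out by a single monomial equation, i.e. $H=\{\lambda:\lambda^{\sigma}=1\}=T(\sigma)$ for a primitive vector $\sigma\in\Z^{r+1}-(0)$. (Concretely, the inclusion $H\hookrightarrow T$ corresponds to a quotient of character lattices $\Z^{r+1}\twoheadrightarrow\Z^{r}$ whose kernel is the rank-one saturated sublattice $\Z\sigma$; then $H$ is the common kernel of the characters in $\Z\sigma$, and it is connected precisely because $\Z^{r+1}/\Z\sigma$ is torsion-free.) The character $\chi_{\sigma}:T\to\G_m$, $\lambda\mapsto\lambda^{\sigma}$, is then surjective with kernel exactly $H$, so it induces an isomorphism $T/H\cong\G_m$.

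Finally I would bootstrap from $H$ to $T_x$: the finite group $T_x/H$, viewed inside $T/H\cong\G_m$, is a finite subgroup scheme of $\G_m$, hence equals $\mu_n$ for some integer $n\ge 1$; composing $\chi_\sigma$ with the $n$-th power map then gives $T_x=\chi_{\sigma}^{-1}(\mu_n)=\ker(\chi_{n\sigma})=T(n\sigma)$. Setting $\rho:=n\sigma\in\Z^{r+1}-(0)$ yields $T_x=T(\rho)$, and the Auxiliary Lemma shows the primitive $\sigma$ (equivalently $\rho$ up to a nonzero integer factor) is intrinsic to $x$. The only genuine subtlety, and thus the main (if modest) obstacle, is the passage from the identity component $H$ to the full stabilizer $T_x$: a priori $T_x$ could be disconnected, and one must check that adjoining the finitely many extra components still produces something of the prescribed shape $T(\rho)$ rather than a diagonalizable group with torsion in its character group — which is exactly what the classification of finite subgroup schemes of $\G_m$ as the $\mu_n$ delivers. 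If one prefers to avoid scheme-theoretic language, the same conclusion is obtained over $k$ by applying elementary divisors to the Zariski-closed subgroup $T_x(k)\subseteq(k^{\times})^{r+1}$.
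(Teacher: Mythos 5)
Your proof is correct, but it follows a genuinely different route from the paper's. The paper settles the Corollary by appealing to the ``argumentations as before'' (the standard-basis machinery of Appendix E) and to [T2], Hilfssatz 7: there the inertia group is read off from the representation $S_d$ itself — one takes a basis of semi-invariants of $V$, the stabilizer is the common kernel of the characters $\lambda\mapsto\lambda^{\alpha_i-\alpha_j}$ attached to differences of exponents of the monomials that occur, and the hypothesis $\dim T\cdot x=1$ forces these differences to generate a rank-one (hence cyclic) lattice $\Z\rho$, the Auxiliary Lemma governing the comparison of the resulting groups $T(\sigma)\subseteq T(\tau)$. You never look at $S_d$ at all: orbit–stabilizer gives $\dim T_x=r$, the character-lattice description of closed subgroups of a split torus shows the identity component is $T(\sigma)$ with $\sigma$ primitive, and the classification of finite subgroup schemes of $\G_m$ as the $\mu_n$ upgrades this to $T_x=T(n\sigma)=T(\rho)$. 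Your version is more general (it works for any $T$-action with a one-dimensional orbit, not just points of $\Grass_m(S_d)$) and it makes explicit the one real subtlety, the possible disconnectedness of $T_x$, which the paper's citation leaves implicit; the paper's route, on the other hand, stays entirely within the elementary semi-invariant toolkit it has just set up and yields $\rho$ concretely from the exponents of monomials of $V$, which is how the statement is exploited later (8.2, 11.1). Only your closing aside is loosely worded: for non-algebraically-closed $k$ one should read the dimension count and $T_x=T(\rho)$ scheme-theoretically or after base change to $\overline{k}$ (as the paper in fact assumes where the Corollary is used), rather than via ``elementary divisors applied to $T_x(k)$''; this does not affect the argument, which you have already given correctly in scheme-theoretic terms.
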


\begin{proof}This follows by similar argumentations as before (see[T2], Hilfssatz 7,p.141).
\end{proof}


 \address{Gerd Gotzmann , Isselstrasse 34 , D--48431 Rheine }
\email{ g.gotzmann@t-online.de }


\begin{thebibliography}{Gre}

\bibitem[F]{F} Fogarty, J.: Algebraic families on an algebraic surface. Amer. J. Math. 90, 511--521 (1968).

\bibitem[Fu]{Fu} Fulton, W.: Intersection theory. Springer-Verlag 1984.

\bibitem[G1]{G1} Gotzmann, G.: Eine Bedingung für die Flachheit und das
  Hilbertpolynom eines graduierten Ringes. Math. Z. 158, 61--70 (1978).

\bibitem[G2]{G2} \bysame: A stratification of the Hilbert scheme of points in
  the projective plane. Math. Z. 199, 539--547 (1988),

\bibitem[G3]{G3} \bysame: Einfacher Zusammenhang der Hilbertschemata von
  Kurven im  komplex-projectiven Raum. Invent.math. 99, 655--675 (1990).

\bibitem[G4]{G4} \bysame: Topologische Eigenschaften von
  Hilbertfunktion--Strata. Habilitationsschrift, Universit\"at M\"unster, 1993.
                                                                                                                                                                    \bibitem[G5]{G5} \bysame: Einige einfach-zusammenh\"angende Hilbertschemata. Math. Z. 180, 291--305 (1982)

\bibitem[Gre]{Gre} Green, M.: Generic initial ideals. In: Six lectures on
  commutative algebra. Progress in Mathematics 166, 119--186, Birkh\"auser,
  Basel (1998).

\bibitem[HM]{HM} Harris, J., Morrison, I.: Moduli of curves, Springer--Verlag
  1998.

\bibitem[Ha]{Ha} Hartshorne, R.: Algebraic geometry, Springer--Verlag 1977.

\bibitem[Hi]{Hi} Hirschowitz, A.: Le group de Chow \'equivariant. C.R. Acad,
  Sc. Paris, t.298, Serie I, no. 5, 87--89 (1984).

\bibitem[Ho]{Ho} Horrocks, G.: Properties of schemes inherited from orbits. J.
  Alg. 40, 819--823 (1976).
 
\bibitem[I]{I} Iarrobino, A.: Punctual Hilbert schemes. Mem. Amer. Math. Soc.
  Vol.10, N. 188 (1977).

\bibitem[K]{K} Kraft, H. : Geometrische Methoden in der Invariantentheorie,
  Verlag Vieweg 1985.

\bibitem[L]{L} Lella, P.: A network of rational curves on the Hilbert scheme.
  Preprint, available at http://arxiv.org./abs/1006.5020.

\bibitem[LR]{LR} Lella, P., Roggero, M.: Rational components of Hilbert
  schemes. Preprint, available at http://arxiv.org/abs/0903.1029.

\bibitem[Mu]{Mu} Mumford, D.: Lectures on curves on an algebraic surface,
  Pinceton, 1966.

\bibitem[T1]{T1}  Gotzmann, G.: Der kombinatorische Teil der ersten Chowgruppe
  eines Hilbertschemas von Raumkurven, Schriftenreihe des Mathematischen
  Instituts der Universit\"at M\"unster, 3. Serie, Heft 13, September 1994.

\bibitem[T2]{T2}  \bysame: Der algebraische Teil der ersten Chowgruppe eines
  Hilbertschemas von  Raumkurven, ibid., Heft 19, Februar 1997.

\bibitem[T3]{T3}  \bysame: Die N\'eron--Severi--Gruppe eines Hilbertschemas
  von Raumkurven und der  universellen Kurve, ibid., Heft 23, Januar 1999.

\bibitem[T4]{T4}  \bysame: Die erste Chowgruppe eines Hilbertschemas von Raumkurven, ibid., Heft 25, M\"arz 2000.
\end{thebibliography}
\end{document}